\patchcmd{\ttlh@hang}{\parindent\z@}{\parindent\z@\leavevmode}{}{}
\patchcmd{\ttlh@hang}{\noindent}{}{}{}
\newcommand\eqdef{\coloneqq}
\newcommand\nbd{\nobreakdash-\hspace{0pt}}
\newcommand\idd[1]{\mathrm{id}_{#1}}
\newcommand\Idd[1]{\mathrm{Id}_{#1}}
\newcommand\invrs[1]{#1^{-1}}
\newcommand\incl{\hookrightarrow}
\newcommand\iso{\overset{\sim}{\rightarrow}}
\newcommand\surj{\twoheadrightarrow}
\newcommand\sd{\looparrowright}
\newcommand\restr[2]{{#1}{\raisebox{0pt}{$|_{#2}$}}}
\newcommand\set[1]{\left\{ {#1} \right\}}
\newcommand\order[2]{#2^{(#1)}}
\newcommand{\overbar}[1]{{\mkern 1.5mu\overline{\mkern-1.5mu#1\mkern-1.5mu}\mkern 1.5mu}}
\newcommand{\pout}[1]{\amalg_{#1}}
\newcommand{\pback}[1]{\times_{#1}}
\newcommand\Ninfty{\mathbb{N} \cup \set{\infty}}
\newcommand\slice[2]{{#1}/{\raisebox{-2pt}{$#2$}}}
\newcommand\join{\,{\star}\,}
\newcommand\opp[1]{#1^\mathrm{op}}
\newcommand\cat[1]{\underline{\mathrm{#1}}}
\newcommand\core{\mathsf{core}}
\newcommand\fun[1]{\mathsf{#1}}
\DeclareMathOperator*{\colim}{colim}
\DeclareMathOperator{\Ob}{Ob}
\DeclareMathOperator{\Hom}{Hom}
\DeclareMathOperator{\PSh}{PSh}
\DeclareMathOperator{\Lan}{Lan}
\newcommand{\init}{\mathbb{0}}
\newcommand{\term}{\mathbb{1}}
\newcommand\ogPos{\cat{ogPos}}
\newcommand\Pos{\cat{Pos}}
\newcommand{\Cat}{\cat{Cat}}
\newcommand{\Set}{\cat{Set}}
\newcommand{\ssSet}{\cat{ssSet}}
\newcommand{\omegaCat}{\omega\cat{Cat}}
\newcommand\ssimcat{\Delta_i}
\newcommand\simplex[1]{\vec{\Delta}^{#1}}
\newcommand{\dCpx}{\cat{dCpx}}
\newcommand{\rdCpx}{\dCpx^\mathsf{reg}}
\newcommand{\frdCpx}{\dCpx^\mathsf{reg,fin}}
\newcommand{\mdCpx}{\dCpx^\m}
\newcommand{\mssSet}{\ssSet^\m}
\newcommand{\ICpx}{\Infl\cat{Cpx}}
\newcommand{\mICpx}{\Infl\cat{Cpx}^\m}
\newcommand{\MCpx}{\Merg\cat{Cpx}}
\newcommand{\mMCpx}{\Merg\cat{Cpx}^\m}
\newcommand{\MICpx}{\Merg\!\Infl\cat{Cpx}}
\newcommand{\mMICpx}{\Merg\!\Infl\cat{Cpx}^\m}
\newcommand{\UInfl}{\fun{U}_{\!\Infl}}
\newcommand{\FInfl}{\fun{F}_{\!\Infl}}
\newcommand{\UmInfl}{\fun{U}^\m_{\!\Infl}}
\newcommand{\UMerg}{\fun{U}_{\!\Merg}}
\newcommand{\UmMerg}{\fun{U}^\m_{\!\Merg}}
\newcommand{\FMerg}{\fun{F}_{\!\Merg}}
\newcommand{\FmMerg}{\fun{F}^\m_{\!\Merg}}
\newcommand{\Fdelta}{i_{\Delta}}
\newcommand{\atom}{{\scalebox{1.3}{\( \odot \)}}}
\DeclareMathOperator{\clos}{cl}
\newcommand\clset[1]{\mathrm{cl}\set{#1}}
\newcommand\ordcpx[1]{{#1}^\Delta}
\newcommand\conv[1]{#1_\mathrm{c}}
\newcommand\augm[1]{{{#1}_\bot}}
\newcommand\dimin[1]{{#1}_{\not\bot}} 
\newcommand\augmo[1]{{#1}^+}
\newcommand\maxel[1]{\mathscr{M}\!\mathit{ax}\,#1}
\newcommand\gr[2]{#2_{#1}}
\newcommand\bd[2]{\partial_{#1}^{#2}}
\newcommand{\bdmap}{\partial}
\newcommand\faces[2]{\Delta_{#1}^{#2}}
\newcommand\cofaces[2]{\nabla_{#1}^{#2}}
\DeclareMathOperator{\inter}{int}
\newcommand\cp[1]{\,{\scriptstyle\#}_{#1}\,}
\newcommand\cpsub[1]{\triangleright_{#1}}
\newcommand\subcp[1]{\prescript{}{#1\!}{\triangleleft}\;}
\newcommand\subs[3]{#1[#2/#3]}
\newcommand\mrg[1]{\left\langle#1\right\rangle}
\newcommand\celto{\Rightarrow}
\newcommand\rdto{\celto^+}
\newcommand\submol{\sqsubseteq}
\newcommand\gray{\otimes}
\newcommand\molec{\mathit{Mol}}
\newcommand\molecin[1]{\slice{\molec}{#1}}
\newcommand{\cyl}{\curvearrowright} 
\newcommand\inl[1]{{#1} \join }
\newcommand\inr[1]{\join {#1} }
\newcommand{\m}{\mathsf{m}}
\newcommand{\mrk}[1]{#1^{\m}}
\newcommand{\flatm}[1]{#1^{\flat}}
\newcommand{\sharpm}[1]{#1^{\sharp}}
\newcommand{\Um}{\fun{U}_\m}
\newcommand{\Gammacon}{\Gamma^\mathsf{c}}
\newcommand\pcyl[1]{\ltimes_{#1}}
\newcommand\meqv{\simeq_\m}
\newcommand\mcelto{\celto_\m}
\newcommand\mrdto{\rdto_\m}
\newcommand\I{\mathcal{I}}
\newcommand\satur[1]{#1_{\mathsf{inv}}}
\newcommand\clcom[1]{\overbar{#1}}
\DeclareMathOperator{\Pd}{Pd}
\DeclareMathOperator{\Rd}{Rd}
\DeclareMathOperator{\cell}{cell}
\DeclareMathOperator{\dgn}{dgn}
\newcommand{\un}{\varepsilon}
\newcommand{\lun}[1]{\lambda_{#1}}
\newcommand{\run}[1]{\rho_{#1}}
\newcommand{\Infl}{\mathscr{I}}
\newcommand{\mInfl}{\Infl_\m}
\newcommand{\Merg}{\mathscr{M}}
\newcommand\glcom[1]{\llparenthesis{#1}\rrparenthesis}
\newsavebox{\@brx}
\newcommand{\llangle}[1][]{\savebox{\@brx}{\(\m@th{#1\langle}\)}%
  \mathopen{\copy\@brx\kern-0.5\wd\@brx\usebox{\@brx}}}
\newcommand{\rrangle}[1][]{\savebox{\@brx}{\(\m@th{#1\rangle}\)}%
  \mathclose{\copy\@brx\kern-0.5\wd\@brx\usebox{\@brx}}}
\newcommand\mrgcom[1]{\llangle{#1}\rrangle}
\newcommand{\Rnd}{\mathscr{R}}
\newcommand{\adCpx}{\augmo{\dCpx}}
\newcommand{\Cof}{\mathcal{C}\!\mathit{of}}
\newcommand{\ACof}{\mathcal{AC}\!\mathit{of}}
\newcommand{\Fib}{\mathcal{F}\!\mathit{ib}}
\newcommand{\AFib}{\mathcal{AF}\!\mathit{ib}}
\newcommand{\ICof}[1]{#1\text{-}\Cof}
\newcommand{\JFib}[1]{#1\text{-}\Fib}
\newcommand{\ppnat}{\mathbin{\Box}}
\DeclareMathOperator{\rlp}{r}
\DeclareMathOperator{\llp}{l}
\DeclareMathOperator{\Ho}{Ho}
\newcommand\acof{\overset{\sim}{\incl}}
\newcommand\afib{\overset{\sim}{\surj}}
\newcommand\rcodiag[1]{\nabla_{#1}}
\newcommand\rdiag[1]{\Delta_{#1}}
\newcommand\cufs[1]{#1^{\mathsf{c}\cup\mathsf{f}}}
\newcommand\fibs[1]{#1^{\mathsf{fib}}}
\newcommand\cofs[1]{#1^{\mathsf{cof}}}
\newcommand\bifs[1]{#1^{\mathsf{bif}}}
\newcommand\homo{\approx}
\newcommand\Ibd{I_\partial}
\newcommand\Imrg{I_{\langle\rangle}}
\newcommand\Imrk{I_\m}
\newcommand\Jhorn{J_\mathsf{horn}}
\newcommand\Jsat{J_\mathsf{sat}}
\newcommand\Jn{J_n}
\newcommand\Mw{\mathfrak{M}}
\newcommand\Nw{\mathfrak{N}}
\newcommand\Mwn{\Mw_n}
\newcommand\MMwn{\Mw_{\!\Merg, n}}
\newcommand{\MwKan}[1]{\mathfrak{K}_{#1}}
\newcommand\arr{\vec{I}}
\newcommand\marr{\mrk{\arr}}
\newcommand\globe[1]{O^{#1}}
\renewcommand{\a}{\alpha}
\renewcommand{\b}{\beta}
\renewcommand{\S}{\mathcal{S}}
\newcommand{\C}{\mathcal{C}}
\renewcommand{\L}{\mathcal{L}}
\newcommand{\E}{\mathcal{E}}
\newcommand{\R}{\mathcal{R}}
\renewcommand{\th}{\vartheta}
\newcommand\cls[1]{\mathcal{#1}}
\newcommand{\oset}[3][0ex]{%
  \mathrel{\mathop{#3}\limits^{
    \vbox to#1{\kern-1.5\ex@
    \hbox{$\scriptstyle#2$}\vss}}}}
\newcommand\qeq{\oset{?}{=}}
\newcommand\ntext{\texorpdfstring{$n$}{n}}
\newcommand\inftyn{\texorpdfstring{$(\infty, n)$}{(infty, n)}}
\newcommand\inftyinf{\texorpdfstring{$(\infty, \infty)$}{(infty, infty)}}
\newtheoremstyle{ittheorem}
  {\topsep}   
  {\topsep}   
  {\itshape}  
  {0pt}       
  {\sffamily \itshape \bfseries} 
  { ---}         
  {5pt plus 1pt minus 1pt} 
  {}          
\newtheoremstyle{itdfn}
  {\topsep}   
  {\topsep}   
  {}  
  {0pt}       
  {\sffamily \itshape \bfseries} 
  {}         
  {5pt plus 1pt minus 1pt} 
  {\thmnumber{#2}{\thmnote{\normalfont\ \ %
  {\sffamily(#3)}.}}}          
\newtheoremstyle{itrmk}
  {0.5\topsep}   
  {0.5\topsep}   
  {\normalfont}  
  {0pt}       
  {\sffamily \itshape} 
  { ---}         
  {5pt plus 1pt minus 1pt} 
  {}          
\newtheoremstyle{itexm}
  {0.5\topsep}      
  {0.5\topsep}      
  {\normalfont}     
  {0pt}             
  {\sffamily \itshape \bfseries \color{\mycolor}}        
  {\\}               
  {5pt plus 1pt minus 1pt} 
  {\thmname{#1} \thmnumber{#2}{\thmnote{\normalfont\ \ %
  {\sffamily(#3)}.}}}           
  \renewcommand\@upn{\textit}
\theoremstyle{ittheorem}
\newtheorem{thm}{Theorem}[section]
\newtheorem*{thm*}{Theorem}
\newtheorem{prop}[thm]{Proposition}
\newtheorem*{prop*}{Proposition}
\newtheorem{cor}[thm]{Corollary}
\newtheorem{lem}[thm]{Lemma}
\newtheorem{conj}[thm]{Conjecture}
\theoremstyle{itdfn}
\newtheorem{dfn}[thm]{}
\theoremstyle{itrmk}
\newtheorem{rmk}[thm]{Remark}
\newtheorem{comm}[thm]{Comment}
\newtheorem{exm}[thm]{Example}
\setlist{leftmargin=20pt,itemsep=0pt,parsep=0pt,topsep=1ex}
\renewcommand{\cftsubsecpagefont}{\mdseries}
\makeatletter \renewcommand{\cftsubsecfillnum}[1]{%
  {\cftsubsecleader}\nobreak
  \makebox[\@pnumwidth][\cftpnumalign]{\cftsubsecpagefont \oldstylenums{#1}}\cftsubsecafterpnum\par
} \makeatother
\makeatletter \renewcommand{\cftsecfillnum}[1]{%
  {\cftsecleader}\nobreak
  \makebox[\@pnumwidth][\cftpnumalign]{}\cftsecafterpnum\par
} \makeatother
\tikzset{
	0c/.style={circle, draw, fill, inner sep=.7pt},
	1c/.style={->, shorten <=2pt, shorten >=2pt},
	u1c/.style={-, shorten <=2pt, shorten >=2pt},
	2c/.style={double, shorten <=6pt, shorten >=8pt, decoration={markings,mark=at position -6pt with {\arrow[scale=1.75]{>}}}, preaction={decorate}},
	follow/.style={->, >=stealth, ultra thick, shorten <=3pt, shorten >=3pt, color=gray!70},
	arlabel/.style={scale=.8}
}
\newcommand\runtitle{semi-strictification of \inftyn-categories}
\newcommand\runauthor{chanavat and hadzihasanovic}
\title{Semi-strictification of \inftyn-categories}
\author{Cl\'emence Chanavat and Amar Hadzihasanovic}
\institution{Tallinn University of Technology}
\begin{document}

{$\quad$}

\vspace{20pt}

\maketitle 

\noindent\makebox[\textwidth][r]{%
	\begin{minipage}[t]{.7\textwidth}
\small \emph{Abstract.}
	We prove the first equivalence between a weak non-algebraic model and a semi-strict algebraic model of $(\infty, n)$-categories.
	This takes the form of a natural semi-strictification, whereby a weak $(\infty, n)$-category is embedded into a semi-strict one through an acyclic cofibration, in such a way that weak functors lift to semi-strict functors; this constitutes the derived unit of a Quillen equivalence between weak model categories whose fibrant objects are, respectively, the weak $(\infty, n)$-categories and (up to an acyclic fibration) the semi-strict ones.
	The semi-strict model has algebraic units and composition of round pasting diagrams, satisfying a strict form of associativity and interchange as in Henry's regular version of Simpson's weak units conjecture; semi-strict functors strictly preserve round composition, but only weakly preserve units.
	Globular composition operations are obtained from a combination of units and round composition.
	Since the models satisfy the homotopy hypothesis in the case $n = 0$, this result also exhibits the first semi-strict model of the classical homotopy types that has algebraic units and composition.
	The constructions are based on the combinatorics of regular directed complexes and are entirely explicit and combinatorial, in the spirit of Mac Lane's strictification of bicategories.
\end{minipage}}

\vspace{20pt}

\makeaftertitle

\normalsize \thispagestyle{empty} 

\clearpage 

{$\quad$}

\vspace{25pt}

\noindent\makebox[\textwidth][c]{%
\begin{minipage}[t]{.85\textwidth}
\setcounter{tocdepth}{2}
\tableofcontents
\end{minipage}}

\clearpage

\section*{Introduction}
\addcontentsline{toc}{subsection}{Introduction}

\noindent
Since the early days of higher category theory, the notion of $n$\nbd category, and of \inftyn\nbd category, has undergone a kind of de-concretisation: first of all, it is accepted, and with good reason, that there is no uniquely defined mathematical structure (in the traditional sense, that is, unless one accepts foundations that are intrinsically higher-categorical) which captures it; secondly, it seems to also be gaining acceptance that there will be no ``standard model'' that serves as a reference for others---at least, not for essential, rather than contingent reasons---the way that the usual ``categories'' are a standard model for 1-categories (and not, say, simplicial sets satisfying the Segal condition).
Rather, the expectation seems to be that of a web of equivalences of models, each of them convenient in some aspects and lacking in others, which is ``self-reinforcing'' in that each link strengthens the idea that there is, after all, an essentially unique notion of \inftyn\nbd category.
A common vision is that, ultimately, we will all work in a model-independent language, invariant under equivalence of \inftyn\nbd categories, only using formal properties of what are otherwise concrete constructions; if one of these---say, slices, or enrichment, or the Grothendieck construction, or Gray products---is inaccessible in one model, it suffices to produce it in a different model, prove the necessary formal properties, and, finally, prove the equivalence between the two models.

While this may be a perfectly happy vision for homotopy theorists, or for the most formal-minded category theorists, it is less so for practitioners of diagrammatic algebra, computational algebra, representation theory, higher-dimensional rewriting, applied category theory, and other fields that rely on explicit diagrammatic manipulation, presentations by generators and relations, and rewriting methods which are intrinsically not equivalence-invariant.
With this community in mind, the second-named author has been expanding the combinatorial \cite{hadzihasanovic2024combinatorics} and computational \cite{hadzihasanovic2023data, hadzihasanovic2023higher} foundations of higher-categorical diagram rewriting, and then, jointly with the first-named author, developing models of \inftyn\nbd categories that support an expressive language for diagrammatic reasoning and rewriting, and which may reasonably aim to be ``self-contained'' \cite{chanavat2024diagrammatic, chanavat2024equivalences, chanavat2024model, chanavat2025gray}.

When models are 2\nbd dimensional---that is, they are bicategories or monoidal categories---a result that is fundamental to diagrammatic algebra is Mac Lane's strictification theorem \cite{maclane1963natural} that every bicategory can be embedded into a \emph{strict} 2\nbd category via an equivalence of bicategories: the replacement of a bicategory with a strict 2\nbd category allows one to dramatically reduce the higher-dimensional coherence data that needs to be accounted for in computations.
It has been suggested \cite{schelstraete2025rewriting} that the analogous \emph{semi-strictification} result one dimension up, that every tricategory embeds into a Gray-category \cite{gordon1995coherence}, could play an equally significant role in categorified diagrammatic algebra and representation theory.
As algebra turns ever higher, we can expect that semi-strictification results for \inftyn\nbd categories will be equally valuable, and from the start it has been our aim to achieve such a result.
This is what we present here, not exactly for the models we studied in our previous issues, but a close variant.
To explain what exactly we achieved, we first take some time to discuss what it means for a model to be weak, strict, or semi-strict.

\subsection*{Geometric, algebraic, weak, and semi-strict models}

\noindent 
We can identify a number of components that are expected of any model of \inftyn\nbd categories.
\begin{enumerate}
	\item A model of $k$\nbd dimensional \emph{cells} and their \emph{boundaries} for each $k \in \mathbb{N}$.
	\item A method for producing or recognising \emph{structural homotopies} between cells or diagrams of cells, which includes at least \emph{units}; we will use ``units'' as an umbrella term for such structural homotopies.
	\item The definition of a subclass of cells that are \emph{internal equivalences}, such that the structural homotopies are internal equivalences, and all cells in dimension $>n$ are internal equivalences.
	\item A method for producing or recognising an \emph{inverse} of an equivalence.
	\item A class of \emph{composable diagrams}, that is, arrangements of multiple cells that admit a composite, that is, reduction to a single cell.
	\item A method for producing or recognising a \emph{composite} of such a diagram.
	\item A definition of \emph{functor} between \inftyn\nbd categories, and a characterisation of what functors are \emph{equivalences}.
\end{enumerate}
Applying this scheme to categories, as a toy example of a model of $(\infty, 1)$\nbd categories, we have: there are only 0 and 1\nbd dimensional cells, modelled by points and arrows; the only structural homotopies are identity morphisms; the internal equivalences are isomorphisms, and the inverse of an isomorphism is its unique algebraic inverse; composable diagrams are concatenations of arrows, and their composite is specified algebraically; a functor is a functor and an equivalence is an essentially surjective, full and faithful functor.

Given such a model, there is a number of ``tests'' that it is expected to pass, in order to qualify as a candidate for a general model of \inftyn\nbd categories.
\begin{enumerate}
	\item The \emph{homotopy hypothesis}: $(\infty, 0)$\nbd categories, also known as $\infty$\nbd groupoids, must be a model of all classical homotopy types, in such a way that the Postnikov tower of a homotopy type can be read as the natural tower of $k$\nbd truncations identifying all $k$\nbd cells connected by a $(k+1)$\nbd cell.
	\item Categories should be faithfully represented in the model as 1-truncated $(\infty, 1)$\nbd categories, bicategories as 2-truncated $(\infty, 2)$\nbd categories.
	\item The model of $(\infty, 1)$\nbd categories should be equivalent to quasicategories or to complete Segal spaces or another standard model.
	\item More recently, as most models of $(\infty, 2)$\nbd categories are proved equivalent \cite{gagna2022equivalence}, equivalence with these models is also becoming a standard. 
\end{enumerate}
There is sometimes an expectation that strict $n$\nbd categories should be represented as a subclass of \inftyn\nbd categories, but for reasons that we will later recall, we think it is still up for debate that these should play an essential role in the theory.
There are also degrees of strength in what \emph{equivalence} with other models means, but at least with respect to the homotopy hypothesis, there is an undisputed standard which is to produce some version of a Quillen equivalence with the classical model structure on simplicial sets.

Based on the form that the different components take in a model, we can give a rough classification: say that units, inverses, and composites are
\begin{itemize}
	\item \emph{geometric} if their definition is given in terms of a \emph{recognition principle}, and their existence is merely stated as a \emph{property} of some underlying structure of an \inftyn\nbd category;
	\item \emph{algebraic} if they are produced by actual \emph{operations} which are part of the structure of an \inftyn\nbd category.
\end{itemize}
This gives a classification of models ranging from ``fully geometric'' to ``fully algebraic''.
It must be noted, however, that it is almost never expected that inverses should be provided algebraically, since there is no expectation that functors should, even in principle, preserve some assigned inverses; even 1\nbd groupoids are typically defined as categories with the \emph{property} that every morphism is an isomorphism.
Thus, an \emph{algebraic model} is, typically, one in which only units and composites are specified algebraically.
(We note that geometric models may sometimes be ``algebraicised'' by turning an existence property into a function producing existential witnesses---a typical move in constructive mathematics; this is, for example, the case of algebraic Kan complexes as a model of $\infty$\nbd groupoids \cite{nikolaus2011algebraic}.
We consider these models to be only algebraic in a formal sense; our classification is spiritual.)

In the current landscape of models, geometric and algebraic models form two disconnected clusters.
Geometric models include Segal-type models \cite{rezk2010cartesian, paoli2019simplicial} and ``shaped'' models \cite{verity2008weak, campion2025cubical}; several equivalences between these models have been proved \cite{barwick2020unicity, doherty2023equivalence, loubaton2023theory} at the level of their full homotopy theory.
Algebraic models include Grothendieck--Maltsiniotis models \cite{maltsiniotis2010grothendieck} as well as Batanin--Leinster models \cite{batanin1998weak, leinster2004higher} and the recent type-theoretic models \cite{benjamin2024globular}; between these, several equivalences have also been proved \cite{ara2010groupoides, bourke2020iterated, benjamin2024invertible}, although only at the level of equivalences of categories of strict functors.
So far, no equivalence is known between a geometric and an algebraic model for arbitrary $n$; even more significantly, \emph{no fully algebraic model} is known to satisfy the homotopy hypothesis beyond dimension 3, where algebraic tricategories suffice to present all 3\nbd types.

Within the subclass of algebraic or partially algebraic models, it makes sense to say that a model is \emph{strict} or \emph{semi-strict} if composition, in combination with units, satisfies some non-trivial equations \emph{strictly}, that is, up to \emph{equality} of cells and not up to internal equivalence; and that it is \emph{weak} otherwise.
For geometric models, this classification does not make sense, since there are no equations that could even in principle be satisfied; still, it is common to also refer to these models as weak.
Equations are typically sorted into three classes, matching the axioms of strict $n$\nbd categories.
\begin{itemize}
	\item \emph{Associativity}-type equations: the order of composition \emph{along the same direction} is irrelevant.
	\item \emph{Interchange}-type equations: the order of composition \emph{along different directions} is irrelevant. 
	\item \emph{Unitality}-type equations: composition with a unit is trivial.
\end{itemize}
Although there may be other models that satisfy some equations of each type, it is assumed that a fully strict model is one that is equivalent to strict $n$\nbd categories, either globular or cubical \cite{al2002multiple}.

\subsection*{Simpson's weak units conjecture}

\noindent
In \cite{kapranov1991infty}, Kapranov and Voevodsky claimed a proof of the homotopy hypothesis for strict $\omega$\nbd categories in which every cell is weakly invertible.
This would have made the entire point of weak models questionable, but the proof was incorrect, and the result false, as proved conclusively by Simpson \cite[Theorem 4.4.2]{simpson2009homotopy}; in fact, already strict 3\nbd categories are unable to model all homotopy 3\nbd types in the sense of the homotopy hypothesis.
Specific mistakes in Kapranov and Voevodsky's proof have since been pinpointed by Henry \cite{henry2019non} and the second-named author \cite{hadzihasanovic2020diagrammatic}.

Having established that no general model of the classical homotopy types---\emph{a fortiori}, no general model of \inftyn\nbd categories---can be fully strict, the question remains: to what degree can a general model be semi-strict?
At least in the first non-trivial case, of tricategories and homotopy 3-types, it was known, via the equivalence with Gray-categories, that only weakening the interchange equation, and keeping associativity and unitality strict, was enough to ensure full generality.
Simpson, on the other hand, gave an informal conceptual argument that only weakening unitality, while keeping associativity and interchange strict, should be enough.
This has come to be known as \emph{Simpson's weak units conjecture}.
Not being in the form of a precise mathematical statement, but rather a programmatic ``vision'', attempts at a proof of the weak units conjecture have focussed on a variety of models.

A precise formulation of an $n$\nbd categorical weak units conjecture was given by J.~Kock \cite{kock2006weak} as a conjectural equivalence between two Segal-type models, one of which satisfies certain Segal conditions up to isomorphism instead of weak equivalence.
A weak version of this conjecture was proved for $n = 3$ jointly by Joyal and Kock \cite{joyal2007weak}.
To the best of our knowledge, no evident further progress has been made on the $n$\nbd categorical version of the conjecture.

The situation is somewhat better for the case of $\infty$\nbd groupoids.
First, Paoli proved a form of semi-strictification for the subclass of homotopy $n$\nbd types whose classifying space is path-connected, although not quite in the sense of providing a semi-strict algebraic model, but rather of finding semi-strict representatives of objects in a weak model \cite{paoli2009weakly}.
The most significant progress, to our knowledge, is represented by Henry's proof of the homotopy hypothesis for a partially algebraic model of $\infty$\nbd groupoids, endowed with a semi-strict algebraic notion of composition, but non-algebraic units \cite{henry2018regular}.

Henry's approach relies on the algebra of strict $\omega$\nbd categories, not as the actual algebra of composition in the model, but rather, first, for the production of algebraic models of cells and their pasting diagrams; and, secondly, for the definition of a ``restricted'' composition operation where only a subclass of pasting diagrams is considered composable.
In the model that satisfies the homotopy hypothesis, this subclass is characterised by the \emph{spherical boundary} property---what we call \emph{roundness} following Steiner \cite{steiner1998pasting}---postulating, intuitively, that the composable $n$\nbd dimensional diagrams are those whose shape is, topologically, an $n$\nbd dimensional topological ball.
The fact that this ``round'' composition is particularly well-behaved had also been noticed, independently, by the second-named author \cite{hadzihasanovic2020combinatorial}, and can be traced to the fact that it is dual to ``polytopal'' subdivisions of cells, which are highly non-singular in that they do not change the homeomorphism type.
This should be contrasted with the \emph{globular} composition characteristic of most algebraic models, which typically does change the homeomorphism type, for example when producing a 2\nbd ball from a wedge of two 2\nbd balls:
\[\begin{tikzcd}
	\bullet & \bullet & \bullet & \quad & \bullet & \bullet\,.
	\arrow[""{name=0, anchor=center, inner sep=0}, curve={height=-12pt}, from=1-1, to=1-2]
	\arrow[""{name=1, anchor=center, inner sep=0}, curve={height=12pt}, from=1-1, to=1-2]
	\arrow[""{name=2, anchor=center, inner sep=0}, curve={height=-12pt}, from=1-2, to=1-3]
	\arrow[""{name=3, anchor=center, inner sep=0}, curve={height=12pt}, from=1-2, to=1-3]
	\arrow[""{name=4, anchor=center, inner sep=0}, curve={height=-12pt}, from=1-5, to=1-6]
	\arrow[""{name=5, anchor=center, inner sep=0}, curve={height=12pt}, from=1-5, to=1-6]
	\arrow["{\text{compose}}", dashed, shorten <=10pt, shorten >=5pt, maps to, from=1-3, to=1-5]
	\arrow[shorten <=3pt, shorten >=3pt, Rightarrow, from=1, to=0]
	\arrow[shorten <=3pt, shorten >=3pt, Rightarrow, from=3, to=2]
	\arrow[shorten <=3pt, shorten >=3pt, Rightarrow, from=5, to=4]
\end{tikzcd}\]
While Henry's proof is a remarkable step forward, the fact that, in the absence of algebraic units, there is no way to encode an algebraic globular composition limits the ability to compare this model with other algebraic models.

Beyond the weak units conjecture, we mention, briefly, that the development of ``quasistrict'' \cite{bar2017data} and ``associative'' \cite{dorn2018associative} $n$\nbd categories, which serve as a foundation for the \emph{homotopy.io} proof assistant \cite{corbyn2024homotopy}, has been sometimes presented as an approach to semi-strictification in the ``Gray-categorical'' tradition of weakening interchange.
Apart from the original definitions due to Dorn, however, it appears that all subsequent developments have only focussed on freely generated objects, in which composition coincides with pasting of diagrams---that is, the operation of forming diagrams of multiple cells, rather than the reduction of such diagrams to a single cell.
We note that both in Henry's model, and in the ones we will present here, the operation of pasting even satisfies the axioms of \emph{strict} $\omega$\nbd categories, which, of course, does not mean that the models are strict; composition is only \emph{at most} as strict as pasting.
Thus, at present, we do not find that this programme has contributed to this particular question.
Recently, type-theoretic semi-strict models have also been produced \cite{finster2022type}, with no proof of equivalence to other models.

In summary, to the best of our knowledge, the state of the art before our article is the following:
\begin{enumerate}
	\item there is no known model of $\infty$\nbd groupoids with both algebraic units and algebraic composition that satisfies the homotopy hypothesis,
	\item there is no geometric model of \inftyn\nbd categories that has been proved equivalent to an algebraic model for all $n$,
	\item there is no weak model of \inftyn\nbd categories that has been proved equivalent to a semi-strict model for all $n$.
\end{enumerate}
Our article presents a joint solution to all these open problems.

\subsection*{Background and main ideas of the proof}

\noindent
The idea for our proof was first conceived by the second-named author in 2017, from an analysis of the combinatorial structure of Mac Lane's strictification of associativity through the lens of Hermida's ``coherence via universality'' \cite{hermida2001coherent}.
While strictification of monoidal categories is typically presented as an equivalence between two algebraic models, it can fruitfully be seen as the strictification of a non-algebraic model, namely, a variant of Hermida's \emph{representable multicategories} \cite{hermida2000representable}.

Recall that a multicategory has ``multimorphisms'' with many sources and one target, that we can think of as the ``single 0-cell'' case of 2-cells with many 1-cells in its source and a single 1-cell in its target:
\[\begin{tikzcd}[column sep=scriptsize]
	\bullet &&& \bullet \\
	& \bullet & \bullet
	\arrow[""{name=0, anchor=center, inner sep=0}, curve={height=-12pt}, from=1-1, to=1-4]
	\arrow[curve={height=6pt}, from=1-1, to=2-2]
	\arrow[""{name=1, anchor=center, inner sep=0}, from=2-2, to=2-3]
	\arrow[curve={height=6pt}, from=2-3, to=1-4]
	\arrow[shorten <=6pt, shorten >=6pt, Rightarrow, from=1, to=0]
\end{tikzcd}\]
In a representable multicategory, the tensor product $a \otimes b$ is not given by an operation; instead, it is witnessed by a \emph{universal} multimorphism of type
\begin{equation} \label{eq:universal_cell}
\begin{tikzcd}[column sep=scriptsize]
	\bullet && \bullet \\
	& \bullet
	\arrow[""{name=0, anchor=center, inner sep=0}, "{a \otimes b}", curve={height=-12pt}, from=1-1, to=1-3]
	\arrow["a"', curve={height=6pt}, from=1-1, to=2-2]
	\arrow["b"', curve={height=6pt}, from=2-2, to=1-3]
	\arrow[shorten >=5pt, Rightarrow, from=2-2, to=0]
\end{tikzcd}
\end{equation}
determining a non-algebraic notion of composition, such that the structure of a coherent weakly associative tensor product in a monoidal category is equivalent to the \emph{property} that a multicategory possess enough universal multimorphisms.
While in a multicategory, due to the asymmetry of multiple sources and one target, universality must be stated as a ``unique factorisation'' property, one can formulate variants of this equivalence for more symmetric structures such as coloured pros or planar properads, such that universality can be replaced with \emph{invertibility} as soon as one has a sufficiently expressive algebra of units.

In \cite{hadzihasanovic2019weak}, the second-named author observed that if, in such a structure, we have separately an algebra of units sufficiently expressive to generate units not just on cells, but also more general pasting diagrams
\[\begin{tikzcd}[sep=scriptsize]
	&&&&& \bullet \\
	\bullet & \bullet & \bullet && \bullet && \bullet \\
	&&&&& \bullet
	\arrow["b", curve={height=-6pt}, from=1-6, to=2-7]
	\arrow["a", from=2-1, to=2-2]
	\arrow["b", from=2-2, to=2-3]
	\arrow["{\text{unit}}", shorten <=5pt, shorten >=5pt, dashed, maps to, from=2-3, to=2-5]
	\arrow[""{name=0, anchor=center, inner sep=0}, "a", curve={height=-6pt}, from=2-5, to=1-6]
	\arrow["a"', curve={height=6pt}, from=2-5, to=3-6]
	\arrow[from=3-6, to=1-6]
	\arrow[""{name=1, anchor=center, inner sep=0}, "b"', curve={height=6pt}, from=3-6, to=2-7]
	\arrow[shorten <=3pt, shorten >=7pt, Rightarrow, from=3-6, to=0]
	\arrow[shorten <=7pt, shorten >=3pt, Rightarrow, from=1, to=1-6]
\end{tikzcd}\]
as well as an algebra of round composition
\[\begin{tikzcd}[sep=scriptsize]
	& \bullet \\
	\bullet && \bullet & \quad & \bullet && \bullet \\
	& \bullet &&&& \bullet
	\arrow["e", curve={height=-6pt}, from=1-2, to=2-3]
	\arrow[""{name=0, anchor=center, inner sep=0}, "d", curve={height=-6pt}, from=2-1, to=1-2]
	\arrow["a"', curve={height=6pt}, from=2-1, to=3-2]
	\arrow["{\text{compose}}", shorten <=5pt, shorten >=5pt, dashed, maps to, from=2-3, to=2-5]
	\arrow[""{name=1, anchor=center, inner sep=0}, "de", curve={height=-18pt}, from=2-5, to=2-7]
	\arrow["a"', curve={height=6pt}, from=2-5, to=3-6]
	\arrow["c"{description}, from=3-2, to=1-2]
	\arrow[""{name=2, anchor=center, inner sep=0}, "b"', curve={height=6pt}, from=3-2, to=2-3]
	\arrow["b"', curve={height=6pt}, from=3-6, to=2-7]
	\arrow["f", shorten <=3pt, shorten >=7pt, Rightarrow, from=3-2, to=0]
	\arrow["g"', shorten <=7pt, shorten >=3pt, Rightarrow, from=2, to=1-2]
	\arrow["{(ag)(fe)}"', shift left=2, shorten <=3pt, shorten >=6pt, Rightarrow, from=3-6, to=1]
\end{tikzcd}\]
then such a structure admits canonical witnesses of composition as in (\ref{eq:universal_cell}), obtained by taking a unit on a pasting diagram, then partially composing it:
\[\begin{tikzcd}[sep=scriptsize]
	&&&&& \bullet \\
	\bullet & \bullet & \bullet & \; & \bullet && \bullet & \quad & \bullet && \bullet \\
	&&&&& \bullet &&&& \bullet
	\arrow["b", curve={height=-6pt}, from=1-6, to=2-7]
	\arrow["a", from=2-1, to=2-2]
	\arrow["b", from=2-2, to=2-3]
	\arrow["{\text{unit}}", shorten <=5pt, shorten >=5pt, dashed, maps to, from=2-3, to=2-5]
	\arrow[""{name=0, anchor=center, inner sep=0}, "a", curve={height=-6pt}, from=2-5, to=1-6]
	\arrow["a"', curve={height=6pt}, from=2-5, to=3-6]
	\arrow["{\text{compose}}", shorten <=5pt, shorten >=5pt, dashed, maps to, from=2-7, to=2-9]
	\arrow[""{name=1, anchor=center, inner sep=0}, "ab", curve={height=-18pt}, from=2-9, to=2-11]
	\arrow["a"', curve={height=6pt}, from=2-9, to=3-10]
	\arrow[from=3-6, to=1-6]
	\arrow[""{name=2, anchor=center, inner sep=0}, "b"', curve={height=6pt}, from=3-6, to=2-7]
	\arrow["b"', curve={height=6pt}, from=3-10, to=2-11]
	\arrow[shorten <=3pt, shorten >=7pt, Rightarrow, from=3-6, to=0]
	\arrow[shorten <=7pt, shorten >=3pt, Rightarrow, from=2, to=1-6]
	\arrow[shorten <=3pt, shorten >=6pt, Rightarrow, from=3-10, to=1]
\end{tikzcd}\]
Moreover, the algebra of units, by itself, suffices to instantiate the notion of universality-as-invertibility, hence define a non-algebraic model of monoidal categories.
Then, \emph{freely} adding round composites to any instance of this model produces an instance of an \emph{algebraic} model, which, furthermore, satisfies strict associativity; the embedding of the original model into this free composition-algebra is an equivalence, realising its semi-strictification.

The fact that freely adding composites still produces an instance of the weak model---in particular, still admits an algebra of units---relies on a distributive law between the monad governing the algebra of units (called the \emph{inflate} monad) and the monad governing round composition (called the \emph{merge} monad), stating, roughly, that \emph{a unit on a composite is a composite of units}:
\begin{equation} \label{eq:merge_inflate}
	\begin{tikzcd}[sep=scriptsize]
	&&&& \bullet && \bullet \\
	\bullet & \bullet & \bullet &&& \bullet &&& \bullet && \bullet\;. \\
	&&&& \bullet && \bullet \\
	&&&&& \bullet
	\arrow["ab", from=1-5, to=1-7]
	\arrow["{\text{unit}}", shorten <=5pt, shorten >=5pt, dashed, maps to, from=1-7, to=2-9]
	\arrow["a", from=2-1, to=2-2]
	\arrow["b", from=2-2, to=2-3]
	\arrow["{\text{compose}}", shorten <=5pt, shorten >=5pt, dashed, maps to, from=2-3, to=1-5]
	\arrow["{\text{unit}}"', shorten <=5pt, shorten >=5pt, dashed, maps to, from=2-3, to=3-5]
	\arrow["b", curve={height=-6pt}, from=2-6, to=3-7]
	\arrow[""{name=0, anchor=center, inner sep=0}, "ab", curve={height=-18pt}, from=2-9, to=2-11]
	\arrow[""{name=1, anchor=center, inner sep=0}, "ab"', curve={height=18pt}, from=2-9, to=2-11]
	\arrow[""{name=2, anchor=center, inner sep=0}, "a", curve={height=-6pt}, from=3-5, to=2-6]
	\arrow["a"', curve={height=6pt}, from=3-5, to=4-6]
	\arrow["{\text{compose}}"', shorten <=5pt, shorten >=5pt, dashed, maps to, from=3-7, to=2-9]
	\arrow[from=4-6, to=2-6]
	\arrow[""{name=3, anchor=center, inner sep=0}, "b"', curve={height=6pt}, from=4-6, to=3-7]
	\arrow[shorten <=5pt, shorten >=5pt, Rightarrow, from=1, to=0]
	\arrow[shorten <=3pt, shorten >=7pt, Rightarrow, from=4-6, to=2]
	\arrow[shorten <=7pt, shorten >=3pt, Rightarrow, from=3, to=2-6]
\end{tikzcd}
\end{equation}
Even though such a model does not have a primitive algebraic operation for globular composition, one can still \emph{derive} one from the combination of units and round composition:
\[\begin{tikzcd}[sep=scriptsize]
	&&&&& \bullet \\
	\bullet & \bullet & \bullet & \; & \bullet & \bullet & \bullet & \quad & \bullet & {\bullet\;.}
	\arrow[curve={height=-6pt}, from=1-6, to=2-7]
	\arrow[""{name=0, anchor=center, inner sep=0}, curve={height=-12pt}, from=2-1, to=2-2]
	\arrow[""{name=1, anchor=center, inner sep=0}, curve={height=12pt}, from=2-1, to=2-2]
	\arrow[""{name=2, anchor=center, inner sep=0}, curve={height=-12pt}, from=2-2, to=2-3]
	\arrow[""{name=3, anchor=center, inner sep=0}, curve={height=12pt}, from=2-2, to=2-3]
	\arrow["{\text{unit}}", shorten <=5pt, shorten >=5pt, dashed, maps to, from=2-3, to=2-5]
	\arrow[""{name=4, anchor=center, inner sep=0}, curve={height=-6pt}, from=2-5, to=1-6]
	\arrow[""{name=5, anchor=center, inner sep=0}, curve={height=18pt}, from=2-5, to=2-6]
	\arrow[""{name=6, anchor=center, inner sep=0}, from=2-5, to=2-6]
	\arrow[""{name=7, anchor=center, inner sep=0}, from=2-6, to=1-6]
	\arrow[""{name=8, anchor=center, inner sep=0}, curve={height=18pt}, from=2-6, to=2-7]
	\arrow[""{name=9, anchor=center, inner sep=0}, from=2-6, to=2-7]
	\arrow["{\text{compose}}", shorten <=5pt, shorten >=5pt, dashed, maps to, from=2-7, to=2-9]
	\arrow[""{name=10, anchor=center, inner sep=0}, curve={height=12pt}, from=2-9, to=2-10]
	\arrow[""{name=11, anchor=center, inner sep=0}, curve={height=-12pt}, from=2-9, to=2-10]
	\arrow[shorten <=3pt, shorten >=3pt, Rightarrow, from=1, to=0]
	\arrow[shorten <=3pt, shorten >=3pt, Rightarrow, from=3, to=2]
	\arrow[shorten <=2pt, shorten >=2pt, Rightarrow, from=5, to=6]
	\arrow[shorten <=3pt, shorten >=3pt, Rightarrow, from=9, to=7]
	\arrow[shorten >=3pt, Rightarrow, from=2-6, to=4]
	\arrow[shorten <=2pt, shorten >=2pt, Rightarrow, from=8, to=9]
	\arrow[shorten <=3pt, shorten >=3pt, Rightarrow, from=10, to=11]
\end{tikzcd}\]
At the end of \cite{hadzihasanovic2019weak}, it was conjectured that the general structure of this semi-strictification proof could, in principle, be replicated in arbitrary dimensions, as long as one could find suitable higher-dimensional analogues of the inflate and merge monads as well as their distributive law.

As a matter of fact, the proof that we present is remarkably similar in its structure to its 2\nbd dimensional proof-of-concept; but while in dimension 2 it was easy to give explicit definitions of the inflate and merge monads, finding a form that would generalise to arbitrarily high dimension has required a development of the theory of higher-categorical pasting diagrams well beyond the previous state of the art.
This development, revisiting and expanding Steiner's groundbreaking work \cite{steiner1993algebra}, has resulted in the monograph \cite{hadzihasanovic2024combinatorics}. 

Our models are based on the combinatorial theory of \emph{regular directed complexes}, order-theoretic objects ideally joining regular CW complexes and strict $\omega$\nbd categories, modelling the shapes of cells and pasting diagrams.
The idea is that different notions of morphism between regular directed complexes should govern the different components of a model: embeddings for faces and boundaries, ``collapsing'' morphisms for units and degeneracies, and subdivisions for composition.
The problem of generalising the merge-inflate distributivity then becomes the problem of finding a class of morphisms that admits a \emph{ternary factorisation system} whose classes are, respectively, subdivisions, collapses, and local embeddings, such that the restriction to collapses and local embeddings is expressive enough to support a weak model of \inftyn\nbd categories.

As shown in \cite[Chapter 6]{hadzihasanovic2024combinatorics}, there are two natural notions of morphism of regular directed complexes, characterised among the order-preserving maps of their underlying posets by the property that they determine functors of strict $\omega$\nbd categories \emph{covariantly} and \emph{contravariantly}.
These are called \emph{maps} and \emph{comaps}, respectively.
It so happens that duals of comaps are remarkably well-adapted to modelling round composition, while maps admit a (collapse, local embedding) orthogonal factorisation system, making it natural to look for a ternary factorisation on a subclass of (comap, map)-spans.

Unfortunately, in general, there are way too many collapsing maps to hope for distributivity against comaps, or even just to have a well-behaved model of weak \inftyn\nbd categories where degeneracies are dual to arbitrary collapsing maps.
The latter problem is solved by restricting to the class of \emph{cartesian maps}, whose underlying order-preserving map is a posetal Grothendieck fibration.
This class is very well-behaved, stable under all sorts of constructions, and produces enough degeneracies to admit a calculus of \emph{coinductively weakly invertible} round diagrams \cite{cheng2007omega}, which we used to develop a model of \inftyn\nbd categories with some remarkable properties, called the diagrammatic model \cite{chanavat2024equivalences, chanavat2024model}.
For finite $n$, this model is naturally a localisation of a model of \inftyinf\nbd categories in the \emph{coinductive} sense, where every coinductively weakly invertible cell is an internal equivalence \cite{loubaton2024inductive}.

The pullback of a comap against a cartesian map satisfies some characteristic properties of regular directed complexes, and our initial hope was that we could realise a merge-inflate distributivity with arbitrary comaps and cartesian maps, semi-strictifying the diagrammatic model.
Unfortunately, this turned out to be too optimistic, as we found some (cartesian map, comap)-pairs that were incompatible with distributivity.

There was a subclass of cartesian maps that we knew to be compatible with distributivity: these are the ones that are generated by \emph{cylindrical collapses}, that is, projections of left Gray-cylinders which are ``partially collapsed'' on the boundary.
Furthermore, the degeneracies generated by these collapses are sufficient to support our calculus of natural equivalences from \cite{chanavat2024equivalences}.
However, there is not enough of them to support a good theory of coinductive weak invertibility; in particular, not all degenerate cells are automatically coinductively weakly invertible.
So, instead, we pivot to defining an \emph{inductive} model of \inftyinf\nbd categories, in the standard fashion of specifying a subclass of \emph{marked cells}, including all the degenerate cells, which are required to be invertible up to marked cells.
(Incidentally, using marked structures is also the approach suggested by Henry \cite[\textsection 6.4.3]{henry2018regular} for extending his proof from $\infty$\nbd groupoids to \inftyn\nbd categories.)

One last problem is that the class of cylindrical collapses is not stable under many constructions, which makes it inconvenient as a ``categorical'' structure that should be strictly respected by morphisms. 
Instead, we work with categories of non-unital structures, which we call \emph{marked directed complexes} (without round composition) and \emph{marked merge-complexes} (with round composition); the inflate-algebra structure is treated as an extra structure on nice objects that does not need to be preserved by morphisms.
For this reason, our semi-strictification of functors produces functors that strictly preserve round composition, but only weakly preserve units.
The classical theory of Quillen model structures is not suited to defining a homotopy theory on such non-unital structures, but its elegant generalisation also due to Henry \cite{henry2020weak}, the theory of \emph{weak model categories}, is perfectly equipped for the task.

With this setup, an amalgamation of the second-named author's proof for bicategories with Henry's proof of $\infty$\nbd groupoids goes through, resulting in a semi-strictification proof for \inftyn\nbd categories.

\subsection*{Structure of the article}

\noindent
Part \ref{part:combinatorial} is dedicated to the underlying combinatorics of our models.
After some recollections on the theory of regular directed complexes (Section \ref{sec:recollections}), in Section \ref{sec:subdivisions} we define the notions of \emph{subdivision} and of \emph{cylindrical collapse} that will determine our algebra of round composition and units.
We prove (Proposition \ref{prop:freeness_of_collapses}) that the algebra of cylindrical collapses is particularly simple, in that it is \emph{freely generated} by codimension-1 collapses.
In Section \ref{sec:composing}, we construct a factorisation of subdivisions against local collapses which determines our ternary factorisation system (Proposition \ref{prop:ternary_factorisation_systems}), and show that Gray products determine monoidal structures on the wide subcategories on local embeddings and local subdivisions (Proposition \ref{prop:gray_product_of_regular_functors}).
Finally, in Section \ref{sec:colimits}, we study some notable colimits in categories of finite regular directed complexes, in preparation for the definition of our models in categories of presheaves that are continuous with respect to these colimits.

Part \ref{part:homotopy} sets up the homotopy theory of \inftyn\nbd categories relative to which we will prove our result.
Section \ref{sec:weakmodel} is a summary of the relevant parts of Henry's theory of weak model categories.
In Section \ref{sec:directed}, we give a couple of equivalent presentations of the category of directed complexes, and import the terminology and some results relative to cells, pasting diagrams, round diagrams, and contexts that we developed for diagrammatic sets.
We also provide a technical lemma (Lemma \ref{lem:day_for_gamma_continuous_presheaves}) on the extension of monoidal structures to categories of continuous presheaves, that we use to define the Gray product of directed complexes (Proposition \ref{prop:gray_product_of_directed_complexes}).
In Section \ref{sec:marked}, we define the category of marked directed complexes, as well as their Gray product.
Then, in Section \ref{sec:weakonmarked}, we put (Theorem \ref{thm:weak_model_structures}) a family of weak model structures $\Mwn$ on this category, indexed by $n \in \Ninfty$.
We do this \emph{\`a la} Cisinski--Olschok, defining a functorial cylinder (by left Gray product with the ``marked arrow'') and sets of generating cofibrations and anodyne extensions.

Part \ref{part:weak} is dedicated to our weak model of \inftyn\nbd categories.
In Section \ref{sec:inflate}, we define \emph{inflate-complexes} either as presheaves on the category of atoms and local collapses, or as algebras for an inflate monad, as well as their marked version.
We define notions of \emph{marked-equivalence} of round diagrams (when there exist marked round diagrams pointing from one to the other, and vice versa) as well as \emph{marked-invertibility} (when a round diagram is invertible up to marked-equivalence).
We then define an \inftyinf\nbd category to be a marked inflate-complex satisfying two axioms: every round diagram is marked-equivalent to a single cell, its \emph{weak composite}; and a cell is marked-invertible if and only if it is marked (a form of Rezk-completeness).
An \inftyinf\nbd category is an \inftyn\nbd category if, furthermore, all cells in dimension $> n$ are marked.
Section \ref{sec:equivalences} is dedicated to the proof of Theorem \ref{thm:characterisation_of_fibrants}, that fibrant objects in $\Mwn$ are precisely the underlying marked directed complexes of \inftyn\nbd categories; in particular, every fibrant admits an inflate-algebra structure (Theorem 
\ref{thm:fibrants_admit_inflate_algebra_structure}).
In Section \ref{sec:equivalencesof}, we define a functor of \inftyn\nbd categories as a morphism of the underlying marked directed complexes, and characterise the equivalences of fibrant objects in $\Mwn$ as being precisely the functors of \inftyn\nbd categories that are surjective on cells of every dimension up to marked-equivalence (Theorem \ref{thm:characterisation_of_equivalences}).
Finally, in Section \ref{sec:homohyp}, we prove the homotopy hypothesis for our weak model: $\Mw_0$ is Quillen-equivalent to a weak model structure on semi-simplicial sets presenting the classical homotopy types (Theorem \ref{thm:homotopy_hypothesis}).

Part \ref{part:semistrict} is semi-strictification. 
In Section \ref{sec:merge}, we define merge-complexes, describe the free merge-complex on a directed complex, define the Gray product of merge-complexes, and prove that the ``free merge-complex'' functor is strong monoidal (Theorem \ref{thm:gray_product_of_merge-complexes}).
Then, we do it all over again for marked merge-complexes.
In Section \ref{sec:weakonmerge}, we put a family of weak model structures $\MMwn$ on marked merge-complexes and show that the free-forgetful adjunction between marked directed complexes and marked merge-complexes determines a Quillen adjunction between $\Mwn$ and $\MMwn$ (Theorem 
\ref{thm:model_structures_on_marked_merge}).
In Section \ref{sec:mergeinflate}, we define \emph{merge-inflate-complexes}, which have compatible structures of merge-complex and inflate-complex in the sense of merge-inflate distributivity (\ref{eq:merge_inflate}).
We then define a \emph{merge-$n$\nbd category} to be a complete marked merge-inflate-complex, in the sense that its marked cells coincide with marked-invertible cells, and a \emph{semi-strict functor} of merge-$n$\nbd categories to be a morphism of the underlying marked merge-complexes.
Finally, in Section \ref{sec:simpson} we prove our main theorem (Theorem \ref{thm:main_thm}), whose full statement we reproduce here.

\begin{thm*}
	Let $f\colon (X, A) \to (Y, B)$ be a functor of \inftyn\nbd categories.
	Then there exists a square
	\[\begin{tikzcd}[column sep=large]
	{(X, A)} & {(Y, B)} \\
	{\UmMerg(\FMerg X, \satur{\clcom{A}})} & {\UmMerg(\FMerg Y, \satur{\clcom{B}}).}
	\arrow["f", from=1-1, to=1-2]
	\arrow[hook, "{\sigma_{(X, A)}}", from=1-1, to=2-1]
	\arrow[hook, "{\sigma_{(Y, B)}}", from=1-2, to=2-2]
	\arrow["{\UMerg\FMerg f}", from=2-1, to=2-2]
\end{tikzcd}\]
	where
	\begin{enumerate}
		\item $\FMerg f\colon (\FMerg X, \satur{\clcom{A}}) \to (\FMerg Y, \satur{\clcom{B}})$ is a semi-strict functor of merge-$n$\nbd categories,
		\item $\sigma_{(X, A)}$ and $\sigma_{(Y, B)}$ are equivalences of \inftyn\nbd categories, in particular acyclic cofibrations.
	\end{enumerate}
	Moreover, the free-forgetful adjunction between marked directed complexes and marked merge-complexes determines a Quillen equivalence between $\Mwn$ and $\MMwn$ such that
	\begin{enumerate}
		\item the category of \inftyn\nbd categories and functors is equivalent to the category of fibrant objects in $\Mwn$,
		\item every morphism of fibrant objects in $\MMwn$ is a semi-strict functor of merge-$n$\nbd categories up to acyclic fibrations over its domain and codomain.
	\end{enumerate}
	In particular, a semi-strict functor of merge-$n$\nbd categories is an equivalence if and only if its underlying functor of \inftyn\nbd categories is an equivalence.
\end{thm*}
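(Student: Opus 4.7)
The plan is to construct the semi-strictification square from the unit of the adjunction $\FMerg \dashv \UMerg$, which by Theorem \ref{thm:model_structures_on_marked_merge} is already a Quillen adjunction between $\Mwn$ and $\MMwn$. Essentially all further claims reduce to the single assertion that, when $(X, A)$ is an \inftyn\nbd category, the component $\sigma_{(X,A)}$ of the (marked) unit is an acyclic cofibration in $\Mwn$; from this the Quillen equivalence and its corollaries follow by formal arguments.

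I would take $\sigma_{(X,A)}$ to be the unit morphism $X \hookrightarrow \UMerg\FMerg X$ of directed complexes, interpreted as a morphism into $\UmMerg(\FMerg X, \satur{\clcom{A}})$; the marking on the right is designed to be the smallest one extending $A$ that makes the saturation under invertibility well-defined on the formal composites added by $\FMerg$. The square commutes by naturality of the unit. Since the unit into a free merge-complex is an inclusion, $\sigma_{(X,A)}$ is a cofibration; to show it is a weak equivalence, by Theorem \ref{thm:characterisation_of_equivalences} it suffices to prove that every cell of $\UmMerg(\FMerg X, \satur{\clcom{A}})$ is marked-equivalent to a cell in the image of $\sigma_{(X,A)}$. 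Every such cell is either a cell of $X$, which is tautologically hit, or a formal round composite $\mrgcom{U}$ of a round diagram $U$ of cells of $X$; since $(X, A)$ is an \inftyn\nbd category, $U$ has a weak composite $u \in X$ marked-equivalent to $U$ via cells in $A$. I would show that, leveraging the merge-inflate distributivity (\ref{eq:merge_inflate}), this equivalence lifts to a marked-equivalence $\mrgcom{U} \meqv u$ in $\UmMerg(\FMerg X, \satur{\clcom{A}})$, closing the argument by induction on the depth of the formal composite.

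For the Quillen equivalence, since Theorem \ref{thm:model_structures_on_marked_merge} gives us a Quillen adjunction, it suffices to check that the derived unit is a weak equivalence on fibrant objects; on such objects it is precisely $\sigma_{(X,A)}$. The equivalence between the category of \inftyn\nbd categories and functors and the category of fibrants in $\Mwn$ follows from Theorem \ref{thm:characterisation_of_fibrants} together with the definition of a functor as a morphism of underlying marked directed complexes. For the claim that every morphism of fibrants in $\MMwn$ is a semi-strict functor of merge-$n$\nbd categories up to acyclic fibration, I would use that fibrant replacement in $\MMwn$, combined with the completeness characterisation, endows each fibrant with a merge-$n$\nbd category structure via an acyclic fibration; lifting a morphism of fibrants along the replacements then yields a semi-strict functor. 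Finally, the equivalence-preservation statement is a two-out-of-three argument applied to the commutative square, once the vertical arrows are known to be equivalences.

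The main obstacle will be the construction of the marked-equivalence $\mrgcom{U} \meqv u$: it requires a careful, dimension-inductive analysis of how the saturation $\satur{-}$ interacts with the free merge-complex construction, so that $\satur{\clcom{A}}$ is rich enough to witness the equivalence between a formal composite and its weak composite, yet controlled enough that $(\FMerg X, \satur{\clcom{A}})$ remains complete in the sense of merge-$n$\nbd categories (the marked cells coincide with marked-invertible ones). This is the higher-dimensional incarnation of the distributive-law argument of the second-named author for bicategories \cite{hadzihasanovic2019weak}, and the merge-inflate distributivity built up in Part \ref{part:combinatorial} and Section \ref{sec:mergeinflate} is precisely what is designed to make it go through in full generality.
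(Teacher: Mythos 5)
Your construction of $\sigma_{(X,A)}$ as the unit into the free marked merge-complex with the saturated closure marking, the commutativity of the square by naturality, and the verification that $\sigma_{(X,A)}$ is an equivalence via the essential-acyclic-fibration characterisation (Theorem \ref{thm:characterisation_of_equivalences}) all match the paper's route through Lemma \ref{lem:free_merge-complex_on_inftyn_is_essential_inftyn} and Theorem \ref{thm:embedding_into_merge_n_category}. Two remarks on that part. First, no ``induction on the depth of the formal composite'' is needed: by the $(\S, \C\L)$ factorisation and Lemma \ref{lem:presheaf_restriction_to_right_class}, every cell of $\FMerg X$ already has a normal form $[a', s]$ consisting of a single subdivision followed by a round diagram of $X$, so the marked-equivalence with a cell in the image of $\sigma_{(X,A)}$ is produced in one step by transporting a weak composite of $a'$ along an explicitly constructed subdivision. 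Second, the point you correctly flag as the main obstacle --- that $(\FMerg X, \satur{\clcom{A}})$ is itself a merge-$n$\nbd category, so that marked-equivalence and Theorem \ref{thm:characterisation_of_equivalences} even apply to it --- is indeed where the bulk of the work lies (Lemma \ref{lem:free_merge-complex_on_inftyn_is_essential_inftyn} together with Proposition \ref{prop:naive_saturation_merge_n_category} and Lemma \ref{lem:composition_of_marked_invertible}); your proposal defers it rather than supplies it, but the strategy is the intended one.

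The genuine gap is in the Quillen equivalence. You assert that ``it suffices to check that the derived unit is a weak equivalence on fibrant objects'', but the criterion available here (Proposition \ref{prop:quillen_equivalence_criterion}) requires \emph{both} the derived unit condition on cofibrant objects of $\Mwn$ \emph{and} the derived counit condition on fibrant objects of $\MMwn$: for each $\MMwn$\nbd fibrant $(X, A)$, the transpose $\FmMerg\UmMerg(X, A) \to (X, A)$ must be an equivalence. This does not follow formally from the unit condition alone, and the paper devotes a substantive lemma to it (Lemma \ref{lem:counit_acyclic_fibration}): one must show that the counit factors through the saturation $(\FMerg\UMerg X, \satur{\clcom{A}})$ --- i.e.\ that marked-invertible formal composites map to marked cells of $(X, A)$, which uses completeness and Lemma \ref{lem:composition_of_marked_invertible} --- and then verify by explicit lifting against boundary inclusions and top-markings that the resulting map is an acyclic fibration. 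Your later clause about every morphism of fibrants in $\MMwn$ being semi-strict ``up to acyclic fibrations'', and your two-out-of-three argument for the final reflection statement, both silently rely on exactly these counit equivalences, so the omission propagates. Until the counit condition is supplied, the Quillen equivalence and the last two claims of the theorem are not established.
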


\subsection*{Open questions and directions}

\noindent
Our hope is to have breached a wall between non-algebraic and algebraic models, and between weak and semi-strict models, and that this breach can now be exploited to further connect the web of models of \inftyn\nbd categories. 
First of all, having proved the homotopy hypothesis, we know that our models are equivalent to other geometric models of $(\infty, 0)$\nbd categories, but the case $n > 0$ still needs to be proved.
In Section \ref{sec:homohyp}, we set up a comparison with the complicial model, but it seems that the strategy used to prove the equivalence for $n = 0$, based on a left Quillen simplicial subdivision functor, may still work at most for $n = 1$; jointly with Loubaton, we have found combinatorial obstructions to the existence of a ``marked simplicial subdivision'' already for $n = 2$.
A more promising strategy may be to work inductively along the lines of \cite{bergner2013comparison}, proving an equivalence between $\Mw_{n+1}$ and Segal precategories weakly enriched in $\Mwn$, since such a proof would involve a comparison between ``$(n+1)$\nbd dimensional pasting diagrams'' and ``$1$\nbd dimensional pasting diagrams enriched in $n$\nbd dimensional pasting diagrams'', for which our explicit combinatorial models may be advantageous.

Overall, we find it likely that our models are equivalent to the other geometric models, and we are entirely confident of it up to $n = 3$; it should be noted that, until now, every inequivalence of models has already been visible at $n = 0$.
If there remains some doubt about $n > 3$, it is tied to the ``mismatch'' between regular directed complexes and polygraphs for strict $n$\nbd categories that appears at $n = 4$, as discussed in \cite[Section 6.2]{chanavat2024model}: in this dimension, the combinatorial pasting of diagrams satisfies some extra relations that are not provable in the algebra of strict 4\nbd categories.
These relations are topologically sound, satisfied by the pasting of topological cells, and they appear to be forms of ``interchange in higher codimension'' that are not reducible to the codimension-1 case.
For these reasons, we are inclined to consider them as evidence of a form of incompleteness of the algebra of strict 4\nbd categories, and to consider our combinatorial pasting as a better model of the concept that they are both trying to capture.
We cannot exclude, for the moment, that this mismatch may carry over from strict $n$\nbd categories to Segal-type weak models that are also based on ``iterated enrichment''.
On the other hand, we equally cannot see how it \emph{could} carry over to the complicial and cubical models, where it seems intuitively true that pasting should satisfy all relations that are topologically sound; so if the proof of equivalence between complicial and Segal-type models holds, perhaps there is no substantial problem.
In any case, if an inequivalence was found at $n = 4$ between models of \inftyn\nbd categories that are equivalent up to $n = 3$, we would mainly consider it as evidence that claims of ``unicity'' were premature; whereas a proof of equivalence would be further strong evidence for unicity, and at the same time evidence of the incompleteness of strict $n$\nbd categories for pasting diagrams. 

In Section \ref{sec:mergeinflate}, we also show how, in a merge-$n$\nbd category, one can define algebraic globular composition operations, which can be seen as a first step towards a comparison with Batanin--Leinster models based on monads on $\omega$\nbd graphs, or other equivalent models in the algebraic cluster.
Of course, our questions about the mismatch with strict 4\nbd categories seem even more relevant to the comparison with these models, which are directly based on a weakening of the algebra of strict $n$\nbd categories, so \emph{a fortiori} cannot satisfy the extra relations for pasting.
On the other hand, despite some recent progress \cite{fujii2024omega}, the homotopy theory of these algebraic models is still underdeveloped, so there is more leeway for a change of course, should one be necessary.

Although we have focussed here on the potential role of these models in connecting other models, we must underscore that, ultimately, we also believe in their inherent value as ``convenient models''.
While in this article, we only define the Gray product of marked directed complexes and of marked merge-complexes, using the results of \cite[Chapter 7]{hadzihasanovic2024combinatorics}, it is easy to also produce definitions of the join, suspension, and direction-reversing duals, as well as ``pseudo'' version of the Gray product and the join, all of which are fully explicit and combinatorial.
Furthermore, the intrinsic diagrammatic language and ``pasting theorem'' satisfied by our models are well beyond those currently supported by any other models of \inftyn\nbd categories.
We believe that this makes them very suitable as a framework for higher and homotopical diagrammatic algebra.
Of course, much remains to be done in developing the proper ``higher category theory'' of these models, starting from the Grothendieck construction and notions of higher limits and colimits, that we leave to future work.

\subsection*{Acknowledgements}

The second-named author was supported by Estonian Research Council grant PSG764, and is grateful to Nathanael Arkor, Simon Henry, Fosco Loregian, F\'elix Loubaton, and Viktoriya Ozornova for helpful conversations on various points of the article.

\section{Combinatorial background} \label{part:combinatorial}

\subsection{Recollections on regular directed complexes} \label{sec:recollections}

\noindent
What follows is a brief recollection of fundamental notions from \cite{hadzihasanovic2024combinatorics}; we refer to the book for the full details.
We consider each poset $P$ to be equipped with the Alexandrov topology whose closed sets are the lower sets (that is, the downwards closed sets).
We write $\clos$ for the closure operator of this topology.
Given $x \in P$, we let $\faces{}{}x \eqdef \set{y \in P \mid \text{$x$ covers $y$}}$, the set of \emph{faces of $x$}, and, dually, $\cofaces{}{}x \eqdef \set{y \in P \mid x \in \faces{}{}y}$, the set of \emph{cofaces of $x$}.
We let $\bd{}{}x \eqdef \clset{\faces{}{}x}$.

Recall that $P$ is \emph{graded} if, for all $x \in P$, all maximal chains in $\clset{x}$ have the same finite size.
In this case, we write $\dim x$ for the size of a maximal chain in $\clset{x}$ and call it the \emph{dimension} of $x$.
For each $n \in \mathbb{N}$ and $A \subseteq P$, we let $\gr{n}{A} \eqdef \set{x \in A \mid \dim x = n}$, as well as $\gr{>n}{A} \eqdef \bigcup_{k > n} \gr{k}{A}$ and, similarly, $\gr{<n}{A} \eqdef \bigcup_{k < n} \gr{k}{A}$.
We let the \emph{dimension} $\dim P$ of $P$ be equal to $\max \set{\dim x \mid x \in P}$ if it exists, and $\infty$ otherwise.

\begin{dfn}[Oriented graded poset]
	Let $P$ be a graded poset.
	An \emph{orientation} on $P$ is, for each $x \in P$, a bipartition of the set $\faces{}{}x$ into a set $\faces{}{-}x$ of \emph{input faces} and a set $\faces{}{+}x$ of \emph{output faces}.
	An \emph{oriented graded poset} is a graded poset equipped with an orientation.
	A \emph{morphism} $f\colon P \to Q$ of oriented graded posets is a function of their underlying sets that respects the grading and, for each $x \in P$ and $\a \in \set{-, +}$, induces a bijection between $\faces{}{\a}x$ and $\faces{}{\a}f(x)$.
\end{dfn}

\noindent
We let $\ogPos$ denote the category of oriented graded posets and morphisms.
A morphism of oriented graded posets has an underlying closed order-preserving map of posets, which determines a forgetful functor $\ogPos \to \Pos$.

We call an injective morphism an \emph{embedding}; its underlying map is a closed embedding of posets. The category $\ogPos$ has pushouts of embeddings along embeddings, and embeddings are stable under these; furthermore, a pushout square of embeddings is also a pullback square.
All these pushouts are computed in $\Pos$, preserved and reflected by the forgetful functor.

Every closed subset $U \subseteq P$ inherits a unique orientation such that its inclusion is an embedding of oriented graded posets.
For each $n \in \mathbb{N}$ and $\a \in \set{-, +}$, we let $\faces{n}{\a}U \eqdef \set{x \in \gr{n}{U} \mid \cofaces{}{-\a}x \cap U = \varnothing}$.
Then $\faces{n}{-}U \cap \faces{n}{+}U$ is equal to the set $\gr{n}{(\maxel{U})}$ of maximal $n$\nbd dimensional elements of $U$.

\begin{dfn}[Input and output boundaries]
	Let $U$ be a closed subset of an oriented graded poset.
	For each $n \in \mathbb{N}$ and $\a \in \set{-, +}$, we let
	\[
		\bd{n}{\a}U \eqdef \clos (\faces{n}{\a}U) \cup \bigcup_{k < n} \clos \gr{k}{(\maxel P)}.
	\]
	We call $\bd{n}{-}U$ the \emph{input $n$\nbd boundary} and $\bd{n}{+}U$ the \emph{output $n$\nbd boundary} of $U$.
	We let $\bd{n}{}U \eqdef \bd{n}{-}U \cup \bd{n}{+}U$ be the \emph{$n$\nbd boundary of $U$}, and finally $\bd{}{}U \eqdef \bigcup_{k < \dim U} \bd{n}{}U$ be the \emph{boundary of $U$}.
	We let $\inter{U} \eqdef U \setminus \bd{}{}U$ be the \emph{interior of $U$}.
\end{dfn}

\noindent
We adopt the convention of omitting $n$ in $\bd{n}{\a}U$ when $n = \dim U - 1$, and of writing $\bd{n}{\a}x$ for $\bd{n}{\a}\clset{x}$. 

\begin{dfn}[Globular and round subsets]
	Let $U$ be a closed subset of an oriented graded poset.
	We say that $U$ is \emph{globular} if, for all $n \in \mathbb{N}$, $k < n$, and $\a, \b \in \set{-, +}$, it satisfies $\bd{k}{\a}\bd{n}{\b}U = \bd{k}{\a}U$.
	We say that $U$ is \emph{round} if it is globular and, in addition, $\bd{n}{-}U \cap \bd{n}{+}U = \bd{n-1}{}U$ for all $n < \dim U$.
\end{dfn}

\noindent
Let $1$ denote the oriented graded poset with a single element.

\begin{dfn}[Molecule]
	The class of \emph{molecules} is the subclass of oriented graded posets closed under isomorphisms and inductively generated by the following clauses.
	\begin{enumerate}
		\item (\emph{Point}).
			The \emph{point} $1$ is a molecule.
		\item (\emph{Paste}).
			If $U$, $V$ are molecules, $k \in \mathbb{N}$, and $\varphi\colon \bd{k}{+}U \iso \bd{k}{-}V$ is an isomorphism, then the \emph{pasting} $U \cp{k} V$ obtained as the pushout
\[\begin{tikzcd}
	{\bd{k}{+}U} & {\bd{k}{-}V} & V \\
	U && {U \cp{k} V}
	\arrow["\varphi", from=1-1, to=1-2]
	\arrow[hook, from=1-1, to=2-1]
	\arrow[hook, from=1-2, to=1-3]
	\arrow[hook, from=1-3, to=2-3]
	\arrow[hook, from=2-1, to=2-3]
	\arrow["\lrcorner"{anchor=center, pos=0.125, rotate=180}, draw=none, from=2-3, to=1-1]
\end{tikzcd}\]
			is a molecule.
		\item (\emph{Atom}).
			If $U$, $V$ are round molecules, $n \eqdef \dim U = \dim V$, and $\varphi$ is an isomorphism $\bd{}{}U \iso \bd{}{}V$ restricting to $\varphi^\a\colon \bd{}{\a}U \iso \bd{}{\a}V$ for each $\a \in \set{-, +}$, then the \emph{atom} $U \celto V$ obtained by extending the pushout
\[\begin{tikzcd}
	{\bd{}{}U} & {\bd{}{}V} & V \\
	U && {\bd{}{}(U \celto V)}
	\arrow["\varphi", from=1-1, to=1-2]
	\arrow[hook, from=1-1, to=2-1]
	\arrow[hook, from=1-2, to=1-3]
	\arrow[hook, from=1-3, to=2-3]
	\arrow[hook, from=2-1, to=2-3]
	\arrow["\lrcorner"{anchor=center, pos=0.125, rotate=180}, draw=none, from=2-3, to=1-1]
\end{tikzcd}\]
			with a top element $\top$ such that $\faces{}{-}\top \eqdef \gr{n}{U}$ and $\faces{}{+}\top \eqdef \gr{n}{V}$ is a molecule.
	\end{enumerate}
\end{dfn}

\noindent
We also call \emph{atom} any molecule with a greatest element; every atom is either the point or is of the form $U \celto V$ for some round molecules $U$, $V$.
The simplest, smallest molecules are the \emph{globes}, which are in fact atoms.

\begin{dfn}[Globes]
	The class of \emph{globes} is the subclass of molecules generated using only the (\emph{Point}) and (\emph{Atom}) clauses. 
\end{dfn}

\noindent
Up to isomorphism, there is a unique globe $\globe{n}$ for each dimension $n \in \mathbb{N}$, with a single $n$\nbd dimensional element (that we denote by $n$) and two $k$\nbd dimensional elements $k^-, k^+$ such that $\faces{k}{\a}\globe{n} = \set{k^\a}$ for all $\a \in \set{-, +}$ and $k < n$.

\begin{dfn}[Arrow]
	The \emph{arrow} $\arr$ is the 1\nbd globe $\globe{1}$, that is, the atom $1 \celto 1$.
\end{dfn}

\noindent
Molecules satisfy many nice properties.
They are \emph{rigid} in the sense that they have no non-trivial automorphisms in $\ogPos$; this also implies that the isomorphisms $\varphi$ in the definition of pastings or atoms are unique when they exist, which justifies omitting them in the notation.
All input and output boundaries of molecules are molecules.
Moreover, all molecules are globular, and all atoms are round, but not all molecules are round.
Finally, molecules are ``locally atoms'', in the sense that if $U$ is a molecule and $x \in U$, then $\clset{x}$ is an atom.
Turning this into a definition determines the class of \emph{regular directed complexes}.

\begin{dfn}[Regular directed complex]
	A \emph{regular directed complex} is an oriented graded poset $P$ such that, for all $x \in P$, $\clset{x}$ is an atom.
\end{dfn}

\noindent
Regular directed complexes are the \emph{trait d'union} between regular CW complexes and strict $\omega$\nbd categories: given a regular directed complex $P$,
\begin{itemize}
	\item the geometric realisation of the order complex $\ordcpx{P}$ of the underlying poset of $P$ admits a structure of regular CW complex with one cell for each $x \in P$,
	\item the set $\molecin{P}$ of isomorphism classes of morphisms $f\colon U \to P$, with $U$ ranging over molecules, admits a structure of strict $\omega$\nbd category with a minimal set of generators in bijection with the elements of $P$.
\end{itemize}
With this view in mind, when $P$ is a regular directed complex, we will call its elements \emph{cells}.

In particular, if $U$ is a molecule, $\ordcpx{U}$ is contractible, and when $U$ is a round $n$\nbd dimensional molecule, $\ordcpx{U}$ is a PL $n$\nbd ball, while $\ordcpx{\bd{}{}U}$ is a PL $(n-1)$\nbd sphere.
At the same time, if $U$ is an $n$\nbd dimensional molecule, $\molecin{U}$ has a ``greatest'' $n$\nbd cell such that every other cell is a factor, namely, the one represented by $\idd{U}\colon U \to U$.
Then, given a strict $\omega$\nbd category $X$, a functor $d\colon \molecin{U} \to X$ can be seen as a \emph{pasting diagram} in $X$ whose shape is encoded by $U$, and whose composite is the image of the greatest cell.
\[\begin{tikzcd}[column sep=scriptsize]
	& \alpha \\
	f & g & h & k & \leadsto & {{\scriptstyle x}} && {{\scriptstyle z}} & {{\scriptstyle w}} \\
	x & y & z & w &&& {{\scriptstyle y}}
	\arrow[no head, from=1-2, to=2-1]
	\arrow[no head, from=1-2, to=2-2]
	\arrow["\shortmid"{marking}, no head, from=1-2, to=2-3]
	\arrow[no head, from=2-1, to=3-1]
	\arrow["\shortmid"{marking}, no head, from=2-1, to=3-2]
	\arrow[no head, from=2-2, to=3-2]
	\arrow["\shortmid"{marking}, no head, from=2-2, to=3-3]
	\arrow[no head, from=2-3, to=3-1]
	\arrow["\shortmid"{marking}, no head, from=2-3, to=3-3]
	\arrow[no head, from=2-4, to=3-3]
	\arrow["\shortmid"{marking}, no head, from=2-4, to=3-4]
	\arrow[""{name=0, anchor=center, inner sep=0}, "h", curve={height=-24pt}, from=2-6, to=2-8]
	\arrow["f"', curve={height=5pt}, from=2-6, to=3-7]
	\arrow["k", from=2-8, to=2-9]
	\arrow["g"', curve={height=5pt}, from=3-7, to=2-8]
	\arrow["\alpha"', shorten <=3pt, shorten >=6pt, Rightarrow, from=3-7, to=0]
\end{tikzcd}\]
Thus, a round molecule may be seen as encoding \emph{a pasting diagram shape which is topologically a ball}.
\[\begin{tikzcd}[column sep=scriptsize]
	& \bullet && {\text{(not round)}} && \bullet && {\text{(round)}} \\
	\bullet && \bullet & \bullet & \bullet && \bullet & \bullet
	\arrow[curve={height=-6pt}, from=1-2, to=2-3]
	\arrow[curve={height=-6pt}, from=1-6, to=2-7]
	\arrow[""{name=0, anchor=center, inner sep=0}, curve={height=-18pt}, from=1-6, to=2-8]
	\arrow[curve={height=-6pt}, from=2-1, to=1-2]
	\arrow[""{name=1, anchor=center, inner sep=0}, curve={height=12pt}, from=2-1, to=2-3]
	\arrow[""{name=2, anchor=center, inner sep=0}, curve={height=18pt}, from=2-3, to=2-4]
	\arrow[""{name=3, anchor=center, inner sep=0}, curve={height=-18pt}, from=2-3, to=2-4]
	\arrow[curve={height=-6pt}, from=2-5, to=1-6]
	\arrow[""{name=4, anchor=center, inner sep=0}, curve={height=12pt}, from=2-5, to=2-7]
	\arrow[""{name=5, anchor=center, inner sep=0}, curve={height=18pt}, from=2-7, to=2-8]
	\arrow[""{name=6, anchor=center, inner sep=0}, curve={height=-12pt}, from=2-7, to=2-8]
	\arrow[shorten <=5pt, Rightarrow, from=1, to=1-2]
	\arrow[shorten <=3pt, shorten >=3pt, Rightarrow, from=2, to=3]
	\arrow[shorten <=5pt, Rightarrow, from=4, to=1-6]
	\arrow[shorten <=3pt, shorten >=3pt, Rightarrow, from=5, to=6]
	\arrow[shorten <=2pt, shorten >=3pt, Rightarrow, from=2-7, to=0]
\end{tikzcd}\]
\noindent
Given a round molecule, we can produce an atom with the same boundaries.

\begin{dfn}[Merger of a round molecule]
	Let $U$ be a round molecule.
	The \emph{merger of $U$} is the atom $\mrg{U} \eqdef \bd{}{-}U \celto \bd{}{+}U$.
\end{dfn}

\noindent
The notion of ``factor in a pasting decomposition'' is captured combinatorially by the class of \emph{submolecule inclusions}.

\begin{dfn}[Submolecule inclusion]
	Let $U$, $V$ be molecules.
	The class of \emph{submolecule inclusions} $\iota\colon U \submol V$ is the inductive subclass of embeddings $\iota\colon U \incl V$ generated by the following clauses.
	\begin{enumerate}
		\item (\emph{Pasting factor}).
			If $V = U \cp{k} W$ or $V = W \cp{k} U$ for some molecule $W$ and $k \in \mathbb{N}$, then the canonical inclusion $U \incl V$ is a submolecule inclusion.
		\item (\emph{Isomorphism}).
			If $\varphi\colon U \iso V$ is an isomorphism of molecules, then $\varphi$ is a submolecule inclusion.
		\item (\emph{Composition}).
			If $\iota\colon U \incl W$ and $j\colon W \incl V$ are submolecule inclusions, then $j\iota\colon U \incl V$ is a submolecule inclusion.
	\end{enumerate}
\end{dfn}

\noindent
Given a molecule $U$, $x \in U$, $n \in \mathbb{N}$, and $\a \in \set{-, +}$, we have that the closed subset inclusions $\clset{x} \submol U$ and $\bd{n}{\a}U \submol U$ are all submolecule inclusions.
We can generalise the notion of pasting from pasting along the \emph{entire} output and input $k$\nbd boundary, to pasting along a \emph{submolecule} of one of the two.

\begin{dfn}[Pasting at a submolecule]
	Let $U$, $V$ be molecules, $k \in \mathbb{N}$, and let $\iota\colon \bd{k}{+}U \submol \bd{k}{-}V$ be a submolecule inclusion.
	The \emph{pasting of $U$ at the submolecule $\iota$} is the oriented graded poset $U \cpsub{k, \iota} V$ obtained as the pushout
\[\begin{tikzcd}
	{\bd{k}{+}U} & {\bd{k}{-}V} & V \\
	U && {U \cpsub{k, \iota} V}.
	\arrow["\iota", hook, from=1-1, to=1-2]
	\arrow[hook, from=1-1, to=2-1]
	\arrow[hook, from=1-2, to=1-3]
	\arrow[hook, from=1-3, to=2-3]
	\arrow[hook, from=2-1, to=2-3]
	\arrow["\lrcorner"{anchor=center, pos=0.125, rotate=180}, draw=none, from=2-3, to=1-1]
\end{tikzcd}\]
	Dually, if $\iota\colon \bd{k}{-}U \submol \bd{k}{+}V$, the pasting of $U$ at the submolecule $\iota$ is obtained as the pushout
	\[\begin{tikzcd}
	{\bd{k}{-}U} & {\bd{k}{+}V} & V \\
	U && {V \subcp{k,\iota} U}.
	\arrow["\iota", hook, from=1-1, to=1-2]
	\arrow[hook, from=1-1, to=2-1]
	\arrow[hook, from=1-2, to=1-3]
	\arrow[hook, from=1-3, to=2-3]
	\arrow[hook, from=2-1, to=2-3]
	\arrow["\lrcorner"{anchor=center, pos=0.125, rotate=180}, draw=none, from=2-3, to=1-1]
\end{tikzcd}\]
\end{dfn}

\noindent 
It can be proved that pasting at a submolecule always produces a molecule containing $U$ and $V$ as submolecules.
By convention, both in pasting and pasting at a submolecule, we omit $k$ when it is equal to $\min \set{\dim U, \dim V} - 1$; pastings of this form suffice to generate all molecules.

Pasting diagrams play a central role in the theory of higher-dimensional rewriting, where an $(n+1)$\nbd dimensional cell $U$ is interpreted as the shape of a rewrite rule on $n$\nbd dimensional pasting diagrams, allowing one to substitute a diagram of shape $\bd{}{+}U$ for a diagram of shape $\bd{}{-}U$.
This action by substitution is captured by the following definition.

\begin{dfn}[Substitution at a rewritable submolecule]
	Let $\iota\colon V \submol U$ be a submolecule inclusion.
	We say that $\iota$ is \emph{rewritable} if $\dim V = \dim U$ and $V$ is round.
	If $\iota\colon V \submol U$ is a rewritable submolecule, $n \eqdef \dim V$, and $W$ is a round molecule such that $V \celto W$ is defined, we let $\subs{U}{W}{\iota(V)} \eqdef \bd{}{+}(U \subcp{n,\iota} (V \celto W))$ and call it the \emph{substitution of $W$ for $\iota\colon V \submol U$}.
\end{dfn}

\noindent
Intuitively, $\subs{U}{W}{\iota(V)}$ is $U$ with the submolecule $V$ replaced with the molecule $W$, while preserving the boundary; this always produces a molecule containing $W$ as a submolecule. 

Morphisms $f\colon P \to Q$ between regular directed complexes in $\ogPos$ are very rigid: they are precisely the \emph{local embeddings}, that is, functions that restrict to embeddings on $\clset{x}$ for each $x \in P$.
We let $\rdCpx_\L$ denote the category of regular directed complexes and local embeddings, or, equivalently, the full subcategory of $\ogPos$ on regular directed complexes.

The rigidity of morphisms between regular directed complexes stems from a property called \emph{oriented thinness}: any interval $[x, z]$ of length 2 in $P$ is ``diamond-shaped'', that is, of the form $x < y_1, y_2 < z$ for exactly two elements $y_1, y_2$, and furthermore, letting $\a_i$, $\b_i$ be the unique signs such that $y_i \in \cofaces{}{\a_i}x \cap \faces{}{\b_i}z$ for each $i \in \set{1, 2}$, we have $\a_1\b_1 = -\a_2\b_2$; finally, every 1\nbd dimensional element has exactly one input face and one output face.
Thinness is tied to the topology of combinatorial manifolds \cite{bjorner1995topological}, and oriented thinness---also known as being equipped with a \emph{balanced colouring} \cite{chandler2022broken}---implies that a regular directed complex has a naturally associated augmented chain complex of free abelian groups.

In \cite[Chapter 6]{hadzihasanovic2024combinatorics}, we took two generalised notions of morphism into consideration, determined by the following conditions: an order-preserving map $f\colon P \to Q$ is
\begin{itemize}
	\item a \emph{map} if it \emph{covariantly} determines a functor $\molecin{P} \to \molecin{Q}$,
	\item a \emph{comap} if it \emph{contravariantly} determines a functor $\molecin{Q} \to \molecin{P}$.
\end{itemize}
This comes down to the following explicit definitions.

\begin{dfn}[Map of regular directed complexes]
	Let $P$, $Q$ be regular directed complexes.
	An order-preserving map $f\colon P \to Q$ is a \emph{map} if, for all $n \in \mathbb{N}$, $\a \in \set{-, +}$, and $x \in P$,
	\begin{enumerate}
		\item $f(\bd{n}{\a}x) = \bd{n}{\a}f(x)$, and
		\item for all $y, y' \in \bd{n}{\a}x$, if $f(y) = f(y')$, there is a zig-zag $y \leq y_1 \geq \ldots \leq y_m \geq y'$ in $\bd{n}{\a}x$ such that $f(y) \leq f(y_i)$ for all $i \in \set{1, \ldots, m}$.
	\end{enumerate}
\end{dfn}

\noindent
We say that a map $f\colon P \to Q$ is \emph{final} if, for all $x, x' \in P$, if $f(x) = f(x')$, then there exists a zig-zag $x \leq x_1 \geq \ldots \leq x_m \geq x'$ in $P$ such that $f(x) \leq f(x_i)$ for all $i \in \set{1, \ldots, m}$.
By definition, thus, the restriction of a map to a boundary $\bd{n}{\a}x$ is final onto its image.
The terminology reflects the fact that a map is final if and only if its underlying order-preserving map of posets is final when seen as a functor of posetal categories.
Final maps and local embeddings form an orthogonal factorisation system, lifted from the comprehensive factorisation system on categories and functors \cite{street1973comprehensive}.
In \cite{chanavat2024diagrammatic} and further articles, we also considered a restricted class of maps---the \emph{cartesian maps}---which are those whose underlying map is a Grothendieck fibration of posetal categories.
This notion of map, which has some particularly nice homotopy-theoretic properties, is foundational for the theory of diagrammatic sets.

\begin{dfn}[Comap of regular directed complexes]
	Let $P$, $Q$ be regular directed complexes.
	An order-preserving map $c\colon P \to Q$ is a \emph{comap} if, for all $n \in \mathbb{N}$, $\a \in \set{-, +}$, and $y \in Q$,
	\begin{enumerate}
		\item $\invrs{c}\clset{y}$ is a molecule,
		\item $\bd{n}{\a}\invrs{c}\clset{y} = \invrs{c}\bd{n}{\a}y$.
	\end{enumerate}
\end{dfn}

\noindent
To conclude the section, we recall the definition of \emph{Gray products} and of \emph{joins}, which determine monoidal structures on regular directed complexes compatibly with both maps and comaps, whose units are, respectively, the point $1$ and the empty regular directed complex \( \varnothing \).

\begin{dfn}[Gray product]
	Let $P$, $Q$ be oriented graded posets.
	The \emph{Gray product} of $P$ and $Q$ is the oriented graded poset $P \gray Q$ whose
	\begin{itemize}
		\item underlying graded poset is the product $P \times Q$ of the underlying posets,
		\item orientation is defined, for all $(x, y) \in P \times Q$ and $\alpha \in \set{+, -}$, by the equation $\faces{}{\a}(x, y) = \faces{}{\a}x \times \set{y} \pout{} \set{x} \times \faces{}{(-)^{\dim{x}}\a}y$.
	\end{itemize}
\end{dfn}

\noindent
We say that an oriented graded poset has a \emph{positive least element} if it has a least element $\bot$ such that $\cofaces{}{}\bot = \cofaces{}{+}\bot$.
The full subcategory $\augmo{\ogPos}$ of $\ogPos$ on oriented graded posets with a positive least element is actually equivalent to $\ogPos$, via the pair of functors
\[
	\augm{-}\colon \ogPos \to \augmo{\ogPos}, \quad \quad \dimin{-}\colon \augmo{\ogPos} \to \ogPos
\]
which, respectively, freely add and delete the positive least element.
Moreover, if $P$ and $Q$ have a positive least element, then so does $P \gray Q$; Gray products thus restrict to a monoidal structure on $\augmo{\ogPos}$.
We obtain the join by transporting this monoidal structure via the equivalence $(\augm{-}, \dimin{-})$.

\begin{dfn}[Join]
	Let $P$, $Q$ be oriented graded posets.
	The \emph{join} of $P$ and $Q$ is the oriented graded poset $P \join Q \eqdef \dimin{(\augm{P} \gray \augm{Q})}$.
	For each $x \in P$ and $y \in Q$, we use the notation $\inl{x} \eqdef (x, \bot)$, $\inr{y} \eqdef (\bot, y)$, $x \join y \eqdef (x, y)$ for elements of $P \join Q$; note that $\dim \inl{x} = \dim x$, $\dim \inr{y} = \dim y$, while $\dim (x \join y) = \dim x + \dim y + 1$.
\end{dfn}

\noindent
The classes of atoms, molecules, round molecules, and regular directed complexes are all closed under Gray products and joins.

\subsection{Subdivisions and local collapses} \label{sec:subdivisions}

\noindent
Let $c\colon P \to Q$ be a comap of regular directed complexes.
The inverse image of $c$, as a function from closed subsets of $Q$ to closed subsets of $P$, maps atoms in $Q$ to round molecules in $P$, while preserving their dimension and their partition into interior and $n$\nbd dimensional input and output boundaries for each $n \in \mathbb{N}$; from this perspective, $c$ may be seen as dual to an ``oriented subdivision'' of $Q$.
We will embrace this point of view, and consider a notion of subdivision of regular directed complexes as formally dual to a comap.

\begin{dfn}[Subdivision of regular directed complexes]
	Let $P$ and $Q$ be regular directed complexes.
	A \emph{subdivision} $s\colon P \sd Q$ is a comap $\conv{s}\colon Q \to P$.
\end{dfn}

\noindent
Regular directed complexes and subdivisions form a category $\rdCpx_\S$, which is simply the opposite of the category of regular directed complexes and comaps.
Given a subset $A \subseteq P$ and a subdivision $s\colon P \sd Q$, we will write $s(A) \eqdef \invrs{\conv{s}}A$.
When $K \subseteq P$ is a closed subset, since $\conv{s}$ is order-preserving, $s(K)$ is a closed subset of $Q$.
When $\conv{s}\colon Q \to P$ is invertible, we will identify $s$ with $\invrs{\conv{s}}\colon P \to Q$; by \cite[Proposition 6.3.13]{hadzihasanovic2024combinatorics}, there is no ambiguity in the notion of invertibility between maps and comaps.

\begin{lem} \label{lem:restriction_of_subdivision}
	Let $s\colon P \sd Q$ be a subdivision and let $U \incl P$ be a closed subset.
	Then $\restr{\conv{s}}{s(U)}\colon s(U) \to U$ determines a subdivision $\restr{s}{U}\colon U \sd s(U)$.
\end{lem}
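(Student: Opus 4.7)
The plan is to directly verify the defining conditions of a comap for the restriction $\restr{\conv{s}}{s(U)}\colon s(U)\to U$, after observing that source and target both inherit canonical structures of regular directed complexes. Since $U\incl P$ is closed and $\conv{s}\colon Q\to P$ is order-preserving, the preimage $s(U)=\invrs{\conv{s}}(U)$ is closed in $Q$, so both $U$ and $s(U)$ are regular directed complexes in their induced orientations (as explained in Section~\ref{sec:recollections}), and $\restr{\conv{s}}{s(U)}$ is a well-defined order-preserving map between them.

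The key observation, which makes the rest automatic, is that for any $y\in U$ the preimage $\invrs{(\restr{\conv{s}}{s(U)})}\clset{y}$ as a subset of $s(U)$ coincides with $\invrs{\conv{s}}\clset{y}$ as a subset of $Q$. Indeed, $\clset{y}\subseteq U$ by closedness of $U$, so every $z\in Q$ with $\conv{s}(z)\in\clset{y}$ already lies in $\invrs{\conv{s}}(U)=s(U)$. The closure $\clset{y}$ taken in $U$ agrees with the one taken in $P$, and similarly $\bd{n}{\a}y$ computed in $U$ agrees with the version computed in $P$, for each $n\in\mathbb{N}$ and $\a\in\set{-,+}$, since $\bd{n}{\a}y\subseteq\clset{y}\subseteq U$.

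From these identifications, both comap axioms transfer directly from $\conv{s}$ to its restriction. First, $\invrs{(\restr{\conv{s}}{s(U)})}\clset{y}=\invrs{\conv{s}}\clset{y}$ is a molecule because $\conv{s}$ is a comap. Second,
\[
	\bd{n}{\a}\invrs{(\restr{\conv{s}}{s(U)})}\clset{y}=\bd{n}{\a}\invrs{\conv{s}}\clset{y}=\invrs{\conv{s}}\bd{n}{\a}y=\invrs{(\restr{\conv{s}}{s(U)})}\bd{n}{\a}y,
\]
where the middle equality is the comap property of $\conv{s}$ and the outer equalities use the observation above together with the fact that $\bd{n}{\a}y\subseteq U$. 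This verifies that $\restr{\conv{s}}{s(U)}$ is a comap, hence defines a subdivision $\restr{s}{U}\colon U\sd s(U)$. There is no real obstacle: the lemma is essentially a compatibility statement saying that the comap conditions are local at each $y$, and locality at $y\in U$ is preserved by restricting to the closed subset $U$ together with its preimage.
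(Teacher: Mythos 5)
Your proof is correct: the paper itself only cites \cite[Lemma 6.3.14]{hadzihasanovic2024combinatorics} here, and your direct verification --- that $s(U)$ is closed, that $\invrs{\conv{s}}\clset{y}$ is already contained in $s(U)$ for $y \in U$, and that closures and boundaries of closed subsets are intrinsic, so both comap axioms transfer verbatim --- is exactly the locality argument that reference supplies. Nothing is missing.
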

\begin{proof}
	See \cite[Lemma 6.3.14]{hadzihasanovic2024combinatorics}.
\end{proof}

\begin{dfn}[Co-merger of a round molecule]
	Let $U$ be a round molecule.
	The \emph{co-merger of $U$} is the subdivision $\mrg{}_U\colon \mrg{U} \sd U$ determined by the comap
	\[
		x \mapsto \begin{cases}
			\top_U 
			& \text{if $x \in \inter{U}$}, \\
			x 
			& \text{if $x \in \bd{}{}U$}.
		\end{cases}
	\]
\end{dfn}

\begin{dfn}[Substitution along a subdivision]
	Let $P$ be a regular directed complex, $x \in P$, and let $s\colon \clset{x} \sd V$ be a subdivision.
	The \emph{substitution of $V$ for $x$ in $P$ along $s$} is the oriented graded poset $\subs{P}{V}{x}_s$ whose underlying set is the disjoint union $(P \setminus \clset{x}) \pout{} V$ with the partial order and orientation defined, for each $z \in \subs{P}{V}{x}_s$ and $\a \in \set{-, +}$, by
	\[
		\cofaces{}{\a}z \eqdef
		\begin{cases}
			\cofaces{P}{\a}z 
			& \text{if $z \in P \setminus \clset{x}$}, \\
			\cofaces{V}{\a}z \pout{} \bigcup 
			\set{\cofaces{P}{\a}y \mid y = \conv{s}(z), \dim y = \dim z}
			& \text{if $z \in V$}.
		\end{cases}
	\]
\end{dfn}

\begin{comm}
	The union over $y = \conv{s}(z)$ such that $\dim y = \dim z$ is either empty (if $\dim \conv{s}(z) \neq \dim z$) or over a singleton (if $\dim \conv{s}(z) = \dim z$).
\end{comm}

\begin{lem} \label{lem:substitution_preserves_rdcpx}
	Let $P$ be a regular directed complex, $x \in P$, let $s\colon \clset{x} \sd V$ be a subdivision, and let $\conv{t}\colon \subs{P}{V}{x}_s \to P$ be the function defined by
	\[
		z \mapsto
		\begin{cases}
			z 
			& \text{if $z \in P \setminus \clset{x}$}, \\
			\conv{s}(z)
			& \text{if $z \in V$}.
		\end{cases}
	\]
	Then
	\begin{enumerate}
		\item $\subs{P}{V}{x}_s$ is a regular directed complex,
		\item $\conv{t}$ determines a subdivision $t\colon P \sd \subs{P}{V}{x}_s$.
	\end{enumerate}
\end{lem}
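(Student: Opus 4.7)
The plan is to establish the comap property (2) first, and then deduce that $\subs{P}{V}{x}_s$ is a regular directed complex (1) by analysing the closed star of each element, using the results of (2) specialised to atoms.

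First I would verify that the data in the definition of $\subs{P}{V}{x}_s$ assemble into a well-defined oriented graded poset, and that $\conv{t}$ is an order-preserving, dimension-preserving function to $P$. The key observation is that in $\bigcup \set{\cofaces{P}{\a}y \mid y = \conv{s}(z), \dim y = \dim z}$, the dimension condition $\dim y = \dim z$ ensures that any newly added coface $w \in \cofaces{P}{\a}y$ of some $z \in V$ satisfies $\dim w = \dim y + 1 = \dim z + 1$, so the new cover relations are compatible with the gradings on the two summands $P \setminus \clset{x}$ and $V$. Order-preservation of $\conv{t}$ then follows by inspection of the three cases of cover relations in $\subs{P}{V}{x}_s$: entirely within $V$, entirely within $P \setminus \clset{x}$, or across the boundary from $V$ into $P \setminus \clset{x}$; in the last case the definition forces $\conv{t}(z) = \conv{s}(z) \leq w = \conv{t}(w)$ in $P$.

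For (2), I plan to compute $\invrs{\conv{t}}\clset{y}$ explicitly for each $y \in P$. When $y \in \clset{x}$, no element of $P \setminus \clset{x}$ maps into $\clset{y}$ under $\conv{t}$, so $\invrs{\conv{t}}\clset{y} = \invrs{\conv{s}}\clset{y}$ is a molecule of dimension $\dim y$ and the boundary identity is inherited from $\conv{s}$. When $y \in P \setminus \clset{x}$, I will invoke Lemma \ref{lem:restriction_of_subdivision} applied to the closed subset $\clset{y} \cap \clset{x} \subseteq \clset{x}$ to obtain a subdivision of this set, and then describe $\invrs{\conv{t}}\clset{y}$ as the result of replacing $\clset{y} \cap \clset{x}$ in $\clset{y}$ with its preimage in $V$ along this restricted subdivision. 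Showing this to be a molecule of dimension $\dim y$ proceeds by exhibiting a molecular decomposition of the atom $\clset{y}$ in which $\clset{y} \cap \clset{x}$ appears as a submolecule, and then iteratively pasting along the cells of $V$ above cells of $\bd{}{}y \cap \clset{x}$; the boundary identity then transports from the boundary identity for $\conv{s}$ via this decomposition.

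Finally for (1), it suffices to show that $\clset{z}$ in $\subs{P}{V}{x}_s$ is an atom for each $z$. If $z \in V$, no cover relation in $\subs{P}{V}{x}_s$ goes downward from $V$ into $P \setminus \clset{x}$, so $\clset{z}$ in $\subs{P}{V}{x}_s$ coincides with $\clset{z}$ in $V$, which is an atom. If $z \in P \setminus \clset{x}$, then $\conv{t}(z) = z$ and one verifies that $\clset{z}$ in $\subs{P}{V}{x}_s$ equals $\invrs{\conv{t}}\clset{z}$, which by (2) is a molecule of dimension $\dim z$; since $z$ is its unique maximal element, it is an atom. The main technical obstacle is the case analysis for $y \in P \setminus \clset{x}$ with $\clset{y} \cap \clset{x}$ non-empty, where one must identify a molecular decomposition of the atom $\clset{y}$ that displays the intersection as a submolecule compatible with the subdivision inherited from $s$, and track orientations carefully across the iterated submolecule inclusions used to assemble the preimage.
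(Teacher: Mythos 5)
Your overall architecture --- compute the fibres $\invrs{\conv{t}}\clset{y}$ to get the comap property, then read off regularity of $\subs{P}{V}{x}_s$ from the fibres --- is sound in outline, and the cases $y \in \clset{x}$ and $z \in V$ are handled correctly. The gap is in the step you yourself flag as the main obstacle: for $y \in P \setminus \clset{x}$ with $\clset{y} \cap \clset{x} \neq \varnothing$ you propose to exhibit a molecular decomposition of the atom $\clset{y}$ in which $\clset{y} \cap \clset{x}$ appears as a submolecule, but this closed subset need not be a submolecule of $\clset{y}$, and in general is not even a molecule: the intersection of the closures of two cells in a regular directed complex can be disconnected, whereas molecules are connected. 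For instance, two square-shaped $2$\nbd cells glued along a pair of opposite, vertex-disjoint edges form a regular directed complex in which $\clset{x} \cap \clset{y}$ is a disjoint union of two arrows; a subdivision of $\clset{x}$ may subdivide both, and then $\invrs{\conv{t}}\clset{y}$ cannot be produced by a single submolecule substitution. So the claim that the fibre over such a $y$ is a molecule with the correct boundaries does not follow as you have set it up. What is true, and what rescues the strategy, is that each individual closure $\clset{z}$ for $z \in \clset{y} \cap \clset{x}$ \emph{is} a submolecule of $\clset{y}$, so the replacement has to be organised as an iteration of single-atom substitutions, or as an induction along a pasting decomposition; that induction is where the actual content of the proof lies, and your proposal does not supply it.

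For comparison, the paper's proof packages exactly this induction differently: it reduces at once to the case where $P$ is a molecule $U$, so that $\clset{x} \submol U$, invokes \cite[Proposition 6.3.3, Lemma 6.3.9]{hadzihasanovic2024combinatorics} to get that $V$ is a round molecule and that $s$ preserves all boundaries and pasting decompositions of submolecules of $\clset{x}$, and then argues by induction on the inductive generation of the submolecule inclusion $\clset{x} \submol U' \submol U$, extending the substitution across one pasting $U' \cp{k} W$ at a time. If you want to keep your fibrewise formulation, replace the one-shot submolecule claim by such an induction (either on the chain of submolecules $\clset{x} \submol U' \submol \clset{y}$ when $x \leq y$, or on the maximal cells of $\clset{y} \cap \clset{x}$ otherwise), making sure at each stage to use the preservation of boundaries by $s$ to see that the pasting remains well defined after substitution.
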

\begin{proof}
	It suffices to consider the case where $P$ is a molecule $U$, in which case $\clset{x} \submol U$.
	By \cite[Proposition 6.3.3, Lemma 6.3.9]{hadzihasanovic2024combinatorics}, $V$ is a round molecule, and furthermore $s$ preserves all boundaries and pasting decompositions of submolecules of $\clset{x}$.
	The result then follows by a straightforward induction on submolecules $\clset{x} \submol U' \submol U$.
\end{proof}

\begin{rmk}
	When $\clset{x} \submol U$ is a top-dimensional atom in a molecule $U$, and $V$ is a round molecule with $\bd{}{\a}V$ isomorphic to $\bd{}{\a}x$ for each $\a \in \set{-, +}$, then the substitution of rewritable submolecules $\subs{U}{V}{\clset{x}}$ is obtained as a substitution along the co-merger $\mrg{}_V\colon \mrg{V} \sd V$.
\end{rmk}

\noindent 
Let $I$ be the underlying poset $\set{0^- < 1 > 0^+}$ of $\arr$, let $P$ be a regular directed complex, and let $K \subseteq P$ be a closed subset.
We recall the notion of \emph{partial Gray cylinder on $P$ relative to $K$} from \cite[Section 1.2]{chanavat2024equivalences}, with a new notation.
This is the regular directed complex $\arr \pcyl{K} P$ whose underlying graded poset is the pushout
\[
\begin{tikzcd}
	{I \times K} & K \\
	{I \times P} & {(I \times P) \pout{I \times K} K}
	\arrow[two heads, from=1-1, to=1-2]
	\arrow[hook, from=1-1, to=2-1]
	\arrow["{(-)}", hook, from=1-2, to=2-2]
	\arrow["q_K", two heads, from=2-1, to=2-2]
	\arrow["\lrcorner"{anchor=center, pos=0.125, rotate=180}, draw=none, from=2-2, to=1-1]
\end{tikzcd}
\]
in $\Pos$, whose elements are either of the form $(x)$ for $x \in K$ or $(i, x)$ for $i \in I$ and $x \in P \setminus K$, and orientation is specified by
\begin{align*}
	\faces{}{\a}(x) & \eqdef \set{(y) \mid y \in \faces{}{\a}x}, \\
	\faces{}{\a}(i, x) & \eqdef \begin{cases}
		\set{(0^\a, x)} \cup \set{(1, y) \mid y \in \faces{}{-\a}x \setminus K} &
		\text{if $i = 1$,} \\
		\set{(i, y) \mid y \in \faces{}{\a}x \setminus K} \cup
		\set{(y) \mid y \in \faces{}{\a}x \cap K} &
		\text{otherwise}.
	\end{cases}
\end{align*}
In particular, $\arr \pcyl{\varnothing} P = \arr \gray P$.
As shown in \cite[Lemma 1.20]{chanavat2024equivalences}, when $U$ is a molecule, $\arr \pcyl{K} U$ is also a molecule, which is round whenever $U$ is round and $K \subseteq \bd{}{}U$.

\begin{dfn}[Cylindrical collapse of atoms]
	Let $U$, $V$ be atoms.
	The class of \emph{cylindrical collapses} $p\colon U \surj V$ is the inductive subclass of maps $p\colon U \surj V$ generated by the following clauses.
	\begin{enumerate}
		\item (\emph{Generating collapse}).
			If $U = \arr \pcyl{K} V$ for some $K \subseteq \bd{}{}V$, then the canonical projection $\tau_K\colon \arr \pcyl{K} V \surj V$ is a cylindrical collapse.
		\item (\emph{Isomorphism}).
			If $\varphi\colon U \iso V$ is an isomorphism of atoms, then $\varphi$ is a cylindrical collapse.
		\item (\emph{Composition}).
			If $p\colon U \surj W$ and $q\colon W \surj V$ are cylindrical collapses, then $qp\colon U \surj V$ is a cylindrical collapse.
	\end{enumerate}
\end{dfn}

\begin{dfn}[Local collapse of regular directed complexes]
	Let $P$, $Q$ be regular directed complexes.
	A \emph{local collapse} $f\colon P \to Q$ is a map $f\colon P \to Q$ with the property that, for all $x \in P$, the restricted map $\restr{f}{\clset{x}}\colon \clset{x} \surj \clset{f(x)}$ is a cylindrical collapse.
	A local collapse is a \emph{collapse} if it is a final map.
\end{dfn}

\noindent 
Since, by construction, cylindrical collapses are closed under isomorphisms and under composition, so are local collapses.
Thus, regular directed complexes and local collapses form a category $\rdCpx_{\C\L}$.

\begin{rmk}
	Because isomorphisms are local collapses, every local embedding of regular directed complexes is a local collapse.
	Thus, $\rdCpx_{\C\L}$ contains $\rdCpx_\L$ as a subcategory.
\end{rmk}

\begin{lem} \label{lem:local_collapses_of_atoms}
	Let $U$, $V$ be atoms and let $p\colon U \surj V$ be a surjective map.
	The following are equivalent:
	\begin{enumerate}[label=(\alph*)]
		\item $p$ is a collapse;
		\item $p$ is a local collapse;
		\item $p$ is a cylindrical collapse.
	\end{enumerate}
\end{lem}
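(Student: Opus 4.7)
The four required implications split into two trivial ones and two substantive ones. By definition of collapse as a final local collapse, (a) $\Rightarrow$ (b) is immediate. For (b) $\Rightarrow$ (c), since $U$ is an atom with top element $\top_U$, we have $\clset{\top_U} = U$; applying the defining condition of local collapse at $x = \top_U$ yields precisely that $p$ itself is a cylindrical collapse. Thus both (a) $\Rightarrow$ (b) and (b) $\Rightarrow$ (c) are essentially tautological.

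For (b) $\Rightarrow$ (a), it remains to verify finality. Generating collapses $\tau_K$ and isomorphisms are dimension-preserving, and this property is stable under composition, so any cylindrical collapse, in particular $p$, is dimension-preserving; hence $p(\top_U)$ has dimension $\dim V$, forcing $p(\top_U) = \top_V$. Then for any $x, x' \in U$ with $p(x) = p(x')$, the zig-zag $x \leq \top_U \geq x'$ satisfies $p(x) \leq p(\top_U) = \top_V$, establishing finality.

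The substantive content lies in (c) $\Rightarrow$ (b), which I prove by induction on the inductive generation of cylindrical collapses. The isomorphism case is immediate, since restrictions of isomorphisms to subatoms are again isomorphisms. For a composite $p = qp'$, the restriction $\restr{p}{\clset{x}}$ factors as $\restr{q}{\clset{p'(x)}} \circ \restr{p'}{\clset{x}}$---using that a map sends the closure of an element onto the closure of its image---and both factors are cylindrical by induction, closed under composition. The crucial case is a generating collapse $\tau_K\colon \arr \pcyl{K} V \surj V$, where the elements of $\arr \pcyl{K} V$ fall into three types: points $(y)$ with $y \in K$, where $\restr{\tau_K}{\clset{(y)}}$ is an isomorphism onto $\clset{y}$; points $(0^\a, y)$ with $y \in V \setminus K$, where $\clset{(0^\a, y)}$ is again identified with $\clset{y}$ via $\tau_K$; and points $(1, y)$ with $y \in V \setminus K$, where I identify $\clset{(1, y)}$ with $\arr \pcyl{K \cap \bd{}{}y} \clset{y}$, exhibiting $\restr{\tau_K}{\clset{(1, y)}}$ as the generating collapse $\tau_{K \cap \bd{}{}y}$.

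I expect the main obstacle to be this last identification, namely showing $\clset{(1, y)} \cong \arr \pcyl{K \cap \bd{}{}y} \clset{y}$ in a way that intertwines the two projections and respects the orientations on both sides. The plan is to unfold the pushout defining $\arr \pcyl{K} V$ in $\Pos$, check that the elements and covering relations below $(1, y)$ match those produced by the analogous pushout for $\clset{y}$ and $K \cap \bd{}{}y$, and finally verify compatibility of the orientations using the explicit formulae for $\faces{}{\a}(1, x)$; this should essentially amount to functoriality of the partial Gray cylinder construction with respect to the closed embedding $\clset{y} \hookrightarrow V$.
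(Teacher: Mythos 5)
Your proposal follows essentially the same route as the paper's proof: (a)$\Rightarrow$(b) is definitional, (b)$\Rightarrow$(c) comes from restricting at the top element, and (c)$\Rightarrow$(b) reduces by induction to showing that a generating collapse $\tau_K$ restricts over each $\clset{x}$ to an isomorphism or to the generating collapse $\tau_{K \cap \clset{\tau_K(x)}}$ --- exactly the paper's key computation (your case analysis over the three kinds of elements of $\arr \pcyl{K} V$ is in fact slightly more careful than the paper's phrasing, which lumps the $(0^\a, y)$ case in with the others).

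There is, however, one false claim in your (b)$\Rightarrow$(a) step. Generating collapses are \emph{not} dimension-preserving: $\tau_K\colon \arr \pcyl{K} V \surj V$ sends the greatest element $(1, \top_V)$, which has dimension $\dim V + 1$, to $\top_V$, which has dimension $\dim V$; a non-trivial cylindrical collapse strictly lowers the dimension of the top cell (indeed, the whole point of Proposition \ref{prop:freeness_of_collapses} is that the codimension $\dim U - \dim V$ counts the generating collapses in its factorisation). So "hence $p(\top_U)$ has dimension $\dim V$" does not follow from your premise. The conclusion you actually need is only $p(\top_U) = \top_V$, and this is true for a different reason: a map sends $\clset{x}$ onto $\clset{f(x)}$ (apply condition 1 of the definition of map at $n = \dim x$), so surjectivity gives $V = p(U) = p(\clset{\top_U}) = \clset{p(\top_U)}$, forcing $p(\top_U) = \top_V$. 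With that repair your zig-zag $x \leq \top_U \geq x'$ does establish finality; this is also the justification behind the paper's one-line assertion that every surjective map of atoms is final, which it uses to get (b)$\Rightarrow$(a) without passing through (c) at all.
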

\begin{proof}
	Every surjective map of atoms is final, so a local collapse of atoms is always a collapse.
	Moreover, since $U$ is surjective and has a greatest element, if $p$ is a local collapse, then it is a cylindrical collapse.
	Because isomorphisms are local isomorphisms, and local collapses are closed under composition, it suffices to show that generating collapses are local collapses.
	Suppose then that $U = \arr \pcyl{K} V$ and $p = \tau_K$ for some $K \subseteq \bd{}{}V$.
	For each $x \in U$, let $K_x \eqdef K \cap \clset{\tau_K(x)}$. Then $\restr{\tau_K}{\clset{x}}$ is, up to isomorphism, the canonical projection $\tau_{K_x}\colon \arr \pcyl{K_x} \clset{\tau_K(x)} \surj \clset{\tau_K(x)}$, which determines either a generating collapse (if $\tau_K(x) \notin K$) or an isomorphism (if $\tau_K(x) \in K$).
\end{proof}

\begin{prop} \label{prop:factorisation_of_local_collapses}
	There is an orthogonal factorisation system $(\C, \L)$ on $\rdCpx_{\C\L}$ whose left class $\C$ is the class of collapses, and right class $\L$ is the class of local embeddings.
\end{prop}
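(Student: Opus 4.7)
The plan is to lift the already-recalled orthogonal factorisation system (final map, local embedding) from the category of regular directed complexes and maps to the subcategory $\rdCpx_{\C\L}$. The idea is simple: take the factorisation in maps and verify that the middle object and factors stay within $\rdCpx_{\C\L}$; orthogonality is then essentially inherited.

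The linchpin is a key local-triviality lemma: every local embedding $m\colon R \to Q$ restricts on each atom to an \emph{isomorphism} $\restr{m}{\clset{y}}\colon \clset{y} \iso \clset{m(y)}$. To see this, I would first note that since $m$ is a map, the condition $m(\bd{n}{\a}y) = \bd{n}{\a}m(y)$ together with an induction on dimension yields $m(\clset{y}) = \clset{m(y)}$. Since $m$ is also an embedding on $\clset{y}$, the restriction is an injective morphism of atoms, surjective onto $\clset{m(y)}$, hence a bijective morphism of oriented graded posets between atoms of the same dimension; by the rigidity of atoms in $\ogPos$, any such bijection is an isomorphism.

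Given a local collapse $f\colon P \to Q$, I factor it in the maps category as $f = m \circ e$ with $e\colon P \to R$ final and $m\colon R \to Q$ a local embedding; thus $m \in \L$ for free. To show $e \in \C$, it suffices to show that $e$ is a local collapse, since finality is given. For each $x \in P$, the equation
\[
	\restr{m}{\clset{e(x)}} \circ \restr{e}{\clset{x}} = \restr{f}{\clset{x}}
\]
exhibits $\restr{e}{\clset{x}}$ as the composite of an isomorphism (by the key lemma) with a cylindrical collapse (by hypothesis on $f$). Since cylindrical collapses are closed under composition and contain all isomorphisms, $\restr{e}{\clset{x}}$ is a cylindrical collapse, so $e$ is a local collapse, and then a collapse by Lemma \ref{lem:local_collapses_of_atoms}.

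Orthogonality follows by the same mechanism. Given a lifting problem with $e \in \C$ and $m \in \L$ (with bottom edge $\b$, top edge $\a$), the diagonal $\gamma$ exists uniquely in maps because $\C$ consists of final maps and $\L$ of local embeddings. To see that $\gamma$ lies in $\rdCpx_{\C\L}$, I argue on each atom as above: $\restr{\gamma}{\clset{y}} = \restr{m}{\clset{\gamma(y)}}^{-1} \circ \restr{\b}{\clset{y}}$, where the first factor is an isomorphism by the key lemma and the second is a cylindrical collapse because $\b$ itself is a local collapse. I expect the only non-routine step to be the key local-triviality lemma; once it is established, both the factorisation and orthogonality are obtained by pure transport from the comprehensive factorisation, with no new combinatorial input on cylindrical collapses beyond their stability under pre- and post-composition with isomorphisms.
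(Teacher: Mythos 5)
Your proposal is correct and follows essentially the same route as the paper: the paper likewise restricts the (final map, local embedding) factorisation from all maps to local collapses and observes that the final factor is a local collapse because being a local collapse is a local property. Your ``key local-triviality lemma'' is just an explicit unpacking of that locality argument (local embeddings restrict to isomorphisms on atoms, so the final factor is atomwise a cylindrical collapse postcomposed with an isomorphism), which the paper leaves implicit.
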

\begin{proof}
	The proof of \cite[Proposition 6.2.30]{hadzihasanovic2024combinatorics} restricts from all maps of regular directed complexes to local collapses, showing the existence of an essentially unique factorisation of $f$ as a final map $p\colon P \surj P'$ followed by a local embedding $j\colon P' \to Q$.
	Since the property of being a local collapse is local, $p$ is a local collapse, so by definition it is a collapse.
\end{proof}

\noindent The factorisation system restricts to an orthogonal factorisation system $(\C, \E)$ on the full subcategory $\atom_{\C\E}$ on the atoms, in which case, by Lemma \ref{lem:local_collapses_of_atoms}, the left class coincides with the class of cylindrical collapses, while the right class is the class of embeddings of atoms.
The category can be assumed to be skeletal, in which case the factorisation system is strict.

\begin{lem} \label{lem:sections_of_collapses}
	Let $p\colon U \surj V$ be a collapse of atoms.
	Then
	\begin{enumerate}
		\item $p$ admits a section,
		\item if $p'\colon U \surj V$ is another collapse with the same set of sections, then $p = p'$.
	\end{enumerate}
\end{lem}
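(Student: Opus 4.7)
The plan is to proceed by structural induction on the generation of cylindrical collapses in both parts.

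For the existence of a section in part (1), the three generating clauses are handled in turn. If $p = \varphi$ is an isomorphism of atoms, then $\varphi^{-1}$ is a section. If $p = q \circ r$ is a composition of cylindrical collapses, sections $s_r, s_q$ provided by the inductive hypothesis give a section $s_r \circ s_q$ of $p$. The essential case is a generating collapse $\tau_K\colon \arr \pcyl{K} V \surj V$, for which I would exhibit the ``bottom'' section $s^-\colon V \to \arr \pcyl{K} V$ defined by
\[
	s^-(x) \eqdef
	\begin{cases}
		(x) & \text{if } x \in K, \\
		(0^-, x) & \text{if } x \in V \setminus K,
	\end{cases}
\]
and symmetrically a ``top'' section $s^+$ replacing $0^-$ by $0^+$. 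Reading off $\faces{}{\a}(x)$ and $\faces{}{\a}(0^-, x)$ from the orientation formulas defining $\arr \pcyl{K} V$, one checks directly that $s^-$ induces a bijection onto $\faces{}{\a}s^-(x)$ for every $x \in V$ and $\a \in \set{-, +}$; hence $s^-$ is a local embedding, and $\tau_K \circ s^- = \idd{V}$ by construction.

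For part (2), the strategy is to reconstruct $p$ from its set of sections, after which the same reconstruction applied to $p'$ will force $p = p'$. If $\dim U = \dim V$, both $p$ and $p'$ are forced to be isomorphisms by rigidity of atoms, and the common section is their common inverse. Otherwise $\dim U > \dim V$ and, since $p$ and $p'$ share the same section set, in particular they share sections of the form $s^-, s^+$ produced as in part (1). On $s^-(V) \cup s^+(V)$ one has $p(x) = p'(x) = \invrs{(s^\a)}(x)$ directly from $p \circ s^\a = p' \circ s^\a = \idd{V}$. The remaining task is to determine $p(x)$ for $x \notin s^-(V) \cup s^+(V)$ purely from the sectional data together with the defining equation $p(\bd{n}{\a}x) = \bd{n}{\a}p(x)$ of a map.

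The main obstacle, as I anticipate it, is to make this reconstruction both complete and free of circularity; my expectation is that the cleanest route is to invoke the freeness of the algebra of cylindrical collapses (Proposition~\ref{prop:freeness_of_collapses}) to factor $p$ through a composition of generating collapses, and then induct on the length of this factorisation. In the single-generator case $p = \tau_K$, every element of $\arr \pcyl{K} V$ outside $s^-(V) \cup s^+(V)$ has the form $(1, v)$ with $s^-(v), s^+(v) \in \bd{}{}(1, v)$, which identifies $v = p(1, v)$ uniquely. For the inductive step, one identifies a ``last'' generating collapse $\tau_K\colon U \surj W$ from the sectional data, notes that $p'$ must share this same factor because its sections factor through $\tau_K$ in the same way, and applies the inductive hypothesis to the residual collapses $W \surj V$.
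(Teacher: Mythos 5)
Your part (1) is correct and self-contained: the explicit ``bottom'' and ``top'' sections of a generating collapse $\tau_K$ do induce the required bijections on faces (this is immediate from the orientation formulas for $\arr \pcyl{K} V$), and composing sections along a factorisation into generating collapses and isomorphisms gives a section of any collapse. This is more elementary than what the paper does: the paper instead observes that generating collapses are \emph{cartesian} maps, that isomorphisms are cartesian and cartesian maps compose, and then cites an external result (Proposition 1.17 of the diagrammatic sets paper) giving both the existence of sections and the determination of a cartesian map of atoms by its set of sections. Your route avoids the cartesian machinery at the cost of having to prove part (2) by hand, and that is where the proposal breaks down.

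The gap is in the inductive step of part (2). First, there is an ordering problem: Proposition \ref{prop:freeness_of_collapses} is proved \emph{after} and \emph{using} Lemma \ref{lem:sections_of_collapses} (its uniqueness argument ends by invoking precisely the statement you are trying to prove), so you may only use the \emph{existence} of a factorisation $p = \tau_{\order{1}{K}}\cdots\tau_{\order{m}{K}}\varphi$, which follows from the definition of cylindrical collapse and rigidity, and not its uniqueness. Second, and more seriously, the sentence ``one identifies a last generating collapse $\tau_K$ from the sectional data, and $p'$ must share this same factor because its sections factor through $\tau_K$ in the same way'' is an assertion of exactly the hard combinatorial content, not a proof of it. A priori $p'$ has a completely different factorisation into generating collapses, its section set need not consist only of composites of sections of the factors, and you never say which elements of the common section set are ``the'' $s^-$ and $s^+$ of the outermost factor. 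The paper resolves this (in the proof of Proposition \ref{prop:freeness_of_collapses}, where the analogous step is needed) by characterising the two faces $x^\a \in \faces{}{\a}\top_U \cap \invrs{p}\top$ whose closures are the images of the two sections of the outermost generating collapse, \emph{intrinsically} in terms of $p$ and the face relation, independently of any chosen factorisation; without some such intrinsic characterisation your induction does not get off the ground. Your single-generator base case is essentially fine (the argument that $p'(1,v) = v$ because $v$ would otherwise lie in $\faces{}{-}p'(1,v) \cap \faces{}{+}p'(1,v) = \varnothing$ can be made precise), but the general step needs the missing characterisation, or else the paper's detour through cartesian maps.
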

\begin{proof}
	By \cite[Lemma 1.20]{chanavat2024equivalences}, generating collapses are cartesian maps of atoms.
	Since isomorphisms are also cartesian, and cartesian maps are closed under composition, it follows that all collapses are cartesian.
	We conclude by \cite[Proposition 1.17]{chanavat2024diagrammatic}.
\end{proof}

\begin{comm}
	Proposition \ref{prop:factorisation_of_local_collapses} in combination with Lemma \ref{lem:sections_of_collapses} and the fact that atoms have no non-trivial automorphisms implies that, with its natural grading given by dimension, any skeleton of $\atom_{\C\E}$ is an Eilenberg--Zilber category in the sense of \cite[Definition 1.3.1]{cisinski2019higher}.
\end{comm}

\begin{prop} \label{prop:freeness_of_collapses}
	Let $U$, $V$ be atoms, let $p\colon U \surj V$ be a collapse, and let $m \eqdef \dim{U} - \dim{V}$.
	Then there exists a unique triple of
	\begin{enumerate}
		\item a sequence $(\order{i}{V})_{i=0}^m$ of atoms,
		\item a sequence $(\order{i}{K} \subseteq \order{i-1}{V})_{i=1}^m$ of closed subsets,
		\item an isomorphism $\varphi\colon U \iso \order{m}{V}$,
	\end{enumerate}
	such that
	\begin{enumerate}
		\item $\order{0}{V} = V$ and, for each $i \in \set{1, \ldots, m}$, $\order{i}{V} = \arr \pcyl{\order{i}{K}} \order{i-1}{V}$,
		\item $p = \tau_{\order{1}{K}} \ldots \tau_{\order{m}{K}}\varphi$.
	\end{enumerate}
\end{prop}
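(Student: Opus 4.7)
In the base case $m = 0$, Lemma \ref{lem:local_collapses_of_atoms} identifies $p$ as a cylindrical collapse; since every generating collapse $\tau_K\colon \arr \pcyl{K} W \surj W$ satisfies $\dim(\arr \pcyl{K} W) = \dim W + 1$, no generating collapse can appear in a dimension-preserving cylindrical-collapse decomposition. Hence $p$ is an isomorphism, and the unique triple consists of empty sequences together with $\varphi \eqdef p$.

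\textbf{For the inductive step ($m \geq 1$), existence is the easy half.} Starting from any cylindrical-collapse decomposition $p = f_1 \circ \ldots \circ f_k$, I would repeatedly apply the identity
\[
	\psi \circ \tau_K = \tau_{\psi(K)} \circ \psi',
\]
valid whenever $\psi\colon W \iso W'$ is an isomorphism of atoms and $\psi'\colon \arr \pcyl{K} W \iso \arr \pcyl{\psi(K)} W'$ is its canonical lift to partial Gray cylinders, in order to push every isomorphism past each generating collapse to the rightmost position. Composing the resulting run of isomorphisms into a single $\varphi$ yields the required normal form.

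\textbf{Uniqueness is the harder half.} Given two decompositions $p = \tau_{K^{(1)}} \circ p' = \tau_{\tilde K^{(1)}} \circ \tilde p'$, it is enough to prove $K^{(1)} = \tilde K^{(1)}$: by the rigidity of atoms, this forces $V^{(1)} = \tilde V^{(1)}$ and hence $p' = \tilde p'$ as cylindrical collapses of dimension difference $m - 1$, so the induction hypothesis closes the loop. To recover $K^{(1)}$ intrinsically from $p$, I would exploit that every collapse is cartesian (as observed in the proof of Lemma \ref{lem:sections_of_collapses}). For each $y \in V$, the closed subset $\invrs{p}(\clset{y}) \subseteq U$ is itself an atom on which $p$ restricts to a cylindrical collapse onto $\clset{y}$; the outermost layer of this restricted tower is trivial (an isomorphism) precisely when $y \in K^{(1)}$, and is a non-trivial partial Gray cylinder on $\clset{y}$ otherwise. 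Reading $K^{(1)}$ off as the set of $y \in V$ whose fibre atom has a trivial base-level cylindrical layer then gives the sought invariant of $p$.

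\textbf{The principal obstacle is making this intrinsic characterisation rigorous.} A naive cardinality-of-fibre test fails, because later layers of the tower perturb fibre sizes in ways that can simulate the presence or absence of a base cylinder; the refinement must exploit more structure, such as the dimension profile of cells in $\invrs{p}(\clset{y})$ or the cartesian lifting behaviour along $\bd{}{}\clset{y} \incl \clset{y}$. Once $K^{(1)}$ is identified, the lift $p'\colon U \surj \arr \pcyl{K^{(1)}} V$ is pinned down canonically by the universal property of the $(\C, \L)$-factorisation of Proposition \ref{prop:factorisation_of_local_collapses}, and the induction on $m$ completes the proof.
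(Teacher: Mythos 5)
There is a genuine gap, and it sits exactly where you flag it. Your uniqueness argument peels the tower from the \emph{outermost} layer $\tau_{K^{(1)}}$, and this creates two problems. First, the intrinsic characterisation of $K^{(1)}$ is never actually supplied: you correctly observe that the singleton-fibre test fails for $m > 1$, but the proposed replacement (``read off whether the base-level layer of the restricted tower over $\invrs{p}\clset{y}$ is trivial'') presupposes that the restricted collapse has a well-defined base layer, i.e.\ it presupposes the very uniqueness statement you are proving, and the codimension of $\restr{p}{\invrs{p}\clset{y}}$ need not drop (take $y = \top_V$), so the induction on $m$ does not rescue it. Second, even granting $K^{(1)} = \tilde K^{(1)}$, you conclude ``hence $p' = \tilde p'$'', but this requires cancelling $\tau_{K^{(1)}}$ on the \emph{left} against collapses: $\tau_{K^{(1)}}p' = \tau_{K^{(1)}}\tilde p'$ does not obviously force $p' = \tilde p'$, since $\tau_{K^{(1)}}$ identifies the three cells $(0^-,y), (1,y), (0^+,y)$ over each $y \notin K^{(1)}$ and nothing you have said pins down which of them a given cell of $U$ lands on.

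The paper's proof peels from the other end, precisely to trade left-cancellation (hard) for right-cancellation via sections (available from Lemma \ref{lem:sections_of_collapses}). Writing $p = \tau_{\order{1}{K}}\ldots\tau_{\order{m}{K}}\varphi$, the innermost generating collapse $\tau_{\order{m}{K}}\varphi$ has exactly two sections $j^\pm$, whose images are the closures of two faces $x^\a \in \faces{}{\a}\top_U \cap \invrs{p}\top$; these faces are characterised intrinsically in terms of $p$ alone (as the unique elements of $\faces{}{\a}\top_U \cap \invrs{p}\top$ satisfying a face-intersection condition against all other such elements), hence are the same for any competing factorisation. Precomposing both factorisations with $j^\a$ kills the innermost layer and yields an equation of collapses of codimension $m-1$, to which the inductive hypothesis applies; one then concludes that the two innermost layers are parallel collapses with the same sections, hence equal. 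The base case $m = 1$ is exactly where your fibre-cardinality test does work: $\order{1}{K}$ is the set of cells of $V$ with singleton $p$-fibre. If you want to salvage your outermost-first strategy, you would need to prove both the left-cancellation property and a non-circular intrinsic description of $K^{(1)}$; the sections-based argument avoids having to do either.
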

\begin{proof}
	By Lemma \ref{lem:local_collapses_of_atoms}, $p$ is a cylindrical collapse, so it is a composite of isomorphisms and generating collapses, and the rigidity of atoms allows us to push isomorphisms to the right of generating collapses.
	This implies both the existence of the entire triple, and the uniqueness of the isomorphism $\varphi$, so it suffices to prove uniqueness.
	Observe, first, that the sequence $(\order{i}{K})_{i=1}^m$ determines uniquely the sequence $(\order{i}{V})_{i=0}^m$.
	We proceed by induction on $m$; when $m = 0$, there is nothing more to prove.
	When $m = 1$, observe that $\order{1}{K}$ is uniquely determined as the set of cells in $V$ whose $p$-fibre is a singleton.
	Finally, suppose that $m > 1$, and $p = \tau_{\order{1}{K}}\ldots\tau_{\order{m}{K}}\varphi = \tau_{\order{1}{L}}\ldots\tau_{\order{m}{L}}\psi$.
	Let $\top$ be the greatest element of $V$ and let $\top_U$ be the greatest element of $U$. 
	Then $\varphi$ maps $\top_U$ to $(1, (1, \ldots, (1, \top)\ldots))$.
	Now, $\tau_{\order{m}{K}}\varphi$ has exactly two sections $j^-$ and $j^+$, whose images are the closures of the faces $x^\a \in \faces{}{\a}\top_U$ that $\varphi$ maps to $(0^\a, (1, \ldots, (1, \top)\ldots))$ for each $\a \in \set{-, +}$.
	Using $\varphi$ as an explicit parametrisation, we can characterise $x^\a$ as the unique cell in $\faces{}{\a}\top_U \cap \invrs{p}\top$ such that, for all $x \in \faces{}{\a}\top_U \cap \invrs{p}\top$, either $x^\a = x$ or $\faces{}{}x^\a \cap \faces{}{}x = \faces{}{-\a}x^\a \cap \faces{}{\a}x$.
	Since this characterisation is independent of the factorisation of $p$, the same two faces determine the images of the two sections of $\tau_{\order{m}{L}}\psi$.
	It follows that, for each $\a \in \set{-, +}$, $\chi^\a \eqdef \tau_{\order{m}{L}}\psi j^\a$ is an isomorphism of atoms, so
	\[
		 \tau_{\order{1}{K}}\ldots\tau_{\order{m-1}{K}} = \tau_{\order{1}{L}}\ldots\tau_{\order{m-1}{L}}\chi^\a,
	\]
	which by the inductive hypothesis implies $(\order{i}{K})_{i=1}^{m-1} = (\order{i}{L})_{i=1}^{m-1}$ and $\chi^\a = \idd{}$.
	Then $\tau_{\order{m}{K}}\varphi$ and $\tau_{\order{m}{L}}\psi$ are parallel collapses with the same set of sections, so by Lemma \ref{lem:sections_of_collapses} they are equal, and we conclude by the case $m = 1$.
\end{proof}

\begin{comm}
	Proposition \ref{prop:freeness_of_collapses} can be interpreted as the statement that collapses of atoms are \emph{freely} generated by the generating collapses, in the sense that no non-trivial equations appear between their composites, even up to isomorphism.
\end{comm}

\subsection{Composing subdivisions and local collapses} \label{sec:composing}

\noindent
We have introduced subdivisions and local collapses as two somewhat orthogonal notions of morphisms of regular directed complexes; the aim of this section is to glue them together.
First of all, from \cite[Proposition 6.3.13]{hadzihasanovic2024combinatorics} we know that comaps and maps intersect exactly at isomorphisms, which justifies identifying an invertible subdivision with an invertible local collapse.

\begin{dfn}[Local subdivision-collapse of regular directed complexes]
	Let $P$, $Q$ be regular directed complexes.
	A \emph{local subdivision-collapse} $[f, s]\colon P \to Q$ is an equivalence class of pairs of
	\begin{enumerate}
		\item a local collapse $f\colon P' \to Q$, and
		\item a subdivision $s\colon P \sd P'$,
	\end{enumerate}
	under the equivalence relation $[f, s] = [f\invrs{\varphi}, \varphi s]$ for all isomorphisms of regular directed complexes $\varphi\colon P' \iso P''$.
\end{dfn}

\begin{comm}
	Since subdivisions are formal duals of comaps, the pair $[f, s]$ is really a \emph{span} of a comap and a cartesian map of regular directed complexes; the equivalence relation also arises from the usual truncation identifying isomorphic spans.
	However, we prefer having both subdivisions and local collapses pointing in their natural direction when seen as higher-categorical functors; recall from \cite[Theorem 6.3.17]{hadzihasanovic2024combinatorics} that the category of regular directed complexes and comaps admits a \emph{contravariant} functor to the category $\omega\Cat$ of strict $\omega$\nbd categories and functors.
\end{comm}

\begin{prop} \label{prop:pullback_of_local_collapse_along_subdivision}
	Let $P$, $Q$, $Q'$ be regular directed complexes, let $f\colon P \to Q$ be a local collapse, let $s\colon Q \sd Q'$ be a subdivision, and consider the pullback 
\[\begin{tikzcd}
	{P'} && P \\
	{Q'} && Q
	\arrow["{\conv{(f^*s)}}", from=1-1, to=1-3]
	\arrow["s^*f", from=1-1, to=2-1]
	\arrow["\lrcorner"{anchor=center, pos=0.125}, draw=none, from=1-1, to=2-3]
	\arrow["f", from=1-3, to=2-3]
	\arrow["{\conv{s}}", from=2-1, to=2-3]
\end{tikzcd}\]
	of the underlying order-preserving maps in $\Pos$.
	Then $P'$ is a graded poset and admits a unique orientation such that
	\begin{enumerate}
		\item $s^*f$ is a local collapse of regular directed complexes,
		\item $\conv{(f^*s)}$ is a comap, dual to a subdivision $f^*s\colon P \sd P'$.
	\end{enumerate}
\end{prop}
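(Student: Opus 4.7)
The plan is to reduce the statement to an explicit atomic computation and then glue. For each $y \in P$, the preimage $\invrs{\conv{(f^*s)}}\clset{y}$ inside $P'$ is naturally identified with the poset pullback of the cylindrical collapse $\restr{f}{\clset{y}}\colon \clset{y} \surj \clset{f(y)}$ along the comap $\restr{\conv{s}}{s(\clset{f(y)})}$ supplied by Lemma \ref{lem:restriction_of_subdivision}, and more generally $\invrs{\conv{(f^*s)}}\clset{z}$ for any $z \in P$ is described in the same fashion. Using Proposition \ref{prop:freeness_of_collapses}, the restriction $\restr{f}{\clset{y}}$ factorises uniquely as a composition of generating collapses, and pullbacks in $\Pos$ compose, so it suffices to treat the case where $f = \tau_K\colon \arr \pcyl{K} V \surj V$ is a single generating collapse and $s\colon V \sd V'$ is an arbitrary subdivision.

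For this atomic case I would exhibit an explicit bijection between the poset pullback $P'$ and the partial Gray cylinder $\arr \pcyl{s(K)} V'$, where $s(K) \eqdef \invrs{\conv{s}}(K) \subseteq V'$. It sends $(y) \in \arr \pcyl{s(K)} V'$, with $y \in s(K)$, to the pair $((\conv{s}(y)), y)$, and $(i, y)$, with $y \notin s(K)$ hence $\conv{s}(y) \notin K$, to $((i, \conv{s}(y)), y)$. A short case analysis using the pushout definition of $\arr \pcyl{K} V$ verifies this is a poset isomorphism; transporting the orientation of $\arr \pcyl{s(K)} V'$ to $P'$, the projection $s^*f$ becomes the generating collapse $\tau_{s(K)}$, and $\conv{(f^*s)}$ is the evident map sending $(y) \mapsto (\conv{s}(y))$ and $(i, y) \mapsto (i, \conv{s}(y))$.

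With the atomic case in hand, the orientations on each $\clset{(x, y)} \subseteq P'$ glue into a global orientation on $P'$: compatibility on the intersections of atoms holds because all the local descriptions arise from restrictions of the same ambient pullback. Since the orientation of a regular directed complex is determined by the orientations on its atoms, this also proves uniqueness---any orientation making $s^*f$ a local collapse must restrict, on each atom $\clset{(x, y)}$, to the unique orientation turning $\restr{s^*f}{\clset{(x,y)}}$ into a cylindrical collapse over $\clset{x}$. The main technical obstacle is the atomic case, specifically checking that $\conv{(f^*s)}\colon \arr \pcyl{s(K)} V' \to \arr \pcyl{K} V$ is a comap: one must verify that $\invrs{\conv{(f^*s)}}\clset{z}$ is a molecule with the correct input and output boundaries for every $z \in \arr \pcyl{K} V$, which proceeds by case analysis depending on whether $z$ lies in $K$, in the upper or lower copy of $V \setminus K$, or in the ``interior'' portion $\set{1} \times (V \setminus K)$ of the cylinder, in each case combining the comap property of $\conv{s}$ with the explicit pasting decomposition of $\arr \pcyl{K} V$ to reconstruct the required boundaries.
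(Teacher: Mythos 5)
Your proposal is correct and follows essentially the same route as the paper: reduce by locality to the case of a collapse of atoms, then (via the pasting law for pullbacks) to a single generating collapse $\tau_K$, identify the pullback with the partial Gray cylinder $\arr \pcyl{s(K)} Q'$ carrying its canonical orientation, and verify explicitly that the induced projection to $\arr \pcyl{K} Q$ is a comap. The only cosmetic difference is that you invoke Proposition \ref{prop:freeness_of_collapses} to decompose the cylindrical collapse, where the definition of cylindrical collapse (as a composite of generating collapses and isomorphisms) already suffices; uniqueness of the factorisation is not needed here.
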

\begin{proof}
	An element of $P'$ can be explicitly represented as a pair $(x, y)$ with $x \in P$ and $y \in Q'$ such that $f(x) = \conv{s}(y)$.
	Because $\conv{(f^*s)}$ is order-preserving, and $f$ is both closed and order-preserving,
	\[
		\clset{(x, y)} \subseteq \invrs{\conv{(f^*s)}}\clset{x} \subseteq \invrs{\conv{(f^*s)}}\invrs{f}\clset{f(x)} = \invrs{(s^*f)}\invrs{\conv{s}}\clset{f(x)},
	\]
	which---by the local definition of regular directed complex, local collapse, and subdivision---implies that it suffices to consider the case where $f$ is a collapse of marked atoms, in which case, by \cite[Proposition 6.3.3, Lemma 6.3.9]{hadzihasanovic2024combinatorics}, $Q'$ is a round molecule.
	Finally, by the pasting law for pullbacks, and the fact that isomorphisms of molecules are trivial, it suffices to consider the case where $f$ is a generating collapse $\tau_K\colon \arr \pcyl{K} Q \surj Q$.

	In this case, $P'$ is isomorphic to $(I \times Q') \pout{I \times s(K)} s(K)$ and $s^*f$ to $\tau_{s(K)}$.
	Then, the orientation of $\arr \pcyl{s(K)} Q'$ makes the pullback into a round molecule and $\tau_{s(K)}$ into a local collapse, and is evidently unique with this property.
	Moreover, $\conv{(f^*s)}\colon \arr \pcyl{s(K)} Q' \to \arr \pcyl{K} Q$ is defined by $(x) \mapsto (\conv{s}(x))$ for all $x \in s(K)$ and by $(i, x) \mapsto (i, \conv{s}(x))$ for all $x \in Q' \setminus s(K)$ and $i \in I$, and can be checked explicitly to determine a subdivision, using the fact that $\conv{s}$ determines a subdivision, from which we conclude.
\end{proof}

\noindent
Given local subdivision-collapses $[f, s]\colon P \to Q$ and $[g, t]\colon Q \to R$, we define
\[
	[g, t][f,s] \eqdef [g(t^*f), (f^*t)s]\colon P \to R
\]
with the notation of Proposition \ref{prop:pullback_of_local_collapse_along_subdivision}.
The canonicity of the orientation on the pullback ensures associativity of this assignment, and with the units $[\idd{P}, \idd{P}]$ determines a category $\rdCpx_{\S\C\L}$ of regular directed complexes and local subdivision-collapses.
There are evident inclusions
\[\begin{tikzcd}
	{\rdCpx_{\S}} & {\rdCpx_{\S\C\L}} & {\rdCpx_{\C\L}},
	\arrow[hook, from=1-1, to=1-2]
	\arrow[hook', from=1-3, to=1-2]
\end{tikzcd}\]
the first sending $s\colon P \sd Q$ to $[\idd{Q}, s]\colon P \to Q$, and the second sending $f\colon P \to Q$ to $[f, \idd{P}]\colon P \to Q$.
We will identify subdivisions and local collapses with their images through these inclusions, and treat them as subclasses of local subdivision-collapses.

\begin{rmk}
	Given a local subdivision-collapse $[f, s]$, where $s\colon P \sd P'$ and $f\colon P' \to Q$, by \cite[Theorem 6.3.17]{hadzihasanovic2024combinatorics}, the subdivision $s$ induces a strict functor $\conv{s}^*\colon \molecin{P} \to \molecin{P'}$, while the map $f$ induces a strict functor $f_*\colon \molecin{P'} \to \molecin{Q}$ of strict $\omega$\nbd categories.
	Then, the assignment $[f, s] \mapsto f_*\conv{s}^*\colon \molecin{P} \to \molecin{Q}$ is compatible with the factorisation of subdivisions against local collapses, so it determines a functor from $\rdCpx_{\S\C\L}$ to the category $\omegaCat$ of small strict $\omega$\nbd categories. 
	By \cite[Proposition 6.2.37, Proposition 6.3.19]{hadzihasanovic2024combinatorics}, this functor is pseudomonic, and can be seen as a representation of the category of regular directed complexes and local subdivision-collapses as a subcategory of $\omegaCat$.
\end{rmk}

\begin{prop} \label{prop:ternary_factorisation_systems}
	There is a ternary factorisation system $(\S, \C, \L)$ on $\rdCpx_{\S\C\L}$ whose classes are, respectively, the class $\S$ of subdivisions, the class $\C$ of collapses, and the class $\L$ of local embeddings.
\end{prop}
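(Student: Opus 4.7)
The plan is to obtain the ternary factorisation by splicing together the tautological presentation of a local subdivision-collapse as a pair (subdivision, local collapse) with the binary $(\C, \L)$-factorisation of Proposition \ref{prop:factorisation_of_local_collapses} applied to the local collapse part.

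First, I would record the key computational lemma that, when Proposition \ref{prop:pullback_of_local_collapse_along_subdivision} is applied to a pair in which either the local collapse or the subdivision is an identity, the pullback is trivial. This yields the composition formulas
\[
[g, \idd{}] \circ [f, \idd{}] = [gf, \idd{}], \qquad [\idd{}, t] \circ [\idd{}, s] = [\idd{}, ts], \qquad [f, \idd{}] \circ [\idd{}, s] = [f, s]
\]
whenever the composites make sense in $\rdCpx_{\S\C\L}$. These imply that each of the classes $\S$, $\C$, $\L$ is closed under composition in $\rdCpx_{\S\C\L}$ (the closure of $\C$ being part of Proposition \ref{prop:factorisation_of_local_collapses}) and contains all isomorphisms, and they will be the workhorse for both existence and uniqueness.

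For existence, given $[f, s]\colon P \to Q$ with $s\colon P \sd P'$ and $f\colon P' \to Q$, I apply Proposition \ref{prop:factorisation_of_local_collapses} to write $f = jp$ with $p\colon P' \surj P''$ a collapse and $j\colon P'' \incl Q$ a local embedding. Using the formulas above,
\[
[f, s] = [j, \idd{}] \circ [p, \idd{}] \circ [\idd{}, s],
\]
which exhibits $[f, s]$ as a local embedding after a collapse after a subdivision. For uniqueness, suppose $[f, s] = [h_0, \idd{}] \circ [g_0, \idd{}] \circ [\idd{}, s'']$ with $s''$ a subdivision, $g_0$ a collapse, and $h_0$ a local embedding. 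The same formulas compress the right-hand side to $[h_0 g_0, s'']$, so that $[h_0 g_0, s''] = [f, s]$. By the equivalence relation defining local subdivision-collapses, there is an isomorphism $\varphi$ with $s'' = \varphi s$ and $h_0 g_0 \cdot \varphi = f$. Since $g_0 \cdot \varphi$ is a collapse (being the composite of a collapse with an isomorphism), the pair $(g_0 \cdot \varphi, h_0)$ is a $(\C, \L)$-factorisation of $f$ in $\rdCpx_{\C\L}$, and the essential uniqueness in Proposition \ref{prop:factorisation_of_local_collapses} identifies it with $(p, j)$ up to a further isomorphism. Composing the two isomorphisms gives the required essential uniqueness of the ternary factorisation of $[f, s]$.

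The work is essentially administrative: the only real subtlety is tracking the isomorphism witnessing the equivalence relation that defines $\rdCpx_{\S\C\L}$, ensuring that the essential uniqueness of $(\C, \L)$-factorisations of local collapses transports cleanly to essential uniqueness at the level of equivalence classes of spans. Once the composition formulas above are in place, there is no genuine obstacle, and no new input beyond Proposition \ref{prop:factorisation_of_local_collapses} and Proposition \ref{prop:pullback_of_local_collapse_along_subdivision} is required.
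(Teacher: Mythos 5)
Your proposal is correct and follows essentially the same route as the paper: the paper's one-line proof observes that the $(\S, \C\L)$ factorisation holds by construction of $\rdCpx_{\S\C\L}$ and then invokes Proposition \ref{prop:factorisation_of_local_collapses} to split the local-collapse factor, which is exactly what your composition formulas and splicing argument make explicit. The only difference is that you spell out the bookkeeping (the trivial-pullback identities and the transport of essential uniqueness across the equivalence relation on spans) that the paper leaves implicit.
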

\begin{proof}
	The fact that $(\S, \C\L)$ is an orthogonal factorisation systems holds essentially by construction, and we conclude by Proposition \ref{prop:factorisation_of_local_collapses}.
\end{proof}

\noindent
By general properties of ternary factorisation systems, \emph{any} pair of classes determines a subcategory of morphisms whose factor in the third class is an isomorphism.
The following definition corresponds to the class $\S\L$.

\begin{dfn}[Local subdivision]
	A local subdivision-collapse $[f, s]\colon P \to Q$ is a \emph{local subdivision} if $f$ is a local embedding.
\end{dfn}

\noindent 
Regular directed complexes and local subdivision-collapses form a category $\rdCpx_{\S\L}$.

\begin{rmk}
	By Lemma \ref{lem:restriction_of_subdivision}, given an embedding $\iota\colon U \incl P$ and a subdivision $s\colon P \sd Q$, the square
\[\begin{tikzcd}[column sep=large]
	U & {s(\iota(U))} \\
	P & Q
	\arrow[arloop->, "{\restr{s}{\iota(U)}\iota}", from=1-1, to=1-2]
	\arrow["\iota", hook, from=1-1, to=2-1]
	\arrow[hook, from=1-2, to=2-2]
	\arrow[arloop->, "s", from=2-1, to=2-2]
\end{tikzcd}\]
	is an $(\S, \L)$ factorisation of $s\iota$, where the $\L$\nbd factor is in fact an embedding.
	Thus $\S$ also forms an orthogonal factorisation system with the restricted right class $\E \subseteq \L$ of embeddings.
	Note that this is not the case for the $(\C, \L)$ factorisation system: the $(\C, \L)$ factorisation of a morphism of the form $p\iota$ with $\iota \in \E$ can have an $\L$\nbd factor which is not an embedding.
\end{rmk}

\noindent
We know from \cite[Proposition 7.2.21]{hadzihasanovic2024combinatorics} that Gray products determine a monoidal structure on the category of regular directed complexes and comaps, so they also determine a monoidal structure $(\rdCpx_\S, \gray, 1)$.
We also know from \cite[Corollary 7.2.18]{hadzihasanovic2024combinatorics} that Gray products determine a monoidal structure $(\rdCpx_\L, \gray, 1)$.
Thus, the following is straightforward.

\begin{prop} \label{prop:gray_product_of_regular_functors}
	Let $[f, s]\colon P \to Q$ and $[g, t]\colon P' \to Q'$ be local subdivisions of regular directed complexes.
	Then $f \gray g$ and $s \gray t$ determine a local subdivision
	\[
		[f, s]\gray[g, t] \eqdef [f \gray g, s \gray t]\colon P \gray P' \to Q \gray Q'.
	\]
	This determines a monoidal structure $(\rdCpx_{\S\L}, \gray, 1)$, such that the inclusions
\[\begin{tikzcd}
	{(\rdCpx_\S, \gray, 1)} & {(\rdCpx_{\S\L}, \gray, 1)} & {(\rdCpx_\L, \gray, 1)}
	\arrow[hook, from=1-1, to=1-2]
	\arrow[hook', from=1-3, to=1-2]
\end{tikzcd}\]
	are strong monoidal.
\end{prop}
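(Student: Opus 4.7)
The plan is to assemble $\gray$ on $\rdCpx_{\S\L}$ from the two monoidal structures cited just above, $(\rdCpx_\S, \gray, 1)$ and $(\rdCpx_\L, \gray, 1)$, exploiting the canonical decomposition of a local subdivision into a subdivision followed by a local embedding. For a representative $(f,s)$ with $s\colon P \sd P''$ and $f\colon P'' \to Q$ a local embedding, and likewise $(g,t)$, the formula $[f,s] \gray [g,t] \eqdef [f \gray g, s \gray t]$ returns a legitimate composable pair because $s \gray t$ is a subdivision and $f \gray g$ is a local embedding by the two cited monoidal structures. Independence of the representative is immediate: replacing $(f,s)$ by $(f\invrs{\varphi}, \varphi s)$ along an isomorphism $\varphi$ yields a pair whose intermediate object differs from the original by the isomorphism $\varphi \gray \idd{}$, so the resulting class in $\rdCpx_{\S\L}$ is unchanged, and similarly on the right factor.

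The main step is functoriality. Identity preservation reduces to $\idd{} \gray \idd{} = \idd{}$ on each of the two components. For composition, expanding the formula $[g,t] \circ [f,s] = [g(t^*f), (f^*t)s]$ on both sides reduces the claim to the assertion that the Gray product commutes with the pullback construction of Proposition \ref{prop:pullback_of_local_collapse_along_subdivision}, namely that $(s'^*f) \gray (t'^*g)$ and $(f^*s') \gray (g^*t')$ realise, respectively, $(s' \gray t')^*(f \gray g)$ and $(f \gray g)^*(s' \gray t')$. At the level of underlying posets the Gray product is the cartesian product, which commutes with pullbacks in $\Pos$ (being a right adjoint in the cartesian closed structure), so the two sides have canonically isomorphic underlying posets; the orientations then coincide by the uniqueness clause of Proposition \ref{prop:pullback_of_local_collapse_along_subdivision}, which pins down a single orientation making the legs into a local collapse and a comap.

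For the unit $1$, associator, and unitors, it is enough to take the corresponding isomorphisms of regular directed complexes: these already live in $\rdCpx_\L$, hence in $\rdCpx_{\S\L}$, and the pentagon and triangle coherences transport along the inclusion from the monoidal structure on $\rdCpx_\L$. The two inclusion functors strictly preserve both $\gray$ and $1$, via $[\idd{}, s] \gray [\idd{}, t] = [\idd{}, s \gray t]$ and $[f, \idd{}] \gray [g, \idd{}] = [f \gray g, \idd{}]$, so they are strict, and \emph{a fortiori} strong, monoidal. The main obstacle is the compatibility of Gray product with the pullback of Proposition \ref{prop:pullback_of_local_collapse_along_subdivision}; beyond that step, the argument is essentially bookkeeping, provided the underlying poset identification and the orientation uniqueness are both invoked.
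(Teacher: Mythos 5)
Your proposal is correct and follows exactly the route the paper intends: the paper gives no written proof, declaring the statement ``straightforward'' once the monoidal structures on $\rdCpx_\S$ (from Gray products of comaps) and $\rdCpx_\L$ (from Gray products of local embeddings) are in hand, and your argument is precisely the fleshed-out version of that. You also correctly isolate the one genuinely non-trivial point — that the Gray product commutes with the pullback of Proposition \ref{prop:pullback_of_local_collapse_along_subdivision} defining composition — and your treatment of it (products commute with pullbacks in $\Pos$, then the uniqueness clause pins down the orientation, which applies here because $f \gray g$ is a local \emph{embedding}, sidestepping the failure of Gray products to preserve general local collapses) is sound.
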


\begin{comm}
	On the other hand, while Gray products also extend to maps and to cartesian maps of regular directed complexes, this is \emph{not} the case for local collapses; indeed, by non-commutativity of Gray products, the Gray product of two cylindrical collapses is almost never a cylindrical collapse.
\end{comm}

\noindent
We have a similar story for joins, using \cite[Proposition 7.4.22, Proposition 7.4.23]{hadzihasanovic2024combinatorics}.

\begin{prop} \label{prop:join_of_local_sd_collapses}
	Let $[f, s]\colon P \to Q$ and $[g, t]\colon P' \to Q'$ be local subdivisions of regular directed complexes.
	Then $f \join g$ and $s \join t$ determine a local subdivision
	\[
		[f, s]\join[g, t] \eqdef [f \join g, s \join t]\colon P \join P' \to Q \join Q'.
	\]
	This determines a monoidal structure $(\rdCpx_{\S\L}, \join, \varnothing)$, such that the inclusions
\[\begin{tikzcd}
	{(\rdCpx_\S, \join, \varnothing)} & {(\rdCpx_{\S\L}, \join, \varnothing)} & {(\rdCpx_\L, \join, \varnothing)}
	\arrow[hook, from=1-1, to=1-2]
	\arrow[hook', from=1-3, to=1-2]
\end{tikzcd}\]
	are strong monoidal.
\end{prop}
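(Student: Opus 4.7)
The plan is to mirror the proof of Proposition \ref{prop:gray_product_of_regular_functors}, using the monoidal structures $(\rdCpx_\S, \join, \varnothing)$ and $(\rdCpx_\L, \join, \varnothing)$ provided by \cite[Proposition 7.4.22, Proposition 7.4.23]{hadzihasanovic2024combinatorics} on the two constituent wide subcategories, and showing that they glue together into a monoidal structure on $\rdCpx_{\S\L}$. First, I would verify that the assignment $[f,s] \join [g,t] \eqdef [f \join g, s \join t]$ is well-defined. The join of two local embeddings is a local embedding, so $[f \join g, s \join t]$ is indeed a local subdivision; and if $[f, s] = [f\invrs{\varphi}, \varphi s]$ for some isomorphism $\varphi$, then $\varphi \join \idd{}$ is an isomorphism of regular directed complexes, and bifunctoriality of $\join$ separately on $\rdCpx_\S$ and $\rdCpx_\L$ gives $[f \join g, s \join t] = [(f\invrs{\varphi}) \join g, (\varphi s) \join t]$.

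The substantive step is bifunctoriality on composites. Given further local subdivisions $[f',s']\colon Q \to R$ and $[g',t']\colon Q' \to R'$, I need to show
\[ ([f', s'] \join [g', t']) \circ ([f,s] \join [g,t]) = ([f',s'][f,s]) \join ([g',t'][g,t]). \]
By the definition of composition in $\rdCpx_{\S\C\L}$, both sides are computed via the pullback construction of Proposition \ref{prop:pullback_of_local_collapse_along_subdivision}: the left-hand side involves the pullback of $f \join g$ along $s' \join t'$, while the right-hand side involves the join of the two separate pullbacks. The key claim is therefore that the pullback construction of Proposition \ref{prop:pullback_of_local_collapse_along_subdivision} commutes with the join, both in underlying order-preserving maps and in orientations. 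The underlying posets agree because $\join$ is defined via $\augm{-}$ and $\dimin{-}$ from the Cartesian product $\gray$, which preserves pullbacks; and the uniqueness of the orientation asserted by Proposition \ref{prop:pullback_of_local_collapse_along_subdivision} then forces the orientations to coincide, since both give an orientation making the relevant square a pullback of local embedding against subdivision.

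The associators and unitors of $(\rdCpx_{\S\L}, \join, \varnothing)$ are then inherited from either constituent monoidal structure: the underlying poset isomorphisms $(P \join Q) \join R \iso P \join (Q \join R)$ and $\varnothing \join P \iso P \iso P \join \varnothing$ are isomorphisms of regular directed complexes, which sit inside $\rdCpx_{\S\L}$ via both inclusions and agree there. Coherence follows from coherence in $\rdCpx_\S$ (or $\rdCpx_\L$). Strong monoidality of the inclusions is then immediate from the definition of $\join$ on local subdivision-collapses.

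The main obstacle will be verifying the pullback--join compatibility in the bifunctoriality step. While at the level of underlying posets this is essentially formal, one must track carefully how the orientation produced by Proposition \ref{prop:pullback_of_local_collapse_along_subdivision} transforms under $\augm{-}$ and $\dimin{-}$; however, invoking the uniqueness clause of that proposition should bypass any explicit calculation, reducing the whole verification to identifying the underlying poset of the pullback of a join with the join of the pullbacks.
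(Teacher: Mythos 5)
Your proposal is correct and follows essentially the same route as the paper, which gives no explicit proof here: it simply notes that "we have a similar story for joins," deferring to the monoidal structures on $\rdCpx_\S$ and $\rdCpx_\L$ from \cite[Proposition 7.4.22, Proposition 7.4.23]{hadzihasanovic2024combinatorics} and the argument already sketched for Proposition \ref{prop:gray_product_of_regular_functors}. Your elaboration of the bifunctoriality step --- reducing it to compatibility of the pullback of Proposition \ref{prop:pullback_of_local_collapse_along_subdivision} with $\join$ at the level of underlying posets, and then invoking the uniqueness of the orientation --- is exactly the verification the paper leaves implicit.
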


\subsection{Colimits of finite regular directed complexes} \label{sec:colimits}

\noindent 
We conclude this part of the article by studying some colimits in categories of regular directed complexes.
For simplicity, and because this is the only case that we will need, we will focus only on categories of \emph{finite} regular directed complexes; the extension to the infinite case is straightforward but involves some technicalities about transfinite compositions that we wish to avoid.

Restricting to the full subcategories on finite objects, we have a diagram of inclusions of subcategories
\begin{equation} \label{eq:subcategories_of_scl}
\begin{tikzcd}
	{\frdCpx_\L} & {\frdCpx_{\S\L}} & {\frdCpx_\S} \\
	{\frdCpx_{\C\L}} & {\frdCpx_{\S\C\L}} & {\frdCpx_{\S\C}}
	\arrow[hook, from=1-1, to=1-2]
	\arrow[hook, from=1-1, to=2-1]
	\arrow[hook, from=1-2, to=2-2]
	\arrow[hook', from=1-3, to=1-2]
	\arrow[hook, from=1-3, to=2-3]
	\arrow[hook, from=2-1, to=2-2]
	\arrow[hook', from=2-3, to=2-2]
\end{tikzcd}
\end{equation}
Let
\begin{align*}
	\Ibd & \eqdef \set{\bdmap_U\colon \bd{}{}U \incl U \mid \text{$U$ is an atom}}, \\
	\Imrg & \eqdef \set{\mrg{}_U\colon \mrg{U} \sd U \mid \text{$U$ is a round molecule}},
\end{align*}
which we call the set of \emph{boundary inclusions} and the set of \emph{co-mergers}.

\begin{prop} \label{prop:initial_object}
	The empty regular directed complex $\varnothing$ is a strict initial object in $\frdCpx_\L$, preserved by all possible inclusions in diagram (\ref{eq:subcategories_of_scl}).
\end{prop}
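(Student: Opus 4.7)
The plan is a direct verification, broken into three nearly-automatic checks and one slightly more delicate one.

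First, I would verify initiality and strictness in $\frdCpx_\L$ itself. For any finite regular directed complex $P$, the underlying function $\varnothing \to P$ is the unique empty function; since the local-embedding condition quantifies over cells of the source, it is vacuously satisfied, so the empty function is in fact a (unique) local embedding. For strictness, a local embedding $P \to \varnothing$ has an injective underlying function into the empty set, forcing $P = \varnothing$.

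Next, I would trace the inclusion arrows of diagram~(\ref{eq:subcategories_of_scl}) out of $\frdCpx_\L$, which reach exactly $\frdCpx_{\C\L}$, $\frdCpx_{\S\L}$, and $\frdCpx_{\S\C\L}$. For $\frdCpx_{\C\L}$, the same vacuity argument works: a local collapse $\varnothing \to P$ has empty underlying function, and the cylindrical-collapse condition on $\restr{f}{\clset{x}}$ for $x \in \varnothing$ is vacuous; uniqueness is immediate. So the unique local embedding from $\frdCpx_\L$ is the unique morphism in $\frdCpx_{\C\L}$ as well.

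The only substantive step is the subdivision case, which handles both $\frdCpx_{\S\L}$ and $\frdCpx_{\S\C\L}$ at once. A local subdivision-collapse $[f, s]\colon \varnothing \to P$ is specified by some $P'$ together with a subdivision $s\colon \varnothing \sd P'$ and a local collapse (or local embedding) $f\colon P' \to P$. By definition, $s$ is a comap $\conv{s}\colon P' \to \varnothing$, and in particular an order-preserving function of underlying sets landing in $\varnothing$; this forces $P' = \varnothing$ and $s = \idd{\varnothing}$. Hence $[f, s] = [f, \idd{\varnothing}]$, where $f\colon \varnothing \to P$ is the unique local collapse (which coincides with the unique local embedding) from $\varnothing$. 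Thus the unique morphism $\varnothing \to P$ in each of $\frdCpx_{\S\L}$ and $\frdCpx_{\S\C\L}$ is precisely the image of the initial morphism from $\frdCpx_\L$.

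I do not expect any real obstacle: the whole argument is bookkeeping from the definitions, the slight subtlety being only to observe that a subdivision out of $\varnothing$ is, after unfolding, a function whose codomain is empty, which forces its domain to be empty as well.
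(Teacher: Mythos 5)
Your proposal is correct and follows essentially the same route as the paper: the paper likewise derives strict initiality from the corresponding (vacuous) property of underlying posets and observes that the only subdivision with domain or codomain $\varnothing$ is the identity, which is exactly your key step for the $\S$-containing categories. The extra detail you supply (unfolding a subdivision out of $\varnothing$ as a comap into $\varnothing$) is just the explicit form of the paper's one-line remark.
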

\begin{proof}
	The fact that $\varnothing$ is a strict initial object in $\frdCpx_{\C\L}$ follows from the same property in $\Pos$, and the only subdivision with domain or codomain $\varnothing$ is the identity.
\end{proof}

\begin{prop} \label{prop:pushouts_of_embeddings_along_embeddings}
	The category $\frdCpx_\L$ has pushouts of embeddings along embeddings, preserved by all possible inclusions in diagram (\ref{eq:subcategories_of_scl}).
	Moreover, embeddings are stable under these pushouts.
\end{prop}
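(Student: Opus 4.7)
The plan is to reduce the existence of these pushouts to the known fact that $\ogPos$ has pushouts of embeddings along embeddings, computed in $\Pos$, which also double as pullbacks and under which embeddings are stable. Given embeddings $\iota\colon K \incl P$ and $j\colon K \incl Q$ of regular directed complexes, I would form the pushout $P \pout{K} Q$ in $\ogPos$ and verify that it is a regular directed complex: each element $z$ in the pushout is the image of some element in $P$ or $Q$ along one of the canonical embeddings, so $\clset{z}$ in $P \pout{K} Q$ is isomorphic to a closure in $P$ or $Q$, hence an atom. This immediately yields the pushout in $\frdCpx_\L$ with embeddings stable under it.

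For preservation under each inclusion into a larger category in diagram \eqref{eq:subcategories_of_scl}, the strategy is always the same. Given a cocone on the pushout span in the larger category, I extract the underlying order-preserving data, produce the unique candidate morphism out of $P \pout{K} Q$ using the $\ogPos$ (equivalently $\Pos$) pushout universal property, and then verify locally that the candidate belongs to the relevant class. For $\frdCpx_{\C\L}$, this is direct: at each $z \in P \pout{K} Q$, the restriction of the candidate to $\clset{z}$ coincides with the restriction of one of the cocone legs at a representative of $z$, which is by hypothesis a cylindrical collapse.

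For $\frdCpx_{\S\L}$ and $\frdCpx_{\S\C\L}$, a cocone consists of two equivalence classes of spans $[f_P, s_P]$ and $[f_Q, s_Q]$ that agree as morphisms out of $K$. Unwinding this agreement yields an isomorphism between the two subdivided copies of $K$ inside the domains of $f_P$ and $f_Q$, and I would use it to paste the two subdivision-legs into a single comap from a glued object to $P \pout{K} Q$. The crucial point here is that the $\ogPos$ pushout is also a pullback, so the glued domain is itself a pushout of regular directed complexes (by the same argument as for $P \pout{K} Q$), and the glued comap can be checked, locally over each $z \in P \pout{K} Q$, to have atomic fibres with the correct boundaries because it reuses the data of $\conv{s_P}$ or $\conv{s_Q}$ verbatim. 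The local-collapse legs then glue by the $\frdCpx_{\C\L}$ case.

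The step I expect to require the most care is the gluing of spans in the $\frdCpx_{\S\C\L}$ case: I must check that the construction respects the equivalence identifying $[f,s]$ with $[f\invrs{\varphi}, \varphi s]$, that uniqueness of the candidate morphism follows from uniqueness of both its subdivision and local-collapse legs up to this equivalence, and that Proposition \ref{prop:pullback_of_local_collapse_along_subdivision} is applicable to bring composites such as $[f_P, s_P] \circ [\iota, \idd{K}]$ into a common shape. Once this bookkeeping is done, all remaining verifications reduce to the universal property in $\ogPos$, with no genuinely new combinatorial input required.
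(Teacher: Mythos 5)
Your proposal follows essentially the same route as the paper: compute the pushout in $\Pos$/$\ogPos$ and verify regularity locally, handle $\frdCpx_{\C\L}$ by local verification of the mediating map, and for $\frdCpx_{\S\L}$ and $\frdCpx_{\S\C\L}$ use the $(\S,\E)$/$(\S,\C\L)$ factorisation to split a cocone into subdivision legs (glued into a comap out of the pushout of the subdivided objects, which exists by the first part) and local-collapse legs (handled by the already-established $\C\L$ case). The only quibble is terminological: the comap condition requires inverse images of closures to be \emph{molecules} with matching boundaries, not atoms, but your local verification goes through verbatim with that correction.
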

\begin{proof}
	The fact that $\frdCpx_\L$ has these pushouts and that they are preserved by the inclusion into $\frdCpx_{\C\L}$ is a simple variant of \cite[Proposition 6.2.27]{hadzihasanovic2024combinatorics}, restricted from all maps to local collapses; moreover, the pushouts are constructed as in $\Pos$.
	It then suffices to show that these squares are still universal after inclusion into $\frdCpx_{\S\L}$ and into $\frdCpx_{\S\C\L}$.
	We will consider the latter and note that the proof restricts to the former.
	Consider a cone under a span $(\iota, \iota')$ of embeddings in $\frdCpx_{\S\C\L}$.
	Using the $(\S, \C\L)$ factorisation, and the fact that $(\S, \E)$ form an orthogonal factorisation system, this corresponds to an essentially unique commutative diagram
\[\begin{tikzcd}
	& P & {s(P)} \\
	U & {t(U)} && R \\
	& {P'} & {s'(P')}
	\arrow[arloop->, "s", from=1-2, to=1-3]
	\arrow["{f\in \C\L}", from=1-3, to=2-4]
	\arrow["\iota", hook, from=2-1, to=1-2]
	\arrow[arloop->, "t", from=2-1, to=2-2]
	\arrow["{\iota'}", hook, from=2-1, to=3-2]
	\arrow["j", hook, from=2-2, to=1-3]
	\arrow["{j'}", hook, from=2-2, to=3-3]
	\arrow[arloop->, "{s'}", from=3-2, to=3-3]
	\arrow["{f' \in \C\L}", from=3-3, to=2-4]
\end{tikzcd}\]
	Ignoring $f$ and $f'$, the rest of the diagram has, by construction of the factorisations as in Lemma \ref{lem:restriction_of_subdivision}, an underlying commutative diagram in $\Pos$
	\[\begin{tikzcd}
	& P & {s(P)} \\
	U & {t(U)} \\
	& {P'} & {s'(P')}
	\arrow["{\conv{s}}"', from=1-3, to=1-2]
	\arrow["\iota", hook, from=2-1, to=1-2]
	\arrow["{\iota'}", hook, from=2-1, to=3-2]
	\arrow["j", hook, from=2-2, to=1-3]
	\arrow["{\conv{t}}"', from=2-2, to=2-1]
	\arrow["{j'}", hook, from=2-2, to=3-3]
	\arrow["{\conv{s'}}"', from=3-3, to=3-2]
\end{tikzcd}\]
	which is a morphism of spans, so it induces a unique order-preserving map between their pushouts 
\[\begin{tikzcd}
	& P & {s(P)} \\
	U && {P \pout{U} P'} & {s(P) \pout{t(U)} s'(P')} \\
	& {P'} & {s'(P')}
	\arrow[hook, from=1-2, to=2-3]
	\arrow["{\conv{s}}"', from=1-3, to=1-2]
	\arrow[hook, from=1-3, to=2-4]
	\arrow["\iota", hook, from=2-1, to=1-2]
	\arrow["{\iota'}", hook, from=2-1, to=3-2]
	\arrow["\lrcorner"{anchor=center, pos=0.125, rotate=-135}, draw=none, from=2-3, to=2-1]
	\arrow["{\conv{t'}}"', dashed, from=2-4, to=2-3]
	\arrow[hook, from=3-2, to=2-3]
	\arrow[hook, from=3-3, to=2-4]
	\arrow["{\conv{s'}}"', from=3-3, to=3-2]
\end{tikzcd}\]
	and it is straightforward to verify that $\conv{t'}$ lifts to a (necessarily unique) comap between the tips of the two cones lifted to $\rdCpx_{\S\C\L}$.
	We conclude by the already proven universality of the pushout $s(P) \pout{t(U)} s'(P')$ in $\rdCpx_{\C\L}$.
\end{proof}

\begin{prop} \label{prop:construction_of_embeddings_from_boundary_inclusions}
	Every embedding of finite regular directed complexes can be constructed in $\frdCpx_\L$ as a composite of pushouts of boundary inclusions along embeddings.
\end{prop}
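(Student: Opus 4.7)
The plan is to proceed by well-founded induction on the cardinality $|Q \setminus \iota(P)|$ of the complement of the image of a given embedding $\iota\colon P \incl Q$, exploiting the fact that $Q$ is finite. When $|Q \setminus \iota(P)| = 0$, the embedding $\iota$ is an isomorphism, hence trivially a (length-zero) composite of pushouts of boundary inclusions.

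For the inductive step, pick an element $x \in Q \setminus \iota(P)$ of minimal dimension. Every $y \in \bd{}{}\clset{x}$ satisfies $\dim y < \dim x$, so by minimality $y \notin Q \setminus \iota(P)$, which means $\bd{}{}\clset{x} \subseteq \iota(P)$; restricting $\invrs{\iota}$ produces an embedding $\bd{}{}\clset{x} \incl P$. Since $\clset{x}$ is an atom, I can form the pushout
\[\begin{tikzcd}
	{\bd{}{}\clset{x}} & {\clset{x}} \\
	P & {P'}
	\arrow["{\bdmap_{\clset{x}}}", hook, from=1-1, to=1-2]
	\arrow[hook, from=1-1, to=2-1]
	\arrow[hook, from=1-2, to=2-2]
	\arrow[hook, from=2-1, to=2-2]
	\arrow["\lrcorner"{anchor=center, pos=0.125, rotate=180}, draw=none, from=2-2, to=1-1]
\end{tikzcd}\]
of the boundary inclusion $\bdmap_{\clset{x}} \in \Ibd$ along this embedding, which exists in $\frdCpx_\L$ by Proposition \ref{prop:pushouts_of_embeddings_along_embeddings}.

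By the same proposition, this pushout is computed at the level of underlying posets, hence the underlying set of $P'$ can be identified with the closed subset $\iota(P) \cup \clset{x}$ of $Q$, and its orientation coincides with the one inherited from $Q$ by uniqueness of the induced orientation on a closed subset. The cospan of embeddings $P \incl Q$ and $\clset{x} \incl Q$ agrees on $\bd{}{}\clset{x}$, so the universal property produces an embedding $\iota'\colon P' \incl Q$ whose image is $\iota(P) \cup \clset{x}$; in particular $|Q \setminus \iota'(P')| = |Q \setminus \iota(P)| - 1$. The inductive hypothesis applied to $\iota'$ decomposes it as a composite of pushouts of boundary inclusions, and prepending the pushout square above yields the desired decomposition of $\iota$.

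There is no substantial obstacle here, only a verification: one has to confirm that the pushout $P'$ really does embed into $Q$ with image $\iota(P) \cup \clset{x}$. This is the only genuinely combinatorial point, and it is automatic from Proposition \ref{prop:pushouts_of_embeddings_along_embeddings} together with the fact that the orientation on a closed subset of a regular directed complex is uniquely inherited from the ambient orientation; the rest is bookkeeping driven by the minimal-dimension choice of $x$, which ensures at each stage that the boundary of the next atom to be attached already lives in the subcomplex built so far.
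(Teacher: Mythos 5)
Your proof is correct and follows essentially the same strategy as the paper's: both attach the cells of $Q \setminus \iota(P)$ one at a time in non-decreasing order of dimension, using at each step that the boundary of the next atom already lies in the subcomplex built so far, so that the attachment is a pushout of a boundary inclusion along an embedding. The only cosmetic difference is that the paper fixes an ordering of $Q \setminus \iota(P)$ by dimension up front and runs an explicit double recursion, whereas you select a minimal-dimension element at each step of a cardinality induction.
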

\begin{proof}
	Let $\iota\colon P \incl Q$ be an embedding, let $A \eqdef Q \setminus \iota(P)$, and for each $i \leq n \eqdef \dim Q$, fix an ordering $(\order{i,j}{x})_{j=1}^{m_i}$ of the elements of $\gr{i}{A}$.
	Then, letting $m_{-1} \eqdef 0$, we define, recursively on $i \geq -1$ and $j \in \set{0,\ldots,m_i}$, regular directed complexes $\order{i,j}{P}$ that fit into a factorisation
	\[
		P \incl \order{i,j}{P} \incl Q
	\]
	and such that the embedding of $\order{i, j}{P}$ into $Q$ is surjective on cells of dimension $< i$.
	We let $\order{-1,0}{P} \eqdef P$, and then, for $i \geq 0$, $\order{i,0}{P} \eqdef \order{i-1, m_{i-1}}{P}$ and, for $j \in \set{1, \ldots, m_i}$, we let $\order{i,j}{P}$ be defined by the pushout
\[\begin{tikzcd}
	{\bd{}{}\order{i,j}{x}} & {\clset{\order{i,j}{x}}} \\
	{\order{i,j-1}{P}} & {\order{i,j}{P}}
	\arrow[hook, from=1-1, to=1-2]
	\arrow[hook, from=1-1, to=2-1]
	\arrow[hook, from=1-2, to=2-2]
	\arrow[hook, from=2-1, to=2-2]
	\arrow["\lrcorner"{anchor=center, pos=0.125, rotate=180}, draw=none, from=2-2, to=1-1]
\end{tikzcd}\]
where the leftmost vertical embedding picks the closed subset whose image in $Q$ is equal to $\bd{}{}\order{i,j}{x}$, compatibly with the orientation of $\order{i,j}{x}$.
	Then the evident square induces a unique embedding $\order{i,j}{P} \incl Q$.
	By construction, $\order{n,m_n}{P} \incl Q$ is an isomorphism.
\end{proof}

\begin{lem} \label{lem:pushouts_of_co-mergers}
	The category $\frdCpx_{\S\L}$ has pushouts of co-mergers along embeddings, preserved by the inclusion $\frdCpx_{\S\L} \incl \frdCpx_{\S\C\L}$.
\end{lem}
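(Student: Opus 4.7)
The plan is to construct the pushout explicitly by substitution. Since $\mrg{U} = \bd{}{-}U \celto \bd{}{+}U$ is an atom, with greatest element $\top$, the embedding $\iota\colon \mrg{U} \incl P$ identifies $\mrg{U}$ with $\clset{x}$ where $x \eqdef \iota(\top)$. I would take $R \eqdef \subs{P}{U}{x}_{\mrg{}_U}$ as the candidate pushout: by Lemma \ref{lem:substitution_preserves_rdcpx}, this is a regular directed complex equipped with a subdivision $t\colon P \sd R$. There is an evident embedding $j\colon U \incl R$ realising $U$ as the substituted part of the underlying set $(P \setminus \clset{x}) \pout{} U$.

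First I would verify that the square with edges $\mrg{}_U$, $\iota$, $j$, $t$ commutes in $\rdCpx_{\S\L}$. Computing the composites of spans via Proposition \ref{prop:pullback_of_local_collapse_along_subdivision}, both $j \circ \mrg{}_U$ and $t \circ \iota$ present as the same span $[j, \mrg{}_U]\colon \mrg{U} \to R$: the pullback of $\iota$ against $t$ recovers $U$ with the canonical embedding, and the restriction of $t$ to $\iota(\mrg{U})$ reproduces $\mrg{}_U$ by construction of the substitution.

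Next, for universality, I would consider a cone consisting of local subdivisions $\alpha\colon U \to S$ and $\beta\colon P \to S$ with $\alpha \circ \mrg{}_U = \beta \circ \iota$, and use the $(\S, \L)$ factorisation to write $\alpha = j_U \circ s_U$ and $\beta = j_P \circ s_P$ with $s_U, s_P$ subdivisions and $j_U, j_P$ local embeddings. Computing the two composites as above, the cone condition forces the restriction $s_P|_{\clset{x}}$ to coincide with $s_U \circ \mrg{}_U$ up to a unique isomorphism of the middle object, and $j_P$ restricted to the image of $\clset{x}$ to match $j_U$ under this isomorphism. This is exactly the compatibility needed to glue $s_U$ into $s_P$ via a further substitution, producing a subdivision $R \sd R'$, and to assemble $j_U$ and $j_P$ into a single local embedding $R' \to S$. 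Uniqueness follows from the rigidity of the substitution construction: any mediating morphism must restrict to $\alpha$ on the $U$-part of $R$ and to $\beta$ on $P \setminus \clset{x}$, and its span representative is determined up to the equivalence relation.

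Finally, for preservation by the inclusion $\frdCpx_{\S\L} \incl \frdCpx_{\S\C\L}$, I would repeat the universality argument for cones of general local subdivision-collapses, using the ternary $(\S, \C, \L)$ factorisation from Proposition \ref{prop:ternary_factorisation_systems} instead of $(\S, \L)$: the additional collapse component sits between the subdivision and the local embedding, and by the local nature of cylindrical collapses it does not interact with the substitution construction on $\clset{x}$ beyond what happens on $U$ itself, so the induced morphism retains the same form. The main obstacle throughout will be the careful bookkeeping of equivalence classes of spans and verifying compositions via the pullback formula of Proposition \ref{prop:pullback_of_local_collapse_along_subdivision}; the rigidity of molecules, together with the uniqueness of the $(\S, \L)$ and $(\S, \C, \L)$ factorisations, does most of the work.
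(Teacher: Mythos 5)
Your proposal is correct and follows essentially the same route as the paper: the pushout is realised as the substitution $\subs{P}{U}{x}_{\mrg{}_U}$ of Lemma \ref{lem:substitution_preserves_rdcpx}, and universality is obtained by normalising a cone via the orthogonal factorisation of subdivisions against (local) embeddings and then constructing the mediating subdivision from the resulting compatibility, which the paper phrases as an explicit comap defined using the fact that the relevant squares are pullbacks in $\Pos$. The only organisational difference is that the paper proves universality once for cones in $\frdCpx_{\S\C\L}$ (treating the $\C\L$-part as a single undissected factor) and observes that the argument restricts to $\frdCpx_{\S\L}$, whereas you run the $(\S,\L)$ case first and then redo it with the ternary factorisation; both are fine.
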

\begin{proof}
	Consider a span $(\iota, s)$ of an embedding and a co-merger; because the domain of a co-merger is an atom, we may assume that $\iota$ is the inclusion of an atom $\clset{x} \incl P$.
	From Lemma \ref{lem:substitution_preserves_rdcpx}, we have a commutative square
\begin{equation} \label{eq:pushout_of_comerger}
	\begin{tikzcd}
	{\clset{x}} & V \\
	P & {\subs{P}{V}{x}_s}
	\arrow[arloop->, "s", from=1-1, to=1-2]
	\arrow["\iota", hook, from=1-1, to=2-1]
	\arrow["j", hook, from=1-2, to=2-2]
	\arrow[arloop->, "t", from=2-1, to=2-2]
\end{tikzcd}
\end{equation}
	which we claim is a pushout in $\frdCpx_{\S\L}$ and in $\frdCpx_{\S\C\L}$.
	We consider the latter category, and note that the proof restricts to the former.
	Consider a cone under $(\iota, s)$ in $\frdCpx_{\S\C\L}$.
	Using the $(\S, \C\L)$ factorisation, and the fact that $(\S, \E)$ form an orthogonal factorisation system, this corresponds to an essentially unique commutative diagram
	\[\begin{tikzcd}[row sep=scriptsize]
	& V & {t'(V)} \\
	{\clset{x}} &&&& Q. \\
	& P && {s'(P)}
	\arrow[arloop->, "{t'}", from=1-2, to=1-3]
	\arrow["{f \in \C\L}", curve={height=-18pt}, from=1-3, to=2-5]
	\arrow["{\iota'}", hook, from=1-3, to=3-4]
	\arrow[arloop->, "s", from=2-1, to=1-2]
	\arrow[arloop->, "t's", curve={height=12pt}, from=2-1, to=1-3]
	\arrow["\iota", hook, from=2-1, to=3-2]
	\arrow[arloop->, "{s'}", from=3-2, to=3-4]
	\arrow["{f' \in \C\L}", from=3-4, to=2-5]
\end{tikzcd}\]
	Now, observe that, by construction, the commutative squares of underlying order-preserving maps
\begin{equation} \label{eq:two_pullback_squares}
\begin{tikzcd}
	{\clset{x}} & V \\
	P & {\subs{P}{V}{x}_s,}
	\arrow["\iota", hook, from=1-1, to=2-1]
	\arrow["{\conv{s}}"', from=1-2, to=1-1]
	\arrow["\lrcorner"{anchor=center, pos=0.125, rotate=-90}, draw=none, from=1-2, to=2-1]
	\arrow["j", hook, from=1-2, to=2-2]
	\arrow["{\conv{t}}"', from=2-2, to=2-1]
\end{tikzcd}
\begin{tikzcd}
	{\clset{x}} & V & {t'(V)} \\
	P && {s'(P)}
	\arrow["\iota", hook, from=1-1, to=2-1]
	\arrow["{\conv{s}}"', from=1-2, to=1-1]
	\arrow["{\conv{t'}}"', from=1-3, to=1-2]
	\arrow["\lrcorner"{anchor=center, pos=0.125, rotate=-90}, draw=none, from=1-3, to=2-1]
	\arrow["{\iota'}", hook, from=1-3, to=2-3]
	\arrow["{\conv{s'}}"', from=2-3, to=2-1]
\end{tikzcd}
\end{equation}
	are pullback squares in $\Pos$, so the assignment
	\[
		y \mapsto \begin{cases}
			\conv{s'}(y) 
			& \text{if $\conv{s'}(y) \in P \setminus \clset{x}$}, \\
			\conv{t'}(y')
			& \text{if $\conv{s'}(y) \in \clset{x}$, $y' \in t'(V)$, $y = \iota'(y')$}
		\end{cases}
	\]
	determines an order-preserving map $\conv{u}\colon s'(P) \to \subs{P}{V}{x}_s$, which is easily determined to be a comap; moreover, this is evidently unique with the property that the rightmost square in (\ref{eq:two_pullback_squares}) factors through the leftmost as
	\[\begin{tikzcd}
	{\clset{x}} & V & {t'(V)} \\
	P & {\subs{P}{V}{x}_s} & {s'(P)}.
	\arrow["\iota", hook, from=1-1, to=2-1]
	\arrow["{\conv{s}}"', from=1-2, to=1-1]
	\arrow["j", hook, from=1-2, to=2-2]
	\arrow["{\conv{t'}}"', from=1-3, to=1-2]
	\arrow["\lrcorner"{anchor=center, pos=0.125, rotate=-90}, draw=none, from=1-3, to=2-1]
	\arrow["{\iota'}", hook, from=1-3, to=2-3]
	\arrow["{\conv{t}}"', from=2-2, to=2-1]
	\arrow["{\conv{u}}"', dashed, from=2-3, to=2-2]
\end{tikzcd}\]
	This proves the universality of the square (\ref{eq:pushout_of_comerger}).
\end{proof}

\begin{prop} \label{prop:construction_of_subdivisions_from_comergers}
	Every subdivision of finite regular directed complexes can be constructed in $\frdCpx_{\S\L}$ as a composition of pushouts of co-mergers along embeddings.
\end{prop}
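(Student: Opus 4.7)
The plan is to induct on the non-negative integer $N(s) \eqdef |Q| - |P|$, peeling off one pushout of a co-merger at each step. For the base case $N(s) = 0$, I would argue that $\conv{s}\colon Q \to P$ is surjective (for each $y \in P$, its fibre equals $\inter{V_y}$, the interior of the non-empty round molecule $V_y \eqdef \invrs{\conv{s}}\clset{y}$) and hence bijective between finite sets of equal cardinality; rigidity combined with the boundary-preserving properties of a comap then upgrades this bijection to an isomorphism of regular directed complexes, so $s$ is represented by the empty composite of pushouts.

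For the inductive step with $N(s) > 0$, I would pick a cell $x \in P$ of minimal dimension such that $V_x \eqdef s(\clset{x})$ properly contains $\clset{x}$. Minimality forces every $y \in \bd{}{}x$ to satisfy $V_y \cong \clset{y}$, which propagates to an isomorphism $\bd{}{}V_x \cong \bd{}{}x$ compatible with input/output partitions. The restricted subdivision $s_x \eqdef \restr{s}{\clset{x}}\colon \clset{x} \sd V_x$ then acts as the identity on the boundary and collapses $\inter{V_x}$ onto the top cell of $\clset{x}$; by rigidity of atoms it is identified with the co-merger $\mrg{}_{V_x}\colon \mrg{V_x} \sd V_x$ along the canonical isomorphism $\clset{x} \cong \mrg{V_x}$ supplied by the shared boundary. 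Lemma \ref{lem:pushouts_of_co-mergers} then exhibits the induced subdivision $t_1\colon P \sd P_1$, with $P_1 \eqdef \subs{P}{V_x}{x}_{s_x}$, as a pushout of this co-merger along the embedding $\clset{x} \incl P$.

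To close the induction, I would factor $s = u \circ t_1$ by defining a subdivision $u\colon P_1 \sd Q$ whose underlying comap $\conv{u}\colon Q \to P_1$ is the identity on $V_x \subseteq Q$ (viewing $V_x$ as a closed subcomplex of $P_1$ via the substitution) and agrees with $\conv{s}$ elsewhere. The verification that $\conv{u}$ is a well-defined comap and that $\conv{t_1} \circ \conv{u} = \conv{s}$ is a direct check using the explicit formula for $\subs{P}{V_x}{x}_{s_x}$ and the fact that $V_x = \invrs{\conv{s}}\clset{x}$. Since $|P_1| - |P| = |V_x| - |\clset{x}| > 0$, we have $N(u) < N(s)$, so the inductive hypothesis expresses $u$ as a composite of pushouts of co-mergers along embeddings, and concatenation with $t_1$ gives the required decomposition of $s$.

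The principal technical subtlety is the identification of $s_x$ with a genuine co-merger: one must verify that the minimal-dimension choice propagates through the comap structure so as to produce an isomorphism of atoms $\clset{x} \cong \mrg{V_x}$ compatible with the $(-, +)$ splitting of the boundary, rather than merely of underlying graded posets. Without this minimality, the inductive step would only yield a pushout along some subdivision $\clset{x} \sd V_x$, which need not arise from a co-merger; the minimal-dimension selection is what ensures that the extra subdivision structure on $\bd{}{}V_x$ collapses to that of $\bd{}{}x$, making $s_x$ fit the co-merger template exactly.
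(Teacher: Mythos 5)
Your proof is correct and follows essentially the same route as the paper's: both peel off one pushout of a co-merger per cell of $P$, processing cells in non-decreasing dimension so that the boundary of the cell currently being treated is already matched with its image in $Q$ — which is exactly what makes the restricted subdivision a genuine co-merger. The only difference is bookkeeping: you induct on $|Q| - |P|$ and select a single minimal-dimension non-trivially subdivided cell at each step, whereas the paper runs a double recursion over a fixed enumeration of all cells of $P$ by dimension; note also that the factorisation $s = u \circ t_1$ you construct by hand already follows from the universal property of the pushout established in Lemma \ref{lem:pushouts_of_co-mergers}.
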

\begin{proof}
	Let $s\colon P \sd Q$ be a subdivision, and for each $i \leq n \eqdef \dim{P} = \dim{Q}$, fix an ordering $(\order{i,j}{x})_{j=1}^{m_i}$ of the elements of $\gr{i}{P}$.
	Then, letting $m_{-1} \eqdef 0$, we define, recursively on $i \geq -1$ and $j \in \set{0, \ldots, m_i}$, regular directed complexes $\order{i,j}{P}$ that fit into a factorisation
	\[
		P \sd \order{i,j}{P} \sd Q
	\]
	and such that the restriction of $\order{i,j}{P} \sd Q$ to cells of dimension $< i$ is an isomorphism, while $P \sd \order{i,j}{P}$ is bijective on the cells $\order{i',j'}{x}$ with $i' > i$ or with $i' = i$ and $j' > j$.
	We let $\order{-1, 0}{P} \eqdef P$, and then, for $i \geq 0$, $\order{i,0}{P} \eqdef \order{i-1,m_{i-1}}{P}$ and, for $j \in \set{1, \ldots, m_i}$, we let $\order{i,j}{P}$ be defined by the pushout
	\[\begin{tikzcd}
	{\mrg{s\left(\clset{\order{i,j}{x}}\right)}} & {s\left(\clset{\order{i,j}{x}}\right)} \\
	{\order{i,j-1}{P}} & {\order{i,j}{P}}
	\arrow[arloop->, "{\mrg{}}", from=1-1, to=1-2]
	\arrow[hook, from=1-1, to=2-1]
	\arrow[hook, from=1-2, to=2-2]
	\arrow[arloop->, from=2-1, to=2-2]
	\arrow["\lrcorner"{anchor=center, pos=0.125, rotate=180}, draw=none, from=2-2, to=1-1]
\end{tikzcd}\]
	where the leftmost embedding picks the unique cell in the image of $\order{i,j}{x}$ through $P \sd \order{i,j-1}{P}$.
	The evident square induces a unique subdivision $\order{i,j}{P} \sd Q$.
	By construction, $\order{n,m_n}{P} \sd Q$ is an isomorphism.
\end{proof}

\begin{prop} \label{prop:pushouts_of_subdivisions_along_embeddings}
	The category $\frdCpx_{\S\L}$ has pushouts of subdivisions along embeddings, preserved by the inclusion into $\frdCpx_{\S\C\L}$.
	Moreover, both subdivisions and embeddings are stable under these pushouts.
\end{prop}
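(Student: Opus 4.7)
The plan is to reduce the statement to the already-established Lemma \ref{lem:pushouts_of_co-mergers}, by decomposing an arbitrary subdivision into a sequence of pushouts of co-mergers via Proposition \ref{prop:construction_of_subdivisions_from_comergers}, and then using the pasting law for pushouts to glue the individual pushouts together.

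More precisely, fix an embedding $\iota\colon P \incl R$ and a subdivision $s\colon P \sd Q$ in $\frdCpx_{\S\L}$. By Proposition \ref{prop:construction_of_subdivisions_from_comergers}, $s$ factors as a finite composite $P \overset{t_1}{\sd} P_1 \overset{t_2}{\sd} \ldots \overset{t_n}{\sd} P_n = Q$, where each $t_k$ is obtained as the pushout of a co-merger $\mrg{}_{V_k}\colon \mrg{V_k} \sd V_k$ along an embedding $\mrg{V_k} \incl P_{k-1}$. I will argue by induction on $n$ that the pushout of $s$ along $\iota$ exists in $\frdCpx_{\S\L}$, is preserved by the inclusion into $\frdCpx_{\S\C\L}$, and has a subdivision as its new $\iota$-leg and an embedding as its new $s$-leg.

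For $n=0$, the statement is trivial. For $n=1$, the subdivision $s = t_1$ is itself a pushout of a co-merger along an embedding $\iota_1\colon \mrg{V_1} \incl P$. Composing with $\iota$ gives an embedding $\iota\iota_1\colon \mrg{V_1} \incl R$, so by Lemma \ref{lem:pushouts_of_co-mergers} the pushout of $\mrg{}_{V_1}$ along $\iota\iota_1$ exists in $\frdCpx_{\S\L}$ (and is preserved into $\frdCpx_{\S\C\L}$), producing a subdivision $R \sd R_1$ and an embedding $V_1 \incl R_1$; by the pasting law for pushouts in both categories, the outer rectangle
\[\begin{tikzcd}
	{\mrg{V_1}} & {V_1} \\
	P & Q \\
	R & {R_1}
	\arrow[arloop->, from=1-1, to=1-2]
	\arrow[hook, from=1-1, to=2-1]
	\arrow[hook, from=1-2, to=2-2]
	\arrow[arloop->, from=2-1, to=2-2]
	\arrow[hook, "\iota", from=2-1, to=3-1]
	\arrow[hook, from=2-2, to=3-2]
	\arrow[arloop->, from=3-1, to=3-2]
\end{tikzcd}\]
has its bottom square a pushout. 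For the inductive step, write $s$ as $s'\circ t_1$ with $s'\colon P_1 \sd Q$ a composite of $n-1$ such pushouts; applying the base case to $t_1$ and $\iota$ yields an embedding $\iota_1'\colon P_1 \incl R_1$ together with a subdivision $R \sd R_1$, and applying the inductive hypothesis to $s'$ and $\iota_1'$ produces the desired pushout square, which glues to the first one via the pasting law.

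For stability: at each inductive step, the newly produced horizontal arrow is a pushout of a co-merger, hence a subdivision by Lemma \ref{lem:pushouts_of_co-mergers}, and the new vertical arrow is an embedding by the same lemma. The composite of the horizontal subdivisions is then a subdivision in $\frdCpx_{\S\L}$, and the final vertical embedding $Q = P_n \incl R_n$ is an embedding, so both classes are stable under these pushouts. Preservation by the inclusion into $\frdCpx_{\S\C\L}$ follows because each elementary pushout is preserved (Lemma \ref{lem:pushouts_of_co-mergers}) and the pasting law holds in $\frdCpx_{\S\C\L}$ too.

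The main potential obstacle is purely bookkeeping: at each inductive step, one must verify that the ``co-merger leg'' really is embedded into the next stage $R_{k-1}$ via a genuine embedding so that Lemma \ref{lem:pushouts_of_co-mergers} applies, and that the induced morphism $P_k \to R_k$ comes out as an embedding rather than merely a local embedding. Both facts follow from the second clause of Lemma \ref{lem:pushouts_of_co-mergers} (embeddings are stable under pushouts of co-mergers) together with Proposition \ref{prop:pushouts_of_embeddings_along_embeddings} used to track how closed subsets transfer across the diagram.
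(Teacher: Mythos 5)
Your proposal is correct and is exactly the paper's argument: the paper's proof reads, in its entirety, ``Follows from Lemma \ref{lem:pushouts_of_co-mergers} combined with Proposition \ref{prop:construction_of_subdivisions_from_comergers} by repeated application of the pasting law for pushouts.'' You have simply spelled out the induction and the bookkeeping that the paper leaves implicit.
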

\begin{proof}
	Follows from Lemma \ref{lem:pushouts_of_co-mergers} combined with Proposition \ref{prop:construction_of_subdivisions_from_comergers} by repeated application of the pasting law for pushouts.
\end{proof}

\begin{comm}
	An explicit construction of the pushout of $\iota\colon U \incl P$ along a subdivision $s\colon U \sd V$ is the oriented graded poset $\subs{P}{V}{\iota(U)}_s$ whose underlying set is $(P \setminus \iota(U)) \pout{} V$ with the partial order and orientation defined, for each $z \in \subs{P}{V}{\iota(U)}_s$ and $\a \in \set{-, +}$, by
	\[
		\cofaces{}{\a}z \eqdef
		\begin{cases}
			\cofaces{P}{\a}z 
			& \text{if $z \in P \setminus \iota(U)$}, \\
			\cofaces{V}{\a}z + \bigcup 
			\set{\cofaces{P}{\a}y \mid y = \iota(\conv{s}(z)), \dim y = \dim z}
			& \text{if $z \in V$},
		\end{cases}
	\]
	an evident generalisation of substitution.
\end{comm}

\begin{dfn}[The classes of colimits $\Gamma$ and $\Gamma_\S$]
	We let $\Gamma$ denote the class of colimit cones in $\frdCpx_\L$ consisting of
	\begin{enumerate}
		\item the initial object,
		\item pushouts of boundary inclusions along embeddings.
	\end{enumerate}
	We let $\Gamma_\S$ denote the class of colimit cones in $\frdCpx_{\S\L}$ which additionally contains pushouts of co-mergers along embeddings.
\end{dfn}

\begin{rmk}
	There are countably many isomorphism classes of finite regular directed complexes, and each of them has finitely many embeddings and subdivisions into it, so the classes $\Gamma$ and $\Gamma_\S$ are essentially small, that is, they can be treated as sets.
\end{rmk}

\begin{prop} \label{prop:gamma_continuous_functors}
	Let $\fun{F}$ be a functor defined on one of the subcategories of $\frdCpx_{\S\C\L}$ in diagram (\ref{eq:subcategories_of_scl}).
	Then
	\begin{enumerate}
		\item if $\fun{F}$ is defined at least on $\frdCpx_{\L}$ and preserves $\Gamma$\nbd colimits, then it preserves finite coproducts and all pushouts of embeddings along embeddings,
		\item if $\fun{F}$ is defined at least on $\frdCpx_{\S\L}$ and preserves $\Gamma_\S$\nbd colimits, then it also preserves all pushouts of subdivisions along embeddings.
	\end{enumerate}
\end{prop}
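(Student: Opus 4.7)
The plan is to use the two structural decomposition results, Propositions \ref{prop:construction_of_embeddings_from_boundary_inclusions} and \ref{prop:construction_of_subdivisions_from_comergers}, to write every relevant pushout as an iterated composite of generating pushouts that lie in $\Gamma$ or $\Gamma_\S$, and then invoke the pasting law for pushouts (both in the domain and in the target of $\fun{F}$) to pass preservation of generators to preservation of composites.

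For part (1), I would first prove that $\fun{F}$ preserves pushouts of embeddings along embeddings. Given a span $P \hookleftarrow U \hookrightarrow Q$, Proposition \ref{prop:construction_of_embeddings_from_boundary_inclusions} factors $U \incl Q$ as $U = \order{0}{Q} \incl \ldots \incl \order{N}{Q} = Q$, with each step a pushout of a boundary inclusion along an embedding. Form inductively $\order{k}{R} \eqdef \order{k}{Q} \pout{\order{k-1}{Q}} \order{k-1}{R}$ starting from $\order{0}{R} \eqdef P$; by Proposition \ref{prop:pushouts_of_embeddings_along_embeddings} these exist and are built by embeddings, and by the pasting law each step $\order{k-1}{R} \incl \order{k}{R}$ is itself a pushout of a boundary inclusion along an embedding, hence a $\Gamma$-colimit preserved by $\fun{F}$. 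A second application of the pasting law in the target category gives that $\fun{F}$ preserves the total pushout $P \pout{U} Q$. Finite coproducts then reduce to this case: the empty coproduct is the initial object, which is itself in $\Gamma$, and a binary coproduct $P \sqcup Q$ is the pushout of the embeddings $\varnothing \incl P$ and $\varnothing \incl Q$.

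For part (2), since $\Gamma \subseteq \Gamma_\S$, preservation of $\Gamma_\S$-colimits entails preservation of $\Gamma$-colimits, so part (1) applies to $\fun{F}$. Given a span $V \overset{s}{\dashleftarrow} U \hookrightarrow P$ with $s$ a subdivision, use Proposition \ref{prop:construction_of_subdivisions_from_comergers} to decompose $s$ as $U = \order{0}{W} \sd \ldots \sd \order{N}{W} = V$ with each $\order{k-1}{W} \sd \order{k}{W}$ a pushout of a co-merger $\mrg{}_{X_k}\colon \mrg{X_k} \sd X_k$ along an embedding $\mrg{X_k} \incl \order{k-1}{W}$. Form the iterated pushouts $\order{k}{R} \eqdef \order{k}{W} \pout{\order{k-1}{W}} \order{k-1}{R}$, which exist by Proposition \ref{prop:pushouts_of_subdivisions_along_embeddings}. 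Two vertical applications of the pasting law identify the big rectangle with vertices $\mrg{X_k},\, X_k,\, \order{k-1}{R},\, \order{k}{R}$ as the pushout of the co-merger $\mrg{}_{X_k}$ along the composite embedding $\mrg{X_k} \incl \order{k-1}{R}$, which is a $\Gamma_\S$-colimit and is therefore preserved by $\fun{F}$; since $\fun{F}$ also preserves the top generating square, the bottom square is a pushout in the target by the pasting law applied there. A final horizontal pasting over $k = 1, \ldots, N$ yields preservation of $V \pout{U} P$.

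The main obstacle, rather than any deep step, is bookkeeping: one must check at every stage that the maps produced by successive pushouts remain of the correct type (embedding, subdivision, or co-merger) so that the generating colimit invoked truly belongs to $\Gamma$ or $\Gamma_\S$. This is ensured by the stability clauses of Propositions \ref{prop:pushouts_of_embeddings_along_embeddings} and \ref{prop:pushouts_of_subdivisions_along_embeddings}, together with the fact that the construction in Proposition \ref{prop:construction_of_subdivisions_from_comergers} factors each stage through a co-merger whose domain embeds into the current intermediate object.
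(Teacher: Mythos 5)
Your proposal is correct and follows exactly the route the paper intends: the paper's own proof is the one-line remark that the statement ``follows from Proposition \ref{prop:construction_of_embeddings_from_boundary_inclusions} and Proposition \ref{prop:construction_of_subdivisions_from_comergers}'', and your argument is precisely the detailed expansion of that reduction via the pasting law for pushouts, including the necessary type-checking supplied by the stability clauses.
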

\begin{proof}
	Follows from Proposition \ref{prop:construction_of_embeddings_from_boundary_inclusions} and Proposition \ref{prop:construction_of_subdivisions_from_comergers}.
\end{proof}

\noindent We will refer also to these more general colimits as ``$\Gamma$\nbd colimits'' and ``$\Gamma_\S$\nbd colimits'', respectively.

\begin{prop} \label{prop:gray_preservation_of_colimits}
	Let $P$ be a regular directed complex.
	The endofunctors $P \gray -$ and $- \gray P$
	\begin{enumerate}
		\item preserve all $\Gamma$-colimits when defined on $\frdCpx_\L$,
		\item preserve all $\Gamma_\S$-colimits when defined on $\frdCpx_{\S\L}$.
	\end{enumerate}
\end{prop}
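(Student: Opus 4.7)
The plan is to treat each type of colimit in $\Gamma$ and $\Gamma_\S$ separately, in each case reducing the verification to (i) preservation at the level of underlying posets, and (ii) a local check that the Gray product orientation matches the orientation specified on the colimit. The key observation is that the underlying $\Pos$-valued functor of $P \gray -$ (respectively $- \gray P$) is the Cartesian product $P \times -$ (respectively $- \times P$), which preserves both limits and colimits: it is a left adjoint because $\Pos$ is Cartesian closed, and it preserves limits since products commute with all limits. Moreover, by Proposition \ref{prop:gray_product_of_regular_functors}, the Gray product restricts to bifunctors on $\frdCpx_\L$ and $\frdCpx_{\S\L}$ sending embeddings to embeddings and subdivisions to subdivisions, so the images of the colimit cones land where expected. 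We focus on $P \gray -$; the argument for $- \gray P$ is identical.

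First I would handle the $\Gamma$\nbd colimits. Preservation of the initial object is immediate, since $P \gray \varnothing$ has underlying set $P \times \varnothing = \varnothing$. For pushouts of boundary inclusions along embeddings in $\frdCpx_\L$, Proposition \ref{prop:pushouts_of_embeddings_along_embeddings} states that such pushouts are computed in $\Pos$, so $P \times -$ preserves the underlying poset of the pushout. The defining formula
\[
\faces{}{\a}(p, z) = \faces{}{\a}p \times \set{z} \pout{} \set{p} \times \faces{}{(-)^{\dim p}\a}z
\]
shows that the orientation on $P \gray X$ at each $(p, z)$ depends only on the local orientations at $p$ and $z$. Since each element of the pushout $Q \pout{\bd{}{}U} U$ is uniquely an element of either $Q$ or $U$ with orientation inherited from its origin (compatibly on the shared $\bd{}{}U$ because the inclusions are embeddings), the orientation on $P \gray (Q \pout{\bd{}{}U} U)$ decomposes as the gluing of the orientations on $P \gray Q$ and $P \gray U$ along $P \gray \bd{}{}U$, matching the pushout orientation in $\frdCpx_\L$.

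For pushouts of co-mergers along embeddings in $\frdCpx_{\S\L}$, I would invoke Lemma \ref{lem:pushouts_of_co-mergers}: such a pushout has the form $\subs{Q}{V}{x}_s$, and its universality is characterised there by the fact that certain squares of underlying order-preserving maps are pullbacks in $\Pos$. Since $P \times -$ preserves pullbacks, applying $P \gray -$ yields analogous pullback squares at the level of underlying posets, and the universality argument from the proof of Lemma \ref{lem:pushouts_of_co-mergers} can be rerun verbatim to obtain the required universal comap. The hard part will be the orientation check: one must verify that the orientation of $P \gray \subs{Q}{V}{x}_s$ at each $(p, z)$ matches the orientation given by the explicit substitution formula on the candidate pushout $(P \gray Q) \pout{P \gray \clset{x}} (P \gray V)$ in $\frdCpx_{\S\L}$. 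This requires splitting into cases according to whether $z$ lies in $Q \setminus \clset{x}$ or in $V$, and in the latter case, whether $\conv{s}(z)$ has the same dimension as $z$; in each case, unwinding the definition of $\cofaces{}{\a}$ on the substitution alongside the Gray product face formula should show that the two orientations agree by a direct but somewhat tedious computation.
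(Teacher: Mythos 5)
Your argument is correct, and for the $\Gamma$\nbd colimits it coincides with the paper's: both reduce to the fact that these colimits are computed in $\Pos$, preserved and reflected by the forgetful functor, and that $P \times -$ preserves all colimits by cartesian closedness of $\Pos$, with the orientation formula for the Gray product being purely local. For the pushouts of co-mergers, the paper takes a shortcut worth comparing with your plan of rerunning the universality argument of Lemma \ref{lem:pushouts_of_co-mergers} via pullback preservation and then doing a case-by-case orientation check. Since that pushout is the substitution $\subs{Q}{V}{x}_s$, whose underlying set is the disjoint union $(Q \setminus \clset{x}) \pout{} V$, distributivity of products over disjoint unions gives a natural bijection between $P \times ((Q \setminus \clset{x}) \pout{} V)$ and $(P \times (Q \setminus \clset{x})) \pout{} (P \times V)$, which lifts to an isomorphism of oriented graded posets between $P \gray \subs{Q}{V}{x}_s$ and $\subs{(P \gray Q)}{P \gray V}{P \gray \clset{x}}_{\idd{P} \gray s}$; the latter is already known to be the pushout of $\idd{P} \gray s$ along $P \gray \clset{x} \incl P \gray Q$ by Proposition \ref{prop:pushouts_of_subdivisions_along_embeddings}, so its universal property comes for free and nothing needs to be re-derived. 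This also exposes a small circularity in your plan: to ``rerun verbatim'' the mediating-map construction of Lemma \ref{lem:pushouts_of_co-mergers} for $P \gray \subs{Q}{V}{x}_s$, you already need the disjoint-union decomposition of its underlying set, which is exactly the distributivity observation -- so you may as well use it to identify the object outright and collapse the ``tedious orientation computation'' into a single comparison of the substitution and Gray-product face formulas.
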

\begin{proof}
	Since $\Gamma$\nbd colimits are preserved and reflected by the forgetful functor to $\Pos$, the first result follows from the fact that $P \times -$ and $- \times P$ preserves both embeddings and all colimits, since $\Pos$ is cartesian closed.
	As for $\Gamma_\S$-colimits, consider a pushout square of an embedding $\iota\colon \clset{x} \incl Q$ along a co-merger $s\colon \clset{x} \sd V$.
	Then the natural bijection
	\[
		P \times ((Q \setminus \clset{x}) \pout{} V) \simeq
		(P \times (Q \setminus \clset{x})) \pout{} (P \times V)
	\]
	given by distributivity of products over disjoint unions lifts to an isomorphism between $P \gray \subs{Q}{V}{x}_s$ and $\subs{(P \gray Q)}{P \gray V}{P \gray \clset{x}}_{\idd{P} \gray s}$.
\end{proof}

\begin{prop} \label{prop:join_preservation_of_connected_colimits}
	Let $P$ be a regular directed complex.
	The endofunctors $P \join -$ and $- \join P$
	\begin{enumerate}
		\item preserve all connected $\Gamma$-colimits when defined on $\frdCpx_\L$,
		\item preserve all connected $\Gamma_\S$-colimits when defined on $\frdCpx_{\S\L}$.
	\end{enumerate}
\end{prop}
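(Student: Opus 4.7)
The plan is to reduce, via Proposition \ref{prop:gamma_continuous_functors}, to preservation of connected pushouts of boundary inclusions along embeddings (for the first claim) and of co-mergers along embeddings (for the second claim), and then to transport the argument of Proposition \ref{prop:gray_preservation_of_colimits} through the equivalence $(\augm{-}, \dimin{-})$ between $\ogPos$ and $\augmo{\ogPos}$. By the definition $P \join Q \eqdef \dimin{(\augm{P} \gray \augm{Q})}$, the functor $P \join (-)$ factors as $\augm{-}$, then $\augm{P} \gray (-)$, then $\dimin{-}$, and symmetrically for $(-) \join P$, so it suffices to check preservation by each of these three factors.

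The key step is to verify that $\augm{-}\colon \ogPos \to \augmo{\ogPos}$ sends a connected pushout in $\ogPos$ to a pushout in $\augmo{\ogPos}$, viewed as a full subcategory of $\ogPos$. This works because, in a connected pushout of $A \leftarrow C \to B$ with $C \neq \varnothing$, the added positive least elements of $\augm{A}$, $\augm{B}$, $\augm{C}$ are all identified under the cocone maps, so the $\ogPos$-pushout $\augm{A} \pout{\augm{C}} \augm{B}$ acquires a unique least element and coincides with $\augm{(A \pout{C} B)}$. This is precisely where the connectedness hypothesis enters: for non-connected colimits such as the empty diagram or coproducts, the distinct bottoms are not identified and the argument breaks down.

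Next, I would adapt the proof of Proposition \ref{prop:gray_preservation_of_colimits} to show that $\augm{P} \gray (-)$ and $(-) \gray \augm{P}$ preserve pushouts of embeddings along embeddings and of co-mergers along embeddings in $\augmo{\ogPos}$. The arguments go through essentially verbatim: distributivity of products over pushouts in $\Pos$ handles the embedding case, while the substitution calculation used for co-mergers is insensitive to the presence of a positive least element, which is simply paired with every element of the other factor and propagates transparently. Since $\dimin{-}$ is a left inverse of $\augm{-}$ up to natural isomorphism and preserves all colimits, composing the three preservation statements yields the claim for $P \join (-)$, and the symmetric statement for $(-) \join P$ is obtained identically.

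The main obstacle is the careful bookkeeping of the augmentation in the first step, including the verification that the relevant pushouts actually remain inside the subcategories $\augmo{(\frdCpx_\L)}$ and $\augmo{(\frdCpx_{\S\L})}$ of augmented finite regular directed complexes; once this is established, the calculations in the second step reduce to those already performed for Gray products.
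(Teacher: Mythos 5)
Your proposal is correct and is essentially the expansion of the paper's own (one-line) proof, which simply declares the result a "simple variant" of Proposition \ref{prop:gray_preservation_of_colimits} using \cite[Lemma 1.3.23]{hadzihasanovic2024combinatorics} — the latter playing exactly the role of your key step, namely that $\augm{-}$ turns connected pushouts into pushouts because the adjoined least elements get identified. Your identification of where connectedness enters, and the transport of the Gray-product argument through the equivalence $(\augm{-}, \dimin{-})$, match the intended argument.
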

\begin{proof}
	Simple variant of Proposition \ref{prop:gray_preservation_of_colimits} using \cite[Lemma 1.3.23]{hadzihasanovic2024combinatorics}.
\end{proof}

\begin{rmk}
	The only non-connected $\Gamma_\S$-colimit is the initial object.
\end{rmk}

\section{Homotopy-theoretic background} \label{part:homotopy}

\subsection{Recollections on weak model categories} \label{sec:weakmodel}

\noindent
We recall some definitions and results about weak model structures from \cite{henry2020weak}.

\begin{dfn}[Class of cofibrations]
	Let $\cat{C}$ be a category with an initial object $\init$.
	A \emph{class of cofibrations} is a class $\Cof$ of morphisms in $\cat{C}$, called \emph{cofibrations} and notated as $\iota\colon A \incl B$, satisfying the following properties.
	For each $A \in \Ob \cat{C}$, say that $A$ is \emph{cofibrant} if the unique morphism $\init \to A$ is a cofibration.
	Then
	\begin{enumerate}
		\item $\init$ is cofibrant,
		\item if $\iota\colon A \incl B$ is a cofibration, then $A$ is cofibrant,
		\item if $A$ is cofibrant and $\varphi\colon A \iso B$ is an isomorphism, then $\varphi$ is a cofibration,
		\item if $\iota\colon A \incl B$ and $j\colon B \incl C$ are cofibrations, then so is $j\iota\colon A \incl C$,
		\item if $\iota\colon A \incl B$ is a cofibration, $f\colon A \to C$ a morphism, and $C$ is cofibrant, then the pushout
	\[\begin{tikzcd}
	A & C \\
	B & {B \pout{A} C}
	\arrow["f", from=1-1, to=1-2]
	\arrow["\iota", hook, from=1-1, to=2-1]
	\arrow["j", hook, from=1-2, to=2-2]
	\arrow[from=2-1, to=2-2]
	\arrow["\lrcorner"{anchor=center, pos=0.125, rotate=180}, draw=none, from=2-2, to=1-1]
\end{tikzcd}\]
	exists and $j\colon C \incl B \pout{A} C$ is a cofibration.
	\end{enumerate}
\end{dfn}

\noindent The following is dual to the previous definition, but we spell it out explicitly to set some notation.

\begin{dfn}[Class of fibrations]
	Let $\cat{C}$ be a category with a terminal object $\term$.
	A \emph{class of fibrations} is a class $\Fib$ of morphisms in $\cat{C}$, called \emph{fibrations} and notated as $p\colon X \surj Y$, satisfying the following properties.
	For each $X \in \Ob \cat{C}$, say that $X$ is \emph{fibrant} if the unique morphism $X \to \term$ is a fibration.
  	Then
	\begin{enumerate}
		\item $\term$ is fibrant,
		\item if $p\colon X \surj Y$ is a fibration, then $Y$ is fibrant,
		\item if $Y$ is fibrant and $\varphi\colon X \iso Y$ is an isomorphism, then $\varphi$ is a fibration,
		\item if $p\colon X \surj Y$ and $q\colon Y \surj Z$ are fibrations, then so is $qp\colon X \surj Z$,
		\item if $p\colon X \surj Y$ is a fibration, $f\colon Z \to Y$ a morphism, and $Z$ is fibrant, then the pullback
			\[\begin{tikzcd}
	{Z \pback{Y} X} & X \\
	Z & Y
	\arrow[from=1-1, to=1-2]
	\arrow["q", two heads, from=1-1, to=2-1]
	\arrow["\lrcorner"{anchor=center, pos=0.125}, draw=none, from=1-1, to=2-2]
	\arrow["p", two heads, from=1-2, to=2-2]
	\arrow["f", from=2-1, to=2-2]
\end{tikzcd}\]
	exists and $q\colon Z \pback{Y} X \surj Z$ is a fibration.
	\end{enumerate}
\end{dfn}

\begin{comm}
	The second condition---that domains of cofibrations must be cofibrant, and codomains of fibrations fibrant---is not part of the original definition, but as discussed in \cite[Remark 2.1.4]{henry2020weak}, it is not restrictive to assume it; we do so to avoid constantly specifying that a cofibration is between cofibrant objects and a fibration is between fibrant objects.
\end{comm}

\noindent
Given a class of morphisms $\cls{A}$, we let $\rlp (\cls{A})$ denote the class of morphisms with the right lifting property against $\cls{A}$, and, dually, $\llp (\cls{A})$ denote the class of morphisms with the left lifting property against $\cls{A}$.

\begin{dfn}[Acyclic cofibrations and acyclic fibrations]
	Let $(\cat{C}, \Cof, \Fib)$ be a category with a class of cofibrations and a class of fibrations.
	The class of \emph{acyclic cofibrations} in $\cat{C}$ is $\ACof \eqdef \Cof \cap \llp(\Fib)$.
	Dually, the class of \emph{acyclic fibrations} in $\cat{C}$ is $\AFib \eqdef \Fib \cap \rlp(\Cof)$.
	We notate an acyclic cofibration as $\iota\colon A \acof B$ and an acyclic fibration as $p\colon X \afib Y$.
\end{dfn}

\begin{dfn}[Relative cylinder]
	Let $(\cat{C}, \Cof, \Fib)$ be a category with a class of cofibrations and a class of fibrations, let $j\colon A \incl B$ be a cofibration, and consider the commutative diagram
\[\begin{tikzcd}
	A & B \\
	B & {B \pout{A} B} & B.
	\arrow["j", hook, from=1-1, to=1-2]
	\arrow["j", hook, from=1-1, to=2-1]
	\arrow["{\iota_2}", hook, from=1-2, to=2-2]
	\arrow[curve={height=-12pt}, equals, from=1-2, to=2-3]
	\arrow["{\iota_1}", hook, from=2-1, to=2-2]
	\arrow[curve={height=30pt}, equals, from=2-1, to=2-3]
	\arrow["\lrcorner"{anchor=center, pos=0.125, rotate=180}, draw=none, from=2-2, to=1-1]
	\arrow["{\rcodiag{j}}", dashed, from=2-2, to=2-3]
\end{tikzcd}\]
	A \emph{relative cylinder for $j$} is a cofibration $(\ell^-, \ell^+)\colon B \pout{A} B \incl I_A B$ such that
	\begin{enumerate}
		\item there exist an acyclic cofibration $d\colon B \acof D_A B$ and a factorisation of $d\rcodiag{j}$ through $(\ell^-, \ell^+)$,
		\item $\ell^-\colon B \acof I_A B$ is an acyclic cofibration.
	\end{enumerate}
	A relative cylinder for $j$ is \emph{strong} if the acyclic cofibration $d$ can be taken to be $\idd{B}$.
	If $A$ is cofibrant, then a \emph{cylinder for $A$} is a relative cylinder for the unique morphism $\init \incl A$.
\end{dfn}

\begin{dfn}[Relative path object]
	Let $(\cat{C}, \Cof, \Fib)$ be a category with a class of cofibrations and a class of fibrations, let $p\colon X \surj Y$ be a fibration, and consider the commutative diagram
\[\begin{tikzcd}
	X & {X \pback{Y} X} & X \\
	& X & Y.
	\arrow["{\rdiag{p}}", dashed, from=1-1, to=1-2]
	\arrow[curve={height=-30pt}, equals, from=1-1, to=1-3]
	\arrow[curve={height=12pt}, equals, from=1-1, to=2-2]
	\arrow["{\pi_2}", two heads, from=1-2, to=1-3]
	\arrow["{\pi_1}", two heads, from=1-2, to=2-2]
	\arrow["\lrcorner"{anchor=center, pos=0.125}, draw=none, from=1-2, to=2-3]
	\arrow["p", two heads, from=1-3, to=2-3]
	\arrow["p", two heads, from=2-2, to=2-3]
\end{tikzcd}\]
	A \emph{relative path object for $p$} is a fibration $(q^-, q^+)\colon P_Y X \surj X \pback{Y} X$ such that
	\begin{enumerate}
		\item there exist an acyclic fibration $t\colon T_Y X \afib X$ and a factorisation of $\rdiag{p}t$ through $(q^-, q^+)$,
		\item $q^- \colon P_Y X \afib X$ is an acyclic fibration.
	\end{enumerate}
	A relative path object for $p$ is \emph{strong} if the acyclic fibration $t$ can be taken to be $\idd{X}$.
	If $X$ is fibrant, then a \emph{path object for $X$} is a relative path object for the unique morphism $X \surj \term$.
\end{dfn}

\begin{dfn}[Weak model category]
	A \emph{weak model category} $\Mw = (\cat{C}, \Cof, \Fib)$ is a category endowed with a class of cofibrations and a class of fibrations satisfying the following axioms.
	\begin{enumerate}
		\item (\emph{Factorisation}). Every morphism $A \to X$ with $A$ cofibrant and $X$ fibrant factors both as $A \incl X' \afib X$ and as $A \acof A' \surj X$.
		\item (\emph{Cylinder}). Every cofibration $A \incl X$ with $X$ fibrant admits a relative strong cylinder object.
		\item (\emph{Path object}). Every fibration $A \surj X$ with $A$ cofibrant admits a relative strong path object.
	\end{enumerate}
\end{dfn}

\noindent
The factorisation axiom, applied to the unique morphisms $A \to \term$ and $\init \to X$, produces fibrant replacements of cofibrant objects and cofibrant replacements of fibrant objects.

\begin{dfn}[Fibrant and cofibrant replacement]
	Let $\Mw$ be a weak model category, $A$ a cofibrant object, and $X$ a fibrant object.
	A \emph{fibrant replacement of $A$} is an acyclic cofibration $A \acof \fibs{A}$ such that $\fibs{A}$ is fibrant.
	A \emph{cofibrant replacement of $X$} is an acyclic fibration $\cofs{X} \afib X$ such that $\cofs{X}$ is cofibrant.
\end{dfn}

\noindent
The structure of weak model category is sufficient for a ``nice'' construction of the homotopy category, as follows. 
If $\Mw = (\cat{C}, \Cof, \Fib)$ is a weak model category, say an object is \emph{bifibrant} if it is both fibrant and cofibrant, and let
\begin{itemize}
	\item $\cufs{\Mw}$ denote the full subcategory of $\cat{C}$ on objects that are either cofibrant or fibrant,
	\item $\cofs{\Mw}$ denote the full subcategory of $\cat{C}$ on cofibrant objects,
	\item $\fibs{\Mw}$ denote the full subcategory of $\cat{C}$ on fibrant objects,
	\item $\bifs{\Mw}$ denote the full subcategory of $\cat{C}$ on bifibrant objects;
\end{itemize}
there is an evident square of inclusions
\begin{equation} \label{eq:inclusions_of_cofs_fibs}
\begin{tikzcd}
	{\bifs{\Mw}} & {\fibs{\Mw}} \\
	{\cofs{\Mw}} & {\cufs{\Mw}}.
	\arrow[hook, from=1-1, to=1-2]
	\arrow[hook, from=1-1, to=2-1]
	\arrow[hook, from=1-2, to=2-2]
	\arrow[hook, from=2-1, to=2-2]
\end{tikzcd}
\end{equation}

\begin{lem} \label{lem:homotopy_relation}
	Let $\Mw$ be a weak model category and $f, g\colon A \to X$ be two morphisms with $A$ cofibrant and $X$ fibrant.
	The following are equivalent:
	\begin{enumerate}[label=(\alph*)]
		\item there exist a cylinder $(j^-, j^+)\colon A \pout{} A \incl IA$ and $h\colon IA \to X$ such that $f = hj^-$ and $g = hj^+$;
		\item for every cylinder $(j^-, j^+)\colon A \pout{} A \incl IA$, there exists $h\colon IA \to X$ such that $f = hj^-$ and $g = hj^+$;
		\item there exist a path object $(p^-, p^+)\colon PX \surj X \times X$ and $h\colon A \to PX$ such that $f = p^-h$ and $g = p^+h$;
		\item for every path object $(p^-, p^+)\colon PX \surj X \times X$, there exists $h\colon A \to PX$ such that $f = p^-h$ and $g = p^+h$.
	\end{enumerate}
\end{lem}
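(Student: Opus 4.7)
The plan is to close the cycle $(b) \Rightarrow (a) \Rightarrow (d) \Rightarrow (c) \Rightarrow (b)$, from which all four equivalences follow. The implications $(b) \Rightarrow (a)$ and $(d) \Rightarrow (c)$ are immediate specialisations of a universal quantifier to a particular instance. For the substantive implications $(a) \Rightarrow (d)$ and $(c) \Rightarrow (b)$, I would use dual lifting constructions that exploit the structural witnesses provided by the definitions: every relative cylinder for $\init \incl A$ comes with an acyclic cofibration $d\colon A \acof DA$ and a map $\sigma\colon IA \to DA$ satisfying $\sigma j^- = \sigma j^+ = d$, while every relative path object for $X \surj \term$ dually provides an acyclic fibration $t\colon TX \afib X$ and a map $r\colon TX \to PX$ satisfying $p^- r = p^+ r = t$. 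Using these witnesses rather than their identity specialisations (as would be available in the strong case) is what will allow the argument to range over \emph{arbitrary} cylinders and path objects, as demanded by $(b)$ and $(d)$.

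For $(a) \Rightarrow (d)$, I fix an arbitrary path object $(p^-, p^+)\colon PX \surj X \times X$ and the cylinder homotopy $h\colon IA \to X$ of $(a)$. Lifting $f$ along $t$ (possible because $A$ is cofibrant and $t$ is an acyclic fibration) produces $\tilde f\colon A \to TX$ with $t\tilde f = f$; lifting $f$ along $d$ (possible because $X$ is fibrant and $d$ is an acyclic cofibration) produces $\tilde f'\colon DA \to X$ with $\tilde f' d = f$. I set $f_\sigma \eqdef \tilde f' \sigma\colon IA \to X$, which by construction satisfies $f_\sigma j^- = f_\sigma j^+ = f$. I then solve the lifting square with top edge $r\tilde f\colon A \to PX$, left edge $j^-\colon A \acof IA$, right edge $(p^-, p^+)\colon PX \surj X \times X$, and bottom edge $(f_\sigma, h)\colon IA \to X \times X$; commutativity reduces to the identity $(f, f) = (f, f)$ at $j^-$, and a diagonal $K\colon IA \to PX$ exists because $j^-$ is an acyclic cofibration and $(p^-, p^+)$ is a fibration. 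The composite $H \eqdef Kj^+$ then satisfies $p^- H = f_\sigma j^+ = f$ and $p^+ H = hj^+ = g$, which is the path-object homotopy required for $(d)$.

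The direction $(c) \Rightarrow (b)$ is strictly dual. Fixing an arbitrary cylinder with its witnesses $d, \sigma$ and the path-object homotopy $H\colon A \to PX$ of $(c)$, I construct $\tilde f$ and $f_\sigma$ as above and solve the lifting square with top edge $(r\tilde f, H)\colon A \pout{} A \to PX$, left edge $(j^-, j^+)\colon A \pout{} A \incl IA$, right edge $p^-\colon PX \afib X$, and bottom edge $f_\sigma\colon IA \to X$; commutativity follows from $p^- r \tilde f = t\tilde f = f = f_\sigma j^-$ and $p^- H = f = f_\sigma j^+$, and a diagonal $K\colon IA \to PX$ exists because $(j^-, j^+)$ is a cofibration and $p^-$ is an acyclic fibration. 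The composite $h \eqdef p^+ K$ then satisfies $hj^- = p^+ r\tilde f = t\tilde f = f$ and $hj^+ = p^+ H = g$, which is the cylinder homotopy required for $(b)$.

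I expect the main difficulty to lie less in either construction, which are entirely parallel and not technically demanding, than in recognising that the lifting squares must be set up with $(f_\sigma, h)$ and $(r\tilde f, H)$ rather than with a more symmetric-looking choice: the ``trick'' is to let $f_\sigma$ (a constant-at-$f$ pseudo-homotopy built from the cylinder's own witness $\sigma$) and $r\tilde f$ (a constant-at-$f$ pseudo-homotopy built from the path object's witness $r$) play the role of ``reflexivity'' placeholders on the opposite side of the actual homotopy data, so that the diagonal simultaneously extracts the correct endpoints $(f, g)$ when restricted to $j^+$, respectively composed with $p^+$. Once this is seen, the bookkeeping with the non-strong witnesses $(d, \sigma)$ and $(t, r)$ is routine.
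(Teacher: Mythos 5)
Your proof is correct, and it is essentially the argument behind the result the paper cites (the paper's own ``proof'' is just a pointer to \cite[Proposition 2.1.16]{henry2020weak}): the two substantive implications are handled exactly as one should, by using the non-strong witnesses $d, \sigma$ of the cylinder and $t, r$ of the path object to build the constant-at-$f$ comparison maps $f_\sigma = \tilde f'\sigma$ and $r\tilde f$, and then lifting the acyclic leg $j^-$ against the fibration $(p^-,p^+)$ (resp.\ the cofibration $(j^-,j^+)$ against the acyclic fibration $p^-$), which correctly uses only the legs that the definitions guarantee to be acyclic. The one thing you elide is that $(b)\Rightarrow(a)$ and $(d)\Rightarrow(c)$ are genuine specialisations only because at least one cylinder for the cofibrant $A$ and one path object for the fibrant $X$ actually exist; since the Cylinder axiom as stated only provides relative cylinders for cofibrations with fibrant codomain, this requires a short auxiliary argument (pass to a fibrant replacement $A \acof A'$, take a strong cylinder there, and restrict, using the $d$-witness to absorb the replacement), which is established in the ambient theory but is worth acknowledging rather than calling the implication immediate.
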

\begin{proof}
	See \cite[Proposition 2.1.16]{henry2020weak}.
\end{proof}

\begin{dfn}[Homotopy relation]
	Let $\Mw$ be a weak model category and $f, g\colon A \to X$ be two morphisms with $A$ cofibrant and $X$ fibrant.
	We say that $f$, $g$ are \emph{homotopic}, and write $f \homo g$, if any of the equivalent conditions of Lemma \ref{lem:homotopy_relation} holds.
\end{dfn}

\begin{lem} \label{lem:homotopy_relation_is_equivalence}
	Let $\Mw$ be a weak model category.
	Then the homotopy relation $\homo$ on morphisms with cofibrant domain and fibrant codomain is an equivalence relation.
\end{lem}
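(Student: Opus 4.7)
The plan is to verify reflexivity, symmetry, and transitivity of $\homo$ in turn, using Lemma \ref{lem:homotopy_relation} to move freely between the cylinder formulations (a), (b) and the path-object formulations (c), (d), and between their existential and universal variants; this flexibility is crucial in order to compensate for the definitional asymmetry between $\ell^-$ and $\ell^+$ (dually, between $q^-$ and $q^+$) in a relative cylinder (respectively, a relative path object). For \emph{reflexivity} $f \homo f$, I would exploit the strong half of the cylinder axiom: a strong relative cylinder comes equipped with a factorisation $\sigma\colon IA \to A$ of the codiagonal through $(\ell^-, \ell^+)$, so that $\sigma\ell^- = \sigma\ell^+ = \idd{A}$, and then $f\sigma\colon IA \to X$ witnesses $f \homo f$ via (a). The existence of a suitable relative cylinder in the current setting---where $A$ is only assumed cofibrant---is obtained by first factoring an auxiliary morphism into the fibrant $X$ via the factorisation axiom and transporting the cylinder structure back; the dual path-object construction is available symmetrically.

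For \emph{symmetry}, given a cylinder $IA$ and $h\colon IA \to X$ witnessing $f \homo g$, I would construct a companion cylinder $IA'$ in which the two endpoint inclusions are swapped, so that the same morphism $h$ re-labels as a witness of $g \homo f$. The naive swap via the involution of $A \pout{} A$ may fail to produce a legitimate cylinder since only $\ell^-$ is \emph{a priori} an acyclic cofibration, while $\ell^+$ is only a cofibration; the robust fix is to form $IA'$ as a pushout of two copies of $IA$ glued along $\ell^+$, so that both new outer endpoint inclusions are composites of $\ell^-$ with a pushout of $\ell^-$, and are hence acyclic cofibrations by closure of acyclic cofibrations under composition and pushout along cofibrations.

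For \emph{transitivity}, given $f \homo g$ and $g \homo h$, invoke the universal formulation (b) to realise both homotopies in a common cylinder $IA$ by morphisms $h_1, h_2\colon IA \to X$ with $h_1 \ell^+ = g = h_2 \ell^-$. Then form the long cylinder $IA \pout{A} IA$ by gluing two copies of $IA$ along $\ell^+$ and $\ell^-$ at the middle endpoint $A$; the pair $(h_1, h_2)$ induces a morphism to $X$ restricting to $f$ on the outer initial endpoint and to $h$ on the outer final one, and stability of both cofibrations and acyclic cofibrations under pushout makes this gluing into a legitimate cylinder, whence $f \homo h$. The main obstacle throughout is precisely the definitional asymmetry between $\ell^-$ and $\ell^+$ (dually, $q^-$ and $q^+$); both the symmetry and transitivity arguments overcome it by constructing the required witness as an iterated pushout along acyclic cofibrations, using only the closure properties built into the definition of a class of cofibrations.
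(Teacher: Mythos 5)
The paper itself does not prove this lemma --- it is imported verbatim from Henry's theory of weak model categories (\cite[Proposition 2.1.17]{henry2020weak}) --- so any self-contained argument is already a different route from the paper's. Your reflexivity and transitivity steps are essentially sound. For reflexivity, note only that the Cylinder axiom provides relative strong cylinders for cofibrations with \emph{fibrant} codomain, so the existence of a (strong) cylinder for a merely cofibrant $A$ is itself a small lemma; this is where your ``transport from a fibrant replacement'' sketch must do real work (with a non-strong cylinder one can instead extend $f$ along the acyclic cofibration $d\colon A \acof DA$ using fibrancy of $X$). For transitivity, gluing two copies of $IA$ along $\ell^+$ and $\ell^-$ does work: the leg attached along $\ell^-$ is a pushout of an acyclic cofibration, so the surviving $-$\nbd endpoint is acyclic; the pair of outer endpoints is a cofibration because it factors as $\ell^-\amalg\idd{A}$ followed by a pushout of the cofibration $(\ell^-,\ell^+)$; and $(\sigma,\sigma)$ supplies the retraction.

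The genuine gap is symmetry. Gluing two copies of $IA$ along $\ell^+$ does not do what you claim: each cocone leg $c_i\colon IA \to IA\pout{A}IA$ is a pushout of $\ell^+$ (the \emph{other} leg of the span), not of $\ell^-$, so the outer endpoints $c_i\ell^-$ are composites of $\ell^-$ with a pushout of $\ell^+$ --- and $\ell^+$ is precisely the map you do not yet know to be acyclic. Worse, even granting that the doubled object were a cylinder, the only map to $X$ available is $(h,h)$, and it sends \emph{both} outer endpoints to $f$ (gluing along $\ell^-$ instead sends both to $g$); it witnesses $f\homo f$, never $g\homo f$. No pushout construction on the cylinder side alone can reverse a homotopy. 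The standard fix (and Henry's) uses the cylinder and a path object together: with $\sigma\colon IA\to A$ the retraction and $\nu\colon X\to PX$ the section of a strong path object, solve the lifting problem of the acyclic cofibration $\ell^-\colon A\acof IA$ against the fibration $(q^-,q^+)\colon PX\surj X\times X$, with top map $\nu f$ and bottom map $(h, f\sigma)$; the square commutes since both composites equal $(f,f)$, and the lift $H$ satisfies $q^-H\ell^+=h\ell^+=g$ and $q^+H\ell^+=f\sigma\ell^+=f$, so $H\ell^+$ is a path-object witness of $g\homo f$, which Lemma \ref{lem:homotopy_relation} converts back to a cylinder witness. (Alternatively one could first show that $\ell^+$ is an acyclic cofibration, but that is essentially Lemma \ref{lem:acyclic_cofib_is_cofib_equivalence}, which is only available downstream of the present lemma.) As written, your symmetry step does not go through.
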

\begin{proof}
	See \cite[Proposition 2.1.17]{henry2020weak}.
\end{proof}

\begin{dfn}[Homotopy category]
	Let $\Mw$ be a weak model category.
	The \emph{homotopy category} of $\Mw$ is the category $\Ho(\Mw)$ whose objects are the bifibrant objects in $\Mw$, and morphisms are equivalence classes of morphisms under the homotopy relation.
\end{dfn}

\begin{prop} \label{prop:characterisation_of_homotopy_category}
	Let $\Mw$ be a weak model category.
	The following categories all exist and are equivalent:
	\begin{enumerate}[label=(\alph*)]
		\item the homotopy category of $\Mw$;
		\item the localisation of $\bifs{\Mw}$ at the acyclic fibrations;
		\item the localisation of $\bifs{\Mw}$ at the acyclic cofibrations;
		\item the localisation of $\fibs{\Mw}$ at the acyclic fibrations;
		\item the localisation of $\cofs{\Mw}$ at the acyclic cofibrations;
		\item the localisation of $\cufs{\Mw}$ at acyclic cofibrations and acyclic fibrations.
	\end{enumerate}
	The equivalences are induced by the square of inclusions (\ref{eq:inclusions_of_cofs_fibs}).
\end{prop}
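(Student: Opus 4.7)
The plan is a four-step argument. First I would prove the central equivalence (a)~$\simeq$~(b) between the homotopy category and the localisation of $\bifs{\Mw}$ at acyclic fibrations; by duality (swapping cylinders for path objects) this also yields (a)~$\simeq$~(c). Second, I would extend from $\bifs{\Mw}$ to the one-sided subcategories $\fibs{\Mw}$ and $\cofs{\Mw}$ by cofibrant and fibrant replacements, obtaining (b)~$\simeq$~(d) and (c)~$\simeq$~(e). Third, I would deduce (d)~$\simeq$~(f) and (e)~$\simeq$~(f) by matching universal properties. Throughout, the comparison functors are those induced by the square of inclusions (\ref{eq:inclusions_of_cofs_fibs}) together with the canonical quotient $\bifs{\Mw} \to \Ho(\Mw)$.

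For (a)~$\simeq$~(b), I would show that this canonical quotient enjoys the universal property of the localisation at acyclic fibrations. To see the quotient inverts an acyclic fibration $p\colon X \afib Y$ between bifibrant objects, note that the cofibration $\init \incl Y$ admits a lift through $p$, producing a section $s$; a direct lifting argument using a strong relative path object for $p$ then produces a homotopy $sp \homo \idd{X}$. For the universal property, suppose $F\colon \bifs{\Mw} \to \cat{D}$ inverts acyclic fibrations: given $f \homo g\colon A \to X$ witnessed via a strong path object $(p^-, p^+)\colon PX \surj X \times X$ and $h\colon A \to PX$ as in Lemma~\ref{lem:homotopy_relation}, the strong-section identity $p^- \Delta = p^+ \Delta = \idd{X}$ forces $F(p^-) = F(p^+)$ after inverting the acyclic fibration $p^-$, so that $F(f) = F(p^-)F(h) = F(p^+)F(h) = F(g)$ and $F$ factors through $\Ho(\Mw)$. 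The equivalence (a)~$\simeq$~(c) is formally dual, using cylinders.

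For (b)~$\simeq$~(d): the inclusion $\bifs{\Mw} \incl \fibs{\Mw}$ preserves acyclic fibrations and so induces a functor between the localisations. Essential surjectivity follows from the factorisation axiom applied to $\init \to X$ for each $X \in \fibs{\Mw}$, yielding a cofibrant replacement $\cofs{X} \afib X$ with $\cofs{X}$ bifibrant, which becomes an isomorphism after localising. Fullness and faithfulness exploit that morphisms and homotopies in $\fibs{\Mw}$ can be transferred to cofibrant replacements by lifting cofibrations against acyclic fibrations, with any ambiguity in the lift controlled by the strong path object axiom. The equivalence (c)~$\simeq$~(e) is dual. Finally, (d)~$\simeq$~(f) and (e)~$\simeq$~(f) follow by matching universal properties: a functor on $\cufs{\Mw}$ inverting both classes restricts to either $\fibs{\Mw}$ or $\cofs{\Mw}$; conversely, an extension is built from the relevant replacements and shown well-defined up to homotopy.

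The principal technical obstacle is the fully faithfulness of the inclusions $\bifs{\Mw} \incl \fibs{\Mw}$ and $\bifs{\Mw} \incl \cofs{\Mw}$ after localisation: cofibrant and fibrant replacements are not strictly functorial, so one must show that different choices of replacement yield homotopic lifts, and that lifts of homotopic maps remain homotopic. This is precisely the calculation the \emph{strong} relative cylinders and path objects of the cylinder and path object axioms are designed for: they provide just enough rigidity to carry out this manipulation without demanding genuinely functorial factorisations, and this is where the weak model hypothesis earns its keep.
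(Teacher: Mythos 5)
The paper does not actually prove this statement --- its proof is a pointer to \cite[Theorem 2.2.6]{henry2020weak} --- so your sketch is being measured against Henry's argument, whose broad outline (identify $\Ho(\Mw)$ with the localisation of $\bifs{\Mw}$ at either class, then pass to $\fibs{\Mw}$, $\cofs{\Mw}$, $\cufs{\Mw}$ by replacement) your plan does reproduce. There are, however, two concrete gaps. The more serious one is in your verification of the universal property for (a)~$\simeq$~(b): you apply $F\colon \bifs{\Mw} \to \cat{D}$ to $h\colon A \to PX$ and to $p^\pm\colon PX \to X$, but in a weak model category the path object $PX$ of a bifibrant $X$ is fibrant and need \emph{not} be cofibrant, so it generally does not lie in $\bifs{\Mw}$ and $F$ is simply not defined on these morphisms (dually, $IA$ need not be fibrant). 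This is exactly the defect that separates weak model categories from Quillen's. The repair is to insert a cofibrant replacement $q\colon \cofs{(PX)} \afib PX$, lift $h$ and the strong section $\Delta$ along $q$ (possible because $A$ and $X$ are cofibrant and $q$ is an acyclic fibration), and run your $F(p^-) = F(p^+)$ argument with $p^\pm q$ in place of $p^\pm$, noting that $\cofs{(PX)}$ is bifibrant and $p^- q$ is an acyclic fibration of $\bifs{\Mw}$. Your closing paragraph correctly identifies non-functoriality of replacements as an obstacle, but misses that the cylinder and path objects themselves already leave the bifibrant subcategory.

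The second gap is the homotopy $sp \homo \idd{X}$. You invoke ``a strong relative path object for $p$'', but $(q^-, q^+)\colon P_Y X \surj X \pback{Y} X$ is only a fibration and neither $s$ nor $\init \incl X$ is an acyclic cofibration, so no lifting square is available: lifting $\init \incl X$ against the acyclic fibration $q^-$ produces some $h$ with $q^- h = sp$ but gives no control over $q^+ h$. The clean argument is on the cylinder side: take a strong cylinder $(\ell^-, \ell^+)\colon X \pout{} X \incl IX$ with retraction $\sigma$ satisfying $\sigma\ell^\pm = \idd{X}$, and lift in the square whose top is $(\idd{X}, sp)\colon X \pout{} X \to X$, left the cofibration into $IX$, right the acyclic fibration $p$, and bottom $p\sigma$; the resulting lift is the desired homotopy. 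With these two repairs (plus some care in step (f) for objects of $\cufs{\Mw}$ that are fibrant but not cofibrant, and for morphisms going from fibrant objects to cofibrant ones, to which the factorisation axiom does not apply), your plan goes through and coincides in substance with the cited proof of Henry.
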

\begin{proof}
	See \cite[Theorem 2.2.6]{henry2020weak}.
\end{proof}

\noindent
Proposition \ref{prop:characterisation_of_homotopy_category} allows us to treat all of these as alternative presentations of $\Ho(\Mw)$.

\begin{dfn}[Equivalence in a weak model category]
	Let $\Mw$ be a weak model category.
	A morphism in $\cufs{\Mw}$ is an \emph{equivalence} if it is invertible in $\Ho(\Mw)$.
\end{dfn}

\begin{rmk}
	In particular, by Proposition \ref{prop:characterisation_of_homotopy_category}, a morphism $f\colon X \to Y$ between bifibrant objects is an equivalence if and only if it is a \emph{homotopy equivalence}, that is, there exists $g\colon Y \to X$ such that $gf \homo \idd{X}$ and $fg \homo \idd{Y}$.
\end{rmk}

\noindent
The following property of model categories still holds in weak model categories.
\begin{lem} \label{lem:acyclic_cofib_is_cofib_equivalence}
	Let $\Mw$ be a weak model category.
	Then
	\begin{enumerate}
		\item a cofibration is an acyclic cofibration if and only if it is an equivalence,
		\item a fibration is an acyclic fibration if and only if it is an equivalence.
	\end{enumerate}
\end{lem}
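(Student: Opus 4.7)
The forward direction is immediate: by Proposition \ref{prop:characterisation_of_homotopy_category}, $\Ho(\Mw)$ admits a presentation as the localisation of $\cufs{\Mw}$ at both acyclic cofibrations and acyclic fibrations, so morphisms in either class become isomorphisms in $\Ho(\Mw)$ by construction, hence are equivalences by definition.

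For the converse of the first statement, suppose $\iota\colon A \incl B$ is a cofibration and an equivalence. I would first reduce to the case where the codomain is fibrant: applying the factorisation axiom to $B \to \term$ yields a fibrant replacement $k\colon B \acof \fibs{B}$, and $k\iota\colon A \incl \fibs{B}$ is a cofibration from a cofibrant to a fibrant object, still an equivalence by 2-out-of-3 for isomorphisms in $\Ho(\Mw)$. Then, applying the factorisation axiom to $k\iota$, I would factor it as $A \acof A' \surj \fibs{B}$ with $j\colon A \acof A'$ an acyclic cofibration and $p\colon A' \surj \fibs{B}$ a fibration; here $A'$ is bifibrant, being cofibrant as the codomain of an acyclic cofibration from a cofibrant object, and fibrant as the domain of a fibration into a fibrant object. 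By 2-out-of-3 applied in $\Ho(\Mw)$, the fibration $p$ is itself an equivalence.

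The core of the proof is then a Ken Brown--type step: a fibration $p$ between bifibrant objects which is an equivalence is already an acyclic fibration. Since $p$ is a homotopy equivalence between bifibrants (Proposition \ref{prop:characterisation_of_homotopy_category}), it admits a homotopy inverse $q$; using a cylinder on $\fibs{B}$ and lifting a chosen homotopy $pq \homo \idd{\fibs{B}}$ against $p$ along the acyclic cofibration $\ell^-\colon \fibs{B} \acof I\fibs{B}$, one strictifies $q$ to an honest section $q'$ of $p$ that still satisfies $q'p \homo \idd{A'}$. To establish that $p \in \rlp(\Cof)$, I would solve an arbitrary lifting problem of $p$ against a cofibration $\iota_0\colon A_0 \incl B_0$ (with data $f_0\colon A_0 \to A'$ and $g_0\colon B_0 \to \fibs{B}$) by first taking $q'g_0$ as a candidate, whose $p$-image is already $g_0$, and then patching it on $A_0$ via a lift through $p$ of the relative homotopy $q'g_0\iota_0 = q'pf_0 \homo f_0$ obtained from $q'p \homo \idd{A'}$. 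The extension of this homotopy from $A_0$ to $B_0$ is supplied by the Leibniz pushout construction combining $\iota_0$ with a cylinder on $B_0$, together with the LLP of acyclic cofibrations against $p$. \emph{This homotopy-patching step is the main technical obstacle of the proof}, and is where the full strength of the cylinder and path-object axioms in a weak model category is used.

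Once $p$ is known to be an acyclic fibration, the RLP of $p$ against $k\iota$ applied to the square with $j$ on top and $\idd{\fibs{B}}$ on the bottom exhibits $k\iota$ as a retract of the acyclic cofibration $j$; since $\llp(\Fib)$ is closed under retracts, $k\iota$ is an acyclic cofibration. To conclude that $\iota$ itself is an acyclic cofibration, any lifting problem of $\iota$ against a fibration $p_0\colon X_0 \surj Y_0$ can be reduced to one of $k\iota$ against $p_0$: since $Y_0$ is fibrant, the LLP of $k$ against $Y_0 \to \term$ extends the bottom morphism $g_0\colon B \to Y_0$ along $k$, and the resulting lifting problem of $k\iota$ against $p_0$ has a solution because $k\iota \in \ACof$; precomposing with $k$ answers the original problem. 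The second statement of the lemma is proved by the entirely dual argument.
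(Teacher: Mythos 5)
First, note that the paper does not prove this lemma itself: its ``proof'' is a citation of \cite[Proposition 2.2.10]{henry2020weak}, plus the remark that the cofibrancy conventions make every cofibration a cofibration between cofibrant objects. Your proposal is therefore a from-scratch reconstruction rather than a variant of an in-paper argument. Its overall architecture is sound and standard: the forward direction via the localisation presentation of $\Ho(\Mw)$, the reduction to a fibration $p\colon A' \surj \fibs{B}$ between bifibrant objects which is an equivalence, the strictification of the homotopy inverse to a genuine section by lifting against $\ell^-$, the retract argument exhibiting $k\iota$ as a retract of the acyclic cofibration $j$, and the final descent from $k\iota$ to $\iota$ are all correct as written.

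The gap sits exactly in the step you flag as the technical core. Having obtained a section $q'$ with $pq' = \idd{}$ and a homotopy $L\colon q'p \homo \idd{A'}$, you patch the candidate $q'g_0$ using the whiskered homotopy $H \eqdef Lf_0 \colon q'pf_0 \homo f_0$ and a Leibniz square against $p$. But for that square to have a commuting bottom face equal to the constant homotopy $g_0\pi_{B_0}$ --- which is what is needed for the corrected lift $h$ to satisfy $ph = g_0$ on the nose rather than merely $ph \homo g_0$ --- the homotopy $L$ must be \emph{vertical}, i.e.\ a homotopy over $\fibs{B}$ with $pL$ constant. The $L$ produced by lifting $pq \homo \idd{\fibs{B}}$ has no reason to be vertical: $pL$ is some self-homotopy of $p$ from $pq'p = p$ to $p$. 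Repairing this requires showing that $q'p \homo \idd{A'}$ can be realised by a map $A' \to P_{\fibs{B}}A'$ into the \emph{relative} path object of $p$ (which exists by the path-object axiom, $p$ being a fibration with cofibrant domain); that is the genuine content of this ``Ken Brown'' step, and it involves lifting the self-homotopy $pL$ and composing and cancelling homotopies --- manipulations that are delicate here precisely because cylinders are not functorial and homotopies do not compose strictly. A secondary issue: your Leibniz construction uses a relative cylinder for $\iota_0\colon A_0 \incl B_0$, but the cylinder axiom as stated only guarantees these for cofibrations with \emph{fibrant} codomain, and $B_0$ is merely cofibrant; their existence in general is itself a nontrivial consequence of the axioms established in \cite{henry2020weak}, not something available for free.
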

\begin{proof}
	See \cite[Proposition 2.2.10]{henry2020weak}; note that, since we assumed that domains of cofibrations are cofibrant, every cofibration is a cofibration between cofibrants, and dually for fibrations.
\end{proof}

\noindent
We also still have the following ``cube lemma''.

\begin{lem} \label{lem:cube_lemma}
	Let $\Mw$ be a weak model category and consider a commutative diagram in $\cofs{\Mw}$
\[\begin{tikzcd}
	B & A & C \\
	{B'} & {A'} & {C'}
	\arrow["\wr", from=1-1, to=2-1]
	\arrow[hook', from=1-2, to=1-1]
	\arrow[hook, from=1-2, to=1-3]
	\arrow["\wr", from=1-2, to=2-2]
	\arrow["\wr", from=1-3, to=2-3]
	\arrow[hook', from=2-2, to=2-1]
	\arrow[hook, from=2-2, to=2-3]
\end{tikzcd}\]
	where the horizontal morphisms are cofibrations and the vertical morphisms are equivalences.
	Then the morphism $B \pout{A} C \to B' \pout{A'} C'$ universally determined between pushouts is an equivalence.
\end{lem}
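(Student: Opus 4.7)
The plan is to reduce, by factorising each vertical equivalence, to the case where all three verticals are acyclic cofibrations, which case then follows from stability of acyclic cofibrations under pushouts along cofibrations. The induced morphism $f\colon B \pout{A} C \to B' \pout{A'} C'$ exists because the class-of-cofibrations axiom guarantees both pushouts with cofibrant apex, so the question is entirely whether $f$ is an equivalence.

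First I would establish the stability lemma: if $\iota\colon A \acof B$ is an acyclic cofibration and $g\colon A \to X$ is a morphism with $X$ cofibrant, then the pushout cofibration $j\colon X \incl B \pout{A} X$ is itself acyclic. Indeed, any lifting problem of $j$ against a fibration $p\colon Y \surj Z$ corresponds, via the pushout universal property, to a lifting problem of $\iota$ against $p$, which is solvable since $\iota \in \llp(\Fib)$. Granted this stability, if all three verticals are acyclic cofibrations I would push them out sequentially: $B \pout{A} C \to B' \pout{A} C$ is a pushout of $B \acof B'$ along $A \incl B$, hence acyclic cofibration; then $B' \pout{A} C \to B' \pout{A'} C$ is a pushout of $A \acof A'$ along the resulting cofibration into $B' \pout{A} C$; and similarly for $C \acof C'$. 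Composing the three stages, $f$ is an acyclic cofibration, hence an equivalence by Lemma~\ref{lem:acyclic_cofib_is_cofib_equivalence}.

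The reduction from arbitrary equivalences to acyclic cofibrations is the main obstacle. The natural strategy is a ``mapping-cylinder'' factorisation of each column, but since the columns lie entirely in $\cofs{\Mw}$ and not all objects are fibrant, the factorization axiom is not directly applicable. I would first take fibrant replacements of $B'$, $A'$, $C'$, factor the composites from $B$, $A$, $C$ into the replacements using the factorization axiom, and then pull the resulting intermediate objects back into $\cofs{\Mw}$, producing a Reedy-style span factoring the original map of spans through a middle row whose upper half is levelwise acyclic cofibration (handled by the previous paragraph) and whose lower half consists of equivalences of a more tractable kind---e.g.\ acyclic fibrations between cofibrant objects, which admit sections by the lifting property against $\init \incl -$. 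The subtle point is performing these factorisations compatibly across the three columns so that the horizontal maps of the middle row remain cofibrations and all squares commute on the nose; this is where the absence of left properness bites, and I expect the cleanest resolution is to bypass the explicit construction by invoking Proposition~\ref{prop:characterisation_of_homotopy_category}, which identifies $\Ho(\Mw)$ with the localisation of $\cofs{\Mw}$ at acyclic cofibrations, and showing that the pushout construction descends to this localisation thanks precisely to the acyclic-cofibration case already handled.
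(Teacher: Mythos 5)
Your acyclic-cofibration case is fine: acyclic cofibrations are $\Cof \cap \llp(\Fib)$, left lifting properties are stable under pushout, and pushing out the three columns one at a time gives the result. The problem is that this is not where the content of the lemma lies, and your reduction of the general case to it does not work. The factorisation axiom of a weak model category only factors morphisms from a \emph{cofibrant} object to a \emph{fibrant} one, so you cannot factor the vertical equivalences as (acyclic cofibration, acyclic fibration) inside $\cofs{\Mw}$; your detour through fibrant replacements followed by ``pulling back into $\cofs{\Mw}$'' is exactly the construction that is unavailable, as you half-acknowledge. The fallback via Proposition~\ref{prop:characterisation_of_homotopy_category} also does not close the gap: knowing that the pushout of spans sends \emph{levelwise acyclic cofibrations} to acyclic cofibrations does not imply that it sends \emph{levelwise equivalences} to equivalences, because descending to the localisation would require knowing that a levelwise equivalence of cofibration-spans is connected to an isomorphism by a zig-zag of levelwise acyclic cofibrations --- and that Reedy-type statement is precisely the factorisation you could not perform.

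The missing ingredient is strictly stronger than the stability you proved: in a weak model category, \emph{arbitrary equivalences} between cofibrant objects are stable under pushout along cofibrations (a form of left properness for cofibrant objects; this is \cite[Corollary 2.4.4]{henry2020weak}, and it is not formal --- it needs the cylinder/path-object axioms). Granting it, the lemma follows by the usual two-stage factorisation $B \pout{A} C \to B' \pout{A} C \cong B' \pout{A'} (A' \pout{A} C) \to B' \pout{A'} C'$: the first map is the pushout of the equivalence $B \to B'$ along the cofibration $B \incl B \pout{A} C$; the second is the pushout along the cofibration $A' \pout{A} C \incl B' \pout{A} C$ of the map $A' \pout{A} C \to C'$, which is an equivalence by the same stability applied to $A \to A'$ together with 2-out-of-3. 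This is essentially the argument the paper invokes (via \cite[Proposition 2.2.12]{henry2016algebraic}), noting that since the horizontal maps are already cofibrations no factorisation step is needed; the whole point of the citation to \cite[Corollary 2.4.4]{henry2020weak} is to supply the left-properness-for-cofibrants statement that your proposal lacks.
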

\begin{proof}
	The proof of \cite[Proposition 2.2.12]{henry2016algebraic}, specialised to the case where all horizontal morphisms are cofibrations, so there is no need for the factorisation step, only uses that cofibrations are stable under pushout, that equivalences satisfy 2-out-of-3, and that equivalences of cofibrant objects are stable under pushout along cofibrations, which holds by \cite[Corollary 2.4.4]{henry2020weak}.
\end{proof}

\noindent
As in non-weak model categories, there are notions of \emph{Quillen adjunction} and of \emph{Quillen equivalence} of weak model categories.

\begin{dfn}[Quillen adjunction of weak model categories]
	Let $\Mw$ and $\Nw$ be weak model categories.
	A \emph{Quillen adjunction} between $\Mw$ and $\Nw$ is a pair of functors $\fun{L}\colon \cofs{\Mw} \to \cofs{\Nw}$ and $\fun{R}\colon \fibs{\Nw} \to \fibs{\Mw}$ such that
	\begin{enumerate}
		\item there are isomorphisms $\Hom_{\cufs{\Nw}}(\fun{L}A, X) \simeq \Hom_{\cufs{\Mw}}(A, \fun{R}X)$ natural in $A \in \Ob(\cofs{\Mw})$ and $X \in \Ob(\fibs{\Nw})$,
		\item $\fun{L}$ sends cofibrations to cofibrations,
		\item $\fun{R}$ sends fibrations to fibrations.
	\end{enumerate}
	We say that a functor $\fun{L}\colon \cofs{\Mw} \to \cofs{\Nw}$ is \emph{left Quillen}, and that a functor $\fun{R}\colon \fibs{\Nw} \to \fibs{\Mw}$ is \emph{right Quillen}, when they are part of a Quillen adjunction.
\end{dfn}

\begin{prop} \label{prop:quillen_adjunction_descends_to_homotopy_category}
	Let $(\fun{L}, \fun{R})$ be a Quillen adjunction between weak model categories $\Mw$ and $\Nw$.
	Then $\fun{L}$ and $\fun{R}$ induce an adjunction $\Ho(\fun{L}) \dashv \Ho(\fun{R})$ between $\Ho(\Mw)$ and $\Ho(\Nw)$.
\end{prop}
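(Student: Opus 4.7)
The plan is to adapt the classical argument, using the equivalent presentations of the homotopy category provided by Proposition~\ref{prop:characterisation_of_homotopy_category}.

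\emph{First}, I would check that $\fun{L}$ sends acyclic cofibrations to acyclic cofibrations and, dually, $\fun{R}$ sends acyclic fibrations to acyclic fibrations. If $\iota$ is an acyclic cofibration in $\Mw$, then $\fun{L}\iota$ is a cofibration by hypothesis; moreover, for every fibration $p$ in $\Nw$, the morphism $\fun{R}p$ is a fibration in $\Mw$, so every lifting square of $\fun{L}\iota$ against $p$ transposes, via the adjunction, to a lifting square of $\iota$ against $\fun{R}p$, which admits a solution since $\iota \in \ACof$. Transposing the solution back yields the required lift, so $\fun{L}\iota \in \Cof \cap \llp(\Fib) = \ACof$. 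The argument for $\fun{R}$ is dual.

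\emph{Second}, by Proposition~\ref{prop:characterisation_of_homotopy_category}(e), the composite $\cofs{\Mw} \xrightarrow{\fun{L}} \cofs{\Nw} \to \Ho(\Nw)$ inverts all acyclic cofibrations, and hence factors uniquely through $\Ho(\Mw)$, yielding $\Ho(\fun{L})\colon \Ho(\Mw) \to \Ho(\Nw)$; dually, Proposition~\ref{prop:characterisation_of_homotopy_category}(d) together with the fact that $\fun{R}$ preserves acyclic fibrations produces $\Ho(\fun{R})\colon \Ho(\Nw) \to \Ho(\Mw)$.

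\emph{Third}, I would exhibit the adjunction isomorphism on bifibrant representatives, where by Proposition~\ref{prop:characterisation_of_homotopy_category}(a) hom-sets in the homotopy categories are $\homo$\nbd quotients of the underlying hom-sets. The given bijection $\Hom_{\cufs{\Nw}}(\fun{L}A, X) \simeq \Hom_{\cufs{\Mw}}(A, \fun{R}X)$, for $A \in \Ob(\bifs{\Mw})$ and $X \in \Ob(\bifs{\Nw})$, supplies the required isomorphism provided it descends to $\homo$. For one direction, $\fun{R}$ applied to a path object $(q^-, q^+)\colon PX \surj X \pback{} X$ yields a path object for $\fun{R}X$: $\fun{R}$ preserves fibrations and acyclic fibrations by Step~1 and the hypothesis, while the comparison $\fun{R}(X \pback{} X) \to \fun{R}X \pback{} \fun{R}X$ is an isomorphism as detected through the hom-sets bijection on cofibrant probes; hence transposing a path-object homotopy $h\colon \fun{L}A \to PX$ yields $\hat{h}\colon A \to \fun{R}PX$ witnessing homotopy of the transposes. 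The reverse direction is symmetric, via $\fun{L}$ sending a cylinder for $A$ to a cylinder for $\fun{L}A$.

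\emph{Main obstacle.} The delicate step is preservation of the homotopy relation, because the adjunction is only defined between $\cofs{\Mw}$ and $\fibs{\Nw}$ rather than on the full underlying categories, so strictly speaking neither $\fun{L}$ preserves the pushout defining a cylinder nor $\fun{R}$ the pullback defining a path object. This is overcome by observing that, thanks to Lemma~\ref{lem:homotopy_relation}, it suffices to exhibit \emph{some} cylinder or path object witnessing a given homotopy; the hom-sets adjunction, applied to pullback or pushout cones tested against the relevant (co)fibrant objects, is enough to produce such witnesses together with the factorisations required by the definitions, without needing strict preservation of limits or colimits.
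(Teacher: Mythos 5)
The paper does not prove this statement itself: its ``proof'' is a pointer to \cite[Proposition 2.4.3]{henry2020weak}, so you are supplying an argument the paper leaves to the reference. Your Steps 1 and 2 are correct and standard: the transposition argument giving $\fun{L}(\ACof)\subseteq\ACof$ and $\fun{R}(\AFib)\subseteq\AFib$ works because codomains of fibrations are fibrant by the paper's conventions, so every lifting square of $\fun{L}\iota$ against a fibration does transpose; and the factorisation through the localisations via Proposition~\ref{prop:characterisation_of_homotopy_category}(d),(e) is then immediate.

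The gap is in Step 3. The assertion that the comparison $\fun{R}(X \pback{} X) \to \fun{R}X \pback{} \fun{R}X$ is ``an isomorphism as detected through the hom-sets bijection on cofibrant probes'' is not a valid inference: a morphism inducing bijections on $\Hom(A,-)$ for all cofibrant $A$ need not be invertible, since nothing in the axioms makes the cofibrant objects a generating family, and the adjunction isomorphism is only a \emph{partial} adjunction defined on $\cofs{\Mw}\times\fibs{\Nw}$. Hence $(\fun{R}q^-,\fun{R}q^+)\colon \fun{R}(PX)\to \fun{R}X\times\fun{R}X$ need not be a fibration, $\fun{R}(PX)$ need not be a path object in the sense required by Lemma~\ref{lem:homotopy_relation}, and the appeal to that lemma does not go through (dually, $\fun{L}A \pout{} \fun{L}A \to \fun{L}(A\pout{}A)$ need not be an isomorphism, so $\fun{L}(IA)$ need not be a cylinder). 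Your ``main obstacle'' paragraph flags exactly this but resolves it only by assertion. The actual repair is a separate lemma that the homotopy relation is detected by the \emph{weaker} structures that $\fun{L}$ and $\fun{R}$ do preserve. For instance, $\fun{R}(PX)$ carries two acyclic fibrations $\fun{R}q^\a$ to $\fun{R}X$ together with (the image of) the diagonal factorisation; given $\hat h\colon A \to \fun{R}(PX)$ with $\fun{R}q^\a\hat h = \hat f, \hat g$, choose an honest strong cylinder $(i^-,i^+)\colon A\pout{}A \incl IA$ with retraction $\sigma$ in $\Mw$ and lift $(\fun{R}(\nu)\hat f, \hat h)\colon A\pout{}A \to \fun{R}(PX)$ against the acyclic fibration $\fun{R}q^-$ over $\hat f\sigma$; postcomposing the lift with $\fun{R}q^+$ gives a genuine homotopy $IA \to \fun{R}X$ from $\hat f$ to $\hat g$. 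The dual lifting argument, against an honest path object of $X$, handles $\fun{L}$ applied to a cylinder. Without this (or an equivalent) lifting argument, the descent of the adjunction bijection to homotopy classes is not established.
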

\begin{proof}
	See \cite[Proposition 2.4.3]{henry2020weak}.
\end{proof}

\begin{dfn}[Quillen equivalence of weak model categories]
	A Quillen adjunction between weak model categories is a \emph{Quillen equivalence} if it determines an adjoint equivalence between their homotopy categories.
\end{dfn}

\noindent
We have the following criterion for deciding that a Quillen adjunction is a Quillen equivalence.

\begin{prop} \label{prop:quillen_equivalence_criterion}
	Let $(\fun{L}, \fun{R})$ be a Quillen adjunction between weak model categories $\Mw$ and $\Nw$.
	The following are equivalent:
	\begin{enumerate}[label=(\alph*)]
		\item for each $A \in \Ob \cofs{\Mw}$ there is a fibrant replacement $\fun{L}A \acof \fibs{(\fun{L}A)}$ such that the transpose morphism $A \to \fun{R}\fibs{(\fun{L}A)}$ is an equivalence in $\cufs{\Mw}$, and for each $X \in \Ob \fibs{\Nw}$ there is a cofibrant replacement $\cofs{(\fun{R}X)} \afib \fun{R}X$ such that the transpose morphism $\fun{L}\cofs{(\fun{R}X)} \to X$	is an equivalence in $\cufs{\Nw}$;
		\item $(\fun{L}, \fun{R})$ is a Quillen equivalence between $\Mw$ and $\Nw$.
	\end{enumerate}
\end{prop}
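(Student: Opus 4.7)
The plan is to identify the transpose maps of fibrant and cofibrant replacements appearing in (a) with the components of the unit and counit of the induced adjunction $\Ho(\fun{L}) \dashv \Ho(\fun{R})$ from Proposition~\ref{prop:quillen_adjunction_descends_to_homotopy_category}; the equivalence with (b) will then follow from the standard characterisation of adjoint equivalences via invertibility of the unit and counit.

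First I would argue that the property in (a) is independent of the specific choice of fibrant replacement for $\fun{L}A$, respectively of cofibrant replacement for $\fun{R}X$. Given two fibrant replacements $\fun{L}A \acof X$ and $\fun{L}A \acof X'$, applying the factorisation axiom to a comparison map yields a common further fibrant replacement, so that the two candidate transposes $A \to \fun{R}X$ and $A \to \fun{R}X'$ differ only by post-composition with the image under $\fun{R}$ of an acyclic cofibration between fibrant objects. Such a map is a fibration between fibrants and, by Lemma~\ref{lem:acyclic_cofib_is_cofib_equivalence}, an equivalence; 2-out-of-3 for equivalences then transfers the property of being an equivalence between the two transposes. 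The same argument, dualised, handles cofibrant replacements of $\fun{R}X$.

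Next I would identify the homotopy class of the transpose $A \to \fun{R}\fibs{(\fun{L}A)}$ with the component at $A$ of the unit $\eta$ of $\Ho(\fun{L}) \dashv \Ho(\fun{R})$, and similarly the transpose $\fun{L}\cofs{(\fun{R}X)} \to X$ with the component at $X$ of the counit $\varepsilon$. By Proposition~\ref{prop:characterisation_of_homotopy_category}, one can present $\Ho(\Mw)$ as the localisation of $\cofs{\Mw}$ at acyclic cofibrations and $\Ho(\Nw)$ as the localisation of $\fibs{\Nw}$ at acyclic fibrations; with these presentations, the hom-isomorphism $\Hom_{\cufs{\Nw}}(\fun{L}A,X) \simeq \Hom_{\cufs{\Mw}}(A,\fun{R}X)$ descends to the quotient by the homotopy relation, so that $\eta_A$ is represented by the transpose of the inverse in $\Ho(\Nw)$ of the acyclic cofibration $\fun{L}A \acof \fibs{(\fun{L}A)}$, which in turn is represented in $\cufs{\Mw}$ by the transpose of $\fun{L}A \to \fibs{(\fun{L}A)}$ itself. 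This step is essentially bookkeeping, but requires Lemma~\ref{lem:homotopy_relation_is_equivalence} to guarantee that the hom-quotients are well-defined.

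Combining these, the adjunction $\Ho(\fun{L}) \dashv \Ho(\fun{R})$ is an adjoint equivalence, i.e.\ $(\fun{L}, \fun{R})$ is a Quillen equivalence, if and only if every component of $\eta$ and $\varepsilon$ is an isomorphism in the respective homotopy category, which by the identifications above is precisely condition (a). The main delicate point I anticipate is the careful identification of $\eta$ and $\varepsilon$ with the homotopy classes of the transpose morphisms: one must track the adjunction isomorphism across the different presentations of $\Ho(\Mw)$ and $\Ho(\Nw)$ given by Proposition~\ref{prop:characterisation_of_homotopy_category} and verify compatibility with the homotopy relation; in particular, one needs to know that $\fun{L}$ sends acyclic cofibrations between cofibrants to equivalences and $\fun{R}$ sends acyclic fibrations between fibrants to equivalences, both consequences of Lemma~\ref{lem:acyclic_cofib_is_cofib_equivalence} together with the Quillen adjunction axioms.
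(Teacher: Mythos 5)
The paper does not prove this statement itself: its proof is a citation to \cite[Proposition 2.4.5]{henry2020weak}, and your outline is essentially the standard argument carried out there --- identify the transposes of a fibrant replacement of $\fun{L}A$ and a cofibrant replacement of $\fun{R}X$ with the unit and counit of the derived adjunction $\Ho(\fun{L}) \dashv \Ho(\fun{R})$ supplied by Proposition \ref{prop:quillen_adjunction_descends_to_homotopy_category}, then invoke the characterisation of adjoint equivalences. So your approach is correct and matches the cited proof in substance.

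One small internal inconsistency worth fixing: in your independence-of-replacement step, the comparison map $X \to X'$ between two fibrant replacements of $\fun{L}A$ (obtained by lifting the acyclic cofibration $\fun{L}A \acof X$ against $X' \surj \term$) is in general neither an acyclic cofibration nor a fibration; it is merely an equivalence between fibrant objects, by 2-out-of-3 and Lemma \ref{lem:acyclic_cofib_is_cofib_equivalence}. What you then need is that $\fun{R}$ sends equivalences between fibrant objects to equivalences, which is exactly what the existence of $\Ho(\fun{R})$ in Proposition \ref{prop:quillen_adjunction_descends_to_homotopy_category} provides (via the presentation of $\Ho(\Nw)$ as the localisation of $\fibs{\Nw}$ at acyclic fibrations). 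Note also that since condition (a) is stated existentially, this independence is only needed for the direction (b) $\Rightarrow$ (a) and for the remark following the proposition; the core of the proof is the identification of the transposes with the derived unit and counit, which you have right.
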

\begin{proof}
	See \cite[Proposition 2.4.5]{henry2020weak}.
\end{proof}

\begin{rmk}
	By a standard argument, if the first condition in Proposition \ref{prop:quillen_equivalence_criterion} holds, then it holds for all fibrant and cofibrant replacements.
\end{rmk}

\noindent 
To construct weak model structures, we will use the ``Cisinski--Olschok'' methods from \cite[Section 3]{henry2020weak}.

\begin{dfn}[Pushout-product]
	Let $\cat{C}$ be a category with pushouts, let $\fun{F}, \fun{G}$ be two endofunctors of $\cat{C}$, and let $\beta\colon \fun{F} \Rightarrow \fun{G}$ be a natural transformation.
	Given a morphism $f\colon C \to D$ in $\cat{C}$, the \emph{pushout-product of $\beta$ and $f$} is the morphism $\beta \ppnat f$ obtained universally in the diagram
\[\begin{tikzcd}
	{\fun{F}X} & {\fun{G}X} \\
	{\fun{F}Y} & {\fun{F}Y \pout{\fun{F}X} \fun{G}X} & {\fun{G}Y}.
	\arrow["{\beta_X}", from=1-1, to=1-2]
	\arrow["{\fun{F}f}", from=1-1, to=2-1]
	\arrow[from=1-2, to=2-2]
	\arrow["{\fun{G}f}", curve={height=-12pt}, from=1-2, to=2-3]
	\arrow[from=2-1, to=2-2]
	\arrow["{\beta_Y}", curve={height=30pt}, from=2-1, to=2-3]
	\arrow["\lrcorner"{anchor=center, pos=0.125, rotate=180}, draw=none, from=2-2, to=1-1]
	\arrow["{\beta \ppnat f}", dashed, from=2-2, to=2-3]
\end{tikzcd}\]
\end{dfn}

\noindent 
In a category $\cat{C}$ with product and coproducts, we let $\rcodiag{\cat{C}}$ denote the codiagonal endofunctor $A \mapsto A \pout{} A$ and $\rdiag{\cat{C}}$ the diagonal endofunctor $X \mapsto X \times X$.

\begin{dfn}[Functorial cylinder]
	Let $\cat{C}$ be a category with products and coproducts.
	A \emph{functorial cylinder on $\cat{C}$} is a left adjoint endofunctor $\fun{I}$ together with a pair of natural transformations $\iota^-, \iota^+\colon \Idd{\cat{C}} \to \fun{I}$.
	We let $(\iota^-, \iota^+)\colon \rcodiag{\cat{C}} \to \fun{I}$ denote the induced natural transformation with components $(\iota^-_A, \iota^+_A)\colon A \pout{} A \to \fun{I}A$.
\end{dfn}

\noindent
A functorial cylinder determines a \emph{functorial path object} $(\fun{P}, (\pi^-, \pi^+))$, where $\fun{P}$ is the right adjoint of $\fun{I}$, and $(\pi^-, \pi^+)\colon \fun{P} \to \rdiag{\cat{C}}$ is the transpose of $(\iota^-, \iota^+)$.

\begin{dfn}[$I$-fibrations and $I$-cofibrations]
	Let $\cat{C}$ be a category with an initial object $\init$ and a terminal object $\term$, let $I$ be a set of morphisms in $\cat{C}$, and let $f\colon A \to X$ be a morphism in $\cat{C}$.
	We say that
	\begin{itemize}
		\item $X$ is \emph{$I$-fibrant} if the unique morphism $X \to \term$ is in $\rlp(I)$,
		\item $f$ is an \emph{$I$-fibration} if $X$ is $I$-fibrant and $f \in \rlp(I)$,
		\item $A$ is \emph{$I$-cofibrant} if the unique morphism $\init \to A$ is in $\llp(\rlp(I))$,
		\item $f$ is an \emph{$I$-cofibration} if $A$ is $I$-cofibrant and $f \in \llp(\rlp(I))$.
	\end{itemize}
	We write $\JFib{I}$ for the class of $I$-fibrations, and $\ICof{I}$ for the class of $I$-cofibrations.
\end{dfn}

\begin{prop} \label{prop:weak_cisinski_olschok}
	Let $\cat{C}$ be a locally presentable category, let $(\fun{I}, (\iota^-, \iota^+))$ be a functorial cylinder on $\cat{C}$ with dual functorial path object $(\fun{P}, (\pi^-, \pi^+))$, let $I$, $J$ be sets of morphisms in $\cat{C}$, and suppose that
	\begin{enumerate}
		\item $J \subseteq \ICof{I}$,
		\item for all $i \in I$, the pushout-product $(\iota^-, \iota^+) \ppnat i$ is in $\llp(\rlp(I))$,
		\item for all $i \in I$ and $\a \in \set{-, +}$, the pushout-product $\iota^\a \ppnat i$ is in $\llp(\JFib{J})$,
		\item for all $j \in J$, the pushout-product $(\iota^-, \iota^+) \ppnat j$ is in $\llp(\JFib{J})$.
	\end{enumerate}
	Then $(\cat{C}, \ICof{I}, \JFib{J})$ is a weak model category.
	Moreover, for each cofibrant object $A$ and fibrant object $X$,
	\begin{enumerate}
		\item $(\iota^-_A, \iota^+_A)\colon A \pout{} A \incl \fun{I}A$ is a cylinder for $A$,
		\item $(\pi^-_X, \pi^+_X)\colon \fun{P}X \surj X \times X$ is a path object for $X$.
	\end{enumerate}
\end{prop}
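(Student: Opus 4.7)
The plan is to follow the weak-model-category analogue of the Cisinski--Olschok theorem, essentially as in \cite[\S3]{henry2020weak}, splitting the proof into the class axioms, the factorisation axiom, and the cylinder/path-object axioms. The class axioms for $\ICof{I}$ and $\JFib{J}$ amount to standard closure properties of classes of the form $\llp(\rlp(-))$ and $\rlp(-)$, together with the built-in cofibrant-domain and fibrant-codomain restrictions. The factorisation axiom follows from Quillen's small object argument, available since $\cat{C}$ is locally presentable: given $f\colon A \to X$ with $A$ cofibrant and $X$ fibrant, factor $f$ via $I$ to obtain $A \incl X' \to X$ and via $J$ to obtain $A \to X'' \surj X$. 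Hypothesis (1), which yields $\rlp(I) \subseteq \rlp(J)$, ensures that the right leg of the first factorisation is a $J$-fibration lying in $\rlp(I) = \rlp(\ICof{I})$, hence an acyclic fibration; the same inclusion gives $\llp(\rlp(J)) \subseteq \llp(\rlp(I)) = \ICof{I}$ and $\llp(\rlp(J)) \subseteq \llp(\JFib{J})$, so the left leg of the second factorisation is an acyclic cofibration.

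For the cylinder axiom, let $j\colon A \incl B$ be a cofibration with $B$ fibrant. The strategy is to build a strong relative cylinder $I_A B$ from the pushout-product of the functorial cylinder structure with $j$, followed by a subsequent lifting against the fibrant object $B$. Hypothesis (2), extended from $I$ to all $I$-cofibrations by the standard pushout-product calculus for classes $\llp(\rlp(-))$, makes the induced map $(\ell^-, \ell^+)\colon B \pout{A} B \incl I_A B$ a cofibration; hypothesis (3) gives that each leg $\ell^\a\colon B \incl I_A B$ lies in $\llp(\JFib{J})$, hence is an acyclic cofibration. Strongness---that $\rcodiag{j}$ factors through $(\ell^-, \ell^+)$ \emph{exactly} rather than up to an auxiliary acyclic cofibration $d$---is obtained via a lifting of $\ell^-$ against $B \surj \term$, using fibrancy of $B$, with hypothesis (4) ensuring that the intermediate cofibrations produced during this construction remain acyclic. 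The path object axiom is dual, applied to the right adjoint $\fun{P}$ and the natural transformation $(\pi^-, \pi^+)\colon \fun{P} \to \rdiag{\cat{C}}$, with the four hypotheses transposed under the adjunction $\fun{I} \dashv \fun{P}$ into their pullback-comma analogues. The ``moreover'' statement then follows by specialising the cylinder construction to the cofibration $\varnothing \incl A$: since $\fun{I}$ is a left adjoint it preserves the initial object, so the pushout-product collapses to $\fun{I}A$ and $(\iota^-_A, \iota^+_A)$ plays the role of $(\ell^-, \ell^+)$; dually for path objects.

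The main obstacle is the construction of the strong relative cylinder. In the classical (non-weak) Cisinski--Olschok setting, the cylinder functor comes equipped with a retraction $\fun{I} \Rightarrow \Idd{\cat{C}}$, from which $I_A B$ is built as a single explicit pushout $\fun{I}B \pout{\fun{I}A} A$ and strongness is immediate. Here no such retraction is assumed, so the map $I_A B \to B$ witnessing strongness must be produced by lifting arguments using the fibrancy of $B$, and coordinating this with the pushout-product cofibration property and the acyclicity of $\ell^\a$ requires careful simultaneous use of all four hypotheses---in particular, hypothesis (4) is needed precisely to close up the intermediate lifts into an acyclic cofibration against which $B \surj \term$ can be lifted coherently. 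Once the cylinder (and dually the path object) is in place, the remaining axioms reduce to routine manipulation of lifting diagrams and the small object argument.
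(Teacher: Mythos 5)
The paper's own proof of this proposition is a single sentence: it is quoted as a special case of Henry's Theorem~3.0.5, with local presentability invoked only so that the small object argument applies. So the real question is whether your reconstruction of that theorem's proof holds up, and there is a genuine gap at its centre, in the cylinder axiom. First, the pushout-product $(\iota^-, \iota^+) \ppnat j$ for a cofibration $j\colon A \incl B$ is, by the paper's definition, the map $(B \pout{} B) \pout{A \pout{} A} \fun{I}A \to \fun{I}B$; its domain is \emph{not} $B \pout{A} B$, which is instead $(B \pout{} B) \pout{A \pout{} A} A$. Passing from the former to the latter means collapsing the tube $\fun{I}A$ onto $A$, which is exactly the step that needs the retraction $\fun{I} \Rightarrow \Idd{\cat{C}}$ you correctly identify as unavailable; so the claim that hypothesis (2) ``makes the induced map $(\ell^-, \ell^+)\colon B \pout{A} B \incl I_A B$ a cofibration'' reintroduces the very conflation you set out to avoid. (In the absolute case $A = \varnothing$ the two objects coincide, which is why the ``moreover'' clause looks easy, but the cylinder axiom demands genuinely relative cylinders for all cofibrations with fibrant codomain.) Second, the proposed strongness argument --- lifting $\ell^-$ against $B \surj \term$ --- only produces a retraction $r$ with $r\ell^- = \idd{B}$; a strong relative cylinder requires a single map $\sigma$ with $\sigma\ell^- = \sigma\ell^+ = \idd{B}$, and a lift along one leg says nothing about the other. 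The standard route is to first produce a \emph{non-strong} relative cylinder, that is, the auxiliary acyclic cofibration $d\colon B \acof D_A B$ together with $\sigma\colon I_A B \to D_A B$ satisfying $\sigma(\ell^-, \ell^+) = d\rcodiag{j}$, and only then use fibrancy of $B$ to retract $d$. Producing the pair $(d, \sigma)$ in the absence of a cylinder retraction is the non-trivial content of the cited theorem, and your sketch does not supply it; appealing to hypothesis (4) to ``close up the intermediate lifts'' names no actual mechanism.

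Your treatment of the class axioms and of the factorisation axiom is correct --- the deductions $\rlp(I) \subseteq \rlp(J)$ and $\llp(\rlp(J)) \subseteq \llp(\rlp(I)) \cap \llp(\JFib{J})$ from hypothesis (1) are exactly what is needed --- and the dualisation to path objects is sound in principle. But the cylinder/path-object axioms are where all the difficulty of the theorem lives, and as written your construction of $I_A B$ is not well-formed. Since the paper discharges the whole proposition by citation, the honest options are to do the same or to carry out the relative cylinder construction in full; the middle road taken here asserts a construction that does not exist as described.
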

\begin{proof}
	A special case of \cite[Theorem 3.0.5]{henry2020weak}, where we assume that $\cat{C}$ is locally presentable to ensure that the small object argument applies.
\end{proof}

\noindent
In the conditions of Proposition \ref{prop:weak_cisinski_olschok}, we say that a morphism in $I$ is a \emph{generating cofibration}, and that a morphism in $J$ is a \emph{generating anodyne extension}.

\subsection{Directed complexes} \label{sec:directed}

\noindent 
Let $\cat{C}$ be a category and let $\Xi$ be a class of colimit cones in $\cat{C}$.
A presheaf on $\cat{C}$ is said to be \emph{$\Xi$-continuous} if it sends colimit cones in $\Xi$ to limit cones in $\Set$.
The full subcategory $\PSh_\Xi(\cat{C})$ of the category of presheaves on the $\Xi$-continuous presheaves enjoys many good properties---for instance, it is always a reflective subcategory; the classical reference is \cite{freyd1972categories}.

\begin{dfn}[Directed complex]
	A \emph{directed complex} is a $\Gamma$-continuous presheaf on the category $\frdCpx_{\L}$ of finite regular directed complexes and local embeddings.
\end{dfn}

\begin{comm}
	Note that this is not the same notion as Steiner's notion of directed complex \cite{steiner1993algebra}, which is the combinatorial forerunner of our regular directed complexes.
\end{comm}

\noindent
We let $\dCpx$ denote the category $\PSh_\Gamma(\rdCpx_\L)$ of directed complexes.
Because representable presheaves are continuous with respect to all colimits, the Yoneda embedding factors through
\[
	\frdCpx_{\L} \incl \dCpx
\]
and we will identify each finite regular directed complex with its image through this embedding.
This presentation of $\dCpx$ is particulary convenient for obtaining this identification, but there is a simpler, equivalent one.
Let $\atom_\E$ be a skeleton of the full subcategory of $\frdCpx_\L$ on the atoms.

\begin{prop} \label{prop:guises_of_directed_complexes}
	The following categories are equivalent:
	\begin{enumerate}[label=(\alph*)]
		\item the category of directed complexes;
		\item the category of presheaves on $\atom_\E$.
	\end{enumerate}
\end{prop}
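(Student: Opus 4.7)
The plan is to realise the claimed equivalence as the restriction/right-Kan-extension pair along the fully faithful inclusion $\iota\colon \atom_\E \incl \frdCpx_\L$. Restriction gives a functor $r\colon \PSh_\Gamma(\frdCpx_\L) \to \PSh(\atom_\E)$ sending $X$ to $X \circ \opp{\iota}$. In the reverse direction, I would send $F \in \PSh(\atom_\E)$ to the right Kan extension $e(F) \eqdef \mathrm{Ran}_{\opp{\iota}} F$, which at each finite regular directed complex $P$ is given by the limit of $F$ over the opposite of the comma category $\iota/P$ of atoms embedded into $P$.

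The core input, supplied by Proposition \ref{prop:construction_of_embeddings_from_boundary_inclusions}, is that every finite regular directed complex is an iterated $\Gamma$-colimit of atoms, built from the initial object by pushouts of boundary inclusions $\bd{}{}U \incl U$ along embeddings. From this I would derive the two main technical claims. First, that $e(F)$ is $\Gamma$-continuous: $e(F)$ sends the initial object to a singleton (empty limit), and sends each pushout $P' \eqdef P \pout{\bd{}{}U} U$ to a pullback in $\Set$, because the comma category $\iota/P'$ is itself the pushout of $\iota/P$ and $\iota/U$ over $\iota/\bd{}{}U$, as any atom embedded into $P'$ factors through $P$ or $U$ and agrees on the part landing in $\bd{}{}U$. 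Second, that the unit and counit of $(r,e)$ are isomorphisms: the counit $r\,e(F) \Rightarrow F$ is an iso at each atom $U$ because $\idd{U}$ is a terminal object of $\iota/U$, so the defining limit collapses to $F(U)$; and the unit $X \Rightarrow e\,r(X)$ for any $\Gamma$-continuous $X$ is an iso because the expression of $P$ as an iterated $\Gamma$-colimit of atoms is dualised by $X$ into an isomorphism $X(P) \iso \lim X|_{\opp{(\iota/P)}}$.

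The main obstacle will be the bookkeeping behind the first claim. The decomposition in Proposition \ref{prop:construction_of_embeddings_from_boundary_inclusions} depends on a chosen ordering of cells, whereas the indexing category $\iota/P$ records all embeddings of atoms simultaneously; one must argue that the pushout description of $\iota/P'$ above is genuine and that $e(F)$ preserves the generating $\Gamma$-colimits cleanly. The cleanest way to organise the argument is to verify $\Gamma$-continuity locally, pushout by pushout, and to use that a presheaf on $\frdCpx_\L$ is $\Gamma$-continuous if and only if it is continuous for the generating colimits of $\Gamma$ (the initial object and pushouts of boundary inclusions along embeddings), an observation that falls out of the same transfinite-composition argument used to prove Proposition \ref{prop:construction_of_embeddings_from_boundary_inclusions}.
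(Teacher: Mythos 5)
Your proposal is correct and is in substance the same argument as the paper's: both rest on the presentation of a finite regular directed complex as a $\Gamma$\nbd colimit of its atoms (equivalently, the identification of the comma category of embedded atoms with the underlying poset) together with the absence of non-trivial $\Gamma$\nbd colimits in $\atom_\E$. The only difference is packaging --- the paper proves the restriction functor fully faithful and essentially surjective directly and is terser about why the extension of a presheaf on $\atom_\E$ is $\Gamma$\nbd continuous, a point your comma-category decomposition spells out explicitly.
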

\begin{proof}
	There is an evident restriction functor $\dCpx \to \PSh(\atom_\E)$.
	Every regular directed complex $P$ has a presentation as the colimit of the diagram $\fun{F}_P\colon P \to \rdCpx_\L$ sending $x \in P$ to the atom $\clset{x}$, which has image in $\atom_\E$.
	When $P$ is finite, this colimit can be constructed with a sequence of coproducts and pushouts of embeddings, which are $\Gamma$\nbd colimits.
	This implies that the restriction functor is faithful.
	Moreover, a local embedding $f\colon P \to Q$ is uniquely determined by the restrictions $\restr{f}{\clset{x}}\colon \clset{x} \to \clset{f(x)}$, which are components of a natural transformation from $\fun{F}_P$ to $\fun{F}_Q f$; this implies that the restriction functor is full.
	Finally, there are no non-trivial $\Gamma$-colimits in $\atom_\E$, so its essential image is the entire category of presheaves.
\end{proof}

\noindent
Directed complexes are essentially ``diagrammatic sets without degeneracies''; the fact that $\frdCpx_{\L}$ embeds into them can be compared with \cite[Lemma 2.5]{chanavat2024diagrammatic}.
We can thus import all the terminology from the theory of diagrammatic sets that does not refer to degeneracies.

\begin{dfn}[Diagram in a directed complex]
	Let $U$ be a finite regular directed complex and $X$ a directed complex.
	A \emph{diagram of shape $U$ in $X$} is a morphism $u\colon U \to X$.
	A diagram is a \emph{pasting diagram} if $U$ is a molecule, a \emph{round diagram} if $U$ is round, and a \emph{cell} if $U$ is an atom.
\end{dfn}

\noindent
Given a diagram $u\colon U \to X$ and a morphism $f\colon X \to Y$ of directed complexes, we will sometimes write $f(u) \eqdef fu\colon U \to Y$.
We let $\cell X$ denote the set of cells in a directed complex, which is the same as the set of its elements as a presheaf on $\atom_\E$.
This set is graded by dimension.
By entirely standard arguments, based on the properties of $\atom_\E$, every directed complex $X$ is the colimit of the sequence
\[
	\varnothing \equiv \gr{\le -1}{X} \incl \gr{\le 0}{X} \incl \dots \incl \gr{\le n}{X} \incl \dots,
\]
of inclusions of its restrictions $\gr{\le n}{X}$ to cells of dimension $\le n$, and, furthermore, each inclusion fits into a pushout diagram
\[\begin{tikzcd}
	{\coprod_{u \in \gr{n}{\cell X}} \bd{}{}U_u} & {\coprod_{u \in \gr{n}{\cell X}} U_u} \\
	{\gr{\leq n-1}{X}} & {\gr{\leq n}{X},}
	\arrow[hook, from=1-1, to=1-2]
	\arrow["{(\bd{}{}u)_{u \in \gr{n}{\cell X}}}", from=1-1, to=2-1]
	\arrow["{(u)_{u \in \gr{n}{\cell X}}}", from=1-2, to=2-2]
	\arrow[hook, from=2-1, to=2-2]
	\arrow["\lrcorner"{anchor=center, pos=0.125, rotate=180}, draw=none, from=2-2, to=1-1]
\end{tikzcd}\]
where $U_u$ is the shape of the cell $u$.

What follows is mainly a recap of \cite[Section 1.1]{chanavat2024equivalences}, restricted from diagrammatic sets to directed complexes.
Recall that, for all $k \in \mathbb{N}$, an \emph{$\omega$\nbd graph} (or globular set) in degree $\geq k$ is a graded set $G = \coprod_{n \geq k} \gr{n}{G}$ together with boundary functions $\bd{}{-}, \bd{}{+}\colon \gr{n+1}{G} \to \gr{n}{G}$ for all $n \geq k$, satisfying $\bd{}{\a}\bd{}{-} = \bd{}{\a}\bd{}{+}$ for all $\a \in \set{-, +}$.
For all $n \geq m \geq k$, this relation allows us to define recursively $\bd{m}{\a}\colon \gr{n}{G} \to \gr{m}{G}$ by $\bd{m}{\a} \eqdef \idd{\gr{n}{G}}$ if $n = m$ and $\bd{m}{\a} \eqdef \bd{}{\a}\bd{m+1}{\a}$ if $n > m$.
If $G$ and $H$ are $\omega$\nbd graphs in degree $\geq k$, then a morphism $f\colon G \to H$ is a grade-preserving, boundary-preserving function.

Given $n > k$ and $a \in \gr{n}{G}$, we write $a\colon a^- \celto a^+$ if $\bd{}{\a}a = a^\a$ for each $\a \in \set{-, +}$, and say that $a$ is of \emph{type} $a^- \celto a^+$.
For all $n \geq k$, we say that $a, b \in \gr{n}{G}$ are \emph{parallel} if either $n = 0$, or $n > 0$ and $a$ and $b$ have the same type.
Given parallel $a, b \in \gr{n}{G}$, the graded set
\[
	G(a, b) \eqdef \set{ c \in \gr{> n}{G} \mid \bd{n}{-}c = a, \bd{n}{+}c = b }
\]
inherits by restriction a structure of $\omega$\nbd graph in degree $> n$.

\begin{dfn}[The $\omega$-graph of pasting diagrams]
	Let $u\colon U \to X$ be a pasting diagram in a directed complex, $n \in \mathbb{N}$, and $\a \in \set{-, +}$.
	We let $\bd{n}{\a}u \eqdef \restr{u}{\bd{n}{\a}{U}}\colon \bd{n}{\a}U \to X$; we may omit the index $n$ when $n = \dim{u} - 1$.
	We let $\Pd X$ denote the set of pasting diagrams in $X$ and $\Rd X \subseteq \Pd X$ its subset of round diagrams.
	The set $\Pd X$ is graded by dimension; given a subset $A$ of $\Pd X$ and $n \in \mathbb{N}$, we let $\gr{n}{A} \eqdef \set{u \in A \mid \dim u = n}$.
	Then, $\Pd X$ admits the structure of an $\omega$\nbd graph with the functions $\bd{}{-}, \bd{}{+}\colon \gr{n+1}{\Pd X} \to \gr{n}{\Pd X}$ for each $n \in \mathbb{N}$.
	These restrict along the inclusions $\gr{n}{\Rd X} \subseteq \gr{n}{\Pd X}$, making $\Rd X$ an $\omega$\nbd subgraph of $\Pd X$.
\end{dfn}

\noindent
When specifying the type of a pasting diagram, we will distinguish notationally between cells and other pasting diagrams, by writing $u\colon a \celto b$ for cells and $u\colon a \rdto b$ for more general pasting diagrams.

\begin{comm}
	The $+$ in $\rdto$ should be read as the Kleene $+$ in formal language theory, rather than a reference to orientation.
	It is preferrable to the Kleene $*$ because the relation ``there exists a pasting diagram of type $a \rdto b$'' is transitive but not, in general, reflexive.
\end{comm}

\begin{dfn}[Subdiagram]
	Let $u\colon U \to X$ be a pasting diagram.
	A \emph{subdiagram of $u$} is a pair of a pasting diagram $v\colon V \to X$ and a submolecule inclusion $\iota\colon V \submol U$ such that $v = u\iota$.
	A subdiagram is \emph{rewritable} when $\iota$ is a rewritable submolecule inclusion.
	We write $\iota\colon v \submol u$ for the data of a subdiagram of $u$.
\end{dfn}

\noindent We will simply write $v \submol u$ when $\iota$ is irrelevant or evident from the context.

\begin{dfn}[Pasting of pasting diagrams]
	Let $u\colon U \to X$ and $v\colon V \to X$ be pasting diagrams such that $\bd{k}{+}u = \bd{k}{-}v$.
	We let $u \cp{k} v\colon U \cp{k} V \to X$ be the pasting diagram determined by the universal property of the pasting $U \cp{k} V$.
	More generally, if $\iota\colon \bd{k}{+}u \submol \bd{k}{-}v$, we let $u \cpsub{k,\iota} v\colon U \cpsub{k,\iota} V \to X$ be the pasting diagram determined by the universal property of $U \cpsub{k,\iota} V$ as a pasting of $U$ at a submolecule of $\bd{k}{-} V$.
	Dually, if $\iota\colon \bd{k}{-}v \submol \bd{k}{+}u$, we let $u \subcp{k,\iota} v$ be the universally determined pasting diagram of shape $U \subcp{k,\iota} V$.
\end{dfn}

\begin{rmk}
	There are evident subdiagrams $u, v \submol u \cpsub{k,\iota} v$ and $u, v \submol u \subcp{k,\iota} v$ whenever the pastings are defined.
\end{rmk}

\begin{rmk}
	All pastings are evidently preserved by morphisms $f\colon X \to Y$ of directed complexes, in the sense that $f(u \cp{k} v) = f(u) \cp{k} f(v)$.
\end{rmk}

\noindent We may omit the index $k$ when it is equal to $\min \set{\dim{u}, \dim{v}} - 1$, and omit $\iota$ when it is irrelevant or evident from the context.
It follows from \cite[Chapter 5]{hadzihasanovic2024combinatorics} that pasting satisfies all the axioms of composition in strict $\omega$\nbd categories.
In particular, pastings of the form $u \cp{} v$ suffice to generate all pastings of the form $u \cp{k} v$, as well as pastings at a subdiagram $u \cpsub{k, \iota} v$, for all $k \in \mathbb{N}$.

\begin{dfn}[Substitution at a rewritable subdiagram]
	Let $u\colon U \to X$ be a pasting diagram, let $\iota\colon v \submol u$ be a rewritable subdiagram of shape $V$, and let $w$ be a round diagram of shape $W$, parallel to $v$.
	The \emph{substitution of $w$ for $\iota\colon v \submol u$} is the unique pasting diagram $\subs{u}{w}{\iota(v)}$ of shape $\subs{U}{W}{\iota(V)}$ which restricts to $w$ along $W \incl \subs{U}{W}{\iota(V)}$ and to $\restr{u}{U \setminus \inter{\iota(V)}}$ along $U \setminus \inter{\iota(V)} \incl \subs{U}{W}{\iota(V)}$.
\end{dfn}

\noindent 
From \cite[Section 3.1]{chanavat2024equivalences} and \cite[Section 3.1]{chanavat2025gray}, we recall the notion of an $A$\nbd context for pasting diagrams.

\begin{dfn}[$A$-context]
	Let $X$ be a directed complex, $A \subseteq \Rd X$.
	For $k$ ranging over $\mathbb{N}$ and $v, w$ over parallel pairs in $\gr{k}{\Rd X}$, the class of \emph{$A$-contexts on $\Pd X(v, w)$} is the inductive class of morphisms of $\omega$\nbd graphs in degree $> k$ with domain $\Pd X(v, w)$ generated by the following clauses.
	\begin{enumerate}
		\item (\textit{Left pasting}). 
			For all $u \in \gr{k+1}{A}$ and rewritable $\iota\colon \bd{}{+}u \submol v$,
			\[
				u \cpsub{\iota} -\colon \Pd X(v, w) \to \Pd X(\subs{v}{\bd{}{-}u}{\iota(\bd{}{+}u)}, w)
			\]
			is an $A$\nbd context on $\Pd X(v, w)$.
		\item (\textit{Right pasting}).
			For all $u \in \gr{k+1}{A}$ and rewritable $\iota\colon \bd{}{-}u \submol w$,
			\[
				- \subcp{\iota} u\colon \Pd X(v, w) \to \Pd X(v, \subs{w}{\bd{}{+}u}{\iota(\bd{}{-}u)})
			\]
			is an $A$\nbd context on $\Pd X(v, w)$.
		\item (\textit{Identity}).
			The identity $-\colon \Pd X(v, w) \to \Pd X(v, w)$ is an $A$\nbd context on $\Pd X(v, w)$.
		\item (\textit{Composition}).
			If $\fun{F}\colon \Pd X(v, w) \to \Pd X(v', w')$ is an $A$\nbd context and $\fun{G}$ is an $A$\nbd context on $\Pd X(v', w')$, then $\fun{GF}$ is an $A$\nbd context on $\Pd X(v, w)$.
		\item (\textit{Promotion}).
			If $k > 0$ and $\fun{F}$ is an $A$\nbd context on $\Pd X(\bd{}{-}v, \bd{}{+}w)$, then
			\[
				\fun{F}_{v, w} \eqdef \restr{\fun{F}}{\Pd X(v, w)}\colon \Pd X(v, w) \to \Pd X(\fun{F}v, \fun{F}w)
			\]
			is an $A$\nbd context on $\Pd X(v, w)$.
	\end{enumerate}
	We let $\dim \fun{F} \eqdef k+1$ be the \emph{dimension} of any $A$\nbd context $\fun{F}$ on $\Pd X(v, w)$.
	When $A = \Rd X$ or $A$ is irrelevant, we speak simply of a \emph{context} on $\Pd X(v, w)$.
\end{dfn}

\begin{rmk}
	Given a context $\fun{F}$ on $\Pd X(v, w)$ and a cell $a\colon v \celto w$, there is an evident subdiagram $\iota_{\fun{F}}\colon a \submol \fun{F}a$.
	Conversely, given a rewritable subdiagram $\iota\colon a \submol b$ where $a\colon v \rdto w$, there exists a unique context $\fun{F}_\iota$ on $\Pd(v, w)$ such that $b = \fun{F}_\iota a$.
	We call this the context \emph{determined by $\iota\colon a \submol b$}.
\end{rmk}

\begin{lem} \label{lem:context_layering}
	Let $X$ be a directed complex and $\fun{F}$ a context on $\Pd X(v,w)$ with $k \eqdef \dim{\fun{F}}$.
	Then there exist pasting diagrams $(\ell_i, r_i)_{i=1}^{k}$ in $X$ such that
	\begin{enumerate}
		\item $\fun{F} = \ell_k \cp{k-1} (\ell_{k-1} \cp{k-2} \ldots (\ell_1 \cp{0} - \cp{0} r_1) \ldots \cp{k-2} r_{k-1}) \cp{k-1} r_k$,
		\item $\dim{\ell_i}, \dim{r_i} \leq i$ for all $i \in \set{1, \ldots, k}$.
	\end{enumerate}
\end{lem}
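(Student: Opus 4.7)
The plan is to proceed by structural induction on the construction of the context $\fun{F}$, leveraging the fact that pasting of pasting diagrams in a directed complex satisfies the associativity and interchange laws of a strict $\omega$\nbd category.

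For the identity context on $\Pd X(v, w)$, I would take $\ell_i \eqdef \bd{i-1}{-}v$ and $r_i \eqdef \bd{i-1}{+}w$ for each $i \in \set{1, \ldots, k}$; each has dimension at most $i-1$, and since pasting along a boundary is the identity, the sandwich collapses to $-$. For a left pasting $\fun{F} = u \cpsub{\iota} -$ with $u \in \gr{k}{A}$, I would use the calculus of pasting at submolecules to construct a $k$\nbd dimensional pasting diagram $\ell_k$ with $\bd{k-1}{-}\ell_k = \subs{v}{\bd{}{-}u}{\iota(\bd{}{+}u)}$ and $\bd{k-1}{+}\ell_k = v$, such that $\ell_k \cp{k-1} a = u \cpsub{k-1,\iota} a$ for all $a\colon v \rdto w$; then take $r_k \eqdef w$ and reuse the trivial lower layers from the identity case. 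The right pasting case is symmetric.

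For the promotion of $\fun{F}$ to $\fun{F}_{v, w}$, since a context preserves dimension as a morphism of $\omega$\nbd graphs, the pasting diagrams $\fun{F}v$ and $\fun{F}w$ are $k$\nbd dimensional; the plan is to obtain the $(k+1)$\nbd layered form by appending the top layer $\ell_{k+1} \eqdef \fun{F}v$, $r_{k+1} \eqdef \fun{F}w$ on top of the $k$\nbd layered form supplied by the induction hypothesis. For composition $\fun{G}\fun{F}$, I would rely on a sub-induction that reduces to the case of applying a single generating operation (left pasting, right pasting, or promotion) to a context already in layered form: such an operation can be absorbed into the appropriate $\ell_j$ or $r_j$ by combining it with that layer via pasting.

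The hard part will be the composition case, since absorbing a new pasting into an existing layered form requires commuting it past higher-level pieces via the interchange law, while tracking how the boundaries of neighbouring layers must be adjusted. I would handle this through a careful layer-by-layer verification that the new $\ell_i$, $r_i$ remain of dimension at most $i$ and that their boundaries continue to match across the sandwich; the strict $\omega$\nbd category axioms satisfied by pasting should ensure that this bookkeeping, while tedious, is essentially routine.
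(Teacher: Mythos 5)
Your plan is correct and follows essentially the same route as the paper, whose proof simply defers to \cite[Lemma 3.3]{chanavat2024equivalences}: a structural induction on the context in which top-dimensional (co)pastings are absorbed into the outermost layer by associativity (concretely, $\ell_k = u \cpsub{\iota} v$ in the left-pasting case), promotion appends the layer $(\fun{F}v, \fun{F}w)$, and composition is handled by distributing lower-dimensional whiskerings over higher-codimension pastings via interchange and unitality. No gaps.
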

\begin{proof}
	Same as \cite[Lemma 3.3]{chanavat2024equivalences}.
\end{proof}

\begin{dfn}[Shape of a context]
	Let $X$ be a directed complex, $v\colon V \to X$ and $w\colon W \to X$ be parallel round diagrams, and $\fun{F}$ be a context on $\Pd X(v, w)$ with $k \eqdef \dim{\fun{F}}$.
	Let $(\ell_i\colon L_i \to X, r_i\colon R_i \to X)_{i=1}^k$ be sequences of pasting diagrams provided for $\fun{F}$ by Lemma \ref{lem:context_layering}.
	The \emph{shape of $\fun{F}$} is the molecule
	\[
		L_k \cp{k-1} (L_{k-1} \cp{k-2} \ldots (L_1 \cp{0} (V \celto W) \cp{0} R_1) \ldots \cp{k-2} R_{k-1}) \cp{k-1} R_k.
	\]
	We say that $\fun{F}$ is \emph{round} if its shape is round.
\end{dfn}

\begin{rmk}
	When there exists a cell $a\colon v \celto w$, then the shape of a context $\fun{F}$ on $\Pd(v, w)$ is precisely the shape of $\fun{F}a$.
\end{rmk}

\begin{dfn}[Context subdiagram]
	Let $X$ be a directed complex, $v, w \in \Pd X$ be parallel, and let $\fun{F}$ be a context on $\Pd X(v, w)$.
	A \emph{context subdiagram} $\iota\colon z \submol \fun{F}$ is a pair of 
	\begin{enumerate}
		\item a decomposition $\fun{F} = v' \cp{} {\fun{F}'-}$ or $\fun{F} = {\fun{F}'-} \cp{} v'$, and
		\item a subdiagram $\iota\colon z \submol v'$.
	\end{enumerate}
	A context subdiagram is \emph{rewritable} if $\dim{v'} = \dim \fun{F}$ and $\iota$ is rewritable.
\end{dfn}

\noindent
By \cite[Lemma 3.18]{chanavat2024equivalences}, given a context $\fun{F}$ on $\Pd(v, w)$ and a context subdiagram $\iota\colon z \submol \fun{F}$, every $a\colon v \rdto w$ determines a subdiagram $\iota_a\colon z \submol \fun{F}a$, which is rewritable if $\iota$ is rewritable.

Since the Gray product restricts to a monoidal structure $(\atom_\E, \gray, 1)$, we obtain a biclosed monoidal structure on $\dCpx$ by the Day convolution construction on categories of presheaves \cite{day1970closed}.
Both because we want to show that the embedding of $(\frdCpx_\L, \gray, 1)$ is strong monoidal, and because it will come useful later, we will use the following, more general version.

\begin{lem} \label{lem:day_for_gamma_continuous_presheaves}
	Let $(\cat{C}, \otimes, I)$ be a monoidal category and let $\Xi$ be a class of colimit cones in $\cat{C}$ such that, for all $C \in \Ob \cat{C}$, the functors $C \otimes -$ and $- \otimes C$ preserve $\Xi$-colimits.
	Then there exists an essentially unique monoidal structure $(\PSh_\Xi(\cat{C}), \otimes, I)$ such that
	\begin{enumerate}
		\item for all $\Xi$-continuous presheaves $X$, the functors $X \otimes -$ and $- \otimes X$ preserve all small colimits,
		\item the embedding $(\cat{C}, \otimes, I) \incl (\PSh_\Xi(\cat{C}), \otimes, I)$ is strong monoidal.
	\end{enumerate}
	This monoidal structure is biclosed, and for all $\Xi$\nbd continuous presheaves $X$, $Y$, the monoidal product $X \otimes Y$ can be constructed as
	\[
		\fun{r}_\Xi \left(\int^{C, C' \in \Ob \cat{C}} \Hom_{\cat{C}}(-, C \otimes C') \times X(C) \times Y(C') \right),
	\]
	where $\fun{r}_\Xi\colon \PSh(\cat{C}) \to \PSh_\Xi(\cat{C})$ is the reflector.
	Furthermore, let $(\cat{D}, \otimes, I)$ be another monoidal category with a class $\Lambda$ of colimit cones satisfying the same properties, and let $\fun{F}\colon \cat{C} \to \cat{D}$ be a functor such that
	\begin{enumerate}
		\item $\fun{F}\colon (\cat{C}, \otimes, I) \to (\cat{D}, \otimes, I)$ is strong monoidal,
		\item the left Kan extension $\Lan \fun{F}\colon \PSh(\cat{C}) \to \PSh(\cat{D})$ commutes with reflectors, that is, $(\Lan \fun{F}) \fun{r}_\Xi$ is naturally isomorphic to $\fun{r}_\Lambda (\Lan \fun{F})$.
	\end{enumerate}
	Then $\Lan \fun{F}\colon (\PSh_\Xi(\cat{C}), \otimes, I) \to (\PSh_{\Lambda}(\cat{D}), \otimes, I)$ is strong monoidal.
\end{lem}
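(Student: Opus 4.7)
The plan is to obtain the desired structure by reflecting the standard Day convolution monoidal structure on $\PSh(\cat{C})$ into the subcategory of $\Xi$\nbd continuous presheaves. Recall that Day convolution equips $\PSh(\cat{C})$ with a biclosed monoidal structure $(\otimes_{\mathrm{Day}}, I)$ extending $(\cat{C}, \otimes, I)$ along Yoneda, with internal hom characterised by $[h_C, Y]_{\mathrm{Day}}(C') \simeq Y(C' \otimes C)$ on representables and extended cocontinuously in the first variable. I will first prove that $\PSh_\Xi(\cat{C})$ is closed under the Day internal hom from arbitrary objects, then invoke a standard reflection argument to transport the monoidal structure.

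The key intermediate step is the following claim: if $Y$ is $\Xi$\nbd continuous and $X$ is any presheaf, then $[X, Y]_{\mathrm{Day}}$ is $\Xi$\nbd continuous. For representable $X = h_C$, evaluating at $C'$ yields $Y(C' \otimes C)$; since $- \otimes C$ preserves $\Xi$\nbd colimits by hypothesis and $Y$ is $\Xi$\nbd continuous, the presheaf $C' \mapsto Y(C' \otimes C)$ is $\Xi$\nbd continuous. For general $X$, write $X$ as a colimit of representables and use that $[-, Y]_{\mathrm{Day}}$ sends colimits to limits. From this closure property, the standard reflection argument (compare Day's reflection theorem, or Im--Kelly) produces a biclosed monoidal structure on $\PSh_\Xi(\cat{C})$ in which the tensor is $X \otimes Y \eqdef \fun{r}_\Xi(X \otimes_{\mathrm{Day}} Y)$, the internal hom is inherited unchanged from $\PSh(\cat{C})$, and the inclusion into $\PSh(\cat{C})$ is right adjoint to $\fun{r}_\Xi$, which becomes strong monoidal.

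The two defining properties are then immediate: $X \otimes -$ in $\PSh_\Xi(\cat{C})$ has the inherited internal hom as right adjoint and hence preserves all small colimits; and on representables $h_C \otimes_{\mathrm{Day}} h_{C'} \simeq h_{C \otimes C'}$ is already $\Xi$\nbd continuous by the universal property of Yoneda extended to $\PSh_\Xi(\cat{C})$, so reflection is the identity there, giving strong monoidality of the embedding. Essential uniqueness follows because any monoidal structure satisfying the two conditions must agree with Day convolution on representables (by strong monoidality of the embedding) and extend cocontinuously in each variable to all $\Xi$\nbd continuous presheaves (by colimit-preservation), and these two requirements pin down the bifunctor up to unique isomorphism. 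The explicit coend formula is obtained by applying $\fun{r}_\Xi$ to the usual Day coend.

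For the functoriality part, recall that $\Lan \fun{F}\colon (\PSh(\cat{C}), \otimes_{\mathrm{Day}}) \to (\PSh(\cat{D}), \otimes_{\mathrm{Day}})$ is strong monoidal whenever $\fun{F}$ is, essentially because both $\Lan \fun{F}$ and each $\otimes_{\mathrm{Day}}$ are cocontinuous extensions of functors that already agree (up to the strong monoidal comparison of $\fun{F}$) on representables. Given this and the hypothesis $(\Lan \fun{F})\fun{r}_\Xi \simeq \fun{r}_\Lambda (\Lan \fun{F})$, applying $\fun{r}_\Lambda$ to the comparison isomorphisms $\Lan \fun{F}(X \otimes_{\mathrm{Day}} Y) \iso \Lan \fun{F}(X) \otimes_{\mathrm{Day}} \Lan \fun{F}(Y)$ yields the desired strong monoidal comparison for the reflected structures. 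The main subtlety in the whole argument is the closure-under-internal-hom step, where one must check that the coend/end computing $[X, Y]_{\mathrm{Day}}$ interacts correctly with $\Xi$\nbd limits; beyond that, everything is formal reflection theory.
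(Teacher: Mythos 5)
Your proposal is correct and, at its core, identical to the paper's argument: the crucial verification in both is that the would-be internal homs into a $\Xi$\nbd continuous presheaf are again $\Xi$\nbd continuous, which in the representable case reduces to the hypothesis that $C \otimes -$ and $- \otimes C$ preserve $\Xi$\nbd colimits, and the tensor is in both cases the reflection $\fun{r}_\Xi$ of the Day coend. The only (cosmetic) difference is packaging: you route through Day convolution on all of $\PSh(\cat{C})$ followed by Day's reflection theorem, whereas the paper applies the Day--Street density theorem directly to the dense monoidal subcategory $\cat{C} \incl \PSh_\Xi(\cat{C})$, and for the last part the paper carries out the coend manipulation for $\Lan \fun{F}$ explicitly where you invoke strong monoidality of $\Lan \fun{F}$ for Day convolution abstractly --- both are standard and interchangeable.
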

\begin{proof}
	Let $C \in \Ob (\cat{C})$ and let $X$ be a $\Xi$-continuous presheaf.
	Since $C \otimes -$ and $- \otimes C$ preserve $\Xi$-colimits, the presheaves
	\begin{align*}
		H(C, X)\colon D & \mapsto \Hom_{\PSh_{\Xi}(\cat{C})}(C \otimes D, X), \\
		H'(C, X)\colon D & \mapsto \Hom_{\PSh_{\Xi}(\cat{C})}(D \otimes C, X)
	\end{align*}
	are $\Xi$-continuous, and the assignment is evidently functorial both contravariantly in $C$ and covariantly in $X$, so it determines functors
	\[
		H, H'\colon \opp{\cat{C}} \times \PSh_\Xi(\cat{C}) \to \PSh_\Xi(\cat{C})
	\]
	with natural isomorphisms
	\[
		\Hom(D, H(C, X)) \simeq 
		\Hom(C \otimes D, X) \simeq
		\Hom(C, H'(D, X)).
	\]
	Since $\PSh_\Xi(\cat{C})$ has all small colimits and limits as a reflective subcategory of a presheaf category, and $\cat{C}$ is a dense subcategory, the hypotheses of \cite[Theorem 5.3]{ara2020joint}, attributed to Day \cite{day1970closed} via Street \cite{street2004categorical}, are all met, and guarantee the existence and essential uniqueness of the biclosed monoidal structure $(\PSh_{\Xi}(\cat{C}), \otimes, I)$.
	Moreover, given $\Xi$-continuous presheaves $X$, $Y$, their monoidal product $X \otimes Y$ is given explicitly by
	\[
		\colim_{C \to X}\, \colim_{C' \to Y}\, C \otimes C',
	\]
	which in the case of a reflective subcategory of a presheaf category can be computed in $\PSh(\cat{C})$ by the stated coend, then reflected onto $\PSh_\Xi(\cat{C})$.
	Now, $(\cat{D}, \otimes, I)$ with the class $\Lambda$ satisfies the same hypotheses, so we have an essentially unique biclosed monoidal structure on $\PSh_{\Lambda}(\cat{D})$ extending the one on $\cat{D}$.
	Moreover, $\Lan \fun{F}$ sends a presheaf $X$ on $\cat{C}$ to the presheaf $\Lan \fun{F} X$ on $\cat{D}$ computed by the coend
	\[
	\int^{C \in \Ob(\cat{C})} \Hom_{\cat{D}}(-, \fun{F}C) \times X(C).
	\]
	Under the assumption that $(\Lan \fun{F}) \fun{r}_\Xi$ is naturally isomorphic to $\fun{r}_\Lambda (\Lan \fun{F})$, we then have, given $\Xi$\nbd continuous presheaves $X$, $Y$,
	\begin{align*}
		\Lan \fun{F}(X \otimes Y) & \simeq \fun{r}_\Lambda\left(\int^{C, C'} \Lan \fun{F}\left(\Hom_{\cat{C}}(-, C \otimes C')\right) \times X(C) \times Y(C')\right) \\
					  & \simeq \fun{r}_\Lambda\left(\int^{C, C'} \Hom_{\cat{D}}(-, \fun{F}(C \otimes C')) \times X(C) \times Y(C')\right) \\
					  & \simeq \fun{r}_\Lambda\left(\int^{C, C'} \Hom_{\cat{D}}(-, \fun{F}C \otimes \fun{F}C')) \times X(C) \times Y(C')\right)
	\end{align*}
	by strong monoidality of $\fun{F}$.
	Now, by strong monoidality of the embedding of $\cat{D}$, the representable presheaf on $\fun{F}C \otimes \fun{F}C'$ is naturally isomorphic to the monoidal product of the representables on $\fun{F}C$ and $\fun{F}C'$, so expanding its coend expression, we reduce the above to $\fun{r}_\Lambda$ applied to
	\[
		\int^{C,C',D,D'} \Hom_{\cat{D}}(-, D \otimes D') \times \Hom_{\cat{D}}(D, \fun{F}C) \times \Hom_{\cat{D}}(D', \fun{F}C') \times X(C) \times Y(C')
	\]
	which reduces to
	\[
		\fun{r}_\Lambda\left(\int^{D, D' \in \Ob(\cat{D})} \Hom_{\cat{D}}(-, D \otimes D') \times \Lan \fun{F}X(D) \times \Lan \fun{F}Y(D')\right)
	\]
	which is the expression for $\Lan {\fun{F}}X \otimes \Lan {\fun{F}}Y$.
	This completes the proof.
\end{proof}

\begin{rmk}
	The usual Day convolution is the case $\Xi = \varnothing$ of Lemma \ref{lem:day_for_gamma_continuous_presheaves}.
\end{rmk}

\begin{prop} \label{prop:gray_product_of_directed_complexes}
	There is an essentially unique biclosed monoidal structure $(\dCpx, \gray, 1)$ such that the embedding of $(\frdCpx_\L, \gray, 1)$ is strong monoidal.
\end{prop}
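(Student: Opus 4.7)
The plan is to invoke Lemma \ref{lem:day_for_gamma_continuous_presheaves} directly, with $(\cat{C}, \otimes, I) \eqdef (\frdCpx_\L, \gray, 1)$ and with the class of colimit cones $\Xi \eqdef \Gamma$. Recall that $\dCpx$ is by definition $\PSh_\Gamma(\frdCpx_\L)$, so if the hypotheses of the lemma are met, its conclusion states precisely that there is an essentially unique biclosed monoidal structure on $\dCpx$ with unit $1$ and with a monoidal product that extends (via the Yoneda embedding) the Gray product of finite regular directed complexes, and moreover makes the embedding $(\frdCpx_\L, \gray, 1) \incl (\dCpx, \gray, 1)$ strong monoidal.

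To apply the lemma we only need to check one hypothesis: that for every $P \in \Ob \frdCpx_\L$, both endofunctors $P \gray -$ and $- \gray P$ of $\frdCpx_\L$ preserve $\Gamma$-colimits. This is exactly the content of the first part of Proposition \ref{prop:gray_preservation_of_colimits}. The fact that $(\frdCpx_\L, \gray, 1)$ is a monoidal category is already recorded from \cite[Corollary 7.2.18]{hadzihasanovic2024combinatorics} in the discussion preceding Proposition \ref{prop:gray_product_of_regular_functors}.

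There is no real obstacle: everything has been prepared in Section \ref{sec:colimits}, and the statement is essentially a corollary of Lemma \ref{lem:day_for_gamma_continuous_presheaves} combined with Proposition \ref{prop:gray_preservation_of_colimits}. The only pitfall to keep in mind is to verify that the class $\Gamma$ is essentially small so that the reflector $\fun{r}_\Gamma\colon \PSh(\frdCpx_\L) \to \dCpx$ exists and the explicit formula for $X \gray Y$ in the lemma is well-defined; this is recorded in the remark following the definition of $\Gamma$, since there are countably many isomorphism classes of finite regular directed complexes.
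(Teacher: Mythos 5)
Your proof is correct and is exactly the paper's argument: the paper proves this proposition by citing Proposition \ref{prop:gray_preservation_of_colimits} together with Lemma \ref{lem:day_for_gamma_continuous_presheaves}, which is precisely what you do, only spelled out in more detail.
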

\begin{proof}
	Immediate from Proposition \ref{prop:gray_preservation_of_colimits} and Lemma 
	\ref{lem:day_for_gamma_continuous_presheaves}.
\end{proof}

\noindent
Adapting \cite[Lemma 3.4]{chanavat2024diagrammatic}, since all morphisms in $\atom_\E$ are monomorphisms, we find that the Gray product $X \gray Y$ of two directed complexes $X$ and $Y$ has a particularly simple form: its cells are, up to isomorphism, all of the form $u \gray v$ for some cell $u$ of $X$ and some cell $v$ of $Y$.

Next, let $\Gammacon$ be the subclass of connected $\Gamma$-colimits, that is, the class of pushouts of boundary inclusions of atoms along embeddings of finite regular directed complexes.

\begin{dfn}[Augmented directed complex]
	An \emph{augmented directed complex} is a $\Gammacon$\nbd continuous presheaf on the category $\frdCpx_\L$ of finite regular directed complexes and local embeddings.
\end{dfn}

\begin{comm}
	This is not to be confused with Steiner's notion of augmented directed chain complexes 
	\cite{steiner2004omega}, although the two can be related; see \cite[Chapter 11]{hadzihasanovic2024combinatorics}.
\end{comm}

\noindent
We let $\adCpx$ denote the category $\PSh_{\Gammacon}(\frdCpx_\L)$.

\begin{lem} \label{lem:join_of_augmented_directed_complexes}
	There is an essentially unique biclosed monoidal structure $(\adCpx, \join, \varnothing)$ such that the embedding of $(\frdCpx_\L, \join, \varnothing)$ is strong monoidal.
\end{lem}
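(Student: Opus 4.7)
The plan is to reduce the statement to a direct application of Lemma \ref{lem:day_for_gamma_continuous_presheaves}, using Proposition \ref{prop:join_preservation_of_connected_colimits} as the essential input. The two things to check before invoking the lemma are that the join determines a monoidal structure on the small category $\frdCpx_\L$, and that each of $P \join -$ and $- \join P$ preserves the chosen class of colimit cones $\Gammacon$ in $\frdCpx_\L$.

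First I would observe that the monoidal structure $(\rdCpx_\L, \join, \varnothing)$ restricts to $\frdCpx_\L$: the join of two finite oriented graded posets has underlying set the disjoint union $P \pout{} Q \pout{} (P \times Q)$, which is finite whenever $P$ and $Q$ are, and the unit $\varnothing$ is finite. Second, I would appeal directly to Proposition \ref{prop:join_preservation_of_connected_colimits}, which states exactly that, for each finite regular directed complex $P$, the endofunctors $P \join -$ and $- \join P$ on $\frdCpx_\L$ preserve all connected $\Gamma$\nbd colimits, i.e.\ all $\Gammacon$\nbd colimits.

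With these verifications in place, the hypotheses of Lemma \ref{lem:day_for_gamma_continuous_presheaves} are satisfied for $\cat{C} \eqdef \frdCpx_\L$, monoidal product $\join$, unit $\varnothing$, and $\Xi \eqdef \Gammacon$. The lemma then supplies an essentially unique biclosed monoidal structure $(\PSh_{\Gammacon}(\frdCpx_\L), \join, \varnothing) = (\adCpx, \join, \varnothing)$ such that the Yoneda embedding is strong monoidal, which is the claim.

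There is no real obstacle, as the result is a verbatim application of the abstract extension tool; the only subtlety to keep in mind is the switch from $\Gamma$ to $\Gammacon$. This is precisely why one must restrict to connected colimits: the join does \emph{not} preserve the empty colimit, since $P \join \varnothing \cong P$ rather than $\varnothing$, so the stronger preservation statement needed for Gray products in Proposition \ref{prop:gray_product_of_directed_complexes} fails here. Working with $\adCpx$ rather than $\dCpx$ is exactly the adjustment that makes Lemma \ref{lem:day_for_gamma_continuous_presheaves} applicable to the join.
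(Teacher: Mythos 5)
Your proposal is correct and matches the paper's proof, which is exactly the one-line deduction ``immediate from Proposition \ref{prop:join_preservation_of_connected_colimits} and Lemma \ref{lem:day_for_gamma_continuous_presheaves}'' with $\cat{C} = \frdCpx_\L$ and $\Xi = \Gammacon$. Your added remarks on finiteness of the join and on why connectedness is forced (since $P \join \varnothing \cong P$, the join fails to preserve the initial object) are accurate and explain the choice of $\adCpx$ over $\dCpx$.
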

\begin{proof}
	Immediate from Proposition \ref{prop:join_preservation_of_connected_colimits} and Lemma 
	\ref{lem:day_for_gamma_continuous_presheaves}.
\end{proof}

\noindent
By a variant of Proposition \ref{prop:guises_of_directed_complexes}, we may also present $\adCpx$ as the category of presheaves on a skeleton of the full subcategory of $\frdCpx_\L$ spanned by the atoms and $\varnothing$.
Thus, we have an extension-restriction adjunction
\[\begin{tikzcd}
	\dCpx && \adCpx,
	\arrow[""{name=0, anchor=center, inner sep=0}, "{\augm{-}}", curve={height=-12pt}, from=1-1, to=1-3]
	\arrow[""{name=1, anchor=center, inner sep=0}, "{\dimin{-}}", curve={height=-12pt}, from=1-3, to=1-1]
	\arrow["\dashv"{anchor=center, rotate=-90}, draw=none, from=0, to=1]
\end{tikzcd}\]
where the essential image of $\augm{-}$ consists of the presheaves $X$ such that $X(\varnothing)$ is a singleton.

\begin{dfn} [Join of directed complexes]
	Let $X$, $Y$ be directed complexes.
	The \emph{join of \( X \) and \( Y \)} is the directed complex \( X \join Y \eqdef \dimin{(\augm{X} \join \augm{Y})} \).
\end{dfn}

\begin{prop} \label{prop:join_of_directed_complexes}
	The triple \( (\dCpx, \join, \varnothing) \) is a monoidal structure on \( \dCpx \) such that
	\begin{enumerate}
		\item the embedding of \( (\frdCpx_\L, \join, \varnothing) \) is strong monoidal,
		\item \( \join \) preserves connected colimits in each variable.
	\end{enumerate}
\end{prop}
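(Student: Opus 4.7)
The plan is to transport the biclosed monoidal structure $(\adCpx, \join, \varnothing)$ of Lemma \ref{lem:join_of_augmented_directed_complexes} across the adjunction $\dimin{-} \dashv \augm{-}$. Since $\augm{-}$ is fully faithful with essential image $\adCpx_0 \eqdef \set{X \in \adCpx \mid X(\varnothing) \simeq \set{*}}$, it will suffice to show that $\adCpx_0$ is closed under $\join$ and contains the monoidal unit $\varnothing$. The transported product on $\dCpx \simeq \adCpx_0$ is then precisely $X \join Y \eqdef \dimin{(\augm{X} \join \augm{Y})}$ as defined above. The unit $\varnothing \in \adCpx$ is the representable on the empty regular directed complex, which takes value $\Hom_{\frdCpx_\L}(\varnothing, \varnothing) = \set{*}$ at $\varnothing$ and so lies in $\adCpx_0$.

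The main step is to verify closure of $\adCpx_0$ under $\join$. Given $X, Y \in \adCpx_0$, I would compute $(X \join Y)(\varnothing)$ using the Day convolution coend from Lemma \ref{lem:day_for_gamma_continuous_presheaves}, noting that the reflector $\fun{r}_{\Gammacon}$ does not alter the value at $\varnothing$, since no colimit cone in $\Gammacon$ has $\varnothing$ as its vertex. Since $\Hom_{\frdCpx_\L}(\varnothing, P \join Q) = \set{*}$ for all $P, Q$ by initiality of $\varnothing$, the coend reduces to $\int^{P,Q} X(P) \times Y(Q) \simeq (\colim X) \times (\colim Y)$. The key observation is that for any $X \in \adCpx_0$ the category of elements admits an initial object $(\varnothing, *)$: given any pair $(Q, y)$ with $y \in X(Q)$, the unique embedding $\varnothing \incl Q$ maps $y$ to the unique element of $X(\varnothing) \simeq \set{*}$ by contravariance. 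Hence $\colim X \simeq \set{*}$, and we conclude $(X \join Y)(\varnothing) \simeq \set{*}$.

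For strong monoidality of the embedding of $(\frdCpx_\L, \join, \varnothing)$, I would use that $\augm{-}$ identifies the representable on $P \in \frdCpx_\L$ in $\dCpx$ with the representable on $P$ in $\adCpx$, combined with strong monoidality of the embedding $\frdCpx_\L \incl \adCpx$; this yields natural isomorphisms $P \join Q = \dimin{(\augm{P} \join \augm{Q})} \simeq \dimin{\augm{P \join Q}} \simeq P \join Q$ in $\dCpx$. Finally, to show that $\join$ preserves connected colimits in each variable, I would first check that $\augm{-}$ preserves connected colimits --- at $\varnothing$ a connected colimit of the constant diagram at $\set{*}$ is still $\set{*}$, while on atoms the computation matches the colimit in $\dCpx$ --- and then chain with preservation of all colimits by $\join$ in $\adCpx$ (by biclosedness) and by $\dimin{-}$ (as a left adjoint). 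The main obstacle is the coend computation showing closure of $\adCpx_0$ under $\join$, since all other steps reduce to routine manipulations with adjunctions and Day convolution.
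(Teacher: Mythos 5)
Your treatment of the first part --- computing \( (\augm{X} \join \augm{Y})(\varnothing) \) via the Day coend, observing that \( (\varnothing, \ast) \) is initial in the category of elements of any presheaf with singleton value at \( \varnothing \) so that \( \colim X \simeq \set{\ast} \), and deducing closure of your \( \adCpx_0 \) under \( \join \) together with strong monoidality on representables --- is essentially the paper's argument, with the cofinality step made explicit; the paper states the same singleton computation more tersely. Where you genuinely diverge is the preservation of connected colimits. The paper lifts \( X \mapsto X \join Y \) to the coslice \( \slice{Y}{\dCpx} \) and invokes the criterion of \cite[\textsection 5.7]{ara2020joint} that such a lift preserves connected colimits as soon as it admits a right adjoint, which is then constructed by analogy with the slice constructions of \cite{hadzihasanovic2020combinatorial}. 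You instead factor \( X \join - \) as \( \dimin{-} \circ (\augm{X} \join -) \circ \augm{-} \) and observe that the middle functor preserves all colimits by biclosedness of \( (\adCpx, \join, \varnothing) \), the outer restriction preserves all (pointwise) colimits, and \( \augm{-} \) preserves exactly the connected ones --- its failure on the initial object being precisely why the join does not preserve the one non-connected \( \Gamma_\S \)-colimit. This is a valid and more self-contained argument: it avoids constructing the right adjoints altogether, at the price of not producing the generalised slices that the paper's route yields as a by-product. One small caution: your parenthetical ``\( \dimin{-} \) (as a left adjoint)'' clashes with the adjunction \( \augm{-} \dashv \dimin{-} \) as displayed in the paper; the conclusion is unaffected, since \( \dimin{-} \) is a restriction between presheaf categories and preserves pointwise colimits regardless (and indeed the construction only makes sense if \( \augm{-} \) is the functor setting the value at \( \varnothing \) to a point, in which case \( \dimin{-} \) really is the left adjoint), but you should justify the step by the pointwise description rather than by an appeal to adjointness.
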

\begin{proof}
	Let $X$, $Y$ be directed complexes.
	By the explicit description of $\augm{X} \join \augm{Y}$ given by Lemma 
	\ref{lem:day_for_gamma_continuous_presheaves}, we have
	\begin{equation*}
		\augm{X} \join \augm{Y}(\varnothing) \simeq \augm{X}(\varnothing) \times \augm{Y}(\varnothing) \times \Hom(\varnothing, \varnothing)
	\end{equation*}
	which is a singleton because $\varnothing$ is a strict initial object.
	For the same reason, the essential image of the canonical embedding of $\frdCpx_\L$ into $\adCpx$ is included in the essential image of $\augm{-}$.
	This proves both that the monoidal structure \( (\adCpx, \join, \varnothing) \) restricts to \( (\dCpx, \join, \varnothing) \), and that the embedding \( (\frdCpx_\L, \join, \varnothing) \incl (\dCpx, \join, \varnothing) \) is strong monoidal.
	
	Next, since the monoidal unit \( \varnothing \) is initial in \( \dCpx \), we have canonical inclusions \( X \incl X \join Y \) and \( Y \incl X \join Y \).
	By \cite[\textsection 5.7]{ara2020joint}, to show that \( \join \) preserves connected colimits in each variable, it is enough to show that the functor \( \inr{Y} \colon \dCpx \to \slice{Y}{\dCpx} \) sending \( X \) to \( Y \incl X \join Y \) and the functor \( \inl{Y} \colon \dCpx \to \slice{Y}{\dCpx} \) sending \( X \) to \( Y \incl Y \join X \) both admit right adjoints.
	The construction of these right adjoints is a formal analogue of \cite[Proposition 16, Proposition 17]{hadzihasanovic2020combinatorial} using \cite[Proposition 7.5.29]{hadzihasanovic2024combinatorics} for the relation between duals and joins.
\end{proof}

\begin{lem} \label{lem:join_preserves_mono}
	Let \( i \), \( i' \) be monomorphisms of directed complexes.
	Then \( i \join i' \) is a monomorphism.
\end{lem}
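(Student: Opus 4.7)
The strategy is to reduce to a componentwise injectivity check using the presheaf presentation of (augmented) directed complexes. By Proposition \ref{prop:guises_of_directed_complexes}, $\dCpx$ is equivalent to a presheaf category on a skeleton of atoms, and analogously $\adCpx$ is a presheaf category on atoms together with $\varnothing$; in both cases, monomorphisms coincide with componentwise-injective morphisms. Both $\augm{-}$ and $\dimin{-}$ preserve componentwise injections by direct inspection of their action (the former only adds a singleton component at $\varnothing$ and reindexes the rest; the latter is simply a restriction). Since by definition $i \join i' = \dimin{(\augm{i} \join \augm{i'})}$, it suffices to show that the join of two monomorphisms in $\adCpx$ is a monomorphism.

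For this, I would use the explicit coend description of the monoidal product from Lemma \ref{lem:day_for_gamma_continuous_presheaves}: an element of $(\augm{X} \join \augm{X'})(W)$ for an augmented atom $W$ is an equivalence class $[f\colon W \to U \join V, x \in \augm{X}(U), y \in \augm{X'}(V)]$ modulo the coend relations generated by morphisms of augmented atoms. The morphism $\augm{i} \join \augm{i'}$ sends such a class to $[f, \augm{i}(x), \augm{i'}(y)]$. Injectivity is then shown by exhibiting a canonical representative for each class, using the tripartite classification of elements of an augmented atom into $\inl{w_0}$, $\inr{w_1}$, or $w_0 \join w_1$: every morphism $f\colon W \to U \join V$ factors through a minimal pair of embeddings $U_0 \incl U$ and $V_0 \incl V$ such that $f$ lifts uniquely along $U_0 \join V_0 \incl U \join V$, providing the canonical representative. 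Componentwise injectivity of $\augm{i}$ and $\augm{i'}$ then immediately implies that distinct canonical representatives are mapped to distinct canonical representatives, yielding injectivity of the component map.

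The main obstacle, and the technical heart of the proof, is establishing the canonical form rigorously. This amounts to verifying that every element of an atom in $\augmo{\ogPos}$ admits a unique classification into one of the three types above, which propagates to a unique minimal factorisation of morphisms; this is a direct combinatorial consequence of the definition of joins via Gray products on augmented oriented graded posets and of the rigidity of atoms in $\frdCpx_\L$. A parallel route, analogous to the explicit cell description for Gray products mentioned after Proposition \ref{prop:gray_product_of_directed_complexes}, would be to catalogue directly the cells of $X \join X'$ as being of three disjoint types---inclusion-cells from $X$, inclusion-cells from $X'$, and join-cells $u \join v$---distinguishable by shape, with the morphism $i \join i'$ preserving each type separately; injectivity then follows from injectivity of $i$ and $i'$ on their respective cells.
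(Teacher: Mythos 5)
Your proposal is correct and follows essentially the same route as the paper: reduce via the fact that $\augm{-}$ and $\dimin{-}$ preserve monomorphisms to the augmented case, then use the classification of cells of a join into the three disjoint types $\inl{u}$, $\inr{v}$, $u \join v$ (the paper delegates this last step to the corresponding Gray-product argument of an earlier article, which is exactly your second, "parallel" route). The only caveat is that your first route through the raw coend formula should be routed through this cell classification anyway, since the Day convolution in $\adCpx$ involves a reflector whose effect is most cleanly controlled by the explicit description of cells of $X \join Y$.
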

\begin{proof}
	Since the functors \( \augm{-} \) and \( \dimin{-} \) evidently preserves monomorphisms, it is enough to show that the join of augmented directed complexes does.
	For this, the proof of \cite[Lemma 3.5]{chanavat2024model} goes through with \( {\join} \) in place of \( \gray \), using Proposition \ref{prop:join_of_directed_complexes} and \cite[Lemma 7.4.10]{hadzihasanovic2024combinatorics}.
\end{proof}

\noindent
If $X$, $Y$ are directed complexes, \( u \) is a cell of \( X \) and \( v \) is a cell of \( Y \), we write respectively \( \inl{u} \) and \( \inr{v} \) for the image of \( u \) and \( v \) through the canonical inclusions \( X \incl X \join Y \) and \( Y \incl X \join Y \).
Adapting \cite[Lemma 3.4]{chanavat2024diagrammatic}, we find that each cell of $X \join Y$ is of the form $u \join v$, $\inl{u}$, or \( \inr{v} \) for some cell $u$ of $X$ and some cell $v$ of $Y$.

\subsection{Marked directed complexes} \label{sec:marked}

\begin{dfn}[Marked directed complex]
	A \emph{marked directed complex} is a pair $(X, A)$ of a directed complex $X$ and a set $A \subseteq \gr{>0}{\cell X}$ of \emph{marked cells}.
	Given marked directed complexes $(X, A)$, $(Y, B)$, a \emph{morphism} $f\colon (X, A) \to (Y, B)$ is a morphism $f\colon X \to Y$ in $\dCpx$ such that $A \subseteq \invrs{f}B$.
\end{dfn}

\noindent
We let $\mdCpx$ denote the category of marked directed complexes and their morphisms.
We have an evident forgetful functor
\[
	\Um\colon \mdCpx \to \dCpx
\]
with a left adjoint $\flatm{-}\colon \dCpx \to \mdCpx$ sending $X$ to $(X, \varnothing)$ and a right adjoint $\sharpm{-}\colon \dCpx \to \mdCpx$ sending $X$ to $(X, \gr{>0}{\cell X})$.

\begin{dfn}[Marking of marked directed complexes]
	A morphism $f\colon (X, A) \to (Y, B)$ of marked directed complexes is a \emph{marking} if its underlying morphism of directed complexes is an isomorphism.
\end{dfn}

\begin{dfn}[Conservative morphism]
	A morphism $f\colon (X, A) \to (Y, B)$ of marked directed complexes is \emph{conservative} if $A = \invrs{f}B$.
\end{dfn}

\begin{prop} \label{prop:marking_local_embedding_factorisation}
	There is an orthogonal factorisation system on $\mdCpx$ whose left class is the class of markings, and right class is the class of conservative morphisms.
\end{prop}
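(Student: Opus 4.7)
The plan is to produce the factorisation explicitly and then verify orthogonality by the ``unique diagonal filler'' formulation, which will essentially reduce to the observation that the underlying morphism of directed complexes determines the entire data up to checking a containment of marked cells.

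First, I would construct the factorisation. Given a morphism $f\colon (X, A) \to (Y, B)$ of marked directed complexes, consider the triple
\[
	(X, A) \xrightarrow{f_\m} (X, \invrs{f}B) \xrightarrow{f_\mathrm{c}} (Y, B),
\]
where both morphisms are underlain by the same morphism $f\colon X \to Y$ in $\dCpx$, except that $f_\m$ is the identity on the directed complex $X$. The first is well-defined because $A \subseteq \invrs{f}B$ by the definition of a morphism of marked directed complexes, and is a marking by construction; the second is conservative by construction. This gives the factorisation.

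Next, I would check that both classes are closed under composition and contain all isomorphisms. Markings are closed under composition because ``being an isomorphism of underlying directed complexes'' is, and every isomorphism of marked directed complexes is clearly a marking. For conservative morphisms, if $f\colon (X, A) \to (Y, B)$ and $g\colon (Y, B) \to (Z, C)$ satisfy $A = \invrs{f}B$ and $B = \invrs{g}C$, then $A = \invrs{f}\invrs{g}C = \invrs{(gf)}C$, and isomorphisms are trivially conservative.

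Finally, the orthogonality: suppose given a commutative square
\[\begin{tikzcd}
(X,A) \arrow[r, "h"] \arrow[d, "f"'] & (Y,B) \arrow[d, "g"] \\
(X,A') \arrow[r, "k"'] & (Z,C)
\end{tikzcd}\]
with $f$ a marking and $g$ conservative. Any diagonal filler $d\colon (X, A') \to (Y, B)$ must have underlying morphism of directed complexes equal to $h$, since $f$ is the identity on directed complexes; so uniqueness is automatic, and existence reduces to checking $A' \subseteq \invrs{h}B$. Using $kf = gh$ and the fact that $f$ is the identity on directed complexes, we compute
\[
	\invrs{h}B \;=\; \invrs{h}\invrs{g}C \;=\; \invrs{(gh)}C \;=\; \invrs{(kf)}C \;=\; \invrs{k}C,
\]
where the second equality uses conservativity of $g$. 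Since $k$ is a morphism of marked directed complexes, $A' \subseteq \invrs{k}C = \invrs{h}B$, as required. The only step with any content is this single bookkeeping computation, so I do not anticipate any real obstacles; the entire proof is essentially formal.
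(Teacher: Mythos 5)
Your proof is correct and uses exactly the same factorisation as the paper ($\idd{X}\colon (X,A) \to (X, \invrs{f}B)$ followed by the conservative $f$), with the orthogonality spelled out via unique lifting where the paper merely asserts essential uniqueness. One small imprecision: in the lifting argument a general marking $f$ in the left class is only an \emph{isomorphism} on underlying directed complexes, not the identity, so the unique filler is $h\invrs{\varphi}$ where $\varphi$ is that isomorphism; your computation goes through verbatim after this substitution.
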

\begin{proof}
	Both classes evidently contain all isomorphisms and are closed under composition.
	Let $f\colon (X, A) \to (Y, B)$ be a morphism of marked directed complexes.
	Then $A \subseteq \invrs{f}{B}$, so $\idd{X}\colon (X, A) \to (X, \invrs{f}{B})$ is well-defined as a marking, and $f$ factors as this marking followed by the conservative morphism $f\colon (X, \invrs{f}{B}) \to (Y, B)$.
	Essential uniqueness is straightforward.
\end{proof}

\begin{comm}
	This kind of factorisation system on marked presheaves is often called \emph{(entire, regular)}, for example in \cite{verity2008weak, campion2025cubical}.
	However, \emph{regular} is already overloaded, and anticipating the fact that in fibrant objects, marked cells will be exactly the equivalences, our terminology is consistent with the use of ``conservative functor'' for a functor that reflects isomorphisms.
\end{comm}

\noindent
Given a finite regular directed complex $P$, there is an evident bijection between the set $\cell P$ of cells in $P$ and the set of elements of $P$.
Thus, we can identify a set of marked cells in $P$ with a subset $A \subseteq \gr{>0}{P}$.

\begin{dfn}[Marking of top-dimensional elements]
	Let $P$ be a finite regular directed complex, $n \eqdef \dim P$.
	We let $\mrk{P}$ be the regular marked directed complex $(P, \gr{n}{P})$ and $\m_P\colon \flatm{P} \to \mrk{P}$ be the marking determined by the identity on $P$.
\end{dfn}

\noindent
In particular, if $U$ is an atom with greatest element $\top$, we have $\mrk{U} = (U, \set{\top})$.
We let 
\[
	\Imrk \eqdef \set{\m_U\colon \flatm{U} \to \mrk{U} \mid \text{$U$ is an atom}},
\]
the set of \emph{top-markings} of atoms.
If $U$ is of the form $V \celto W$, we will also write $V \mcelto W$ for $\mrk{U}$.

\begin{prop} \label{prop:colimits_of_marked_directed_complexes}
	Let $\fun{F}\colon \cat{J} \to \mdCpx$ be a small diagram.
	Then
	\begin{enumerate}
		\item every colimit cone $\gamma$ under $\Um\fun{F}$ in $\dCpx$ with tip $X$ lifts to a colimit cone under $\fun{F}$ in $\mdCpx$ with tip
			\[
				\left( X, \bigcup_{j \in \Ob \cat{J}} \set{\gamma_j(u) \mid \text{$u$ is marked in $\fun{F}j$}} \right),
			\]
		\item every limit cone $\delta$ over $\Um\fun{F}$ in $\dCpx$ with tip $X$ lifts to a limit cone over $\fun{F}$ in $\mdCpx$ with tip
			\[
				\left( X, \bigcap_{j \in \Ob \cat{J}} \set{u \in \gr{>0}{\cell X} \mid \text{$\delta_j(u)$ is marked in $\fun{F}j$}} \right).
			\]
	\end{enumerate}
\end{prop}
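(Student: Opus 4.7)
Both statements follow from the same pattern: use the forgetful functor $\Um$, which creates both limits and colimits on the level of underlying directed complexes, and then verify that the specified markings are the unique ones that make the cones universal in $\mdCpx$.

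For the colimit statement, let $\gamma$ be a colimit cone under $\Um \fun{F}$ in $\dCpx$ with tip $X$, and let $A \subseteq \gr{>0}{\cell X}$ be defined as in the proposition. I would first check that each $\gamma_j\colon \Um \fun{F}j \to X$ lifts to a morphism $\fun{F}j \to (X, A)$: if $u$ is marked in $\fun{F}j$, then $\gamma_j(u) \in A$ by construction, so the containment $\set{\text{marked cells of } \fun{F}j} \subseteq \invrs{\gamma_j}A$ holds. (Since morphisms of directed complexes preserve the shape of cells, and in particular dimensions, $A$ is indeed a subset of $\gr{>0}{\cell X}$.) For universality, given a cone $\delta$ under $\fun{F}$ in $\mdCpx$ with tip $(Y, B)$, the universal property of $\gamma$ in $\dCpx$ provides a unique $f\colon X \to Y$ such that $f \gamma_j = \Um \delta_j$. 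Any $v \in A$ is of the form $\gamma_j(u)$ with $u$ marked in $\fun{F}j$, so $f(v) = \delta_j(u) \in B$; hence $A \subseteq \invrs{f}B$ and $f$ lifts to a morphism $(X, A) \to (Y, B)$, necessarily still unique.

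The limit statement is formally dual. Starting from a limit cone $\delta$ over $\Um \fun{F}$ in $\dCpx$ with tip $X$ and the intersection marking $A$ specified, I would check that each $\delta_j$ lifts to a morphism of marked directed complexes by definition of $A$, and that for any cone $\gamma$ over $\fun{F}$ in $\mdCpx$ with tip $(Y, B)$, the universally induced morphism $f\colon Y \to X$ satisfies $B \subseteq \invrs{f}A$: for $v \in B$ and any $j$, $\delta_j(f(v)) = \gamma_j(v)$ is marked in $\fun{F}j$, so $f(v) \in A$.

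There is no real obstacle here; the only point needing a moment of care is that one must take $A$ to be \emph{exactly} the given union (resp. intersection) of marked cells, not an arbitrary marking compatible with the cone, in order to make the induced comparison morphism a morphism of marked directed complexes for \emph{every} competing cone. The minimality of the union (resp. maximality of the intersection) is precisely what the universal property of (co)limits requires.
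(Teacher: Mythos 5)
Your proof is correct and is exactly the routine verification the paper has in mind — its own proof of this proposition is just the word ``Straightforward.'' The details you supply (lifting the cone legs via the definition of the marking, then checking the induced comparison morphism respects markings, with minimality of the union and maximality of the intersection doing the work) are the standard argument and there is nothing to add.
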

\begin{proof}
	Straightforward.
\end{proof}

\noindent
In this and other claims of local finite presentability, we use the criterion of \cite[Corollary 1.52]{adamek1994locally} that a category is locally finitely presentable if and only if it is a category of models of a finite limit sketch.

\begin{prop} \label{cor:local_presentability_of_mdcpx}
	The category $\mdCpx$ is locally finitely presentable.
\end{prop}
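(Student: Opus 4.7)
The plan is to exhibit $\mdCpx$ as the category of models of a finite limit sketch in $\Set$ and then invoke the cited criterion. By Proposition \ref{prop:guises_of_directed_complexes}, $\dCpx$ is equivalent to $\PSh(\atom_\E)$, so it suffices to enrich this presheaf presentation to capture markings.

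I would enlarge $\atom_\E$ to a small category $\atom_\E^\m$ by freely adjoining, for each positive-dimensional atom $U$, a new object $U^\m$ and a new morphism $\iota_U\colon U \to U^\m$, closing under composition with embeddings of atoms (so that each $V \incl U$ in $\atom_\E$ composes with $\iota_U$ to give a single new morphism $V \to U^\m$). A presheaf $F$ on $\atom_\E^\m$ then amounts to a directed complex $X$ (the restriction of $F$ to $\atom_\E$) together with, for each positive-dim $U$, a set $F(U^\m)$ equipped with a function $F(U^\m) \to F(U) = \Hom(U, X)$. To single out those presheaves in which this function is a monomorphism, I would equip $\opp{(\atom_\E^\m)}$ with the structure of a finite limit sketch by distinguishing, for each positive-dim $U$, the finite cone consisting of the pair of identities $U^\m \to U^\m$ over the cospan $U^\m \to U \leftarrow U^\m$ whose legs are both $\opp{\iota_U}$; in $\Set$, a map is monic precisely when the corresponding diagonal square is a pullback, so a model sends this cone to a limit cone if and only if $F(U^\m) \to F(U)$ is injective.

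Then I would verify that the $\Set$-valued models of this sketch correspond naturally to marked directed complexes via the assignment $F \mapsto (X, A)$, where $A$ is the union over positive-dim atoms $U$ of the images of $F(U^\m) \incl \Hom(U, X)$ evaluated at the top cell of each $U$; the inverse sends $(X, A)$ to the presheaf defined by $F(U^\m) \eqdef \set{u\colon U \to X \mid u(\top_U) \in A}$, and morphisms on both sides correspond by naturality with respect to the $\iota_U$, which is precisely the condition that the map sends markings to markings. The cited criterion then yields local finite presentability of $\mdCpx$.

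The only potential obstacle is routine bookkeeping: checking that $\atom_\E^\m$ remains essentially small (it is, as we adjoin countably many objects and morphisms to an essentially small category), and that the encoding of ``$F(U^\m) \to F(U)$ is monic'' as a single finite pullback cone is sound, which is a standard device in sketch theory.
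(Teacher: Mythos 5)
Your proof is correct and takes essentially the same route as the paper: the paper's sketch is (the opposite of) the full subcategory of $\mdCpx$ on the objects $\flatm{U}$ and $\mrk{U}$, which is exactly your $\atom_\E^\m$ with $\mrk{U}$ in the role of $U^\m$ and $\m_U$ in the role of $\iota_U$. The paper's distinguished squares force $\m_U$ to be an epimorphism (a pushout condition), which is precisely the dual of your pullback cone forcing $F(U^\m) \to F(U)$ to be monic, so the two sketches are the same.
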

\begin{proof}
	A finite limit sketch for $\mdCpx$ is given by (the opposite of) its full subcategory on objects $\flatm{U}$ and $\mrk{U}$ ranging over atoms $U$, together with the set of squares
\begin{equation} \label{eq:marking_marking_pushout}
\begin{tikzcd}
	{\flatm{U}} & {\mrk{U}} \\
	{\mrk{U}} & {\mrk{U}}
	\arrow["{\m_U}", from=1-1, to=1-2]
	\arrow["{\m_U}", from=1-1, to=2-1]
	\arrow[equals, from=1-2, to=2-2]
	\arrow[equals, from=2-1, to=2-2]
\end{tikzcd}\end{equation}
	with $U$ ranging over atoms, which ensures that being marked is a property and not a structure on a cell (that is, if two marked cells have the same underlying cell, they are equal).
\end{proof}

\begin{comm}
	Since forcing the square (\ref{eq:marking_marking_pushout}) to be a pushout is equivalent to forcing the representable morphism $\m_U$ to be an epimorphism, $\mdCpx$ is in fact a quasitopos of separated presheaves; compare \cite[Corollary 2.28]{chanavat2024model}.
\end{comm}

\noindent
Given directed complexes $X$ and $Y$ and sets $A \subseteq \cell X$ and $B \subseteq \cell Y$, we let $A \gray B \subseteq \cell (X \gray Y)$ denote the set of cells of the form $u \gray v$ for $u \in A$ and $v \in B$.

\begin{dfn}[Gray product of marked directed complexes]
	Let $(X, A)$ and $(Y, B)$ be marked directed complexes.
	The \emph{Gray product of $(X, A)$ and $(Y, B)$} is the marked directed complex
	\[
		(X, A) \gray (Y, B) \eqdef (X \gray Y, (\cell X \gray B) \cup (A \gray \cell B)).
	\]
\end{dfn}

\begin{prop} \label{prop:gray_product_of_marked_directed_complexes}
	Let $f\colon (X, A) \to (Y, B)$ and $g\colon (X', A') \to (Y', B')$ be morphisms of marked directed complexes.
	Then the Gray product $f \gray g$ of their underlying morphisms of directed complexes determines a morphism
	\[
		f \gray g\colon (X, A) \gray (X', A') \to (Y, B) \gray (Y', B').
	\]
	This determines a biclosed monoidal structure $(\mdCpx, \gray, \flatm{1})$.
\end{prop}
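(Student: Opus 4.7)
The plan is to lift the monoidal structure $(\dCpx, \gray, 1)$ from Proposition \ref{prop:gray_product_of_directed_complexes} to $\mdCpx$ by defining all structure maps on underlying directed complexes and then checking compatibility with markings; biclosedness will then follow from local presentability of $\mdCpx$ plus preservation of colimits in each variable.

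First, I would check bifunctoriality: given morphisms $f\colon (X, A) \to (Y, B)$ and $g\colon (X', A') \to (Y', B')$, every marked cell of $(X, A) \gray (X', A')$ is of the form $u \gray u'$ with $u \in A$ or $u' \in A'$, and its image under $f \gray g$ is $f(u) \gray g(u')$. Since $f(A) \subseteq B$ and $g(A') \subseteq B'$, this image lies in $(\cell Y \gray B') \cup (B \gray \cell Y')$, i.e.\ among the marked cells of $(Y, B) \gray (Y', B')$, so $f \gray g$ descends to a morphism in $\mdCpx$ and the assignment is clearly functorial.

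Next, I would lift the associator and unitors of $(\dCpx, \gray, 1)$. Each is already an isomorphism of directed complexes, so it suffices to check that its underlying bijection on cells restricts to a bijection between sets of marked cells. The associator identifies the marked cells on both sides of $((X,A) \gray (X',A')) \gray (X'',A'')$ and $(X,A) \gray ((X',A') \gray (X'',A''))$ with precisely the cells of the form $u \gray u' \gray u''$ such that at least one of $u, u', u''$ is marked, so its lift is immediate. For the left unitor, the marked cells of $\flatm{1} \gray (Y, B)$ collapse to $\cell 1 \gray B$, which corresponds under $\flatm{1} \gray Y \iso Y$ to exactly $B$; the right unitor is dual. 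The pentagon and triangle axioms then follow from those in $\dCpx$ since the forgetful functor $\Um\colon \mdCpx \to \dCpx$ is faithful.

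Finally, for biclosedness, I would verify that $- \gray (Y, B)$ and $(Y, B) \gray -$ preserve all small colimits and invoke the adjoint functor theorem via the local finite presentability of $\mdCpx$ from Proposition \ref{cor:local_presentability_of_mdcpx}. The underlying endofunctors on $\dCpx$ preserve colimits by biclosedness of $(\dCpx, \gray, 1)$, and by Proposition \ref{prop:colimits_of_marked_directed_complexes} colimits in $\mdCpx$ are computed as the underlying colimit equipped with the union of images of marked cells. To compare markings, observe that every cell of the form $u \gray v$ in $(\colim_j \fun{F}j) \gray (Y, B)$ factors as $\gamma_j(u_j) \gray v$ for some $j$; it is marked in the target iff $v \in B$ or $u_j$ is marked in $\fun{F}j$, which is exactly the condition for it to arise from a marked cell in $\fun{F}j \gray (Y, B)$. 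I do not expect any substantial conceptual obstacle: the only real work is the careful bookkeeping of marked cells through the coherence isomorphisms and through colimit comparisons in each variable.
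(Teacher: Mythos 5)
Your proposal is correct and follows essentially the same route as the paper, which disposes of the monoidal structure by direct calculation and obtains biclosedness from colimit preservation in each variable (via the explicit description of colimits in $\mdCpx$) together with local presentability. You have simply written out the marking bookkeeping that the paper leaves as "straightforward", and your checks of the associator, unitors, and colimit comparison are all accurate.
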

\begin{proof}
	The fact that the Gray product determines a monoidal structure, as well as the fact that the embedding is strong monoidal, are both straightforward by direct calculation.
	Moreover, using the fact that Gray product of directed complexes is biclosed, hence preserves colimits in each variable, together with the characterisation of colimits in $\mdCpx$, it can be checked explicitly that the Gray product of marked directed complexes preserves colimits in each variable.
	Since $\mdCpx$ is locally presentable, this suffices to conclude that the monoidal structure is biclosed.
\end{proof}

\begin{rmk} \label{rmk:Um_strict_monoidal}
	By construction, $\Um\colon (\mdCpx, \otimes, \flatm{1}) \to (\dCpx, \otimes, 1)$ is strict monoidal.
\end{rmk}

\begin{comm}
	Since, in this article, we will only need to use the join of unmarked directed complexes, we omit any discussion marked versions of the join, or any other extensions, although it is certainly possible to do so.
	For the same reason, we do not discuss the \emph{pseudo} versions of either construction, which can be defined along the same lines as in \cite{loubaton2024inductive}.
\end{comm}

\subsection{Weak model structures on marked directed complexes} \label{sec:weakonmarked}

\noindent
Our goal in this section will be to apply Proposition \ref{prop:weak_cisinski_olschok} in order to put weak model structures on $\mdCpx$.
We start by letting
\[
	\fun{I} \eqdef \marr \gray -\colon \mdCpx \to \mdCpx,
\]
and, for each $\a \in \set{-, +}$ and each marked directed complex $(X, A)$,
\[
	\iota^\a_{(X, A)} \eqdef (0^\a \gray \idd{(X, A)})\lambda_{(X, A)}\colon (X, A) \iso \flatm{1} \gray (X, A) \to \marr \gray (X, A),
\]
where $\lambda\colon \Idd{} \iso \flatm{1} \gray \Idd{}$ is the left unitor for the Gray product.

\begin{lem} \label{lem:gray_functorial_cylinder}
	The pair $(\fun{I}, (\iota^-, \iota^+))$ is a functorial cylinder on $\mdCpx$.
\end{lem}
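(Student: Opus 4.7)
The plan is to check the two conditions in the definition of a functorial cylinder: that $\fun{I}$ is a left adjoint endofunctor of $\mdCpx$, and that $\iota^-, \iota^+$ are natural transformations $\Idd{\mdCpx} \Rightarrow \fun{I}$.

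For the first, I appeal to Proposition \ref{prop:gray_product_of_marked_directed_complexes}: the monoidal structure $(\mdCpx, \gray, \flatm{1})$ is biclosed, so by definition $\marr \gray -$ admits a right adjoint (the left internal hom). It is an endofunctor because $\gray$ is a bifunctor on $\mdCpx$ and we have fixed the first argument at $\marr$.

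For the second, I first verify that the face inclusions $0^\a \colon 1 \incl \arr$ lift canonically to morphisms of marked directed complexes $\flatm{1} \to \marr$: indeed, $1$ is $0$-dimensional so $\flatm{1}$ has no marked cells, and the condition on morphisms of marked directed complexes is then vacuous. Then, for any morphism $f \colon (X, A) \to (Y, B)$, naturality of $\iota^\a$ reduces to commutativity of
\[
\begin{tikzcd}
(X,A) & \flatm{1} \gray (X,A) & \marr \gray (X,A) \\
(Y,B) & \flatm{1} \gray (Y,B) & \marr \gray (Y,B)
\arrow["{\lambda_{(X,A)}}", from=1-1, to=1-2]
\arrow["f"', from=1-1, to=2-1]
\arrow["{0^\a \gray \idd{(X,A)}}", from=1-2, to=1-3]
\arrow["{\idd{\flatm{1}} \gray f}", from=1-2, to=2-2]
\arrow["{\idd{\marr} \gray f}", from=1-3, to=2-3]
\arrow["{\lambda_{(Y,B)}}"', from=2-1, to=2-2]
\arrow["{0^\a \gray \idd{(Y,B)}}"', from=2-2, to=2-3]
\end{tikzcd}
\]
The left-hand square commutes by naturality of the left unitor $\lambda$, and the right-hand square commutes by bifunctoriality of $\gray$, since
$
(0^\a \gray \idd{(Y,B)}) \circ (\idd{\flatm{1}} \gray f) = 0^\a \gray f = (\idd{\marr} \gray f) \circ (0^\a \gray \idd{(X,A)}).
$

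There is no real obstacle here; the argument is a formal consequence of the biclosed monoidal structure on $\mdCpx$ together with the basic observation about lifting $0^\a$ to marked directed complexes. Everything that is needed has already been established in Proposition \ref{prop:gray_product_of_marked_directed_complexes}.
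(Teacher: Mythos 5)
Your proof is correct and follows essentially the same route as the paper: Proposition \ref{prop:gray_product_of_marked_directed_complexes} gives the left adjoint via biclosedness, and $\iota^\alpha$ is natural as a composite of the unitor with the whiskering $0^\alpha \gray -$. Your explicit diagram and the observation that $0^\alpha$ lifts to $\flatm{1} \to \marr$ just make explicit what the paper leaves implicit.
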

\begin{proof}
	The functor $\fun{I}$ is left adjoint by Proposition \ref{prop:gray_product_of_marked_directed_complexes}, and $\iota^-, \iota^+$ are composites of natural transformations, hence natural.
\end{proof}

\begin{lem} \label{lem:pushout_product_marking_with_cylinder_boundary}
	Let $s\colon (X, A) \to (Y, B)$ be a marking of marked directed complexes.
	Then $(\iota^-, \iota^+) \ppnat s$ is an isomorphism.
\end{lem}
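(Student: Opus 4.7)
The plan is to reduce the statement to a direct calculation of markings, using that the forgetful functor $\Um\colon \mdCpx \to \dCpx$ preserves both limits and colimits.

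First I would argue that the underlying morphism of $(\iota^-, \iota^+) \ppnat s$ in $\dCpx$ is an isomorphism. Since $\Um$ has both a left adjoint $\flatm{-}$ and a right adjoint $\sharpm{-}$, it preserves pushouts, so computing the underlying map of $(\iota^-, \iota^+) \ppnat s$ commutes with forgetting the marking, yielding the pushout-product of $(\iota^-, \iota^+)$ with $\Um s$ in $\dCpx$. Now $\Um s$ is an isomorphism by hypothesis, and since the Gray product of directed complexes preserves isomorphisms in each variable by biclosedness (Proposition \ref{prop:gray_product_of_directed_complexes}), the map $\idd{\marr} \gray \Um s$ is also an isomorphism. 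A standard diagram chase (or 2-out-of-3 in the pushout-product square) then shows $\Um((\iota^-, \iota^+) \ppnat s)$ is an isomorphism.

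Second, it remains to check that, under this underlying isomorphism, the markings on the pushout $P$ and on $\marr \gray (Y, B)$ coincide. Using Proposition \ref{prop:colimits_of_marked_directed_complexes}, the marking of $P$ is the union of the images of the markings from the two colimit legs. The leg from $\marr \gray (X, A)$ contributes all cells of the form $u \gray v$ with $u$ the top cell of $\arr$ or $v \in A$. The leg from $(Y, B) \amalg (Y, B)$, mapping via $\iota^-_Y \amalg \iota^+_Y$, contributes the cells $0^\a \gray v$ for $v \in B$ and $\a \in \set{-, +}$. Because $s$ is a morphism of marked directed complexes whose underlying map is an isomorphism, under this identification we have $A \subseteq B$, so taking the union simplifies to the set of cells $u \gray v$ with $u$ the top cell of $\arr$ or $v \in B$---which is exactly the marking of $\marr \gray (Y, B)$.

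There is no real obstacle here: the whole proof is essentially bookkeeping. The conceptual point is that the extra markings gained by passing from $(X, A)$ to $(Y, B)$ in the ``cylinder wall'' factor are exactly compensated by the markings contributed by the pushed-in copies of $(Y, B)$ at the two endpoints via $\iota^-$ and $\iota^+$, because those endpoints already see the full marking $B$.
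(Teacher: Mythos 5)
Your proof is correct. The paper's own proof is simply a citation to \cite[Lemma 4.5]{chanavat2024model}, and your direct verification --- passing to underlying directed complexes where the pushout is along an isomorphism, then matching the markings via Proposition \ref{prop:colimits_of_marked_directed_complexes} and the inclusion $A \subseteq B$ --- is exactly the computation that citation stands for.
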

\begin{proof}
	Same as \cite[Lemma 4.5]{chanavat2024model}.
\end{proof}

\noindent
Next, as a set of generating cofibrations, we let
\[
	I \eqdef \flatm{(\Ibd)} \cup \Imrk,
\]
the set of boundary inclusions and top-markings of atoms.

\begin{lem} \label{lem:I_cofibrations_are_monomorphisms}
	The class $\llp(\rlp(I))$ is the class of all monomorphisms in $\mdCpx$.
	Consequently,
	\begin{enumerate}
		\item every marked directed complex is $I$-cofibrant,
		\item the class $\ICof{I}$ is the class of all monomorphisms in $\mdCpx$.
	\end{enumerate}
\end{lem}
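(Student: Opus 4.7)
The plan is to prove the two inclusions separately, then derive both stated consequences at once. I first observe that a morphism in $\mdCpx$ is a monomorphism if and only if its underlying morphism in $\dCpx$ is: the forgetful $\Um$ preserves monos (having a left adjoint $\flatm{-}$), and conversely two parallel morphisms $g, h\colon (Z,C) \to (X,A)$ are determined by their underlying morphisms in $\dCpx$ together with the marking data on $Z$, which is part of the domain.

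For $\llp(\rlp(I)) \subseteq \text{Mono}$, every morphism in $I$ is a monomorphism: the boundary inclusions $\flatm{\bdmap_U}$ have injective underlying map with empty marking on both sides, and each top-marking $\m_U$ is the identity on the underlying directed complex. Since $\mdCpx$ is locally finitely presentable (Proposition \ref{cor:local_presentability_of_mdcpx}), the small object argument applies and $\llp(\rlp(I))$ is the smallest class containing $I$ and closed under pushouts, transfinite compositions, and retracts. Monos are closed under retracts in any category, and stable under pushouts and transfinite compositions in $\mdCpx$ because those colimits are computed in $\dCpx$ on the underlying directed complex (Proposition \ref{prop:colimits_of_marked_directed_complexes}), and in the presheaf category $\dCpx$ monomorphisms have the same stability.

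For $\text{Mono} \subseteq \llp(\rlp(I))$, I would factor a monomorphism $f\colon (X,A) \incl (Y,B)$ as
\[ (X,A) \overset{f_1}\incl (Y, f(A)) \overset{f_2}\incl (Y, B), \]
where $f_1$ has underlying morphism $X \incl Y$ and $f_2$ is the identity on $Y$. To show $f_1 \in \llp(\rlp(I))$: the presheaf presentation of $\dCpx$ from Proposition \ref{prop:guises_of_directed_complexes}, together with the cell-by-cell argument in the proof of Proposition \ref{prop:construction_of_embeddings_from_boundary_inclusions} (which extends from finite regular directed complexes to arbitrary directed complexes by replacing finite sequences with transfinite ones), presents $X \incl Y$ as a transfinite composite of pushouts of boundary inclusions of atoms. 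Lifting this along $\flatm{-}$ to $\mdCpx$ and invoking the union formula for markings (Proposition \ref{prop:colimits_of_marked_directed_complexes}), none of the pushouts of $\flatm{\bdmap_U}$ introduces any new marked cell, since $\flatm{U}$ carries no marking; hence the resulting transfinite composite lands in $(Y, f(A))$, as required. To show $f_2 \in \llp(\rlp(I))$: enumerate the cells in $B \setminus f(A)$; for each such cell $y\colon U \to Y$, the classifying morphism $\flatm{U} \to (Y, C)$ into the current marking fits into a pushout square of $\m_U$ whose result is $(Y, C \cup \set{y})$, again by the union formula. The transfinite composite of these pushouts is $f_2$.

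Finally, the two consequences are immediate: the unique morphism $\flatm{\varnothing} \to (X, A)$ has injective underlying map, so it is a monomorphism and hence lies in $\llp(\rlp(I))$; and since every object is then $I$\nbd cofibrant, the cofibrancy requirement in $\ICof{I}$ is automatic, so $\ICof{I} = \llp(\rlp(I))$ coincides with the class of all monomorphisms. The only real bookkeeping step is the second direction, where one must verify that the cellular decomposition of the underlying embedding lifts without introducing spurious marked cells; this is transparent given the explicit formula of Proposition \ref{prop:colimits_of_marked_directed_complexes}.
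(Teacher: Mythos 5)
Your proof is correct and is essentially the argument the paper relies on: the paper's own proof is a one-line deferral to the analogous lemma for diagrammatic sets, and the standard argument there is exactly your two-step scheme (generators are monomorphisms and monomorphisms are saturated, since colimits of marked objects are computed on underlying directed complexes; conversely, factor a monomorphism through $(Y, f(A))$, build the underlying embedding cell-by-cell from boundary inclusions, then add the remaining markings by pushouts of $\m_U$). The one point worth flagging explicitly is that you should restrict the final marking step to atoms $U$ of positive dimension, which is automatic since $B \setminus f(A) \subseteq \gr{>0}{\cell Y}$; otherwise the argument is complete.
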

\begin{proof}
	Same as \cite[Lemma 3.20]{chanavat2024model}.
\end{proof}

\begin{lem} \label{lem:pushout_product_cofibration_cylinder_boundary}
	Let $i \in I$.
	Then $(\iota^-, \iota^+) \ppnat i$ is in $\llp(\rlp(I))$.
\end{lem}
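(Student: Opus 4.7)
The plan is to treat the two families in $I$ separately. If $i = \m_U \in \Imrk$ is a top-marking, then $i$ is a marking of marked directed complexes, so Lemma \ref{lem:pushout_product_marking_with_cylinder_boundary} immediately gives that $(\iota^-, \iota^+) \ppnat \m_U$ is an isomorphism, hence trivially in $\llp(\rlp(I))$. This disposes of the marking generators.

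For $i = \flatm{\bdmap_U}$ a flat boundary inclusion of an atom $U$, I would use Lemma \ref{lem:I_cofibrations_are_monomorphisms} to reduce the goal to showing that $(\iota^-, \iota^+) \ppnat i$ is a monomorphism in $\mdCpx$. The forgetful $\Um \colon \mdCpx \to \dCpx$ creates colimits by Proposition \ref{prop:colimits_of_marked_directed_complexes} and is strict monoidal by Remark \ref{rmk:Um_strict_monoidal}, with $\Um \marr = \arr$ and $\Um \flatm{V} = V$; it moreover both preserves and reflects monomorphisms (the latter because any $\dCpx$\nbd morphism out of $\Um \flatm{Z} = Z$ lifts canonically to a morphism out of $\flatm{Z}$, since no cell of $\flatm{Z}$ is marked). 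Applying $\Um$ thus reduces the problem to showing that the pushout-product in $\dCpx$,
\[
\kappa \colon (\arr \gray \bd{}{}U) \cup_{\bd{}{}U \amalg \bd{}{}U} (U \amalg U) \longrightarrow \arr \gray U,
\]
is a monomorphism.

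To conclude I would exploit the explicit cell description of Gray products: every cell of $\arr \gray U$ is uniquely of the form $x \gray u$ with $x \in \set{0^-, 0^+, 1} = \cell \arr$ and $u \in \cell U$. The cells $0^\a \gray u$ are exactly the image of the $\a$\nbd th copy of $U$ via $\iota^\a$, the cells $1 \gray u$ with $u$ a proper face of the greatest element $\top_U$ of $U$ are the image of $\arr \gray \bd{}{}U$ not already accounted for, and the overlap of these two families is $\set{0^-, 0^+} \gray \bd{}{}U$, which is precisely the image of $\bd{}{}U \amalg \bd{}{}U$. It follows that $\kappa$ realises the pushout as the sub-directed-complex of $\arr \gray U$ missing only the single top cell $1 \gray \top_U$. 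The main obstacle will be packaging this cell-level analysis into an actual monomorphism of directed complexes rather than a mere injection of cell sets; I would do so by invoking that $\arr \gray -$ preserves monomorphisms, being the left adjoint of a biclosed monoidal structure (Proposition \ref{prop:gray_product_of_directed_complexes}), so that $\arr \gray \bdmap_U$ is a monomorphism, and that monomorphisms in the presheaf category $\dCpx$ (Proposition \ref{prop:guises_of_directed_complexes}) are stable under pushout, so that the pushout tip embeds monomorphically via $\kappa$.
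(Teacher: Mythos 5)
Your proposal is correct and follows essentially the same route as the paper: reduce to showing the pushout-product is a monomorphism via Lemma \ref{lem:I_cofibrations_are_monomorphisms}, dispose of the top-markings via Lemma \ref{lem:pushout_product_marking_with_cylinder_boundary}, and identify the pushout-product with $\flatm{\bdmap_U}$ as the boundary inclusion of $\marr \gray \flatm{U}$ (the paper does this computation directly in $\frdCpx_\L$, where the union is literally a closed subset of the poset $I \times U$, rather than passing through $\Um$ and the presheaf-level Gray product). One small caveat: your justification that $\arr \gray -$ preserves monomorphisms ``being the left adjoint of a biclosed monoidal structure'' is not valid reasoning---left adjoints preserve epimorphisms, not monomorphisms---but the fact itself is immediate here, since $\arr \gray \bdmap_U$ is just the closed embedding $I \times \bd{}{}U \incl I \times U$ of finite regular directed complexes, so the slip does not affect the argument.
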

\begin{proof}
	By Lemma \ref{lem:I_cofibrations_are_monomorphisms}, it suffices to show that the pushout-product is a monomorphism.
	If $i = \flatm{\bdmap}_U\colon \flatm{\bd{}{}U} \incl \flatm{U}$, then $(\iota^-, \iota^+) \ppnat i$ can be computed in $\frdCpx_{\L}$ as the mediating morphism in the commutative diagram
\[\begin{tikzcd}
	{\bd{}{}\arr \gray \bd{}{}U} & {\arr \gray \bd{}{}U} \\
	{\bd{}{}\arr \gray U} & {\bd{}{}\arr \gray U \cup \arr \gray \bd{}{}U} & {\arr \gray U}
	\arrow[hook, from=1-1, to=1-2]
	\arrow[hook, from=1-1, to=2-1]
	\arrow[hook, from=1-2, to=2-2]
	\arrow[curve={height=-12pt}, from=1-2, to=2-3]
	\arrow[hook, from=2-1, to=2-2]
	\arrow[curve={height=30pt}, from=2-1, to=2-3]
	\arrow["\lrcorner"{anchor=center, pos=0.125, rotate=180}, draw=none, from=2-2, to=1-1]
	\arrow[hook, dashed, from=2-2, to=2-3]
\end{tikzcd}
\]
	then lifted to $\mdCpx$, exhibiting it as the inclusion of the boundary of $\marr \gray \flatm{U}$.
	If $i = \m_U\colon \flatm{U} \to \mrk{U}$, then $i$ is a marking, and we conclude by Lemma \ref{lem:pushout_product_marking_with_cylinder_boundary}. 
\end{proof}

\noindent
All our weak model structures on $\mdCpx$ will share the functorial cylinder $(\fun{I}, (\iota^-, \iota^+))$ and the set $I$ of generating cofibrations; they will differ in their sets of generating anodyne extensions.
We closely follow \cite[Section 3.2]{chanavat2024model}.

\begin{dfn}[Marked round diagram]
	Let $u\colon U \to X$ be a round diagram in a marked directed complex $(X, A)$.
	We say that $u$ is \emph{marked} if $u\colon \flatm{U} \to (X, A)$ factors through $\m_U\colon \flatm{U} \to \mrk{U}$.
\end{dfn}

\begin{rmk}
	Equivalently, let $n \eqdef \dim{U}$; then $u$ is marked if and only if $\restr{u}{\clset{x}}\colon \clset{x} \to X$ is a marked cell for all $x \in \gr{n}{U}$.
	Notice that this is compatible with the notion of marked cell in the case that $U$ is an atom.
\end{rmk}

\noindent 
If $h$ is a round diagram of type $u \rdto v$, we may write $h\colon u \mrdto v$ to denote that $h$ is a marked round diagram, and if $h$ is a marked cell, then we write $h\colon u \mcelto v$.

\begin{dfn}[Marked horn]
	Let $(U, B)$ be a marked atom with greatest element $\top \in B$, let $\a \in \set{-, +}$, let $x \in \faces{}{\a}U$, and let $(\order{i}{L}, \order{i}{R})_{i=1}^k$ be sequences of submolecules of $\bd{}{\a}U$ such that
	\begin{enumerate}
		\item $\dim \order{i}{L}, \dim \order{i}{R} \leq i$ for each $i \in \set{1, \ldots, k}$,
		\item $\bd{}{\a}U = \order{k}{L} \cp{k-1} (\ldots \cp{1} (\order{1}{L} \cp{0} \clset{x} \cp{0} \order{1}{R}) \cp{1} \ldots) \cp{k-1} \order{k}{R}$,
		\item $\gr{i}{\order{i}{L}} \cup \gr{i}{\order{i}{R}} \subseteq B$ for each $i \in \set{1, \ldots, k}$, and
		\item $x \in B$ if and only if $\faces{}{-\a}U \subseteq B$.
	\end{enumerate}
	Then, letting $\Lambda_U^x \eqdef U \setminus \set{\top, x}$, we say that the embedding
	\[
		\lambda_U^x\colon (\Lambda_U^x, B \cap \Lambda_U^x) \incl (U, B)
	\]
	is a \emph{marked horn of $(U, B)$}.
	We let $\Jhorn$ denote the set of all marked horns of marked atoms.
\end{dfn}

\begin{comm}
	As discussed in \cite[Comment 3.13]{chanavat2024model}, and with the terminology of \cite[Section 2.1]{chanavat2024equivalences}, given a marked directed complex $(X, A)$ and a marked horn $\lambda_U^x$ of $(U, B)$, each morphism $e\colon (\Lambda_U^x, B \cap \Lambda_U^x) \to (X, B)$ classifies an \emph{equation} of the form
	\[
		\fun{E}x \qeq b
	\]
	in the indeterminate $x$, where $\fun{E}$ is an $A$\nbd context.
	Specifically, supposing $x \in \faces{}{-}U$, $\restr{e}{\bd{}{-}U \setminus \set{x}}$ determines the round $A$-context $\fun{E}$, while $b$ is the round diagram $\restr{e}{\bd{}{+}U}$.
	An extension of $e$ along $\lambda_U^x$, also known as a \emph{filler} for the horn, is then a \emph{lax solution} to this equation, that is, a cell $h\colon \fun{E}a \celto b$, where furthermore, 	\begin{enumerate}
		\item $h$ is required to be marked,
		\item if $\faces{}{+}U \subseteq B$, so $b$ is a marked diagram, then $a$ is also required to be marked.
	\end{enumerate}
	Dually, if $x \in \faces{}{+}U$, an extension is a \emph{colax solution} $h\colon b \celto \fun{E}a$ with the dual marking conditions.
	We will regularly identify equations and their (lax, colax) solutions with horns and their fillers.
\end{comm}

\begin{lem} \label{lem:pushout_product_marked_horn_cylinder_boundary}
	Let $j \in \Jhorn$.
	Then $(\iota^-, \iota^+) \ppnat j \in \Jhorn$.
\end{lem}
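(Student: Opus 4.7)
The plan is to show that the pushout-product $(\iota^-, \iota^+) \ppnat j$, for $j = \lambda_V^x\colon (\Lambda_V^x, C \cap \Lambda_V^x) \incl (V, C)$ a marked horn of a marked atom $(V, C)$ with $x \in \faces{}{\a'}V$, is itself a marked horn of the marked atom $\marr \gray (V, C)$. First I would compute the pushout-product explicitly: since $\Um$ is strict monoidal (Remark \ref{rmk:Um_strict_monoidal}) and colimits in $\mdCpx$ are described elementwise by Proposition \ref{prop:colimits_of_marked_directed_complexes}, the morphism $(\iota^-, \iota^+) \ppnat j$ is the marking-respecting inclusion
\begin{equation*}
(\marr \gray (\Lambda_V^x, C \cap \Lambda_V^x)) \cup (\bd{}{}\arr \gray (V, C)) \incl \marr \gray (V, C),
\end{equation*}
which on underlying directed complexes removes exactly the two elements $(1, \top_V)$ and $(1, x)$ from $\arr \gray V$. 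Because $\arr$ and $V$ are atoms with greatest elements $1$ and $\top_V$, so is $\arr \gray V$, with greatest element $(1, \top_V)$; thus the inclusion already has the shape $\Lambda_{\arr \gray V}^{(1, x)} \incl \arr \gray V$ required of a marked horn.

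Next I would verify the remaining clauses of the marked horn definition, setting $\b \eqdef -\a'$. By the Gray face formula $\faces{}{\b}(1, \top_V) = \set{(0^{\b}, \top_V)} \cup \set{(1, y) \mid y \in \faces{}{-\b}\top_V}$, so $(1, x)$ indeed lies in $\faces{}{\b}(\arr \gray V)$. The top-dim cell $(1, \top_V)$ is marked since every cell of the form $(1,-)$ is marked in $\marr \gray (V,C)$; the biconditional ``$(1, x)$ marked iff $\faces{}{-\b}(\arr \gray V) \subseteq$ marking'' holds because $(1, x)$ is always marked, and on the other side $(0^{\a'}, \top_V)$ is marked by the hypothesis $\top_V \in C$ while the other top-dim faces have shape $(1, -)$ and are automatically marked.

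The main step, where the bulk of the work lies, is the construction of the pasting decomposition of $\bd{}{\b}(\arr \gray V)$ through $\clset{(1, x)}$. I would combine the canonical Gray boundary formula
\begin{equation*}
\bd{}{\b}(\arr \gray V) = (\{0^{\b}\} \gray V) \cp{\dim V - 1} (\arr \gray \bd{}{\a'}V),
\end{equation*}
obtained from the Gray face formula together with the results of Section \ref{sec:composing}, with the image under $\arr \gray -$ of the original horn decomposition $\bd{}{\a'}V = \order{k}{L} \cp{k-1} \ldots \cp{0} \clset{x} \cp{0} \ldots \cp{k-1} \order{k}{R}$. Since Gray product with $\arr$ distributes over pasting while shifting the layer dimension by one, this produces a decomposition of $\arr \gray \bd{}{\a'}V$ through $\clset{(1, x)}$ with layers $(\arr \gray \order{i}{L}, \arr \gray \order{i}{R})_{i=1}^{k}$ of dimensions $\leq i+1$; prepending $\{0^{\b}\} \gray V$ as the outermost $(k+1)$-st layer at dimension $\dim V$ then yields the required decomposition of $\bd{}{\b}(\arr \gray V)$.

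Finally, the layer-marking conditions follow because top-dim cells of $\arr \gray \order{i}{L}$ and $\arr \gray \order{i}{R}$ are either of the form $(1, -)$, hence automatically marked, or of the form $(0^\a, z)$ with $z$ top-dim in $\order{i}{L}$ or $\order{i}{R}$ and therefore marked by the hypothesis on the original horn; the top-dim cell $(0^{\b}, \top_V)$ of the prepended layer is marked because $\top_V \in C$. The main obstacle is the careful bookkeeping of orientation signs and dimension shifts in the Gray decomposition, but this is entirely dictated by the combinatorial formulas of Section \ref{sec:composing} and \cite[Chapter 7]{hadzihasanovic2024combinatorics}.
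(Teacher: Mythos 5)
Your overall strategy is the right one and is, as far as one can tell, the same as that of the cited proof: the pushout-product is the inclusion of $(\marr \gray (\Lambda_V^x, C \cap \Lambda_V^x)) \cup (\flatm{\bd{}{}\arr} \gray (V, C))$ into $\marr \gray (V, C)$, which omits exactly $(1, \top_V)$ and $(1, x)$, and one checks that this is the marked horn $\lambda^{(1,x)}_{\arr \gray V}$ with $\b = -\a'$. Your identification of the missing cells, the sign computation, the verification of the biconditional marking condition, and the classification of which new top-dimensional cells are marked are all correct.

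The gap is in the construction of the layered decomposition, which is where the actual content of the lemma lies. The claim that ``Gray product with $\arr$ distributes over pasting while shifting the layer dimension by one'', i.e.\ that $\arr \gray (A \cp{j} B) = (\arr \gray A) \cp{j+1} (\arr \gray B)$, is false. Already for two arrows $\clset{a} \cp{0} \clset{b}$ one has
\[
	\bd{1}{+}(\arr \gray \clset{a}) = (\arr \gray \bd{0}{-}a) \cp{0} (\set{0^+} \gray \clset{a}) \;\neq\; \arr \gray \bd{0}{+}\clset{a},
\]
so the two squares cannot be pasted along their full $1$\nbd boundaries; the correct decomposition is
\[
	\arr \gray (\clset{a} \cp{0} \clset{b}) = \bigl(\clset{(0^-,a)} \cp{0} (\arr \gray \clset{b})\bigr) \cp{1} \bigl((\arr \gray \clset{a}) \cp{0} \clset{(0^+,b)}\bigr).
\]
Two consequences: the actual layers of $\bd{}{-\a'}(\arr \gray V)$ through $\clset{(1,x)}$ are not $\arr \gray \order{i}{L}$ and $\arr \gray \order{i}{R}$ but whiskered versions involving factors of the form $\set{0^\pm} \gray (-)$, coming from the interchange of Gray product with pasting; and, as the example shows (take $x = a$, so $b$ lies in $\order{1}{R}$ while $(1,b)$ lands in the level-$2$ \emph{left} layer), the left/right roles of the original layers can swap in the new layering. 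The new layering therefore has to be built by induction on the original one using the boundary formula $\bd{m}{\gamma}(\arr \gray Q) = (\set{0^\gamma} \gray \bd{m}{\gamma}Q) \cup (\arr \gray \bd{m-1}{-\gamma}Q)$, and the dimension bounds and marking conditions must be re-verified for these corrected layers. They do hold---the extra whisker cells are of the form $(0^\pm, z)$ with $z$ top-dimensional in an old layer, hence marked by hypothesis, or of the form $(1, z)$, hence automatically marked---so your final paragraph is salvageable, but as written the central decomposition step would fail.
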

\begin{proof}
	Same as \cite[Lemma 4.6]{chanavat2024model}.
\end{proof}

\begin{lem} \label{lem:pushout_product_boundary_inclusions}
	Let $i \in I$ and $\a \in \set{-, +}$.
	Then $\iota^\a \ppnat i \in \llp(\rlp(\Jhorn))$.
\end{lem}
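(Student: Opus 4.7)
The proof proceeds by case analysis on the two types of generating cofibrations $I = \flatm{(\Ibd)} \cup \Imrk$. Without loss of generality, fix $\a = +$ and write $x \eqdef 0^- \gray \top_U$ and $\top \eqdef \top_\arr \gray \top_U$ for the two ``missing'' cells of interest in $\arr \gray U$.

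For $i = \flatm{\bdmap_U}$ with $U$ an atom of dimension $n$, I would compute $\iota^+ \ppnat \flatm{\bdmap_U}$ explicitly. Using that $\fun{I} = \marr \gray -$ preserves colimits and the explicit description of Gray products of marked directed complexes, the pushout $\flatm{U} \pout{\flatm{\bd{}{}U}} (\marr \gray \flatm{\bd{}{}U})$ has underlying directed complex obtained from $\arr \gray U$ by removing exactly $\{x, \top\}$, with markings $\{\top_\arr\} \gray \bd{}{}U$; the comparison to $\marr \gray \flatm{U}$ is then the inclusion
\[
	(\Lambda_{\arr \gray U}^{x},\, \{\top_\arr\} \gray \bd{}{}U) \incl (\arr \gray U,\, \{\top_\arr\} \gray \cell U).
\]
I would verify that this inclusion is precisely a marked horn in $\Jhorn$: the cell $\top$ is marked in the target; the canonical pasting decomposition of $\bd{}{+}(\arr \gray U)$ as $(\arr \gray \bd{}{+}U) \cp{n-1} (0^+ \gray U)$ from the Gray product boundary formula (see \cite[Chapter 7]{hadzihasanovic2024combinatorics}) supplies the required layering $(\order{i}{L}, \order{i}{R})_{i=1}^n$; in each layer the top-dimensional cells are of the form $\top_\arr \gray y$ and hence lie in the marking; and the iff condition (4) holds since both sides are false ($x$ is absent from the marking, and $0^+ \gray \top_U \in \faces{}{-}(\arr \gray U)$ is likewise absent).

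For $i = \m_U$, an analogous computation shows that $\iota^+ \ppnat \m_U$ is a marking $m\colon (X, A) \to (X, A \cup \{x\})$ with $X = \arr \gray U$ and $A = \{\top_\arr\} \gray \cell U \cup \{0^+ \gray \top_U\}$; the only new marked cell is $x$. The strategy here is to use the saturated marked horn $\lambda_1 \eqdef \lambda_{(X, A \cup \{x\})}^{x} \in \Jhorn$, which is well-defined because the iff condition (4) holds in $A \cup \{x\}$ (both $x \in A \cup \{x\}$ and $\faces{}{-}(\arr \gray U) \subseteq A \cup \{x\}$ are true). Combining this with the observation that the inclusion $\iota_\Lambda\colon (\Lambda_X^x, A \cap \Lambda_X^x) \incl (X, A)$ is obtained as a pushout of the Case 1 horn $\lambda_{(X, \{\top_\arr\} \gray \cell U)}^x$ along the marking that adjoins $0^+ \gray \top_U$, and hence already lies in $\llp(\rlp(\Jhorn))$, I would exploit the equality $m \circ \iota_\Lambda = \lambda_1$ to realise $m$ as a retract of the pushout of $\lambda_1$ along $\iota_\Lambda$, via the ``fold'' morphism that identifies the two copies of $\{x, \top\}$ appearing in the pushout, verifying compatibility of markings.

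The main obstacle is the second case: since $m$ is an isomorphism on underlying directed complexes (a pure marking), it is not itself a horn nor a plain pushout of one, and the retract diagram must be arranged so that the fold morphism is a well-defined map in $\mdCpx$ preserving the marking $A \cup \{x\}$ on both sides. The first case is essentially a direct combinatorial identification, while the second requires the more delicate manipulation of the iff saturation condition together with the factorisation $m \iota_\Lambda = \lambda_1$.
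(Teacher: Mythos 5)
Your first case is essentially the paper's argument (which delegates the explicit computation to an earlier lemma of \cite{chanavat2024model}): $\iota^\a \ppnat \flatm{\bdmap_U}$ is identified with a marked horn of $\marr \gray \flatm{U}$, with condition (4) of the horn definition holding because both sides are false. Only watch the signs: for $\a = +$ the missing face $x = 0^- \gray \top_U$ lies in $\faces{}{-}(1, \top_U)$, so the layering in condition (2) must decompose the \emph{input} boundary $\bd{}{-}(\arr \gray U)$ around $0^- \gray U$, and condition (4) tests whether $\faces{}{+}(1, \top_U)$ is contained in the marking; these slips do not affect the conclusion.

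The second case is where the proposal has a genuine gap. You correctly identify $\iota^\a \ppnat \m_U$ as the marking $m\colon (X, A) \to (X, A \cup \set{x})$, the saturated horn $\lambda_1$ (which is exactly $\iota^\a \ppnat (\m_U \flatm{\bdmap_U})$, hence already lies in $\Jhorn$), and the factorisation $m\iota_\Lambda = \lambda_1$ --- precisely the ingredients the paper uses. But the retract you propose does not exist. Let $P \eqdef (X, A) \pout{(\Lambda, A \cap \Lambda)} (X, A \cup \set{x})$ be the pushout of $\lambda_1$ along $\iota_\Lambda$, with coprojections $j_1\colon (X, A) \to P$ and $j_2\colon (X, A \cup \set{x}) \to P$. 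The fold $\nabla\colon P \to (X, A \cup \set{x})$ \emph{is} well defined and satisfies $\nabla j_1 = m$; the problem is the section. To exhibit $m$ as a retract of $j_1$ you need $s\colon (X, A \cup \set{x}) \to P$ with $sm = j_1$, and since $m$ has underlying identity, $s$ is forced to have the same underlying morphism as $j_1$, i.e.\ to land in the first copy of $X$ inside $P$. But by the description of colimits of marked directed complexes, the marking of $P$ is $j_1(A) \cup j_2(A \cup \set{x})$, and the two copies of $x$ are distinct because $x \notin \Lambda$; so $j_1(x)$ is \emph{unmarked} in $P$ and $s$ is not a morphism of marked directed complexes. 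No other choice of retract data is available, since $\arr \gray U$ is a rigid atom, forcing the top row to consist of identities; and marking $j_1(x)$ to repair this is circular, as that marking is itself a pushout of $m$. The paper instead argues on the side of lifting properties: because $m$ is a marking, a solution to a lifting problem against $m$ is unique and determined by the top map whenever it exists, and one shows directly that any $p \in \rlp(\Jhorn)$ --- having the right lifting property against both $\iota^\a \ppnat \flatm{\bdmap_U}$ and $\iota^\a \ppnat (\m_U \flatm{\bdmap_U})$ --- admits such solutions (the adaptation of \cite[Lemma 4.8]{chanavat2024model}). You would need to replace the retract step with an argument of this kind.
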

\begin{proof}
	Suppose $i = \flatm{\bdmap}_U\colon \flatm{\bd{}{}U} \incl \flatm{U}$.
	Then \cite[Lemma 4.7]{chanavat2024model} shows that $\iota^\a \ppnat i$, as well as $\iota^\a \ppnat \m_U i$, are in $\Jhorn \subseteq \llp(\rlp(\Jhorn))$.
	As in the proof of \cite[Lemma 4.8]{chanavat2024model}, then, we use the fact that $\iota^\a \ppnat \m_U$ is a marking to deduce that any morphism with the right lifting property against $\iota^\a \ppnat \m_U i$ also has the right lifting property against $\iota^\a \ppnat \m_U$, and we conclude.
\end{proof}

\noindent
Because directed complexes do not have an algebraic notion of units, we cannot reproduce either the ``walking equivalence'' or the ``walking pair of invertors'' from \cite[Section 3.2]{chanavat2024model}.
Instead, we emulate the generating anodyne extensions for ``saturation'' as in \cite[Definition 3.4]{loubaton2024inductive}.

\begin{dfn}[Saturation]
	Let $n > 0$, and let $U$, $V$, $W$ be $n$\nbd dimensional atoms such that $U \cp{} V \cp{} W$ is defined.
	Then, let $UV \eqdef \mrg{U \cp{} V}$, $VW \eqdef \mrg{V \cp{} W}$, $R \eqdef UV \celto (U \cp{} V)$, and $L \eqdef (V \cp{} W) \celto VW$.
	Let $u, v, w, uv, vw, r, \ell$ denote the greatest elements of $U, V, W, UV, VW, R, L$, respectively.
	Then, letting $\Sigma_{U,V,W} \eqdef (R \cp{} W) \cp{} (V \cp{} L)$, we say that the marking
	\[
		\sigma_{U,V,W}\colon (\Sigma_{U, V, W}, \set{uv, vw, r, \ell}) \to (\Sigma_{U,V,W}, \set{u, v, w, uv, vw, r, \ell})
	\]
	is a \emph{saturation} of shape $(U, V, W)$.
	We let $\Jsat$ denote the set of all saturations.
\end{dfn}

\begin{comm}
	Given a marked directed complex $(X, A)$, a morphism from $\Sigma_{U, V, W}$, where $U$, $V$, $W$ are $n$\nbd dimensional, classifies a pair of $(n+1)$\nbd dimensional cells of the form $z_L\colon a \cp{} a_L \celto e$ and $z_R\colon h \celto a_R \cp{} a$, where $e, h, a, a_L, a_R$ are all $n$\nbd dimensional cells.
	If $(X, A)$ has the right lifting property against $\Jsat$, whenever $z_L$, $z_R$, $e$, and $h$ are all marked, then $a, a_L, a_R$ are also marked.
\end{comm}

\noindent
Finally, for each $n \in \Ninfty$, we let
\[
	\Jn \eqdef \Jhorn \cup \Jsat \cup \set{\m_U \in I_\m \mid \dim U > n},
\]
and $\Mwn \eqdef (\mdCpx, \ICof{I}, \JFib{\Jn})$; notice that $\Jn = \Jhorn \cup \Jsat$ when $n = \infty$.

\begin{thm} \label{thm:weak_model_structures}
	For each $n \in \Ninfty$, $\Mwn$ is a weak model structure.
\end{thm}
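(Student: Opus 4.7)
The plan is to apply Proposition \ref{prop:weak_cisinski_olschok} with the functorial cylinder $(\fun{I}, (\iota^-, \iota^+))$ of Lemma \ref{lem:gray_functorial_cylinder}, the set $I$ of generating cofibrations, and the set $J_n$ of generating anodyne extensions. Since $\mdCpx$ is locally finitely presentable by Proposition \ref{cor:local_presentability_of_mdcpx}, the small object argument applies, and it suffices to verify the four hypotheses of the proposition.

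For the first condition, $J_n \subseteq \ICof{I}$, by Lemma \ref{lem:I_cofibrations_are_monomorphisms} every object is $I$-cofibrant and $\ICof{I}$ coincides with the monomorphisms, so I only need to check that each element of $J_n$ is a monomorphism. Marked horns are embeddings of underlying directed complexes, while saturations and top-markings $\m_U$ with $\dim U > n$ are markings, whose underlying morphisms are identities, so in all cases we obtain monomorphisms. The second condition, that $(\iota^-, \iota^+) \ppnat i \in \llp(\rlp(I))$ for $i \in I$, is precisely Lemma \ref{lem:pushout_product_cofibration_cylinder_boundary}.

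For the third condition, note that $J_n \supseteq \Jhorn$ implies $\JFib{J_n} \subseteq \rlp(\Jhorn)$, hence $\llp(\rlp(\Jhorn)) \subseteq \llp(\JFib{J_n})$; the condition then follows from Lemma \ref{lem:pushout_product_boundary_inclusions}. For the fourth condition, I would split $J_n$ into three cases. If $j \in \Jhorn$, Lemma \ref{lem:pushout_product_marked_horn_cylinder_boundary} gives $(\iota^-, \iota^+) \ppnat j \in \Jhorn \subseteq J_n \subseteq \llp(\JFib{J_n})$. If $j \in \Jsat$ or $j$ is a top-marking $\m_U$ with $\dim U > n$, then $j$ is a marking, so Lemma \ref{lem:pushout_product_marking_with_cylinder_boundary} ensures that $(\iota^-, \iota^+) \ppnat j$ is an isomorphism, which trivially belongs to $\llp(\JFib{J_n})$.

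There is no real obstacle here: all the technical work has been done in the preceding lemmas, which were organised precisely to match the Cisinski--Olschok hypotheses. The only step requiring a moment's thought is the observation for condition 3 that, once the pushout-product lies in $\llp(\rlp(\Jhorn))$, it automatically lies in $\llp(\JFib{J_n})$ by the contravariance of $\llp$ with respect to inclusions of morphism classes. Everything else is a case split plus citation.
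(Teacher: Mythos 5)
Your proposal is correct and follows essentially the same route as the paper's own proof: an application of Proposition \ref{prop:weak_cisinski_olschok} with the four hypotheses verified by Lemmas \ref{lem:I_cofibrations_are_monomorphisms}, \ref{lem:pushout_product_cofibration_cylinder_boundary}, \ref{lem:pushout_product_boundary_inclusions}, \ref{lem:pushout_product_marked_horn_cylinder_boundary}, and \ref{lem:pushout_product_marking_with_cylinder_boundary}, with the same case split on $\Jn$ in the last step. The only cosmetic difference is that for the third condition you pass directly through $\JFib{\Jn} \subseteq \rlp(\Jhorn)$ while the paper routes through $\llp(\rlp(\Jn))$; both containment chains are valid.
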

\begin{proof}
	We use Proposition \ref{prop:weak_cisinski_olschok}.
	All morphisms in $\Jn$ are monomorphisms, so the first point follows from Lemma \ref{lem:I_cofibrations_are_monomorphisms}.
	The second point is Lemma \ref{lem:pushout_product_cofibration_cylinder_boundary}.
	For the third point, for each $i \in I$ and $\a \in \set{-, +}$, by 
	Lemma \ref{lem:pushout_product_boundary_inclusions} we have $\iota^\a \ppnat i \in \llp(\rlp(\Jhorn))$; but $\Jhorn \subseteq \Jn$ implies $\llp(\rlp(\Jhorn)) \subseteq \llp(\rlp(\Jn))$, and $\JFib{J} \subseteq \rlp(\Jn)$ implies $\llp(\rlp(\Jn)) \subseteq \llp(\JFib{\Jn})$.
	Finally, for the fourth point, we use Lemma \ref{lem:pushout_product_marked_horn_cylinder_boundary} when $j$ is a marked horn, and Lemma \ref{lem:pushout_product_marking_with_cylinder_boundary} when $j$ is either a saturation or $\m_U$ for some atom $U$ of dimension $> n$, and we conclude.
\end{proof}

\section{A weak model} \label{part:weak}

\subsection{Inflate-complexes and \inftyn-categories} \label{sec:inflate}

\noindent
Our next goal is to characterise the fibrant objects in $\Mwn$, but compared to the analogous task for diagrammatic sets, we are inconvenienced by the lack of algebraic units and unitors.
This would not be the case had we worked with presheaves on $\atom_{\C\E}$, with collapses producing at least some of the most useful degenerate cells considered in \cite[Section 1.2]{chanavat2024equivalences}.
On the other hand, the reason why we are working in a non-unital setting is that, since collapses are not closed under Gray products, there is no natural way to define a functorial cylinder on such a presheaf category, hence no natural way to define a (weak) model structure \`a la Cisinski--Olschok.

We find the following solution, balancing between the two opposing pulls: we will avoid putting a (weak) model structure on categories of objects with units and unitors, but we will show that every fibrant object in $\Mwn$ can be endowed with algebraic units and unitors; this can be compared with the classical result that semisimplicial sets satisfying the Kan condition can be given the structure of a simplicial set \cite{rourke1971delta, mcclure2013semisimplicial}, but is actually simpler, due to the freeness of cylindrical collapses as given by Proposition \ref{prop:freeness_of_collapses} (simplicial co-degeneracies, in comparison, have many non-trivial relations).

\begin{dfn}[Inflate-complex]
	An \emph{inflate-complex} is a $\Gamma$\nbd continuous presheaf on the category $\frdCpx_{\C\L}$ of finite regular directed complexes and local collapses.
\end{dfn}

\noindent
We let $\ICpx$ denote the category $\PSh_\Gamma(\frdCpx_{\C\L})$ of inflate-complexes.
The proof of Proposition \ref{prop:guises_of_directed_complexes} goes through essentially unmodified to show the following.

\begin{prop} \label{prop:guises_of_inflate_complexes}
	The following categories are equivalent:
	\begin{enumerate}[label=(\alph*)]
		\item the category of inflate-complexes;
		\item the category of presheaves on $\atom_{\C\E}$.
	\end{enumerate}
\end{prop}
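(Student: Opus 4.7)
The plan is to mimic the proof of Proposition \ref{prop:guises_of_directed_complexes}, with the ``local collapse'' categories $\frdCpx_{\C\L}$ and $\atom_{\C\E}$ taking the place of the ``local embedding'' categories $\frdCpx_\L$ and $\atom_\E$. First, I would define the restriction functor $\fun{r}\colon \ICpx \to \PSh(\atom_{\C\E})$ induced by the inclusion $\atom_{\C\E} \incl \frdCpx_{\C\L}$.

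The key observation is that every finite regular directed complex $P$ retains the colimit presentation $P \simeq \colim_{x \in P} \clset{x}$ when viewed in $\frdCpx_{\C\L}$, with structure morphisms the atom-embeddings $\clset{y} \incl \clset{x}$ for $y \leq x$, which are local embeddings and hence local collapses. This colimit is constructed from coproducts and pushouts of boundary inclusions along embeddings, all of which are $\Gamma$-colimits preserved by the inclusion $\frdCpx_\L \incl \frdCpx_{\C\L}$ by Proposition \ref{prop:initial_object} and Proposition \ref{prop:pushouts_of_embeddings_along_embeddings}. Consequently, for any $\Gamma$-continuous presheaf $X \in \ICpx$, one has $X(P) \simeq \lim_{x \in P} X(\clset{x})$ functorially, which immediately yields faithfulness of $\fun{r}$.

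For fullness, a local collapse $f\colon P \to Q$ is uniquely determined by the family of restrictions $\restr{f}{\clset{x}}\colon \clset{x} \surj \clset{f(x)}$, which by definition of local collapse are cylindrical collapses, and so morphisms in $\atom_{\C\E}$. These assemble into a natural transformation $\fun{F}_P \Rightarrow \fun{F}_Q \circ f$, where $\fun{F}_P\colon P \to \atom_{\C\E}$ sends $x$ to $\clset{x}$ and $y \leq x$ to the atom-embedding $\clset{y} \incl \clset{x}$; the naturality squares---with embeddings horizontally and cylindrical collapses vertically---commute tautologically because $f$ is a single order-preserving map. Conversely, any such natural transformation glues along the colimit presentation to a unique local collapse.

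Essential surjectivity follows from the observation that $\atom_{\C\E}$ contains no non-trivial $\Gamma$-colimits: no atom arises as a pushout of a boundary inclusion along a non-trivial embedding, and the initial object is not an atom. Hence the $\Gamma$-continuity condition is vacuous on the restricted subcategory, and any presheaf on $\atom_{\C\E}$ extends, via left Kan extension along $\atom_{\C\E} \incl \frdCpx_{\C\L}$ followed by the reflector onto $\Gamma$-continuous presheaves, to an inflate-complex whose restriction recovers the original. I foresee no substantive obstacle, as every preservation property needed for $\Gamma$-colimits under the inclusion $\frdCpx_\L \incl \frdCpx_{\C\L}$ has been supplied by Section \ref{sec:colimits}.
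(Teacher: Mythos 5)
Your proposal is correct and follows essentially the same route as the paper, which simply observes that the proof of Proposition \ref{prop:guises_of_directed_complexes} goes through unmodified: restriction along $\atom_{\C\E} \incl \frdCpx_{\C\L}$, faithfulness from the $\Gamma$-colimit presentation of a finite regular directed complex by its atoms (preserved into $\frdCpx_{\C\L}$ by Propositions \ref{prop:initial_object} and \ref{prop:pushouts_of_embeddings_along_embeddings}), fullness from the fact that a local collapse is determined by its restrictions to closures of elements, and essential surjectivity from the absence of non-trivial $\Gamma$-colimits among atoms. Your added detail on why the naturality squares commute and on the Kan-extension construction of the extension is consistent with, and slightly more explicit than, the paper's argument.
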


\noindent
As directed complexes are ``diagrammatic sets without degeneracies'', inflate-complexes are ``diagrammatic sets with limited degeneracies'', namely, those induced by cylindrical collapses.

\begin{dfn}[Degenerate and non-degenerate cells]
	Let $u\colon U \to X$ be a cell in an inflate-complex.
	We say that $u$ is \emph{non-degenerate} if, for all collapses of atoms $p\colon U \surj V$ and cells $v\colon V \to X$, the equation $u = vp$ implies that $u = v$ and $p = \idd{U}$.
	We say that $u$ is \emph{degenerate} otherwise.
\end{dfn}

\noindent
We let $\dgn X$ denote the set of degenerate cells in an inflate-complex $X$.
Like diagrammatic sets, inflate-complexes enjoy the ``Eilenberg--Zilber property'' that every degenerate cell is degenerate in a unique way.

\begin{lem} \label{lem:eilenberg_zilber_collapses}
	Let $u\colon U \to X$ be a cell in an inflate-complex.
	Then there exists a unique pair of a collapse of atoms $p\colon U \surj V$ and a non-degenerate cell $v\colon V \to X$ such that $u = vp$.
\end{lem}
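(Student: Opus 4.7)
The plan is a standard Eilenberg--Zilber argument, adapted to inflate-complexes using the structural results on collapses of atoms established earlier in this section. I will work in a skeleton of $\atom_{\C\E}$ throughout, so that any embedding between isomorphic atoms is an identity.

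For existence, I would induct on $\dim U$. If $u$ is non-degenerate, set $(p, v) \eqdef (\idd{U}, u)$. Otherwise, by definition, there exist a non-identity collapse $p_1\colon U \surj U_1$ and a cell $u_1\colon U_1 \to X$ with $u = u_1 p_1$. By Proposition \ref{prop:freeness_of_collapses}, $p_1$ factors into at least one generating collapse, each of which strictly decreases dimension, so $\dim U_1 < \dim U$. The inductive hypothesis yields a factorisation $u_1 = v p_2$ with $p_2\colon U_1 \surj V$ a collapse and $v\colon V \to X$ non-degenerate; then $u = v (p_2 p_1)$ is the desired factorisation, since collapses are closed under composition.

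For uniqueness, suppose $u = v p = v' p'$ with $p\colon U \surj V$, $p'\colon U \surj V'$ collapses and $v\colon V \to X$, $v'\colon V' \to X$ non-degenerate. By Lemma \ref{lem:sections_of_collapses}, $p$ admits a section $j\colon V \to U$, so $v = v p j = u j = v' (p' j)$. Factorising $p' j\colon V \to V'$ through the $(\C, \L)$ system of Proposition \ref{prop:factorisation_of_local_collapses} (which restricts to $(\C, \E)$ on atoms) as $p' j = e c$ with $c\colon V \surj W$ a collapse and $e\colon W \incl V'$ an embedding, I obtain $v = (v' e) c$; non-degeneracy of $v$ then forces $c = \idd{V}$, so that $W = V$ and $p' j = e$. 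Running the symmetric argument with a section $j'$ of $p'$ produces an embedding $e'\colon V' \incl V$ with $p j' = e'$. Since atoms are rigid, the endo-embeddings $e' e\colon V \to V$ and $e e'\colon V' \to V'$ must be identities, so in the skeleton $V = V'$ and $e = e' = \idd{V}$, whence $v = v'$.

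To conclude $p = p'$, note that the identity $p' j = \idd{V}$ shows that every section of $p$ is also a section of $p'$, and vice versa by symmetry; hence $p$ and $p'$ have the same set of sections, and part (2) of Lemma \ref{lem:sections_of_collapses} yields $p = p'$. The main obstacle is the uniqueness argument: the nontrivial step is to see that non-degeneracy of $v$ pins the collapsing factor $c$ of $p' j$ to an identity, for then rigidity of atoms and the fact that collapses are determined by their sections make everything collapse into place.
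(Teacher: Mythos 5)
Your proof is correct and follows the same route the paper intends: the paper's own proof is just a citation to the analogous Eilenberg--Zilber lemma for diagrammatic sets ``using Lemma \ref{lem:sections_of_collapses}'', and your argument is precisely that standard one, with existence by induction on dimension (via Proposition \ref{prop:freeness_of_collapses} to see that non-identity collapses strictly drop dimension) and uniqueness via sections, the $(\C,\E)$ factorisation, and rigidity. The only point you gloss over is that an endo-embedding $e'e\colon V \to V$ is an automorphism before rigidity applies, which holds because atoms here are finite, so an injective closed self-embedding is bijective and hence an isomorphism.
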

\begin{proof}
	Same as \cite[Lemma 2.3]{chanavat2024diagrammatic}, using Lemma \ref{lem:sections_of_collapses}.
\end{proof}

\noindent
Local collapses suffice to generate the following families of degenerate cells.

\begin{dfn}[Unit]
	Let $u\colon U \to X$ be a pasting diagram in an inflate-complex.
	The \emph{unit on $u$} is the pasting diagram $\un u\colon u \rdto u$ defined by $u\tau_{\bd{}{}U}\colon \arr \pcyl{\bd{}{}U} U \to X$.
\end{dfn}

\begin{dfn}[Left unitor]
	Let $u\colon U \to X$ be a pasting diagram in an inflate-complex and let $\iota\colon v \submol \bd{}{-}u$ be a rewritable subdiagram of shape $V$ in its input boundary.
	Let $K \eqdef \bd{}{}U \setminus \inter \iota(V)$.
	The \emph{left unitor of $u$ at $\iota$} is the pasting diagram $\lun{\iota}u\colon u \rdto \un v \cpsub{\iota} u$ defined by $u\tau_{K}\colon \arr \pcyl{K} U \to X$.
\end{dfn}

\begin{dfn}[Right unitor]
	Let $u\colon U \to X$ be a pasting diagram in a diagrammatic set and let $\iota\colon v \submol \bd{}{+}u$ be a rewritable subdiagram of shape $V$ in its output boundary.
	Let $K \eqdef \bd{}{}U \setminus \inter \iota(V)$.
	The \emph{right unitor of $u$ at $\iota$} is the pasting diagram $\run{\iota}u\colon u \subcp{\iota} \un v \rdto u$ defined by $u\tau_{K}\colon \arr \pcyl{K} U \to X$.
\end{dfn}

\begin{rmk}
	If $u$ is round or a cell, then so are $\un u$, $\lun{\iota}u$, and $\run{\iota}u$.
\end{rmk}

\begin{exm}
	If $u$ is a 0\nbd dimensional cell, there is only one degenerate 1\nbd dimensional cell over $u$, that is, the unit $\un u$.
	If $u\colon v \celto w$ is a 1\nbd dimensional cell, the degenerate 2\nbd dimensional cells over $u$ are precisely
	\[\begin{tikzcd}[column sep=small, row sep=scriptsize]
	&&&& {\scriptstyle v} &&&&&& {\scriptstyle v} \\
	{\scriptstyle v} && {\scriptstyle w} & {\scriptstyle v} && {\scriptstyle w} & {\scriptstyle v} && {\scriptstyle w} & {\scriptstyle v} && {\scriptstyle w} \\
	&&&&&&& {\scriptstyle w} &&& {\scriptstyle w}
	\arrow["u", curve={height=-6pt}, from=1-5, to=2-6]
	\arrow["u", curve={height=-6pt}, from=1-11, to=2-12]
	\arrow[""{name=0, anchor=center, inner sep=0}, "u"', curve={height=12pt}, from=2-1, to=2-3]
	\arrow[""{name=1, anchor=center, inner sep=0}, "u", curve={height=-12pt}, from=2-1, to=2-3]
	\arrow["{\un v}", from=2-4, to=1-5]
	\arrow[""{name=2, anchor=center, inner sep=0}, "u"', curve={height=6pt}, from=2-4, to=2-6]
	\arrow[""{name=3, anchor=center, inner sep=0}, "u", curve={height=-6pt}, from=2-7, to=2-9]
	\arrow["u"', curve={height=6pt}, from=2-7, to=3-8]
	\arrow["{\un v}", from=2-10, to=1-11]
	\arrow["u"', curve={height=6pt}, from=2-10, to=3-11]
	\arrow["{\un w}"', from=3-8, to=2-9]
	\arrow["{u\tau_\varnothing}"', shorten <=3pt, shorten >=3pt, Rightarrow, from=3-11, to=1-11]
	\arrow["{\un w}"', from=3-11, to=2-12]
	\arrow["{\un u}"', shorten <=3pt, shorten >=3pt, Rightarrow, from=0, to=1]
	\arrow["{\lun{}u}"', shorten <=4pt, Rightarrow, from=2, to=1-5]
	\arrow["{\run{} u}"', shorten >=4pt, Rightarrow, from=3-8, to=3]
\end{tikzcd}\]
	corresponding to the subsets $\set{0^-, 0^+}$, $\set{0^+}$, $\set{0^-}$, and $\varnothing$ of $\bd{}{}\arr$.
\end{exm}

\noindent
We have an evident presheaf extension-restriction adjunction
\[\begin{tikzcd}
	\dCpx && \ICpx.
	\arrow[""{name=0, anchor=center, inner sep=0}, "{\FInfl}", curve={height=-12pt}, from=1-1, to=1-3]
	\arrow[""{name=1, anchor=center, inner sep=0}, "{\UInfl}", curve={height=-12pt}, from=1-3, to=1-1]
	\arrow["\dashv"{anchor=center, rotate=-90}, draw=none, from=0, to=1]
\end{tikzcd}\]
We will use the following result to describe it explicitly.

\begin{lem} \label{lem:presheaf_restriction_to_right_class}
	Let $\cat{C}$ be a category with an orthogonal factorisation system $(\L, \R)$ and let $\cat{C}_\L$ and $\cat{C}_\R$ be its wide subcategories on $\L$ and $\R$-morphisms, respectively.
	Then
	\begin{enumerate}
		\item the presheaf restriction functor $\PSh(\cat{C}) \to \PSh(\cat{C}_\R)$ is monadic,
		\item the induced monad $\fun{M}$ admits the following explicit description:
		\begin{itemize}
			\item the underlying functor sends a presheaf $X$ on $\cat{C}_\R$ to the presheaf $\fun{M}X$ whose elements $C \to \fun{M}X$ are equivalence classes of pairs
			\[
				[x\colon C' \to X, \ell\colon C \to C'], \quad \ell \in \L,
			\]
			under the equivalence relation $[x, \ell] = [x\invrs{\varphi}, \varphi\ell]$ for all isomorphisms $\varphi\colon C' \iso C''$, with the action of morphisms in $\cat{C}$ given by
			\[
				[x, \ell]f \eqdef [xr', \ell'],
			\]
			where $\ell f = r'\ell'$ is an $(\L, \R)$ factorisation;
			\item the unit $\eta^{\fun{M}}_X\colon X \to \fun{M} X$ sends $x\colon C \to X$ to $[x, \idd{C}]$;
			\item the multiplication $\mu^\fun{M}_X\colon \fun{M}\fun{M} X \to X$ sends $[[x, \ell], \ell']$ to $[x, \ell\ell']$.
		\end{itemize}
	\end{enumerate}
\end{lem}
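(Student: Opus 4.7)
The plan is to establish the first point via (crude) Beck monadicity, and then to read off the explicit description in the second point by computing the left Kan extension along the inclusion $i\colon \cat{C}_\R \incl \cat{C}$, using the orthogonal factorisation system to reduce every equivalence class to a canonical representative.

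For monadicity, the functor $i^*\colon \PSh(\cat{C}) \to \PSh(\cat{C}_\R)$ has a left adjoint $i_!$ given by left Kan extension, so the induced monad $\fun{M} = i^* i_!$ is well-defined. Since limits and colimits of presheaves are computed pointwise, and $\cat{C}_\R$ has the same objects as $\cat{C}$, the restriction $i^*$ preserves all small limits and colimits and is conservative (a pointwise isomorphism restricts to a pointwise isomorphism at the same objects). Beck's monadicity theorem then applies immediately.

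For the explicit description, I would start from the coend formula
\[
	(i_! X)(C) = \int^{C' \in \cat{C}_\R} \Hom_{\cat{C}}(C, iC') \times X(C'),
\]
viewed as a colimit over the comma category $C \downarrow i$. Given a factorisation $f = r\ell$ of $f\colon C \to iC'$ with $\ell \in \L$ and $r \in \R$, the coend equivalence replaces $(f, x)$ by $(\ell, x \cdot r)$; essential uniqueness of $(\L, \R)$-factorisations then promotes this to the assertion that the inclusion of the full subcategory $\L_C \subseteq C \downarrow i$ on objects $(C', \ell)$ with $\ell \in \L$ is \emph{final}. It remains to compute the colimit over $\L_C$: I would show that any morphism $h\colon (C'', \ell') \to (C', \ell)$ in $\L_C$, that is, an $\R$-morphism $h$ with $ih \circ \ell' = \ell$, is necessarily an isomorphism. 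This is a double application of the lifting property: lifting $\ell$ against $ih$ produces a section $d$ of $ih$, and then the uniqueness of the lifting of $\ell'$ against $ih$ forces $d \circ ih = \idd{iC''}$. The colimit over the resulting groupoid is exactly the quotient described in the statement. The formulas for the action of a general morphism $f\colon D \to C$ and for the unit $\eta^{\fun{M}}$ are then immediate from the coend equivalence, while the multiplication $\mu^{\fun{M}}$ is extracted from the adjunction counit and simplifies to $[[x, \ell], \ell'] \mapsto [x, \ell\ell']$ using that $\L$ is closed under composition.

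The main obstacle I expect is precisely this lemma that every morphism in $\L_C$ is an isomorphism: while short, it is the only step where both existence and uniqueness of lifts are used in an essential way, and it is what produces the clean normal form stated in the lemma. The remaining verifications --- functoriality of the $\cat{C}$-action via $(\L, \R)$-renormalisation, agreement of the concrete unit and multiplication with their abstract counterparts, and naturality of the bijection with $i_! X$ --- are routine bookkeeping along the coend presentation.
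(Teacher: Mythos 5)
Your proposal is correct and follows essentially the same route as the paper: crude/Beck monadicity for the first point (restriction along a bijective-on-objects functor is conservative and preserves the relevant colimits), and for the second point a reduction of the coend for $i_! X$ to equivalence classes of pairs $(x, \ell)$ with $\ell \in \L$ modulo unique isomorphisms, driven by essential uniqueness of $(\L,\R)$-factorisations. The only difference is presentational --- you organise the computation as a cofinality argument over the comma category and observe that the subcategory of $\L$-legs is a groupoid, whereas the paper decomposes $\Hom_{\cat{C}}(-,C)$ as a coend over the core groupoid and collapses it by co-Yoneda --- but these are two phrasings of the same argument.
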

\begin{proof}
	Since the presheaf restriction is along a bijective-on-objects functor, by \cite[Example A4.2.7(b)]{johnstone2002elephant} it reflects isomorphisms, and since it is both a left and a right adjoint, the conditions of the crude monadicity theorem apply, so the adjunction is monadic.
	Given a presheaf $X$ on $\cat{C}_\R$, the presheaf extension $\fun{F}X$ of $X$ is computed by the coend
	\[
		\int^{C \in \Ob(\cat{C}_\R)} \Hom_{\cat{C}}(-, C) \times X(C),
	\]
	but since $(\L, \R)$ is an orthogonal factorisation system on $\cat{C}$, we have a natural isomorphism
	\[
		\Hom_{\cat{C}}(-, C) \simeq \int^{C' \in \Ob(\core(\cat{C}))} \Hom_{\cat{C}_\L}(-, C') \times \Hom_{\cat{C}_\R}(C', C)
	\]
	where $\core(\cat{C})$ is the core groupoid of $\cat{C}$.
	By coend calculus, $\fun{F}X$ is naturally isomorphic to
	\begin{align*}
		\int^{C \in \Ob(\cat{C}_\R), C' \in \Ob(\core(\cat{C}))} & \Hom_{\cat{C}_\L}(-, C') \times \Hom_{\cat{C}_\R}(C', C) \times X(C) \\
					& \simeq \int^{C' \in \Ob(\core(\cat{C}))} \Hom_{\cat{C}_\L}(-, C') \times X(C')
	\end{align*}
	whose restriction to $\PSh(\cat{C}_\R)$ is precisely $\fun{M}X$.
	Computing the unit and multiplication is a straightforward exercise.
\end{proof}

\noindent 
In the case that $\cat{C}$ has no non-trivial isomorphisms, the equivalence relation becomes trivial, and the elements of $\fun{M}X$ are simply pairs $(x, \ell)$.
This is the case for the category $\atom_{\C\E}$.

\begin{dfn}[Inflate monad]
	The \emph{inflate monad} is the monad $(\Infl, \mu^\Infl, \eta^\Infl)$ on $\dCpx$ induced by monadic adjunction $\FInfl \dashv \UInfl$ according to Lemma \ref{lem:presheaf_restriction_to_right_class}.
\end{dfn}

\noindent
Thus, given a directed complex $X$, cells of shape $U$ in $\Infl X$ are pairs $(v, p)$ of a collapse of atoms $p\colon U \surj V$ and a cell $v\colon V \to X$.
Next, we lift inflate-complexes to the marked world.

\begin{dfn}[Marked inflate-complex]
	A \emph{marked inflate-complex} is a pair $(X, A)$ of an inflate-complex $X$ and a set $A \subseteq \gr{>0}{\cell X}$ such that $\dgn X \subseteq A$.
	Given marked inflate-complexes $(X, A)$, $(Y, B)$, a \emph{morphism} $f\colon (X, A) \to (Y, B)$ is a morphism $f\colon X \to Y$ in $\ICpx$ such that $A \subseteq \invrs{f}B$.
\end{dfn}

\noindent
We let $\mICpx$ denote the category of marked inflate-complexes.

\begin{dfn}[Marked inflate monad]
	Let $(X, A)$ be a marked directed complex.
	We let $\mInfl(X, A)$ be the marked inflate-complex
	\[
		\left(\Infl X, \set{(u, \idd{}) \mid u \in A} \cup \set{(v, p) \mid v \in \cell X, p \neq \idd{}}\right).
	\]
	With the structure determined by the underlying morphisms of the inflate monad on $\dCpx$, this determines a monad $(\mInfl, \eta^\Infl, \mu^\Infl)$ on $\mdCpx$.
\end{dfn}

\noindent
The following is straightforward, observing that the condition for an $\Infl$\nbd algebra to lift to an $\mInfl$\nbd algebra is precisely that $\dgn X \subseteq A$.

\begin{prop} \label{prop:guises_of_marked_inflate_complexes}
	The following categories are equivalent:
	\begin{enumerate}[label=(\alph*)]
		\item the category of marked inflate-complexes;
		\item the category of $\mInfl$-algebras on $\mdCpx$.
	\end{enumerate}
\end{prop}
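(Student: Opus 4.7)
The plan is to unpack the $\mInfl$-algebra structure explicitly and match it, piece by piece, to the two extra data in the definition of a marked inflate-complex: an underlying inflate-complex structure, and the saturation condition $\dgn X \subseteq A$.

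First, I would recall that by the combination of Lemma \ref{lem:presheaf_restriction_to_right_class}, the definition of $\Infl$, and Proposition \ref{prop:guises_of_inflate_complexes}, the adjunction $\FInfl \dashv \UInfl$ is monadic, so $\ICpx$ is canonically equivalent to the category of $\Infl$-algebras on $\dCpx$; under this equivalence, the structure morphism $\Infl X \to X$ sends a pair $(v, p)$ —— with $p\colon U \surj V$ a collapse of atoms and $v$ a cell of shape $V$ in $X$ —— to the composite $vp$, which is precisely a (possibly trivial) degenerate cell of $X$.

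Next, I would unpack an $\mInfl$-algebra structure $\theta\colon \mInfl(X, A) \to (X, A)$ on a marked directed complex $(X, A)$. Since $\Um\colon \mdCpx \to \dCpx$ preserves and reflects the monad $\mInfl$ in the sense that $\Um \mInfl = \Infl \Um$, the underlying map $\Infl X \to X$ is an $\Infl$-algebra structure on $X$, determining an inflate-complex structure via the previous step. The only extra content in the fact that $\theta$ is a morphism in $\mdCpx$ is the marking condition: the marked cells of $\mInfl(X, A)$, as defined, split into (i) pairs $(u, \idd{})$ with $u \in A$, which by the unit axiom for algebras are sent to $u$ itself —— so these impose no constraint —— and (ii) pairs $(v, p)$ with $p$ a non-trivial collapse, which are sent to the degenerate cells $vp$ of $X$. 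Hence the marking condition on $\theta$ is equivalent to $\dgn X \subseteq A$, exactly the condition in the definition of marked inflate-complex. Conversely, given a marked inflate-complex, the inflate-complex algebra structure on $X$ evidently lifts to an $\mInfl$-algebra structure on $(X, A)$, and these two constructions are inverse to each other.

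For morphisms, a morphism of $\mInfl$-algebras is a morphism $f\colon (X, A) \to (Y, B)$ in $\mdCpx$ intertwining the algebra structures; by monadicity of $\Infl$, intertwining the underlying $\Infl$-algebra structures is equivalent to $\Um f$ being a morphism of inflate-complexes, while the compatibility with markings is $A \subseteq \invrs{f}B$, which is the existing condition. I do not foresee a real obstacle —— as the preceding remark suggests, the argument is essentially bookkeeping on top of the monadicity established in Lemma \ref{lem:presheaf_restriction_to_right_class}; the only point worth treating with some care is confirming that the marked cells listed in the definition of $\mInfl(X, A)$ are precisely what is needed to encode the saturation condition and nothing more.
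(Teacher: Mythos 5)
Your proposal is correct and follows exactly the route the paper intends: the paper gives no proof beyond the remark that "the condition for an $\Infl$\nobreakdash-algebra to lift to an $\mInfl$\nobreakdash-algebra is precisely that $\dgn X \subseteq A$", and your argument is precisely the bookkeeping that backs up this observation (monadicity of $\FInfl \dashv \UInfl$ via Lemma \ref{lem:presheaf_restriction_to_right_class}, the unit axiom disposing of the cells $(u,\idd{})$, and the non-trivial pairs $(v,p)$ mapping onto exactly $\dgn X$). Nothing is missing.
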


\noindent
We will not be particularly concerned with the category of (marked) inflate-complexes, but the description of its objects as $\Infl$-algebras is useful in the proof of the following result.

\begin{thm} \label{thm:fibrants_admit_inflate_algebra_structure}
	Let $(X, A)$ be a $\Jhorn$-fibrant marked directed complex.
	Then $(X, A)$ admits an $\mInfl$-algebra structure.
\end{thm}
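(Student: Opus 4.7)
The plan is to reduce the construction of an $\mInfl$-algebra structure on $(X, A)$ to a coherent system of horn-fillers, leveraging the freeness of collapses. First, by Proposition~\ref{prop:guises_of_marked_inflate_complexes}, such a structure is equivalently an extension of $X$ to an inflate-complex whose newly-created (``degenerate'') cells all lie in $A$, and by Proposition~\ref{prop:guises_of_inflate_complexes} this is the same as an extension of the presheaf $X\colon \opp{\atom_\E} \to \Set$ to a presheaf on $\opp{\atom_{\C\E}}$. Since $(\C, \E)$ is an orthogonal factorisation system on $\atom_{\C\E}$ and, by Proposition~\ref{prop:freeness_of_collapses}, its left class is freely generated by the generating cylindrical collapses $\tau_K\colon \arr \pcyl{K} V \surj V$, the extension is determined by, and only constrained by, its action on these generating collapses together with naturality along the embeddings of face closures.

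I would then construct the assignment $(v, \tau_K)\colon \arr \pcyl{K} V \to X$ for each cell $v \in X(V)$ and each $K \subseteq \bd{}{}V$ by induction on $\dim V$, with a secondary induction on $\size{V \setminus K}$. The boundary $\bd{}{}(\arr \pcyl{K} V)$ decomposes canonically into the two ``lid'' copies of $V$ identified along $K$, and the ``wall'' faces of shape $\arr \pcyl{K \cap \clset{x}} \clset{x}$ for each $x \in \bd{}{}V \setminus K$. Naturality forces the lids to receive $v$ itself, since $\tau_K \iota^\a = \idd{V}$ in $\atom_{\C\E}$ for each $\a \in \set{-, +}$, while the walls receive the inductively-defined partial units; these assemble into a morphism $\bd{}{}(\arr \pcyl{K} V) \to X$ by inductive coherence. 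The top $(\dim V + 1)$\nbd dimensional cell is then obtained by invoking $\Jhorn$-fibrancy: selecting an appropriate codimension-$1$ face as the ``missing face'' of a marked horn on $\mrk{(\arr \pcyl{K} V)}$, with marking taken to include all cells of dimension strictly greater than $\dim V$, the iff marking condition holds by the inductive hypothesis, so the horn is valid and its filler produces both the top cell, marked by construction, and the chosen missing face, consistent with the assembled boundary.

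Finally, I would verify that (i) the resulting action extends $X$ to a genuine presheaf on $\atom_{\C\E}$, which by Proposition~\ref{prop:freeness_of_collapses} reduces to checking naturality against embeddings, and follows from choosing the horn-fillers compatibly along the induction; (ii) each new cell $(v, \tau_K)$ lies in $A$, which is immediate from the horn marking; (iii) the base case, namely the existence of a unit $\un w\colon w \mcelto w$ on each $0$\nbd cell $w$, arises from filling a specifically-designed marked horn of higher dimension whose combinatorics force the two endpoints of the missing $1$\nbd cell to coincide with $w$.

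The hard part will be the coherent, simultaneous setup of all horn-filler choices across the family $(V, K)$ and, in particular, the base case: while each individual filler exists by $\Jhorn$-fibrancy, the unit on a $0$\nbd cell is not directly a horn filler but must be extracted from a higher-dimensional horn configuration specifically tailored so that its filler has the required degenerate boundary. This type of rigidification argument, promoting horn-filling conditions to algebraic structure, is standard in spirit and familiar from the passage between simplicial Kan complexes and algebraic Kan complexes, but must be carefully adapted here to the combinatorics of $\atom_{\C\E}$ and the specific form of $\Jhorn$.
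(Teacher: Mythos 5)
Your overall strategy matches the paper's in outline: use Proposition \ref{prop:freeness_of_collapses} to reduce the $\mInfl$\nbd algebra structure to a choice of image for each pair $(v, \tau_K)$ of a cell and a generating collapse, compatibly with restriction to faces, and produce these images by marked-horn filling. However, the central inductive step as you describe it does not work. The cell $\alpha(v, \tau_K) = v\tau_K$ of shape $\arr \pcyl{K} V$ has its \emph{entire} boundary prescribed by naturality: both lids $(0^-, \top)$ and $(0^+, \top)$ must restrict to $v$ on the nose, and every wall $(1, x)$ must restrict to the previously constructed $\alpha(\restr{v}{\clset{x}}, \tau_{K_x})$; there are no other codimension-$1$ faces. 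A filler for a marked horn of $\arr \pcyl{K} V$ only extends a morphism defined on $(\arr \pcyl{K} V) \setminus \set{\top, x}$, so its restriction to the chosen missing face $x$ is \emph{produced} by the lift and cannot be forced to equal the prescribed value --- it will only be marked-equivalent to it, which is useless for a strict presheaf/algebra structure. Hence the top cell you obtain fails to have the required boundary on one lid or one wall. You correctly diagnose exactly this obstruction for units on $0$\nbd cells (``not directly a horn filler but must be extracted from a higher-dimensional horn configuration''), but the obstruction is not special to the base case: it occurs at \emph{every} pair $(v, \tau_K)$, since the boundary is always fully determined.

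The paper resolves this uniformly by never taking the degenerate cell to be a horn filler itself. For each $(u', \tau_K)$ it builds a tower of marked atoms $\order{n}{U}$, each a partial cylinder on the previous one, and fills a marked horn of $\order{n}{U}$ whose \emph{missing face} $x_n$ is, by the combinatorics of iterated partial cylinders, an atom of exactly the right shape whose two lids are both identified with the distinguished face of the previous filler; the degenerate cell $\alpha(u', a^{n-1}(\tau_K))$ is then the restriction of the filler $s_n$ to $\clset{x_n}$, while $s_n$ itself is only an auxiliary witness fed into the next horn. This is also why the proof must construct the whole family $\alpha(u', a^n(\tau_K))$ for all $n \geq 0$ at once rather than one generating collapse at a time: the iterated collapses $a^n(\tau_K)$ index precisely the auxiliary data needed to keep the tower of horns well-posed, and equation (\ref{eq:restrictions_of_the_a(p)}) is what guarantees the walls of each successive horn match the lower-dimensional degeneracies already built. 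Your proposal is missing this mechanism, and without it the induction cannot close.
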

\begin{proof}
	Since we assumed $\atom_\E$ skeletal, unique representatives for cylindrical collapses are fixed, and we will freely use parametrisations of atoms given by unique isomorphisms of atoms.
	Let $p\colon U \surj V$ be a non-trivial collapse of atoms.
	By Proposition \ref{prop:freeness_of_collapses}, $p$ admits a unique factorisation $p = q\tau_K$, where $U = \arr \pcyl{K} U'$ and $\tau_K\colon \arr \pcyl{K} U' \surj U'$ is a generating collapse.
	Then, we let $K(p) \eqdef \clset{(0^-, \top_{U'})} \subseteq \bd{}{}U$ and
	\[
		a(p)\eqdef p\tau_{K(p)}\colon \arr \pcyl{K(p)} U \surj V.
	\]
	This operation produces a non-trivial collapse, so it can be iterated: we let $a^0(p) \eqdef p$ and $a^n(p) \eqdef a(a^{n-1}(p))$ for $n > 0$.
	Moreover, it has the following property: let $x \in \faces{}{}U' \setminus K$, let $K_x \eqdef \clset{x} \cap K$, and let 
	\[
		p_x \eqdef \restr{p}{\clset{(1, x)}}\colon \arr \pcyl{K_x} \clset{x} \surj \clset{p(x)};
	\]
	then, since $K(p) \cap \clset{(1, x)} = \clset{(0^-, x)} = K(p_x)$, we have
	\begin{equation} \label{eq:restrictions_of_the_a(p)}
		\restr{a(p)}{\clset{(1, (1, x))}} = a(p_x)\colon \arr \pcyl{K(p_x)} (\arr \pcyl{K_x} \clset{x}) \surj \clset{p(x)}.
	\end{equation}	
	Now, we will construct an $\Infl$-algebra structure $\alpha\colon \Infl X \to X$ inductively, as follows.
	Unitality fixes the structure uniquely on pairs $(u, \idd{})$, so it suffices to define $\alpha$ on pairs $(v, p)$ such that $p\colon U \surj V$ is non-trivial; this means in particular that $\alpha$ is uniquely defined on 0\nbd cells of $\Infl X$.
	Moreover, for each $m \in \mathbb{N}$, suppose that $\alpha$ is well-defined on $k$\nbd cells of $\Infl X$ for all $k \leq m$, satisfying the restricted unitality and associativity equations.
	Then, since the proof of Lemma \ref{lem:eilenberg_zilber_collapses} for $m$\nbd cells does not use any higher-dimensional collapses or embeddings, we can assume that each $m$\nbd cell $v$ is equal to $\alpha(w, q)$ for a unique pair of a collapse $q$ and non-degenerate cell $w$ of dimension $\leq m$.

	Inductively on $m > 0$, for each pair $(v, p)$ of a non-trivial collapse $p\colon U \surj V$ with $\dim U = m$, we will construct marked cells $\alpha(v, a^n(p))$ for all $n \geq 0$; for the reason just mentioned, we may freely assume that $v$ is non-degenerate, for if $v = \alpha(v', p')$ with non-trivial $p'$, then we must have
	\[
		\alpha(v, a^n(p)) = \alpha(\alpha(v', p'), a^n(p)) = \alpha(v', a^n(p'p)).
	\]
	Now, let $p = q\tau_K$ be the unique factorisation given by Proposition \ref{prop:freeness_of_collapses}, and let $u' \eqdef \alpha(v, q)$.
	If $q$ is non-trivial and $p = a(q)$, then $\alpha(v, a^n(p))$ must be equal to $\alpha(v, a^{n+1}(q))$, which is already defined by the inductive hypothesis.
	In any other case, $\alpha(u', a^n(\tau_K))$ is yet undefined.
	Since by Proposition \ref{prop:freeness_of_collapses} there are no non-trivial equations between composites of cylindrical collapses, we can extend $\alpha$ freely on this family of cells, inductively on $n \geq 0$, as long as we can construct marked cells with the correct boundaries at each step.

	Let $U'$ be the shape of $u'$ and consider the sequence of marked atoms
	\[
		\order{0}{U} \eqdef \marr \pcyl{K} \flatm{(U')}, \quad \quad
		\order{n}{U} \eqdef \marr \pcyl{K(a^{n-1}(\tau_K))} \order{n-1}{U} 
		\quad \text{for $n > 0$},
	\]
	whose underlying atom is the domain of $a^n(\tau_K)$ for each $n \geq 0$, and the set of marked cells, for each $n \geq 0$, is $\set{x \in \order{n}{U} \mid \dim a^n(\tau_K)(x) < \dim x}$.
	Moreover, let $x_0 \eqdef (0^+, \top) \in \order{0}{U}$ and $x_n \eqdef (1, x_{n-1}) \in \order{n}{U}$ for each $n > 0$.
	We will construct, inductively on $n \geq 0$, morphisms 
	\[
		s_n\colon \order{n}{U} \to (X, A)
	\]
	classifying marked cells of $X$, and, for each $n > 0$, we will then let
	\[
		\alpha(u', a^{n-1}(\tau_K)) \eqdef \restr{s_n}{\clset{x_n}},
	\]
	which is marked because every cell in $\faces{}{}\order{n}{U}$ is.
	Let $n = 0$ and let $\top$ be the greatest element of $U'$.
	Then we have a marked horn $(\Lambda^{(0^-, \top)}, \order{0}{B}) \incl \order{0}{U}$, where $\order{0}{B}$ is the restriction of the marked structure on $\order{0}{U}$.
	For each $x \in \faces{}{}U' \setminus K$, let $K_x$ and $p_x$ be defined as before, and let $u'_x \eqdef \restr{u'}{\clset{x}}$.
	Then
	\[
		\restr{e_0}{\clset{(0^+, \top)}} \eqdef u', \quad \quad
		\restr{e_0}{\clset{(1, x)}} \eqdef \alpha(u'_x, p_x)
	\]
	is well-defined as a morphism $e_0\colon (\Lambda^{(0^-, \top)}, \order{0}{B}) \to (X, A)$, so because $(X, A)$ is $\Jhorn$-fibrant, it extends to a morphism $s_0\colon \order{0}{U} \to (X, A)$.
	This classifies a marked cell of $X$, and satisfies $\restr{s_0}{\clset{x_0}} = u'$.

	Now, let $n > 0$.
	Then we have a marked horn $(\Lambda^{x_n}, \order{n}{B}) \incl \order{n}{U}$, where $\order{n}{B}$ is the restriction of the marked structure on $\order{n}{U}$.
	We define a morphism $e_n\colon (\Lambda^{x_n}, \order{n}{B}) \to (X, A)$ as follows: for each top-dimensional $z \in \Lambda^{x_n}$, we let
	\begin{itemize}
		\item $\restr{e_n}{\clset{z}} \eqdef s_{n-1}$ if $z \in \invrs{a^n(\tau_K)}\top$,
		\item $\restr{e_n}{\clset{z}} \eqdef \alpha(u'_x, a^n(p_x))$ if $z \in \invrs{a^n(\tau_K)}x$ for some $x \in \faces{}{}U' \setminus K$.
	\end{itemize}
	By $\Jhorn$-fibrancy, this extends to a morphism $s_n\colon \order{n}{U} \to (X, A)$.
	Both the fact that $e_n$ is well-defined, and the fact that $\restr{s_n}{\clset{x_n}}$ has the correct boundaries for $\alpha(u', a^{n-1}(\tau_K))$, follow from the recursive use of equation (\ref{eq:restrictions_of_the_a(p)}).
	This concludes the inductive step.
	By construction, every degenerate cell is mapped to a marked cell, so the $\Infl$\nbd algebra lifts to an $\mInfl$\nbd algebra.
\end{proof}

\begin{exm}
	Since the construction in Theorem \ref{thm:fibrants_admit_inflate_algebra_structure} may not be immediately transparent, we give an illustration of what it looks like in the simplest possible case, where $v$ is a 0\nbd dimensional cell and $p\colon \arr \to 1$ is the unique possible collapse.
	In the first step, to construct $s_0$, we consider the marked horn $(\Lambda^{(0^-, \top)}, \varnothing)$ of $\order{0}{U} \eqdef \marr$, which is simply the inclusion $\flatm{\set{0^+}} \incl \marr$, and $e_0$ is simply the classifying morphism of $v$.
	Then a filler $s_0$ classifies a marked 1\nbd dimensional cell
	\[
		s_0\colon v' \mcelto v.
	\]
	Next, $\order{1}{U}$ is $\sharpm{(\arr \pcyl{\set{0^-}} \arr)}$, the horn $\Lambda^{x_1}$ looks like the inclusion
\[\begin{tikzcd}[sep=small]
	{{\scriptstyle(0^-)}} &&&& {{\scriptstyle(0^-)}} \\
	&& {{\scriptstyle(0^+, 0^+)}} & \incl &&& {{\scriptstyle(0^+, 0^+)}} \\
	& {{\scriptstyle(0^-, 0^+)}} &&&& {{\scriptstyle(0^-, 0^+)}}
	\arrow["{(0^+, 1)}", curve={height=-6pt}, from=1-1, to=2-3]
	\arrow["{(0^-, 1)}"', curve={height=6pt}, from=1-1, to=3-2]
	\arrow[""{name=0, anchor=center, inner sep=0}, "{(0^+, 1)}", curve={height=-6pt}, from=1-5, to=2-7]
	\arrow["{(0^-, 1)}"', curve={height=6pt}, from=1-5, to=3-6]
	\arrow["{(1, 0^+)}"', from=3-6, to=2-7]
	\arrow["{(1, 1)}"', shorten <=3pt, shorten >=6pt, Rightarrow, from=3-6, to=0]
\end{tikzcd}\]
	and $e_1$ classifies the diagram
\[\begin{tikzcd}[sep=small]
	{{\scriptstyle v'}} \\
	&& {{\scriptstyle v}} \\
	& {{\scriptstyle v}}
	\arrow["{s_0}", curve={height=-6pt}, from=1-1, to=2-3]
	\arrow["{s_0}", curve={height=6pt}, from=1-1, to=3-2]
\end{tikzcd}\]
	whose filler exhibits a marked 2\nbd cell $s_1\colon s_0 \cp{} e \mcelto s_0$, where $e\colon v \mcelto v$.
	We will let $\alpha(v, p) \equiv \un v \eqdef e$.
	Now, $K(p) \subseteq \bd{}{}\order{1}{U}$ is $\clset{(0^-, 1)}$, so $\order{2}{U}$ is the 3\nbd dimensional atom whose input and output boundaries are, respectively,
\[\begin{tikzcd}[column sep=scriptsize]
	{{\scriptstyle((0^-))}} &&&& {{\scriptstyle(0^+, (0^+, 0^+))}} \\
	\\
	& {{\scriptstyle((0^-, 0^+))}} && {{\scriptstyle(0^-, (0^+, 0^+))}}
	\arrow[""{name=0, anchor=center, inner sep=0}, "{(0^+, (0^+, 1))}", curve={height=-6pt}, from=1-1, to=1-5]
	\arrow["{((0^-, 1))}"', curve={height=6pt}, from=1-1, to=3-2]
	\arrow[""{name=1, anchor=center, inner sep=0}, "{(0^-, (0^+, 1))}", curve={height=-12pt}, from=1-1, to=3-4]
	\arrow["{(0^-, (1, 0^+))}"', from=3-2, to=3-4]
	\arrow["{(1, (0^+, 0^+))}"', curve={height=6pt}, from=3-4, to=1-5]
	\arrow["{(0^-, (1, 1))}", shorten >=6pt, Rightarrow, from=3-2, to=1]
	\arrow["{(1, (0^+, 1))}"', curve={height=6pt}, shorten >=10pt, Rightarrow, from=3-4, to=0]
\end{tikzcd}\]
\[\begin{tikzcd}[column sep=scriptsize]
	{{\scriptstyle((0^-))}} &&&& {{\scriptstyle(0^+, (0^+, 0^+))}} \\
	\\
	& {{\scriptstyle((0^-, 0^+))}} && {{\scriptstyle(0^-, (0^+, 0^+))}}
	\arrow[""{name=0, anchor=center, inner sep=0}, "{(0^+, (0^+, 1))}", curve={height=-6pt}, from=1-1, to=1-5]
	\arrow["{((0^-, 1))}"', curve={height=6pt}, from=1-1, to=3-2]
	\arrow[""{name=1, anchor=center, inner sep=0}, "{(0^+, (1, 0^+))}", curve={height=-12pt}, from=3-2, to=1-5]
	\arrow["{(0^-, (1, 0^+))}"', from=3-2, to=3-4]
	\arrow["{(1, (0^+, 0^+))}"', curve={height=6pt}, from=3-4, to=1-5]
	\arrow["{(0^+, (1, 1))}", curve={height=-6pt}, shorten >=10pt, Rightarrow, from=3-2, to=0]
	\arrow["{(1, (1, 0^+))}"', shorten >=6pt, Rightarrow, from=3-4, to=1]
\end{tikzcd}\]
	with all cells marked in dimension $> 0$, and $e_2$ classifies the diagram
	\[\begin{tikzcd}[column sep=small]
	{{\scriptstyle v'}} &&&& {{\scriptstyle v}} & {{\scriptstyle v'}} &&&& {{\scriptstyle v}} \\
	\\
	& {{\scriptstyle v}} && {{\scriptstyle v}} &&& {{\scriptstyle v}} && {{\scriptstyle v}}
	\arrow[""{name=0, anchor=center, inner sep=0}, "{s_0}", curve={height=-6pt}, from=1-1, to=1-5]
	\arrow["{s_0}"', curve={height=6pt}, from=1-1, to=3-2]
	\arrow[""{name=1, anchor=center, inner sep=0}, "{s_0}", curve={height=-12pt}, from=1-1, to=3-4]
	\arrow[""{name=2, anchor=center, inner sep=0}, "{s_0}", curve={height=-6pt}, from=1-6, to=1-10]
	\arrow["{s_0}"', curve={height=6pt}, from=1-6, to=3-7]
	\arrow["e"', from=3-2, to=3-4]
	\arrow["e"', curve={height=6pt}, from=3-4, to=1-5]
	\arrow["e", curve={height=-18pt}, from=3-7, to=1-10]
	\arrow["e"', from=3-7, to=3-9]
	\arrow["e"', curve={height=6pt}, from=3-9, to=1-10]
	\arrow["{s_1}", shorten >=6pt, Rightarrow, from=3-2, to=1]
	\arrow["{s_1}"', curve={height=6pt}, shorten >=10pt, Rightarrow, from=3-4, to=0]
	\arrow["{s_1}", curve={height=-6pt}, shorten >=10pt, Rightarrow, from=3-7, to=2]
\end{tikzcd}\]
	whose filler exhibits a cell
	\[
		s_2\colon (s_1 \cp{} e) \cp{} s_1 \mcelto (s_0 \cp{} h) \cp{} s_1
	\]
	where $h\colon e \cp{} e \mcelto e$.
	We will let $\alpha(v, a(p)) \equiv \run{}(\un v) \eqdef h$.
	Proceeding like this, at each stage we will have a horn whose top-dimensional cells can be mapped to the filler produced at the previous stage, and the result of filling the ``missing'' cell will be the desired structural degeneracy.
	The only difference when $v$ is not 0\nbd dimensional is that some of the top-dimensional cells will need to be mapped to previously-constructed degenerate cells over lower-dimensional faces of $v$.
\end{exm}

\noindent
We are now almost ready to define our weak model of \inftyn-categories.

\begin{dfn}[Marked-reducibility and marked-equivalence]
	Let $u$, $v$ be a pair of parallel round diagrams in a marked directed complex.
	We say that $u$ is \emph{marked-reducible to $v$}, and write $u \mrdto v$, if there exists a marked round diagram $h\colon u \mrdto v$.
	We say that $u$ is \emph{marked-equivalent to $v$}, and write $u \meqv v$, if $u \mrdto v$ and $v \mrdto u$.
\end{dfn}

\begin{lem} \label{lem:marked-equivalence_in_generic_inflate_complexes}
	Let $(X, A)$ be a marked inflate-complex, $n \in \mathbb{N}$, let $u, v, v', w$ in $\gr{n}{\Rd X}$, and let $\iota\colon v \submol u$ be a rewritable subdiagram.
	Then
	\begin{enumerate}
		\item $u \meqv u$, that is, $\mrdto$ and $\meqv$ are reflexive,
		\item if $u \mrdto v$ and $v \mrdto w$, then $u \mrdto w$, that is, $\mrdto$ is transitive,
		\item if $v' \mrdto v$, then $\subs{u}{v'}{\iota(v)} \mrdto u$, and if $v \mrdto v'$, then $u \mrdto \subs{u}{v'}{\iota(v)}$.
	\end{enumerate}
\end{lem}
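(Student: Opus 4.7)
The plan is to treat the three claims in sequence, each making essential use of the fact that $(X, A)$ is a \emph{marked} inflate-complex, so degenerate cells are automatically marked.

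For reflexivity, the natural witness is the unit $\un u\colon u \rdto u$ coming from the inflate structure: its shape $\arr \pcyl{\bd{}{}U} U$ is round by \cite[Lemma 1.20]{chanavat2024equivalences}, and every top-dimensional cell of $\un u$ factors through the non-trivial cylindrical collapse $\tau_{\bd{}{}U}$, hence is degenerate. By the definition of marked inflate-complex all such cells are marked, so $\un u$ is a marked round diagram and $u \mrdto u$.

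For transitivity, given marked round diagrams $h\colon u \mrdto v$ and $k\colon v \mrdto w$ of dimension $n+1$, the plan is to exhibit the pasting $h \cp{n} k$ as the required witness. First I would check that the shape $H \cp{n} K$ is a round molecule; this reduces to verifying $\bd{n}{-}H \cap \bd{n}{+}K = \bd{n-1}{}H$ inside the pushout, which follows from globularity and roundness of $H$ and $K$ together with the fact that they are pasted along their full common $n$-boundary $V$, so that below level $n$ their boundaries coincide. Once roundness is in hand, the pasting is marked because its top-dimensional cells are exactly the top-dimensional cells of $h$ and $k$.

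For the substitution statement, the main tool is the context $\fun{F}_\iota$ on $\Pd X(\bd{}{-}v, \bd{}{+}v)$ uniquely determined by the rewritable subdiagram $\iota\colon v \submol u$, which satisfies $u = \fun{F}_\iota v$ and $\subs{u}{v'}{\iota(v)} = \fun{F}_\iota v'$. Given a marked witness $g\colon v' \mrdto v$, applying the promoted context $(\fun{F}_\iota)_{v', v}$ produces a round diagram in $\Pd X(\subs{u}{v'}{\iota(v)}, u)$. Using the layering of Lemma \ref{lem:context_layering}, $(\fun{F}_\iota)_{v', v} g$ unfolds as a pasting of $g$ with units $\un{\ell_i}, \un{r_i}$ of the lower-dimensional components of $\fun{F}_\iota$; since $g$ is marked and units are degenerate cells of the inflate-complex (hence marked in $(X, A)$), every top-dimensional cell of the resulting diagram is marked. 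This yields $\subs{u}{v'}{\iota(v)} \mrdto u$. The dual direction, starting from $g\colon v \mrdto v'$ and applying $(\fun{F}_\iota)_{v, v'}$, is entirely symmetric.

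The main technical obstacle I expect is in part (3): carefully unpacking how the promoted context acts on a higher-dimensional round diagram, confirming both that the resulting shape is genuinely a round molecule and that every top-dimensional cell is accounted for as either a cell of $g$ or a unit on a lower-dimensional piece of $\fun{F}_\iota$. The inductive structure of contexts together with the explicit description of units via cylindrical collapses should make the verification routine layer by layer, but keeping track of dimensions and pasting levels requires some care.
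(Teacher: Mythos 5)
Your parts (1) and (2) coincide with the paper's proof: the unit $\un u$ witnesses reflexivity, and the pasting $h \cp{} k$ witnesses transitivity (the roundness of the pasted shape, which you verify by hand, is indeed the standard fact that pasting two round molecules along their full common codimension\nbd 1 boundary is round; the paper takes it for granted).

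For part (3) you take a genuinely different route. The paper's witness is the single pasting-at-a-submolecule $h \cpsub{\iota} \un u\colon \subs{u}{v'}{\iota(v)} \mrdto u$: one takes the unit on all of $u$ and pastes the witness $h\colon v' \mrdto v$ into the copy of $v$ sitting in $\bd{}{-}(\un u) = u$. Its top-dimensional cells are those of $h$ (marked by hypothesis) and those of $\un u$ (degenerate, hence marked), so the whole thing is a one-line construction. Your witness $(\fun{F}_\iota)_{v',v}\,g$ obtained by promoting the context determined by $\iota$ is also valid and has the correct type $\subs{u}{v'}{\iota(v)} \rdto u$, but it is the more roundabout of the two, and your justification of its markedness contains a small inaccuracy: by Lemma \ref{lem:context_layering} the promoted context pastes $g$ with the lower-dimensional diagrams $\ell_i, r_i$ themselves (a whiskering in codimension $\geq 1$), \emph{not} with units on them. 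No units arise at all; since $\dim \ell_i, \dim r_i \leq n < \dim g$, the top-dimensional cells of $(\fun{F}_\iota)_{v',v}\,g$ are exactly those of $g$, so markedness is immediate and your anticipated ``main technical obstacle'' largely evaporates. Both approaches work; the paper's buys brevity and avoids invoking the context machinery (which in the paper's logical order is introduced before this lemma but is not needed for it), while yours makes the relationship with the later theory of $A$\nbd contexts explicit.
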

\begin{proof}
	Reflexivity is exhibited by the unit $\un u\colon u \mrdto u$.
	If $h\colon u \mrdto v$ and $k\colon v \mrdto w$, then $h \cp{} k\colon u \mrdto w$.
	Finally, if $h\colon v' \mrdto v$, then we have a marked round diagram $h \cpsub{} \un u\colon \subs{u}{v'}{\iota(v)} \mrdto u$, and dually when $v \mrdto v'$.
\end{proof}

\begin{comm}
	It follows from Lemma \ref{lem:marked-equivalence_in_generic_inflate_complexes} that $\mrdto$ is always a preorder in a marked inflate-complex, so $\meqv$ is always an equivalence relation.
\end{comm}

\begin{rmk}
	As a particular case of the last point of Lemma \ref{lem:marked-equivalence_in_generic_inflate_complexes}, if $v, w$ are round diagrams of the same dimension such that $v \cp{} w$ is defined, then $v \mrdto v'$ implies $v \cp{} w \mrdto v' \cp{} w$, and similarly $w \mrdto w'$ implies $v \cp{} w \mrdto v \cp{} w'$, that is, $\mrdto$ is always compatible with pasting in codimension 1.
\end{rmk}

\begin{lem} \label{lem:unitors_as_marked-reductions}
	Let $(X, A)$ be a marked inflate-complex, $n > 0$, let $u \in \gr{n}{\Rd X}$, and for each $\a \in \set{-, +}$, let $j^\a\colon u^\a \submol \bd{}{\a}u$ be a rewritable subdiagram.
	Then $u \subcp{j^+} \un u^+ \mrdto u \mrdto \un u^- \cpsub{j^-} u$
\end{lem}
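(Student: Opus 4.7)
The plan is to identify the required witnesses of marked-reducibility with the left and right unitors introduced in Section \ref{sec:inflate}. By the definitions given there, $\lun{j^-}u$ is a round diagram of type $u \rdto \un u^- \cpsub{j^-} u$ and $\run{j^+}u$ is a round diagram of type $u \subcp{j^+} \un u^+ \rdto u$, so both already have the boundaries demanded by the statement. The only remaining task will be to verify that in a marked inflate-complex these unitors are \emph{marked} round diagrams, i.e.\ that each of their top-dimensional restrictions is a marked cell of $X$.

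For $\lun{j^-}u$, recall that it is defined as $u\tau_K$ with $\tau_K\colon \arr \pcyl{K} U \surj U$, where $U$ is the shape of $u$ and $K \eqdef \bd{}{}U \setminus \inter j^-(U^-)$. Since $\dim u = n$, the shape $\arr \pcyl{K} U$ has dimension $n+1$, and its top-dimensional elements are exactly those of the form $(1, x)$ with $x$ ranging over the top-dimensional elements of $U$. The key observation will be that the restriction $\restr{\tau_K}{\clset{(1,x)}}$ is, up to isomorphism, the generating cylindrical collapse $\tau_{K \cap \clset{x}}\colon \arr \pcyl{K \cap \clset{x}} \clset{x} \surj \clset{x}$; this is non-trivial since its domain has dimension $n+1$ and its codomain dimension $n$. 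Consequently $\restr{(u\tau_K)}{\clset{(1,x)}}$ factors as $\restr{u}{\clset{x}}$ precomposed with a non-identity collapse of atoms, which by definition makes it a degenerate cell of $X$. Because $(X, A)$ is a marked inflate-complex, $\dgn X \subseteq A$, so every top-dimensional restriction of $\lun{j^-}u$ is marked, and $\lun{j^-}u$ is therefore a marked round diagram exhibiting $u \mrdto \un u^- \cpsub{j^-} u$.

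The same argument applies to $\run{j^+}u$, with the symmetric choice $K \eqdef \bd{}{}U \setminus \inter j^+(U^+)$, yielding $u \subcp{j^+} \un u^+ \mrdto u$. There is no real obstacle here: the content of the lemma is simply that the \emph{existing} unitors, which were originally presented as plain round diagrams witnessing reduction, become automatically marked once we pass to a marked inflate-complex, where the subset $A$ is required to contain all degeneracies. Thus the heart of the proof is nothing more than reading off the dimensions of the cells $(1, x)$ in $\arr \pcyl{K} U$ and invoking the condition $\dgn X \subseteq A$.
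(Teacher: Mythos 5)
Your proposal is correct and takes essentially the same approach as the paper, whose proof simply exhibits the two reductions by the right unitor $\run{j^+}u$ and the left unitor $\lun{j^-}u$. The extra verification you supply -- that the top-dimensional restrictions of the unitors factor through non-trivial generating collapses, hence are degenerate and therefore marked because $\dgn X \subseteq A$ -- is precisely the justification the paper leaves implicit.
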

\begin{proof}
	These are exhibited by the right unitor $\run{j^+}u$ and the left unitor $\lun{j^-}u$, respectively.
\end{proof}

\begin{dfn}[Marked-invertible round diagram]
	Let $e\colon u \rdto v$ be a round diagram in a marked inflate-complex.
	We say that $e$ is \emph{marked-invertible} if there exists a round diagram $e^*\colon v \rdto u$ such that $e \cp{} e^* \meqv \un u$ and $e^* \cp{} e \meqv \un v$.
	In this case, we call $e^*$ a \emph{weak inverse} of $e$.
\end{dfn}

\noindent 
The following is immediate from symmetry in the definition.

\begin{lem} \label{lem:marked_invertibility_is_symmetric}
	Let $e$ be a marked-invertible round diagram in a marked inflate-complex, and let $e^*$ be a weak inverse of $e$.
	Then $e^*$ is marked-invertible.
\end{lem}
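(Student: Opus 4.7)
The plan is to simply take $e$ itself as a weak inverse of $e^*$, exploiting the perfect symmetry in the definition of marked-invertibility. Concretely, the definition requires a round diagram $(e^*)^*\colon u \rdto v$ such that $e^* \cp{} (e^*)^* \meqv \un v$ and $(e^*)^* \cp{} e^* \meqv \un u$. Setting $(e^*)^* \eqdef e$, these two conditions read $e^* \cp{} e \meqv \un v$ and $e \cp{} e^* \meqv \un u$, which are precisely the hypotheses asserting that $e^*$ is a weak inverse of $e$. No further argument is required.

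There is no substantial obstacle here: the lemma is a purely formal consequence of the fact that the two equations in the definition of ``weak inverse'' are interchanged under the swap $(e, e^*) \leftrightarrow (e^*, e)$, and the roles of $u$ and $v$ are interchanged accordingly. The proof fits in a single line, as already signposted by the preceding sentence in the excerpt (``immediate from symmetry in the definition'').
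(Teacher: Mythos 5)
Your proof is correct and is exactly the argument the paper intends: the lemma is stated as ``immediate from symmetry in the definition'' with no further proof given, and taking $e$ itself as the weak inverse of $e^*$ is precisely that symmetry. Nothing to add.
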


\begin{lem} \label{lem:marked_equivalence_preserves_marked_invertibility}
	Let $(X, A)$ be a marked inflate-complex, let $e$, $h$ be parallel round diagrams, and suppose $e \meqv h$.
	Then $e$ is marked-invertible if and only if $h$ is marked-invertible.
\end{lem}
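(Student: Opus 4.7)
The plan is to show that $e^*$, a weak inverse of $e$, is also a weak inverse of $h$; this will establish that $h$ is marked-invertible.

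Suppose $e\colon u \rdto v$ and $h\colon u \rdto v$ are parallel, so that $e \meqv h$ gives marked round diagrams $\alpha\colon e \mrdto h$ and $\beta\colon h \mrdto e$. Assume $e$ is marked-invertible with weak inverse $e^*\colon v \rdto u$, so that $e \cp{} e^* \meqv \un u$ and $e^* \cp{} e \meqv \un v$. Since $e$ and $h$ share boundaries, the pastings $h \cp{} e^*$ and $e^* \cp{} h$ are defined, and we aim to show $h \cp{} e^* \meqv \un u$ and $e^* \cp{} h \meqv \un v$.

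The key observation is that $\mrdto$ is compatible with pasting in codimension 1, as noted in the remark following Lemma \ref{lem:marked-equivalence_in_generic_inflate_complexes}: this is an instance of the third point of that lemma, since $e$ (respectively $e^*$) is a rewritable subdiagram of $e \cp{} e^*$ (respectively $e^* \cp{} e$). Hence $\alpha\colon e \mrdto h$ yields $e \cp{} e^* \mrdto h \cp{} e^*$, and $\beta\colon h \mrdto e$ yields $h \cp{} e^* \mrdto e \cp{} e^*$; together these give $h \cp{} e^* \meqv e \cp{} e^*$. By transitivity of $\meqv$ (Lemma \ref{lem:marked-equivalence_in_generic_inflate_complexes}) combined with $e \cp{} e^* \meqv \un u$, we conclude $h \cp{} e^* \meqv \un u$. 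The symmetric argument using pasting with $e^*$ on the left yields $e^* \cp{} h \meqv \un v$.

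This shows $e^*$ is a weak inverse of $h$, so $h$ is marked-invertible. The converse follows by symmetry, exchanging the roles of $e$ and $h$ (using $\beta$ and $\alpha$ in place of $\alpha$ and $\beta$). I do not expect any significant obstacle here; the entire argument reduces to the reflexivity and transitivity of $\meqv$ together with codimension-1 compatibility of $\mrdto$ with pasting, all of which are already established in Lemma \ref{lem:marked-equivalence_in_generic_inflate_complexes}.
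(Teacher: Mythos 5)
Your proof is correct and follows essentially the same route as the paper: both establish $h \cp{} e^* \meqv e \cp{} e^* \meqv \un u$ and $e^* \cp{} h \meqv e^* \cp{} e \meqv \un v$ via Lemma \ref{lem:marked-equivalence_in_generic_inflate_complexes}, concluding that $e^*$ is also a weak inverse of $h$, with the converse by symmetry. You merely spell out the codimension-1 compatibility step that the paper leaves implicit.
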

\begin{proof}
	Suppose $e\colon u \rdto v$ is marked-invertible and let $e^*$ be a weak inverse.
	Then, by Lemma \ref{lem:marked-equivalence_in_generic_inflate_complexes}, we have
	\[
		h \cp{} e^* \meqv e \cp{} e^* \meqv \un u, \quad \quad e^* \cp{} h \meqv e^* \cp{} e \meqv \un v
	\]
	so $h$ is marked-invertible and $e^*$ is also a weak inverse of $h$.
	The converse is symmetric.
\end{proof}

\begin{dfn}[Weak composite]
	Let $u$ be a round diagram of shape $U$ in a marked directed complex.
	A \emph{weak composite of $u$} is a cell $\mrg{u}$ of shape $\mrg{U}$ such that $u \meqv \mrg{u}$.
\end{dfn}

\begin{dfn}[\inftyn-category]
	A marked inflate-complex is an \emph{\inftyinf\nbd category} if it satisfies the following axioms.
	\begin{enumerate}
		\item \emph{(Weak composites)}.
			Every round diagram has a weak composite.
		\item \emph{(Completeness)}.
			Marked cells coincide with marked-invertible cells.
	\end{enumerate}
	For each $n \in \mathbb{N}$, an \inftyinf\nbd category is an \emph{\inftyn\nbd category} if, furthermore, every cell of dimension $> n$ is marked.
\end{dfn}

\begin{comm}
	Our use of ``\inftyn\nbd category''---what is meant to be a model-independent notion---for this specific model should be read informally, as a shorthand for ``\emph{complete marked inflate-complex with weak composites}''.
\end{comm}

\begin{comm}
	As discussed in \cite{barwick2020unicity, loubaton2024inductive}, while for finite $n$ the notion of \inftyn\nbd category is presumed unique, there are two or three possible inequivalent notions of \inftyinf\nbd category; our definition is presumed to model the \emph{inductive} notion.
	We believe that it should be possible to produce a model of walking coherent coinductive equivalences \cite{hadzihasanovic2025model} in marked directed complexes so that an appropriate localisation gives a coinductive model of \inftyinf\nbd categories, but this is not as straightforward as in the diagrammatic model \cite{chanavat2024model}, and we will not attempt to do so here.
\end{comm}

\noindent
In an \inftyn\nbd category, we may unambiguously speak of an \emph{equivalence} to refer either to a marked cell, or to a marked-invertible cell, since the two notions coincide.
We will also be interested in the following, slightly weaker notion.

\begin{dfn}[Essential \inftyn-category]
	A marked inflate-complex is an \emph{essential \inftyinf\nbd category} if it satisfies the following axioms.
	\begin{enumerate}
		\item \emph{(Weak composites)}.
			Every round diagram has a weak composite.
		\item \emph{(Essential completeness)}.
			Every marked cell is marked-invertible, and every marked-invertible cell is marked-equivalent to a marked cell.
	\end{enumerate}
	For each $n \in \mathbb{N}$, an essential \inftyinf\nbd category is an \emph{essential \inftyn\nbd category} if, furthermore, every cell of dimension $> n$ is marked.
\end{dfn}

\noindent
We will now show that, for each $n \in \Ninfty$, a $\Jn$-fibrant marked directed complex can be given a structure of \inftyn\nbd category.

\begin{lem} \label{lem:marked_equivalence_is_symmetric_in_Jhorn_fibrant}
	Let $(X, A)$ be a $\Jhorn$\nbd fibrant marked directed complex, let $u$, $v$ be parallel round diagrams in $X$, and suppose $u \mrdto v$.
	Then $u \meqv v$.
\end{lem}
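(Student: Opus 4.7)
The plan is to construct from the given marked round diagram $h\colon u \mrdto v$ a marked cell $h^*\colon v \mcelto u$ in $(X, A)$, which exhibits $v \mrdto u$ and hence $u \meqv v$. The enabling observation is Theorem \ref{thm:fibrants_admit_inflate_algebra_structure}: since $(X, A)$ is $\Jhorn$-fibrant, it carries an $\mInfl$-algebra structure. Consequently, for every round diagram in $X$ we have access to an algebraic unit all of whose top cells are marked, and it is this marked unit that will play the role of ``right-hand side'' of the inversion equation we set up.

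Let $U$, $V$, $H$ be the shapes of $u$, $v$, $h$, and set $n \eqdef \dim u$; let $X_0 \eqdef V \celto U$ be the atom with input $V$ and output $U$, and let $\un_U$ denote the shape of the unit on $U$, namely $\arr \pcyl{\bd{}{}U} U$. I will consider the $(n{+}2)$-atom $W \eqdef (H \cp{n} X_0) \celto \un_U$, which is well-defined since $H \cp{n} X_0$ and $\un_U$ are round $(n{+}1)$-molecules with common $n$-boundary $U$. Writing $\top_W$, $\top_0$, $\top_{\un_U}$ for the respective greatest elements, the face $\top_0$ lies in $\faces{}{-}\top_W$, and I will set up the corresponding horn $\Lambda^{\top_0}_W \incl W$ as a marked horn whose filler produces the desired inverse cell.

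Concretely, I let $B$ consist of $\top_W$, $\top_0$, all $(n{+}1)$-cells of $H$, and $\top_{\un_U}$. Using the pasting decomposition $\bd{}{-}W = H \cp{n} X_0$ with $\order{n+1}{L} \eqdef H$, $\order{n+1}{R}$ a point, and $\order{i}{L}$, $\order{i}{R}$ trivial for $i \leq n$, the marked horn conditions reduce to: the $(n{+}1)$-cells of $H$ are in $B$ (which holds by construction), and $\top_0 \in B$ iff $\faces{}{+}W = \set{\top_{\un_U}} \subseteq B$ (both hold). I then define a morphism $e\colon (\Lambda^{\top_0}_W, B \cap \Lambda^{\top_0}_W) \to (X, A)$ by sending the cells of $H$ to the corresponding cells of $h$, the cells of $\bd{}{}X_0$ to the cells of $u$ and $v$ via the identifications $\bd{}{-}X_0 = V$ and $\bd{}{+}X_0 = U$ forced by the boundary structure of $W$, and the cells of $\un_U$ to the corresponding cells of $\un u$. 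Every marked cell in the horn is mapped to a marked cell of $(X, A)$: the top cells of $H$ map to marked cells of $h$, and $\top_{\un_U}$ maps to the top cell of $\un u$, which is marked thanks to the $\mInfl$-algebra structure.

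Applying $\Jhorn$-fibrancy, $e$ extends along the marked horn to $\bar e\colon (W, B) \to (X, A)$, and the restriction $h^* \eqdef \restr{\bar e}{\clset{\top_0}}$ is a marked cell of shape $V \celto U$ from $v$ to $u$, completing the proof. The main obstacle is the verification that $e$ is well-defined, which comes down to bookkeeping about the identifications of shared boundaries between $H$, $X_0$ and $\un_U$ inside $W$ and their agreement with the corresponding boundaries of $h$ and $\un u$ in $X$; once these compatibilities are checked, horn filling yields the result immediately, and the argument goes through uniformly, including for $n = 0$.
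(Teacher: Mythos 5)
Your proposal is correct and takes essentially the same route as the paper: the paper likewise invokes Theorem \ref{thm:fibrants_admit_inflate_algebra_structure} to obtain the marked unit $\un u$ and then fills the marked horn classifying the equation $h \cp{} x \qeq \un u$, whose lax solution supplies the marked cell $v \mcelto u$. Two cosmetic slips in your bookkeeping, neither of which affects the argument: when $U$ is not an atom, the shape $\arr \pcyl{\bd{}{}U} U$ of the unit has several top-dimensional cells rather than a single greatest element, so condition (4) of the marked-horn definition should require all of them to lie in $B$ (their images in $\un u$ are degenerate, hence marked by the $\mInfl$-algebra structure); and $\order{n+1}{R}$ should be the $n$-dimensional molecule $\bd{}{+}X_0 \cong U$ rather than a point when $n > 0$.
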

\begin{proof}
	By Theorem \ref{thm:fibrants_admit_inflate_algebra_structure}, we can assume that $(X, A)$ is a marked inflate-complex.
	Let $e\colon u \mrdto v$ exhibit $u \mrdto v$.
	Then there is a marked horn classifying the equation $e \cp{} x \qeq \un u$.
	Because $(X, A)$ is $\Jhorn$\nbd fibrant, this admits a lax solution $h\colon e \cp{} e^* \celto \un u$, where, since $e$ and $\un u$ are both marked round diagrams, the cell $e^*$ can be taken to be marked.
	Then $e^*\colon v \mcelto u$ exhibits $v \mrdto u$.
\end{proof}

\begin{prop} \label{prop:fibrants_are_inftyn_categories}
	Let $n \in \Ninfty$ and let $(X, A)$ be fibrant in $\Mwn$.
	Then $(X, A)$ is the underlying marked directed complex of an \inftyn\nbd category.
\end{prop}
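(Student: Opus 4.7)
The plan is to verify each of the three defining axioms of an \inftyn\nbd category in turn. First, I would invoke Theorem \ref{thm:fibrants_admit_inflate_algebra_structure}: since $\Jhorn \subseteq \Jn$, the marked directed complex $(X, A)$ is $\Jhorn$-fibrant and thereby inherits a marked inflate-complex structure, supplying the algebra of degenerate cells. Second, when $n < \infty$, the generating anodyne extensions $\m_U \in \Jn$ for atoms $U$ with $\dim U > n$ yield the dimension-truncation clause directly: every cell of dimension $> n$ lifts along $\m_U$ to a marked cell.

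For the weak composite axiom, I would proceed by horn filling. Given a round diagram $u \colon U \to X$, consider the atom $V \eqdef U \celto \mrg{U}$; the underlying horn $\Lambda_V^{\top_{\mrg{U}}}$ removes exactly $\top_V$ and the top of $\mrg{U}$, leaving $U$ itself. Marking only $\top_V$ and using the canonical layering of $\bd{}{+}V = \mrg{U}$ through its top, the marking biconditional $x \in B \Leftrightarrow \faces{}{-}V \subseteq B$ holds vacuously (neither $\top_{\mrg{U}}$ nor the top cells of $U$ lie in $B$), and the dimensional conditions on the layering reduce to statements about empty sets of top-dimensional cells in lower-dimensional boundaries. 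A filler provided by $\Jhorn$-fibrancy then produces a marked cell $h \colon u \mcelto \mrg{u}$ whose output face is a cell $\mrg{u}$ of shape $\mrg{U}$; Lemma \ref{lem:marked_equivalence_is_symmetric_in_Jhorn_fibrant} upgrades $u \mrdto \mrg{u}$ to $u \meqv \mrg{u}$.

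For completeness, I would treat the two implications separately. Suppose first that $a \colon u \mcelto v$ is marked. Filling marked horns that classify the equations $a \cp{} x \qeq \un u$ and $x \cp{} a \qeq \un v$, on atoms of the form $(A \cp{} X) \celto \mrg{A \cp{} X}$, produces marked one-sided weak inverses $a^-, a^+ \colon v \mcelto u$ with $a \cp{} a^- \meqv \un u$ and $a^+ \cp{} a \meqv \un v$; the horn conditions are met because $a$ and the unitor cells in the layering are marked. A standard identity-insertion argument, using left and right unitors---which are marked round diagrams by Lemma \ref{lem:unitors_as_marked-reductions}---then gives
\begin{equation*}
a^- \meqv \un v \cp{} a^- \meqv (a^+ \cp{} a) \cp{} a^- \meqv a^+ \cp{} (a \cp{} a^-) \meqv a^+ \cp{} \un u \meqv a^+,
\end{equation*}
so $a^- \cp{} a \meqv \un v$ and $a^-$ is a two-sided weak inverse, exhibiting $a$ as marked-invertible.

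The converse is where the saturation axiom $\Jsat$ enters, and it is expected to be the main obstacle. Given $a$ with weak inverse $a^*$, the plan is to apply a saturation $\sigma_{A, A^*, A}$: the shapes $A, A^*, A$ are $n$\nbd dimensional atoms with $A \cp{} A^* \cp{} A$ defined, and interpreting $u \eqdef a$, $v \eqdef a^*$, $w \eqdef a$, its conclusion is precisely that $a$ and $a^*$ are marked. The required ingredients are marked cells $uv, vw$ of shapes $\mrg{A \cp{} A^*}$ and $\mrg{A^* \cp{} A}$, together with marked cells $r \colon uv \mcelto a \cp{} a^*$ and $\ell \colon a^* \cp{} a \mcelto vw$. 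I would construct these by combining the weak composite construction with Lemma \ref{lem:marked_equivalence_is_symmetric_in_Jhorn_fibrant}: the marked-equivalences $a \cp{} a^* \meqv \un u$ and $a^* \cp{} a \meqv \un v$, paired with the markedness of the degenerate cells $\un u, \un v$, allow promoting the intermediary marked round diagrams to marked single cells of the specific atomic shapes demanded by $\sigma_{A, A^*, A}$. Once all four data are aligned as marked cells of the correct shapes, the saturation lift fires and forces $a$ to be marked. The delicate point throughout is the careful matching of atomic shapes with the shapes of the diagrams produced by the various horn fillers.
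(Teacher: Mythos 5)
Your proof is correct and follows essentially the same route as the paper's: the inflate-algebra structure from Theorem \ref{thm:fibrants_admit_inflate_algebra_structure}, weak composites and one-sided inverses obtained by filling marked horns together with Lemma \ref{lem:marked_equivalence_is_symmetric_in_Jhorn_fibrant}, the standard two-sided-inverse computation, $\Jsat$\nbd fibrancy for the converse direction of completeness, and the markings $\m_U$ for the truncation clause. The one point to tighten is that a weak inverse $a^*$ is a priori a round diagram rather than a cell of atomic shape, so before the saturation can fire you must replace it by a weak composite $\mrg{a^*}$ (and replace $\un u$, $\un v$ by \emph{marked} weak composites, which requires the marked case of your weak-composite step), exactly as the paper does.
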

\begin{proof}
	By Theorem \ref{thm:fibrants_admit_inflate_algebra_structure}, $(X, A)$ admits a structure of marked inflate-complex; fix such a structure.
	Let $u$ be a round diagram in $X$.
	Then there is a marked horn classifying the equation $u \qeq x$ in the indeterminate $x$, and since $X$ is $\Jhorn$\nbd fibrant, this admits a lax solution $h\colon u \mrdto \mrg{u}$.
	By Lemma \ref{lem:marked_equivalence_is_symmetric_in_Jhorn_fibrant}, also $u \meqv \mrg{u}$.
	This proves that $(X, A)$ has weak composites.
	Moreover, if $u$ is a marked round diagram, then $\mrg{u}$ can be taken to be a marked cell.

	Next, let $e\colon u \celto v$ be a marked cell in $X$.
	Then there is a marked horn classifying the equation $e \cp{} x \qeq \un u$.
	By $\Jhorn$\nbd fibrancy, this has a lax solution $z\colon e \cp{} e^* \mrdto \un u$.
	This exhibits $e \cp{} e^* \mrdto \un u$, and by Lemma \ref{lem:marked_equivalence_is_symmetric_in_Jhorn_fibrant} $e \cp{} e^* \meqv \un u$.
	Dually, a lax solution to $x \cp{} e \qeq \un v$ produces a marked cell $h\colon e' \cp{} e \mrdto \un v$.
	Now, by Lemma \ref{lem:marked-equivalence_in_generic_inflate_complexes} and Lemma \ref{lem:marked_equivalence_is_symmetric_in_Jhorn_fibrant}, we have
	\[	
		e^* \meqv \un v \cp{} e^* \meqv e' \cp{} e \cp{} e^* \meqv e' \cp{} \un u \meqv e'
	\]
	hence $e^* \cp{} e \meqv e' \cp{} e \meqv \un v$.
	This proves that $e$ is marked-invertible.
	Conversely, suppose that $e$ is a marked-invertible cell and let $e^*$ be a weak inverse.	
	By the first part, we can find weak composites $\mrg{e^*} \meqv e^*$, $\mrg{\un{u}} \meqv \un{u}$, and $\mrg{\un{v}} \meqv \un{v}$, where the last two can be taken to be marked, and we have marked-equivalences
	\[
		e \cp{} \mrg{e^*} \meqv \mrg{\un u}, \quad\quad \mrg{\un v} \meqv \mrg{e^*} \cp{} e.
	\]
	Let $z$ and $h$ be the marked round diagrams exhibiting these; then $z$ and $h$ admit weak composites $\mrg{z}$ and $\mrg{h}$ which can be taken to be marked.
	Since $(X, A)$ is $\Jsat$\nbd fibrant, and $\mrg{z}, \mrg{h}, \mrg{\un{u}}$, and $\mrg{\un{v}}$ are all marked, it follows that $e$ and $\mrg{e^*}$ are also marked.
	We conclude that $(X, A)$ is also complete.

	Finally, if $n < \infty$, then $\Jn$\nbd fibrancy implies directly that every cell of dimension $> n$ is marked, and we conclude.
\end{proof}

\noindent
In the next section, we will prove the converse to this result.

\subsection{Equivalences in \inftyn-categories} \label{sec:equivalences}

\noindent
Fix $n \in \Ninfty$.
Our first aim in this section is to reprove enough of the results from \cite{chanavat2024equivalences}, which are relative to coinductively weakly invertible round diagrams in a diagrammatic set, in the case of marked-invertible round diagrams in an essential \inftyn\nbd category.
The following two results correspond to \cite[Corollary 2.14]{chanavat2024equivalences}.

\begin{lem} \label{lem:marked_equivalence_symmetric_in_ess_complete}
	Let $(X, A)$ be an essential \inftyn\nbd category, let $u$, $v$ be parallel round diagrams in $X$, and suppose $u \mrdto v$.
	Then $u \meqv v$.
\end{lem}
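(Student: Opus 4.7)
The strategy is to reduce a marked round diagram $e\colon u \mrdto v$ to a single cell via the weak composites axiom, and then apply essential completeness twice: once to obtain a marked-invertible inverse cell, and a second time to replace that inverse by a genuinely marked cell going in the opposite direction.

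First, I invoke the weak composites axiom on $e$ to produce a cell $\mrg{e}\colon u \celto v$ of shape the merger of the shape of $e$, together with marked $(n+2)$-round diagrams witnessing $e \meqv \mrg{e}$ (where $n = \dim u = \dim v$). The critical intermediate claim is then that $\mrg{e}$ is marked-invertible. To establish this, one exploits the fact that the top-dimensional cells of $e$ are all marked, hence marked-invertible by the first clause of essential completeness, and then argues that this invertibility is transmitted along the pasting structure assembling $e$; the equivalence $e \meqv \mrg{e}$ combined with Lemma \ref{lem:marked_equivalence_preserves_marked_invertibility} then transfers marked-invertibility from $e$, viewed as a round diagram, to the single cell $\mrg{e}$.

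Once $\mrg{e}$ is known to be marked-invertible, pick a weak inverse $\mrg{e}^*\colon v \celto u$. Lemma \ref{lem:marked_invertibility_is_symmetric} ensures that $\mrg{e}^*$ is itself marked-invertible, and the second clause of essential completeness produces a marked cell $s\colon v \mcelto u$ with $s \meqv \mrg{e}^*$. This marked cell $s$ directly exhibits $v \mrdto u$, which together with the given $e\colon u \mrdto v$ yields the desired $u \meqv v$.

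The main obstacle will be the intermediate claim that $\mrg{e}$ is marked-invertible. A naive attempt is circular, since marked-invertibility is defined in terms of $\meqv$ and the present lemma is what licenses symmetry of $\meqv$ in the first place. However, marked-invertibility of the $(n+1)$-cell $\mrg{e}$ refers to $\meqv$ between $(n+1)$-round diagrams, which is witnessed by marked $(n+2)$-round diagrams—strictly one dimension above the level of $e$. The right way around the apparent circularity should therefore be an induction on dimension, or equivalently on the complexity of the shape of $e$, so that at the level of $e$ itself only the single-cell case of the lemma is really invoked, where essential completeness gives the conclusion outright.
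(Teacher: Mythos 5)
Your overall strategy --- produce a weak inverse of the forward witness and then use the second clause of essential completeness to replace it with a genuinely marked cell of type $v \mcelto u$ --- is the same as the paper's. The gap is in your intermediate claim that $\mrg{e}$ is marked-invertible. The ``transmission along the pasting structure'' you appeal to is exactly Lemma \ref{lem:pastings_of_marked_invertible} and Proposition \ref{prop:round_diagrams_of_marked_inv_cells_are_marked-invertible}, and in the paper both are proved \emph{after} the present lemma and use it: the computations in Lemma \ref{lem:pastings_of_marked_invertible} repeatedly upgrade the one-directional reductions supplied by the unitors (Lemma \ref{lem:unitors_as_marked-reductions}) to marked-equivalences, which is precisely the present lemma applied to parallel round diagrams of dimension $\dim e$. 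Your proposed escape by induction on dimension does not close this loop, because the recursion ascends: proving the lemma for $n$-dimensional $u, v$ via your route requires the pasting lemma for $(n+1)$-dimensional diagrams, which requires the present lemma for $(n+1)$-dimensional parallel round diagrams, which requires the pasting lemma one dimension higher still, and so on without terminating. An induction on the complexity of the shape of $e$ fails for the same reason, since its inductive step \emph{is} the pasting lemma.

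The paper avoids all of this by never decomposing $e$ into cells and never passing to its weak composite: it applies the first clause of essential completeness directly to the marked witness $e$ (reading that clause as applying to the marked round diagram itself, not only to single marked cells) to conclude that $e$ is marked-invertible, takes a weak inverse $e^*\colon v \rdto u$, observes that $e^*$ is marked-invertible by Lemma \ref{lem:marked_invertibility_is_symmetric} and hence so is any weak composite $\mrg{e^*}$ by Lemma \ref{lem:marked_equivalence_preserves_marked_invertibility}, and only then applies the second clause to the \emph{cell} $\mrg{e^*}$ to obtain a marked cell $e'\colon v \mcelto u$. Note also a smaller slip of the same kind at the end of your argument: a weak inverse of $\mrg{e}$ is a priori a round diagram of type $v \rdto u$, not a cell, so before invoking the second clause of essential completeness (which is stated for cells) you must pass to a weak composite of it, exactly as the paper does with $\mrg{e^*}$.
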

\begin{proof}
	Let $e\colon u \mrdto v$ be a witness of marked-reducibility.
	Then $e$ is marked, so by essential completeness, $e$ is marked-invertible; let $e^*\colon v \rdto u$ be a weak inverse of $e$.
	By Lemma \ref{lem:marked_invertibility_is_symmetric}, $e^*$ is marked-invertible, and by Lemma \ref{lem:marked_equivalence_preserves_marked_invertibility}, so is any weak composite $\mrg{e^*}$ of $e^*$.
	By essential completeness, $\mrg{e^*} \meqv e'$ for some marked cell $e'\colon v \mrdto u$, and we conclude that $u \meqv v$.
\end{proof}

\begin{lem} \label{lem:pastings_of_marked_invertible}
	Let $(X, A)$ be an essential \inftyn\nbd category, let $e$, $h$ be round diagrams in $X$ with $\dim e = \dim h$, and suppose $e$ is marked-invertible.
	Then
	\begin{enumerate}
		\item if a pasting $e \cpsub{} h$ is defined, then $e \cpsub{} h$ is marked-invertible if and only if $h$ is marked-invertible,
		\item if a pasting $h \subcp{} e$ is defined, then $h \subcp{} e$ is marked-invertible if and only if $h$ is marked-invertible.
	\end{enumerate}
\end{lem}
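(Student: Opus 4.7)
By the evident duality between the two clauses (swapping the roles of input and output boundaries), it suffices to establish the equivalence for pastings of the form $e \cpsub{\iota} h$, where $\iota\colon \bd{}{+}e \submol \bd{}{-}h$. The plan is to first prove the stronger structural statement that the pasting of two marked-invertible round diagrams of the same dimension is marked-invertible, by explicit construction of a weak inverse, and then deduce the claimed equivalence from it.

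For the structural step, assume both $e$ and $h$ are marked-invertible, with weak inverses $e^*$ and $h^*$ respectively. Since $e^*$ and $h^*$ are parallel to the reversals of $e$ and $h$, the inclusion $\iota$ transports to a rewritable submolecule inclusion of $\bd{}{-}e^* = \bd{}{+}e$ into $\bd{}{+}h^* = \bd{}{-}h$, so the pasting $h^* \subcp{\iota} e^*$ is well-defined; this is our candidate weak inverse. To verify, compute $(e \cpsub{\iota} h) \cp{} (h^* \subcp{\iota} e^*)$, expand it using the algebra of pasting at submolecules so that the subdiagram $h \cp{} h^*$ appears as a rewritable subdiagram, then substitute it by $\un{\bd{}{-}h}$ using $h \cp{} h^* \meqv \un{\bd{}{-}h}$ together with the substitution compatibility of Lemma \ref{lem:marked-equivalence_in_generic_inflate_complexes}; applying the right unitor of Lemma \ref{lem:unitors_as_marked-reductions} collapses the result to $e \cp{} e^*$, which is marked-equivalent to $\un{\bd{}{-}e}$. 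A symmetric calculation handles $(h^* \subcp{\iota} e^*) \cp{} (e \cpsub{\iota} h) \meqv \un{\bd{}{+}(e \cpsub{\iota} h)}$. This exhibits $h^* \subcp{\iota} e^*$ as a weak inverse of $e \cpsub{\iota} h$, establishing the forward direction.

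For the converse, suppose $e \cpsub{\iota} h$ is marked-invertible and $e$ is marked-invertible. By Lemma \ref{lem:marked_invertibility_is_symmetric}, $e^*$ is also marked-invertible, so by the forward direction applied to the codimension-one pasting $e^* \cp{} (e \cpsub{\iota} h)$ (whose two factors are each marked-invertible), this composite is marked-invertible. On the other hand, reassociating this pasting presents $e^* \cp{} e$ as a rewritable subdiagram, and the marked-equivalence $e^* \cp{} e \meqv \un{\bd{}{+}e}$ together with Lemma \ref{lem:marked-equivalence_in_generic_inflate_complexes} yields $e^* \cp{} (e \cpsub{\iota} h) \meqv \un{\bd{}{+}e} \cpsub{\iota} h$, which by the left unitor of Lemma \ref{lem:unitors_as_marked-reductions} is marked-equivalent to $h$. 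Applying Lemma \ref{lem:marked_equivalence_preserves_marked_invertibility} to this chain of marked-equivalences transports marked-invertibility from $e^* \cp{} (e \cpsub{\iota} h)$ to $h$.

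The main obstacle is the algebraic bookkeeping around pasting at submolecules: verifying that the candidate weak inverse $h^* \subcp{\iota} e^*$ pastes sensibly against $e \cpsub{\iota} h$, and that the rewritings used in both directions (presenting $h \cp{} h^*$ and $e^* \cp{} e$ as rewritable subdiagrams of the appropriate larger pastings) are realised inside the available algebra of substitutions, unitors, and codimension-one pastings. All of the marked-equivalence manipulations stay within the closure properties recorded in Lemma \ref{lem:marked-equivalence_in_generic_inflate_complexes}, and no use of essential completeness is required beyond the fact, already used silently through Lemma \ref{lem:marked_invertibility_is_symmetric} and Lemma \ref{lem:marked_equivalence_preserves_marked_invertibility}, that marked-invertibility is closed under weak inverses and transports along $\meqv$.
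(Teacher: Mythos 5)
Your forward direction is essentially the paper's: the candidate weak inverse $h^* \subcp{} e^*$ and the two verifications (rewrite so that $h \cp{} h^*$, resp.\ $e^* \cp{} e$, sits as a rewritable subdiagram, substitute a unit, absorb with unitors) are exactly the displayed computations in the paper's proof. Where you genuinely diverge is the converse. The paper, given a weak inverse $k$ of $e \cpsub{} h$, explicitly exhibits $k \subcp{} e$ as a weak inverse of $h$ and verifies both marked-equivalences by a second round of pasting algebra. You instead reuse the forward direction as a closure property: $e^*$ is marked-invertible by Lemma \ref{lem:marked_invertibility_is_symmetric}, so $e^* \cpsub{} (e \cpsub{} h)$ is marked-invertible, and since this reassociates to $(e^* \cp{} e) \cpsub{} h \meqv \un{v} \cpsub{} h \meqv h$, Lemma \ref{lem:marked_equivalence_preserves_marked_invertibility} transports invertibility to $h$. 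This is the standard ``two-out-of-three from closure under composition and inverses'' argument; it buys you a shorter converse at the cost of one extra application of the forward direction, and it avoids naming an explicit weak inverse for $h$ (which the paper's route provides for free). Both are valid.

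Two small imprecisions, neither fatal. First, after substituting $\un{\bd{}{-}h}$ for $h \cp{} h^*$, the right unitor does not collapse the composite to $e \cp{} e^*$ on the nose but to something of the form $\un{u'} \subcp{} (e \cp{} e^*)$ with $u' = \bd{}{-}(e \cpsub{} h)$; one further substitution of $\un{u}$ for $e \cp{} e^*$ and a unitor are needed to land on $\un{u'}$ rather than $\un{u}$ --- this is precisely the chain the paper writes out. Second, your closing claim that essential completeness is not used beyond the transport lemmas is not quite right: the unitors only provide reductions $\mrdto$ in one direction, and upgrading these to $\meqv$ requires Lemma \ref{lem:marked_equivalence_symmetric_in_ess_complete}, which is where essential completeness enters (as it does in the paper's proof).
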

\begin{proof}
	Let $e\colon u \rdto v$, $h\colon v' \rdto w$ be the types of $e$ and $h$, let $v \submol v'$ be the rewritable subdiagram determining the pasting $e \cpsub{} h$, and fix a weak inverse $e^*\colon v \rdto u$ of $e$.
	Suppose $h$ is marked-invertible and let $h^*\colon w \rdto v'$ be a weak inverse.
	Then, letting $u' \eqdef \subs{v'}{u}{v}$, we have
	\begin{align*}
		(e \cpsub{} h) \cp{} (h^* \subcp{} e^*) & = e \cpsub{} (h \cp{} h^*) \subcp{} e^* 
		\meqv e \cpsub{} \un v' \subcp{} e^* \\
							& \meqv \un u' \cp{} (e \cpsub{} \un v' \subcp{} e^*) = ((\un u' \subcp{} e) \cp{} \un v') \subcp{} e^* \\
							& \meqv (\un u' \subcp{} e) \subcp{} e^* = \un u' \subcp{} (e \cp{} e^*) \\
							& \meqv \un u' \subcp{} \un u \meqv \un u',
	\end{align*}
	using various instances of Lemma \ref{lem:marked-equivalence_in_generic_inflate_complexes} and Lemma \ref{lem:unitors_as_marked-reductions} in conjunction with Lemma \ref{lem:marked_equivalence_symmetric_in_ess_complete}, as well as $\omega$\nbd categorical equations of pasting.
	More easily,
	\begin{align*}
		(h^* \subcp{} e^*) \cp{} (e \cpsub{} h) & = h^* \cp{} ((e^* \cp{} e) \cpsub{} h) \meqv h^* \cp{} (\un v \cpsub{} h) \\
							& \meqv h^* \cp{} h \meqv \un w.
	\end{align*}
	This proves that $e \cpsub{} h$ is marked-invertible with weak inverse $h^* \subcp{} e^*$.
	Conversely, suppose that $e \cpsub{} h$ is marked-invertible, and let $k\colon w \rdto u'$ be a weak inverse.
	Then
	\begin{align*}
		h \cp{} (k \subcp{} e) & \meqv \un v \cpsub{} (h \cp{} (k \subcp{} e)) \meqv (e^* \cp{} e) \cpsub{} (h \cp{} (k \subcp{} e)) \\
					 & = e^* \cpsub{} ((e \cpsub{} h) \cp{} k) \subcp{} e
					 \meqv e^* \cpsub{} \un u' \subcp{} e \\
					 & \meqv ((\un v' \subcp{} e^*) \cp{} \un u') \subcp{} e 
					 \meqv \un v' \subcp{} (e^* \cp{} e) \meqv \un v' \subcp{} \un v \meqv \un v',
	\end{align*}
	while $(k \subcp{} e) \cp{} h = k \cp{} (e \cpsub{} h) \meqv \un w$, which proves that $h$ is invertible with weak inverse $k \subcp{} e$.
	The case of $h \subcp{} e$ is dual.
\end{proof}

\begin{prop} \label{prop:round_diagrams_of_marked_inv_cells_are_marked-invertible}
	Let $(X, A)$ be an essential \inftyn\nbd category, $m > 0$, let $u \in \gr{m}{\Rd X}$, and suppose every $m$\nbd dimensional cell $v \submol u$ is marked-invertible.
	Then $u$ is marked-invertible.
\end{prop}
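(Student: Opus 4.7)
The plan is to prove the statement by induction on the number $N$ of $m$\nbd dimensional cell-submolecules $v \submol u$. For the base case $N = 1$, let $v$ be the unique top-dimensional cell-submolecule. Because $u$ is round and $v$ is the only $m$\nbd element of its shape $U$, roundness forces $\bd{}{\a}v = \bd{}{\a}U$ for each $\a \in \set{-, +}$, so $U = \clset{x}$ is an atom and $u$ coincides with the cell $v$, which is marked-invertible by hypothesis.

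For the inductive step $N \geq 2$, I would decompose $u$ as a codimension-$1$ pasting $u = u_1 \cpsub{m-1,\iota} u_2$ (or dually $u_1 \subcp{m-1,\iota} u_2$) of two round $m$\nbd dimensional pasting diagrams, each of which has strictly fewer top-dimensional cells than $u$. Since every top-dimensional cell-submolecule of each $u_i$ is also a top-dimensional cell-submolecule of $u$, the inductive hypothesis applies and gives that $u_1$ and $u_2$ are both marked-invertible. Lemma \ref{lem:pastings_of_marked_invertible} then applies with $e \eqdef u_1$ (or $u_2$) and concludes that $u$ itself is marked-invertible.

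The main obstacle is the decomposition step: in general, the pasting of two round $m$\nbd molecules along a submolecule of their codimension-$1$ boundary is not round, so one cannot split $u$ arbitrarily. The argument therefore relies on a combinatorial ``round layering'' statement for round molecules: every round $m$\nbd dimensional molecule with at least two $m$\nbd dimensional elements admits some non-trivial decomposition as a codimension-$1$ pasting $U_1 \cpsub{m-1,\iota} U_2$ (or the dual) in which both factors are themselves round $m$\nbd dimensional molecules. I expect this to follow from the explicit theory of layerings of molecules in \cite[Chapter 5]{hadzihasanovic2024combinatorics}, possibly by choosing an $m$\nbd dimensional element whose $(m-1)$\nbd output boundary forms a submolecule of the current $(m-1)$\nbd dimensional slice and peeling it off, and is analogous to the layering-based decompositions used for parallel results in \cite{chanavat2024equivalences}. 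Once this decomposition lemma is in hand, the induction closes immediately via Lemma \ref{lem:pastings_of_marked_invertible}.
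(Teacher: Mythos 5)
Your proof is correct and follows essentially the same route as the paper, whose own proof simply defers to the proof of Proposition 2.15 of \cite{chanavat2024equivalences}: an induction on the number of top-dimensional cells, with the base case given by the fact that a round molecule with a single top-dimensional element is an atom, and the inductive step closed by Lemma \ref{lem:pastings_of_marked_invertible}. The combinatorial splitting of a round molecule with at least two top-dimensional elements into two round factors along a rewritable codimension-$1$ submolecule, which you correctly isolate as the crux, is exactly the ingredient that the cited proof draws from the layering theory of \cite{hadzihasanovic2024combinatorics}.
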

\begin{proof}
	Follows from Lemma \ref{lem:pastings_of_marked_invertible} by the proof of \cite[Proposition 2.15]{chanavat2024equivalences}.
\end{proof}

\begin{cor} \label{cor:marked_round_diagrams_are_marked_invertible}
	Let $(X, A)$ be an essential \inftyn\nbd category and let $u$ be a marked round diagram in $X$.
	Then $u$ is marked-invertible.
\end{cor}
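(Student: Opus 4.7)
The plan is to reduce the corollary directly to Proposition \ref{prop:round_diagrams_of_marked_inv_cells_are_marked-invertible} together with the essential completeness axiom. Since marked cells always live in dimension $> 0$, a marked round diagram $u$ has some dimension $m \eqdef \dim u > 0$, which puts us in the setting where the proposition applies.

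First I would unfold what it means for $u\colon U \to X$ to be marked: by definition, $u$ factors through $\m_U\colon \flatm{U} \to \mrk{U}$, which is equivalent to saying that for every $x \in \gr{m}{U}$, the restriction $\restr{u}{\clset{x}}$ is a marked cell of $X$. Equivalently, every $m$\nbd dimensional subdiagram $v \submol u$ is a marked cell. By the essential completeness axiom, every such marked cell is marked-invertible.

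Then I would apply Proposition \ref{prop:round_diagrams_of_marked_inv_cells_are_marked-invertible} with this value of $m$: since every top-dimensional cell $v \submol u$ is marked-invertible, $u$ is marked-invertible. There is no obstacle to overcome here, as the hard work is packaged in the preceding proposition (which handles the pasting calculus of marked-invertible cells via Lemma \ref{lem:pastings_of_marked_invertible}) and in the axiom of essential completeness (which bridges ``marked'' and ``marked-invertible'' for cells).

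\begin{proof}
	Let $u\colon U \to X$ be a marked round diagram and let $m \eqdef \dim u$. Since the marked structure only concerns cells of dimension $> 0$, we have $m > 0$. By the definition of a marked round diagram, for every $m$\nbd dimensional subdiagram $v \submol u$, the cell $v$ is marked. By essential completeness, every marked cell is marked-invertible, so every $m$\nbd dimensional cell $v \submol u$ is marked-invertible. We conclude by Proposition \ref{prop:round_diagrams_of_marked_inv_cells_are_marked-invertible}.
\end{proof}
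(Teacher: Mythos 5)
Your proof is correct and follows exactly the paper's route: the paper's own proof reads ``Follows from Proposition \ref{prop:round_diagrams_of_marked_inv_cells_are_marked-invertible} and essential completeness'', and you have simply spelled out the unfolding of the definition of a marked round diagram and the application of the completeness axiom. No issues.
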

\begin{proof}
	Follows from Proposition \ref{prop:round_diagrams_of_marked_inv_cells_are_marked-invertible} and essential completeness. 
\end{proof}

\begin{prop} \label{prop:marked_round_diagrams_have_marked_weak_composites}
	Let $(X, A)$ be an essential \inftyn\nbd category and let $u$ be a marked round diagram.
	Then $u$ admits a marked weak composite.
\end{prop}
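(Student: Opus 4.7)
The plan is to take an arbitrary weak composite produced by the axiom, show it is marked-invertible, and then use essential completeness to replace it with a marked cell of the same shape.

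First, by the weak composites axiom, $u$ admits some weak composite $\mrg{u}$, that is, a cell of shape $\mrg{U}$ with $u \meqv \mrg{u}$. The task is then to show that $\mrg{u}$ can be chosen to be marked. Since $u$ is a marked round diagram, Corollary \ref{cor:marked_round_diagrams_are_marked_invertible} gives that $u$ is marked-invertible; then, since $u \meqv \mrg{u}$, Lemma \ref{lem:marked_equivalence_preserves_marked_invertibility} implies that $\mrg{u}$ is also marked-invertible.

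Next, by essential completeness, there exists a marked cell $e$ parallel to $\mrg{u}$ with $\mrg{u} \meqv e$. Because $e$ and $\mrg{u}$ are parallel cells, and because the atom shape of a cell is uniquely determined up to (unique) isomorphism by its input and output boundaries, the shape of $e$ is (isomorphic, hence in the skeletal presentation equal to) $\mrg{U}$. By transitivity of $\meqv$ (Lemma \ref{lem:marked-equivalence_in_generic_inflate_complexes}), we have $e \meqv \mrg{u} \meqv u$, so $e$ is a marked cell of shape $\mrg{U}$ satisfying $u \meqv e$, i.e.\ a marked weak composite of $u$.

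There is no serious obstacle here: the only subtle point is the identification of the shape of $e$ with $\mrg{U}$, but this follows directly from rigidity of atoms together with the fact that an atom $V_- \celto V_+$ is determined by the round molecules $V_-$ and $V_+$ forming its boundaries, and the parallelism of cells fixes exactly these boundaries.
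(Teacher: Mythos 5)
Your proof is correct and follows essentially the same route as the paper's: marked implies marked-invertible (Corollary \ref{cor:marked_round_diagrams_are_marked_invertible}), so any weak composite is marked-invertible (Lemma \ref{lem:marked_equivalence_preserves_marked_invertibility}), and essential completeness then yields a marked-equivalent marked cell, which is again a weak composite. The only difference is that you spell out the identification of the shape of the replacement cell with $\mrg{U}$, which the paper leaves implicit.
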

\begin{proof}
	By Corollary \ref{cor:marked_round_diagrams_are_marked_invertible}, $u$ is marked-invertible, and by Lemma \ref{lem:marked_equivalence_preserves_marked_invertibility}, any weak composite $\mrg{u}$ of $u$ is also marked-invertible.
	By essential completeness, $\mrg{u}$ is marked-equivalent to a marked cell, which is also a weak composite.
\end{proof}

\begin{rmk}
	As a consequence of Proposition \ref{prop:marked_round_diagrams_have_marked_weak_composites}, if we have $u \meqv v$ in an essential \inftyn\nbd category, we may always assume that the marked-equivalence is witnessed by a marked \emph{cell} $h\colon u \mcelto v$.
\end{rmk}

\begin{prop} \label{prop:marked_invertible_have_marked_weak_inverses}
	Let $(X, A)$ be an essential \inftyn\nbd category and let $u$ be a marked round diagram.
	Then $u$ admits a marked weak inverse.
\end{prop}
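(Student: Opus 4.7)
The plan is to produce a marked \emph{cell} $e$ that serves as a weak inverse of $u$, by first replacing $u$ by a marked weak composite and then using essential completeness to rectify a chosen inverse into a marked cell.

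First I would apply Proposition \ref{prop:marked_round_diagrams_have_marked_weak_composites} to obtain a marked weak composite $\mrg{u}$ of $u$; this is a marked cell of type $\bd{}{-}u \celto \bd{}{+}u$ with $u \meqv \mrg{u}$. Since $\mrg{u}$ is marked, essential completeness makes it marked-invertible, so I would fix a weak inverse $k$ of $\mrg{u}$ (a round diagram of type $\bd{}{+}u \rdto \bd{}{-}u$). By Lemma \ref{lem:marked_invertibility_is_symmetric}, $k$ is marked-invertible, but not yet necessarily marked. To remedy this, I would take a weak composite $\mrg{k}$ of $k$ using the weak composites axiom; by Lemma \ref{lem:marked_equivalence_preserves_marked_invertibility}, $\mrg{k}$ is still marked-invertible, hence by essential completeness it is marked-equivalent to a marked cell $e$, so that $e \meqv k$.

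It then remains to check that the marked cell $e$, which is automatically a marked round diagram, satisfies the weak-inverse equations against $u$. Using compatibility of $\meqv$ with pasting given by Lemma \ref{lem:marked-equivalence_in_generic_inflate_complexes}, one computes
\[
	u \cp{} e \meqv \mrg{u} \cp{} e \meqv \mrg{u} \cp{} k \meqv \un{\bd{}{-}u}, \qquad e \cp{} u \meqv k \cp{} \mrg{u} \meqv \un{\bd{}{+}u},
\]
where the final marked-equivalence in each chain is the weak-inverse relation between $\mrg{u}$ and $k$. This exhibits $e$ as a marked weak inverse of $u$.

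The only subtlety is keeping track of types: since $\mrg{u}$ has the same source and target as $u$, the chosen $k$ has type $\bd{}{+}u \rdto \bd{}{-}u$, and $e \meqv k$ forces $e$ to be parallel to $k$, so that both $u \cp{} e$ and $e \cp{} u$ are well-formed round diagrams parallel to the appropriate units. No new ideas are needed beyond those already assembled in this subsection; the argument is essentially a rectification step, transporting an \emph{a priori} unmarked weak inverse of a marked cell to a marked one via essential completeness and a weak composite.
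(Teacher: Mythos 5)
Your proof is correct and follows essentially the same route as the paper's: the paper invokes Corollary \ref{cor:marked_round_diagrams_are_marked_invertible} directly to get a weak inverse of $u$, notes it is marked-invertible by Lemma \ref{lem:marked_invertibility_is_symmetric}, passes to a weak composite, and replaces it with a marked cell by essential completeness. Your extra initial detour through $\mrg{u}$ is harmless but unnecessary, and your explicit verification of the weak-inverse equations for the rectified cell is exactly what the paper leaves implicit via Lemma \ref{lem:marked_equivalence_preserves_marked_invertibility}.
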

\begin{proof}
	By Corollary \ref{cor:marked_round_diagrams_are_marked_invertible}, $u$ is marked-invertible.
	By Lemma \ref{lem:marked_invertibility_is_symmetric}, any weak inverse $u^*$ of $u$ is marked-invertible.
	Then any weak composite $\mrg{u^*}$ is also a weak inverse of $u$, and by essential completeness it can always be replaced with a marked cell.
\end{proof}

\noindent
To make the next results more readable, we introduce the following terminology.
Given an inflate-complex $X$ and a set $A \subseteq \Rd X$, we will say
\begin{itemize}
	\item a round diagram $u$ in $X$ is an \emph{$A$-round diagram} if, letting $m \eqdef \dim u$, every $m$\nbd dimensional cell $v \submol u$ is in $A$,
	\item a round diagram $e\colon u \rdto v$ in $X$ is \emph{$A$-invertible} if there exist $e^*\colon v \rdto u$ and round diagrams $z\colon e \cp{} e^* \rdto \un u$, $h\colon e^* \cp{} e \rdto \un v$, $z'\colon \un u \rdto e \cp{} e^*$, and $h'\colon \un v \rdto e^* \cp{} e$ such that $z, h, z', h' \in A$.
\end{itemize}
We let $A^+$ and $\I(A)$ denote the sets of $A$\nbd round diagrams and of $A$\nbd invertible diagrams, respectively.
Note that, for a marked inflate-complex $(X, A)$,
\begin{itemize}
	\item marked round diagrams are exactly $A$\nbd round diagrams,
	\item marked-invertible round diagrams are $A^+$\nbd invertible round diagrams.
\end{itemize}
Finally, let $\satur{A} \eqdef \I(A^+) \cap \cell X$.
Then completeness of $(X, A)$ is precisely the statement that $A = \satur{A}$, while essential completeness is the statement that $A \subseteq \satur{A}$ and for all $v \in \satur{A}$ there exists $u \in A$ such that $u \meqv v$.

\begin{lem} \label{lem:marked-invertible_iff_invertible_up_to_marked-invertible}
	Let $(X, A)$ be an essential \inftyn\nbd category and let $e$ be a round diagram in $X$.
	The following are equivalent:
	\begin{enumerate}[label=(\alph*)]
		\item $e$ is marked-invertible;
		\item $e$ is $\I(A^+)$-invertible;
		\item $e$ is $(\satur{A})^+$-invertible.
	\end{enumerate}
\end{lem}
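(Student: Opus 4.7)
The plan is to prove the three conditions equivalent by a cyclic argument \((a) \Rightarrow (c) \Rightarrow (b) \Rightarrow (a)\), exploiting the ``weak composites'' and ``essential completeness'' axioms together with the preservation results already established.

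For \((a) \Rightarrow (c)\), I would unpack the definitions: if $e$ is marked-invertible, then the existing witnesses $z, h, z', h'$ are marked round diagrams, so every top-dimensional cell of each witness belongs to $A$. By essential completeness, every marked cell is marked-invertible, giving the inclusion $A \subseteq \satur{A}$, and therefore each witness lies in $(\satur{A})^+$. The step \((c) \Rightarrow (b)\) is then immediate from Proposition~\ref{prop:round_diagrams_of_marked_inv_cells_are_marked-invertible}: witnesses whose top-dimensional cells are all marked-invertible are themselves marked-invertible, so they lie in $\I(A^+)$.

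The main content is in \((b) \Rightarrow (a)\), where marked-invertible witnesses must be replaced by genuinely marked witnesses with the same boundary. Suppose $z \colon e \cp{} e^* \rdto \un u$ is one such witness in $\I(A^+)$. By the weak composites axiom there is a weak composite $\mrg{z}$ of $z$ (a cell of shape $\mrg{U}$ for $U$ the shape of $z$) with $z \meqv \mrg{z}$; in particular $\mrg{z}$ is parallel to $z$, so it has the same source $e \cp{} e^*$ and target $\un u$. By Lemma~\ref{lem:marked_equivalence_preserves_marked_invertibility}, $\mrg{z}$ is marked-invertible, hence lies in $\satur{A}$. By essential completeness, there exists a marked cell $z_0$ with $z_0 \meqv \mrg{z}$; since $\meqv$ only relates parallel diagrams, $z_0$ also has source $e \cp{} e^*$ and target $\un u$. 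Thus $z_0$ is a marked round diagram witnessing $e \cp{} e^* \mrdto \un u$. Applying the same argument to $h$, $z'$, and $h'$ produces the remaining marked witnesses, so $e \cp{} e^* \meqv \un u$ and $e^* \cp{} e \meqv \un v$, showing that $e$ is marked-invertible with the same weak inverse $e^*$.

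The subtle point (which I expect to be the main obstacle to keep track of, rather than genuinely difficult) is ensuring that the new marked witnesses have the \emph{same} source and target as the original ones, so that $e^*$ can be reused unchanged; this is handled by the fact that weak composites and marked-equivalences preserve boundaries, and that a single cell is always a legitimate round-diagram witness.
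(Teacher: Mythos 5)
Your proposal is correct and uses essentially the same ingredients as the paper's proof (essential completeness for the easy inclusion, Proposition~\ref{prop:round_diagrams_of_marked_inv_cells_are_marked-invertible} to upgrade from marked-invertible top cells to marked-invertible witnesses, then weak composites plus Lemma~\ref{lem:marked_equivalence_preserves_marked_invertibility} and essential completeness to replace the witnesses by genuinely marked cells); the only difference is that you run the cycle as $(a)\Rightarrow(c)\Rightarrow(b)\Rightarrow(a)$ whereas the paper runs $(a)\Rightarrow(b)\Rightarrow(c)\Rightarrow(a)$. Your care about parallelism of the replacement witnesses is exactly the right point to flag, and is handled correctly.
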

\begin{proof}
	Let $e\colon u \rdto v$ be the type of $e$.
	Suppose $e$ is marked-invertible, let $e^*$ be a weak inverse, and let $z$, $h$, $z'$, $h'$ be witnesses of $e \cp{} e^* \meqv \un u$ and $e^* \cp{} e \meqv \un v$.
	By Corollary \ref{cor:marked_round_diagrams_are_marked_invertible}, $z$, $h$, $z'$, $h'$ are marked-invertible, so $e$ is $\I(A^+)$\nbd invertible.

	Next, suppose that $e$ is $\I(A^+)$-invertible, so there exist marked-invertible round diagrams $z$, $h$, $z'$, $h'$ as by the definition.
	Then by Lemma \ref{lem:marked_equivalence_preserves_marked_invertibility}, any weak composites $\mrg{z}$, $\mrg{h}$, $\mrg{z'}$, $\mrg{h'}$ are also marked-invertible, so $e$ is $\satur{A}$\nbd invertible, and \emph{a fortiori} $(\satur{A})^+$\nbd invertible.

	Finally, suppose that $e$ is $(\satur{A})^+$-invertible, so there exist round diagrams of types $z$, $h$, $z'$, $h'$ as by the definition, whose top-dimensional cells are marked-invertible.
	By Proposition \ref{prop:round_diagrams_of_marked_inv_cells_are_marked-invertible}, $z$, $h$, $z'$, $h'$ are marked-invertible, and so are any weak composites $\mrg{z}$, $\mrg{h}$, $\mrg{z'}$, $\mrg{h'}$.
	By essential completeness, they can be replaced with marked cells exhibiting $e \cp{} e^* \meqv \un u$ and $e^* \cp{} e \meqv \un v$.
\end{proof}

\begin{thm} \label{thm:naive_completion_is_completion}
	Let $(X, A)$ be an essential \inftyn-category.
	Then $(X, \satur{A})$ is an \inftyn-category.
\end{thm}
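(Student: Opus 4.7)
The plan is to verify the three defining conditions of an $\infty n$\nbd category directly for $(X, \satur{A})$, exploiting the fact that $\satur{A}$ is, by definition, essentially the set of marked\nbd invertible cells of $(X, A)$ together with the characterisation given by Lemma \ref{lem:marked-invertible_iff_invertible_up_to_marked-invertible}. The bulk of the technical work has already been done in the preceding lemmas; what remains is a careful bookkeeping exercise.

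First, I would check that $(X, \satur{A})$ is a well-defined marked inflate-complex, i.e.\ that $\dgn X \subseteq \satur{A}$. Since $(X, A)$ is a marked inflate-complex we have $\dgn X \subseteq A$, and essential completeness ensures that every marked cell is marked-invertible, so $A \subseteq \satur{A}$; combining these gives the required inclusion. The same inclusion $A \subseteq \satur{A}$ then immediately handles the weak-composites axiom: for any round diagram $u$, a weak composite $\mrg{u}$ with $u \meqv v \meqv \mrg{u}$ witnessed by marked round diagrams in $(X, A)$ is \emph{a fortiori} witnessed by $\satur{A}$\nbd round diagrams, since every cell of $A^+$ lies in $(\satur{A})^+$. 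And if $n < \infty$, the fact that all cells of dimension $>n$ are marked in $(X,A)$ propagates along $A \subseteq \satur{A}$ to give the same property in $(X, \satur{A})$.

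The substantive step is verifying completeness, namely that the marked cells of $(X, \satur{A})$ coincide with the marked-invertible cells of $(X, \satur{A})$. By definition, the marked cells of $(X, \satur{A})$ are precisely the elements of $\satur{A} = \I(A^+) \cap \cell X$, that is, the cells which are marked-invertible in $(X, A)$. On the other hand, marked-invertibility in $(X, \satur{A})$ means $(\satur{A})^+$\nbd invertibility. Applying Lemma \ref{lem:marked-invertible_iff_invertible_up_to_marked-invertible} (the equivalence of (a) and (c)), a cell is marked-invertible in $(X, A)$ if and only if it is $(\satur{A})^+$\nbd invertible, which is exactly marked-invertibility in $(X, \satur{A})$. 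The two classes therefore coincide with $\satur{A}$.

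I do not expect any serious obstacle: Lemma \ref{lem:marked-invertible_iff_invertible_up_to_marked-invertible} is tailor-made for this argument, and the rest is a matter of unfolding definitions and using the monotonicity $A \subseteq \satur{A}$. The only subtlety worth a brief sanity check is that marked-invertibility is stable under the enlargement of the marked structure from $A$ to $\satur{A}$, which is built into the equivalence between (a) and (c) of the lemma — it asserts not only that the $(X, A)$\nbd invertible cells become $(X, \satur{A})$\nbd invertible, but also that no \emph{new} marked-invertible cells appear, which is what guarantees that $\satur{A}$ is already a fixed point of the saturation operation and that no further iteration is needed.
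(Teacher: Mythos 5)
Your proposal is correct and follows essentially the same route as the paper: the inclusion $A \subseteq \satur{A}$ from essential completeness gives weak composites and one inclusion of marked into marked-invertible cells \emph{a fortiori}, and the equivalence (a)$\Leftrightarrow$(c) of Lemma \ref{lem:marked-invertible_iff_invertible_up_to_marked-invertible} supplies the converse, so that $\satur{A}$ is a fixed point of saturation. The additional checks you make (that $\dgn X \subseteq \satur{A}$ so the marked inflate-complex is well-defined, and that the dimension-$>n$ condition propagates) are implicit in the paper but worth recording.
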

\begin{proof}
	By essential completeness, $A \subseteq \satur{A}$, so $A^+ \subseteq (\satur{A})^+$.
	\emph{A fortiori}, then, $(X, \satur{A})$ has weak composites, and every cell in $\satur{A}$, being $A^+$\nbd invertible, is also $(\satur{A})^+$\nbd invertible.
	The converse, that an $(\satur{A})^+$\nbd invertible cell is $A^+$\nbd invertible, is the content of Lemma \ref{lem:marked-invertible_iff_invertible_up_to_marked-invertible}.
\end{proof}

\noindent
We give another useful criterion for marked-invertibility of a round diagram in an essential \inftyn\nbd category.

\begin{lem} \label{lem:marked_invertibility_from_non-inverses}
	Let $(X, A)$ be an essential \inftyn\nbd category and $a\colon u \rdto v$ a round diagram.
	The following are equivalent:
	\begin{enumerate}[label=(\alph*)]
		\item there exist round diagrams $a_L\colon v \rdto u'$, $a_R\colon v' \rdto u$, $e\colon u \mrdto u'$, $h\colon v' \mrdto v$, as well as $z_L\colon a\cp{}a_L \mrdto e$, $z_R\colon h \mrdto a_R \cp{} a$;
		\item $a$ is marked-invertible.
	\end{enumerate}
	Moreover, under either of the equivalent conditions,
	\begin{enumerate}
		\item $a_L$ and $a_R$ are also marked-invertible,
		\item $a_L$, $a_R$, $e$, $h$, $z_L$, $z_R$ can be taken to be marked cells,
		\item $u'$ and $v'$ can be taken to be equal to $u$ and $v$, respectively.
	\end{enumerate}
\end{lem}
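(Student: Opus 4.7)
The plan is to handle the two implications separately and then derive the extra clauses from what the proof of the main implication produces.

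For the direction $(b) \Rightarrow (a)$, assume $a$ is marked-invertible and let $a^*$ be a weak inverse. Take $u' \eqdef u$, $v' \eqdef v$, $a_L \eqdef a_R \eqdef a^*$, $e \eqdef \un u$, $h \eqdef \un v$. The required cells $z_L\colon a \cp{} a^* \mrdto \un u$ and $z_R\colon \un v \mrdto a^* \cp{} a$ exist by the very definition of marked-equivalence $a \cp{} a^* \meqv \un u$ and $a^* \cp{} a \meqv \un v$, and by Proposition \ref{prop:marked_round_diagrams_have_marked_weak_composites} applied to their weak composites they can be taken to be marked cells; similarly $a^*$ can be replaced by a marked weak composite, while $e$ and $h$ are already structural units.

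For the direction $(a) \Rightarrow (b)$, the idea is to build a one-sided inverse on each side using the fact that the marked diagrams $e$ and $h$ are marked-invertible by Corollary \ref{cor:marked_round_diagrams_are_marked_invertible}, and then paste them together. Fix weak inverses $e^*\colon u' \rdto u$ and $h^*\colon v \rdto v'$. Define
\[
	a^* \eqdef a_L \cp{} e^*, \qquad a^{**} \eqdef h^* \cp{} a_R.
\]
Since $z_L$ and $z_R$ are marked, Lemma \ref{lem:marked_equivalence_symmetric_in_ess_complete} upgrades them to $a \cp{} a_L \meqv e$ and $h \meqv a_R \cp{} a$. Using Lemma \ref{lem:marked-equivalence_in_generic_inflate_complexes} together with unitor-style rewriting from Lemma \ref{lem:unitors_as_marked-reductions}, one computes
\[
	a \cp{} a^* \meqv (a \cp{} a_L) \cp{} e^* \meqv e \cp{} e^* \meqv \un u,
	\qquad
	a^{**} \cp{} a \meqv h^* \cp{} (a_R \cp{} a) \meqv h^* \cp{} h \meqv \un v.
\]
The classical zig-zag argument then forces $a^* \meqv a^{**}$:
\[
	a^* \meqv \un v \cp{} a^* \meqv (a^{**} \cp{} a) \cp{} a^* \meqv a^{**} \cp{} (a \cp{} a^*) \meqv a^{**} \cp{} \un u \meqv a^{**},
\]
so $a^* \cp{} a \meqv a^{**} \cp{} a \meqv \un v$ and $a \cp{} a^* \meqv \un u$, exhibiting $a^*$ as a weak inverse. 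The witnessing marked round diagrams are obtained by composition of the given data with the structural witnesses of marked-invertibility of $e$ and $h$.

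For the additional clauses, once $a$ is known to be marked-invertible, marked-invertibility of $a_L$ follows from Lemma \ref{lem:pastings_of_marked_invertible}: the pasting $a \cp{} a_L$ is marked-equivalent to the marked-invertible $e$, hence marked-invertible by Lemma \ref{lem:marked_equivalence_preserves_marked_invertibility}, and then cancelling the marked-invertible $a$ gives marked-invertibility of $a_L$; the argument for $a_R$ is dual. To replace all data with marked cells, apply Proposition \ref{prop:marked_round_diagrams_have_marked_weak_composites} to the round diagrams $a_L, a_R$ (which are marked-invertible, hence via essential completeness marked-equivalent to marked cells) and to the already-marked $e, h, z_L, z_R$; after these substitutions, one can furthermore take $u' \eqdef u$ and $v' \eqdef v$, replacing $e$ and $h$ by the units $\un u$ and $\un v$ and accordingly absorbing the marked-equivalences into $z_L$ and $z_R$. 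The main obstacle is bookkeeping in the zig-zag calculation, which must be carried out carefully using the substitution-compatibility part of Lemma \ref{lem:marked-equivalence_in_generic_inflate_complexes}, but no new idea beyond those already developed in this subsection is needed.
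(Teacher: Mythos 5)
Your proof is correct and follows essentially the same route as the paper's: the forward direction takes $a_L = a_R = a^*$, $e = \un u$, $h = \un v$; the converse builds one-sided inverses $a_L \cp{} e^*$ and $h^* \cp{} a_R$ from weak inverses of the marked $e$ and $h$ and concludes by the standard zig-zag (which the paper only cites, from the proof of Proposition \ref{prop:fibrants_are_inftyn_categories}, while you write it out); the extra clauses are handled identically via Lemmas \ref{lem:marked_equivalence_preserves_marked_invertibility} and \ref{lem:pastings_of_marked_invertible} and Proposition \ref{prop:marked_round_diagrams_have_marked_weak_composites}. No gaps.
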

\begin{proof}
	One direction follows from the definition, taking $a_L, a_R \eqdef a^*$, $e \eqdef \un u$, $h \eqdef \un v$, and $z_L$ and $z_R$ to be witnesses of $a \cp{} a^* \mrdto \un u$ and $\un v \mrdto a^* \cp{} a$.
	This also implies that we can take $u' = u$ and $v' = v$.
	Conversely, let $e^*$ and $h^*$ be weak inverses of $e$ and $h$, respectively.
	Then $(e^* \cp{} a_R) \cp{} a \meqv \un u$ and $a \cp{} (a_L \cp{} h^*) \meqv \un v$, and by the standard argument already used in the proof of Proposition \ref{prop:fibrants_are_inftyn_categories}, $e^* \cp{} a_R \meqv a_L \cp{} h^*$, so $a$ is marked-invertible.
	Since, by Lemma \ref{lem:marked_equivalence_preserves_marked_invertibility}, $a \cp{} a_L$ and $a_R \cp{} a$ are also marked-invertible, by Lemma \ref{lem:pastings_of_marked_invertible} $a_L$ and $a_R$ are marked-invertible.
	Finally, by Proposition \ref{prop:marked_round_diagrams_have_marked_weak_composites}, we can always pass to weak composites while preserving the property of being marked.
\end{proof}

\noindent 
The following result, implied by the previous Lemma, will come useful later.

\begin{prop} \label{prop:morphisms_preserve_marked_invertibility}
	Let $(X, A)$ and $(Y, B)$ be essential \inftyn\nbd categories, let $f\colon (X, A) \to (Y, B)$ be a morphism of their underlying marked directed complexes, and let $e$ be a marked-invertible round diagram in $(X, A)$.
	Then $f(e)$ is marked-invertible in $(Y, B)$.
\end{prop}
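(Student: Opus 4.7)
The plan is to reduce the statement to Lemma \ref{lem:marked_invertibility_from_non-inverses}, which provides a criterion for marked-invertibility that is ``transportable'' along morphisms of marked directed complexes: all the data it requires are marked cells together with marked cells witnessing certain relations up to marked-equivalence, and it makes no reference to weak inverses satisfying equations that may fail to be preserved on the nose.

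First, since $e\colon u \rdto v$ is marked-invertible in $(X, A)$, I would apply the backward direction of Lemma \ref{lem:marked_invertibility_from_non-inverses}, together with its ``moreover'' clause, to obtain marked cells $a_L, a_R\colon v \mcelto u$, marked cells $e'\colon u \mcelto u$ and $h'\colon v \mcelto v$, and marked cells $z_L\colon e \cp{} a_L \mcelto e'$ and $z_R\colon h' \mcelto a_R \cp{} e$ in $(X, A)$ (using $u' = u$ and $v' = v$ as allowed by the ``moreover'' clause).

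Next, I would apply $f$ to this entire configuration. Because $f$ is a morphism of the underlying directed complexes, it preserves the shape of every cell as well as all pastings and boundaries; in particular, the images of the relations above hold verbatim for $f(e), f(a_L), f(a_R), f(e'), f(h'), f(z_L), f(z_R)$ in $(Y, B)$. Because $f$ is a morphism of marked directed complexes, it sends marked cells to marked cells, so each of these images is marked. This exactly provides the data required by clause (a) of Lemma \ref{lem:marked_invertibility_from_non-inverses} applied to $f(e)$ in $(Y, B)$, so the forward direction of that Lemma gives that $f(e)$ is marked-invertible.

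There is no real obstacle here: the substantive content has been packaged into Lemma \ref{lem:marked_invertibility_from_non-inverses}, whose point is precisely to replace the ``inverse plus equations'' formulation of marked-invertibility, which is fragile under functors that do not strictly preserve units and composites, by a formulation in terms of the existence of certain marked cells, which is manifestly preserved by any morphism of marked directed complexes.
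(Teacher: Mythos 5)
Your proposal is correct and is exactly the paper's argument: the paper's proof reads ``Immediate from the characterisation in Lemma \ref{lem:marked_invertibility_from_non-inverses} and the fact that morphisms send marked cells to marked cells,'' which is precisely the reduction you carry out in detail. You have merely spelled out the steps the paper leaves implicit.
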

\begin{proof}
	Immediate from the characterisation in Lemma \ref{lem:marked_invertibility_from_non-inverses} and the fact that morphisms send marked cells to marked cells.
\end{proof}

\noindent
Now, we can easily import the results of \cite[Section 3.2, Section 5]{chanavat2024equivalences}.

\begin{dfn}[Marked-equivalence of round contexts]
	Let $X$ be an essential \inftyn\nbd category and let $\fun{F}, \fun{G}\colon \Pd X(v, w) \to \Pd X(v', w')$ be round contexts.
	A family of marked round diagrams $\th a\colon \fun{F}a \mrdto \fun{G}a$ indexed by round diagrams $a\colon v \rdto w$ is a \emph{marked-equivalence from $\fun{F}$ to $\fun{G}$} if, for all round diagrams $a, b\colon v \rdto w$, there exists a marked-equivalence from ${\fun{F}_{a,b}-} \cp{} \th b$ to $\th a \cp{} {\fun{G}_{a,b}-}$ as round contexts $\Pd X(a, b) \to \Pd X(\fun{F}a, \fun{G}b)$.
\end{dfn}

\noindent 
We write $\th\colon \fun{F} \mrdto \fun{G}$ to indicate that $\th$ is a marked-equivalence from $\fun{F}$ to $\fun{G}$.

\begin{prop} \label{prop:properties_of_marked_equivalences_of_contexts}
	Let $(X, A)$ be an essential \inftyn\nbd category.
	Then:
	\begin{enumerate}
		\item if $\th\colon \fun{F} \mrdto \fun{G}$ and $\psi\colon \fun{G} \mrdto \fun{H}$ are marked-equivalences of round contexts, then the family
			\[
				(\th \cp{} \psi)a \eqdef \th a \cp {} \psi a\colon \fun{F} a \mrdto \fun{H} a
			\]
			determines a marked-equivalence $\th \cp{} \psi\colon \fun{F} \mrdto \fun{H}$;
		\item if $\fun{F}, \fun{G}\colon \Pd(v, w) \to \Pd(v', w')$ are parallel round contexts, $\th\colon \fun{F} \mrdto \fun{G}$ is a marked-equivalence, and $\fun{H}$ is a round context on $\Pd(v', w')$, then the family
			\[
				\fun{H} \th a \cp{} \un{(\fun{HG}a)}\colon \fun{HF}a \mrdto \fun{HG}a
			\]
			determines a marked-equivalence $\fun{H}\th \cp{} \un(\fun{HG})\colon \fun{HF} \mrdto \fun{HG}$;
		\item if $\fun{F}, \fun{G}$ are parallel round contexts on $\Pd X(v', w')$, $\th\colon \fun{F} \mrdto \fun{G}$ is a marked-equivalence, and $\fun{H}\colon \Pd X(v, w) \to \Pd X(v', w')$ is a round context, then the family		
		\[
			\th \fun{H}a\colon \fun{FH}a \mrdto \fun{GH}a
		\]
			determines a marked-equivalence $\th{\fun{H}}\colon \fun{FH} \mrdto \fun{FG}$;
		\item if $\th\colon \fun{F} \mrdto \fun{G}$ is a marked-equivalence of round contexts, then any choice of componentwise marked weak inverses
			\[
				(\th a)^*\colon \fun{G} a \mrdto \fun{F} a
			\]
			determines a marked-equivalence $\th^*\colon \fun{G} \mrdto \fun{F}$;
		\item for all parallel round diagrams $v, w$, letting $-$ be the identity context on $\Pd(v, w)$, the family of units
			\[
				\un a \colon a \mrdto a
			\]
			determines a marked-equivalence $\un\colon - \mrdto -$;
		\item for all parallel round diagrams $v, w$ and rewritable $\iota\colon u \submol v$, the family of left unitors
			\[
				\lun{\iota}a\colon a \mrdto \un u \cpsub{\iota} a
			\]
			determines a marked-equivalence $\lun{\iota}\colon - \mrdto \un u \cpsub{\iota} -$;
		\item for all parallel round diagrams $v, w$ and rewritable $j\colon u \submol w$, the family of right unitors
			\[
				\run{j}a\colon a \subcp{j} \un u \mrdto a
			\]
			determines a marked-equivalence $\run{j}\colon - \subcp{j} \un u \mrdto -$;
		\item for all round contexts $\fun{F}$, rewritable context subdiagrams $\iota\colon u \submol \fun{F}$, and marked round diagrams $h\colon u \mrdto v$, the family
			\[
				\un{(\fun{F}a)} \subcp{\iota_a} h\colon \fun{F}a \rdto \subs{\fun{F}a}{v}{\iota_a(u)}
			\]
			determines a marked-equivalence $\un{(\fun{F})} \subcp{\iota} h\colon \fun{F} \mrdto \subs{\fun{F}}{v}{\iota_a(u)}$. 
	\end{enumerate}
\end{prop}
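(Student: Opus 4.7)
My plan is to treat all eight items simultaneously, since the definition of marked-equivalence of contexts is recursive: verifying that $\theta\colon \fun{F} \mrdto \fun{G}$ is a marked-equivalence requires producing, at each pair $a, b\colon v \rdto w$, a marked-equivalence one dimension higher between $\fun{F}_{a,b}- \cp{} \th b$ and $\th a \cp{} \fun{G}_{a,b}-$. The proofs in \cite[Section 5]{chanavat2024equivalences}, given for coinductively weakly invertible round diagrams in the diagrammatic model, can be transported essentially verbatim once we check that marked round diagrams in an essential \inftyn\nbd category share the closure properties that were used there. The key ingredients are: Corollary \ref{cor:marked_round_diagrams_are_marked_invertible} (marked round diagrams are marked-invertible), Proposition \ref{prop:marked_round_diagrams_have_marked_weak_composites} and Proposition \ref{prop:marked_invertible_have_marked_weak_inverses} (allowing us to replace round diagrams by marked cells and to choose marked weak inverses), Lemma \ref{lem:marked_equivalence_symmetric_in_ess_complete} (symmetry of $\mrdto$), Lemma \ref{lem:pastings_of_marked_invertible}, and Lemma \ref{lem:marked-invertible_iff_invertible_up_to_marked-invertible}. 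Together these give us that, within an essential \inftyn\nbd category, the preorder $\mrdto$ on round diagrams is an equivalence relation, compatible with pasting and context application, and that its witnesses can always be taken to be marked cells.

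For each item, I would first specify the candidate family $\th$ and check that the componentwise round diagrams are marked: for (1) this is the pasting $\th a \cp{} \psi a$ (marked because pasting of marked diagrams is marked); for (2) and (3), marking follows from the compatibility of contexts with markings and Corollary \ref{cor:marked_round_diagrams_are_marked_invertible}; for (4), by Proposition \ref{prop:marked_invertible_have_marked_weak_inverses} we can take componentwise marked weak inverses; for (5), units on marked cells are marked; for (6), (7), the unitor cells are marked by construction; for (8), the context subdiagram $\iota$ gives a genuine substitution whose witness can be taken marked. The main work is then to produce, for each pair $a, b$, the required higher-dimensional marked-equivalence of contexts relating the two naturality composites.

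For (1), (2), (3), (8) the higher-dimensional marked-equivalence is obtained from the hypotheses by a standard horizontal/vertical pasting manipulation using the $\omega$\nbd categorical equations of pasting from \cite{hadzihasanovic2024combinatorics} together with Lemma \ref{lem:marked-equivalence_in_generic_inflate_complexes}: what in \cite{chanavat2024equivalences} was a naturality square for coinductive natural equivalences here becomes a marked-equivalence of contexts, which exists by the inductive hypothesis at one dimension higher. Items (5), (6), (7) reduce to verifying the naturality on $\Pd X(a, b)$ by the unitality and interchange laws of pasting combined with the compatibility of units and unitors with substitution, precisely as in the proof of \cite[Proposition 5.5]{chanavat2024equivalences}. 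For item (4), given $\th\colon \fun{F} \mrdto \fun{G}$ and componentwise marked weak inverses $(\th a)^*$, the required marked-equivalence between $\fun{G}_{a,b}- \cp{} (\th b)^*$ and $(\th a)^* \cp{} \fun{F}_{a,b}-$ is produced by pre- and post-pasting with $\th a$ and $\th b$, using the marked-equivalence witnesses for $\th \cp{} \th^* \mrdto \un{\fun{F}}$ and $\un{\fun{G}} \mrdto \th^* \cp{} \th$ together with Lemma \ref{lem:marked_invertibility_from_non-inverses}.

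The principal obstacle is organising the induction: a marked-equivalence of $k$\nbd dimensional contexts demands, for each pair of $k$\nbd cells, a marked-equivalence of $(k+1)$\nbd dimensional contexts, so the statement must be proved as an unbounded recursion on the layering of Lemma \ref{lem:context_layering}. I expect to handle this by observing that each of the constructions above preserves the \emph{shape} of the layering and only increases the dimension of the cells produced by one, so the required higher-dimensional marked-equivalences are always of the same form as those we are proving, and can therefore be produced uniformly. The other subtle point is that in item (4) the weak inverses $(\th a)^*$ are not canonical, but any choice works because any two marked weak inverses of the same round diagram are themselves marked-equivalent, a fact that follows from Lemma \ref{lem:marked-invertible_iff_invertible_up_to_marked-invertible} applied componentwise.
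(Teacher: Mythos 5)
Your proposal is correct and takes essentially the same route as the paper: the paper's proof simply states that the argument of \cite[Theorem 3.22]{chanavat2024equivalences} goes through unmodified after replacing weakly invertible round diagrams with marked round diagrams and natural equivalences with marked-equivalences of round contexts, which is precisely the transport you describe, together with the same supporting results (Corollary \ref{cor:marked_round_diagrams_are_marked_invertible}, Proposition \ref{prop:marked_round_diagrams_have_marked_weak_composites}, Lemma \ref{lem:marked_equivalence_symmetric_in_ess_complete}, etc.) that make the transport licit. Your writeup merely spells out in more detail the recursive bookkeeping that the paper leaves implicit in its citation.
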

\begin{proof}
	The proof of \cite[Theorem 3.22]{chanavat2024equivalences} goes through unmodified after replacing weakly invertible with marked round diagrams, and natural equivalences with marked-equivalences of round contexts.
\end{proof}

\noindent
Given parallel round contexts $\fun{F}$, $\fun{G}$ in an essential \inftyn\nbd category, we write $\fun{F} \meqv \fun{G}$ if there exists a marked-equivalence $\th\colon \fun{F} \mrdto \fun{G}$.

\begin{cor} \label{cor:equivalence_relation_on_marked_contexts}
	Let $(X, A)$ be an essential \inftyn\nbd category.
	Then the relation $\meqv$ on round contexts in $X$ is
	\begin{enumerate}
		\item an equivalence relation,
		\item a congruence with respect to composition of round contexts,
		\item compatible with the relation $\meqv$ on round diagrams of the same dimension, that is, if $v \meqv w$ and $\fun{F} \meqv \fun{G}$ with $\dim{v} = \dim{\fun{F}}$, then
			\begin{itemize}
				\item if $v \cp{} {\fun{F}-}$ is defined, then $v \cp{} {\fun{F}-} \meqv w \cp{} {\fun{G}-}$,
				\item if ${\fun{F}-} \cp{} v$ is defined, then ${\fun{F}-} \cp{} v \meqv {\fun{G}-} \cp{} w$.
			\end{itemize}
	\end{enumerate}
\end{cor}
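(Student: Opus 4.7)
The plan is to derive each clause directly from the items assembled in Proposition \ref{prop:properties_of_marked_equivalences_of_contexts}, using no additional substantive input.

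For clause (1), reflexivity of $\meqv$ on round contexts is witnessed by item 5 of Proposition \ref{prop:properties_of_marked_equivalences_of_contexts}, applied to the identity context $-$ on $\Pd X(v, w)$, but more generally we need to produce a marked-equivalence $\un_{\fun{F}}\colon \fun{F} \mrdto \fun{F}$ for an arbitrary round context $\fun{F}$; this is provided by the family $a \mapsto \un(\fun{F}a)$, whose status as a marked-equivalence follows by applying item 2 of the Proposition to the identity marked-equivalence $\un\colon - \mrdto -$ composed on the left with $\fun{F}$. Symmetry is exactly item 4. Transitivity is exactly item 1.

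For clause (2), suppose $\th\colon \fun{F} \mrdto \fun{G}$ and $\psi\colon \fun{F}' \mrdto \fun{G}'$ are marked-equivalences between round contexts such that the composites $\fun{F}'\fun{F}$ and $\fun{G}'\fun{G}$ are defined. By item 3, $\psi\fun{F}\colon \fun{F}'\fun{F} \mrdto \fun{G}'\fun{F}$ is a marked-equivalence, and by item 2, $\fun{G}'\th \cp{} \un(\fun{G}'\fun{G})\colon \fun{G}'\fun{F} \mrdto \fun{G}'\fun{G}$ is a marked-equivalence. Composing these via item 1 yields a marked-equivalence $\fun{F}'\fun{F} \mrdto \fun{G}'\fun{G}$, proving that $\meqv$ is a congruence for context composition.

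For clause (3), suppose $v \meqv w$ in $\gr{k}{\Rd X}$ and $\fun{F} \meqv \fun{G}$ are parallel round contexts of dimension $k$, with $v \cp{} {\fun{F}-}$ defined (the other case is dual). The operations $v \cp{} -$ and $w \cp{} -$ are themselves round contexts of dimension $k$ on the appropriate space of round diagrams, so the composites $v \cp{} {\fun{F}-}$ and $w \cp{} {\fun{G}-}$ are again round contexts, and are parallel. By essential completeness, we may represent $v \meqv w$ by a marked cell $h\colon v \mcelto w$; then, viewing $h$ via the identity context on $\Pd X(\bd{}{-}v, \bd{}{+}v)$ and applying item 8 of Proposition \ref{prop:properties_of_marked_equivalences_of_contexts} to the rewritable context subdiagram $v \submol (v \cp{} {-})$, we obtain a marked-equivalence $v \cp{} {-} \mrdto w \cp{} {-}$ between these ``pasting'' contexts. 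Now by the congruence property (2) already established, we conclude
\[
    v \cp{} {\fun{F}-} \;\meqv\; w \cp{} {-} \circ \fun{F} \;\meqv\; w \cp{} {\fun{G}-},
\]
which is the required compatibility.

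The main potential obstacle is purely notational: one must consistently treat ``paste on the left by $v$'' as a round context in its own right, so that items 2, 3 and 8 of Proposition \ref{prop:properties_of_marked_equivalences_of_contexts} can be applied uniformly. Once this identification is made, each clause reduces to routine bookkeeping of the marked-equivalences produced by that Proposition.
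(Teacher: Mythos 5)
Your proof is correct and takes essentially the same route as the paper, whose own proof simply defers to \cite[Proposition 3.24]{chanavat2024equivalences}: each clause is assembled from the items of Proposition \ref{prop:properties_of_marked_equivalences_of_contexts} exactly as you do (whiskering on both sides for the congruence, and item 8 applied to the rewritable context subdiagram $v \submol (v \cp{} -)$ together with a marked cell $h\colon v \mcelto w$ for the compatibility clause). One nitpick: the family $a \mapsto \un(\fun{F}a)$ witnessing reflexivity comes from item 3 (pre-composing $\un\colon - \mrdto -$ with $\fun{F}$), not item 2; item 2 also yields a witness of $\fun{F} \meqv \fun{F}$, but its components are $\fun{F}(\un a) \cp{} \un(\fun{F}a)$, so either way the clause holds.
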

\begin{proof}
	Same as \cite[Proposition 3.24]{chanavat2024equivalences}.
\end{proof}

\begin{dfn}[Marked-invertible round context]
	Let $(X, A)$ be an essential \inftyn\nbd category and let $\fun{E}\colon \Pd(v, w) \to \Pd(v', w')$ be a round context in $X$.
	We say that $\fun{E}$ is \emph{marked-invertible} if there exists a round context $\fun{E}^*\colon \Pd(v', w') \to \Pd(v, w)$ such that $\fun{E}^*\fun{E} \meqv -$ and $\fun{E}\fun{E}^* \meqv -$.
	In this case, we call $\fun{E}^*$ a \emph{weak inverse} of the round context $\fun{E}$.
\end{dfn}

\begin{lem} \label{lem:contexts_of_marked-invertible_are_marked-invertible}
	Let $(X, A)$ be an essential \inftyn\nbd category and let $\fun{E}$ be an $\I(A^+)$\nbd context.
	Then $\fun{E}$ is marked-invertible and admits an $\I(A^+)$\nbd context $\fun{E}^*$ as its weak inverse.
\end{lem}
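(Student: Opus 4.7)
The plan is to proceed by structural induction on the construction of $\fun{E}$ as an $\I(A^+)$\nbd context. The identity case is trivial with $\fun{E}^* = -$. For a left-pasting base case $\fun{E} = u \cpsub{\iota} -$ with $u \in \I(A^+)$ of type $v_1 \rdto v_2$ and $\iota\colon v_2 \submol v$, I would choose a weak inverse $u^*\colon v_2 \rdto v_1$ of $u$; by Lemma~\ref{lem:marked_invertibility_is_symmetric}, $u^*$ is marked-invertible, so $u^* \in \I(A^+)$. Setting $\fun{E}^* \eqdef u^* \cpsub{\iota'} -$, where $\iota'\colon v_1 \submol \subs{v}{v_1}{\iota(v_2)}$ is the canonical inclusion of $v_1$ into the substituted boundary, yields another $\I(A^+)$\nbd context. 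Verifying $\fun{E}^*\fun{E} \meqv -$ then amounts to noting that the $\omega$\nbd categorical pasting axioms yield a strict equality $u^* \cpsub{\iota'} (u \cpsub{\iota} a) = (u^* \cp{} u) \cpsub{\iota} a$, applying the congruence of $\meqv$ under pasting from Corollary~\ref{cor:equivalence_relation_on_marked_contexts} together with the marked-equivalence $u^* \cp{} u \meqv \un{v_2}$ (witnessed by a marked diagram since $u$ is $A^+$\nbd invertible), and finally invoking the symmetrised version of the left unitor marked-equivalence from Proposition~\ref{prop:properties_of_marked_equivalences_of_contexts}. The check of $\fun{E}\fun{E}^* \meqv -$ is analogous, and the right-pasting base case is dual via the right unitor.

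The inductive steps would then be purely formal. For a composite $\fun{E} = \fun{G}\fun{F}$, assuming inductively that $\fun{F}$ and $\fun{G}$ admit $\I(A^+)$\nbd context weak inverses $\fun{F}^*$ and $\fun{G}^*$, the candidate $\fun{F}^* \fun{G}^*$ is again an $\I(A^+)$\nbd context and satisfies
\[
	(\fun{F}^* \fun{G}^*)(\fun{G}\fun{F}) = \fun{F}^*(\fun{G}^*\fun{G})\fun{F} \meqv \fun{F}^* \fun{F} \meqv -,
\]
using the congruence of $\meqv$ under context composition, and symmetrically on the other side. The promotion case reduces to the observation that promotion commutes with composition of contexts and is compatible with the marked-equivalence relation, so that promoting a weak inverse to the relevant boundary yields a weak inverse of the promoted context.

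The main obstacle I anticipate is the bookkeeping in the left-pasting base case: one must carefully track the subdiagram inclusions $\iota$ and $\iota'$ through the substitution $\subs{v}{v_1}{\iota(v_2)}$, and confirm that the standard $\omega$\nbd categorical pasting identities actually reduce the composite context to a form in which the unitor marked-equivalence applies directly. Once this has been verified and dualised for right pastings, the remaining inductive steps become formal consequences of the congruence properties collected in Proposition~\ref{prop:properties_of_marked_equivalences_of_contexts} and Corollary~\ref{cor:equivalence_relation_on_marked_contexts}.
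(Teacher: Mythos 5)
Your overall strategy---structural induction on the generating clauses of $\I(A^+)$\nbd contexts---is the right one, and is essentially what the paper does: its proof of this lemma simply defers to \cite[Theorem 5.22]{chanavat2024equivalences}, asserting that the argument there goes through after replacing weakly invertible diagrams with marked-invertible ones and natural equivalences with marked-equivalences of round contexts. Your identity, left/right-pasting and composition cases are sound in outline: in the base case the reduction $u^* \cpsub{\iota'}(u \cpsub{\iota} a) = (u^* \cp{} u)\cpsub{\iota} a \meqv \un{v_2} \cpsub{\iota} a \meqv a$ is exactly the intended one, with the first marked-equivalence supplied by the context-subdiagram substitution of Proposition \ref{prop:properties_of_marked_equivalences_of_contexts} (using that the witness of $u^* \cp{} u \mrdto \un{v_2}$ is a marked round diagram precisely because $u$ is $A^+$\nbd invertible) and the second by the left unitor together with Lemma \ref{lem:marked_equivalence_symmetric_in_ess_complete}.

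The gap is in the promotion case, which you dispatch too quickly. If $\fun{F}$ is an $\I(A^+)$\nbd context on $\Pd X(\bd{}{-}v, \bd{}{+}w)$ with weak inverse $\fun{F}^*$, the promotion $(\fun{F}^*)_{\fun{F}v, \fun{F}w}$ has codomain $\Pd X(\fun{F}^*\fun{F}v, \fun{F}^*\fun{F}w)$, and $\fun{F}^*\fun{F}v$ is only marked-equivalent to $v$, not equal to it (already for $\fun{F} = u \cpsub{\iota} -$ one has $\fun{F}^*\fun{F}v = (u^* \cp{} u)\cpsub{\iota} v \neq v$). So the promoted inverse is not a weak inverse of $\fun{F}_{v,w}$ in the sense of the definition, which demands a context with codomain exactly $\Pd X(v, w)$; ``promotion commutes with everything'' does not resolve this. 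To repair it you must post-compose with boundary-correcting left and right pastings built from the components $\th v\colon \fun{F}^*\fun{F}v \mrdto v$ and $\th w\colon \fun{F}^*\fun{F}w \mrdto w$ of the marked-equivalence $\th\colon \fun{F}^*\fun{F} \mrdto -$ (and a weak inverse of one of them); these are marked round diagrams, hence lie in $\I(A^+)$ by Corollary \ref{cor:marked_round_diagrams_are_marked_invertible}, so the corrected candidate is still an $\I(A^+)$\nbd context. Verifying that the corrected composite is marked-equivalent to the identity then genuinely uses the naturality clause in the definition of marked-equivalence of round contexts---the existence, for all $a$, $b$, of a marked-equivalence from ${\fun{F}_{a,b}-} \cp{} \th b$ to $\th a \cp{} {\fun{G}_{a,b}-}$---and not merely the existence of the components $\th a$. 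This is the one step of the induction that is not a formal consequence of the congruence properties you cite, and it needs to be written out.
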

\begin{proof}
	The proof of \cite[Theorem 5.22]{chanavat2024equivalences} goes through unmodified after replacing weakly invertible with marked-invertible round diagrams, and natural equivalences with marked-equivalences of round contexts.
\end{proof}

\begin{prop} \label{prop:solutions_of_equations}
	Let $(X, A)$ be an essential \inftyn\nbd category, let $\fun{E}$ be an $\I(A^+)$\nbd context $\Pd(v, w) \to \Pd(v', w')$, and $b\colon v' \rdto w'$ a round diagram.
	Then
	\begin{enumerate}
		\item there exists $a\colon v \rdto w$ such that $\fun{E}a \meqv b$,
		\item if $b$ is marked-invertible, then $a$ is marked-invertible,
		\item $a$ is weakly unique, in the sense that if $a'\colon v \rdto w$ is another round diagram such that $\fun{E}a' \meqv b$, then $a \meqv a'$.
	\end{enumerate}
\end{prop}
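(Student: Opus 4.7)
The strategy is to use Lemma \ref{lem:contexts_of_marked-invertible_are_marked-invertible} to obtain a weak inverse $\fun{E}^*$ of $\fun{E}$ which is itself an $\I(A^+)$-context, and then to define the solution as $a \eqdef \fun{E}^*b$, in direct analogy with inverting a linear operator. The ingredients then come in three flavours: (i) existence is read off the marked-equivalence $\fun{E}\fun{E}^* \meqv -$; (ii) marked-invertibility is transported through the $\I(A^+)$-structure via Lemma \ref{lem:pastings_of_marked_invertible}; and (iii) uniqueness uses $\fun{E}^*$ as an $\omega$-graph morphism on higher-dimensional cells, combined with the layering lemma.

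For point (i), since $\fun{E}^*\colon \Pd X(v', w') \to \Pd X(v, w)$ is a round context, $a \eqdef \fun{E}^*b$ is a round diagram of type $v \rdto w$. The component at $b$ of the marked-equivalence $\fun{E}\fun{E}^* \meqv -$ is a marked round diagram $\fun{E}\fun{E}^*b \mrdto b$, which by Lemma \ref{lem:marked_equivalence_symmetric_in_ess_complete} upgrades to $\fun{E}a = \fun{E}\fun{E}^*b \meqv b$. For point (ii), I would induct on the inductive construction of $\fun{E}^*$ as an $\I(A^+)$-context: the identity case is trivial, each Left or Right pasting step is of the form $u \cpsub{\iota} -$ or $- \subcp{\iota} u$ with $u \in \I(A^+)$ hence marked-invertible, so by Lemma \ref{lem:pastings_of_marked_invertible} marked-invertibility of the pasting is equivalent to marked-invertibility of the remaining part; Promotion is a restriction and preserves the property automatically. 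Hence marked-invertibility of $b$ implies that of $a = \fun{E}^*b$.

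For uniqueness (iii), given another solution $\fun{E}a' \meqv b$, transitivity yields $\fun{E}a \meqv \fun{E}a'$, witnessed by a marked round diagram $\alpha\colon \fun{E}a \mrdto \fun{E}a'$. Viewed as a higher-dimensional element in the $\omega$-graph $\Pd X(v', w')$, this $\alpha$ can be fed into the $\omega$-graph morphism $\fun{E}^*$, producing a pasting diagram $\fun{E}^*\alpha\colon \fun{E}^*\fun{E}a \rdto \fun{E}^*\fun{E}a'$. Applying the layering of Lemma \ref{lem:context_layering} to $\fun{E}^*$ and plugging $\alpha$ into the single top-dimensional slot, one sees that every top-dimensional cell of $\fun{E}^*\alpha$ descends from $\alpha$ and is therefore marked; hence $\fun{E}^*\alpha$ is a marked round diagram, exhibiting $\fun{E}^*\fun{E}a \mrdto \fun{E}^*\fun{E}a'$, upgraded to a marked-equivalence by Lemma \ref{lem:marked_equivalence_symmetric_in_ess_complete}. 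Combined with the components $\fun{E}^*\fun{E}a \meqv a$ and $\fun{E}^*\fun{E}a' \meqv a'$ of $\fun{E}^*\fun{E} \meqv -$, transitivity (Lemma \ref{lem:marked-equivalence_in_generic_inflate_complexes}) gives $a \meqv a'$.

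The main obstacle I expect is not any single computation, but rather being careful in the uniqueness step about what it means to ``apply'' a round context to a cell of dimension strictly higher than that of the context. Since a context is a morphism of $\omega$-graphs in degree greater than its own dimension, this is well-defined; the crucial point is the dimension-counting via Lemma \ref{lem:context_layering}, which ensures that all top-dimensional cells in the image come from the argument, so that marked inputs produce marked outputs. Once this is properly articulated, the rest is formal manipulation with marked-equivalence classes using Lemma \ref{lem:marked-equivalence_in_generic_inflate_complexes} and Lemma \ref{lem:pastings_of_marked_invertible}.
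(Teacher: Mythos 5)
Your proposal matches the paper's proof: both obtain an $\I(A^+)$\nbd context weak inverse $\fun{E}^*$ from Lemma \ref{lem:contexts_of_marked-invertible_are_marked-invertible}, set $a \eqdef \fun{E}^*b$, read existence and weak uniqueness off the marked-equivalences $\fun{E}\fun{E}^* \meqv -$ and $\fun{E}^*\fun{E} \meqv -$, and prove marked-invertibility of $\fun{E}^*b$ by induction on the construction of the context using Lemma \ref{lem:pastings_of_marked_invertible}. The only place you are slightly quicker than warranted is the \emph{Promotion} case of that induction, where the pasted diagrams have strictly lower dimension than the argument, so Lemma \ref{lem:pastings_of_marked_invertible} does not apply verbatim and a short whiskering argument is needed --- but the paper handles this at the same level of detail by deferring to the proof of Proposition \ref{prop:round_diagrams_of_marked_inv_cells_are_marked-invertible}.
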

\begin{proof}
	The existence and weak uniqueness are proved using Lemma \ref{lem:contexts_of_marked-invertible_are_marked-invertible} as in \cite[Lemma 5.10]{chanavat2024equivalences}, letting $a \eqdef \fun{E}^*b$ for a weak inverse $\fun{E}^*$ of $\fun{E}$.
	If $b$ is marked-invertible, then one shows that $\fun{E}^*b$ is marked-invertible by induction on the construction of the $\I(A^+)$\nbd context $\fun{E}^*$, as in 
	Proposition \ref{prop:round_diagrams_of_marked_inv_cells_are_marked-invertible}.
\end{proof}

\begin{lem} \label{lem:essential_inftyn_cats_are_Jhorn_fibrant}
	Let $(X, A)$ be an essential \inftyn\nbd category.
	Then $(X, A)$ is $\Jhorn$\nbd fibrant.
\end{lem}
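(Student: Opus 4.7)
The plan is to reduce filling a marked horn to the weak solvability of equations guaranteed by Proposition \ref{prop:solutions_of_equations}. Given a marked horn $\lambda_U^x$ and a morphism $e\colon (\Lambda_U^x, B \cap \Lambda_U^x) \to (X, A)$, we interpret $e$ as in the comment following the marked horn definition: in the lax case $x \in \faces{}{-}U$, $e$ classifies an equation $\fun{E}x \qeq b$, where $\fun{E}$ is an $A$-context on $\Pd(v, w)$ for $v = \bd{}{-}x$, $w = \bd{}{+}x$, and $b$ is a round diagram parallel to $\fun{E}x$. The colax case $x \in \faces{}{+}U$ is symmetric. Since essential completeness together with Corollary \ref{cor:marked_round_diagrams_are_marked_invertible} implies that every cell of $A$ is marked-invertible, the $A$-context $\fun{E}$ is also an $\I(A^+)$-context, so Proposition \ref{prop:solutions_of_equations} applies.

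By that proposition, there exists a round diagram $a_0\colon v \rdto w$ with $\fun{E}a_0 \meqv b$. A weak composite $a \eqdef \mrg{a_0}$ is then a cell of shape $\clset{x}$ whose boundary matches that of $x$, and $\fun{E}a \meqv \fun{E}a_0 \meqv b$ by compatibility of $\meqv$ with round contexts (Corollary \ref{cor:equivalence_relation_on_marked_contexts}). When the marking condition of the horn forces $\faces{}{+}U \subseteq B$, so $b$ is marked and hence marked-invertible, Proposition \ref{prop:solutions_of_equations} additionally yields that $a_0$---and hence $a$, by Lemma \ref{lem:marked_equivalence_preserves_marked_invertibility}---is marked-invertible; essential completeness then allows us to replace $a$ with a marked cell $a'$ marked-equivalent to it, still satisfying $\fun{E}a' \meqv b$.

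To finish, a marked round diagram witnessing $\fun{E}a \meqv b$ has a marked weak composite by Proposition \ref{prop:marked_round_diagrams_have_marked_weak_composites}, producing a marked cell $h\colon \fun{E}a \mcelto b$ whose shape is $U$. Extending $e$ by $x \mapsto a$ and $\top \mapsto h$ yields the desired filler $(U, B) \to (X, A)$, marked exactly where required by the horn. The main obstacle I anticipate is essentially bookkeeping: verifying that the shape of the cell $\mrg{a_0}$ is indeed $\clset{x}$ (which follows from the fact that the context $\fun{E}$ has indeterminate in $\Pd(v, w)$, so that round diagrams there are parallel to $x$), and checking that the layering conditions in the marked horn definition do not impose constraints beyond those captured by ``$\fun{E}$ is an $A$-context''. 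Once this tracking of shapes and markings is done, the result is a direct assembly of Proposition \ref{prop:solutions_of_equations}, Corollary \ref{cor:equivalence_relation_on_marked_contexts}, and Proposition \ref{prop:marked_round_diagrams_have_marked_weak_composites}.
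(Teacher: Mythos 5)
Your proposal is correct and follows essentially the same route as the paper: classify the horn as an equation $\fun{E}x \qeq b$ with $\fun{E}$ an $A$-context, upgrade it to an $\I(A^+)$-context via essential completeness, solve it with Proposition \ref{prop:solutions_of_equations}, pass to a (marked, if required) weak composite of the solution, and obtain the filler as a marked weak composite of the witnessing marked round diagram. The paper makes the final step explicit by pasting the witness $k\colon \mrg{a} \mrdto a$ into the witness $h\colon \fun{E}a \mrdto b$ as $k \cpsub{\iota} h$, which is exactly the compatibility of $\meqv$ with contexts that you invoke via Corollary \ref{cor:equivalence_relation_on_marked_contexts}.
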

\begin{proof}
	Let $\lambda^x_U$ be a marked horn, and suppose without loss of generality that $x \in \faces{}{-}U$.
	A morphism from the domain of $\lambda^x_U$ to $(X, A)$ classifies an equation $\fun{E}x \qeq b$ in the indeterminate $x$, where $\fun{E}$ is an $A$\nbd context.
	By essential completeness, $\fun{E}$ is also an $\I(A^+)$\nbd context, so by Proposition \ref{prop:solutions_of_equations}, this has a solution $h\colon \fun{E}a \mrdto b$; moreover, if $b$ is a marked round diagram, then $a$ is marked-invertible.
	Let $\mrg{a}$ be a weak composite of $a$, witnessed by $k\colon \mrg{a} \mrdto a$; if $a$ is marked-invertible, then $\mrg{a}$ is marked-invertible, so by essential completeness, we may take $\mrg{a}$ to be marked.
	Then, letting $\iota\colon a \submol \fun{E}a$ be the evident subdiagram, $k \cpsub{\iota} h$ is a marked round diagram, which by Proposition \ref{prop:marked_round_diagrams_have_marked_weak_composites} admits a marked weak composite; this is a filler for the marked horn.
\end{proof}

\noindent
We can now characterise the fibrant objects in $\Mwn$: they are precisely the \inftyn-categories, up to a choice of $\Infl$\nbd algebra structure.

\begin{thm} \label{thm:characterisation_of_fibrants}
	Let $(X, A)$ be a marked directed complex.
	The following are equivalent:
	\begin{enumerate}[label=(\alph*)]
		\item $(X, A)$ is fibrant in $\Mwn$;
		\item $(X, A)$ is the underlying marked directed complex of an \inftyn\nbd category.
	\end{enumerate}
\end{thm}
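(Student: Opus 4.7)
The plan is to reduce the claim to three separate lifting conditions. The direction (a) $\Rightarrow$ (b) is already essentially established: Proposition \ref{prop:fibrants_are_inftyn_categories} gives the data of an \inftyn\nbd category structure, where the $\mInfl$-algebra structure comes from Theorem \ref{thm:fibrants_admit_inflate_algebra_structure}. So the substantive work is the converse.

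For (b) $\Rightarrow$ (a), we need to verify that $(X, A)$ has the right lifting property against each morphism in $\Jn = \Jhorn \cup \Jsat \cup \set{\m_U \mid \dim U > n}$. We treat each class separately. First, since any \inftyn\nbd category is \emph{a fortiori} an essential \inftyn\nbd category, $\Jhorn$-fibrancy is immediate from Lemma \ref{lem:essential_inftyn_cats_are_Jhorn_fibrant}. Second, fibrancy against $\m_U$ for $\dim U > n$ amounts to the statement that every cell of shape $U$ in $X$ is marked, which is part of the definition of \inftyn\nbd category when $n < \infty$; when $n = \infty$ the set $\set{\m_U \mid \dim U > n}$ is empty and there is nothing to check.

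The remaining step, $\Jsat$-fibrancy, is the only one requiring genuine argument, but it reduces directly to previously established results. Using the explicit classification in the comment following the definition of $\Jsat$, a lifting problem against a saturation $\sigma_{U,V,W}$ is equivalent to producing cells $z_L\colon a \cp{} a_L \mcelto e$ and $z_R\colon h \mcelto a_R \cp{} a$ with $e, h, z_L, z_R$ marked, and then showing that $a$, $a_L$, $a_R$ are also marked. By Corollary \ref{cor:marked_round_diagrams_are_marked_invertible}, the marked cells $e, h, z_L, z_R$ are marked-invertible, so the hypotheses of Lemma \ref{lem:marked_invertibility_from_non-inverses} apply and yield that $a$, $a_L$, $a_R$ are marked-invertible. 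Invoking completeness (not merely essential completeness), marked-invertible cells coincide with marked cells, hence $a$, $a_L$, $a_R$ are marked, producing the required lift.

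The main (and only) technical obstacle is the $\Jsat$-fibrancy argument, but thanks to Lemma \ref{lem:marked_invertibility_from_non-inverses} this reduces to little more than a recognition that a cell sandwiched by marked cells on both sides in the saturation configuration is marked-invertible. The strict completeness axiom (as opposed to essential completeness) is essential here: it is precisely what allows us to pass from marked-invertibility back to markedness without having to replace the cells $a, a_L, a_R$ by marked-equivalent ones, which would not in general solve the lifting problem on the nose.
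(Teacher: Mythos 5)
Your proposal is correct and follows essentially the same route as the paper: the forward direction via Proposition \ref{prop:fibrants_are_inftyn_categories}, and the converse by checking $\Jhorn$-fibrancy via Lemma \ref{lem:essential_inftyn_cats_are_Jhorn_fibrant}, the top-markings via the dimension condition, and $\Jsat$-fibrancy by combining Lemma \ref{lem:marked_invertibility_from_non-inverses} with completeness. The only superfluous step is the appeal to Corollary \ref{cor:marked_round_diagrams_are_marked_invertible}, which is not needed since Lemma \ref{lem:marked_invertibility_from_non-inverses} takes the marked witnesses directly as input.
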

\begin{proof}
	One direction is Proposition \ref{prop:fibrants_are_inftyn_categories}.
	For the converse, suppose $(X, A)$ is an \inftyn\nbd category. 
	By Lemma \ref{lem:essential_inftyn_cats_are_Jhorn_fibrant}, $(X, A)$ is $\Jhorn$\nbd fibrant.
	Moreover, by definition all cells of dimension $> n$ in $X$ are marked, so it suffices to show that $(X, A)$ is $\Jsat$\nbd fibrant.
	This is equivalent to the following property: if $a$, $a_L$, $a_R$, $e$, $h$ are cells of the same dimension such that $a_R \cp{} a \cp{} a_L$ is defined, $a \cp{} a_L \meqv e$, $a_R \cp{} a \meqv h$, and both $e$ and $h$ are marked, then $a$, $a_L$, $a_R$ are all marked.
	From Lemma \ref{lem:marked_invertibility_from_non-inverses}, we know that, under these conditions, $a$, $a_L$, and $a_R$ are all marked-invertible.
	By completeness of $(X, A)$, $a$, $a_L$, and $a_R$ are all marked.
\end{proof}

\begin{cor} \label{cor:naive_saturation_fibrant_replacement}
	Let $(X, A)$ be an essential \inftyn\nbd category.
	Then the marking $(X, A) \acof (X, \satur{A})$ is a fibrant replacement in $\Mwn$.
\end{cor}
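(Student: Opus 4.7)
The plan is to verify separately the two defining conditions of a fibrant replacement: that the target $(X, \satur{A})$ is fibrant in $\Mwn$, and that the marking $\iota\colon (X, A) \hookrightarrow (X, \satur{A})$ is an acyclic cofibration. Fibrancy of $(X, \satur{A})$ is immediate from the preceding results: Theorem \ref{thm:naive_completion_is_completion} upgrades the essential \inftyn\nbd category $(X, A)$ to an honest \inftyn\nbd category $(X, \satur{A})$, and Theorem \ref{thm:characterisation_of_fibrants} then identifies \inftyn\nbd categories with fibrant objects of $\Mwn$.

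For $\iota$ to be an acyclic cofibration, note first that it is the identity on the underlying directed complex (only enlarging the marked set), hence a monomorphism, and so a cofibration by Lemma \ref{lem:I_cofibrations_are_monomorphisms}. For acyclicity I plan to check the left lifting property directly against an arbitrary fibration $p\colon (Y, B) \surj (Z, C)$. Given a lifting problem with top edge $f\colon (X, A) \to (Y, B)$ and bottom edge $g\colon (X, \satur{A}) \to (Z, C)$, the fact that $\iota$ is the identity on $X$ forces $f$ and $g$ to share the same underlying map of directed complexes, and the only candidate lift is this same underlying map reinterpreted as a morphism $(X, \satur{A}) \to (Y, B)$; both triangles will then commute automatically once this candidate is verified to respect the marked structure.

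The only substantive step is thus to check that the underlying map $X \to Y$ of $f$ sends $\satur{A}$ into $B$. Since $(Y, B)$ is fibrant, it is an \inftyn\nbd category by Theorem \ref{thm:characterisation_of_fibrants}, so in particular $B = \satur{B}$ by completeness. For $e \in \satur{A}$, the cell $e$ is by definition $A^+$-invertible, hence marked-invertible in the essential \inftyn\nbd category $(X, A)$; applying Proposition \ref{prop:morphisms_preserve_marked_invertibility} to $f$ (with both source and target being essential \inftyn\nbd categories) then yields that $f(e)$ is marked-invertible in $(Y, B)$, so $f(e) \in \satur{B} = B$. No major obstacle is expected, as the corollary amounts to an assembly of Theorem \ref{thm:naive_completion_is_completion}, Theorem \ref{thm:characterisation_of_fibrants}, and Proposition \ref{prop:morphisms_preserve_marked_invertibility}.
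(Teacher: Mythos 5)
Your proposal is correct, and the fibrancy half (Theorem \ref{thm:naive_completion_is_completion} plus Theorem \ref{thm:characterisation_of_fibrants}) is exactly the paper's. Where you diverge is in establishing that the marking is an \emph{acyclic} cofibration. The paper's proof is cellular: it observes that $(X, A) \acof (X, \satur{A})$ can be built as a transfinite composition of pushouts along the generating anodyne extensions in $\Jsat$, hence lies in $\llp(\rlp(\Jn)) \subseteq \llp(\JFib{\Jn})$; the needed attaching data comes from Lemma \ref{lem:marked_invertibility_from_non-inverses}, which lets one take all witnesses of marked-invertibility to be marked cells of the shapes appearing in $\Sigma_{U,V,W}$. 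You instead verify the left lifting property against an arbitrary fibration $p\colon (Y, B) \surj (Z, C)$ directly: the lift is forced to have the same underlying map as the top edge $f$, and the only content is $f(\satur{A}) \subseteq B$, which you get from fibrancy of the domain $(Y, B)$ (via closure of fibrations under composition and fibrancy of codomains), Theorem \ref{thm:characterisation_of_fibrants}, completeness $B = \satur{B}$, and Proposition \ref{prop:morphisms_preserve_marked_invertibility}. Both arguments are sound and non-circular (all inputs precede the corollary). The trade-off: your route is shorter and sidesteps the bookkeeping of exhibiting the marking as a relative $\Jn$-cell complex, but it only establishes membership in $\llp(\Fib)$, whereas the paper's cellular argument yields the slightly stronger statement that the marking is anodyne, which is what transports verbatim to the merge-complex setting in Proposition \ref{prop:naive_saturation_merge_n_category} (your lifting argument would also transport there, since fibrations in $\MMwn$ are detected by $\UmMerg$, but this requires a separate remark).
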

\begin{proof}
	The marking is an acyclic cofibration because it can be constructed as a transfinite composition of pushouts along saturations.
	We conclude by Theorem \ref{thm:naive_completion_is_completion} combined with Theorem 
	\ref{thm:characterisation_of_fibrants}.
\end{proof}

\noindent
The characterisation of Theorem \ref{thm:characterisation_of_fibrants}, coupled with Lemma 
\ref{lem:marked_invertibility_from_non-inverses}, allows us to give a notion of marked-invertible round diagram in a fibrant marked directed complex, without reference to an $\mInfl$\nbd algebra structure.

\begin{dfn}[Marked-invertible round diagram, without units]
	Let $a\colon u \rdto v$ be a round diagram in an $\Mwn$\nbd fibrant marked directed complex.
	We say that $a$ is \emph{marked-invertible} if it satisfies the first condition of Lemma 
	\ref{lem:marked_invertibility_from_non-inverses}.
\end{dfn}

\noindent
This shows that \emph{in fibrants} the notion of marked-invertibility is independent of any choice of an $\mInfl$\nbd algebra structure.

\subsection{Equivalences of \inftyn-categories} \label{sec:equivalencesof}

\noindent
Throughout this section, we fix $n \in \Ninfty$.

\begin{dfn}[Functor of \inftyn-categories]
	Let $(X, A)$, $(Y, B)$ be \inftyn\nbd categories.
	A \emph{functor} $f\colon (X, A) \to (Y, B)$ is a morphism of their underlying marked directed complexes.
\end{dfn}

\begin{comm}
	By Theorem \ref{thm:characterisation_of_fibrants} and the fact that all objects are cofibrant in $\Mwn$, the category of \inftyn\nbd categories and functors is equivalent to both $\fibs{\Mwn}$ and $\bifs{\Mwn}$.
\end{comm}

\noindent
A functor is \emph{not} required to respect the $\Infl$-algebra structure.
However, it still preserves units up to marked-equivalence.

\begin{lem} \label{lem:functors_preserve_marked_equivalence}
	Let $f\colon (X, A) \to (Y, B)$ be a functor of \inftyn\nbd categories, let $u, v \in \Rd X$ be parallel, and suppose $u \meqv v$.
	Then $f(u) \meqv f(v)$.
\end{lem}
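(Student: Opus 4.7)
The plan is to directly transport witnesses of marked-equivalence through $f$, exploiting the fact that the notion of marked-equivalence is intrinsic to the underlying marked directed complex and does not require an $\mInfl$\nbd algebra structure to be defined, even though the basic results about $\meqv$ (Lemma \ref{lem:marked-equivalence_in_generic_inflate_complexes}) are stated in the inflate setting. By Theorem \ref{thm:characterisation_of_fibrants} we may fix $\mInfl$\nbd algebra structures on $(X, A)$ and $(Y, B)$, but $f$ is only required to be a morphism of marked directed complexes, so it need not respect units.

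By definition of $\meqv$, we may choose marked round diagrams $h\colon u \mrdto v$ and $k\colon v \mrdto u$ in $(X, A)$. Since the underlying morphism $f\colon X \to Y$ of directed complexes preserves shapes and boundaries of pasting diagrams, $fh$ and $fk$ are round diagrams in $Y$ of the same shape as $h$ and $k$, with $\bd{}{-}(fh) = f(u)$, $\bd{}{+}(fh) = f(v)$, and dually for $fk$. Moreover, the condition $A \subseteq \invrs{f}B$ ensures that $f$ maps marked cells to marked cells; hence, by the pointwise characterisation of marked round diagrams (a round diagram is marked exactly when its restriction to each top-dimensional element classifies a marked cell), $fh$ and $fk$ are themselves marked round diagrams. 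This gives $f(h)\colon f(u) \mrdto f(v)$ and $f(k)\colon f(v) \mrdto f(u)$, and therefore $f(u) \meqv f(v)$, as required. There is no real obstacle: the entire point is that $\meqv$ is an invariant of the bare marked-directed-complex structure, so any morphism in $\mdCpx$ automatically preserves it.
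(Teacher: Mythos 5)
Your proposal is correct and follows essentially the same route as the paper: push the witnessing marked round diagram(s) forward along $f$ and observe that, since $f$ preserves marked cells, their images remain marked, which is exactly the paper's one-line argument. The only cosmetic difference is that you transport both witnesses explicitly, whereas the paper handles one direction and leaves the symmetric case implicit.
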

\begin{proof}
	Let $h\colon u \mrdto v$ be a witness.
	Since $f$ sends marked cells to marked cells, $f(h)\colon f(u) \rdto f(v)$ is a marked round diagram, hence $f(u) \meqv f(v)$.
\end{proof}

\begin{lem} \label{lem:units_are_marked_idempotents}
	Let $(X, A)$ be an \inftyn\nbd category and let $e\colon u \mrdto u$ be a marked round diagram.
	The following are equivalent:
	\begin{enumerate}[label=(\alph*)]
		\item $e \meqv \un u$;
		\item $e$ is a weak idempotent, that is, $e \meqv e \cp{} e$.
	\end{enumerate}
\end{lem}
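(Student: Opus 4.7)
Both directions follow from the marked-invertibility of marked round diagrams together with strict associativity of pasting and the unitor axioms, so the plan is straightforward: I will derive (a) $\Rightarrow$ (b) from the left (or right) unitor applied to $\un u$ itself, and I will derive (b) $\Rightarrow$ (a) by ``cancelling'' one copy of $e$ in the equation $e \cp{} e \meqv e$ against a weak inverse.

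For (a) $\Rightarrow$ (b), I would use the left unitor $\lun{} \un u \colon \un u \mrdto \un u \cpsub{} \un u$ from Lemma \ref{lem:unitors_as_marked-reductions} applied to the pasting diagram $\un u$ itself, which gives $\un u \meqv \un u \cp{} \un u$. Then, using the compatibility of $\meqv$ with pasting in codimension 1 from Corollary \ref{cor:equivalence_relation_on_marked_contexts}, the assumption $e \meqv \un u$ yields $e \cp{} e \meqv \un u \cp{} \un u \meqv \un u \meqv e$.

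For (b) $\Rightarrow$ (a), the key step is to use that $e$ is a marked round diagram in an \inftyn-category, hence marked-invertible by Corollary \ref{cor:marked_round_diagrams_are_marked_invertible}. Let $e^*$ be a weak inverse of $e$, so $e \cp{} e^* \meqv \un u$ and $e^* \cp{} e \meqv \un u$. The plan is to compute, using compatibility of $\meqv$ with pasting, strict associativity, and the right unitor of $e$:
\[
	\un u \meqv e \cp{} e^* \meqv (e \cp{} e) \cp{} e^* = e \cp{} (e \cp{} e^*) \meqv e \cp{} \un u \meqv e,
\]
where the only equality uses strict associativity of pasting (inherited from the underlying strict $\omega$\nbd category structure on pasting diagrams), while the first marked-equivalence uses the definition of a weak inverse, the second uses the hypothesis $e \cp{} e \meqv e$, the third again uses the weak inverse identity, and the last uses the right unitor $\run{} e \colon e \subcp{} \un u \mrdto e$ from Lemma \ref{lem:unitors_as_marked-reductions}.

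There is no substantial obstacle here; the only subtle point is to verify that pasting $\un u$ with $e$ or $e$ with itself is well-defined (both sides are of the form $u \rdto u$ and paste along the common boundary $u$), which follows since $\un u$, $e$, and $e \cp{} e$ are all parallel round diagrams of type $u \rdto u$ in the same dimension, and then to apply Corollary \ref{cor:equivalence_relation_on_marked_contexts} to get compatibility of $\meqv$ with the codimension-1 pasting.
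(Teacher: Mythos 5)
Your proposal is correct and matches the paper's proof essentially step for step: (a) $\Rightarrow$ (b) via $\un u \meqv \un u \cp{} \un u$ and compatibility of $\meqv$ with codimension-1 pasting, and (b) $\Rightarrow$ (a) by cancelling against a weak inverse supplied by Corollary \ref{cor:marked_round_diagrams_are_marked_invertible}. The only (inessential) difference is that you cancel on the right using $e \cp{} e^* \meqv \un u$ where the paper cancels on the left using $e^* \cp{} e \meqv \un u$.
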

\begin{proof}
	We have $\un u \meqv \un u \cp{} \un u$, so if $e \meqv \un u$, by compatibility of marked-equivalence with pasting in codimension 1, $e \meqv e \cp{} e$.
	Conversely, by Corollary \ref{cor:marked_round_diagrams_are_marked_invertible}, $e$ is marked-invertible, so let $e^*$ be a weak inverse.
	Then, if $e$ is a weak idempotent, $\un u \meqv e^* \cp{} e \meqv e^* \cp{} e \cp{} e \meqv \un u \cp{} e \meqv e$.
\end{proof}

\begin{prop} \label{prop:functors_weakly_preserve_units}
	Let $f\colon (X, A) \to (Y, B)$ be a functor of \inftyn\nbd categories and let $u \in \Rd X$.
	Then $f(\un u) \meqv \un f(u)$.
\end{prop}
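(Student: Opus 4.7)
The plan is to use Lemma \ref{lem:units_are_marked_idempotents}, which characterises units as the marked weak idempotents of their type, to transfer idempotency through the functor $f$. The key observation is that $f$ preserves pasting strictly (being a morphism of the underlying directed complexes), preserves markings (being a morphism of marked directed complexes), and hence preserves the marked-equivalence relation by Lemma \ref{lem:functors_preserve_marked_equivalence}.

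First I would verify that $f(\un u)$ is a marked round diagram of type $f(u) \mrdto f(u)$. Since $(X, A)$ is an \inftyn\nbd category, its underlying marked inflate-complex satisfies $\dgn X \subseteq A$. Every top-dimensional cell of the round diagram $\un u = u\tau_{\bd{}{}U}$ arises by postcomposition with a non-trivial cylindrical collapse (the restriction of $\tau_{\bd{}{}U}$ to the relevant closure), hence is degenerate, hence marked in $(X, A)$. Thus $\un u$ is a marked round diagram, and because $f$ sends marked cells to marked cells, $f(\un u)$ is a marked round diagram.

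Next I would apply Lemma \ref{lem:units_are_marked_idempotents} in $(X, A)$ to $e \eqdef \un u$: since $\un u \meqv \un u$ trivially, the lemma gives $\un u \meqv \un u \cp{} \un u$. Applying $f$ and using that it preserves pasting and marked-equivalences (Lemma \ref{lem:functors_preserve_marked_equivalence}) yields
\[
	f(\un u) \meqv f(\un u \cp{} \un u) = f(\un u) \cp{} f(\un u)
\]
in $(Y, B)$. Finally, since $f(\un u)$ is a marked round diagram of type $f(u) \mrdto f(u)$ which is a weak idempotent, a second application of Lemma \ref{lem:units_are_marked_idempotents}, this time in $(Y, B)$ and in the reverse direction, concludes $f(\un u) \meqv \un f(u)$.

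There is no serious obstacle here: the argument simply transports the tautological equation ``$\un u$ is a weak idempotent'' across $f$, and the force of the statement is carried entirely by the preceding lemma on idempotents. The only point requiring care is justifying that $\un u$ itself is marked before invoking Lemma \ref{lem:units_are_marked_idempotents} on the image side, which follows from the marked inflate-complex axiom $\dgn X \subseteq A$.
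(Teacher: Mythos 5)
Your proof is correct and follows essentially the same route as the paper: apply Lemma \ref{lem:functors_preserve_marked_equivalence} to $\un u \meqv \un u \cp{} \un u$, use that $f$ strictly preserves pasting, and conclude by Lemma \ref{lem:units_are_marked_idempotents}. Your extra verification that $f(\un u)$ is a marked round diagram (needed to invoke the idempotent lemma in $(Y,B)$) is a reasonable point of care that the paper leaves implicit.
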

\begin{proof}
	By Lemma \ref{lem:functors_preserve_marked_equivalence}, we have $f(\un u) \meqv f(\un u \cp{} \un u) = f(\un u) \cp{} f(\un u)$, and we conclude by Lemma \ref{lem:units_are_marked_idempotents}.
\end{proof}

\noindent
The aim for the rest of this section is to characterise the equivalences between $\infty$\nbd categories as being exactly those that are ``acyclic fibrations up to $\meqv$''.
Most of the development is formally analogous to \cite[Section 4.3]{chanavat2024model} which, in turn, closely followed \cite[Chapter 20]{ara2025polygraphs}.

\begin{dfn}[Essential acyclic fibration]
	Let $f\colon (X, A) \to (Y, B)$ be a functor of \inftyn\nbd categories.
	We say that $f$ is an \emph{essential acyclic fibration} if
	\begin{enumerate}
		\item for all $v \in \gr{0}{\cell Y}$, there exists $u \in \gr{0}{\cell X}$ such that $v \meqv f(u)$,
		\item for all $n > 0$, parallel pairs $u^-, u^+ \in \gr{n-1}{\Rd X}$, and cells $v\colon f(u^-) \celto f(u^+)$ in $Y$, there exists a cell $u\colon u^- \celto u^+$ such that $v \meqv f(u)$, and if $v$ is marked, then $u$ is marked.
	\end{enumerate}
\end{dfn}

\begin{lem} \label{lem:ess_surjective_reflects_meqv}
	Let $f\colon (X, A) \to (Y, B)$ be an essential acyclic fibration of \inftyn\nbd categories, let $u, v \in \Rd X$ be parallel, and suppose $f(u) \meqv f(v)$.
	Then $u \meqv v$.
\end{lem}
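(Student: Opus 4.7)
The plan is to reduce the statement to the case of cells, and then to directly apply the lifting property of an essential acyclic fibration.

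First, I would use weak composites to replace $u$ and $v$ with cells. By the weak composites axiom of $\infty$-$n$-categories, there exist cells $\mrg{u}$ and $\mrg{v}$ of shape $\mrg{U}$ (where $U$ is the common shape of the boundaries, noting that $u$ and $v$ being parallel means $\bd{}{\a}u = \bd{}{\a}v$ for each $\a$) such that $u \meqv \mrg{u}$ and $v \meqv \mrg{v}$, and in particular $\mrg{u}$ and $\mrg{v}$ are parallel cells. By Lemma \ref{lem:functors_preserve_marked_equivalence}, $f$ preserves marked-equivalence, so we obtain
\[
f(\mrg{u}) \meqv f(u) \meqv f(v) \meqv f(\mrg{v})
\]
in $Y$. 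By transitivity of $\meqv$ (Lemma \ref{lem:marked-equivalence_in_generic_inflate_complexes}), it suffices to show $\mrg{u} \meqv \mrg{v}$, since then $u \meqv \mrg{u} \meqv \mrg{v} \meqv v$.

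Next, I would upgrade the witness of $f(\mrg{u}) \meqv f(\mrg{v})$ to a marked cell. By definition of marked-equivalence there is a marked round diagram $k \colon f(\mrg{u}) \mrdto f(\mrg{v})$, and by Proposition \ref{prop:marked_round_diagrams_have_marked_weak_composites} applied to $k$ in the $\infty$-$n$-category $(Y, B)$, there exists a marked weak composite $\mrg{k}$, which is then a marked cell $\mrg{k} \colon f(\mrg{u}) \mcelto f(\mrg{v})$.

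Finally, I would apply the essential acyclic fibration property. Since $\mrg{u}, \mrg{v} \in \gr{n}{\Rd X}$ form a parallel pair of round diagrams in $X$ and $\mrg{k}$ is a marked cell in $Y$ of type $f(\mrg{u}) \celto f(\mrg{v})$, the second clause of the definition of essential acyclic fibration yields a marked cell $h \colon \mrg{u} \mcelto \mrg{v}$ in $X$. This witnesses $\mrg{u} \mrdto \mrg{v}$; since $(X, A)$ is an $\infty$-$n$-category and in particular an essential $\infty$-$n$-category, Lemma \ref{lem:marked_equivalence_symmetric_in_ess_complete} upgrades this reduction to a marked-equivalence $\mrg{u} \meqv \mrg{v}$, completing the proof. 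There is no real obstacle here: the only subtlety worth noting is that we use the essential acyclic fibration property only in its ``marked'' branch, so it is precisely the clause guaranteeing that markedness of $\mrg{k}$ transfers to $h$ that makes the argument work.
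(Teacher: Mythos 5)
Your proof is correct and follows essentially the same route as the paper's: upgrade the witness of $f(u) \meqv f(v)$ to a marked \emph{cell} in $(Y, B)$, lift it through $f$ using the second clause of the definition of essential acyclic fibration (whose markedness transfer is indeed the crucial point), and conclude by symmetry of $\mrdto$ in an essential \inftyn\nbd category. The only difference is that your preliminary replacement of $u$ and $v$ by their weak composites $\mrg{u}$, $\mrg{v}$ is unnecessary: the lifting clause is stated for arbitrary parallel pairs in $\gr{n-1}{\Rd X}$, not just cells, so the paper applies it directly to $u$ and $v$.
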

\begin{proof}
	Let $h\colon f(u) \mrdto f(v)$ be a marked cell witnessing the equivalence.
	Then, by definition, there exists $e\colon u \mrdto v$ such that $h \meqv f(e)$.
\end{proof}

\begin{lem} \label{lem:acyclic_fibration_is_ess_surjective}
	Let $f\colon (X, A) \to (Y, B)$ be an acyclic fibration of \inftyn\nbd categories.
	Then $f$ is an essential acyclic fibration.
\end{lem}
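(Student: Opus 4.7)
The plan is to invoke the right lifting property of $f$ against cofibrations, which by Lemma \ref{lem:I_cofibrations_are_monomorphisms} coincide with the monomorphisms of $\mdCpx$. Condition (1) is immediate: a 0-cell $v$ of $Y$ is a morphism $\flatm{1} \to (Y, B)$, and together with the unique morphism $\flatm{\varnothing} \to (X, A)$ it forms a lifting square against the boundary inclusion $\flatm{\bdmap}_1\colon \flatm{\varnothing} \incl \flatm{1}$, which lies in $I$. Any lift produces a 0-cell $u$ of $X$ with $f(u) = v$, so $v \meqv f(u)$ by reflexivity.

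For condition (2), given parallel $u^-, u^+ \in \gr{n-1}{\Rd X}$ of shapes $U^-, U^+$ and a cell $v\colon f(u^-) \celto f(u^+)$, we would consider the atom $V \eqdef U^- \celto U^+$ of dimension $n$. Parallelism of $u^-, u^+$ in the $\omega$-graph $\Rd X$ forces $\bd{}{\a}U^- = \bd{}{\a}U^+$ on the nose for each $\a \in \set{-, +}$, so $V$ is well-defined, and the co-pairing $(u^-, u^+)\colon \flatm{\bd{}{}V} \to (X, A)$ is a well-defined morphism satisfying $f(u^-, u^+) = \bd{}{}v$. If $v$ is unmarked, the right lifting property of $f$ against $\flatm{\bdmap}_V \in I$ produces a cell $u\colon \flatm{V} \to (X, A)$ with $\bd{}{\a}u = u^\a$ for each $\a \in \set{-, +}$ and $f(u) = v$. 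If $v$ is marked, it factors through $\m_V\colon \flatm{V} \to \mrk{V}$, and we would instead lift against the monomorphism $\flatm{\bd{}{}V} \incl \mrk{V}$, which is a cofibration by Lemma \ref{lem:I_cofibrations_are_monomorphisms}, obtaining a marked cell $u$ with $f(u) = v$. In both cases $v = f(u)$ gives $v \meqv f(u)$ trivially.

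The proof is essentially routine, and there is no significant obstacle. The only piece of conceptual setup is identifying $U^- \celto U^+$ as the correct atomic shape for the lift, and observing that the marked case is handled by replacing the codomain of the boundary inclusion with its top-marked version, without any additional argument. In fact the construction produces a strictly stronger conclusion than the definition of essential acyclic fibration demands: we obtain $f(u) = v$ on the nose, not merely up to marked-equivalence.
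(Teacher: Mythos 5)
Your proposal is correct and takes essentially the same route as the paper, which simply observes that the right lifting property against the boundary inclusions $\flatm{\bdmap_U}$ and the markings $\m_U$ yields the two conditions up to equality, and concludes by reflexivity of marked-equivalence. Your write-up just spells out the lifting squares (and packages the marked case as a lift against the composite monomorphism $\flatm{\bd{}{}V} \incl \mrk{V}$, which is equivalent to lifting successively against $\flatm{\bdmap_V}$ and $\m_V$), so there is no substantive difference.
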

\begin{proof}
	The right lifting property against the boundary inclusions $\flatm{\bdmap_U}$ and the markings $\m_U$ implies that the conditions of an essential acyclic fibration hold up to equality rather than marked-equivalence, and we conclude by reflexivity of marked-equivalence.
\end{proof}

\begin{lem} \label{lem:ess_surjective_2_out_of_3_simple_cases}
	Let $f\colon (X, A) \to (Y, B)$ and $g\colon (Y, B) \to (Z, C)$ be functors of \inftyn\nbd categories, and suppose $g$ is an essential acyclic fibration.
	Then $gf$ is an essential acyclic fibration if and only if $f$ is an essential acyclic fibration.
\end{lem}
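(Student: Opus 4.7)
The plan is to handle each implication separately, using only two ingredients already established: that functors of \inftyn\nbd categories preserve marked-equivalence (Lemma \ref{lem:functors_preserve_marked_equivalence}), and that essential acyclic fibrations reflect marked-equivalence on parallel pairs (Lemma \ref{lem:ess_surjective_reflects_meqv}).

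For the backward implication, suppose both $f$ and $g$ are essential acyclic fibrations; I will lift in two stages. Given a $0$\nbd cell $w$ in $Z$, essential surjectivity of $g$ produces a $0$\nbd cell $v$ in $Y$ with $w \meqv g(v)$, then essential surjectivity of $f$ produces a $0$\nbd cell $u$ in $X$ with $v \meqv f(u)$; applying $g$ to the second equivalence via Lemma \ref{lem:functors_preserve_marked_equivalence}, and transitivity of $\meqv$, yields $w \meqv gf(u)$. The higher-dimensional case runs identically: given a cell $w\colon gf(u^-) \celto gf(u^+)$ in $Z$, I first lift along $g$ to a cell $v\colon f(u^-) \celto f(u^+)$ in $Y$ with $w \meqv g(v)$, then lift $v$ along $f$ to a cell $u\colon u^- \celto u^+$ in $X$ with $v \meqv f(u)$, and close the chain by applying $g$. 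The marking condition transfers stage by stage: if $w$ is marked then so is $v$ (by the marking clause for $g$), hence so is $u$ (by the marking clause for $f$).

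For the forward implication, suppose $g$ and $gf$ are essential acyclic fibrations; I will use $g$ as a detour and then reflect the resulting equivalence back. Given a $0$\nbd cell $v$ in $Y$, apply essential surjectivity of $gf$ to $g(v)$ to obtain a $0$\nbd cell $u$ in $X$ with $g(v) \meqv g(f(u))$; since $v$ and $f(u)$ are parallel $0$\nbd cells in $Y$ and $g$ is an essential acyclic fibration, Lemma \ref{lem:ess_surjective_reflects_meqv} reflects this equivalence to $v \meqv f(u)$. For the higher-dimensional case, given $v\colon f(u^-) \celto f(u^+)$ in $Y$, move to $g(v)\colon gf(u^-) \celto gf(u^+)$ in $Z$, lift along $gf$ to obtain $u\colon u^- \celto u^+$ in $X$ with $g(v) \meqv g(f(u))$, and reflect back via Lemma \ref{lem:ess_surjective_reflects_meqv} to conclude $v \meqv f(u)$. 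The marking condition again transfers along the detour, because markedness of $v$ forces markedness of $g(v)$ (as $g$ sends marked cells to marked cells), which in turn forces the lift $u$ through $gf$ to be marked.

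There is no serious obstacle in this lemma: each direction is a two-step chaining of a lift and (in the forward case) a reflection, and the marking condition travels along for free because both $f$ and $g$ are morphisms of marked directed complexes. In particular, I do not need to invoke any $\mInfl$\nbd algebra structure or the calculus of contexts developed in Section \ref{sec:equivalences}; the two elementary lemmas listed above suffice.
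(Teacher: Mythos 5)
Your proof is correct and is exactly the standard two-out-of-three argument that the paper delegates to the cited propositions in Ara et al.: chain two lifts for the backward implication, and for the forward implication lift along $gf$ and then reflect the resulting marked-equivalence back along $g$ via Lemma \ref{lem:ess_surjective_reflects_meqv}. The marking bookkeeping and the use of Lemma \ref{lem:functors_preserve_marked_equivalence} for transporting $\meqv$ are both handled correctly, so this is a faithful filling-in of the proof the paper leaves to the reference.
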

\begin{proof}
	Essentially the same as \cite[Proposition 20.1.15 and 20.1.17]{ara2025polygraphs}.
\end{proof}

\noindent
Next, let $(\fun{P}, (\pi^-, \pi^+))$ be the functorial path object dual to the functorial cylinder $(\fun{I}, (\iota^-, \iota^+))$.
Given a marked directed complex $(X, A)$ and a cell $\gamma$ of shape $U$ in $\fun{P}(X, A)$, we write $\gamma\colon u^- \cyl u^+$, where for each $\a \in \set{-, +}$, $u^\a$ is the cell in $X$ classified by $\pi^\a_{(X, A)}\gamma$.

\begin{comm}
	A cell $\gamma\colon u^- \cyl u^+$ is a morphism $\marr \gray \flatm{U} \to (X, A)$, that is, a cell of shape $I \gray U$ in $X$ whose restrictions to $\clset{(1, x)}$ are marked for all $x \in U$.
\end{comm}

\begin{lem} \label{lem:projections_of_path_object_are_acycofs}
	Let $(X, A)$ be an \inftyn\nbd category and let $\a \in \set{+, -}$.
	Then $\pi^\a_{(X, A)}\colon \fun{P}(X, A) \to (X, A)$ is an acyclic fibration.
\end{lem}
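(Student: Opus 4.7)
The plan is to establish that $\pi^\a_{(X,A)}$ is both a fibration and lies in $\rlp(I)$, which together make it an acyclic fibration. Both ingredients will follow essentially formally from the Cisinski--Olschok data set up in Section \ref{sec:weakonmarked}, using the fibrancy of $(X,A)$ in $\Mwn$ given by Theorem \ref{thm:characterisation_of_fibrants}. For the fibration part, Proposition \ref{prop:weak_cisinski_olschok} ensures that $(\pi^-_{(X,A)}, \pi^+_{(X,A)}) \colon \fun{P}(X,A) \surj (X,A) \times (X,A)$ is a path object for $(X,A)$, in particular itself a fibration; the second projection $(X,A) \times (X,A) \surj (X,A)$ is a fibration as a pullback of the fibration $(X,A) \to \term$ along itself, and composing exhibits $\pi^\a_{(X,A)}$ as a composite of fibrations.

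For the lifting property, I would appeal to Lemma \ref{lem:pushout_product_boundary_inclusions}, which gives $\iota^\a \ppnat i \in \llp(\rlp(\Jhorn)) \subseteq \llp(\JFib{\Jn})$ for every $i \in I$ and each $\a \in \set{-, +}$. Specialising to the $\Jn$\nbd fibration $(X,A) \to \term$ and transposing along the adjunction $\fun{I} \dashv \fun{P}$---under which a lifting problem for $\iota^\a \ppnat i$ against $(X,A) \to \term$ corresponds to a lifting problem for $i$ against $\pi^\a_{(X,A)}$---yields $\pi^\a_{(X,A)} \in \rlp(i)$ for every $i \in I$. Combined with the previous paragraph, this makes $\pi^\a_{(X,A)}$ an acyclic fibration.

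I do not foresee a significant obstacle, as the argument is a formal transposition of Cisinski--Olschok data already verified for $\Mwn$; the symmetry of condition (3) of Proposition \ref{prop:weak_cisinski_olschok} in $\a$ is exactly what makes it treat both projections uniformly. A more elementary two-out-of-three argument starting from a common strict section of $\pi^-$ and $\pi^+$ does not appear to be available in $\mdCpx$, since the natural candidate would correspond to a retraction $\marr \to \flatm{1}$ in $\dCpx$, which does not exist---collapsing the $1$\nbd cell of $\marr$ is not a local embedding.
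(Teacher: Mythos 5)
Your proof is correct and follows essentially the same route as the paper, which simply notes that $(X,A)$ is fibrant by Theorem \ref{thm:characterisation_of_fibrants} and then invokes Proposition \ref{prop:weak_cisinski_olschok} (the path-object conclusion, whose definition already contains the acyclic-fibration statement, with symmetry in $\a$ coming from condition (3)). Your version merely unfolds that black box, deriving the fibration part from the composite $\fun{P}(X,A) \surj (X,A) \times (X,A) \surj (X,A)$ and the $\rlp(I)$ part by transposing Lemma \ref{lem:pushout_product_boundary_inclusions} along $\fun{I} \dashv \fun{P}$, which is exactly the argument underlying the cited proposition.
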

\begin{proof}
	By Theorem \ref{thm:characterisation_of_fibrants}, $(X, A)$ is fibrant, so the statement immediately follows from Proposition \ref{prop:weak_cisinski_olschok}.
\end{proof}

\begin{comm}
	It follows that, when $(X, A)$ is an \inftyn\nbd category, then $\fun{P}(X, A)$ is fibrant and can be endowed with an $\mInfl$\nbd algebra structure making it an \inftyn\nbd category.
	Then, by Lemma \ref{lem:acyclic_fibration_is_ess_surjective}, $\pi^\a_{(X, A)}$ becomes an essential acyclic fibration of \inftyn\nbd categories.
\end{comm}

\noindent
The following, combined with Lemma \ref{lem:projections_of_path_object_are_acycofs}, is an analogue of the ``transport lemma'' \cite[Lemma 4.29]{chanavat2024model}, \cite[20.3.5]{ara2025polygraphs}.

\begin{lem} \label{lem:transport_lemma}
	Let $(X, A)$ be an \inftyn\nbd category and let $\gamma\colon u \cyl v$ and $\delta\colon u' \cyl v'$ be parallel cells in $\fun{P}(X, A)$.
	Then 
	\begin{enumerate}
		\item $u \meqv u'$ if and only if $v \meqv v'$,
		\item $u$ is marked if and only if $v$ is marked.
	\end{enumerate}
\end{lem}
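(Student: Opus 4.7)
The plan is to treat the two claims separately, using the fact that by Lemma \ref{lem:projections_of_path_object_are_acycofs} the projections $\pi^\a_{(X,A)}\colon \fun{P}(X,A) \to (X,A)$ are acyclic fibrations; in particular $\fun{P}(X,A)$ is fibrant, hence an \inftyn\nbd category by Theorem \ref{thm:characterisation_of_fibrants}, and the $\pi^\a$ become essential acyclic fibrations of \inftyn\nbd categories by Lemma \ref{lem:acyclic_fibration_is_ess_surjective}.

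For (1), the parallel cells $\gamma, \delta$ in $\fun{P}(X,A)$ satisfy $\pi^-(\gamma) = u \meqv u' = \pi^-(\delta)$, so Lemma \ref{lem:ess_surjective_reflects_meqv} applied to $\pi^-$ yields $\gamma \meqv \delta$. Applying $\pi^+$ and invoking Lemma \ref{lem:functors_preserve_marked_equivalence} gives $v \meqv v'$; the converse direction is symmetric, interchanging the roles of $\pi^-$ and $\pi^+$.

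For (2), I would first reduce to the case in which $U$ is an atom: a round diagram is marked iff all its top-dimensional cells are marked, and $\gamma$ restricts along each atomic inclusion $\clset{y} \incl U$ of a top-dim $y$ to a cylinder between the corresponding top cells of $u$ and $v$. Assume then that $U$ is an $n$\nbd atom with $n > 0$ (the case $n = 0$ is vacuous, as there are no marked $0$\nbd cells) and greatest element $\top$. The cell $h \eqdef \gamma(1 \gray \top)$ is the unique top-dimensional cell of $\marr \gray \flatm{U}$ and is marked by definition of the Gray product of marked structures, so $h$ is a marked $(n+1)$\nbd cell of $X$. Unwinding the Gray product orientation formula, its input $n$\nbd round diagram $x$ consists of $u = \gamma(0^- \gray \top)$ pasted with the marked $n$\nbd cells $\gamma(1 \gray y)$ for $y \in \faces{}{+}\top$, and dually its output $n$\nbd round diagram $w$ consists of $v = \gamma(0^+ \gray \top)$ pasted with marked $n$\nbd cells $\gamma(1 \gray y)$ for $y \in \faces{}{-}\top$. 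In particular $h\colon x \mcelto w$ witnesses $x \mrdto w$, hence $x \meqv w$ by Lemma \ref{lem:marked_equivalence_symmetric_in_ess_complete}.

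Suppose now that $u$ is marked; then $u$ is marked-invertible by completeness, and the remaining $n$\nbd cells of $x$ are marked, hence marked-invertible. By Proposition \ref{prop:round_diagrams_of_marked_inv_cells_are_marked-invertible}, $x$ is marked-invertible, and so is $w$ by Lemma \ref{lem:marked_equivalence_preserves_marked_invertibility}. Since every $n$\nbd cell of $w$ other than $v$ is marked, hence marked-invertible, repeated application of Lemma \ref{lem:pastings_of_marked_invertible} peels these auxiliary cells off to conclude that $v$ is marked-invertible, and therefore marked by completeness. The converse direction is symmetric, exchanging the roles of input and output in the analysis of $h$. The main obstacle is the orientation bookkeeping that identifies $u$ (respectively $v$) as the unique ``$0^-$-labelled'' (respectively ``$0^+$-labelled'') $n$\nbd cell in the input (respectively output) boundary of $h$, with every other top-dimensional cell automatically of the form $\gamma(1 \gray y)$ and therefore marked; once this is verified, the rest of the proof chains through the results of Section \ref{sec:equivalences}.
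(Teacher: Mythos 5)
Your proposal is correct, but both halves take routes that differ from the paper's, which handles (1) and (2) uniformly by observing that removing $(1, \top)$ and $(0^+, \top)$ from $\marr \gray \flatm{U}$ yields a marked horn, so that $\gamma$ exhibits $v$ as a solution of an equation $\fun{E}x \qeq b$ with $\fun{E}$ an $A$\nbd context; for (1) it builds a second cell $\gamma'\colon u \cyl v'$ by pasting a witness of $u \meqv u'$ onto $\delta$ and taking a marked weak composite, so that $v$ and $v'$ solve the same equation and are marked-equivalent by the weak uniqueness in Proposition \ref{prop:solutions_of_equations}, while (2) follows from the marked-invertibility clause of the same proposition. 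Your argument for (1) — reflecting $u \meqv u'$ to $\gamma \meqv \delta$ along the essential acyclic fibration $\pi^-$ via Lemma \ref{lem:ess_surjective_reflects_meqv}, then pushing forward along $\pi^+$ — is shorter, avoids constructing $\gamma'$ altogether, and is not circular, since Lemmas \ref{lem:projections_of_path_object_are_acycofs}, \ref{lem:acyclic_fibration_is_ess_surjective} and \ref{lem:ess_surjective_reflects_meqv} all precede the statement and do not use it. (The reduction to atoms at the start of your (2) is vacuous: $\gamma$ is a cell, so $U$ is already an atom.)

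The one step in (2) that needs tightening is the final ``repeated application of Lemma \ref{lem:pastings_of_marked_invertible} peels these auxiliary cells off''. That lemma only treats pastings of \emph{round} diagrams of equal dimension at rewritable submolecules, and peeling the cells $\gamma(1 \gray y)$ one at a time can leave intermediate pieces that are not round, so the induction as literally stated does not run. The repair is to peel them all at once: by the boundary formula for Gray products, $\bd{}{+}(\arr \gray U) \cong (\arr \gray \bd{}{-}U) \cp{n-1} (\set{0^+} \gray U)$, so $w = L \cp{} v$ where $L \eqdef \restr{\gamma}{\arr \gray \bd{}{-}U}$ is a single round diagram all of whose top cells are of the form $\gamma(1 \gray y)$, hence marked, hence marked-invertible by Corollary \ref{cor:marked_round_diagrams_are_marked_invertible}; dually $x = u \cp{} R$ with $R$ marked. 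One application of Lemma \ref{lem:pastings_of_marked_invertible} then transfers marked-invertibility from $w$ to $v$ (and the symmetric argument handles the converse). Alternatively, one can phrase the peeling as inverting the $A$\nbd context determined by the rewritable subdiagram $v \submol w$, which is exactly what the paper's appeal to Proposition \ref{prop:solutions_of_equations} accomplishes. With this adjustment your proof is complete.
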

\begin{proof}
	Let $\gamma, \delta$ also denote the cells classified by the transpose morphisms $\marr \gray \flatm{U} \to (X, A)$, and suppose $u \meqv u'$, witnessed by $e\colon u \mrdto u'$.
	Then $\iota \eqdef (0^-, -)\colon U \submol \bd{}{-}(\arr \gray U)$ determines a rewritable subdiagram $\iota\colon u' \submol \bd{}{-}\delta$, so we have a marked round diagram $e \cpsub{\iota} \delta$ in $X$, such that any marked weak composite determines a cell $\gamma'\colon u \cyl v'$.
	Now, removing $(1, \top)$ and $(0^+, \top)$ from $\marr \gray \flatm{U}$ determines a marked horn on which the restrictions of $\gamma$ and $\gamma'$ are equal, defining an equation $\fun{E}x \qeq b$ where $\fun{E}$ is an $A$\nbd context.
	Then $v$ and $v'$ are both solutions to this equation, witnessed by $\gamma$ and $\gamma'$, and we conclude by Proposition \ref{prop:solutions_of_equations}.	
	Next, suppose that $u$ is marked.
	Then, by the same argument that $\gamma$ is a filler for a marked horn, combined with Proposition \ref{prop:solutions_of_equations}, $v$ is marked-invertible, so by completeness $v$ is marked.
	The converse is dual in both cases.
\end{proof}

\noindent
Now, suppose $(X, A)$ is a marked inflate-complex.
Then there is a morphism $\tau_{(X, A)}\colon \fun{I}(X, A) \to (X, A)$ of marked directed complexes (\emph{not} a morphism of marked inflate-complexes) defined by
\[
	0^\a \gray u \mapsto u, \quad \quad 1 \gray u \mapsto u\tau_\varnothing
\]
for each $\a \in \set{-, +}$ and each cell $u\colon U \to X$, where $\tau_\varnothing\colon \arr \pcyl{\varnothing} U \equiv \arr \gray U \surj U$ is a cylindrical collapse. 
Moreover, by construction, $\tau_{(X, A)}$ is a retraction of $\iota^\a_{(X, A)}$ for each $\a \in \set{-, +}$.
We let
\[
	\nu_{(X, A)}\colon (X, A) \to \fun{P}(X, A)
\]
denote the transpose of $\tau_{(X, A)}$ through the adjunction $\fun{I} \dashv \fun{P}$.
By duality, $\nu_{(X, A)}$ is a section of $\pi^\a_{(X, A)}$ for each $\a \in \set{-, +}$.

\begin{lem} \label{lem:identity_path_is_ess_surjective}
	Let $(X, A)$ be an \inftyn\nbd category and endow $\fun{P}(X, A)$ with an $\mInfl$\nbd algebra structure.
	Then $\nu_{(X, A)}\colon (X, A) \to \fun{P}(X, A)$ is an essential acyclic fibration of \inftyn\nbd categories.
\end{lem}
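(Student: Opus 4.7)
The plan is to exploit the fact that $\nu_{(X, A)}$ is a section of both projections $\pi^-_{(X, A)}$ and $\pi^+_{(X, A)}$, and reduce the statement to a two-out-of-three argument using the lemmas established immediately before it. Specifically, I would fix either sign, say $\a \eqdef -$, and consider the composite
\[
	\pi^-_{(X, A)} \circ \nu_{(X, A)} = \idd{(X, A)}.
\]
The identity functor is trivially an essential acyclic fibration: every cell is itself, so the surjectivity conditions hold up to equality, hence \emph{a fortiori} up to $\meqv$, and the marked condition is preserved on the nose.

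Next, by Lemma \ref{lem:projections_of_path_object_are_acycofs}, $\pi^-_{(X, A)}\colon \fun{P}(X, A) \surj (X, A)$ is an acyclic fibration. Since $(X, A)$ is an \inftyn\nbd category, the fibrancy of $\fun{P}(X, A)$ given by Proposition \ref{prop:weak_cisinski_olschok} together with Theorem \ref{thm:characterisation_of_fibrants} implies that $\fun{P}(X, A)$ can indeed be endowed with an $\mInfl$\nbd algebra structure making it an \inftyn\nbd category, so it makes sense to view $\pi^-_{(X, A)}$ as a functor of \inftyn\nbd categories. By Lemma \ref{lem:acyclic_fibration_is_ess_surjective}, $\pi^-_{(X, A)}$ is then an essential acyclic fibration.

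Finally, I apply Lemma \ref{lem:ess_surjective_2_out_of_3_simple_cases} with $f \eqdef \nu_{(X, A)}$ and $g \eqdef \pi^-_{(X, A)}$: since $g$ is an essential acyclic fibration and $gf = \idd{(X, A)}$ is an essential acyclic fibration, we conclude that $f = \nu_{(X, A)}$ is an essential acyclic fibration, which is the claim. The only subtlety worth double-checking is that $\nu_{(X, A)}$ is a \emph{functor} of \inftyn\nbd categories, that is, a morphism of the underlying marked directed complexes sending marked cells to marked cells; this is automatic from the fact that $\nu_{(X, A)}$ is obtained by transposition through the adjunction $\fun{I} \dashv \fun{P}$ in $\mdCpx$, so there is no real obstacle---the entire argument is a formal consequence of results already in place.
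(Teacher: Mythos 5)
Your proof is correct and follows essentially the same route as the paper: the paper likewise observes that $\pi^\a_{(X, A)}\nu_{(X, A)} = \idd{(X, A)}$, that both $\pi^\a_{(X, A)}$ (via Lemma \ref{lem:projections_of_path_object_are_acycofs} and Lemma \ref{lem:acyclic_fibration_is_ess_surjective}) and the identity are essential acyclic fibrations, and concludes by Lemma \ref{lem:ess_surjective_2_out_of_3_simple_cases}. Your extra checks on fibrancy of $\fun{P}(X, A)$ and on $\nu_{(X, A)}$ being a functor are sound and match remarks the paper makes just before the lemma.
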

\begin{proof}
	For each $\a \in \set{-, +}$, we have $\pi^\a_{(X, A)}\nu_{(X, A)} = \idd{(X, A)}$, and since both $\pi^\a_{(X, A)}$ and $\idd{(X, A)}$ are essential acyclic fibrations, by Lemma 
	\ref{lem:ess_surjective_2_out_of_3_simple_cases} we conclude that $\nu_{(X, A)}$ is an essential acyclic fibration.
\end{proof}

\begin{dfn}[Mapping path space]
	Let $(X, A)$, $(Y, B)$ be \inftyn\nbd categories and let $f\colon (X, A) \to (Y, B)$ be a functor.
	The \emph{mapping path space} of $f$ is the pullback
	\[\begin{tikzcd}
	{\fun{P}_f} & {\fun{P}(Y, B)} \\
	{(X, A)} & {(Y, B)}
	\arrow["{p_2}", from=1-1, to=1-2]
	\arrow["{p_1}", two heads, from=1-1, to=2-1]
	\arrow["\lrcorner"{anchor=center, pos=0.125}, draw=none, from=1-1, to=2-2]
	\arrow["{\pi^-_{(Y, B)}}", two heads, from=1-2, to=2-2]
	\arrow["f", from=2-1, to=2-2]
\end{tikzcd}\]
	in $\mdCpx$.
	Explicitly, cells in $\fun{P}_f$ are pairs $(u, \gamma\colon f(u) \cyl v)$ of a cell $u$ in $(X, A)$ and a cell $\gamma\colon f(u) \cyl v$ in $\fun{P}(Y, B)$.
\end{dfn}

\noindent 
Since $\pi^-_{(Y, B)}$ is an acyclic fibration and $(X, A)$ is fibrant, $p_1\colon \fun{P}_f \to (X, A)$ is an acyclic fibration, and $\fun{P}_f$ can be endowed with an $\mInfl$\nbd algebra structure, making it an \inftyn\nbd category.
With such a structure,
\[
	p_f \eqdef \pi^+_{(Y, B)}p_2\colon \fun{P}_f \to (Y, B)
\]
becomes a functor of \inftyn\nbd categories.
Moreover, letting $\iota_f$ be the functor universally determined in the commutative diagram
\[\begin{tikzcd}
	& {(Y, B)} \\
	{(X, A)} & {\fun{P}_f} & {\fun{P}(Y, B)} \\
	& {(X, A)} & {(Y, B)},
	\arrow["{\nu_{(Y, B)}}", curve={height=-6pt}, from=1-2, to=2-3]
	\arrow["f", curve={height=-6pt}, from=2-1, to=1-2]
	\arrow["{\iota_f}", dashed, from=2-1, to=2-2]
	\arrow[curve={height=12pt}, equals, from=2-1, to=3-2]
	\arrow["{p_2}", from=2-2, to=2-3]
	\arrow["{p_1}", two heads, from=2-2, to=3-2]
	\arrow["\lrcorner"{anchor=center, pos=0.125}, draw=none, from=2-2, to=3-3]
	\arrow["{\pi^-_{(Y, B)}}", two heads, from=2-3, to=3-3]
	\arrow["f", from=3-2, to=3-3]
\end{tikzcd}\]
we have $p_f\iota_f = \pi^+_{(Y, B)}p_2\iota_f = \pi^+_{(Y, B)}\nu_{(Y, B)}f = f$.
At this point, we can proceed in complete analogy with \cite[Section 4.3]{chanavat2024model} and \cite[Section 20.3]{ara2025polygraphs}.

\begin{lem} \label{lem:ess_surjective_iff_pf_acyclic_fibration}
	Let $f\colon (X, A) \to (Y, B)$ be a functor of \inftyn\nbd categories.
	The following are equivalent:
	\begin{enumerate}[label=(\alph*)]
		\item $f$ is an essential acyclic fibration;
		\item $p_f\colon \fun{P}_f \to (Y, B)$ is an acyclic fibration.
	\end{enumerate}
\end{lem}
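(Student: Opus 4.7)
The plan is to exploit the factorisation $f = p_f\iota_f$. First I would show that $\iota_f$ is always an essential acyclic fibration: since $\pi^-_{(Y, B)}$ is an acyclic fibration by Lemma \ref{lem:projections_of_path_object_are_acycofs} and acyclic fibrations are stable under pullback, the projection $p_1\colon \fun{P}_f \surj (X, A)$ is an acyclic fibration, hence an essential acyclic fibration by Lemma \ref{lem:acyclic_fibration_is_ess_surjective}; since $p_1\iota_f = \idd{(X, A)}$ is trivially an essential acyclic fibration, Lemma \ref{lem:ess_surjective_2_out_of_3_simple_cases} yields the claim.

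For $(\Leftarrow)$, if $p_f$ is an acyclic fibration then it is essential acyclic by Lemma \ref{lem:acyclic_fibration_is_ess_surjective}, and a second application of Lemma \ref{lem:ess_surjective_2_out_of_3_simple_cases} to $f = p_f\iota_f$ gives the conclusion. The direction $(\Rightarrow)$ is the substantive one: assuming $f$ is an essential acyclic fibration, I would verify that $p_f$ has the right lifting property against each generating cofibration in $I = \flatm{(\Ibd)} \cup \Imrk$. For the marking $\m_U\colon \flatm{U} \to \mrk{U}$, the data is a cell $u$ in $X$ with a cylinder $\gamma\colon f(u) \cyl v$ such that $v$ is marked; Lemma \ref{lem:transport_lemma} transfers the marking of $v$ to $f(u)$, which in turn gives the marking of $\gamma$ in $\fun{P}(Y, B)$, and essential surjectivity produces a marked cell $u''$ parallel to $u$ with $f(u'') \meqv f(u)$, whence $u \meqv u''$ by Lemma \ref{lem:ess_surjective_reflects_meqv} and $u$ is marked by completeness of $(X, A)$ combined with Lemma \ref{lem:marked_equivalence_preserves_marked_invertibility}; thus $(u, \gamma)$ is marked in $\fun{P}_f$, supplying the required lift.

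For the boundary inclusion against $\flatm{\bdmap}_U$, the data is a boundary $\partial\colon \bd{}{}U \to (X, A)$, a boundary cylinder $\gamma_\partial\colon f(\partial) \cyl \bd{}{}v$, and a filler $v\colon U \to (Y, B)$; essential surjectivity supplies a cell $u$ in $X$ with $\bd{}{}u = \partial$ and a marked witness $h\colon f(u) \mcelto v$. The remaining work is to construct an extension $\gamma\colon f(u) \cyl v$ of $\gamma_\partial$, equivalently to produce a marked cell of $(Y, B)$ filling the top-dimensional cell $(1, \top_U)$ of $\marr \gray \flatm{U}$ with boundary prescribed by $f(u)$, $v$, and $\gamma_\partial$. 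I would realise this as a weak solution to an equation $\fun{E}x \qeq b$, where $\fun{E}$ is assembled by pasting the marked cylinder cells of $\gamma_\partial$ and the witness $h$ around the indeterminate; since every contextual piece is marked in $(Y, B)$, the context $\fun{E}$ is an $\I(A^+)$-context, so Proposition \ref{prop:solutions_of_equations} produces the required marked cell.

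The main obstacle is this last construction: the filling problem for $(1, \top_U)$ must be rewritten as a single equation with an $\I(A^+)$-context, which requires carefully unwinding the combinatorics of the Gray cylinder $\arr \gray U$ to identify how exactly $f(u)$, $v$, $h$, and $\gamma_\partial$ assemble into the input and output boundaries of the missing cell, and checking that every contextual piece is marked. Once this combinatorial setup is in place, Proposition \ref{prop:solutions_of_equations} delivers both the existence and the marked-invertibility of the solution, hence the required lift.
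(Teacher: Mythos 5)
Your overall architecture is sound and matches the argument the paper defers to (\cite[Proposition 20.3.10]{ara2025polygraphs} together with the second point of Lemma \ref{lem:transport_lemma}): the reduction to lifting against the two generating families, the identification of $\iota_f$ as an essential acyclic fibration via $p_1\iota_f = \idd{}$ and Lemma \ref{lem:ess_surjective_2_out_of_3_simple_cases}, the easy direction, and the lifting against markings are all correct. The gap is in the lifting against $\flatm{\bdmap}_U$. You apply essential surjectivity of $f$ directly to the filler $v$, claiming a cell $u$ with $\bd{}{}u = \partial$ and a marked witness $h\colon f(u) \mcelto v$. This is ill-typed: $f(u)$ has boundary $f(\partial)$, whereas $\bd{}{}v = \pi^+(\gamma_\partial)$ is the \emph{far} end of the boundary cylinder, so $f(u)$ and $v$ are not parallel and no round diagram of type $f(u) \rdto v$ exists. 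Moreover, the second condition in the definition of essential acyclic fibration only applies to cells of $Y$ whose boundary is literally of the form $f(u^-) \celto f(u^+)$, which $v$ need not satisfy.

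The missing step is the transport of $v$ backwards across the cylinder \emph{before} invoking essential surjectivity: since $\pi^+_{(Y, B)}$ is an acyclic fibration (Lemma \ref{lem:projections_of_path_object_are_acycofs}), the pair $(\gamma_\partial, v)$ extends to a full cylinder $\tilde\gamma\colon w \cyl v$ with $\restr{\tilde\gamma}{\bd{}{}U} = \gamma_\partial$, and then $\bd{}{}w = \pi^-(\gamma_\partial) = f(\partial)$. Only now does essential surjectivity produce $u$ with $\bd{}{}u = \partial$ and a marked cell $h\colon f(u) \mcelto w$. The final step---pasting $h$ onto the source of the top cell of $\tilde\gamma$ at the rewritable subdiagram $w \submol \bd{}{-}\tilde\gamma$ and taking a marked weak composite, exactly as in the proof of Lemma \ref{lem:transport_lemma}---yields the required $\gamma\colon f(u) \cyl v$ extending $\gamma_\partial$, since the weak composite has the same boundary as the pasted diagram. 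This replaces your ``equation with an $\I(A^+)$\nbd context'', which as written is assembled from the non-existent witness $h\colon f(u) \mcelto v$, and which in any case asks for a filler of a prescribed sphere rather than a lax solution of an equation $\fun{E}x \qeq b$ in a boundary indeterminate. With this reordering the argument goes through.
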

\begin{proof}
	Same as \cite[Proposition 20.3.10]{ara2025polygraphs}, additionally using the second point of Lemma \ref{lem:transport_lemma} for the lifting property against markings.
\end{proof}

\begin{lem} \label{lem:ess_surjective_2_out_of_3}
	Let $f\colon (X, A) \to (Y, B)$ and $g\colon (Y, B) \to (Z, C)$ be functors of \inftyn\nbd categories, and suppose $f$ and $gf$ are essential acyclic fibrations.
	Then $g$ is an essential acyclic fibration.
\end{lem}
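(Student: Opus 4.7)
The proof proceeds by induction on the dimension $n$ of the cell to be lifted. For $n = 0$: given a $0$-cell $v$ in $Z$, apply the hypothesis that $gf$ is an essential acyclic fibration to obtain a $0$-cell $x$ in $X$ with $v \meqv gf(x)$, and take $y \eqdef f(x)$, so that $g(y) = gf(x) \meqv v$.

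For the inductive step, consider a cell $v\colon g(y^-) \celto g(y^+)$ in $Z$ with $y^\pm \in \gr{n-1}{\Rd Y}$. First, replace $y^\pm$ by marked weak composites using Proposition \ref{prop:marked_round_diagrams_have_marked_weak_composites}, and transport $v$ along the induced marked equivalences via Proposition \ref{prop:solutions_of_equations}; this reduces to the case where $y^\pm$ are parallel cells in $Y$. The key step is a coordinated lifting lemma, proved by a simultaneous induction on dimension: since $f$ is an essential acyclic fibration, any parallel pair of $k$-cells $y^\pm$ in $Y$ admits a parallel lift to a pair of $k$-cells $x^\pm$ in $X$, together with marked cells $e^\pm\colon f(x^\pm) \mcelto y^\pm$; this is obtained by first lifting the common $(k-1)$-dimensional boundary, then transporting each of $y^\pm$ so that its boundary lies in the image of the lifted boundary, and finally applying the $k$-dimensional essential surjectivity of $f$ to each side with the shared boundary in $X$ fixed. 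Given such a lift, choose marked weak inverses $(g(e^\pm))^*$ via Proposition \ref{prop:marked_invertible_have_marked_weak_inverses}, and let $v'$ be a marked weak composite of $g(e^-) \cp{n-1} v \cp{n-1} (g(e^+))^*$, a cell of type $gf(x^-) \celto gf(x^+)$ in $Z$; applying the essential acyclic fibration property of $gf$ then yields a cell $x\colon x^- \celto x^+$ in $X$ with $gf(x) \meqv v'$.

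Finally, let $y$ be a weak composite of $(e^-)^* \cp{n-1} f(x) \cp{n-1} e^+$, which is a cell of type $y^- \celto y^+$ in $Y$. Using Lemma \ref{lem:marked-equivalence_in_generic_inflate_complexes}, Lemma \ref{lem:unitors_as_marked-reductions}, and the compatibility of $\meqv$ with pasting, one verifies
\[
	g(y) \meqv (g(e^-))^* \cp{n-1} gf(x) \cp{n-1} g(e^+) \meqv (g(e^-))^* \cp{n-1} v' \cp{n-1} g(e^+) \meqv v
\]
by cancellation of $g(e^\pm)$ against their weak inverses, where we use that $g$ sends weak inverses to weak inverses (a consequence of Lemma \ref{lem:functors_preserve_marked_equivalence} and Proposition \ref{prop:functors_weakly_preserve_units}). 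If $v$ is marked, then $v'$ is marked---being assembled by pasting and weak composite from $v$ and the marked cells $g(e^\pm)$, $(g(e^\pm))^*$---hence $x$ is marked by the hypothesis on $gf$, and consequently $f(x)$ and $y$ are marked. The principal obstacle is the coordinated lifting lemma for parallel pairs: independent lifts of $y^-$ and $y^+$ through $f$ yield cells with potentially distinct boundaries in $X$, so the construction must be organized around a shared boundary lift, which requires the auxiliary induction on dimension to run in tandem with the main statement.
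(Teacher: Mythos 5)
Your overall strategy --- conjugate $v$ by equivalences supplied by $f$-lifts of the boundary data, lift the result through $gf$, and push back down to $Y$ --- is the right idea and is in the spirit of the argument the paper imports from \cite[Theorem 20.3.11]{ara2025polygraphs}. However, your key ``coordinated lifting lemma'' is not correctly stated, and the error propagates into the final computation. If $x^-, x^+$ are parallel cells of $X$, say of type $a^- \celto a^+$, then $f(x^\pm)$ have type $f(a^-) \celto f(a^+)$, whereas $y^\pm$ have type $b^- \celto b^+$ for their shared boundary $(b^-, b^+)$; essential surjectivity of $f$ only produces boundary lifts with $f(a^\a) \meqv b^\a$, not $f(a^\a) = b^\a$, so $f(x^\pm)$ and $y^\pm$ are \emph{not parallel}, and the marked cells $e^\pm\colon f(x^\pm) \mcelto y^\pm$ you posit cannot exist ($\mrdto$ and $\meqv$ are only defined between parallel round diagrams). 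What your construction actually yields is $e^\a\colon f(x^\a) \mcelto \tilde y^\a$, where $\tilde y^\a$ is the transport of $y^\a$ along witnesses $c^\a\colon f(a^\a) \mcelto b^\a$, i.e.\ a weak composite of $c^- \cp{} y^\a \cp{} (c^+)^*$. Consequently the displayed chain $g(y) \meqv (g(e^-))^* \cp{n-1} gf(x) \cp{n-1} g(e^+) \meqv \cdots \meqv v$ does not typecheck as written: every step must additionally carry the conjugation by $g(c^\a)$ and its undoing, and the resulting ``cancellations'' take place at the level of marked-equivalences of round \emph{contexts} (Proposition \ref{prop:properties_of_marked_equivalences_of_contexts}, Lemma \ref{lem:contexts_of_marked-invertible_are_marked-invertible}) rather than of single cells.

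The gap is fillable, but filling it by hand is precisely the bookkeeping that the section you are working in is designed to avoid: the cited proof is organised around the functorial path object, where a cell $\gamma\colon u \cyl v$ of $\fun{P}(Y, B)$ packages a cell together with coherent transport data on its entire boundary, the transport lemma (Lemma \ref{lem:transport_lemma}) replaces the ad hoc conjugations, and Lemma \ref{lem:ess_surjective_iff_pf_acyclic_fibration} converts the up-to-equivalence lifting conditions into a strict lifting property for $p_g$. You should either restate your coordinated lifting lemma with the boundary transports made explicit and redo the final verification at the level of contexts, or switch to the mapping-path-space formulation.
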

\begin{proof}
	Same as \cite[Theorem 20.3.11]{ara2025polygraphs}.
\end{proof}

\begin{thm} \label{thm:characterisation_of_equivalences}
	Let $f\colon (X, A) \to (Y, B)$ be a functor of \inftyn\nbd categories.
	The following are equivalent:
	\begin{enumerate}[label=(\alph*)]
		\item $f$ is an essential acyclic fibration;
		\item $f$ is an equivalence in $\Mwn$.
	\end{enumerate}
\end{thm}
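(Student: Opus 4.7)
The plan is to use the mapping path space factorisation $f = p_f \iota_f$ together with the bridge already built in Lemma~\ref{lem:ess_surjective_iff_pf_acyclic_fibration}, reducing the equivalence to a routine 2-out-of-3 argument in the homotopy category $\Ho(\Mwn)$.

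First, I will record the basic structural properties of the factorisation. Since $\pi^-_{(Y,B)}$ is an acyclic fibration (Lemma~\ref{lem:projections_of_path_object_are_acycofs}) and $(X,A)$ is fibrant, the pullback $p_1\colon \fun{P}_f \twoheadrightarrow (X,A)$ is an acyclic fibration, and in particular a fibration. Composing with $\pi^+_{(Y,B)}$, which is itself a fibration, we find that $p_f = \pi^+_{(Y,B)} p_2$ is a fibration. Moreover, by construction $p_1 \iota_f = \idd{(X,A)}$, so $\iota_f$ is a section of an acyclic fibration; since acyclic fibrations are equivalences (Lemma~\ref{lem:acyclic_cofib_is_cofib_equivalence}), 2-out-of-3 for equivalences in $\Ho(\Mwn)$ forces $\iota_f$ itself to be an equivalence. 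Finally, $p_f \iota_f = f$ by construction.

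From here both implications are one-line. For (a) $\Rightarrow$ (b): if $f$ is an essential acyclic fibration, then Lemma~\ref{lem:ess_surjective_iff_pf_acyclic_fibration} gives that $p_f$ is an acyclic fibration, hence an equivalence; then $f = p_f \iota_f$ is a composite of equivalences, hence an equivalence in $\Mwn$. For (b) $\Rightarrow$ (a): if $f$ is an equivalence, then, since $\iota_f$ is an equivalence and $f = p_f \iota_f$, 2-out-of-3 forces $p_f$ to be an equivalence; but we already established that $p_f$ is a fibration, so by Lemma~\ref{lem:acyclic_cofib_is_cofib_equivalence} it is an acyclic fibration, and Lemma~\ref{lem:ess_surjective_iff_pf_acyclic_fibration} then gives that $f$ is an essential acyclic fibration.

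The only subtle point is making sure that 2-out-of-3 for equivalences is available on the nose in the subcategory containing all the relevant objects and morphisms; this is fine because all of $(X,A)$, $(Y,B)$, $\fun{P}(Y,B)$, and $\fun{P}_f$ are bifibrant (every object is cofibrant, and fibrancy is preserved by the pullback constructions under consideration), so the question takes place in $\bifs{\Mwn}$, where by Proposition~\ref{prop:characterisation_of_homotopy_category} equivalences are precisely the homotopy equivalences and 2-out-of-3 is automatic from the groupoidal nature of $\Ho(\Mwn)$. Thus I do not expect any serious obstacle; essentially all the work has been done upstream in constructing $\fun{P}_f$ and proving Lemma~\ref{lem:ess_surjective_iff_pf_acyclic_fibration}.
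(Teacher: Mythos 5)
Your overall strategy is exactly the intended one: the paper's own proof of Theorem \ref{thm:characterisation_of_equivalences} is a two-line reduction, via Proposition \ref{prop:characterisation_of_homotopy_category}, to the statement that essential acyclic fibrations coincide with homotopy equivalences between these bifibrant objects, which it then outsources to the analogous theorem in the diagrammatic-sets paper; the machinery of Section \ref{sec:equivalencesof} (the mapping path space, Lemma \ref{lem:ess_surjective_iff_pf_acyclic_fibration}, Lemma \ref{lem:acyclic_cofib_is_cofib_equivalence}) is set up precisely so that the argument you spell out goes through. Both directions of your 2-out-of-3 bookkeeping are fine, as is the observation that $\iota_f$ is an equivalence because it is a section of the acyclic fibration $p_1$, and the remark that all objects in sight are bifibrant, so that equivalences and 2-out-of-3 are available in $\Ho(\Mwn)$.

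There is, however, one step whose justification as written would fail: the claim that $p_f = \pi^+_{(Y,B)} p_2$ is a fibration ``because $\pi^+_{(Y,B)}$ is a fibration''. A composite $\pi^+_{(Y,B)} p_2$ is only known to be a fibration if $p_2$ is one, and $p_2$ is the pullback of $f$ along $\pi^-_{(Y,B)}$; since $f$ is an arbitrary functor rather than a fibration, the weak model category axioms give you nothing about $p_2$. The conclusion is nevertheless true, and you genuinely need it in the direction (b) $\Rightarrow$ (a), to upgrade ``$p_f$ is an equivalence'' to ``$p_f$ is an acyclic fibration'' via Lemma \ref{lem:acyclic_cofib_is_cofib_equivalence}. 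The correct argument is the standard one: the pair $(p_1, p_f)\colon \fun{P}_f \to (X,A)\times(Y,B)$ is the pullback of the path-object fibration $(\pi^-_{(Y,B)},\pi^+_{(Y,B)})\colon \fun{P}(Y,B)\surj (Y,B)\times(Y,B)$ along $f\times\idd{(Y,B)}$, whose domain $(X,A)\times(Y,B)$ is fibrant, so $(p_1,p_f)$ is a fibration; composing with the projection $(X,A)\times(Y,B)\surj (Y,B)$, which is a fibration because $(X,A)$ is fibrant, yields that $p_f$ is a fibration. With this repair your proof is complete.
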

\begin{proof}
	Since $(X, A)$ and $(Y, B)$ are bifibrant objects in $\Mwn$, by 
	Proposition \ref{prop:characterisation_of_homotopy_category} it suffices to prove that $f$ is an essential acyclic fibration if and only if it is a homotopy equivalence with respect to the cylinders functorially determined by $(\fun{I}, (\iota^-, \iota^+))$, which follows from the same proof as \cite[Theorem 4.37]{chanavat2024model}.
\end{proof}

\noindent
From now on, we can unambiguously call an essential acyclic fibration an \emph{equivalence of \inftyn\nbd categories}.

\subsection{Homotopy hypothesis} \label{sec:homohyp}

\noindent
The aim of this section is to prove the homotopy hypothesis for our model of weak $(\infty, 0)$\nbd categories.
Since our directed complexes are closely related to Henry's regular polygraphs, we will be able to do so by closely following the proofs of \cite[Section 6.3]{henry2018regular}, once we formalise the intuitive fact that, in the case $n = 0$, we can do away with the marked structure.

The endofunctor \( \arr \gray - \colon \dCpx \to \dCpx \), together with the natural transformations \( \iota^\a = (0^\a \gray \idd{}) \lambda \) for each \( \a \in \set{-, +} \), equip \( \dCpx \) with a functorial cylinder.
We denote again by $\Jhorn$ the set of ``unmarked'' horns of atoms, that is, inclusions
\begin{equation*}
	\lambda^x_U \colon \Lambda^x_U \incl U,
\end{equation*}
where \( U \) is an atom with greatest element \( \top \), \( x \in \faces{}{\a} U \) for some \( \a \in \set{-, +} \), and \( \Lambda^x_U = U \setminus \set{\top, x} \).

\begin{lem} \label{lem:0_model_structure_directed_cplx}
	The triple \( \Nw_0 \eqdef (\dCpx, \ICof{\Ibd}, \JFib{\Jhorn}) \) is a weak model structure on \( \dCpx \) such that the adjoint functors \( \Um \dashv \sharpm{(-)} \) restrict to functors 
	\begin{equation*}
		\Um \colon \cofs{\Mw_0} \to \cofs{\Nw_0}, \quad\quad
		\sharpm{(-)} \colon \fibs{\Nw_0} \to \fibs{\Mw_0},
	\end{equation*}
	exhibiting a Quillen adjunction between $\Mw_0$ and $\Nw_0$.
\end{lem}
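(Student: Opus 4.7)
The plan is to instantiate the Cisinski--Olschok machinery of Proposition \ref{prop:weak_cisinski_olschok}, repeating for \(\dCpx\) the arguments of Section \ref{sec:weakonmarked} stripped of all markings. First I would note that \(\dCpx\) is locally finitely presentable as a presheaf category on \(\atom_\E\) (Proposition \ref{prop:guises_of_directed_complexes}), and that \(\arr \gray -\) is a left adjoint by Proposition \ref{prop:gray_product_of_directed_complexes}, so \((\arr \gray -, (\iota^-, \iota^+))\) is indeed a functorial cylinder on \(\dCpx\). I would then verify the four pushout-product axioms in turn: that \(\llp(\rlp(\Ibd))\) is exactly the class of monomorphisms of directed complexes by the presheaf-theoretic argument of Lemma \ref{lem:I_cofibrations_are_monomorphisms}, so in particular it contains \(\Jhorn\); that \((\iota^-, \iota^+) \ppnat \bdmap_U\) is the boundary inclusion of the atom \(\arr \gray U\), up to the identification coming from \(\arr \gray -\) preserving pushouts of embeddings; that \(\iota^\a \ppnat \bdmap_U\) is isomorphic to the horn \(\lambda^{(0^{-\a}, \top)}_{\arr \gray U} \in \Jhorn\); and that \((\iota^-, \iota^+) \ppnat \lambda^x_U\) is itself a horn of \(\arr \gray U\), by the same geometric computation underlying Lemmas \ref{lem:pushout_product_boundary_inclusions} and \ref{lem:pushout_product_marked_horn_cylinder_boundary}. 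This gives the weak model structure \(\Nw_0\).

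For the Quillen adjunction part, I would separately verify that \(\Um\) preserves cofibrations and that \(\sharpm{(-)}\) preserves fibrations. Cofibrations in both weak model structures are all monomorphisms (the argument of Lemma \ref{lem:I_cofibrations_are_monomorphisms} applies identically to \(\dCpx\)), and \(\Um\), having both a left adjoint \(\flatm{(-)}\) and a right adjoint \(\sharpm{(-)}\), preserves all limits and colimits, hence all monomorphisms; so \(\Um\) restricts as claimed to \(\cofs{\Mw_0} \to \cofs{\Nw_0}\).

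For \(\sharpm{(-)}\), the key observation will be that every positive-dimensional cell of \(\sharpm{X}\) is by construction marked, so any lifting problem in \(\sharpm{X}\) against a generating anodyne extension in \(J_0 \setminus \Jhorn\) --- that is, a saturation \(\sigma_{U,V,W}\) or a marking \(\m_U\) --- trivialises, since the underlying morphism of directed complexes is an isomorphism. Lifting problems against a marked horn \(\lambda^x_U \in \Jhorn\) reduce by adjunction to lifting problems against the underlying unmarked horn inclusion \(\Lambda^x_U \incl U\) in \(\dCpx\), which are solved by \(\Jhorn\)-fibrancy in \(\Nw_0\); and the side-conditions on the filler in the marked sense are automatically satisfied in \(\sharpm{X}\) for the same ``everything positive-dimensional is marked'' reason. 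Thus \(\sharpm{p} \in \JFib{J_0}\) whenever \(p \in \JFib{\Jhorn}\), and the Quillen adjunction is established.

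The proof is essentially routine, being a direct transcription of the marked case with all marking bookkeeping removed. The only point requiring any real thought is the verification that \(\sharpm{(-)}\) trivialises the marking-related generating anodyne extensions of \(J_0\); this is where the specific choice of \(n = 0\) (as opposed to \(n > 0\)) is implicitly used, since for larger \(n\) one does not expect a Quillen adjunction of this form with unmarked directed complexes on the other side.
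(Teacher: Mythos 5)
Your proof is correct, and it follows the same overall strategy as the paper: instantiate Proposition \ref{prop:weak_cisinski_olschok} for \( \Nw_0 \) and then read off the Quillen adjunction from \( \Um \dashv \sharpm{(-)} \). The only difference is in execution: where you re-verify the four pushout-product conditions geometrically and then argue case by case that \( \sharpm{(-)} \) trivialises the saturation and marking extensions, the paper compresses all of this into a single transfer argument --- \( \Um \) is a strict monoidal left adjoint, hence commutes with the functorial cylinders and with pushout-products, and since markings and saturations have underlying isomorphisms one has \( \ICof{\Ibd} = \ICof{\Um(I)} \) and \( \JFib{\Um(\Jzero)} = \JFib{\Jhorn} \), so the already-verified hypotheses for \( \Mw_0 \) push forward to \( \Nw_0 \) and the adjunction transposition does the rest. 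Your more explicit treatment of why \( \sharpm{(-)} \) preserves fibrations is a correct unpacking of what the paper leaves as ``for the same reason''; both arguments are sound.
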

\begin{proof}
	First of all, $\dCpx$ is locally presentable.
	Since the functor \( \Um \) is strict monoidal and left adjoint, it commutes with the functorial cylinders on \( \mdCpx \) and \( \dCpx \), so it preserves pushout-products with their components.
	Since markings have underlying isomorphisms, we see that \( \ICof{\Ibd} = \ICof{\Um(I)} \) and, similarly, \( \JFib{\Um(\Jhorn)} = \JFib{\Jhorn} \), hence the verification of the conditions of Proposition \ref{prop:weak_cisinski_olschok} for \( \Mw_0 \) implies the verification of those same conditions for \( \Nw_0 \).
	For the same reason, it is immediate that \( \Um \) and \( \sharpm{(-)} \) form a Quillen adjunction between \( \Mw_0 \) and \( \Nw_0 \).
\end{proof}

\begin{prop} \label{prop:0_model_structure_quillen_equivalence}
	The Quillen adjunction \( (\Um, \sharpm{(-)}) \) is a Quillen equivalence between \( \Mw_0 \) and \( \Nw_0 \).
\end{prop}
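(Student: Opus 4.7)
I will verify the two conditions of Proposition~\ref{prop:quillen_equivalence_criterion}. Condition (b) is immediate: every object of \( \dCpx \) is cofibrant (the unique morphism out of the initial object is a monomorphism), so I may take \( (\sharpm X)^{\mathsf{cof}} = \sharpm X \), and the transpose \( \Um \sharpm X \to X \) is the identity of \( X \), which is trivially an equivalence. The proof of (a) relies on a structural characterisation of fibrant objects in \( \Mw_0 \) specific to the case \( n = 0 \).

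The key observation is that the fibrant objects of \( \Mw_0 \) are exactly those of the form \( \sharpm Y \) for \( Y \) fibrant in \( \Nw_0 \). For the ``only if'' direction, if \( (Y, B) \) is fibrant in \( \Mw_0 \), then the right lifting property against the top-markings \( \m_U \in J_0 \) for every atom \( U \) of positive dimension forces \( B = \gr{>0}{\cell Y} \), that is, \( (Y, B) = \sharpm Y \). For the ``if'' direction, when all positive-dimensional cells of \( Y \) are marked the right lifting conditions against \( \Jsat \) and the higher \( \m_U \) are vacuously satisfied (every relevant cell in the target is already marked), and right lifting against any marked horn of \( \Mw_0 \) reduces to right lifting against the underlying unmarked horn of \( \Nw_0 \); hence \( \sharpm Y \) is fibrant in \( \Mw_0 \) iff \( Y \) is fibrant in \( \Nw_0 \). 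I expect this reduction to be the main technical step.

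Given this characterisation, I handle condition (a) as follows. Let \( (X, A) \) be a cofibrant object of \( \Mw_0 \), pick a fibrant replacement \( (X, A) \acof (X, A)^{\mathsf{fib}} \) in \( \Mw_0 \), and write \( (X, A)^{\mathsf{fib}} = \sharpm X^{\mathsf{fib}} \) with \( X^{\mathsf{fib}} \) fibrant in \( \Nw_0 \). Since \( \Um \) is left Quillen (Lemma~\ref{lem:0_model_structure_directed_cplx}), the image \( X \to X^{\mathsf{fib}} \) is an acyclic cofibration in \( \Nw_0 \) with fibrant codomain, hence a fibrant replacement of \( \Um(X, A) \); by the remark after Proposition~\ref{prop:quillen_equivalence_criterion} it suffices to verify condition (a) for this particular replacement. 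Its transpose under \( \Um \dashv \sharpm{(-)} \) is the composite \( (X, A) \to \sharpm X \to \sharpm X^{\mathsf{fib}} \), and because \( \Um \sharpm = \Idd{\dCpx} \) forces the unit at any object of the form \( \sharpm Z \) to be the identity, the naturality square of the unit identifies this composite with the original acyclic cofibration \( (X, A) \acof \sharpm X^{\mathsf{fib}} \), which is therefore an equivalence in \( \Mw_0 \).
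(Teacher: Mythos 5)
Your proof is correct and follows essentially the same route as the paper: both verify the two conditions of Proposition \ref{prop:quillen_equivalence_criterion}, with condition (b) trivial because the counit of \( \Um \dashv \sharpm{(-)} \) is the identity, and condition (a) resting on the fact that an \( \Mw_0 \)-fibrant replacement is necessarily of the form \( \sharpm{X^{\mathsf{fib}}} \) with \( X^{\mathsf{fib}} \) fibrant in \( \Nw_0 \), so that the transpose of \( \Um(i) \) is \( i \) itself. The only (harmless) difference is that you deduce the shape \( \sharpm{Y} \) of the fibrant replacement directly from the right lifting property against the top-markings \( \m_U \in J_0 \), whereas the paper invokes Theorem \ref{thm:characterisation_of_fibrants}; your route is slightly more elementary but equivalent.
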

\begin{proof}
	Let \( (X, A) \) be a marked directed complex and let \( i \colon (X, A) \acof (Y, B) \) be a fibrant replacement in \( \Mw_0 \).
	By Theorem \ref{thm:characterisation_of_fibrants}, \( (Y, B) \) is the underlying marked directed complex of an \( (\infty, 0) \)\nbd category, hence \( (Y, B) = \sharpm{Y} \).
	Since \( \Um \) is left Quillen, \( \Um(i) \colon X \acof Y \) is an acyclic cofibration, and since \( \sharpm{Y} \) is fibrant, it has the right lifting property against \( \Jhorn \), so by adjunction, \( Y \) has the right lifting property against \( \Um(\Jhorn) \), that is, \( Y \) is fibrant in \( \Nw_0 \).
	This shows that \( \Um(i) \) is a fibrant replacement of \( \Um(X, A) \) in \( \Nw_0 \).
	By construction, its transpose is \( i \), which is an equivalence.
	This proves one of the conditions of Proposition \ref{prop:quillen_equivalence_criterion}.
	Since there is no need for cofibrant replacement in \( \Mw_0 \) and the counit of the adjunction \( \Um \dashv \sharpm{(-)} \) is the identity, the other condition immediately holds.
	This concludes the proof. 
\end{proof}

\begin{lem} \label{lem:submol_inclusions_are_acyclic}
	Let \( \iota \colon V \submol U \) be a submolecule inclusion.
	Then, in \( \Nw_0 \),
	\begin{enumerate}
		\item \( \iota \) is an acyclic cofibration,
		\item if \( U \) is round and \( \iota \) is rewritable, then \( \bd{}{} U \incl U \setminus \inter{V} \) is an acyclic cofibration. 
	\end{enumerate}
\end{lem}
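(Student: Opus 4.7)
The plan is to derive both statements from a single auxiliary claim, proved by induction on dimension: \emph{for every molecule $W$ with $\dim W > 0$, every $k < \dim W$, and every $\a \in \set{-, +}$, the inclusion $\bd{k}{\a}W \incl W$ is an acyclic cofibration in $\Nw_0$.} Assuming this, Claim 1 follows by induction on the inductive description of submolecule inclusions. Isomorphisms and composites are handled immediately since acyclic cofibrations are closed under isomorphism and composition; a pasting-factor inclusion $V \incl V \cp{k} W$ is, by the defining pushout of $V \cp{k} W$, the pushout of $\bd{k}{-}W \incl W$ along $\bd{k}{+}V \iso \bd{k}{-}W \incl V$, hence an acyclic cofibration by the auxiliary claim together with stability of acyclic cofibrations under pushout.

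The auxiliary claim is proved by induction on $\dim W$. The base case treats $W$ an atom of dimension $m = k+1$: for any $x \in \faces{}{-\a}W$, I factor $\bd{}{\a}W \incl W$ as $\bd{}{\a}W \incl \Lambda_W^x \incl W$, with the second map a horn, hence a generating anodyne extension. The first map is built up as a finite sequence of pushouts by attaching the remaining cells $y \in \faces{}{-\a}W \setminus \set{x}$ one at a time; each attachment is a pushout of the boundary inclusion $\bd{}{}\clset{y} \incl \clset{y}$ along the inclusion of its boundary into the running closed subset, and since $\clset{y}$ is an atom of dimension $m-1$, the inductive hypothesis applies to make this an acyclic cofibration. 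For $k < m - 1$, I further filter $\bd{k}{\a}W \incl W$ through $\bd{\ell}{\a}W$ for $k \leq \ell < m$ and proceed analogously. The pasting case $W = W_1 \cp{\ell} W_2$ is reduced to the atomic case by decomposing $\bd{k}{\a}W$ using the compatibility of boundaries with pasting from \cite[Chapter 5]{hadzihasanovic2024combinatorics} and applying the inductive hypothesis to $W_1$ and $W_2$.

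For Claim 2, the rewritability of $\iota\colon V \submol U$ together with the roundness of $U$ implies that $\inter V \subseteq \inter U$, so $\bd{}{}U \subseteq U \setminus \inter V$. I then filter $\bd{}{}U \incl U \setminus \inter V$ by attaching, in a compatible order, the top-dimensional atoms of $U \setminus V$ together with the remaining cells of $\bd{}{}V$ not already in $\bd{}{}U$; each attachment is a pushout of a half-boundary inclusion $\bd{}{\a}\clset{z} \incl \clset{z}$ of an atom, which is an acyclic cofibration by Claim 1 applied to the submolecule inclusion $\bd{}{\a}\clset{z} \submol \clset{z}$.

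The main obstacle will be the combinatorial bookkeeping in the atomic base case of the auxiliary claim: one must exhibit an ordering of the cells in $\faces{}{-\a}W \setminus \set{x}$ such that each intermediate stage is honestly a pushout along a boundary inclusion of a strictly lower-dimensional atom, rather than merely a monomorphism into some compound closed subset. The oriented thinness of regular directed complexes together with the structural results of \cite[Chapter 5]{hadzihasanovic2024combinatorics} guarantee that this is possible, but carrying out the bookkeeping explicitly---both in the pasting case of the auxiliary claim and in the filtration step of Claim 2---requires careful tracking of which codimension-one faces are available for attachment at each stage.
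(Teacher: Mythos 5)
Your overall skeleton --- reduce everything to half-boundary inclusions $\bd{k}{\a}W \incl W$ being acyclic cofibrations, factor the atomic case through a horn, and treat pasting-factor inclusions as pushouts --- is the right one, and it matches the structure of the proof the paper imports from \cite[Lemma 5.3]{chanavat2024model}. But there is a genuine gap in the base case of your auxiliary claim, and it forces a reorganisation of the whole induction. You build $\bd{}{\a}W \incl \Lambda^x_W$ by attaching each $y \in \faces{}{-\a}W \setminus \set{x}$ as a pushout of the \emph{full} boundary inclusion $\bd{}{}\clset{y} \incl \clset{y}$ and then invoke the inductive hypothesis for acyclicity. This cannot work: $\bd{}{}\clset{y} \incl \clset{y}$ is a generating cofibration and is never an acyclic one (already $\set{0^-, 0^+} \incl \arr$ is not an equivalence), and your inductive hypothesis concerns only half-boundary inclusions, which these are not. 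The correct identification is that, since the atom $W$ is round, $\bd{}{\a}W \cap \bd{}{-\a}W = \bd{}{}(\bd{}{-\a}W)$, so $\bd{}{\a}W \incl \Lambda^x_W$ is precisely the pushout, along $\bd{}{}(\bd{}{-\a}W) \incl \bd{}{\a}W$, of the inclusion $\bd{}{}(\bd{}{-\a}W) \incl \bd{}{-\a}W \setminus \inter \clset{x}$ --- that is, an instance of your Claim 2 for the round molecule $\bd{}{-\a}W$ and the rewritable submolecule $\clset{x}$, one dimension down. So Claim 2 is not a corollary to be dispatched at the end; it is an input to the base case of the auxiliary claim, and the two statements must be proved by a simultaneous induction on dimension. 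Your sequential ordering (auxiliary claim, then Claim 1, then Claim 2) is circular as written.

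Two smaller points. In the pasting case $W = W_1 \cp{\ell} W_2$ with $\ell < k$, you must pass from ``$\bd{k}{\a}W_i \incl W_i$ is an acyclic cofibration for each $i$'' to the induced map of pushouts being one; the paper does this via the cube lemma (Lemma \ref{lem:cube_lemma}), which is exactly why that lemma is re-established for weak model categories --- you should either invoke it or factor the map of pushouts explicitly as a composite of two pushouts of the individual acyclic cofibrations. And in your filtration for Claim 2, note that $U \setminus \inter V$ need not be pure, so some cells of $\bd{}{}V$ are not covered by any top-dimensional atom of $U \setminus V$ that you attach; the usual argument attaches the layers of the round context determined by $\iota$ as whole lower-dimensional molecules along half-boundaries, rather than one atom at a time. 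You rightly flag the bookkeeping as the hard part, but the attaching maps must be of the right type for the induction to close.
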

\begin{proof}
	Same as \cite[Lemma 5.3]{chanavat2024model}, using Lemma \ref{lem:cube_lemma}.
\end{proof}

\noindent
By \cite[Proposition 9.2.14]{hadzihasanovic2024combinatorics}, the full subcategory of \( \atom_\E \) spanned by the oriented simplices
\begin{equation*}
	\simplex{k} \eqdef \underbrace{1 \join \ldots \join 1}_{(k + 1) \text{ times}},\quad k \in \mathbb{N},
\end{equation*}
can be identified with the semi-simplex category \( \ssimcat \), that is, the wide subcategory of the simplex category on the injective maps only.
We denote by \( \ssSet \) the category of presheaves on \( \ssimcat \).
Then, we have a presheaf extension-restriction adjunction
\[\begin{tikzcd}
	\ssSet && \dCpx
	\arrow[""{name=0, anchor=center, inner sep=0}, "{\Fdelta}", curve={height=-12pt}, from=1-1, to=1-3]
	\arrow[""{name=1, anchor=center, inner sep=0}, "{-_\Delta}", curve={height=-12pt}, from=1-3, to=1-1]
	\arrow["\dashv"{anchor=center, rotate=-90}, draw=none, from=0, to=1]
\end{tikzcd}\]
such that the left adjoint is full and faithful, and identifies \( \ssSet \) with the full subcategory of \( \dCpx \) on presheaves \( X \) on \( \atom_\E \) with the property that if \( X(U) \) is not empty, then \( U = \simplex{k} \) for some \( k \geq 0 \).

Next, by \cite[Lemma 10.1.15]{hadzihasanovic2024combinatorics}, there is a functor 
\begin{equation*}
	\ordcpx{(-)} \colon \atom_\E \to \ssSet
\end{equation*}
such that, given an atom \( U \), the \( k \)\nbd simplices of \( \ordcpx{U} \) correspond to the injective chains of length \( k \) in the underlying poset of \( U \).
By left Kan extension along the Yoneda embedding, this produces an adjunction
\[\begin{tikzcd}
	\dCpx && \ssSet.
	\arrow[""{name=0, anchor=center, inner sep=0}, "{\ordcpx{-}}", curve={height=-12pt}, from=1-1, to=1-3]
	\arrow[""{name=1, anchor=center, inner sep=0}, "{\fun{N}}", curve={height=-12pt}, from=1-3, to=1-1]
	\arrow["\dashv"{anchor=center, rotate=-90}, draw=none, from=0, to=1]
\end{tikzcd}\]
We endow \( \dCpx \) with the model structure \( \Nw_0 \), and \( \ssSet \) with the Kan--Quillen weak model structure \( \MwKan 0 \) constructed in \cite[Theorem 5.2.1]{henry2020weak}, whose generating cofibrations and anodyne extensions can be identified with boundary inclusions and horns of oriented simplices, respectively.

\begin{lem} \label{lem:Fdelta_left_Quillen}
	The pair \( (\Fdelta, -_\Delta) \) determines a Quillen adjunction between $\MwKan 0$ and $\Nw_0$.
\end{lem}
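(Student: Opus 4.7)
The plan is to reduce the Quillen adjunction condition to a generator-level check: it suffices to show that \( \Fdelta \) sends each generating cofibration of \( \MwKan 0 \) to an element of \( \llp(\rlp(\Ibd)) \), and each generating anodyne extension of \( \MwKan 0 \) to an element of \( \llp(\rlp(\Jhorn)) \). Both target classes are closed under pushouts, transfinite compositions, and retracts, all of which are preserved by the left adjoint \( \Fdelta \), so this suffices to guarantee that \( \Fdelta \) preserves cofibrations and anodyne extensions, hence that \( (\Fdelta, -_\Delta) \) is a Quillen adjunction.

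The generating cofibrations of \( \MwKan 0 \) are the semi-simplicial boundary inclusions \( \partial \simplex{k} \incl \simplex{k} \) for \( k \geq 0 \), and the generating anodyne extensions are the horn inclusions \( \Lambda^x_{\simplex{k}} \incl \simplex{k} \) for \( k \geq 1 \) and \( x \in \faces{}{}\simplex{k} \), viewed in \( \ssSet \) via the identification of \( \ssimcat \) with the full subcategory of \( \atom_\E \) on the oriented simplices. Since \( \Fdelta \) preserves representables and all colimits, I would compute \( \Fdelta(\partial \simplex{k}) \) as the union in \( \dCpx \) of the representables on the codimension-1 oriented sub-simplices of \( \simplex{k} \); this coincides with the closed subset \( \bd{}{}\simplex{k} \subseteq \simplex{k} \), as every element of \( \bd{}{}\simplex{k} \) lies in some codimension-1 face. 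Hence \( \Fdelta(\partial \simplex{k} \incl \simplex{k}) \) is the boundary inclusion \( \bd{}{}\simplex{k} \incl \simplex{k} \), which is an element of \( \Ibd \).

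A parallel colimit computation identifies \( \Fdelta(\Lambda^x_{\simplex{k}}) \) with the sub-presheaf of \( \simplex{k} \) whose elements are the local embeddings whose image avoids \( \top \) and \( x \); this is precisely the closed subset \( \simplex{k} \setminus \set{\top, x} \), so \( \Fdelta(\Lambda^x_{\simplex{k}} \incl \simplex{k}) \) is the horn inclusion \( \lambda^x_{\simplex{k}} \in \Jhorn \). The entire argument reduces to colimit preservation by \( \Fdelta \) and the matching of semi-simplicial boundary and horn constructions with their oriented counterparts; no substantial obstacle is expected, the only technical point being the verification that the full embedding of \( \ssimcat \) into \( \atom_\E \) carries the simplicial diagrams defining \( \partial \simplex{k} \) and \( \Lambda^x_{\simplex{k}} \) to the oriented ones, which follows from the rigidity of oriented simplices and the bijective correspondence between injective face maps \( [j] \to [k] \) and local embeddings \( \simplex{j} \to \simplex{k} \).
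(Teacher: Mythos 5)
Your argument is correct and is essentially the argument the paper intends: the paper's proof simply defers to the analogous statement in Henry's work, which is established by exactly this generator-level check that the left adjoint carries semi-simplicial boundary inclusions into $\Ibd$ and semi-simplicial horn inclusions into $\Jhorn$, the identifications following from the fact that $\Fdelta$ preserves colimits and restricts to an equivalence onto the presheaves supported on oriented simplices (so that $\bd{}{}\simplex{k}$ and $\simplex{k}\setminus\set{\top, x}$, being such presheaves, are recovered as $\Fdelta$ of their restrictions). The only point worth making explicit is that the condition on $-_\Delta$ preserving fibrations follows by adjunction from $\Fdelta(\Jhorn^\Delta) \subseteq \Jhorn$, which your containment already gives.
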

\begin{proof}
	Same as \cite[Proposition 6.3.5]{henry2018regular}.
\end{proof}

\begin{lem} \label{lem:ordcpx_left_Quillen}
	The pair $(\ordcpx{-}, \fun{N})$ determines a Quillen adjunction between $\Nw_0$ and $\MwKan 0$.
\end{lem}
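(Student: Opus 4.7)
The plan is to verify the conditions for a Quillen adjunction at the level of generating data: it suffices to show that the left adjoint $\ordcpx{-}$ sends the generating cofibrations $\Ibd$ of $\Nw_0$ to cofibrations of $\MwKan 0$, and the generating anodyne extensions $\Jhorn$ to acyclic cofibrations. Since $\MwKan 0$ is presented \`a la Cisinski--Olschok with generating cofibrations given by boundary inclusions of oriented simplices and anodyne extensions given by simplicial horns, this reduces the adjunction to two explicit checks.

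The first check is essentially formal: for any embedding $\iota\colon V \incl U$ of finite regular directed complexes, a $k$\nbd simplex of $\ordcpx{U}$ is an injective chain of length $k$ in the underlying poset of $U$, and such a chain lies in $\ordcpx{V}$ if and only if all its elements lie in $\iota(V)$. Hence $\ordcpx{\iota}$ is a monomorphism of semi-simplicial sets, and in particular $\ordcpx{\bdmap_U}\colon \ordcpx{\bd{}{}U} \incl \ordcpx{U}$ is a cofibration in $\MwKan 0$.

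The real content lies in the second check. Fix a horn $\lambda_U^x\colon \Lambda_U^x \incl U$ in $\Jhorn$, with $n \eqdef \dim U$. The geometric picture is that $\ordcpx{U}$ is the first barycentric subdivision of the regular CW decomposition of the $n$\nbd ball associated to the atom $U$, so that $\ordcpx{U}$ is a triangulated PL $n$\nbd ball with boundary subcomplex $\ordcpx{\bd{}{}U}$ (a PL $(n-1)$\nbd sphere), inside which $\ordcpx{\clset{x}}$ is a triangulated PL $(n-1)$\nbd ball. Consequently, $\ordcpx{\Lambda_U^x}$ is the complement sphere-minus-open-ball, i.e.\ another PL $(n-1)$\nbd ball, and $\ordcpx{\lambda_U^x}$ is the inclusion of a boundary face of an $n$\nbd ball; topologically, this is a deformation retract. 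The plan is to lift this topological fact to an acyclic cofibration in $\MwKan 0$ by exhibiting $\ordcpx{\lambda_U^x}$ as a finite composite of pushouts of standard horn inclusions.

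Concretely, the maximal simplices of $\ordcpx{U} \setminus \ordcpx{\Lambda_U^x}$ correspond to maximal chains in $U$ that either end in $\top$ without passing through $x$, or that pass through both $x$ and $\top$. The strategy is to shell these simplices in a carefully chosen order---essentially descending in the position of $\top$ and $x$ relative to the other elements---so that each new top simplex is attached to the partially built subcomplex along a simplicial horn, yielding the required factorisation. The main obstacle is producing the enumeration and proving that each attachment is a horn and not a larger piece of a simplex boundary; this is a purely PL-combinatorial statement about shellings of a triangulated $n$\nbd ball onto an $(n-1)$\nbd subball of its boundary, independent of the oriented structure, and I expect it to be the only non-formal step of the argument.
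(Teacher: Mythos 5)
Your overall framework---reduce to the generating data, check that $\ordcpx{-}$ sends $\Ibd$ to cofibrations and $\Jhorn$ to acyclic cofibrations---is correct, and the first half (that $\ordcpx{\iota}$ is a monomorphism for every embedding $\iota$, since a chain lies in a closed subset if and only if all its elements do) is fine. The gap is in the second half. You reduce everything to the claim that $\ordcpx{\lambda_U^x}\colon \ordcpx{\Lambda_U^x} \incl \ordcpx{U}$ is a finite composite of pushouts of simplicial horn inclusions, i.e.\ that the triangulated $n$\nbd ball $\ordcpx{U}$ admits a shelling-type collapse onto the boundary subball $\ordcpx{\Lambda_U^x}$, and you leave this as something you ``expect''. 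This is not a routine omission: simplicial collapsibility and shellability of triangulated PL balls are delicate (there exist non-collapsible and non-shellable triangulations of balls), so the general ``PL-combinatorial statement about shellings of a triangulated $n$\nbd ball onto an $(n-1)$\nbd subball of its boundary'' that you appeal to is false without further hypotheses. Any proof along these lines would have to exploit the specific order-complex structure of atoms, and arranging the enumeration so that each attachment is exactly a horn (rather than some larger piece of a simplex boundary) is precisely where such arguments break down.

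The stronger statement is also unnecessary. In a weak model category, a cofibration is an acyclic cofibration if and only if it is an equivalence (Lemma \ref{lem:acyclic_cofib_is_cofib_equivalence}), so it suffices to show that $\ordcpx{\lambda_U^x}$ is a monomorphism between weakly contractible semi-simplicial sets and conclude by 2-out-of-3. This is the route the paper takes, by citing the corresponding argument of Henry \cite{henry2018regular}: $\ordcpx{U}$ is contractible for every atom $U$ (by \cite[Proposition 10.3.2]{hadzihasanovic2024combinatorics}; concretely, $\ordcpx{U}$ is a cone with apex the greatest element), and the contractibility of $\ordcpx{\Lambda_U^x}$ is deduced from the molecule structure of the horn by gluing contractible pieces along contractible pieces, with no explicit relative cell decomposition of the inclusion ever being produced. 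If you replace your shelling step with this contractibility-plus-2-out-of-3 argument, the rest of your proof goes through.
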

\begin{proof}
	Same as \cite[Proposition 6.3.3]{henry2018regular}, using \cite[Proposition 10.3.2]{hadzihasanovic2024combinatorics} to show that \( \ordcpx{U} \) is contractible for all atoms \( U \).
\end{proof}

\begin{lem} \label{lem:point_in_cone_is_equivalence}
	Let \( X \) be a directed complex.
	Then the canonical inclusion \( 1 \incl 1 \join X \) is an acyclic cofibration in $\Nw_0$.
\end{lem}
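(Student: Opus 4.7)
The plan is to exhibit $\inl{1}\colon 1 \incl 1 \join X$ as a transfinite composite of pushouts of horn inclusions, built up via the skeletal filtration of $X$ together with the fact that $1 \join -$ preserves connected colimits (Proposition \ref{prop:join_of_directed_complexes}). The crux is the following identification: \emph{for every atom $U$, the inclusion $1 \join \bd{}{}U \incl 1 \join U$ coincides with the horn inclusion $\lambda^{\inr{\top_U}}_{1 \join U}$}, where $\top_U$ is the greatest element of $U$. Indeed, by closure of atoms under joins, $1 \join U$ is itself an atom with top $1 \join \top_U$ of dimension $\dim U + 1$; the elements of $1 \join U$ are $\inl{1}$ together with $\inr{y}$ and $1 \join y$ for $y \in U$, so removing $1 \join \top_U$ and $\inr{\top_U}$ leaves precisely the subcomplex $1 \join \bd{}{}U$. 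A direct check via the Gray-product face formula shows that $\inr{\top_U} \in \faces{}{\a}(1 \join \top_U)$ for some $\a \in \set{-, +}$, so $\lambda^{\inr{\top_U}}_{1 \join U}$ lies in $\Jhorn$ and is therefore an acyclic cofibration.

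With this key horn lemma in hand, I would prove by induction on $n \in \mathbb{N}$ that $1 \incl 1 \join \gr{\leq n}{X}$ is an acyclic cofibration. The $0$-skeleton is built from $1 \join \varnothing = 1$ by attaching one $0$-cell at a time: since $1 \join -$ sends coproducts to pushouts over the apex $\inl{1}$, we have $1 \join (Y \sqcup 1) = (1 \join Y) \pout{1} \arr$, where the attaching map $1 \incl \arr$ is itself the horn $\lambda^{\inr{1}}_{\arr}$, so each such step is a pushout of an acyclic cofibration. For the inductive step $n \geq 1$, decompose $\gr{\leq n-1}{X} \incl \gr{\leq n}{X}$ as a transfinite composite of pushouts along boundary inclusions $\bd{}{}U_u \incl U_u$ of the $n$-cells $u$ of $X$; each such pushout is connected (since $\bd{}{}U_u$ is non-empty for $n \geq 1$), and hence $1 \join -$ preserves it by Proposition \ref{prop:join_of_directed_complexes}, producing $1 \join \gr{\leq n}{X}$ from $1 \join \gr{\leq n-1}{X}$ by transfinite composition of pushouts of horns from the key lemma. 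Composing all steps and using that $1 \join X = \colim_n 1 \join \gr{\leq n}{X}$, again by preservation of connected (here filtered) colimits, presents $1 \incl 1 \join X$ as a transfinite composite of acyclic cofibrations.

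The main obstacle is the horn identification itself: verifying at the cell level, through the augmentation-Gray-product-reduction definition of the join, that $\Lambda^{\inr{\top_U}}_{1 \join U} = 1 \join \bd{}{}U$ as oriented graded posets and that $\inr{\top_U}$ is a face of the top with the appropriate orientation so that $\Nw_0$ recognises the inclusion as an element of $\Jhorn$. This is combinatorial bookkeeping rather than conceptual difficulty, but requires care to track orientations through the augmentation step. A secondary subtlety is that adding a disjoint $0$-cell is not a connected pushout, so $1 \join -$ does not a priori preserve the colimit; this is bypassed by directly verifying the formula $1 \join (Y \sqcup 1) = (1 \join Y) \pout{1} \arr$ cell by cell, which reduces the $0$-dimensional attachment to the same horn-pushout pattern as the higher-dimensional ones.
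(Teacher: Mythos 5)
Your proof is correct and follows essentially the same route as the paper's: a skeletal induction using that \( 1 \join - \) preserves connected colimits (Proposition \ref{prop:join_of_directed_complexes}), reducing everything to the identification of \( 1 \join \bd{}{}U \incl 1 \join U \) with a horn of the atom \( 1 \join U \). Your identification of that horn as \( \lambda^{\inr{\top_U}}_{1 \join U} \) is the right one --- \( \inr{\top_U} \) is indeed a codimension-1 face of \( 1 \join U \) lying in \( \faces{}{+}(1 \join U) \), and deleting it together with the top element leaves exactly \( 1 \join \bd{}{}U \) --- whereas the paper writes \( \lambda^{\inl 0}_{1 \join U} \), which does not even parse as a horn when \( \dim U > 0 \) since \( \inl 0 \) is \( 0 \)\nbd dimensional; so on this point your computation corrects what appears to be a slip in the paper's own proof. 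Your ``secondary subtlety'' about \( 0 \)\nbd cell attachments is in fact a non-issue, since the pushout of \( \bd{}{}1 = \varnothing \incl 1 \) along \( \varnothing \incl Y \) is still a connected \( \Gamma \)\nbd colimit (only the initial object itself is non-connected), but your explicit verification of \( 1 \join (Y \sqcup 1) \simeq (1 \join Y) \pout{1} \arr \) is harmless and reaches the same conclusion.
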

\begin{proof}
	Every directed complex is obtained from $\varnothing$ as a transfinite composition of pushouts along boundary inclusions of atoms, indexed by its cells in non-decreasing dimension.
	Because $1 \join -$ preserves connected colimits, and the canonical inclusion $1 \incl 1 \join X$ is precisely $1 \join -$ applied to $\varnothing \incl X$, by stability of acyclic cofibrations under pushouts and transfinite composition, it suffices to show that $1 \join \bd{}{}U \incl 1 \join U$ is an acyclic cofibration for all atoms $U$. 
	But, letting $0$ denote the only cell of $1$, this inclusion can be identified with the horn $\lambda^{\inl 0}_{1 \join U}$ of the atom $1 \join U$, which is an acyclic cofibration by definition.
\end{proof}

\noindent We denote by \( \term \) the terminal object of \( \dCpx \).

\begin{lem} \label{lem:morphims_atom_to_terminal_is_equivalence}
	Let \( U \) be an atom.
	The unique morphism \( U \to \term \) is an equivalence in \( \Nw_0 \).
\end{lem}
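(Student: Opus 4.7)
The plan is to proceed by induction on $n \eqdef \dim U$, reducing arbitrary atoms to the base case $U = 1$ via the acyclicity of submolecule inclusions.

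For the inductive step, suppose the result holds for all atoms of dimension less than $n \geq 1$, and write $U = V \celto W$ for round molecules $V, W$ of dimension $n-1$. By Lemma \ref{lem:submol_inclusions_are_acyclic}, the submolecule inclusion $V \submol U$ is an acyclic cofibration, hence an equivalence in $\Nw_0$ by Lemma \ref{lem:acyclic_cofib_is_cofib_equivalence}. By 2-out-of-3 for equivalences in $\Ho(\Nw_0)$, it then suffices to show that $V \to \term$ is an equivalence. I would then decompose $V$ as an iterated pasting $V_1 \cp{k_1} \dots \cp{k_{m-1}} V_m$ of atoms $V_i$ of dimension at most $n-1$, and observe that each inclusion of an atomic factor $V_i$ into the partial pasting is a submolecule inclusion, hence an acyclic cofibration by the same lemma. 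Repeated applications of 2-out-of-3 then reduce the equivalence of $V \to \term$ to the equivalence of $V_i \to \term$ for any single $i$, which holds by the inductive hypothesis.

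For the base case $n = 0$, i.e.\ $U = 1$, we must show that $1 \to \term$ is an equivalence in $\Nw_0$. I would transport the problem to $\Mw_0$ via the Quillen equivalence $(\Um, \sharpm{(-)})$ of Proposition \ref{prop:0_model_structure_quillen_equivalence}: the map $1 \to \term$ is an equivalence in $\Nw_0$ iff $\sharpm 1 \to \sharpm \term = \term_{\mdCpx}$ is an equivalence in $\Mw_0$. Taking a fibrant replacement $\sharpm 1 \acof (X, A)$ in $\Mw_0$ and noting that $\Jzero$ contains all markings $\m_U$ with $\dim U > 0$, every positive-dimensional cell of $X$ is marked, so $(X, A) = \sharpm X$ for an $(\infty, 0)$-category $X$ by Theorem \ref{thm:characterisation_of_fibrants}. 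I would then verify that $\sharpm X \to \term_{\mdCpx}$ is an essential acyclic fibration via Theorem \ref{thm:characterisation_of_equivalences}: since $\term_{\mdCpx}$ has exactly one cell of each shape with all cells marked, the condition reduces to showing that any two parallel round diagrams in $X$ admit a cell between them.

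The main obstacle will be the base case, and more specifically the verification that the fibrant replacement of a point is indiscrete in this sense. One first observes that $X$ has a unique $0$-cell, as $\Jhorn$-fillers never introduce new $0$-cells and markings and saturations add no cells at all. From this, the construction of connecting cells in higher dimensions should follow by induction on dimension, iteratively filling globular horns of the form $\lambda^{n^-}_{\globe{n}}$ whose non-missing faces have already been produced from the parallel data by the inductive hypothesis and units (available on the $\mInfl$-algebra structure from Theorem \ref{thm:fibrants_admit_inflate_algebra_structure}), with Proposition \ref{prop:solutions_of_equations} ensuring that the resulting cells are marked-invertible and hence (by completeness) marked.
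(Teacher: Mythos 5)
Your reduction of the general case to $U = 1$ is sound, though more roundabout than necessary: the paper simply applies Lemma \ref{lem:submol_inclusions_are_acyclic} to the single inclusion $\bd{0}{-}U \incl U$, whose domain is already a point, so no induction on dimension or decomposition into atomic factors is needed.

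The base case, however, contains a genuine gap. Your claim that the fibrant replacement $X$ of $\sharpm{1}$ has a unique $0$\nbd cell because ``$\Jhorn$\nbd fillers never introduce new $0$\nbd cells'' is false: for the $1$\nbd dimensional atom $\arr$, the marked horn $\lambda^{0^-}_{\arr}$ has domain $\set{0^+}$, so filling it adjoins both the top cell \emph{and} a new $0$\nbd cell $0^-$. A small-object-argument fibrant replacement of the point therefore acquires many $0$\nbd cells, and an arbitrary fibrant replacement gives you no control at all. More fundamentally, the ``indiscreteness'' you need --- that every parallel pair of round diagrams in $X$ is connected by a cell --- is essentially the contractibility of the fibrant replacement of the point, i.e.\ the very statement being proved, and your sketch (iterated globular horn filling) does not establish it; at minimum you would need to connect the proliferating $0$\nbd cells by composing and inverting the $1$\nbd cells produced by the fillers, and then propagate this through all higher dimensions and all round (not just globular) shapes. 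The paper avoids this entirely by a join argument: the inclusion $i\colon 1 \incl 1 \join \term$ is an acyclic cofibration by Lemma \ref{lem:point_in_cone_is_equivalence}, the terminal object includes into $1 \join \term$ with retraction $j'$, and the single cell $u \join v$ of shape $\arr$ in $1 \join \term$ provides (after fibrant replacement) a homotopy from $i$ to $jj'i$, whence $j$ and $j'$ are equivalences and $1 \to \term = j'i$ is one too. If you want to salvage your approach, you would need to either prove the indiscreteness claim in full --- which is a substantial argument --- or replace the base case with something like the paper's cone construction.
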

\begin{proof}
	By Lemma \ref{lem:submol_inclusions_are_acyclic} applied to the inclusion \( \bd{0}{-} U \incl U \) and 2-out-of-3 for equivalences, it is enough to show that the unique morphism \( 1 \to \term \) is an equivalence.
	Let \( i \colon 1 \incl 1 \join \term \) and \( j \colon \term \incl 1 \join \term \) be the canonical inclusions. 
	By Lemma \ref{lem:point_in_cone_is_equivalence}, \( i \) is an acyclic cofibration.  
	Since \( \term \) is terminal, \( j \) has a right inverse given by the unique morphism \( j' \colon 1 \join \term \to \term \).
	Up to a fibrant replacement of \( 1 \join \term \), we construct a homotopy between \( j j' i \) and \( i \), which is enough to conclude that \( j' \) is also a homotopy left inverse of \( j \), thus that \( j \) and \( j' \) are equivalences. 
	Letting \( u \colon 1 \to 1 \) be the cell representing the identity on \( 1 \), and \( v \colon 1 \to \term \) be the unique cell of shape \( 1 \) in \( \term \), the cell \( u \join v \colon 1 \join 1 \to 1 \join \term \) has shape \( \arr \) in \( 1 \join \term \).
	Since \( \arr \gray - \) is a functorial cylinder for \( \Nw_0 \), one checks that, up to a fibrant replacement, \( u \join v \) is a homotopy from \( i \) to \( j j'i \).  
	Finally, the unique morphism \( 1 \to \term \) is the composite of \( i \) and \( j' \).
	Since both have been shown to be equivalences, we conclude.
\end{proof}

\begin{lem} \label{lem:homotopy_hypothesis_dCpx}
	The functors \( \Fdelta(\ordcpx{-}) \) and \( \ordcpx{(\Fdelta-)} \) determine self-Quillen equivalences on $\Nw_0$ and on $\MwKan 0$, respectively.
\end{lem}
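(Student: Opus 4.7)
The plan is to exploit the principle that both composite endofunctors send every atom (respectively, every oriented simplex) to a weakly contractible object, and to upgrade this to a natural comparison with the identity via cellular induction. Both endofunctors are left Quillen on their respective weak model structures, as composites of left Quillen functors (Lemmas \ref{lem:Fdelta_left_Quillen}, \ref{lem:ordcpx_left_Quillen}), and since every object is cofibrant in both $\Nw_0$ and $\MwKan 0$, it suffices by Proposition \ref{prop:quillen_equivalence_criterion} to show that the induced self-adjunctions on homotopy categories are equivalences.

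The main homotopical input is the contractibility of $\Fdelta(\ordcpx{U})$ for every atom $U$. Indeed, $\ordcpx{U}$ is contractible in $\MwKan 0$ by the argument of Lemma \ref{lem:ordcpx_left_Quillen}; since $\Fdelta$ is left Quillen and every object is cofibrant in $\MwKan 0$, $\Fdelta(\ordcpx{U})$ is weakly equivalent to $\Fdelta$ applied to the terminal semi-simplicial set, which is the oriented point $1$, in turn equivalent to $\term$ in $\Nw_0$ by Lemma \ref{lem:morphims_atom_to_terminal_is_equivalence}. The symmetric statement for $\ordcpx{(\Fdelta\,\simplex{k})}$ follows identically. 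Combined with the contractibility of atoms and oriented simplices themselves in their respective model structures (Lemma \ref{lem:morphims_atom_to_terminal_is_equivalence}), this shows that on the cellular generators, both composites are weakly equivalent to the identity.

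To conclude, I would construct a natural zigzag of weak equivalences between each endofunctor and the identity, using a functorial fibrant replacement of $\term$ as a common target through which each atom $U$ and its subdivision $\Fdelta(\ordcpx{U})$ map acyclically. The extension to all objects proceeds by cellular induction: every directed complex $X$ is a transfinite colimit of pushouts of boundary inclusions of atoms, and since $\Fdelta(\ordcpx{-})$ is a left adjoint, it preserves these colimits; since it is left Quillen, it sends boundary inclusions to cofibrations. The cube lemma (Lemma \ref{lem:cube_lemma}) then propagates equivalences through each pushout to yield a natural weak equivalence on every object, and symmetrically for $\ordcpx{(\Fdelta -)}$ using the cellular presentation of semi-simplicial sets by boundary inclusions of oriented simplices. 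The main obstacle will be producing the comparison at the level of individual atoms as genuine natural morphisms of directed complexes respecting the oriented-graded-poset structure, since the naive ``last-vertex projection'' from a barycentric subdivision does not preserve cell shapes; following \cite[Section 6.3]{henry2018regular}, the resolution is to build the zigzag through an intermediate contractible object functorial in $U$, where naturality reduces to standard functoriality properties, and the equivalence of the legs of the zigzag to two-out-of-three against the already-constructed acyclic morphisms into $\term$.
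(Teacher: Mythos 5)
The paper's own proof is a two-line black-box application: both composites are left Quillen by Lemmas \ref{lem:Fdelta_left_Quillen} and \ref{lem:ordcpx_left_Quillen}, and then \cite[Theorem 6.3.7]{henry2018regular} is invoked with distinguished objects $1$ and $\simplex{0}$, the only non-trivial hypothesis being the contractibility of atoms (Lemma \ref{lem:morphims_atom_to_terminal_is_equivalence}). You instead try to reconstruct the content of Henry's theorem by hand, and your reconstruction has a genuine gap at its central step. You route the comparison between the identity and $\Fdelta(\ordcpx{-})$ through a fibrant replacement $\fibs{\term}$ of the point. On a single atom this is harmless, since $U$ and $\Fdelta(\ordcpx{U})$ are both contractible; but this intermediate cannot be propagated through the cellular induction. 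The functor your zigzag implicitly interposes is (the colimit extension of) a constant functor, and such a functor sends the boundary inclusion $\bd{}{}\arr \incl \arr$ to the fold map $\fibs{\term} \amalg \fibs{\term} \to \fibs{\term}$, which is not a cofibration; so the hypotheses of Lemma \ref{lem:cube_lemma} already fail at the first non-contractible attachment, and indeed for $S \eqdef \arr \pout{\bd{}{}\arr} \arr$ the resulting comparison map $S \to \fibs{\term}$ is not an equivalence. The actual difficulty of the lemma is to produce an intermediate functor on $\atom_\E$ that is objectwise contractible, receives natural objectwise equivalences from both the Yoneda embedding and $\Fdelta(\ordcpx{-})$, \emph{and} whose colimit extension still sends boundary inclusions to cofibrations and preserves the cellular colimits; your closing sentence defers exactly this construction to \cite[Section 6.3]{henry2018regular}, which amounts to reproducing the citation the paper already makes without supplying the missing argument.

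Two smaller corrections. First, $\Fdelta$ is a left adjoint and does not preserve the terminal object: $\Fdelta(\term_{\ssSet})$ is the presheaf with exactly one cell of every simplicial shape and none otherwise, not the point $1$. The contractibility of $\Fdelta(\ordcpx{U})$ should instead be deduced from the equivalence $\ordcpx{U} \to \simplex{0}$ between cofibrant objects (via 2-out-of-3 against $\term_{\ssSet}$), which $\Fdelta$ preserves, together with $\Fdelta(\simplex{0}) = 1 \simeq \term$ from Lemma \ref{lem:morphims_atom_to_terminal_is_equivalence}. Second, your reduction to showing that the derived self-adjunctions are equivalences is just the paper's definition of Quillen equivalence, so citing Proposition \ref{prop:quillen_equivalence_criterion} there is unnecessary; the relevant observation (used by the paper in Proposition \ref{prop:quillen_equivalences_kan_unmarked}) is that an adjunction whose left adjoint is an equivalence is automatically an adjoint equivalence.
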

\begin{proof}
	By Lemma \ref{lem:ordcpx_left_Quillen} and Lemma \ref{lem:Fdelta_left_Quillen}, both functors are left Quillen.
	Thus, we may apply \cite[Theorem 6.3.7]{henry2018regular} first to \( \Fdelta(\ordcpx{-}) \), using \( 1 \) as distinguished object, and where the only non-trivial hypothesis is satisfied by Lemma \ref{lem:morphims_atom_to_terminal_is_equivalence}; and to \( \ordcpx{(\Fdelta-)} \), using \( \simplex{0} \) as distinguished object, where all the hypothesis are satisfied by standard arguments.
\end{proof}

\begin{prop} \label{prop:quillen_equivalences_kan_unmarked}
	The pairs $(\Fdelta, -_\Delta)$ and $(\ordcpx{-}, \fun{N})$ determine Quillen equivalences between $\Nw_0$ and $\MwKan 0$.
\end{prop}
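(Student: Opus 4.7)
The plan is to deduce both Quillen equivalences from Lemma \ref{lem:homotopy_hypothesis_dCpx} by a purely formal argument on homotopy categories. First I would invoke Proposition \ref{prop:quillen_adjunction_descends_to_homotopy_category} on the Quillen adjunctions of Lemma \ref{lem:Fdelta_left_Quillen} and Lemma \ref{lem:ordcpx_left_Quillen}, producing induced adjunctions $L_1 \dashv R_1$ and $L_2 \dashv R_2$ between $\Ho(\Nw_0)$ and $\Ho(\MwKan 0)$, with $L_1 \eqdef \Ho(\Fdelta)$ and $L_2 \eqdef \Ho(\ordcpx{-})$. Since $\ordcpx{-}$ and $\Fdelta$ are both left Quillen, they preserve cofibrant objects, so the standard compatibility of total left derived functors with composition yields natural isomorphisms $\Ho(\Fdelta \circ \ordcpx{-}) \simeq L_1 L_2$ and $\Ho(\ordcpx{-} \circ \Fdelta) \simeq L_2 L_1$. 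Combined with Lemma \ref{lem:homotopy_hypothesis_dCpx} and one more application of Proposition \ref{prop:quillen_adjunction_descends_to_homotopy_category}, this shows that both $L_1 L_2$ and $L_2 L_1$ are equivalences of categories.

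The key step is then the formal categorical fact: if $F \colon \cat{C} \to \cat{D}$ and $G \colon \cat{D} \to \cat{C}$ are functors such that both $FG$ and $GF$ are equivalences, then $F$ and $G$ are themselves equivalences. Indeed, fixing pseudo-inverses $E_1$ of $FG$ and $E_2$ of $GF$, one checks directly that $G E_1$ is a pseudo-right-inverse of $F$ since $F(GE_1) \simeq \Idd{\cat{D}}$, while $E_2 G$ is a pseudo-left-inverse since $(E_2 G)F \simeq \Idd{\cat{C}}$. Then $G E_1 \simeq E_2(GF)(GE_1) = E_2 G (FG) E_1 \simeq E_2 G$, so the two pseudo-inverses coincide and exhibit $F$ as an equivalence. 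Applying this to $L_1$ and $L_2$ completes the argument: both become equivalences of homotopy categories, and hence both Quillen adjunctions are Quillen equivalences.

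I do not anticipate any real obstacle: the geometric content sits entirely in Lemma \ref{lem:homotopy_hypothesis_dCpx}, and the rest is $2$\nbd categorical bookkeeping. The only points requiring a moment's care are the identification $\Ho(\Fdelta \circ \ordcpx{-}) \simeq L_1 L_2$, which reduces to preservation of cofibrant objects by left Quillen functors, and the observation that once $L_1$ is an equivalence of categories, its right adjoint $R_1$ is automatically a quasi-inverse, so that the induced adjunction on homotopy categories is a genuine adjoint equivalence.
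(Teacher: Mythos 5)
Your proposal is correct and follows essentially the same route as the paper: the paper's proof likewise derives from Lemma \ref{lem:homotopy_hypothesis_dCpx} that $\Ho(\Fdelta)$ and $\Ho(\ordcpx{-})$ form a pair of mutually inverse equivalences (leaving implicit the formal ``if $FG$ and $GF$ are equivalences then so are $F$ and $G$'' step that you spell out), and then concludes via the observation that an adjunction whose left adjoint is an equivalence is an adjoint equivalence. The only difference is that you make the two-out-of-six-style bookkeeping and the compatibility of derived functors with composition explicit, which the paper compresses into a single sentence.
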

\begin{proof}
	By Lemma \ref{lem:homotopy_hypothesis_dCpx}, $\Ho(\Fdelta)$ and $\Ho(\ordcpx{-})$ are a pair of equivalences between $\Ho(\MwKan 0)$ and $\Ho(\Nw_0)$.
	Since an adjunction in which the left adjoint is an equivalence is an adjoint equivalence, we conclude.
\end{proof}

\begin{thm} [Homotopy hypothesis] \label{thm:homotopy_hypothesis}
	The pair of functors
	\[
		\ordcpx{(\Um-)}\colon \cofs{\Mw_0} \to \cofs{\MwKan 0}, \quad \quad
		\sharpm{(\fun{N}-)}\colon \fibs{\MwKan 0} \to \fibs{\Mw_0}
	\]
	exhibits a Quillen equivalence between $\Mw_0$ and $\MwKan 0$.
\end{thm}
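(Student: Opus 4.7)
The proof will essentially consist of composing two Quillen equivalences that have already been established, so the main work is verifying that the composite is indeed the stated adjunction and that composition preserves the Quillen equivalence property.

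First I would recall that by Proposition \ref{prop:0_model_structure_quillen_equivalence}, the pair $(\Um, \sharpm{(-)})$ is a Quillen equivalence between $\Mw_0$ and $\Nw_0$, and by Proposition \ref{prop:quillen_equivalences_kan_unmarked}, the pair $(\ordcpx{-}, \fun{N})$ is a Quillen equivalence between $\Nw_0$ and $\MwKan 0$. Hence we have the composite diagram of Quillen adjunctions
\[\begin{tikzcd}
	\Mw_0 && \Nw_0 && \MwKan 0.
	\arrow[""{name=0, anchor=center, inner sep=0}, "{\Um}", curve={height=-12pt}, from=1-1, to=1-3]
	\arrow[""{name=1, anchor=center, inner sep=0}, "{\sharpm{(-)}}", curve={height=-12pt}, from=1-3, to=1-1]
	\arrow[""{name=2, anchor=center, inner sep=0}, "{\ordcpx{-}}", curve={height=-12pt}, from=1-3, to=1-5]
	\arrow[""{name=3, anchor=center, inner sep=0}, "{\fun{N}}", curve={height=-12pt}, from=1-5, to=1-3]
	\arrow["\dashv"{anchor=center, rotate=-90}, draw=none, from=0, to=1]
	\arrow["\dashv"{anchor=center, rotate=-90}, draw=none, from=2, to=3]
\end{tikzcd}\]

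Next I would check that the composition of Quillen adjunctions is a Quillen adjunction. The underlying adjunction of the composite $\ordcpx{(\Um-)} \dashv \sharpm{(\fun{N}-)}$ has its hom-set bijection obtained by composing those of the two factors, restricted to the appropriate subcategories of cofibrant and fibrant objects. Since $\Um$ sends $\Mw_0$-cofibrations to $\Nw_0$-cofibrations and $\ordcpx{-}$ sends $\Nw_0$-cofibrations to $\MwKan 0$-cofibrations, the composite left adjoint preserves cofibrations; dually for the right adjoint and fibrations.

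Finally, I would descend to homotopy categories. By Proposition \ref{prop:quillen_adjunction_descends_to_homotopy_category}, a Quillen adjunction of weak model categories descends to an adjunction between the homotopy categories, and applied to a composite of Quillen adjunctions this yields the composite of the descended adjunctions (up to natural isomorphism). Since each of $\Ho(\Um) \dashv \Ho(\sharpm{(-)})$ and $\Ho(\ordcpx{-}) \dashv \Ho(\fun{N})$ is an adjoint equivalence of categories, their composite is an adjoint equivalence, so $(\ordcpx{(\Um-)}, \sharpm{(\fun{N}-)})$ is a Quillen equivalence between $\Mw_0$ and $\MwKan 0$.

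There is no real obstacle here; the statement is an immediate corollary of the two preceding Quillen equivalences and the elementary fact that Quillen equivalences of weak model categories compose. The genuine work lay in Proposition \ref{prop:0_model_structure_quillen_equivalence}, which used Theorem \ref{thm:characterisation_of_fibrants} to show that fibrant replacements in $\Mw_0$ are already sharply marked, and in Proposition \ref{prop:quillen_equivalences_kan_unmarked}, which relied on the contractibility of order complexes of atoms (Lemma \ref{lem:morphims_atom_to_terminal_is_equivalence}) to apply Henry's criterion.
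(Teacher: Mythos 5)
Your proposal is correct and matches the paper's proof, which simply cites Proposition \ref{prop:0_model_structure_quillen_equivalence} and Proposition \ref{prop:quillen_equivalences_kan_unmarked} and composes the two Quillen equivalences. Your extra verification that composites of Quillen equivalences of weak model categories are Quillen equivalences is the same (implicit) step the paper relies on.
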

\begin{proof}
	Follows from Proposition \ref{prop:0_model_structure_quillen_equivalence} and Proposition \ref{prop:quillen_equivalences_kan_unmarked}.
\end{proof}

\begin{comm}
	By the results of \cite[Section 5.5]{henry2020weak}, we conclude that $\Mw_0$ presents the homotopy theory of the classical homotopy types, hence our model of $(\infty, 0)$\nbd categories satisfies the homotopy hypothesis.
\end{comm}

\noindent
We briefly discuss the possible extension of Proposition \ref{prop:quillen_equivalences_kan_unmarked} from $(\infty, 0)$\nbd categories to \inftyn\nbd categories.
Recall that a \emph{marked}, or \emph{stratified semi-simplicial set} is a semi-simplicial set \( K \) together with a subset \( A \) of marked simplices of dimension \( > 0 \). 
Marked semi-simplicial sets and morphisms that respect the marked structure form a category \( \mssSet \), with an evident forgetful functor \( \Um \colon \mssSet \to \ssSet \).
Moreover, we can lift the extension-restriction adjunction between $\ssSet$ and $\dCpx$ to an adjunction
\[\begin{tikzcd}
	\mssSet && \mdCpx,
	\arrow[""{name=0, anchor=center, inner sep=0}, "{\Fdelta}", curve={height=-12pt}, from=1-1, to=1-3]
	\arrow[""{name=1, anchor=center, inner sep=0}, "{-_\Delta}", curve={height=-12pt}, from=1-3, to=1-1]
	\arrow["\dashv"{anchor=center, rotate=-90}, draw=none, from=0, to=1]
\end{tikzcd}\]
letting $\Fdelta(K, A) \eqdef (\Fdelta K, \Fdelta A)$, in such a way that the square
\[
	\begin{tikzcd}
		\mssSet & \mdCpx \\
		\ssSet & {\dCpx.}
		\arrow["\Fdelta", hook, from=1-1, to=1-2]
		\arrow["\Um", from=1-1, to=2-1]
		\arrow["\Um", from=1-2, to=2-2]
		\arrow["\Fdelta", hook, from=2-1, to=2-2]
	\end{tikzcd}
\]
commutes.
By results of \cite[Section 5.5]{henry2020weak}, for each \( n \geq 0 \), there is a weak model structure \( \MwKan{n} \eqdef (\mssSet, \ICof{I^\Delta}, \JFib{\Jn^\Delta}) \) on semi-simplicial sets which is Quillen equivalent to the \( n \)\nbd complicial model structure on simplicial sets \cite{verity2008weak, ozornova2020model}.
The set \( I^\Delta \) can be identified, via \( \Fdelta \), with the subset of generating cofibrations of $\Mwn$ whose codomain is a marked oriented simplex; while the set \( \Jn^\Delta \) can be identified, via \( \Fdelta \), with the union of
\begin{itemize}
	\item a subset \( \Jhorn^\Delta \) of \( \Jhorn \), comprising some marked horns of oriented simplices, called the \emph{oriented complicial horn extension};
	\item the set of markings \( \set{\m_{\simplex{k}} \colon \flatm{(\simplex{k})} \to \mrk{(\simplex{k})} \mid k > n} \), called the \emph{oriented \( n \)\nbd triviality extension};
	\item markings of the form \( (\simplex{k}, A_{k, \ell}) \incl (\simplex{k}, A'_{k, \ell}) \) for all \( k \geq 2 \) and \( 0 \le \ell \le k \) and appropriate values of \( A_{k, \ell} \) and \( A'_{k, \ell} \), called the \emph{oriented complicial thinness extension};
	\item markings of the form \( (\simplex{k}, B_k) \incl (\simplex{k}, B'_k) \) for all \( k \geq 2 \) and appropriate values of \( B_k \) and \( B'_k \), called the \emph{oriented complicial saturation extensions}.
\end{itemize}  

\noindent 
The oriented complicial horn extensions and the oriented \( n \)\nbd triviality extensions are, by definitions, acyclic cofibrations in \( \Mwn \), and we are confident that the same is true of oriented complicial thinness extensions and saturation extensions, the first from the right lifting property of fibrations against marked horns where one boundary is a marked round diagram, the second from the right lifting property against saturations.
We then make the following conjecture, which, in the light of \cite{loubaton2023theory}, would imply that our model is equivalent to the main cluster of geometric models of \inftyn\nbd categories.

\begin{conj}
	The pair $(\Fdelta, -_\Delta)$ is a Quillen equivalence between \( \MwKan{n} \) and \( \Mwn \). 
\end{conj}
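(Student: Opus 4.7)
The plan is to proceed in two stages. First, I would verify that $(\Fdelta, -_\Delta)$ is a Quillen adjunction between $\MwKan{n}$ and $\Mwn$. Since $\Fdelta$ is fully faithful and preserves monomorphisms, and the generating cofibrations of $\MwKan{n}$ identify via $\Fdelta$ with the subset of cofibrations of $\Mwn$ whose codomain is a marked oriented simplex, the left class is preserved. For the generating anodyne extensions, the oriented complicial horns lie in $\Jhorn \subseteq \Jn$, the $n$\nbd triviality extensions are exactly the markings $\m_{\simplex{k}}$ for $k > n$ already included in $\Jn$, the thinness extensions should be acyclic by a direct marked horn-filling argument exploiting the right lifting property against marked horns with a marked boundary face, and the saturation extensions should follow from lifting against $\Jsat$. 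These verifications are largely mechanical modulo careful bookkeeping of the marking data.

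Second, to establish that the adjunction is a Quillen equivalence, I would invoke Proposition~\ref{prop:quillen_equivalence_criterion}. Since every object in $\MwKan{n}$ is cofibrant and $\Fdelta$ is fully faithful, it suffices to show that for each fibrant $(Y, B)$ in $\Mwn$ the counit $\Fdelta((Y,B)_\Delta) \to (Y, B)$ is an equivalence in $\Mwn$, and that for each $(K, E)$ in $\MwKan{n}$ the transpose of a fibrant replacement $\Fdelta(K,E) \acof (Y,B)$ is an equivalence in $\MwKan{n}$. By Theorem~\ref{thm:characterisation_of_equivalences}, the first condition reduces to the counit being an essential acyclic fibration of \inftyn\nbd categories, a concrete combinatorial property.

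My overall strategy for this equivalence condition is induction on $n$. The base case $n = 0$ is Theorem~\ref{thm:homotopy_hypothesis}. For the inductive step, following the route advocated in the introduction along the lines of \cite{bergner2013comparison}, I would introduce an intermediate weak model category of Segal precategories weakly enriched in $\Mwn$, together with Quillen equivalences to both $\Mw_{n+1}$ and, by transport along the inductive hypothesis, to $\MwKan{n+1}$. The key combinatorial inputs are: (i)~a hom construction extracting an $\Mwn$\nbd enriched Segal precategory from an $(\infty, n+1)$\nbd category, realising a Quillen equivalence with $\Mw_{n+1}$; (ii)~the analogous construction on the complicial side, established in the literature; and (iii)~a comparison at the enriched level furnished by the inductive hypothesis together with the compatibility of Gray products with $\Fdelta$ proved in Section~\ref{sec:directed}.

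The hard part will be ingredient~(i): proving that an $(\infty, n+1)$\nbd category in our sense is equivalently presented by a Segal object internal to \inftyn\nbd categories. This requires a combinatorial comparison between $(n+1)$\nbd dimensional round pasting diagrams in regular directed complexes and $1$\nbd dimensional round pasting diagrams whose cells are themselves $n$\nbd dimensional pasting diagrams, carried through at the level of the inflate algebra structure, markings, and the completeness axiom. A complication, flagged by the authors, is the mismatch between regular directed complexes and polygraphs for strict $n$\nbd categories appearing at $n = 4$: the extra higher-codimension interchange relations must be shown to be absorbed by weak equivalences on both sides, which is plausible but not obvious. Should the inductive route prove intractable, a fallback would be to directly construct a refined ``oriented thickening'' functor $\mssSet \to \mdCpx$ extending $\Fdelta$ so as to encode marking data combinatorially, leveraging the factorisation machinery of Section~\ref{sec:composing} to circumvent the obstruction to naive simplicial subdivision that the authors note fails already at $n = 2$.
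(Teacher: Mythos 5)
The statement you are addressing is stated in the paper as a \emph{conjecture}, and the paper contains no proof of it; there is therefore no ``paper's own proof'' to compare against. What the paper does prove is only the case $n = 0$ (Theorem \ref{thm:homotopy_hypothesis}), via the unmarked comparison $\Nw_0 \leftrightarrow \MwKan{0}$ and the Quillen equivalence $\Mw_0 \leftrightarrow \Nw_0$, and the introduction explicitly flags that the subdivision-based strategy used there breaks down already at $n = 2$, proposing the Bergner--Rezk-style inductive route as merely ``more promising''. Your proposal reproduces exactly that suggested programme, but it is a programme, not a proof.

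The genuine gap is your ingredient~(i): the claim that an $(\infty, n+1)$\nbd category in this model is equivalently presented by a Segal precategory weakly enriched in $\Mwn$. Nothing in the paper supplies this, and it is the entire mathematical content of the inductive step --- one would need to construct the intermediate weak model category, the hom/enrichment functor at the level of marked directed complexes with their inflate-algebra structures, and prove both Quillen equivalences, none of which is routine. You acknowledge this is ``the hard part'' and that the absorption of the dimension-$4$ mismatch with polygraphs is ``plausible but not obvious''; a proof cannot rest on plausibility at its central step. Even the first stage is not as mechanical as you suggest: the paper only states that the authors are ``confident'' that the oriented complicial thinness and saturation extensions are acyclic cofibrations in $\Mwn$, so establishing the Quillen adjunction itself already requires a verification that the paper does not carry out. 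As written, your submission is a reasonable research plan for attacking the conjecture, but it does not constitute a proof of it, and the statement remains open.
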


\section{A semi-strict model} \label{part:semistrict}

\subsection{Merge-complexes} \label{sec:merge}

\noindent
As Ronnie Brown used to say, a notion of composition is an ``algebraic inverse to subdivision'' \cite{brown2003intuitions}, and indeed, our notion of composition on directed complexes will arise by duality from the action of subdivisions.

\begin{dfn}[Merge-complex]
	A \emph{merge-complex} is a $\Gamma_\S$\nbd continuous presheaf on the category $\frdCpx_{\S\L}$ of finite regular directed complexes and local subdivisions.
\end{dfn}

\noindent
We let $\MCpx$ denote the category $\PSh_{\Gamma_\S}(\frdCpx_{\S\L})$ of merge-complexes.
We have the usual embedding
\[
	\frdCpx_{\S\L} \incl \MCpx
\]
as well as an adjunction
\[\begin{tikzcd}
	\dCpx && \MCpx
	\arrow[""{name=0, anchor=center, inner sep=0}, "{\FMerg}", curve={height=-12pt}, from=1-1, to=1-3]
	\arrow[""{name=1, anchor=center, inner sep=0}, "{\UMerg}", curve={height=-12pt}, from=1-3, to=1-1]
	\arrow["\dashv"{anchor=center, rotate=-90}, draw=none, from=0, to=1]
\end{tikzcd}\]
where $\UMerg$ is presheaf restriction, while $\FMerg$ is the composite of the restricted presheaf extension $\dCpx \to \PSh(\frdCpx_{\S\L})$ with the reflector onto $\MCpx$.
In fact, the following result shows that the reflector acts trivially on the image of $\dCpx$.
Let $i$ denote the inclusion $\frdCpx_\L \incl \frdCpx_{\S\L}$, so that $\Lan i$ is the induced presheaf extension functor.

\begin{lem} \label{lem:lan_takes_gamma_continuous_to_gammas_continuous}
	Let $X$ be a directed complex.
	Then the extension $\Lan i X$ is a merge-complex.
\end{lem}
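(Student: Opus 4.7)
The plan is to verify $\Gamma_\S$-continuity of $\Lan i X$ directly via Proposition \ref{prop:gamma_continuous_functors}, using the following explicit description: by Lemma \ref{lem:presheaf_restriction_to_right_class} applied to the $(\S, \L)$-orthogonal factorisation system on $\frdCpx_{\S\L}$, together with coend calculus for $\Lan i$, one may identify $(\Lan i X)(P)$ with the set of equivalence classes $[f, s]$ where $s \colon P \sd Q$ is a subdivision and $f \in X(Q)$, modulo the equivalence identifying $[f, s]$ and $[f\invrs{\varphi}, \varphi s]$ for isomorphisms $\varphi \colon Q \iso Q'$. In particular, since $i$ is fully faithful, the restriction of $\Lan i X$ along $i$ is naturally isomorphic to $X$, so I can reuse the $\Gamma$-continuity of $X$ where convenient.

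With this formula in hand, the initial object case of $\Gamma_\S$ is immediate, because the initial object in $\frdCpx_{\S\L}$ is $\varnothing$ (Proposition \ref{prop:initial_object}) and the only subdivision out of $\varnothing$ is $\idd{\varnothing}$, so $(\Lan i X)(\varnothing) = X(\varnothing)$ is a singleton. For pushouts of embeddings along embeddings, Proposition \ref{prop:pushouts_of_embeddings_along_embeddings} tells us these are computed in $\frdCpx_\L$, and combining this with Lemma \ref{lem:restriction_of_subdivision}, the required pullback condition on $\Lan i X$ reduces, after a standard calculation unpacking the equivalence relation, to the $\Gamma$-continuity of $X$.

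The key step is then to show that $\Lan i X$ sends a pushout of a co-merger $s \colon \clset{x} \sd V$ along an embedding $\iota \colon \clset{x} \incl P$, with apex $\subs{P}{V}{x}_s$, to a pullback. Given compatible elements $[g, r] \in (\Lan i X)(V)$ and $[h, q] \in (\Lan i X)(P)$, where $r \colon V \sd V'$ and $q \colon P \sd P'$ are subdivisions, I invoke Proposition \ref{prop:pushouts_of_subdivisions_along_embeddings} to glue $r$ and $q$ along the common restriction over $\clset{x}$ into a subdivision $t \colon \subs{P}{V}{x}_s \sd V' \pout{W} P'$, where $W$ is this common restriction; since $V' \pout{W} P'$ is a $\Gamma$-colimit in $\frdCpx_\L$, the $\Gamma$-continuity of $X$ then assembles $g$ and $h$ into a unique element $k \in X(V' \pout{W} P')$, yielding the desired preimage $[k, t]$. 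Injectivity is handled symmetrically, combining the essential uniqueness of $(\S, \L)$-factorisations with the universal property of the pushout.

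The main obstacle will be to extract compatible representatives from the equivalence class data: the compatibility of $[g, r]$ and $[h, q]$ on $\clset{x}$ holds only up to the equivalence relation in the formula for $\Lan i X$, so to perform the gluing one needs to choose representatives whose subdivision-parts over $\clset{x}$ literally agree. This is made possible by the stronger $(\S, \E)$-factorisation of morphisms of the form $q\iota$ with $\iota$ an embedding, noted in the remark following Proposition \ref{prop:ternary_factorisation_systems}, which ensures that the embedding-part of the $(\S, \L)$-factorisation of $q\iota$ is actually an embedding of regular directed complexes, so the identification over $\clset{x}$ can be effected by an honest embedded closed subset $W$ of both $V'$ and $P'$.
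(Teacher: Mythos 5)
Your skeleton matches the paper's proof: describe $(\Lan i X)(P)$ explicitly as classes $[u, s]$ via Lemma \ref{lem:presheaf_restriction_to_right_class}, then check continuity against the three kinds of cones in $\Gamma_\S$ using $(\S, \E)$-factorisations and the universal properties of the pushouts from Section \ref{sec:colimits} together with $\Gamma$-continuity of $X$. However, your opening claim that ``$i$ is fully faithful, so the restriction of $\Lan i X$ along $i$ is naturally isomorphic to $X$'' is false: $i$ is faithful but not full, since a local subdivision with non-trivial subdivision part is not a local embedding, and indeed $(\Lan i X)(P)$ contains every class $[u, s]$ with $s$ a non-trivial subdivision, so the unit $X \to (\Lan i X)i$ is injective but far from surjective. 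This matters because, if relied upon, it would make the $\Gamma$-part of $\Gamma_\S$-continuity appear to come for free, whereas in fact for a pushout of embeddings one must still glue the subdivision parts $s$, $s'$ into $s \cup s'$ using the universality of $P \pout{U} P'$ in $\frdCpx_{\S\L}$ (Proposition \ref{prop:pushouts_of_embeddings_along_embeddings}) and only then apply $\Gamma$-continuity of $X$ to the pushout of the codomains $s(P) \pout{t(U)} s'(P')$. Your references to Lemma \ref{lem:restriction_of_subdivision} and to ``unpacking the equivalence relation'' suggest you intend this genuine argument, so the error is not load-bearing, but the sentence must go.

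In the co-merger case your construction is correct but the pushout $V' \pout{W} P'$ is degenerate: the restriction of $[g, r]$ along the co-merger $s$ is $[g, rs]$ with $rs \colon \clset{x} \sd V'$ a subdivision, hence surjective onto all of $V'$, so the ``common restriction'' $W = q(\iota(\clset{x}))$ is isomorphic to the whole of $V'$ and $V' \pout{W} P' \cong P'$; the appeal to $\Gamma$-continuity of $X$ then produces nothing beyond $h$ itself. The actual content of the step, which is what the paper isolates, is that compatibility over $\clset{x}$ forces $q$ to factor through the co-merger there, so the universal property of $\subs{P}{V}{x}_s$ in $\frdCpx_{\S\L}$ (Lemma \ref{lem:pushouts_of_co-mergers}) yields a unique subdivision $\subs{P}{V}{x}_s \sd P'$, and the sought preimage is $h$ paired with that subdivision.
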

\begin{proof}
	We need to show that $\Lan i X$ is $\Gamma_\S$\nbd continuous, under the assumption that $X$ is $\Gamma$\nbd continuous.
	Since $(\S, \L)$ is an orthogonal factorisation system, we are in the conditions of Lemma \ref{lem:presheaf_restriction_to_right_class}, so we can explicitly describe elements $P \to \Lan i X$ as equivalence classes of pairs $[u, s]$ of a subdivision $s\colon P \sd P'$ and a diagram $u\colon P' \to X$, modulo the action of isomorphisms in the middle.
	We will portray such a pair as a ``formal diagram''
\[\begin{tikzcd}
	P & {P'} & X.
	\arrow[arloop->, "s", from=1-1, to=1-2]
	\arrow["u", "\shortmid"{marking}, from=1-2, to=1-3]
\end{tikzcd}\]
	First of all, since the only subdivision with domain $\varnothing$ is the identity, and $X$ is $\Gamma$\nbd continuous so there is a unique diagram $\varnothing \to X$, there is a unique formal diagram
	\[\begin{tikzcd}
	\varnothing & \varnothing & X
	\arrow[arloop->, "{\idd{}}", from=1-1, to=1-2]
	\arrow["{!}", "\shortmid"{marking}, from=1-2, to=1-3]
\end{tikzcd}\]
	which proves that $\Lan i X$ is continuous with respect to the initial object.
	Next, let $(\iota, \iota')$ be a span of embeddings in $\frdCpx_{\S\L}$.
	A cone under this span with tip $\Lan i X$ is represented by a formal commutative diagram
	\[\begin{tikzcd}
	& P & {s(P)} \\
	U & {t(U)} && X \\
	& {P'} & {s'(P')}
	\arrow[arloop->, "s", from=1-2, to=1-3]
	\arrow["u", "\shortmid"{marking}, from=1-3, to=2-4]
	\arrow["\iota", hook, from=2-1, to=1-2]
	\arrow[arloop->, "t", from=2-1, to=2-2]
	\arrow["{\iota'}", hook, from=2-1, to=3-2]
	\arrow["j", hook, from=2-2, to=1-3]
	\arrow["{j'}", hook, from=2-2, to=3-3]
	\arrow[arloop->, "{s'}", from=3-2, to=3-3]
	\arrow["{v}","\shortmid"{marking}, from=3-3, to=2-4]
\end{tikzcd}\]
	using the $(\S, \E)$ factorisation.
	Since $X$ is $\Gamma$\nbd continuous, the cone given by $(u, v)$ under the span $(j, j')$ of embeddings induces universally a diagram $u \cup v\colon s(P) \pout{t(U)} s'(P') \to X$.
	Precomposing this with the subdivision 
	\[
		s \cup s'\colon P \pout{U} P' \sd s(P) \pout{t(U)} s'(P')
	\]
	obtained by universality of the pushouts in $\frdCpx_{\S\L}$, we obtain a formal commutative diagram
	\[\begin{tikzcd}
	& P & {s(P)} \\
	U && {P \pout{U} P'} & {s(P) \pout{t(U)} s'(P')} & X \\
	& {P'} & {s'(P')}
	\arrow[arloop->, "s", from=1-2, to=1-3]
	\arrow[hook, from=1-2, to=2-3]
	\arrow[hook, from=1-3, to=2-4]
	\arrow["u", "\shortmid"{marking}, curve={height=-18pt}, from=1-3, to=2-5]
	\arrow["\iota", hook, from=2-1, to=1-2]
	\arrow["{\iota'}", hook, from=2-1, to=3-2]
	\arrow["\lrcorner"{anchor=center, pos=0.125, rotate=-135}, draw=none, from=2-3, to=2-1]
	\arrow[arloop->, dashed, "{s \cup s'}", from=2-3, to=2-4]
	\arrow[dashed, "{u \cup v}", "\shortmid"{marking}, from=2-4, to=2-5]
	\arrow[hook, from=3-2, to=2-3]
	\arrow[arloop->, "{s'}", from=3-2, to=3-3]
	\arrow[hook, from=3-3, to=2-4]
	\arrow["v", "\shortmid"{marking}, curve={height=18pt}, from=3-3, to=2-5]
\end{tikzcd}\]
	which proves that $\Lan i X$ is continuous with respect to the pushout of $(\iota, \iota')$.
	Finally, let $(\iota, s)$ be span of an embedding and a subdivision.
	Then, using only $(\S, \E)$ factorisations, a cone under this span with tip $\Lan i X$ is represented by a formal diagram
	\[\begin{tikzcd}[row sep=scriptsize]
	& V & {t(U)} \\
	U &&&& X. \\
	& P && {s'(P)}
	\arrow[arloop->, "{t'}", from=1-2, to=1-3]
	\arrow["u", "\shortmid"{marking}, curve={height=-12pt}, from=1-3, to=2-5]
	\arrow["{\iota'}", hook, from=1-3, to=3-4]
	\arrow[arloop->, "s", from=2-1, to=1-2]
	\arrow[arloop->, "t", curve={height=12pt}, from=2-1, to=1-3]
	\arrow["\iota", hook, from=2-1, to=3-2]
	\arrow[arloop->, "{s'}", from=3-2, to=3-4]
	\arrow["v", "\shortmid"{marking}, from=3-4, to=2-5]
\end{tikzcd}\]
	By universality of the pushout $\subs{P}{V}{\iota(U)}_s$ in $\frdCpx_{\S\L}$, there is a unique subdivision making the formal diagram
\[\begin{tikzcd}
	& V & {t(U)} \\
	U && {\subs{P}{V}{\iota(U)}_s} && X \\
	& P && {s'(P)}
	\arrow[arloop->, "{t'}", from=1-2, to=1-3]
	\arrow[hook, from=1-2, to=2-3]
	\arrow["u", "\shortmid"{marking}, curve={height=-12pt}, from=1-3, to=2-5]
	\arrow["{\iota'}", curve={height=-6pt}, hook, from=1-3, to=3-4]
	\arrow[arloop->, "s", from=2-1, to=1-2]
	\arrow["\iota", hook, from=2-1, to=3-2]
	\arrow["\lrcorner"{anchor=center, pos=0.125, rotate=-135}, draw=none, from=2-3, to=2-1]
	\arrow[arloop->, dashed, from=2-3, to=3-4]
	\arrow[arloop->, from=3-2, to=2-3]
	\arrow[arloop->, "{s'}", from=3-2, to=3-4]
	\arrow["v", "\shortmid"{marking}, from=3-4, to=2-5]
\end{tikzcd}\]
	commute.
	This proves that $\Lan i X$ is continuous with respect to the pushout of $(\iota, s)$, and we conclude that $\Lan i X$ is $\Gamma_\S$\nbd continuous, that is, it is a merge-complex.
\end{proof}

\begin{comm}
	It follows from Lemma \ref{lem:lan_takes_gamma_continuous_to_gammas_continuous} that the left adjoint $\FMerg$ is simply a restriction and corestriction of $\Lan i$, and we can explicitly describe diagrams $P \to \FMerg X$ as pairs $[u\colon P' \to X, s\colon P \sd P']$.
\end{comm}

\begin{rmk}
	Unlike the adjunction $\fun{F}_\Infl \dashv \fun{U}_\Infl$, the adjunction $\FMerg \dashv \UMerg$ is \emph{not} monadic.
	Indeed, one can observe that every $\Gamma$\nbd continuous presheaf on $\frdCpx_{\S\L}$ is also an algebra for the monad induced by this adjunction, and it is easy to come up with examples of $\Gamma$\nbd continuous presheaves which are not $\Gamma_\S$\nbd continuous.
	In fact, the category of algebras for $\UMerg\FMerg$ is equivalent to the category of $\Gamma$\nbd continuous, not necessarily $\Gamma_\S$\nbd continuous presheaves.
\end{rmk}

\begin{rmk}
	Note that the analogue of Lemma \ref{lem:lan_takes_gamma_continuous_to_gammas_continuous} does not hold for the inclusion $\frdCpx_\L \incl \frdCpx_{\C\L}$; the point of failure is the absence of a $(\C, \E)$ factorisation system.
\end{rmk}

\noindent
With regards to \emph{diagrams, pasting diagrams}, and all related notions, since we have the sequence of inclusions $\frdCpx_{\L} \incl \frdCpx_{\S\L} \incl \MCpx$, we can import all the terminology from directed complexes to merge-complexes.
The ability to act on diagrams with subdivisions, however, introduces new possibilities.

\begin{dfn}[Local decomposition of a diagram]
	Let $u\colon U \to X$ and $w\colon W \to X$ be diagrams in a merge-complex, let $\iota\colon V \incl U$ be an embedding, $s\colon V \sd W$ be a subdivision, $v \eqdef u\iota$, and suppose that $v = ws$.
	We let $\subs{u}{w}{\iota(v)}_s$ denote the diagram universally obtained by $\Gamma_\S$\nbd continuity of $X$ in
	\[\begin{tikzcd}
	V & W \\
	U & {\subs{U}{W}{\iota(V)}_s} && X
	\arrow[arloop->, "s", from=1-1, to=1-2]
	\arrow["\iota", hook, from=1-1, to=2-1]
	\arrow[hook, from=1-2, to=2-2]
	\arrow["w", curve={height=-12pt}, from=1-2, to=2-4]
	\arrow[arloop->, from=2-1, to=2-2]
	\arrow["u", curve={height=30pt}, from=2-1, to=2-4]
	\arrow["\lrcorner"{anchor=center, pos=0.125, rotate=180}, draw=none, from=2-2, to=1-1]
	\arrow["{\subs{u}{w}{\iota(v)}_s}", dashed, from=2-2, to=2-4]
\end{tikzcd}\]
	and call it a \emph{local decomposition of $u$}.
\end{dfn}

\begin{dfn}[Merger of a round diagram]
	Let $u\colon U \to X$ be a round diagram in a merge-complex.
	The \emph{merger of $u$} is the cell $\mrgcom{u} \eqdef u\mrg{}_U\colon \mrg{U} \to X$.
\end{dfn}

\noindent
Let $U$ be a round molecule, $n \eqdef \dim U$.
By \cite[Proposition 9.1.11]{hadzihasanovic2024combinatorics}, there is a unique subdivision $o\colon \globe{n} \sd U$.

\begin{dfn}[Globular composite of a round diagram]
	Let $u\colon U \to X$ be a round diagram in a merge-complex, $n \eqdef \dim U$.
	The \emph{globular composite of $u$} is the cell $\glcom{u} \eqdef uo\colon \globe{n} \to X$.
\end{dfn}

\begin{prop} \label{prop:merge_complexes_are_lfp}
	The category $\MCpx$ is locally finitely presentable.
\end{prop}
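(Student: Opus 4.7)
The plan is to exhibit $\MCpx$ as the category of models of a finite limit sketch, and invoke the same criterion from \cite[Corollary 1.52]{adamek1994locally} used in the proof of Proposition \ref{cor:local_presentability_of_mdcpx}.

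First I would note that $\frdCpx_{\S\L}$ is essentially small: there are countably many isomorphism classes of finite regular directed complexes, and for each pair there are only finitely many embeddings and finitely many subdivisions between them, hence only a set of local subdivisions. Fix a small skeleton $\cat{C}$ of $\frdCpx_{\S\L}$, and work with the opposite category $\opp{\cat{C}}$.

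Next, I would build the sketch on $\opp{\cat{C}}$ by declaring the distinguished limit cones to be the duals of the cones in $\Gamma_\S$, after transporting them along the chosen skeletal identification. By the remark following the definition of $\Gamma_\S$, this constitutes a set, not a proper class, of cones. Each such cone is a \emph{finite} limit cone: the initial object $\varnothing$ of $\Gamma$ dualises to a terminal cone over the empty diagram, while each pushout of a boundary inclusion along an embedding or of a co-merger along an embedding dualises to a pullback square, which is a limit over a finite diagram. Hence the resulting sketch is a finite limit sketch.

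By construction, a model of this sketch — that is, a functor $\opp{\cat{C}} \to \Set$ sending every distinguished cone to a limit cone — is exactly a $\Gamma_\S$\nbd continuous presheaf on $\frdCpx_{\S\L}$, which is an object of $\MCpx$; the identification extends to morphisms tautologically. By \cite[Corollary 1.52]{adamek1994locally}, the category of models of a finite limit sketch is locally finitely presentable, and we conclude. There is no genuine obstacle to this argument; the only thing to check is the finiteness of the diagrams underlying the cones in $\Gamma_\S$, which is transparent from their definition as pushouts and an initial object.
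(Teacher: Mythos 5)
Your proof is correct and follows exactly the paper's own argument: the paper's proof is the one-line observation that $\MCpx$ is a category of presheaves continuous with respect to a set of finite colimit cones, hence a category of models of a finite limit sketch, and therefore locally finitely presentable by the criterion of \cite[Corollary 1.52]{adamek1994locally}. Your write-up merely spells out the same steps (essential smallness of $\Gamma_\S$, dualisation of the cones, finiteness of the underlying diagrams) in more detail.
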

\begin{proof}
	It is a category of continuous presheaves with respect to a set of finite colimit cones, which is directly a category of models of a finite limit sketch.
\end{proof}

\begin{thm} \label{thm:gray_product_of_merge-complexes}
	There is an essentially unique biclosed monoidal structure $(\MCpx, \gray, 1)$ such that the embedding of $(\frdCpx_{\S\L}, \gray, 1)$ is strong monoidal.
	Moreover, the functor $\FMerg\colon (\dCpx, \gray, 1) \to (\MCpx, \gray, 1)$ is strong monoidal.
\end{thm}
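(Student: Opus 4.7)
The plan is to obtain both parts of the theorem as direct applications of Lemma \ref{lem:day_for_gamma_continuous_presheaves}, with the non-trivial work concentrated in the verification of the hypotheses. For the first statement, I take $(\cat{C}, \otimes, I) = (\frdCpx_{\S\L}, \gray, 1)$---which is a monoidal category by Proposition \ref{prop:gray_product_of_regular_functors}---and $\Xi = \Gamma_\S$. The preservation hypothesis, namely that $P \gray -$ and $- \gray P$ preserve $\Gamma_\S$-colimits for every regular directed complex $P$, is exactly the content of the second part of Proposition \ref{prop:gray_preservation_of_colimits}. Lemma \ref{lem:day_for_gamma_continuous_presheaves} then produces the essentially unique biclosed monoidal structure on $\MCpx = \PSh_{\Gamma_\S}(\frdCpx_{\S\L})$ for which the embedding of $(\frdCpx_{\S\L}, \gray, 1)$ is strong monoidal.

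For the second statement, I will apply the functorial part of Lemma \ref{lem:day_for_gamma_continuous_presheaves} to the inclusion functor $i\colon \frdCpx_\L \incl \frdCpx_{\S\L}$, with the classes $\Gamma$ on the source and $\Gamma_\S$ on the target. Strong monoidality of $i$ is again part of Proposition \ref{prop:gray_product_of_regular_functors}. The non-trivial hypothesis is that $\Lan i\colon \PSh(\frdCpx_\L) \to \PSh(\frdCpx_{\S\L})$ commutes with the reflectors, that is, $(\Lan i)\fun{r}_\Gamma \simeq \fun{r}_{\Gamma_\S}(\Lan i)$ naturally. To verify this, I first note that for any $\Gamma_\S$-continuous presheaf $Y$ on $\frdCpx_{\S\L}$ the restriction $Y \circ i$ is $\Gamma$-continuous, simply because $i$ sends $\Gamma$-colimit cones to $\Gamma_\S$-colimit cones (indeed, the relevant colimits in $\frdCpx_{\S\L}$ are computed as in $\frdCpx_\L$ by Proposition \ref{prop:pushouts_of_embeddings_along_embeddings}). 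Then, using the presheaf adjunction $\Lan i \dashv (-) \circ i$ together with the universal properties of the two reflectors, I obtain the chain of natural isomorphisms
\[
	\Hom((\Lan i)(\fun{r}_\Gamma X), Y) \simeq
	\Hom(\fun{r}_\Gamma X, Y \circ i) \simeq
	\Hom(X, Y \circ i) \simeq
	\Hom((\Lan i) X, Y) \simeq
	\Hom(\fun{r}_{\Gamma_\S}((\Lan i)X), Y)
\]
for any $\Xi$-continuous presheaf $X$; by Yoneda this gives the desired natural isomorphism. Lemma \ref{lem:day_for_gamma_continuous_presheaves} then yields that the restriction of $\Lan i$ to $\Gamma$-continuous presheaves is strong monoidal into $\Gamma_\S$-continuous presheaves.

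It remains to identify this restricted functor with $\FMerg$. By Lemma \ref{lem:lan_takes_gamma_continuous_to_gammas_continuous}, if $X$ is a directed complex (hence $\Gamma$-continuous), then $(\Lan i)X$ is already $\Gamma_\S$-continuous, so that the reflector $\fun{r}_{\Gamma_\S}$ acts trivially on it; thus $\FMerg X = \fun{r}_{\Gamma_\S}((\Lan i)X) \simeq (\Lan i)X$ naturally. Consequently $\FMerg\colon (\dCpx, \gray, 1) \to (\MCpx, \gray, 1)$ inherits the strong monoidal structure of $\Lan i$, completing the proof.

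The only step requiring genuine care is the verification that $\Lan i$ commutes with reflectors; the rest is bookkeeping of universal properties. The fact that this verification reduces to Lemma \ref{lem:lan_takes_gamma_continuous_to_gammas_continuous}---which exploits the $(\S, \E)$ orthogonal factorisation system and the precise form of $\Gamma_\S$-colimits---explains why strong monoidality of $\FMerg$ is a natural consequence of the existence of the ternary factorisation system from Proposition \ref{prop:ternary_factorisation_systems}, rather than an independent calculation.
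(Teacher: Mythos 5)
Your proof is correct and follows essentially the same route as the paper: both parts are obtained from Lemma \ref{lem:day_for_gamma_continuous_presheaves}, with the colimit-preservation hypothesis supplied by Proposition \ref{prop:gray_preservation_of_colimits} and the commutation of $\Lan i$ with the reflectors deduced from Lemma \ref{lem:lan_takes_gamma_continuous_to_gammas_continuous} together with the fact that $\Gamma_\S$ contains the image of $\Gamma$. The paper's verification of the reflector-commutation is terser (two displayed isomorphisms) where you spell out the underlying adjunction/Yoneda chain, but the idea is identical; the only blemish is that your chain should be stated for an arbitrary presheaf $X$ on $\frdCpx_\L$ rather than a ``$\Xi$-continuous'' one, and it silently uses Lemma \ref{lem:lan_takes_gamma_continuous_to_gammas_continuous} to know that $(\Lan i)(\fun{r}_\Gamma X)$ already lies in $\MCpx$ before Yoneda can be applied there.
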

\begin{proof}
	By Proposition \ref{prop:merge_complexes_are_lfp} and Proposition \ref{prop:gray_preservation_of_colimits}, $(\frdCpx_{\S\L}, \gray, 1)$ with the class of colimit cones $\Gamma_\S$ meets the conditions of Lemma \ref{lem:day_for_gamma_continuous_presheaves}.
	Moreover, by Lemma \ref{lem:lan_takes_gamma_continuous_to_gammas_continuous}, we have
	\[
		\fun{r}_{\Gamma_\S} (\Lan i) \fun{r}_\Gamma \simeq (\Lan i) \fun{r}_\Gamma,
	\]
	and because $\Gamma_\S$ is a superset of $i(\Gamma)$,
	\[
		\fun{r}_{\Gamma_\S} (\Lan i) \fun{r}_\Gamma \simeq \fun{r}_{\Gamma_\S} (\Lan i).
	\]
	Since $i\colon (\frdCpx_\L, \gray, 1) \incl (\frdCpx_{\S\L}, \gray, 1)$ is strong monoidal, we conclude again using Lemma \ref{lem:day_for_gamma_continuous_presheaves}.
\end{proof}

\begin{comm}
	We can give an alternative description of the Gray product of two merge-complexes as follows.
	Let $\Merg \eqdef \UMerg\FMerg$ be the monad determined by the adjunction $\FMerg \dashv \UMerg$ and let $X$, $Y$ be directed complexes.
	Using our explicit description of $\FMerg$, we see that $\Merg$ is commutative, with the left strength
	\begin{align*}
		\tau_{X, Y}\colon X \gray \Merg Y & \to \Merg(X \gray Y), \\
			u \gray [v, s] & \mapsto [u \gray v, \idd{} \gray s]
	\end{align*}
	and the dual right strength
	\begin{align*}
		\sigma_{X, Y}\colon \Merg X \gray Y & \to \Merg(X \gray Y), \\
			[u, s] \gray v & \mapsto [u \gray v, s \gray \idd{}]
	\end{align*}
	giving rise to the natural transformation
	\begin{align*}
		\alpha_{X, Y}\colon \Merg X \gray \Merg Y & \to \Merg(X \gray Y), \\
			[u, s] \gray [v, t] & \mapsto [u \gray v, s \gray t].
	\end{align*}
	For each merge-complex $X$, let $\varepsilon_X\colon \FMerg \UMerg X \to X$ be the counit of the adjunction.
	Then, given merge-complexes $X$, $Y$, the Gray product $X \gray Y$ can be presented as the coequaliser of the diagram
\[\begin{tikzcd}[column sep=tiny]
	& {\FMerg\Merg(\UMerg X \gray \UMerg Y)} \\
	{\FMerg(\Merg\UMerg X \gray \Merg\UMerg Y)} && {\FMerg(\UMerg X \gray \UMerg Y)}
	\arrow["{\varepsilon_{\FMerg(\UMerg X \gray \UMerg Y)}}", from=1-2, to=2-3]
	\arrow["{\FMerg(\alpha_{\UMerg X, \UMerg Y})}", from=2-1, to=1-2]
	\arrow["{\FMerg(\UMerg\varepsilon_X \gray \UMerg\varepsilon_Y)}"', from=2-1, to=2-3]
\end{tikzcd}\]
	in $\MCpx$.
	Less formally and more concretely, $X \gray Y$ is the quotient of $\FMerg(\UMerg X \gray \UMerg Y)$, whose cells of shape $U$ are of the form $[v \gray w, s]$ for a pair of diagrams $u\colon V \to X$ and $w\colon W \to Y$ and a subdivision $s\colon U \sd V \gray W$, by the equivalence relation generated by
	\[
		[v \gray w, s \gray t] = [vs \gray wt, \idd{}]
	\]
	whenever the equation is well-formed.
\end{comm}

\noindent
Next, we define marked versions of merge-complexes.
Let $X$ be a merge-complex and $A \subseteq \gr{>0}{\cell X}$.
Then, we let $\clcom{A}$ be the closure of $A$ under the following conditions: 
\begin{enumerate}
	\item if $U$ is an atom, $s\colon U \sd V$ a subdivision, and $v\colon V \to X$ an $\clcom{A}$\nbd round diagram, then $vs\colon U \to X$ is in $\clcom{A}$,
	\item if $U$, $V$ are atoms, $s\colon U \sd V$ a subdivision, $v\colon V \to X$ a cell, and $vs$ is in $\clcom{A}$, then $v$ is in $\clcom{A}$.
\end{enumerate}

\begin{rmk}
	The second condition is equivalent to the following: given a cell $u\colon U \to X$ in $\clcom{A}$, a subdivision $s\colon \bd{}{}U \sd \bd{}{}V$ of the boundary of $U$, and a diagram $\bd{}{}v\colon \bd{}{}V \to X$ such that $\bd{}{}u = (\bd{}{}v)s$, the decomposition $\subs{u}{\bd{}{}v}{\bd{}{}u}_s$ is in $\clcom{A}$.
	Note that the converse implication is a special case of the first condition.
\end{rmk}

\begin{dfn}[Marked merge-complex]
	A \emph{marked merge-complex} is a pair $(X, A)$ of a merge-complex $X$ and a set $A \subseteq \gr{>0}{\cell X}$ such that $A = \clcom{A}$.
	Given marked merge-complexes $(X, A)$, $(Y, B)$, a \emph{morphism} $f\colon (X, A) \to (Y, B)$ is a morphism $f\colon X \to Y$ in $\MCpx$ such that $A \subseteq \invrs{f}B$.
\end{dfn}

\begin{comm}
	The intuition between the two closure conditions on $A$ is the following.
	The first condition asks that composites of marked round diagrams be marked; this is an expected property of the composition of equivalences in an \inftyn\nbd category.
	The second condition asks that the property of being marked remain stable under decompositions in the boundary of a cell.
	The idea is that different boundary decompositions of a cell $u$ represent the ``same'' cell---which, ultimately, has the globular composite $\glcom{u}$ as representative---so the marking should be a property of the entire equivalence class.
\end{comm}

\noindent
We let $\mMCpx$ denote the category of marked merge-complexes.
As in the case of marked directed complexes, we have a forgetful functor
\[
	\Um\colon \mMCpx \to \MCpx
\]
as well as the ``same'' left and right adjoints $\flatm{-}$ and $\sharpm{-}$ sending $X$ to $(X, \varnothing)$ and to $(X, \gr{>0}{\cell X})$, respectively.
Moreover, the adjunction $\FMerg \dashv \UMerg$ lifts to an adjunction
\[\begin{tikzcd}
	\mdCpx && \mMCpx,
	\arrow[""{name=0, anchor=center, inner sep=0}, "{\FmMerg}", curve={height=-12pt}, from=1-1, to=1-3]
	\arrow[""{name=1, anchor=center, inner sep=0}, "{\UmMerg}", curve={height=-12pt}, from=1-3, to=1-1]
	\arrow["\dashv"{anchor=center, rotate=-90}, draw=none, from=0, to=1]
\end{tikzcd}\]
such that $\FmMerg(X, A) \eqdef (\FMerg X, \clcom{A})$, where the set $A \subseteq \gr{>0}{\cell X}$ is identified with the set $\set{[u, \idd{}] \mid u \in A} \subseteq \gr{>0}{\cell (\FMerg X)}$, and $\UmMerg(X, A) = (\UMerg X, A)$.
The following is proved by a routine argument.

\begin{prop} \label{prop:colimits_of_marked_merge_complexes}
	Let $\fun{F}\colon \cat{J} \to \mMCpx$ be a small diagram.
	Then
	\begin{enumerate}
		\item every colimit cone $\gamma$ under $\Um\fun{F}$ in $\MCpx$ with tip $X$ lifts to a colimit cone under $\fun{F}$ in $\mMCpx$ with tip
			\[
				\left( X, \clcom{\bigcup_{j \in \Ob \cat{J}} \set{\gamma_j(u) \mid \text{$u$ is marked in $\fun{F}j$}}} \right),
			\]
		\item every limit cone $\delta$ over $\Um\fun{F}$ in $\MCpx$ with tip $X$ lifts to a limit cone over $\fun{F}$ in $\mMCpx$ with tip
			\[
				\left( X, \bigcap_{j \in \Ob \cat{J}} \set{u \in \gr{>0}{\cell X} \mid \text{$\delta_j(u)$ is marked in $\fun{F}j$}} \right).
			\]
	\end{enumerate}
\end{prop}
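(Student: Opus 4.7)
The plan is to reduce both parts to the existence of the underlying colimits/limits in $\MCpx$, together with a single auxiliary observation about how morphisms of merge-complexes interact with the closure operator $\clcom{-}$. Explicitly, I will first establish the following preservation lemma: \emph{if $f\colon X \to Y$ is a morphism of merge-complexes and $B \subseteq \gr{>0}{\cell Y}$ is closed under the two clauses defining $\clcom{-}$, then $\invrs{f}B$ is also closed.} This is immediate once one unfolds the definitions: for clause one, if $v\colon V \to X$ is an $\invrs{f}B$-round diagram, then $f v\colon V \to Y$ is a $B$-round diagram, so for every subdivision $s\colon U \sd V$ we have $f(vs) = (fv)s \in B$ by closure of $B$, whence $vs \in \invrs{f}B$; clause two is analogous, reading the implication $f(vs) \in B \Rightarrow f(v) \in B$ in $Y$ and pulling back.

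For the colimit statement, set $\widetilde A \eqdef \clcom{\bigcup_{j} \set{\gamma_j(u) \mid u \text{ marked in } \fun{F}j}}$. By construction $(X, \widetilde A)$ is a marked merge-complex and each $\gamma_j$ lifts to a morphism into it. To check universality, suppose $(\delta_j\colon \fun{F}j \to (Y, B))_j$ is a competing cone in $\mMCpx$; the universal property in $\MCpx$ (whose existence is part of the hypothesis) yields a unique $u\colon X \to Y$ with $u\gamma_j = \delta_j$ for all $j$. Since $B$ is closed, the preservation lemma makes $\invrs{u}B$ closed as well; and $\invrs{u}B$ contains every $\gamma_j(v)$ with $v$ marked in $\fun{F}j$, because $\delta_j$ is marking-preserving. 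Therefore $\invrs{u}B$ contains the entire closure $\widetilde A$, so $u$ is a morphism of marked merge-complexes.

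For the limit statement, each set $\set{u \in \gr{>0}{\cell X} \mid \delta_j(u) \text{ marked in } \fun{F}j}$ is the preimage $\invrs{\delta_j}(\text{marked cells of } \fun{F}j)$, hence closed by the preservation lemma. Intersections of closed sets are manifestly closed, so the proposed tip $(X, \widetilde A)$ is a well-defined marked merge-complex and the projections $\delta_j$ lift to morphisms into each $\fun{F}j$. Universality transports trivially from $\MCpx$: given a competing cone $(u_j\colon (Z, C) \to \fun{F}j)_j$, the universal $u\colon Z \to X$ in $\MCpx$ satisfies $\delta_j u = u_j$, so $u(C) \subseteq \invrs{\delta_j}(\text{marked})$ for every $j$, and hence $u(C) \subseteq \widetilde A$.

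There is no genuine obstacle here: the preservation lemma is an elementary check resting only on naturality of the subdivision action on cells, and the rest is direct bookkeeping parallel to Proposition \ref{prop:colimits_of_marked_directed_complexes}. The only subtlety worth flagging is that, unlike in the marked directed complex case, one cannot take $\widetilde A$ in the colimit to be merely the union of images of marked cells, since such a union need not be closed under $\clcom{-}$; wrapping it in the closure is both necessary (for $(X, \widetilde A)$ to be a marked merge-complex) and sufficient (by the preservation lemma, any cocone target absorbs the closure automatically).
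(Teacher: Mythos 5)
Your proposal is correct, and it is precisely the ``routine argument'' the paper alludes to (the paper gives no explicit proof of Proposition~\ref{prop:colimits_of_marked_merge_complexes}, only the remark that it is routine): the preservation lemma that $\invrs{f}B$ is $\clcom{-}$-closed whenever $B$ is follows from naturality of the subdivision action exactly as you say, and the rest parallels Proposition~\ref{prop:colimits_of_marked_directed_complexes}. Your closing remark correctly identifies the one point where the marked-merge case genuinely differs from the marked-directed case, namely that the colimit marking must be wrapped in $\clcom{-}$, which is exactly how the statement is formulated.
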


\begin{prop} \label{prop:local_presentability_of_marked_mcpx}
	The category $\mMCpx$ is locally finitely presentable.
\end{prop}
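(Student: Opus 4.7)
The plan is to present $\mMCpx$ as a reflective subcategory of a larger locally finitely presentable category, closed under filtered colimits, from which local finite presentability of $\mMCpx$ follows by standard theory.

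First, let $\cat{E}$ denote the category of pairs $(X, A)$ where $X$ is a merge-complex and $A \subseteq \gr{>0}{\cell X}$ is an arbitrary subset of cells of positive dimension, with morphisms defined exactly as in $\mMCpx$ but with no closure requirement imposed on $A$. The category $\cat{E}$ is locally finitely presentable by an adaptation of the sketch-based construction from the proof of Proposition \ref{cor:local_presentability_of_mdcpx}: extend a finite limit sketch whose models are merge-complexes (which exists by Proposition \ref{prop:merge_complexes_are_lfp}) by adjoining, for each atom $U$ with $\dim U > 0$, an object $\mrk{U}$ together with a morphism $\m_U \colon \flatm{U} \to \mrk{U}$, and a pushout cone on $\m_U$ against itself as in (\ref{eq:marking_marking_pushout}) forcing $\m_U$ to be an epimorphism. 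The full subcategory $\mMCpx \hookrightarrow \cat{E}$ is then reflective, with reflector $(X, A) \mapsto (X, \clcom{A})$ leaving the underlying merge-complex unchanged and only enlarging the marking.

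Next, I will verify that the inclusion $\mMCpx \hookrightarrow \cat{E}$ preserves filtered colimits. Both closure conditions defining $\clcom{(-)}$ involve only finite data: a subdivision $s \colon U \sd V$ between atoms, together with finitely many top-dimensional cells of $V$. Given a filtered diagram $((X_i, A_i))_{i \in I}$ in $\mMCpx$ with colimit $(X, A)$ computed in $\cat{E}$, any witness in $X$ to one of the closure conditions factors, by filteredness and finiteness of the relevant shapes, through some $(X_i, A_i)$; since $A_i = \clcom{A_i}$, the conclusion of the closure condition already holds in $A_i$, hence in $A$. This gives $A = \clcom{A}$, so $(X, A)$ lies in $\mMCpx$. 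By the standard result that a reflective subcategory of a locally finitely presentable category, closed under filtered colimits, is itself locally finitely presentable, we conclude.

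The main obstacle is the first step: the explicit construction of a finite limit sketch for $\cat{E}$. While this follows the template of Proposition \ref{cor:local_presentability_of_mdcpx}, some care is needed to ensure that the $\Gamma_\S$-continuity cones in the sketch for $\MCpx$ interact properly with the added marking data, and that the sketch remains a finite-limit sketch (that is, each cone involves only a finite diagram). The filtered-colimit preservation is then a routine finitariness check once the closure operations are recognised as involving only finitely many cells at a time.
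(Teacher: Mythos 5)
Your proof is correct, but it takes a different route from the paper's. The paper proves local finite presentability by directly exhibiting a finite limit sketch for $\mMCpx$ itself: its underlying category is (the opposite of) the full subcategory on the objects $\flatm{U}$ and $\mrk{U}$ with $U$ ranging over molecules, and its cones explicitly encode $\Gamma$\nbd continuity, $\Gamma_\S$\nbd continuity, the closure of markings under composition and under boundary decompositions, and the fact that marking is a property rather than structure. You instead interpose the auxiliary category $\cat{E}$ of merge-complexes with \emph{arbitrary} markings, sketch that (which only requires the unmarked continuity cones plus the epimorphism squares), and then recover $\mMCpx$ as a reflective subcategory closed under filtered colimits, with reflector $(X, A) \mapsto (X, \clcom{A})$. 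Both arguments are sound: the reflectivity check reduces to observing that $\invrs{f}B$ is closed under the two conditions whenever $B$ is, and your finitariness argument for filtered colimits works because each instance of a closure condition involves a single finite shape and finitely many marked cells, all of which can be pushed to a common stage of a filtered diagram (note only that in the first closure condition the codomain of the subdivision is a round molecule, not necessarily an atom, though still finite). What the paper's approach buys is a single self-contained sketch with no appeal to the theory of reflective subcategories; what yours buys is modularity, since the closure conditions---which account for most of the cones in the paper's sketch---are handled once and for all by the reflection, at the cost of the extra filtered-colimit verification and of still having to set up the (simpler) sketch for $\cat{E}$.
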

\begin{proof}
	Since subdivisions are determined by their restrictions on atoms, whose images are round molecules, and all round molecules can be constructed as pastings of molecules, we can construct a finite limit sketch for $\mMCpx$ as follows.
	We take (the opposite of) the full subcategory on objects $\flatm{U}$ and $\mrk{U}$ with $U$ ranging over molecules.
	As set of cones, we take 
	\begin{enumerate}
		\item pastings of unmarked molecules
\[\begin{tikzcd}
	{\flatm{\bd{k}{+}U} = \flatm{\bd{k}{-}V}} & {\flatm{V}} \\
	{\flatm{U}} & {\flatm{(U \cp{k} V)}}
	\arrow[hook, from=1-1, to=1-2]
	\arrow[hook, from=1-1, to=2-1]
	\arrow[hook, from=1-2, to=2-2]
	\arrow[hook, from=2-1, to=2-2]
\end{tikzcd}\]
		as well as pastings of marked molecules of the same dimension
\[\begin{tikzcd}
	{\flatm{\bd{k}{+}U} = \flatm{\bd{k}{-}V}} & {\mrk{V}} \\
	{\mrk{U}} & {\mrk{(U \cp{k} V)},}
	\arrow[hook, from=1-1, to=1-2]
	\arrow[hook, from=1-1, to=2-1]
	\arrow[hook, from=1-2, to=2-2]
	\arrow[hook, from=2-1, to=2-2]
\end{tikzcd} \quad \quad \dim U = \dim V \]
		and ``mixed'' pastings when one molecules has strictly lower dimension
\[\begin{tikzcd}
	{\flatm{\bd{k}{+}U} = \flatm{\bd{k}{-}V}} & {\mrk{V}} \\
	{\flatm{U}} & {\mrk{(U \cp{k} V)},}
	\arrow[hook, from=1-1, to=1-2]
	\arrow[hook, from=1-1, to=2-1]
	\arrow[hook, from=1-2, to=2-2]
	\arrow[hook, from=2-1, to=2-2]
\end{tikzcd}	\quad \dim U < \dim V, \]
\[\begin{tikzcd}
	{\flatm{\bd{k}{+}U} = \flatm{\bd{k}{-}V}} & {\flatm{V}} \\
	{\mrk{U}} & {\mrk{(U \cp{k} V)},}
	\arrow[hook, from=1-1, to=1-2]
	\arrow[hook, from=1-1, to=2-1]
	\arrow[hook, from=1-2, to=2-2]
	\arrow[hook, from=2-1, to=2-2]
\end{tikzcd}	\quad \dim U > \dim V; \]
		these are all the pasting squares whose morphisms are all conservative, and result in a molecule of the form $\flatm{U}$ and $\mrk{U}$;
		\item substitutions along co-mergers of atoms embedded in the boundary of other atoms, either marked or unmarked:
		\[\begin{tikzcd}
	{\flatm{\mrg{V}}} & {\flatm{V}} \\
	{\flatm{U}} & {\flatm{\subs{U}{V}{\iota\mrg{V}}_s}},
	\arrow[arloop->, "s", from=1-1, to=1-2]
	\arrow["\iota", hook, from=1-1, to=2-1]
	\arrow[hook, from=1-2, to=2-2]
	\arrow[arloop->, from=2-1, to=2-2]
\end{tikzcd}	\quad \quad \dim V < \dim U,
\]
\[\begin{tikzcd}
	{\flatm{\mrg{V}}} & {\flatm{V}} \\
	{\mrk{U}} & {\mrk{\subs{U}{V}{\iota\mrg{V}}_s}},
	\arrow[arloop->, "s", from=1-1, to=1-2]
	\arrow["\iota", hook, from=1-1, to=2-1]
	\arrow[hook, from=1-2, to=2-2]
	\arrow[arloop->, from=2-1, to=2-2]
\end{tikzcd}	\quad \quad \dim V < \dim U;
\]
		\item the squares (\ref{eq:marking_marking_pushout}) of top-markings of atoms.
	\end{enumerate}
	The first set encodes $\Gamma$\nbd continuity, the second set encodes $\Gamma_\S$\nbd continuity as well as stability of marking under boundary decompositions, and the third set encodes the fact that marking is a property and not structure on a cell.
\end{proof}

\begin{dfn}[Gray product of marked merge-complexes]
	Let $(X, A)$ and $(Y, B)$ be marked merge-complexes.
	The \emph{Gray product of $(X, A)$ and $(Y, B)$} is the marked merge-complex
	\[
		(X, A) \gray (Y, B) \eqdef (X \gray Y, \clcom{(\cell X \gray B) \cup (A \gray \cell Y)}).
	\]
\end{dfn}

\noindent
To prove that the Gray product defines a monoidal structure on $\mMCpx$, we can follow closely the proof of \cite[Proposition 2.29]{loubaton2024inductive}, which proves the analogous fact for the Gray product of marked strict $\omega$\nbd categories.

\begin{lem} \label{lem:closures_of_marked_cells_gray_product}
	Let $X$, $Y$ be merge-complexes and let $A \subseteq \cell X$, $B \subseteq \cell Y$.
	Then $\clcom{A \gray B} = \clcom{\clcom{A} \gray B} = \clcom{A \gray \clcom{B}} = \clcom{\clcom{A} \gray \clcom{B}}$.
\end{lem}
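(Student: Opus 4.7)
The plan is to reduce the four-way equality to the single inclusion $\clcom{A} \gray B \subseteq \clcom{A \gray B}$, from which, together with its mirror $A \gray \clcom{B} \subseteq \clcom{A \gray B}$ obtained by the same argument after swapping the roles of the two factors, the remaining equalities will follow by monotonicity and idempotence of $\clcom{-}$. Explicitly, applying the key inclusion twice produces $\clcom{A} \gray \clcom{B} \subseteq \clcom{\clcom{A} \gray B} \subseteq \clcom{A \gray B}$, which combined with the monotone chain $A \gray B \subseteq \clcom{A} \gray B \subseteq \clcom{A} \gray \clcom{B}$ and the symmetric chain through $\clcom{A \gray \clcom{B}}$ yields the four coinciding closures after applying $\clcom{-}$ throughout.

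I would establish the key inclusion by induction on the inductive generation of $\clcom{A}$ from $A$ under its two closure clauses, verifying that the property $P(u) \eqdef$ ``for every $v \in B$, $u \gray v \in \clcom{A \gray B}$'' passes the base case and is preserved by each clause. The base case $u \in A$ is immediate, since then $u \gray v \in A \gray B \subseteq \clcom{A \gray B}$ by definition. For the first clause, suppose $u = u's$ where $s\colon U \sd U'$ is a subdivision with $U$ an atom and $U'$ a round molecule, and $u'\colon U' \to X$ is a $\clcom{A}$-round diagram whose top-dimensional cells satisfy the inductive hypothesis. Given $v\colon V \to Y$ with $V$ an atom, the Gray product $U' \gray V$ is a round molecule whose top-dimensional cells have the explicit form $u'_i \gray v$ for the top cells $u'_i$ of $u'$; by the inductive hypothesis each such $u'_i \gray v$ lies in $\clcom{A \gray B}$, so $u' \gray v$ is a $\clcom{A \gray B}$-round diagram. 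By Proposition \ref{prop:gray_product_of_regular_functors}, $s \gray \idd{V}\colon U \gray V \sd U' \gray V$ is a subdivision, while the strong monoidality of the embedding $\frdCpx_{\S\L} \incl \MCpx$ from Theorem \ref{thm:gray_product_of_merge-complexes} together with the bifunctoriality of the Gray product yields the identity $u \gray v = (u' \gray v)(s \gray \idd{V})$; the first closure clause then gives $u \gray v \in \clcom{A \gray B}$. The second clause is handled dually: if $u$ is a cell of shape $U$ and $us \in \clcom{A}$ for some subdivision $s\colon W \sd U$ with $W$ an atom, then by the inductive hypothesis $(us) \gray v = (u \gray v)(s \gray \idd{V}) \in \clcom{A \gray B}$, and the second closure clause yields $u \gray v \in \clcom{A \gray B}$.

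The hardest part will be rigorously pinning down the auxiliary combinatorial facts underpinning the induction: the bifunctoriality identity $(u's) \gray v = (u' \gray v)(s \gray \idd{V})$, the identification of top-dimensional cells of $U' \gray V$ with products $u'_i \gray v$ of top cells of $U'$ with $v$, and the closure of the class of round molecules under Gray product with an atom. Each of these, however, is an immediate consequence of the explicit description of Gray products of regular directed complexes developed in \cite[Chapter 7]{hadzihasanovic2024combinatorics} together with the strong monoidality already cited, so the argument is ultimately bookkeeping, with the inductive skeleton above carrying the essential content.
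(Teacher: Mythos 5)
Your proposal is correct and follows essentially the same route as the paper's proof: reduce to the single inclusion $\clcom{A} \gray B \subseteq \clcom{A \gray B}$ (the rest by duality and monotonicity/idempotence of $\clcom{-}$), then establish it by induction on the generation of $\clcom{A}$, using that top-dimensional cells of $u' \gray v$ are exactly $w \gray v$ for $w$ top-dimensional in $u'$ and the identity $(u's) \gray v = (u' \gray v)(s \gray \idd{V})$ to push each closure clause through. The auxiliary combinatorial facts you flag (roundness of Gray products, bifunctoriality) are exactly the ones the paper relies on implicitly.
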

\begin{proof}
	It suffices to prove that $\clcom{A \gray B} = \clcom{\clcom{A} \gray B}$; the second identity is dual and the third follows from the others.
	From $A \subseteq \clcom{A}$ we have immediately $\clcom{A \gray B} \subseteq \clcom{\clcom{A} \gray B}$.
	Conversely, it suffices to show $\clcom{A} \gray B \subseteq \clcom{A \gray B}$.
	Fix $v\colon V \to Y$ in $B$; we show inductively that, for all $u\colon U \to X$ in $\clcom{A}$, $u \gray v \in \clcom{A \gray B}$.
	The case $u \in A$ is immediate.
	Suppose that $u = u's$ for some $\clcom{A}$-round diagram $u'\colon U' \to X$ and subdivision $s\colon U \sd U'$ such that, for all top-dimensional $w \submol u'$, we have $w \gray v \in \clcom{A \gray B}$.
	Then top-dimensional cells in $u' \gray v$ are precisely those of the form $w \gray v$ for $w$ top-dimensional in $u'$, so $u' \gray v$ is an $\clcom{A \gray B}$\nbd round diagram.
	We conclude that $u \gray v = (u' \gray v)(s \gray \idd{V})$ is in $\clcom{A \gray B}$.
	Next, suppose that there exist a cell $u'\colon U' \to X$ in $\clcom{A}$ as well as a subdivision $t\colon U' \sd U$ such that $u' = ut$.
	Then, by the inductive hypothesis, $u' \gray v$ is in $\clcom{A \gray B}$, and $u' \gray v = (u \gray v)(t \gray \idd{V})$, which implies that $u \gray v$ is in $\clcom{A \gray B}$.
	This completes the induction and the proof.
\end{proof}

\begin{prop} \label{prop:gray_product_of_marked_merge-complexes}
	Let $f\colon (X, A) \to (Y, B)$ and $g\colon (X', A') \to (Y', B')$ be morphisms of marked merge-complexes.
	Then the Gray product $f \gray g$ of their underlying morphisms of merge-complexes determines a morphism
	\[
		f \gray g\colon (X, A) \gray (X', A') \to (Y, B) \gray (Y', B').
	\]
	This determines a biclosed monoidal structure $(\mMCpx, \gray, \flatm{1})$, such that the functor $\FmMerg\colon (\mdCpx, \gray, \flatm{1}) \to (\mMCpx, \gray, \flatm{1})$ is strong monoidal.
\end{prop}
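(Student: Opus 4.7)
The plan is to reduce each claim to the corresponding fact for the underlying monoidal structure on $\MCpx$ from Theorem \ref{thm:gray_product_of_merge-complexes}, treating the marking data separately via Lemma \ref{lem:closures_of_marked_cells_gray_product}.

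I would begin with functoriality. To see that $f \gray g$ lifts to a morphism of marked merge-complexes, observe that the generating set $(\cell X \gray A') \cup (A \gray \cell X')$ of the markings of the domain is sent by $f \gray g$ into $(\cell Y \gray B') \cup (B \gray \cell Y')$, since $f$ and $g$ individually preserve markings. It then suffices to argue that the property ``$A \subseteq \invrs{h} B$'' on a morphism of merge-complexes $h$ is stable under applying $\clcom{-}$ to $A$, which follows because morphisms of merge-complexes commute with subdivisions and with boundary decompositions, so the inductive clauses that generate $\clcom{-}$ are preserved.

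Next, I would lift the associator, left and right unitors, and coherence data from $(\MCpx, \gray, 1)$: their underlying isomorphisms in $\MCpx$ are provided by Theorem \ref{thm:gray_product_of_merge-complexes}, so only preservation of markings needs to be checked. For the associator applied to $(X,A), (Y,B), (Z,C)$, Lemma \ref{lem:closures_of_marked_cells_gray_product} flattens the nested closures on both sides to the common set $\clcom{(\cell X \gray \cell Y \gray C) \cup (\cell X \gray B \gray \cell Z) \cup (A \gray \cell Y \gray \cell Z)}$. The unitors are immediate because $\flatm{1}$ has no marked cells, and coherence diagrams commute because their underlying diagrams in $\MCpx$ do. For biclosedness, I would invoke Proposition \ref{prop:local_presentability_of_marked_mcpx} and check that $(X,A) \gray -$ and $- \gray (X,A)$ preserve all small colimits: by Proposition \ref{prop:colimits_of_marked_merge_complexes} colimits are computed by lifting them along $\Um$ and applying $\clcom{-}$ to the generated marking, and Gray product of merge-complexes is biclosed, so the underlying cells match; the match of markings is a direct application of Lemma \ref{lem:closures_of_marked_cells_gray_product}.

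Finally, for the strong monoidality of $\FmMerg$, the underlying isomorphism $\FMerg(X \gray X') \iso \FMerg X \gray \FMerg X'$ comes from Theorem \ref{thm:gray_product_of_merge-complexes}. Using the explicit description of its action as $[u, s] \gray [v, t] \mapsto [u \gray v, s \gray t]$, I would compare the marked cells of $\FmMerg((X,A) \gray (X',A'))$, namely $\clcom{(\cell X \gray A') \cup (A \gray \cell X')}$ in $\FMerg(X \gray X')$, to those of $\FmMerg(X,A) \gray \FmMerg(X',A')$, which by Lemma \ref{lem:closures_of_marked_cells_gray_product} reduce to $\clcom{(\cell(\FMerg X) \gray A') \cup (A \gray \cell(\FMerg X'))}$.

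The main obstacle will be this last verification. The inclusion of the first closure into the second is immediate since $\cell(X \gray X')$ embeds into $\cell(\FMerg X) \gray \cell(\FMerg X')$ via $u \gray v \mapsto [u,\idd{}] \gray [v,\idd{}]$. The reverse inclusion requires showing that a cell such as $[u, s] \gray v$ with $v \in A'$, corresponding to $[u \gray v, s \gray \idd{}]$ in $\FMerg(X \gray X')$, lies in $\clcom{(\cell X \gray A') \cup (A \gray \cell X')}$: here the $\clcom{-}$ closure clause under subdivision is exactly what is needed, applied to the $\clcom{-}$-round diagram $u \gray v$ whose top cells are of the form $a \gray v$ with $v \in A'$. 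This step is precisely why the second closure condition on $\clcom{-}$ was built in, and unfolding it carefully finishes the proof.
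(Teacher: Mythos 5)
Your proposal is correct and follows essentially the same route as the paper: everything is reduced to the underlying biclosed monoidal structure on $\MCpx$ from Theorem \ref{thm:gray_product_of_merge-complexes}, with the marking data handled by Lemma \ref{lem:closures_of_marked_cells_gray_product}, Proposition \ref{prop:colimits_of_marked_merge_complexes}, and local presentability; you simply spell out the strong-monoidality check that the paper leaves as ``straightforward''. One cosmetic remark: in that last verification the clause you actually invoke is the \emph{first} closure condition on $\clcom{-}$ (a subdivision of a marked round diagram is marked), not the second, but the mathematics is right.
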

\begin{proof}
	The fact that $f \gray g$ is compatible with the marked structure on the Gray product is straightforward, so it suffices to show that associators and unitors are compatible.
	First of all, we observe that $\clcom{\cell X \gray \cell Y} = \cell{(X \gray Y)}$ follows from the description of $X \gray Y$ as a quotient of $\FMerg(\UMerg X \gray \UMerg Y)$, and that for all pairs $C, D \subseteq \cell X$ we have $\clcom{C \cup D} = \clcom{\clcom{C} \cup D} = \clcom{C \cup \clcom{D}} = \clcom{\clcom{C} \cup \clcom{D}}$.
	Compatibility with associators then follows from Lemma \ref{lem:closures_of_marked_cells_gray_product} by the same proof as \cite[Lemma 2.27]{loubaton2024inductive}, while compatibility with unitors is straightforward from the relation $(\cell X \gray \varnothing) \cup (A \gray \cell 1) = A \gray \cell 1 \simeq A$.
	Finally, the fact that the monoidal structure is biclosed follows from the fact that the underlying monoidal structure on $\MCpx$ is biclosed, coupled with direct inspection of colimit preservation using Proposition \ref{prop:colimits_of_marked_merge_complexes}, and local presentability.
	Strong monoidality of $\FmMerg$ is straightforward after Theorem 
	\ref{thm:gray_product_of_merge-complexes}.
\end{proof}

\begin{rmk}
	Note that the right adjoints $\UMerg$ and $\UmMerg$ are only lax monoidal: given merge-complexes $X$, $Y$, the Gray product $\UMerg X \gray \UMerg Y$ usually has strictly fewer cells than $\UMerg (X \gray Y)$.
\end{rmk}

\subsection{Weak model structures on marked merge-complexes} \label{sec:weakonmerge}

\noindent
With the results of Sections \ref{sec:weakonmarked} and \ref{sec:merge}, it is straightforward to construct weak model structures on $\mMCpx$ mirroring those on $\mdCpx$: we have a commutative diagram of strong monoidal functors
\begin{equation} \label{eq:inclusions_directed_to_merge}
\begin{tikzcd}
	{(\frdCpx_\L, \gray, 1)} & {(\dCpx, \gray, 1)} & {(\mdCpx, \gray, \flatm{1})} \\
	{(\frdCpx_{\S\L}, \gray, 1)} & {(\MCpx, \gray, 1)} & {(\mMCpx, \gray, \flatm{1})}
	\arrow[hook, from=1-1, to=1-2]
	\arrow[hook, from=1-1, to=2-1]
	\arrow["{\flatm{-}}", from=1-2, to=1-3]
	\arrow["\FMerg", from=1-2, to=2-2]
	\arrow["\FmMerg", from=1-3, to=2-3]
	\arrow[hook, from=2-1, to=2-2]
	\arrow["{\flatm{-}}", from=2-2, to=2-3]
\end{tikzcd}
\end{equation}
which allows us to transfer both the functorial cylinder and the generating sets of cofibrations and anodyne extensions of $\Mwn$ from $\mdCpx$ to $\mMCpx$. 
In fact, with our notational conventions, all of these objects admit literally the same definition: for each $n \in \Ninfty$, we take
\begin{enumerate}
	\item the functorial cylinder $\fun{I} \eqdef \marr \gray -\colon \mMCpx \to \mMCpx$ with the natural transformations $\iota^\a \eqdef (0^\a \gray \idd{})\lambda$ for each $\a \in \set{-, +}$, where now everything is relative to the monoidal structure $(\mMCpx, \gray, \flatm{1})$,
	\item the set of generating cofibrations $I$ and the set of generating anodyne extensions $\Jn$, now interpreted in $\mMCpx$ through any of the admissible paths in the diagram (\ref{eq:inclusions_directed_to_merge}).
\end{enumerate}

\begin{thm} \label{thm:model_structures_on_marked_merge}
	Let $n \in \Ninfty$.
	Then $\MMwn \eqdef (\mMCpx, \ICof{I}, \JFib{\Jn})$ is a weak model structure on $\mMCpx$ such that
	\begin{enumerate}
		\item the cofibrations are the transfinite compositions of (coproducts of) pushouts along $\flatm{\bdmap_U}$ and $\m_U$, with $U$ ranging over atoms,
		\item the fibrations are the morphisms $p$ such that $\UmMerg p$ is a fibration in $\Mwn$.
	\end{enumerate}
	Moreover, the adjoint functors $\FmMerg \dashv \UmMerg$ restrict to functors
	\[
		\FmMerg\colon \cofs{\Mwn} \to \cofs{\MMwn}, \quad \quad
		\UmMerg\colon \fibs{\MMwn} \to \fibs{\Mwn}
	\]
	exhibiting a Quillen adjunction between $\Mwn$ and $\MMwn$.
\end{thm}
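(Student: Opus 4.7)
The plan is to apply Proposition \ref{prop:weak_cisinski_olschok} to $\mMCpx$, equipped with the functorial cylinder $(\fun{I}, (\iota^-, \iota^+))$ obtained by left Gray product with $\marr$, and the sets $I$ and $\Jn$ of generating cofibrations and anodyne extensions. Local presentability is Proposition \ref{prop:local_presentability_of_marked_mcpx}, and biclosedness of the Gray product on $\mMCpx$ (Proposition \ref{prop:gray_product_of_marked_merge-complexes}) ensures that $\fun{I}$ is a left adjoint, hence a valid functorial cylinder. The idea is to transfer the four pushout-product conditions from $\Mwn$ to $\MMwn$ along the adjunction $\FmMerg \dashv \UmMerg$, exploiting the fact that the sets $I$ and $\Jn$ in $\mMCpx$ are literally the images under $\FmMerg$ of the homonymous sets in $\mdCpx$.

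The key observation is that $\FmMerg$ is a strong monoidal left adjoint by Proposition \ref{prop:gray_product_of_marked_merge-complexes}, so it preserves all colimits and commutes with $\fun{I}$ up to canonical isomorphism; consequently, for any morphism $i$ of marked directed complexes, the pushout-product $(\iota^-, \iota^+) \ppnat \FmMerg(i)$ in $\mMCpx$ is canonically isomorphic to $\FmMerg$ applied to $(\iota^-, \iota^+) \ppnat i$ in $\mdCpx$, and similarly for $\iota^\a \ppnat \FmMerg(i)$. By the adjunction, $p \in \rlp(\FmMerg(\cls{A}))$ if and only if $\UmMerg p \in \rlp(\cls{A})$ for any class $\cls{A}$ in $\mdCpx$, which gives $\FmMerg(\llp(\rlp(\cls{A}))) \subseteq \llp(\rlp(\FmMerg(\cls{A})))$. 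Applying this to $\cls{A} = I$ and $\cls{A} = \Jn$, the four conditions of Proposition \ref{prop:weak_cisinski_olschok} for $(I, \Jn)$ in $\mMCpx$ reduce directly to the corresponding conditions in $\mdCpx$, already established in the proof of Theorem \ref{thm:weak_model_structures}.

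For the characterisations, the description of cofibrations as transfinite compositions of pushouts along $\flatm{\bdmap_U}$ and $\m_U$ is the standard small object argument applied to $I$, available by local presentability. For fibrations, by the adjunction argument above, $p$ has the right lifting property against $\Jn$ in $\mMCpx$ if and only if $\UmMerg p$ has the right lifting property against $\Jn$ in $\mdCpx$, and the same equivalence for the terminal morphism shows that the codomain of $p$ is fibrant in $\MMwn$ if and only if its image under $\UmMerg$ is fibrant in $\Mwn$, yielding the stated characterisation. The Quillen adjunction is then immediate: $\FmMerg$ preserves cofibrations because it sends the generating set $I$ of $\Mwn$ to the generating set $I$ of $\MMwn$ and preserves all colimits, and $\UmMerg$ preserves fibrations by the very characterisation we have just established.

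The main technical point---really the only one requiring vigilance---is the clean identification of pushout-products under $\FmMerg$, which hinges on strong monoidality of $\FmMerg$ with respect to the Gray product; this is why the monoidal structure on $\mMCpx$ had to be set up carefully in the previous section, and why Lemma \ref{lem:closures_of_marked_cells_gray_product} was needed to make the marking closure operator compatible with Gray products. Once this is in place, the argument is essentially a formal transfer, and no genuinely new verification is required beyond what was already done for $\Mwn$.
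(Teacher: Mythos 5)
Your proposal is correct and follows essentially the same route as the paper: both arguments use local presentability of $\mMCpx$, the fact that $\FmMerg$ is a strong monoidal left adjoint commuting with the functorial cylinders (hence preserving pushout-products), and the observation that $I$ and $\Jn$ in $\mMCpx$ are the images under $\FmMerg$ of the homonymous sets in $\mdCpx$, so that the Cisinski--Olschok conditions transfer formally; the characterisations of cofibrations and fibrations are likewise obtained from the small object argument and the adjunction, exactly as in the paper. Your spelling-out of the lifting-property transfer $\FmMerg(\llp(\rlp(\cls{A}))) \subseteq \llp(\rlp(\FmMerg(\cls{A})))$ is a slightly more explicit rendering of what the paper leaves implicit, but it is the same argument.
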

\begin{proof}
	First of all, $\mMCpx$ is locally presentable.
	By commutativity of (\ref{eq:inclusions_directed_to_merge}), the functor $\FmMerg$ is a left adjoint and commutes with the functorial cylinders on $\mdCpx$ and $\mMCpx$ up to natural isomorphism, so it preserves pushout-products with their components.
	Because the generating sets of cofibrations and of anodyne extensions for $\MMwn$ are the image through $\FmMerg$ of those of $\Mwn$, the verification of the conditions of Proposition \ref{prop:weak_cisinski_olschok} for $\Mwn$ implies the verification of those same conditions for $\MMwn$.
	For the same reason, it is immediate that $\FmMerg$ and $\UmMerg$ form a Quillen adjunction between $\Mwn$ and $\MMwn$.
	The characterisation of cofibrations is a standard consequence of the small object argument, while the characterisation of fibrations follows by duality from the adjunction $\FmMerg \dashv \UmMerg$ and the fact that $\Jn$ is in the image of $\FmMerg$.
\end{proof}

\begin{comm}
	Intuitively, cofibrant objects are the ones that are ``freely generated'', starting from $\varnothing$, by cellular extensions (pushouts along boundary inclusions of atoms) and by markings (pushouts along top-markings of atoms); their underlying merge-complexes are the ``polygraphs'' or ``computads'' with respect to the algebra of merge-complexes.
	Note that, because the squares
\[\begin{tikzcd}
	{\bd{}{}\globe{n}} & {\globe{n}} \\
	{\bd{}{}U} & U
	\arrow[hook, from=1-1, to=1-2]
	\arrow[arloop->, "{\bd{}{}o}", from=1-1, to=2-1]
	\arrow[arloop->, "o", from=1-2, to=2-2]
	\arrow[hook, from=2-1, to=2-2]
	\arrow["\lrcorner"{anchor=center, pos=0.125, rotate=180}, draw=none, from=2-2, to=1-1]
\end{tikzcd}\]
	are pushouts for each $n \in \mathbb{N}$ and each $n$\nbd dimensional atom $U$, preserved by the left adjoint $\flatm{-}$, by the pasting law for pushouts we can replace any cellular extension with a ``globular'' cellular extension; indeed, unlike in $\Mwn$, and like in strict $\omega$\nbd categories \cite{lafont2010folk}, we can replace the set $\Ibd$ with the set $\set{\bd{}{}\globe{n} \incl \globe{n} \mid n \in \mathbb{N}}$ in defining our generating set of cofibrations.
\end{comm}

\noindent 
By Theorem \ref{thm:characterisation_of_fibrants}, an $\MMwn$\nbd fibrant marked merge-complex is precisely one whose underlying marked directed complex is an \inftyn\nbd category up to a choice of $\mInfl$\nbd algebra structure.
In fact, having both the structure of a marked merge-complex \emph{and} of a marked inflate-complex on the underlying marked directed complex, even when they do not interact, automatically produces weak composites, allowing us to simplify the characterisation of fibrants.

\begin{prop} \label{prop:algebraic_weak_composites}
	Let $(X, A)$ be a marked merge-complex which also has a structure of marked inflate-complex, and let $u$ be a round diagram in $X$.
	Then its merger $\mrgcom{u}$ is a weak composite of $u$.
\end{prop}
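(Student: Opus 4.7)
The plan is to construct marked cells $h\colon u \mcelto \mrgcom{u}$ and $k\colon \mrgcom{u} \mcelto u$ that witness $u \meqv \mrgcom{u}$; these then exhibit $\mrgcom{u}$ as a weak composite of $u$. Setting $n \eqdef \dim U$, I would use the $\mInfl$\nbd algebra structure to obtain the unit $\un u\colon \arr \pcyl{\bd{}{}U} U \to X$, a round diagram of type $u \rdto u$ whose top-dimensional cells are marked units, and then exploit the merge structure to precompose with a subdivision of the cylinder $\arr \pcyl{\bd{}{}U} U$ that collapses one of its two boundary copies of $U$ through the co-merger $\mrg{}_U\colon \mrg{U} \sd U$.

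Concretely, I would construct a subdivision $s^+\colon (U \celto \mrg{U}) \sd \arr \pcyl{\bd{}{}U} U$ by exhibiting its dual comap $\conv{s^+}\colon \arr \pcyl{\bd{}{}U} U \to U \celto \mrg{U}$ as
\[
	(y) \mapsto y, \quad (0^-, x) \mapsto x, \quad (0^+, x) \mapsto \top_{\mrg{U}}, \quad (1, x) \mapsto \top,
\]
for all $y \in \bd{}{}U$ and all $x \in U \setminus \bd{}{}U$, where $\top$ denotes the top element of $U \celto \mrg{U}$. By construction the restriction of $\conv{s^+}$ to the input face is the identity, and its restriction to the output face is $\conv{\mrg{}_U}$; accordingly, setting $h \eqdef \un u \cdot s^+$ yields a cell of $X$ of shape $U \celto \mrg{U}$ with input $u$ and output $\mrgcom{u}$. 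Markedness of $h$ is obtained from the closure condition (1) defining $\clcom{A}$: the atom $U \celto \mrg{U}$, the subdivision $s^+$, and the $\clcom{A}$\nbd round diagram $\un u$ satisfy its hypotheses, since every top-dimensional cell of $\un u$ is a marked unit in $A$. This proves $u \mrdto \mrgcom{u}$; a symmetric construction, collapsing the input face instead of the output via $\mrg{}_U$, yields $k\colon \mrgcom{u} \mcelto u$ and therefore $\mrgcom{u} \mrdto u$, closing the argument.

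The main technical obstacle is to verify that $\conv{s^+}$ is in fact a comap, in particular when $U$ possesses interior cells of non-maximal dimension, because then some $(0^+, x)$ and $(1, x)$ must be assigned to strictly higher-dimensional cells of $U \celto \mrg{U}$. This is legitimate since the comap axioms are stated in terms of preimages of principal lower sets rather than any face-by-face correspondence, exactly as in the co-merger $\mrg{}_U$ itself. The verification reduces to checking that $\conv{s^+}^{-1}\clset{\top_{\mrg{U}}}$ is the output face of the cylinder (a round molecule isomorphic to $U$, with boundaries recovering those of $\top_{\mrg{U}}$ through $\conv{\mrg{}_U}$), and that $\conv{s^+}^{-1}\clset{\top}$ is the whole cylinder (whose input and output boundaries are already $U$ and, via the output-face collapse, $\mrg{U}$); both reduce to facts already packaged in $\mrg{}_U$ and in the definition of $\arr \pcyl{\bd{}{}U} U$.
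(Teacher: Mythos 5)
Your proposal is correct and follows essentially the same route as the paper: the paper also precomposes the unit $\un u$ with a subdivision $c_U\colon (U \celto \mrg{U}) \sd (\arr \pcyl{\bd{}{}U} U)$ restricting to $\idd{U}$ on the input boundary and to $\mrg{}_U$ on the output boundary (and dually), deducing markedness from the fact that the top-dimensional cells of $\un u$ are degenerate, hence in $A$. The only difference is that you write out the dual comap explicitly and verify the comap axioms by hand, where the paper simply asserts the existence and uniqueness of the subdivision.
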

\begin{proof}
	Let $U$ be the shape of $u$.
	There is a unique subdivision 
	\[
		c_U\colon (U \celto \mrg{U}) \sd (\arr \pcyl{\bd{}{}U} U)
	\]
	which restricts to $\idd{U}$ on the input boundary and to $\mrg{}_U$ on the output boundary.
	Then $(\un u)c_U$ is a cell of type $u \celto \mrgcom{u}$, which is furthermore marked, since all top-dimensional cells in $\un u$ are degenerate, hence they are marked.
	This exhibits $u \mcelto \mrgcom{u}$.
	Dually, there is a subdivision
	\[
		c'_U\colon (\mrg{U} \celto U) \sd (\arr \pcyl{\bd{}{}U} U)
	\]
	such that $(\un u)c'_U$ exhibits $\mrgcom{u} \mcelto u$.
\end{proof}

\begin{cor} \label{cor:characterisation_of_fibrants_merge}
	Let $(X, A)$ be a marked merge-complex.
	The following are equivalent:
	\begin{enumerate}[label=(\alph*)]
		\item $(X, A)$ is fibrant in $\MMwn$;
		\item the underlying marked directed complex of $(X, A)$ admits a structure of marked inflate-complex such that $A = \satur{A}$.
	\end{enumerate}
\end{cor}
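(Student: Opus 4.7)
The plan is to reduce the statement to Theorem \ref{thm:characterisation_of_fibrants} by first unwinding what fibrancy in $\MMwn$ means, then using Proposition \ref{prop:algebraic_weak_composites} to bridge the gap between merger-based and unit-based composition. The first observation is that, by the characterisation of fibrations from Theorem \ref{thm:model_structures_on_marked_merge}, a marked merge-complex $(X, A)$ is fibrant in $\MMwn$ if and only if its underlying marked directed complex $\UmMerg(X, A)$ is fibrant in $\Mwn$; this follows immediately from the adjunction $\FmMerg \dashv \UmMerg$ applied to the generating anodyne extensions, which live in the image of $\FmMerg$.

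For the forward direction, I would apply Theorem \ref{thm:characterisation_of_fibrants} to exhibit $\UmMerg(X, A)$ as the underlying marked directed complex of an \inftyn\nbd category. By Proposition \ref{prop:guises_of_marked_inflate_complexes}, the required $\mInfl$\nbd algebra structure is equivalent to a structure of marked inflate-complex on $\UmMerg(X, A)$; and completeness for an \inftyn\nbd category is, by definition, the condition $A = \satur{A}$. Conversely, assuming a marked inflate-complex structure on $\UmMerg(X, A)$ with $A = \satur{A}$, Proposition \ref{prop:algebraic_weak_composites} supplies, for every round diagram $u$, a weak composite in the form of its merger $\mrgcom{u}$, and completeness is again given by hypothesis. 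Thus $\UmMerg(X, A)$ is an \inftyn\nbd category, and Theorem \ref{thm:characterisation_of_fibrants} delivers its fibrancy in $\Mwn$, hence the fibrancy of $(X, A)$ in $\MMwn$.

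The most delicate point is ensuring that the merge-complex structure of $X$ does not obstruct the merger from serving as a weak composite: this is precisely what Proposition \ref{prop:algebraic_weak_composites} handles, by exhibiting explicit cells of type $u \mcelto \mrgcom{u}$ and $\mrgcom{u} \mcelto u$ through cylindrical collapses, independently of any compatibility between the merge-complex and inflate-complex structures. A subtlety to record is that in the case $n < \infty$, fibrancy against $\Jn$ additionally requires every cell of dimension $> n$ to be marked; this condition is built into the definition of an \inftyn\nbd category and should be understood implicitly in clause (b) of the corollary.
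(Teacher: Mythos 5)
Your proof is correct and follows exactly the route the paper intends: the corollary is stated there without proof, as an immediate consequence of the characterisation of fibrations in Theorem \ref{thm:model_structures_on_marked_merge} (fibrancy in $\MMwn$ reduces to $\Mwn$\nbd fibrancy of $\UmMerg(X,A)$), Theorem \ref{thm:characterisation_of_fibrants}, and Proposition \ref{prop:algebraic_weak_composites} supplying the weak-composites axiom for free, which is precisely the chain you assemble. Your closing remark about $n < \infty$ is well taken --- clause (b) as literally stated omits the requirement that all cells of dimension $> n$ be marked, which does not follow from $A = \satur{A}$ alone, and reading that condition as implicit (as the paper itself does) is the right call.
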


\noindent
When the separate structure of marked inflate-complex makes $(X, A)$ an essential \inftyn\nbd category, we also have a compatibility between composition and marked-invertibility.

\begin{lem} \label{lem:composition_of_marked_invertible}
	Let $(X, A)$ be a marked merge-complex which also has a structure of marked inflate-complex making it an essential \inftyn\nbd category, let $a\colon V \to X$ be a round diagram, and let $s\colon U \sd V$ be a subdivision.
	The following are equivalent:
	\begin{enumerate}[label=(\alph*)]
		\item $a$ is marked-invertible;
		\item $as$ is marked-invertible.
	\end{enumerate}
\end{lem}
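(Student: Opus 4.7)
The plan is to first reduce the statement to the case where $a$ is a single cell and $s$ is a subdivision between atoms, then to invoke the closure conditions of the marked merge-complex structure on $(X, A)$. By Proposition \ref{prop:algebraic_weak_composites}, the mergers $\mrgcom a$ and $\mrgcom{as}$ are weak composites of $a$ and $as$ respectively, so by Lemma \ref{lem:marked_equivalence_preserves_marked_invertibility} it suffices to prove the equivalence with $\mrgcom a$ and $\mrgcom{as}$ in place of $a$ and $as$. A direct computation on the underlying comaps---using that the interior of a round molecule is sent to the top of its merger while boundaries are preserved---establishes the identity of subdivisions $s \cdot \mrg{}_U = \mrg{}_V \cdot \mrg{s}\colon \mrg U \sd V$, where $\mrg{s}\colon \mrg U \sd \mrg V$ is the subdivision of atoms whose dual comap sends the top of $\mrg V$ to the top of $\mrg U$ and restricts to $\conv{s}|_{\bd{}{}V}$ on the boundary. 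This gives $\mrgcom{as} = \mrgcom a \cdot \mrg s$, so the problem reduces to showing that for any cell $w\colon V' \to X$ on an atom $V'$ and any subdivision $t\colon U' \sd V'$ between atoms, $w$ is marked-invertible if and only if $wt$ is.

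For the forward direction, suppose $w$ is marked-invertible. Using Proposition \ref{prop:marked_round_diagrams_have_marked_weak_composites} and essential completeness, I would pick a marked cell $m\colon V' \to X$ parallel to $w$ together with a marked cell witness $\eta\colon w \mcelto m$ of shape $V' \celto V'$. I would then construct, by the same recipe as $\mrg s$, a subdivision $r\colon (U' \celto U') \sd (V' \celto V')$ extending $t$ on each copy of the boundary and sending the top cell to the top cell. Closure condition (1) of the marked structure on $(X, A)$, applied to the marked cells $m$ and $\eta$ along the subdivisions $t$ and $r$ (whose sources are atoms), then ensures that $mt$ and $\eta r$ are marked; the latter witnesses $wt \meqv mt$, and since $mt$ is marked and hence marked-invertible, so is $wt$ by Lemma \ref{lem:marked_equivalence_preserves_marked_invertibility}.

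The converse direction is the main obstacle. Starting from $wt$ marked-invertible, one again passes through essential completeness to a marked cell $m'$ parallel to $wt$ together with a marked witness $\epsilon'\colon wt \mcelto m'$. The strategy is to exhibit $m'$ and $\epsilon'$ in the image of precomposition with $r$---that is, to produce a marked candidate cell $m\colon V' \to X$ and a cell $\eta\colon V' \celto V' \to X$ of type $w \celto m$ with $\eta r$ marked-equivalent to $\epsilon'$---and then to invoke closure condition (2) of the marked merge-complex on $\eta r$ and $mt$ to force $\eta$ and $m$ to lie in $A$, at which point $\eta$ exhibits $w$ as marked-equivalent to the marked cell $m$. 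The delicate step is producing this factoring witness; I expect this to follow by solving an equation for $\eta$ via Proposition \ref{prop:solutions_of_equations} applied to an $\I(A^+)$-context built from the weak inverse of $wt$, and by exploiting the compatibility between the $\mInfl$-algebra structure and the merge-complex structure available on $(X, A)$ as a marked merge-inflate-complex in the sense introduced in this section.
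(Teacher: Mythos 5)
Your reduction to the case of a single cell and a subdivision of atoms, via the identity $\mrgcom{as} = \mrgcom{a}\,\mrg{s}$ and Lemma \ref{lem:marked_equivalence_preserves_marked_invertibility}, is valid, and your forward direction goes through: essential completeness gives a marked $m$ and a marked witness $\eta\colon w \mcelto m$, closure condition (1) marks $\eta r$ and $mt$, and marked-invertibility of $wt$ follows. This is a slightly different route from the paper, which instead precomposes the one-sided witnesses of Lemma \ref{lem:marked_invertibility_from_non-inverses} with explicitly constructed subdivisions of their shapes, but both work.

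The converse direction, however, has a genuine gap, and it is exactly at the step you flag as delicate. To apply closure condition (2) you need a cell $\eta$ on the unsubdivided shape whose precomposition with the subdivision is \emph{literally} a marked cell of $X$, not merely marked-equivalent to one: in an \emph{essential} \inftyn\nbd category, marked-equivalence to a marked cell does not imply being marked, so Proposition \ref{prop:solutions_of_equations}, which only produces solutions up to $\meqv$, cannot close the argument. Worse, the witness $\epsilon'\colon wt \mcelto m'$ you start from has no reason to lie in the image of precomposition at all: its output boundary $m'$ is an arbitrary marked cell produced by essential completeness, with no relation to the subdivision $t$, so asking for $\eta$ with $\eta r = \epsilon'$ is generally impossible, and asking only for $\eta r \meqv \epsilon'$ loses the exact factorisation that closure (2) requires. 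The paper's mechanism is different and is the missing idea: it takes the \emph{one-sided} witnesses $z'_L\colon as \cp{} a'_L \mcelto e'$ of Lemma \ref{lem:marked_invertibility_from_non-inverses}, whose shape $Z'_L$ is arranged so that the only portion of the data constrained to factor through the subdivision is the closed subset carrying $as$ and its input boundary --- and that portion factors \emph{on the nose}, since $as$ is by definition $a$ precomposed with $s$. Identifying $Z_L$ as the pushout of $Z'_L$ along the restricted subdivision of that closed subset, $\Gamma_\S$\nbd continuity of the merge-complex then produces a unique cell $z_L$ with $z_L t_L = z'_L$ exactly, after which closure condition (2) marks $z_L$ and its output boundary, and Lemma \ref{lem:marked_invertibility_from_non-inverses} concludes. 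Without this continuity/pushout step there is no way to manufacture the exact preimage that your plan presupposes. (A minor additional point: the lemma deliberately does not assume any compatibility between the merge and inflate structures --- merge-inflate-complexes are only introduced afterwards --- so your appeal to that compatibility is not available under the stated hypotheses.)
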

\begin{proof}
	Suppose that $a\colon u \rdto v$ is marked-invertible.
	By Lemma \ref{lem:marked_invertibility_from_non-inverses}, we can pick cells $a_L$, $a_R \colon v \celto u$, $e\colon u \mcelto u$, $h\colon v \mcelto v$, and $z_L\colon a \cp{} a_L \mcelto e$, $z_R\colon h \mcelto a_R \cp{} a$.
	Then $z_L$ has shape 
	\[
		Z_L \eqdef (V \cp{} (\bd{}{+}V \celto \bd{}{-}V)) \celto (\bd{}{-}V \celto \bd{}{-}V).
	\]
	Consider the atom
	\[
		Z'_L \eqdef (U \cp{} (\bd{}{+}U \celto \bd{}{-}U)) \celto (\bd{}{-}U \celto \bd{}{-}U);
	\]
	there is a unique subdivision $t_L\colon Z'_L \sd Z_L$ restricting to $s$ on $U \submol \bd{}{-}Z'_L$.
	Then the cell $z'_L \eqdef z_L t_L$ has type $as \cp{} a'_L \celto e'$ for some cells $a'_L$ and $e'$, and by stability of $A$ under composition, both $z'_L$ and $e'$ are marked.
	Dually, we can find a subdivision $t_R$ such that $z'_R \eqdef z_R t_R$ has type $a'_R \cp{} as \celto h'$, and both $z'_R$ and $h'$ are marked.
	By Lemma \ref{lem:marked_invertibility_from_non-inverses}, we conclude that $as$ is marked-invertible.
	
	Conversely, suppose that $as\colon u' \rdto v'$ is marked-invertible; by the same result, we find $a'_L$, $a'_R\colon v' \celto u'$, $e'\colon u' \mcelto u'$, $h'\colon v' \mcelto v'$, $z'_L\colon as \cp{} a'_L \mcelto e'$, $z'_R\colon h' \mcelto a'_R \cp{} as$.
	Now, $z'_L$ has shape $Z'_L$; let $K'_L$ be the closed subset of $Z'_L$ obtained by removing its top element, as well as the top elements of the atoms $\bd{}{+}U \celto \bd{}{-}U$ and $\bd{}{-}U \celto \bd{}{-}U$, and let $K_L = t_L(K'_L)$ be the corresponding closed subset of $Z_L$.
	These are respectively isomorphic to
	\[
		U \pout{\bd{}{}(\bd{}{-}U)} \bd{}{-}U \text{ and } V \pout{\bd{}{}(\bd{}{-}V)} \bd{}{-}V.
	\]
	Then, $Z_L$ is the substitution of $K_L$ for $K'_L$ in $Z'_L$ along the restriction of $t_L$.
	Letting $k\colon K_L \to X$ be equal to $a$ on $U$ and to $u$ on $\bd{}{-}U$, from $\Gamma_\S$\nbd continuity of $X$ we obtain a unique cell $z_L\colon Z_L \to X$ such that the diagram
	\[\begin{tikzcd}
	{K'_L} & {K_L} \\
	{Z'_L} & {Z_L} & X
	\arrow[arloop->, "{\restr{t_L}{K'_L}}", from=1-1, to=1-2]
	\arrow[hook, from=1-1, to=2-1]
	\arrow[hook, from=1-2, to=2-2]
	\arrow["k", curve={height=-12pt}, from=1-2, to=2-3]
	\arrow[arloop->, "{t_L}", from=2-1, to=2-2]
	\arrow["{z'_L}", curve={height=30pt}, from=2-1, to=2-3]
	\arrow["\lrcorner"{anchor=center, pos=0.125, rotate=180}, draw=none, from=2-2, to=1-1]
	\arrow["{z_L}", dashed, from=2-2, to=2-3]
\end{tikzcd}\]
	commutes; this cell has type $a \cp{} a_L \celto e$ for some $a_L\colon v \celto u$ and $e\colon u \celto u$.
	Moreover, by stability of $A$ under decompositions in boundaries, both $z_L$ and $e$ are marked.
	Dually, from $z'_R$ we construct $z_R\colon h \mcelto a_R \cp{} a$ with $h$ marked, which proves that $a$ is marked-invertible.
\end{proof}

\noindent
In the next section, we will show that the structures of merge-complex and inflate-complex can be made compatible; the resulting algebraic structure will be the foundation of our semi-strict model of \inftyn\nbd categories.

\subsection{Merge-inflate-complexes and merge-\ntext-categories} \label{sec:mergeinflate}

\noindent
The existence of the $(\S, \C, \L)$ factorisation system on $\rdCpx_{\S\C\L}$ allows us to put together the algebra of units obtained by duality from collapses, with the algebra of composition obtained by duality from subdivisions.

\begin{dfn}[Merge-inflate-complex]
	A \emph{merge-inflate-complex} is a $\Gamma_\S$\nbd continuous presheaf on the category $\frdCpx_{\S\C\L}$ of finite regular directed complexes and local subdivision-collapses.
\end{dfn}

\noindent
We let $\MICpx$ denote the category $\PSh_{\Gamma_\S}(\frdCpx_{\S\C\L})$.
We have the usual embedding
\[
	\frdCpx_{\S\C\L} \incl \MICpx
\]
as well as an adjunction
\[\begin{tikzcd}
	\ICpx && \MICpx
	\arrow[""{name=0, anchor=center, inner sep=0}, "{\FMerg}", curve={height=-12pt}, from=1-1, to=1-3]
	\arrow[""{name=1, anchor=center, inner sep=0}, "{\UMerg}", curve={height=-12pt}, from=1-3, to=1-1]
	\arrow["\dashv"{anchor=center, rotate=-90}, draw=none, from=0, to=1]
\end{tikzcd}\]
and, letting $i$ now denote the inclusion $\frdCpx_{\C\L} \incl \frdCpx_{\S\C\L}$, the exact same proof as for Lemma \ref{lem:lan_takes_gamma_continuous_to_gammas_continuous}, replacing the factorisation system $(\S, \L)$ with the factorisation system $(\S, \C\L)$, gives us the following.

\begin{lem} \label{lem:lan_takes_inflate_to_merge_inflate}
	Let $X$ be an inflate-complex.
	Then the extension $\Lan i X$ is a merge-inflate-complex.
\end{lem}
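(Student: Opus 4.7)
The plan is to mimic the proof of Lemma~\ref{lem:lan_takes_gamma_continuous_to_gammas_continuous} step for step, substituting the orthogonal factorisation system $(\S, \C\L)$ on $\frdCpx_{\S\C\L}$, obtained by composing the classes of the ternary factorisation of Proposition~\ref{prop:ternary_factorisation_systems}, for the $(\S, \L)$ system on $\frdCpx_{\S\L}$ used there. Accordingly, I would begin by applying Lemma~\ref{lem:presheaf_restriction_to_right_class} to this factorisation along the inclusion $i\colon \frdCpx_{\C\L} \incl \frdCpx_{\S\C\L}$, which gives a concrete description of an element $P \to \Lan i X$ as an equivalence class $[u, s]$ of a subdivision $s\colon P \sd P'$ together with a morphism $u\colon P' \to X$ in the inflate-complex $X$ (that is, a diagram along local collapses), modulo the evident action of isomorphisms. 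As in the original proof, I would represent these data as formal two-step composites $P \sd P' \to X$.

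Second, I would verify $\Gamma_\S$-continuity by checking the three generating families of colimit cones as listed in the definition preceding Proposition~\ref{prop:gamma_continuous_functors}. For the initial object $\varnothing$, the unique subdivision out of $\varnothing$ is the identity and $X$ is $\Gamma$-continuous on $\frdCpx_{\C\L}$, so there is a unique formal diagram with domain $\varnothing$. For pushouts of embeddings along embeddings: given a cone, factor each leg as a subdivision followed by a local collapse using $(\S, \C\L)$, observe that this produces a morphism of spans whose $\L$-parts live in $\frdCpx_{\C\L}$ and whose pushout in $\frdCpx_{\S\C\L}$ is the one computed in $\frdCpx_\L$ (Proposition~\ref{prop:pushouts_of_embeddings_along_embeddings}, preserved by the inclusion); then $\Gamma$-continuity of $X$ supplies the universal map between the two pushouts, and the induced subdivision between them is obtained as in the original proof. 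For pushouts of co-mergers along embeddings: factor the cone legs in the same way and appeal to Lemma~\ref{lem:pushouts_of_co-mergers}, whose pushout is preserved by the inclusion into $\frdCpx_{\S\C\L}$ by Proposition~\ref{prop:pushouts_of_subdivisions_along_embeddings}, so that the substitution $\subs{P}{V}{x}_s$ again serves as the tip; the unique mediating comap is constructed exactly as in Lemma~\ref{lem:pushouts_of_co-mergers}.

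No new technical content is required: every ingredient (the $(\S, \C\L)$ factorisation, the pushouts, and their preservation by the inclusion $\frdCpx_{\S\L} \incl \frdCpx_{\S\C\L}$) has been established in Sections~\ref{sec:composing} and~\ref{sec:colimits}. The only point of mild care is ensuring that the universal comap obtained in the co-merger case is well-defined on the $\clos{x}$-fibre when read as a morphism of the extended presheaf, which is automatic from the fact that the pullback of a comap along a local collapse is a comap (the comap analogue of Proposition~\ref{prop:pullback_of_local_collapse_along_subdivision}, already built into the proofs of the preservation results). The argument therefore goes through verbatim, mutatis mutandis.
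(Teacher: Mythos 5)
Your proposal is correct and is exactly the paper's argument: the paper proves this lemma by the one-line observation that the proof of Lemma~\ref{lem:lan_takes_gamma_continuous_to_gammas_continuous} goes through verbatim with the orthogonal factorisation system $(\S, \C\L)$ in place of $(\S, \L)$, which is precisely the substitution you carry out. Your extra remarks on the three generating cones and on the pullback of comaps along local collapses are just the details the paper leaves implicit in that instruction.
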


\noindent
It follows from the resulting explicit description of $\FMerg\colon \ICpx \to \MICpx$, to which Lemma \ref{lem:presheaf_restriction_to_right_class} applies equally as to $\FMerg\colon \dCpx \to \MCpx$, that the diagram
\[\begin{tikzcd}
	\ICpx & \MICpx \\
	\dCpx & \MCpx
	\arrow["\FMerg", from=1-1, to=1-2]
	\arrow["\UInfl", from=1-1, to=2-1]
	\arrow["\UInfl", from=1-2, to=2-2]
	\arrow["\FMerg", from=2-1, to=2-2]
\end{tikzcd}\]
of categories and functors commutes.
We will adopt all the terminology relative to diagrams and the action of collapses and subdivisions from both merge-complexes and inflate-complexes.

\begin{dfn}[Marked merge-inflate-complex]
	A \emph{marked merge-inflate-complex} is a pair $(X, A)$ of a merge-inflate-complex $X$ and a set $A \subseteq \gr{0}{\cell X}$ such that $A = \clcom{(A \cup \dgn X)}$.
	Given marked merge-inflate-complexes $(X, A)$, $(Y, B)$, a \emph{morphism} $f\colon (X, A) \to (Y, B)$ is a morphism $f\colon X \to Y$ in $\MICpx$ such that $A \subseteq \invrs{f}B$.
\end{dfn}

\noindent
We let $\mMICpx$ denote the category of marked merge-inflate-complexes and their morphisms.
As was the case for the category $\mICpx$, we will not be particularly concerned with morphisms that strictly preserve units---they will not play a role in semi-strictification; it will suffice for us to observe that the adjunction $\FMerg \dashv \UMerg$ between $\ICpx$ and $\MICpx$ lifts to an adjunction
\[\begin{tikzcd}
	\mICpx && \mMICpx,
	\arrow[""{name=0, anchor=center, inner sep=0}, "{\FmMerg}", curve={height=-12pt}, from=1-1, to=1-3]
	\arrow[""{name=1, anchor=center, inner sep=0}, "{\UmMerg}", curve={height=-12pt}, from=1-3, to=1-1]
	\arrow["\dashv"{anchor=center, rotate=-90}, draw=none, from=0, to=1]
\end{tikzcd}\]
by letting $\FmMerg(X, A) \eqdef (\FMerg X, \clcom{A})$ and $\UmMerg(X, A) = (\UMerg X, A)$.
Indeed, to see that $\FmMerg$ is well-defined, it suffices to see that $\dgn (\FmMerg X) \subseteq \clcom{A}$ under the assumption that $\dgn X \subseteq A$, which follows easily from $(\S, \C)$ factorisation.
Moreover, we have a commutative diagram
\[\begin{tikzcd}
	\mICpx & \mMICpx \\
	\mdCpx & \mMCpx
	\arrow["\FmMerg", from=1-1, to=1-2]
	\arrow["\UmInfl", from=1-1, to=2-1]
	\arrow["\UmInfl", from=1-2, to=2-2]
	\arrow["\FmMerg", from=2-1, to=2-2]
\end{tikzcd}\]
of categories and functors, where \( \UmInfl \) sends \( (X, A) \) to \( (\UInfl X, A) \).
We are now ready to define our semi-strict model.

\begin{dfn}[Merge-$\infty$\nbd category]
	A marked merge-inflate-complex is a \emph{merge-$\infty$\nbd category} if it satisfies the following axiom.
\begin{itemize}
	\item (\emph{Completeness}). 
		Marked cells coincide with marked-invertible cells.
\end{itemize}
	For each $n \in \mathbb{N}$, a merge $\infty$\nbd category is a \emph{merge-$n$\nbd category} if, furthermore, every cell of dimension $> n$ is marked.
\end{dfn}

\noindent 
Let $n \in \Ninfty$.
Symbolically, given a merge-inflate-complex $X$, we can concisely characterise sets $A$ which make $(X, A)$ into a merge-$n$\nbd category by the equations
\[
	A = \clcom{(A \cup \dgn X \cup \gr{>n}{\cell X})} = \satur{A}.
\]

\begin{dfn}[Semi-strict functor of merge-$n$-categories]
	Let $(X, A)$, $(Y, B)$ be merge-$n$\nbd categories.
	A \emph{semi-strict functor} $f\colon (X, A) \to (Y, B)$ is a morphism of their underlying marked merge-complexes.
\end{dfn}

\noindent
Thus, as anticipated, we only require semi-strict functors to strictly preserve composition, and not units.

\begin{dfn}[Essential merge-$\infty$\nbd category]
	A marked merge-inflate-complex is an \emph{essential merge-$\infty$\nbd category} if it satisfies the following axiom.
\begin{itemize}
	\item (\emph{Essential completeness}).
		Every marked cell is marked-invertible, and every marked-invertible cell is marked-equivalent to a marked cell.
\end{itemize}
	For each $n \in \mathbb{N}$, an essential merge $\infty$\nbd category is an \emph{essential merge-$n$\nbd category} if, furthermore, every cell of dimension $> n$ is marked.
\end{dfn}

\noindent
From here onward, we fix $n \in \Ninfty$.
From Corollary \ref{cor:characterisation_of_fibrants_merge}, we have immediately the following.

\begin{prop} \label{prop:merge_n_categories_are_inftyn_categories}
	Let $(X, A)$ be a marked merge-inflate-complex.
	The following are equivalent:
	\begin{enumerate}[label=(\alph*)]
		\item $(X, A)$ is a merge-$n$\nbd category;
		\item the underlying marked merge-complex of $(X, A)$ is fibrant in $\MMwn$;
		\item the underlying marked inflate-complex of $(X, A)$ is an \inftyn\nbd category.
	\end{enumerate}
\end{prop}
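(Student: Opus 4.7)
The plan is to derive this as a direct consequence of Corollary \ref{cor:characterisation_of_fibrants_merge} together with Proposition \ref{prop:algebraic_weak_composites}, by simply unfolding the three definitions and checking that, for a marked merge-inflate-complex, the axioms collapse into one another.

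First I would note that, for a marked merge-inflate-complex $(X, A)$, the underlying marked merge-complex and the underlying marked inflate-complex are \emph{both} defined on the same underlying pair of a presheaf and a set $A$; the two algebraic structures on $X$ are compatible in the sense of merge-inflate distributivity, but from the point of view of $A$, completeness ($A = \satur{A}$) and the $n$-dimensional triviality condition ($\gr{>n}{\cell X} \subseteq A$) are unambiguous. Hence being a merge-$n$-category, being an $(\infty, n)$-category on the underlying marked inflate-complex, and being $\MMwn$-fibrant on the underlying marked merge-complex all share these two clauses.

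The equivalence (a) $\Leftrightarrow$ (b) then follows at once from Corollary \ref{cor:characterisation_of_fibrants_merge}: that corollary characterises $\MMwn$-fibrancy of a marked merge-complex as the existence of a compatible marked inflate-complex structure with $A = \satur{A}$ (and, for finite $n$, with cells above dimension $n$ marked). Since $(X, A)$ by hypothesis already carries such an inflate-complex structure, fibrancy reduces precisely to completeness and the dimension bound, which is the definition of a merge-$n$-category.

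The equivalence (a) $\Leftrightarrow$ (c) is the point where the merge structure does some work. The definition of an $(\infty, n)$-category on a marked inflate-complex requires, in addition to completeness and the dimension bound, the existence of weak composites for every round diagram. But Proposition \ref{prop:algebraic_weak_composites} shows that, whenever a marked merge-complex structure coexists with a marked inflate-complex structure on the same underlying marked directed complex, the merger $\mrgcom{u}$ provides a weak composite for any round diagram $u$. Hence the ``weak composites'' axiom is automatic for the underlying marked inflate-complex of a merge-inflate-complex, and (c) reduces to completeness plus the dimension bound, which is again the definition of a merge-$n$-category. No step here poses a real obstacle; the content lies entirely in having Proposition \ref{prop:algebraic_weak_composites} and Corollary \ref{cor:characterisation_of_fibrants_merge} available, and the proof amounts to two short paragraphs of bookkeeping.
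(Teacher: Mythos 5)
Your proposal is correct and is exactly the paper's argument: the paper states this proposition as following ``immediately'' from Corollary \ref{cor:characterisation_of_fibrants_merge} (which itself encodes the weak-composites point via Proposition \ref{prop:algebraic_weak_composites}), and your two paragraphs simply unpack that. The only cosmetic difference is that you invoke merge-inflate distributivity as an aside, whereas Proposition \ref{prop:algebraic_weak_composites} is explicitly stated to work ``even when the two structures do not interact''; nothing in your argument actually uses it.
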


\noindent
The following is also immediate from Proposition \ref{prop:algebraic_weak_composites}.

\begin{prop} \label{prop:essential_merge_n_categories_are_essential_inftyn_categories}
	Let $(X, A)$ be a marked merge-inflate-complex.
	The following are equivalent:
	\begin{enumerate}[label=(\alph*)]
		\item $(X, A)$ is an essential merge-$n$\nbd category;
		\item the underlying marked inflate-complex of $(X, A)$ is an essential \inftyn\nbd category.
	\end{enumerate}
\end{prop}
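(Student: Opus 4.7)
The plan is to verify each implication separately, observing that the two notions differ only by the presence or absence of the \emph{weak composites} axiom; both essential completeness and the truncation condition ``all cells of dimension $>n$ are marked'' are stated in identical terms, and rely only on the underlying marked inflate-complex structure (since marked-invertibility is defined through units $\un u$ and the relations $\mrdto$ and $\meqv$, none of which reference the merge-complex operations).

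For the implication (a) implies (b), let $(X, A)$ be an essential merge-$n$\nbd category. The underlying marked inflate-complex inherits essential completeness and, when $n < \infty$, the truncation condition. The one remaining axiom to verify is the existence of weak composites. This is exactly the content of Proposition \ref{prop:algebraic_weak_composites}: for any round diagram $u$ in $X$, the merger $\mrgcom{u} = u \mrg{}_U$ is a cell, available because $X$ is a merge-complex, and the cited proposition exhibits marked cells $u \mcelto \mrgcom{u}$ and $\mrgcom{u} \mcelto u$, witnessing $u \meqv \mrgcom{u}$. Hence $\mrgcom{u}$ is a weak composite of $u$, and the underlying marked inflate-complex is an essential \inftyn\nbd category.

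For the converse (b) implies (a), suppose $(X, A)$ is a marked merge-inflate-complex whose underlying marked inflate-complex is an essential \inftyn\nbd category. The axioms of essential merge-$n$\nbd category are precisely essential completeness and, when $n < \infty$, the condition that all cells of dimension $>n$ are marked; both of these are imposed on the marked inflate-complex structure and therefore transfer verbatim. No additional compatibility with the merge-complex structure is required by the definition, so $(X, A)$ is immediately an essential merge-$n$\nbd category.

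There is essentially no obstacle here: the proposition is a conceptual clarification that, once an inflate-algebra structure coexists on the carrier of a marked merge-complex, the existence of weak composites comes for free from the algebraic merger, so the only substantive axiom left is essential completeness. The only point worth flagging is the consistency check that the definition of marked-invertibility is genuinely independent of the merge-complex structure, which is built into our setup.
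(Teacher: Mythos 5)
Your proof is correct and takes the same route as the paper, which simply observes that the statement is immediate from Proposition \ref{prop:algebraic_weak_composites}: the two definitions differ only in the weak-composites axiom, which the merger $\mrgcom{u}$ supplies for free once the merge- and inflate-structures coexist on the same carrier. Your explicit unpacking of the two implications is a faithful elaboration of exactly this argument.
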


\noindent
Next, we show that the fibrant replacement of Corollary \ref{cor:naive_saturation_fibrant_replacement} lifts from essential \inftyn\nbd categories to essential merge-$n$\nbd categories.

\begin{prop} \label{prop:naive_saturation_merge_n_category}
	Let $(X, A)$ be an essential merge-$n$\nbd category.
	Then
	\begin{enumerate}
		\item $(X, \satur{A})$ is a merge-$n$\nbd category,
		\item the marking $(X, A) \acof (X, \satur{A})$ is a fibrant replacement in $\MMwn$.
	\end{enumerate}
\end{prop}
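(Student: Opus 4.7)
The plan is to deduce both parts from the corresponding results for essential \inftyn-categories, namely Theorem \ref{thm:naive_completion_is_completion} and Corollary \ref{cor:naive_saturation_fibrant_replacement}, by a transfer argument through the forgetful functor $\UmMerg\colon \mMCpx \to \mdCpx$, exploiting the compatibility established in Section \ref{sec:weakonmerge} between round composition and marked-invertibility.

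For part 1, I would start from Proposition \ref{prop:essential_merge_n_categories_are_essential_inftyn_categories}, which says that the underlying marked inflate-complex of $(X, A)$ is an essential \inftyn-category; hence $\satur{A}$ is exactly the set of cells of $X$ that are marked-invertible in $(X, A)$. The only non-trivial condition for $(X, \satur{A})$ to qualify as a marked merge-inflate-complex is the closure identity $\satur{A} = \clcom{\satur{A}}$, which I would verify by checking both generating conditions: a $\satur{A}$-round diagram is marked-invertible by Proposition \ref{prop:round_diagrams_of_marked_inv_cells_are_marked-invertible}, so its composite with any subdivision remains marked-invertible by Lemma \ref{lem:composition_of_marked_invertible}, and the same Lemma gives that marked-invertibility is reflected along subdivisions. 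Applying then Theorem \ref{thm:naive_completion_is_completion} to the underlying essential \inftyn-category, together with Lemma \ref{lem:marked-invertible_iff_invertible_up_to_marked-invertible} to identify marked-invertibility with respect to $\satur{A}$ with marked-invertibility with respect to $A$, delivers completeness of $(X, \satur{A})$; the dimension axiom is inherited trivially from $A \subseteq \satur{A}$.

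For part 2, the target $(X, \satur{A})$ is fibrant in $\MMwn$ by Proposition \ref{prop:merge_n_categories_are_inftyn_categories}. To see that the marking is a cofibration I would exhibit it as a single pushout, along the morphism $\coprod_{a \in S} \flatm{U_a} \to (X, A)$ classifying the cells in $S \eqdef \satur{A} \setminus A$, of the coproduct $\coprod_{a \in S} \m_{U_a}$ of top-markings; by Proposition \ref{prop:colimits_of_marked_merge_complexes} its codomain is $(X, \clcom{A \cup S}) = (X, \satur{A})$, invoking part 1. For the lifting property against a fibration $p\colon (Y, B) \surj (Y', B')$ in $\MMwn$, the trick is to apply $\UmMerg$, which by Theorem \ref{thm:model_structures_on_marked_merge} sends $p$ to a $\Mwn$-fibration; the image of the original lifting square in $\mdCpx$ then admits a diagonal filler because $(\UMerg X, A) \to (\UMerg X, \satur{A})$ is an acyclic cofibration in $\Mwn$ by Corollary \ref{cor:naive_saturation_fibrant_replacement}. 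Since this filler leaves the underlying map of merge-complexes unchanged, it is precisely the statement that the top morphism of the original square sends $\satur{A}$ into $B$, which immediately produces the required filler in $\mMCpx$.

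I expect the main obstacle to be the equality $\satur{A} = \clcom{\satur{A}}$ in part 1: although it packages neatly once Proposition \ref{prop:round_diagrams_of_marked_inv_cells_are_marked-invertible} and especially Lemma \ref{lem:composition_of_marked_invertible} are in hand, the latter is precisely the step where the compatibility between round composition along subdivisions and the global notion of marked-invertibility is genuinely being consumed—without the merge-inflate compatibility built into a merge-inflate-complex, this would fail. After part 1 the remainder of the proof reduces to a formal transfer through $\UmMerg$ and Corollary \ref{cor:naive_saturation_fibrant_replacement}.
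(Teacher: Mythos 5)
Your proof is correct, and for part 1 it is essentially the paper's argument: reduce to the underlying essential \inftyn\nbd category via Proposition \ref{prop:essential_merge_n_categories_are_essential_inftyn_categories}, invoke Theorem \ref{thm:naive_completion_is_completion} and Proposition \ref{prop:merge_n_categories_are_inftyn_categories}, and isolate $\satur{A} = \clcom{\satur{A}}$ as the only genuinely new point, settled by Lemma \ref{lem:composition_of_marked_invertible} (your explicit check of the two generating clauses, using Proposition \ref{prop:round_diagrams_of_marked_inv_cells_are_marked-invertible} for the first, is exactly what the paper leaves implicit).

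For part 2 you take a mildly different route. The paper simply says the proof is ``just as in Corollary \ref{cor:naive_saturation_fibrant_replacement}'', i.e.\ the marking is realised as a transfinite composition of pushouts along the saturations in $\Jsat$, so it is an acyclic cofibration purely by the generating-anodyne-extension machinery of Proposition \ref{prop:weak_cisinski_olschok}. You instead split the claim: cofibrancy comes from a single pushout of a coproduct of top-markings $\m_{U_a}$ (computed via Proposition \ref{prop:colimits_of_marked_merge_complexes}, where the identification of the resulting marked set with $\satur{A}$ consumes part 1), and the left lifting property against $\MMwn$\nbd fibrations is transferred through $\UmMerg$ using the characterisation of fibrations in Theorem \ref{thm:model_structures_on_marked_merge} together with the already-established acyclicity at the level of $\mdCpx$. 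The transfer step is sound precisely because the left leg is a marking, so any filler of the image square in $\mdCpx$ is forced to have the same underlying map as the top arrow, and its existence is equivalent to the inclusion $\satur{A} \subseteq g^{-1}(B)$, which is exactly the data of a filler back in $\mMCpx$. What your version buys is that you never have to argue that the saturation pushouts in $\mMCpx$ actually terminate at $\satur{A}$ (the closure operator $\clcom{-}$ could in principle interfere); what the paper's version buys is brevity and uniformity with the unmarked-merge-free case. Both are valid.
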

\begin{proof}
	For the first point, by Proposition \ref{prop:essential_merge_n_categories_are_essential_inftyn_categories} combined with Theorem \ref{thm:naive_completion_is_completion} and Proposition \ref{prop:merge_n_categories_are_inftyn_categories}, it suffices to show that $\satur{A} = \clcom{(\satur{A})}$, implying that $(X, \satur{A})$ is well-defined as a marked merge-inflate-complex; but this follows directly from the definition and Lemma \ref{lem:composition_of_marked_invertible}.
	The second point is then proved just as in Corollary \ref{cor:naive_saturation_fibrant_replacement}.
\end{proof}

\noindent
Before we move on to the proof of semi-strictification, we briefly discuss how, from a merge-$n$\nbd category, one obtains an algebraic model in the more traditional sense, that is, an algebra of \emph{globular} units and compositions on the underlying $\omega$\nbd graph of globular cells.
For now, we will not attempt to formally (let alone functorially) produce an algebraic model \emph{\`a la} Batanin--Leinster, although we consider this an obvious direction for future developments; we will simply describe one way to define globular composition operations from the combination of units and composition of round diagrams.

Given an inflate-complex $X$, a pasting diagram $u$ in $X$, and $n \in \mathbb{N}$, we let
\[
	\Rnd_n u \eqdef u \cp{n} \un(\bd{n}{+}u)
\]
and, by downward recursion on $k < n$,
\[
	\Rnd^n_n u \eqdef u, \quad \quad \Rnd^n_k u \eqdef \Rnd_{n-1}(\Rnd^{n-1}_k u).
\]
It is straightforward to show that, if $u$ is an $n$\nbd dimensional pasting diagram and $k \leq n$ is such that $\bd{k}{-}u$, $\bd{k}{+}u$ are round, then $\Rnd^n_k u$ is an $n$\nbd dimensional round diagram.
Moreover, this operation satisfies the equations
\begin{equation} \label{eq:rounding_relations}
	\bd{m}{\a}\Rnd^n_k u = \begin{cases}
		\Rnd^n_k u 
		& \text{if $n \leq m$,} \\
		\Rnd^m_k \bd{m}{\a}u
		& \text{if $k \leq m < n$,} \\
		\bd{m}{\a}u
		& \text{if $m < k$}.
	\end{cases}
\end{equation}
Now, suppose that $X$ is a merge-inflate-complex, let $n \in \mathbb{N}$, $k < n$, and let $u, v\colon \globe{n} \to X$ be globular cells such that $\bd{k}{+}u = \bd{k}{-}v$.
Then, we let
\[
	u *_k v \eqdef \glcom{\Rnd_{k+1}^n (u \cp{k} v)}\colon \globe{n} \to X.
\]
Using the equations (\ref{eq:rounding_relations}), we can check that the operation $- *_k -$ satisfies the same compatibility with boundaries as $- \cp{k} -$, that is,
\[
	\bd{m}{\a}(u *_k v) = \begin{cases}
		\bd{m}{\a} u *_k \bd{m}{\a}v
		& \text{if $k < m$}, \\
		\bd{m}{-} u
		& \text{if $m = k$, $\a = -$,} \\
		\bd{m}{+} v
		& \text{if $m = k$, $\a = +$,} \\
		\bd{m}{\a}u = \bd{m}{\a}v
		& \text{if $m < k$.}
	\end{cases}
\]
If $(X, A)$ is a merge-$n$\nbd category, using the theory of marked-equivalences of contexts along the same lines as \cite[Section 5]{chanavat2024equivalences}, we can show that these operations satisfy associativity and unitality up to marked-equivalence, in the appropriate sense.
However, the only equations that hold \emph{strictly} are the associativity equations for the operation $- *_{n-1} -$ on globular $n$\nbd cells, so at the level of globular composition, we achieve a very mild strictification. 

From a practical and computational point of view, however, we would argue that globular composition is relatively useless, and that the kind of generalised strict associativity and interchange that we achieve is far more useful: as we know, pasting diagrams even form a \emph{strict} $\omega$\nbd category, and there is no need at all to pass to globular composites until one is finished pasting together diagrams of more general shapes.
It suffices to take a look at the 1\nbd categorical commutative diagrams that appear in this or any other article---these are a truncated form of 2\nbd dimensional pasting diagrams, in which either a unique 2\nbd cells exists between boundaries or does not---or, more in general, 2\nbd categorical pasting diagrams, to observe that ones that are pastings of globe-shaped diagrams are vanishingly rare; instead, generalised pastings of commutative squares, triangles, sometimes more general polygons are abundant, and more often than not, they are already round.
All such diagrams can be constructed as pasting diagrams in a merge-$n$\nbd category, and the generalised associativity and interchange of round compositions ensure that the order in which different round portions of these diagrams are composed is irrelevant.

\subsection{Semi-strictification} \label{sec:simpson}

\noindent
Everything is now in place to prove our main theorem, by a similar argument as the one employed in \cite[Section 6.2]{henry2018regular}.

\begin{lem} \label{lem:free_merge-complex_on_inftyn_is_essential_inftyn}
	Let $(X, A)$ be an essential \inftyn\nbd category.
	Then $\FmMerg(X, A)$ is an essential merge-$n$\nbd category.
\end{lem}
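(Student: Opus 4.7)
By Proposition~\ref{prop:essential_merge_n_categories_are_essential_inftyn_categories}, it suffices to show that the underlying marked inflate-complex $(Y, B) \eqdef \UmMerg\FmMerg(X, A)$ is an essential \inftyn\nbd category. Its cells, by Lemma~\ref{lem:presheaf_restriction_to_right_class}, admit the explicit description $[v, s]$ where $s\colon U \sd V$ is a subdivision and $v\colon V \to X$ is a round diagram ($V$ is a round molecule since $U$ is an atom). Weak composites in $(Y, B)$ are immediate from Proposition~\ref{prop:algebraic_weak_composites} applied to the merge-inflate structure on $\FmMerg(X, A)$, and cells of dimension $> n$ lie in $\clcom{A}$: any such $c = [v, s]$ has $\dim V > n$, so every top-dimensional cell of $v$ is marked in $(X, A)$, which makes $v$ a $\clcom{A}$-round diagram in $\FMerg X$ and hence $c = vs \in \clcom{A}$ by the first closure condition.

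For essential completeness, the key tool is that the unit $\eta\colon (X, A) \to (Y, B)$ of the adjunction $\FmMerg \dashv \UmMerg$ is a morphism of marked inflate-complexes, so it strictly preserves units and marked cells, and hence also the explicit data of marked-invertibility. I would proceed in two stages, arranged as a joint induction on dimension to avoid circularity. \emph{Stage one:} every cell of $\clcom{A}$ is marked-invertible in $(Y, B)$. Induct on the construction of $\clcom{A}$: the base case $[a, \idd{}]$ for $a \in A$ inherits marked-invertibility of $a$ along $\eta$, while the closure cases are handled by the construction in Lemma~\ref{lem:composition_of_marked_invertible}, noting that the auxiliary witnesses produced by subdivision already lie in $\clcom{A}$ by the closure conditions and hence are marked-invertible by the dimensional induction. \emph{Stage two:} every marked-invertible $c = [v, s]$ is marked-equivalent to a cell in $\clcom{A}$. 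Using the converse direction of Lemma~\ref{lem:composition_of_marked_invertible}, deduce that $\eta(v)$ is marked-invertible as a round diagram in $(Y, B)$; then by Proposition~\ref{prop:round_diagrams_of_marked_inv_cells_are_marked-invertible} each top-dimensional $\eta(v_i)$ is marked-invertible in $(Y, B)$, and reflection through $\eta$ (using weak composites in $X$ to produce weak inverses in $X$ from weak inverses in $(Y, B)$) gives that each $v_i$ is marked-invertible in $X$. By essential completeness of $(X, A)$ and Proposition~\ref{prop:marked_round_diagrams_have_marked_weak_composites}, pick a marked cell $\bar v \in A$ with $v \meqv \bar v$ in $X$, witnessed by a marked $w$ of shape $V \celto \bar V$. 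Construct a subdivision $s' \colon U \sd \bar V$ agreeing with $s$ on the boundary, and an induced subdivision $t \colon (U \celto U) \sd (V \celto \bar V)$ restricting to $s$ on the input and to $s'$ on the output; then $\eta(w) \cdot t$ is a marked round diagram from $c$ to $[\bar v, s']$, which lies in $\clcom{A}$ by the first closure condition.

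The hard part will be the reflection of marked-invertibility along $\eta$: given $\eta(v_i)$ marked-invertible in $(Y, B)$ with weak inverse $w_i^* = [\tilde w_i, \tilde s_i]$, one must extract a weak inverse for $v_i$ that actually lives in $X$. This cannot be done naively since $\tilde w_i$ is a round diagram (not a cell) and its weak composite in $X$ generally has a boundary differing from that of a candidate inverse of $v_i$; the plan is to combine weak composition in $X$ with boundary adjustment along $\tilde s_i$ to produce a cell of the correct shape, and to transport witnesses back via mergers. A secondary technical task is the construction of the subdivision $t$ between the transition atoms, which I expect to follow from the compatibility of pastings and subdivisions developed in Section~\ref{sec:subdivisions}, but requires careful verification that $t$ restricts correctly on both faces.
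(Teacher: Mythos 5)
Your skeleton is right---weak composites come for free from the merge-inflate structure, the dimension~$> n$ condition is immediate, and essential completeness is the crux---and your stage one can be made to work, although the paper organises it without any induction on the closure: a marked cell of $\FmMerg(X,A)$ is \emph{uniquely} of the form $[h,s]$ with $h$ a marked round diagram in $(X,A)$, so one invokes marked-invertibility of $h$ in $(X,A)$ (where all the machinery of Section~\ref{sec:equivalences} is available) and transports the four witnesses forward along explicitly constructed subdivisions of the transition atoms. This also avoids the circularity you are fighting: both Lemma~\ref{lem:composition_of_marked_invertible} and Proposition~\ref{prop:round_diagrams_of_marked_inv_cells_are_marked-invertible} presuppose that the ambient object is already an essential \inftyn\nbd category, which is exactly what is being proved for $\FmMerg(X,A)$.

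Stage two, however, contains a step that is actually false, not merely hard. You apply Proposition~\ref{prop:round_diagrams_of_marked_inv_cells_are_marked-invertible} in the converse direction, deducing that the top-dimensional cells of a marked-invertible round diagram are marked-invertible. They need not be: a section--retraction pair $f \cp{} g \meqv \un a$ in dimension $1$ gives a marked-invertible round diagram with non-invertible cells, and this is precisely the phenomenon (noted in the remark following the lemma) that prevents $\FmMerg$ of an \inftyn\nbd category from being more than an \emph{essential} merge-$n$\nbd category. The ``reflection along $\eta$'' you flag as the hard part is also not how the paper proceeds; the missing idea is to decompose the \emph{witnesses} rather than reflect an abstract property. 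After passing to mergers, the witnesses $w_L\colon e \cp{} e^* \mcelto \un u'$, etc., are marked \emph{cells} of $\FMerg X$, hence uniquely of the form $[z_L, s_L]$, etc., for marked round diagrams $z_L, z_R, z'_L, z'_R$ in $(X,A)$ whose types are forced to be $h \cp{} h^* \mrdto \un u$, $h^* \cp{} h \mrdto \un v$ and their reverses. So $h$ itself---the whole round diagram, never its individual cells---is marked-invertible in $(X,A)$; essential completeness of $(X,A)$ then yields a marked weak composite $\mrg{h}$, and the marked cells $k\colon h \mcelto \mrg{h}$, $k'\colon \mrg{h} \mcelto h$ transport along subdivisions $(U \celto \mrg{U}) \sd (V \celto \mrg{V})$ to exhibit $e \meqv [\mrg{h}, -] \in \clcom{A}$.
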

\begin{proof}
	By the results of the previous section, from the inflate-complex structure on $(X, A)$, we obtain a merge-inflate-complex structure on $\FmMerg(X, A)$.
	Let $e\colon U \to \FMerg X$ be a marked cell of type $u' \celto v'$ in $\FmMerg(X, A)$.
	Then $e$ is uniquely of the form $[h, s]$ for a marked round diagram $h\colon V \to X$ and a subdivision $s\colon U \sd V$.
	By essential completeness of $(X, A)$, $h\colon u \rdto v$ is marked-invertible, so we can find a weak inverse $h^*\colon v \celto u$ of shape $V^*$ and witnesses of $z_L\colon h \cp{} h^* \mcelto \un u$, $z_R\colon h^* \cp{} h \mcelto \un v$, $z'_L\colon \un u \mcelto h \cp{} h^*$, $z'_R\colon \un v \mcelto h^* \cp{} h$, whose shapes are
	\[
		Z_L \eqdef (V \cp{} V^*) \celto (\arr \pcyl{\bd{}{}(\bd{}{-}V)} \bd{}{-}V), \quad
		Z_R \eqdef (V^* \cp{} V) \celto (\arr \pcyl{\bd{}{}(\bd{}{+}V)} \bd{}{+}V),
	\]
	and their duals $Z^*_L$ and $Z^*_R$.
	Let $U^* \eqdef \bd{}{+}U \celto \bd{}{-}U$.
	We have subdivisions
	\begin{align*}
		s_L\colon \left((U \cp{} U^*) \celto (\arr \pcyl{\bd{}{}(\bd{}{-}U)} \bd{}{-}U)\right)
		& \sd \left((V \cp{} V^*) \celto (\arr \pcyl{\bd{}{}(\bd{}{-}V)} \bd{}{-}V)\right), \\
		s_R\colon \left((U^* \cp{} U) \celto (\arr \pcyl{\bd{}{}(\bd{}{+}U)} \bd{}{+}U)\right)
		& \sd \left((V^* \cp{} V) \celto (\arr \pcyl{\bd{}{}(\bd{}{+}V)} \bd{}{+}V)\right)
	\end{align*}
	restricting to $s$ on $U \submol \bd{}{-}Z_L, \bd{}{-}Z_R$, and to the unique subdivisions fitting in the factorisations
	\[\begin{tikzcd}
	{\arr \pcyl{\bd{}{}(\bd{}{\a}U)} \bd{}{\a}U } & {\arr \pcyl{\bd{}{}(\bd{}{\a}V)} \bd{}{\a}V} \\
	{\bd{}{\a}U} & {\bd{}{\a}V}
	\arrow[arloop->, dashed, from=1-1, to=1-2]
	\arrow["{\tau_{\bd{}{}(\bd{}{\a}U)}}", two heads, from=1-1, to=2-1]
	\arrow["{\tau_{\bd{}{}(\bd{}{\a}V)}}", two heads, from=1-2, to=2-2]
	\arrow[arloop->, "{\restr{s}{\bd{}{\a}U}}", from=2-1, to=2-2]
\end{tikzcd}\]
	on $\bd{}{+}Z_L$ and $\bd{}{+}Z_R$ for $\a = -$ and $\a = +$, respectively.
	Then, by construction, $[z_L, s_L]$ has type $[h, s]\cp{}e^* \mcelto \un u'$ while $[z_R, s_R]$ has type $e^* \cp{} [h, s] \mcelto \un v'$ for some cell $e^*$ decomposing to $h^*$.
	From subdivisions $s'_L$, dual to $s_L$, and $s'_R$, dual to $s_R$, we also obtain witnesses $[z'_L, s'_L]\colon \un u' \mcelto [h, s] \cp{} e^*$ and $[z'_R, s'_R]\colon \un v' \mcelto e^* \cp{} [h, s]$.
	This proves that $e$ is marked-invertible.

	Next, suppose that $e = [h, s]$ is marked-invertible, with $e$, $h$, $s$ as before, except $h$ need not be marked.
	By passing to mergers, we may assume that its weak inverse $e^*$, as well as witnesses of marked-invertibility $w_L\colon e \cp{} e^* \mcelto \un u'$, $w_R\colon e^* \cp{} e \mcelto \un v'$, $w'_L\colon \un u' \mcelto e \cp{} e^*$, $w'_R\colon \un v' \mcelto e^* \cp{} e$ are all single cells.
	Then $e^*$ is uniquely of the form $[h^*, s']$ for some round diagram $h^*$ and subdivision $s'$, and
	\[
		w_L = [z_L, s_L], \quad w_R = [z_R, s_R], \quad w'_L = [z'_L, s'_L], \quad w'_R = [z'_R, s'_R]
	\]
	for some subdivisions $s_L$, $s_R$, $s'_L$, $s'_R$ and marked round diagrams $z_L$, $z_R$, $z'_L$, $z'_R$ in $(X, A)$ which exhibit $h \cp{} h^* \mrdto \un u$, $h^* \cp{} h \mrdto \un v$, $\un u \mrdto h \cp{} h^*$, and $\un v \mrdto h^* \cp{} h$, respectively.
	Then $h$ is marked-invertible.
	By essential completeness of $(X, A)$, there exists a marked weak composite $\mrg{h}$ of $h$, exhibited by marked cells $k\colon h \mcelto \mrg{h}$ and $k'\colon \mrg{h} \mcelto h$.
	There are unique subdivisions
	\[
		t\colon (U \celto \mrg{U}) \sd (V \celto \mrg{V}), \quad \quad
		t'\colon (\mrg{U} \celto U) \sd (\mrg{V} \celto V)
	\]
	restricting to $s$ on the input and the output boundary, respectively.
	Then $[k, t]$ and $[k', t']$ are marked cells in $\FmMerg(X, A)$ exhibiting marked-equivalence of $e$ with a marked cell decomposing to $\mrg{h}$.
	Finally, if all cells in dimension $> n$ are marked in $(X, A)$, it is clear from the definition that all cells in dimension $> n$ are marked in $\FmMerg(X, A)$.
	This completes the proof.
\end{proof}

\begin{rmk}
	Note that, even if $(X, A)$ is an \inftyn\nbd category, there is no guarantee that $\FmMerg(X, A)$ is more than an essential \inftyn\nbd category, since a marked-invertible round diagram in $(X, A)$ may have non-invertible top-dimensional cells---think of a section-retraction pair in dimension $1$---so its ``free'' merger in $\FmMerg(X, A)$ will be marked-invertible, but not marked.
\end{rmk}

\noindent
By Lemma \ref{lem:free_merge-complex_on_inftyn_is_essential_inftyn}, paired with Proposition 
\ref{prop:naive_saturation_merge_n_category}, we know that, if $(X, A)$ is an essential \inftyn\nbd category, then
\[
	\FmMerg(X, A) \acof (\FMerg X, \satur{\clcom{A}})
\]
is a fibrant replacement in $\MMwn$.
We let
\[
	\sigma_{(X, A)}\colon (X, A) \to \UmMerg(\FMerg X, \satur{\clcom{A}})
\]
denote its transpose morphism in $\mdCpx$; its underlying morphism of directed complexes is the same as the underlying morphism of the unit $\eta^\Merg_{(X, A)}$ of the adjunction $\FmMerg \dashv \UmMerg$, sending a cell $u\colon U \to X$ to $[u, \idd{U}]$.

Moreover, if $f\colon (X, A) \to (Y, B)$ is a morphism of essential \inftyn\nbd categories, then $\FMerg f$, being a morphism of the underlying essential \inftyn\nbd categories, sends marked-invertible cells to marked-invertible cells by Proposition 
\ref{prop:morphisms_preserve_marked_invertibility}, so it also determines a semi-strict functor
\[
	\FMerg f\colon (\FMerg X, \satur{\clcom{A}}) \to (\FMerg Y, \satur{\clcom{B}})
\]
between the fibrant replacements.
By naturality of $\eta^\Merg$, we also have a ``naturality square''
\[\begin{tikzcd}[column sep=large]
	{(X, A)} & {(Y, B)} \\
	{\UmMerg(\FMerg X, \satur{\clcom{A}})} & {\UmMerg(\FMerg Y, \satur{\clcom{B}}).}
	\arrow["f", from=1-1, to=1-2]
	\arrow[hook, "{\sigma_{(X, A)}}", from=1-1, to=2-1]
	\arrow[hook, "{\sigma_{(Y, B)}}", from=1-2, to=2-2]
	\arrow["{\UMerg \FMerg f}", from=2-1, to=2-2]
\end{tikzcd}\]

\begin{thm} \label{thm:embedding_into_merge_n_category}
	Let $(X, A)$ be an \inftyn\nbd category.
	Then $\sigma_{(X, A)}$ is an equivalence of \inftyn\nbd categories.
\end{thm}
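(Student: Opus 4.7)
The strategy is to invoke Theorem \ref{thm:characterisation_of_equivalences}: both $(X, A)$ and $\UmMerg(\FMerg X, \satur{\clcom{A}})$ are underlying marked directed complexes of \inftyn\nbd categories (the latter by Lemma \ref{lem:free_merge-complex_on_inftyn_is_essential_inftyn}, Proposition \ref{prop:naive_saturation_merge_n_category}, and Proposition \ref{prop:merge_n_categories_are_inftyn_categories}), and $\sigma_{(X, A)}$ is a functor between them, so it suffices to show that $\sigma_{(X, A)}$ is an essential acyclic fibration. In dimension 0 this is immediate: the only subdivision with domain $1$ is the identity, so $\sigma$ induces a bijection on 0\nbd cells.

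For higher cells, fix parallel $u^-, u^+ \in \gr{n-1}{\Rd X}$ of shapes $U^-, U^+$, and a cell $v\colon \sigma(u^-) \celto \sigma(u^+)$ of shape $U \eqdef U^- \celto U^+$. By the explicit description of $\FMerg$ given by Lemma \ref{lem:presheaf_restriction_to_right_class}, $v$ is represented by a pair $[w, s\colon U \sd V]$ with $w\colon V \to X$ a round diagram in $X$. Unpacking the equivalence relation on such pairs, the boundary condition $\bd{}{\a}v = \sigma(u^\a)$ forces each restriction $s|_{\bd{}{\a}U}$ to be invertible; identifying via these isomorphisms, we obtain $\bd{}{\a}w = u^\a$, and by the rigidity of atoms, $\mrg{V}$ is canonically identified with $U$ with $s = \mrg{}_V$. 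Thus $v = [w, \mrg{}_V] = \mrgcom{\sigma(w)}$, which by Proposition \ref{prop:algebraic_weak_composites} applied in the merge-inflate-complex $(\FMerg X, \satur{\clcom{A}})$ is a weak composite of the round diagram $\sigma(w)$, so $v \meqv \sigma(w)$. Since $(X, A)$ is an \inftyn\nbd category, $w$ admits a weak composite $u \eqdef \mrg{w}$ in $(X, A)$, a cell of shape $\mrg{V} = U$ with $\bd{}{\a}u = u^\a$; Lemma \ref{lem:functors_preserve_marked_equivalence} then yields $\sigma(w) \meqv \sigma(u)$, whence $v \meqv \sigma(u)$ by transitivity.

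For the marking condition, suppose further that $v \in \satur{\clcom{A}}$, that is, $v$ is marked-invertible in $\FmMerg(X, A)$. The argument used in the proof of Lemma \ref{lem:free_merge-complex_on_inftyn_is_essential_inftyn} shows that $[w, s]$ is marked-invertible in $\FmMerg(X, A)$ if and only if $w$ is marked-invertible in $(X, A)$; by Lemma \ref{lem:marked_equivalence_preserves_marked_invertibility}, the weak composite $u = \mrg{w}$ is then also marked-invertible, and by completeness of the \inftyn\nbd category $(X, A)$, $u$ is marked, completing the argument. The main technical hurdle is the unpacking of the representation $[w, s]$ of $v$ from the boundary condition: one must combine the equivalence relation defining $\FMerg X$ with the rigidity of atoms to justify both the identification $\mrg{V} = U$ and the equality $s = \mrg{}_V$, which is what recasts $v$ as a merger and thus exposes it to the algebraic weak composition machinery of Proposition \ref{prop:algebraic_weak_composites}.
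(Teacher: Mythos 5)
Your proof is correct and follows essentially the same route as the paper's: reduce via Theorem \ref{thm:characterisation_of_equivalences} to showing $\sigma_{(X,A)}$ is an essential acyclic fibration, note triviality in dimension $0$, write an arbitrary cell over $\sigma(u^-)\celto\sigma(u^+)$ as $[w,s]$ with $s$ an isomorphism on the boundary, replace $w$ by a weak composite $\mrg{w}$ in $(X,A)$, and handle the marking condition by reflecting marked-invertibility as in Lemma \ref{lem:free_merge-complex_on_inftyn_is_essential_inftyn}. The only (harmless) deviation is that you obtain the marked-equivalence $v \meqv \sigma(\mrg{w})$ by recognising $v$ as the merger $\mrgcom{\sigma(w)}$ and invoking Proposition \ref{prop:algebraic_weak_composites} together with Lemma \ref{lem:functors_preserve_marked_equivalence}, whereas the paper transports the witness $k\colon w \mcelto \mrg{w}$ directly along an explicit subdivision $t$; both yield the same conclusion.
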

\begin{proof}
	It follows from Lemma \ref{lem:free_merge-complex_on_inftyn_is_essential_inftyn}, Proposition \ref{prop:naive_saturation_merge_n_category}, and Proposition \ref{prop:merge_n_categories_are_inftyn_categories} that the domain and codomain of $\sigma_{(X, A)}$ are both \inftyn\nbd categories, so by Theorem \ref{thm:characterisation_of_equivalences} it suffices to show that $\sigma_{(X, A)}$ is an essential acyclic fibration.
	Because there are no non-trivial subdivisions of 0\nbd cells, $\sigma_{(X, A)}$ is surjective on 0\nbd cells.
	Next, suppose that $u$, $v$ are parallel round diagrams of shapes $U$, $V$ in $X$, and let $a\colon [u, \idd{U}] \celto [v, \idd{V}]$ be a cell in $\FMerg X$.
	Then $a$ is uniquely of the form $[a', s]$ for some round diagram $a'$ of shape $W$ in $X$ and subdivision $s\colon (U \celto V) \sd W$ restricting to the identity on the boundary.
	Since $(X, A)$ has weak composites, there exists a cell $\mrg{a'}$ parallel to $a'$ and a marked cell $k\colon a' \mcelto \mrg{a'}$.
	Let $t$ be the unique subdivision
	\[
		\left((U \celto V) \celto (U \celto V)\right) \sd (W \celto \mrg{W})
	\]
	restricting to $s$ on the input boundary; then $t$ restricts to the identity on the output boundary.
	It follows that $[k, t]\colon [a', s] \mcelto [\mrg{a'}, \idd{\mrg{W}}]$ exhibits the marked-equivalence of $a$ with a cell in the image of $\sigma_{(X, A)}$.
	Finally, if $a$ is marked, then, as in the proof of Lemma \ref{lem:free_merge-complex_on_inftyn_is_essential_inftyn}, one shows that $a'$ must be marked-invertible, in which case $\mrg{a'}$ is also marked-invertible.
	By completeness of $(X, A)$, it follows that $\mrg{a'}$ is marked.
\end{proof}

\begin{rmk}
	Since $\sigma_{(X, A)}$ is evidently a monomorphism, hence a cofibration, by 
	Lemma \ref{lem:acyclic_cofib_is_cofib_equivalence} it is in fact an acyclic cofibration.
\end{rmk}

\begin{comm}
	\emph{Sensu stricto}, Theorem \ref{thm:embedding_into_merge_n_category} is the core ``semi-strictification theorem'', in the sense that it is, formally, the higher-dimensional analogue of Mac Lane's strictification theorem for bicategories: it shows that ``freely adding composites'' embeds an \inftyn\nbd category into a merge-$n$\nbd category via an equivalence.
	However, while this realises an embedding of \inftyn\nbd categories into merge-$n$\nbd categories, it does not yet show that the embedding is conservative in the sense that equivalences of merge-$n$\nbd categories are essentially the same as equivalences of \inftyn\nbd categories; for this, we need the results of the rest of this section.
\end{comm}

\begin{lem} \label{lem:counit_acyclic_fibration}
	Let $(X, A)$ be an $\MMwn$-fibrant marked merge-complex.
	Then the counit $\varepsilon^\Merg$ of the adjunction $\FmMerg \dashv \UmMerg$ evaluated at $(X, A)$ factors as
\[\begin{tikzcd}
	{\FmMerg\UmMerg(X, A)} & {(\FMerg\UMerg X, \satur{\clcom{A}})} \\
	& {(X, A)}
	\arrow["\sim", hook, from=1-1, to=1-2]
	\arrow["{\varepsilon^\Merg_{(X, A)}}", curve={height=12pt}, from=1-1, to=2-2]
	\arrow["\wr", two heads, from=1-2, to=2-2]
\end{tikzcd}\]
	where the horizontal morphism is an acyclic cofibration and the vertical morphism is an acyclic fibration.
\end{lem}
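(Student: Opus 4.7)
The plan is to handle each factor separately. For the horizontal morphism, Corollary \ref{cor:characterisation_of_fibrants_merge} ensures that $\UmMerg(X, A)$ admits an inflate-complex structure making it an \inftyn\nbd category, so by Lemma \ref{lem:free_merge-complex_on_inftyn_is_essential_inftyn} the marked merge-complex $\FmMerg\UmMerg(X, A) = (\FMerg \UMerg X, \clcom{A})$ is an essential merge-$n$\nbd category, and Proposition \ref{prop:naive_saturation_merge_n_category} identifies the marking to $(\FMerg \UMerg X, \satur{\clcom{A}})$ as a fibrant replacement in $\MMwn$, hence as an acyclic cofibration.

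For the vertical morphism $p$, by the adjunction $\FmMerg \dashv \UmMerg$ and Theorem \ref{thm:model_structures_on_marked_merge}, it is an acyclic fibration in $\MMwn$ as soon as $\UmMerg p\colon (\UMerg\FMerg\UMerg X, \satur{\clcom{A}}) \to (\UMerg X, A)$ (which sends $[u, s] \mapsto us$) has the right lifting property against $I = \flatm{(\Ibd)} \cup \Imrk$ in $\Mwn$: indeed, the inclusion $\Jn \subseteq \ICof{I}$ combined with fibrancy of the codomain then forces both the fibration condition and the full right lifting property against all cofibrations.

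A lifting problem against $\flatm{\bdmap_U}$ amounts to a cell $v\colon U \to X$ and a lift of $v|_{\bd{}{}U}$ of the form $e' = [\tilde{v}, \tilde{s}]$, where $\tilde{s}\colon \bd{}{}U \sd W$ is a subdivision with $\tilde{v}\tilde{s} = v|_{\bd{}{}U}$. Proposition \ref{prop:pushouts_of_subdivisions_along_embeddings} extends $\tilde{s}$ to $s\colon U \sd U'$ with $U' \eqdef \subs{U}{W}{\bd{}{}U}_{\tilde{s}}$, and $\Gamma_\S$\nbd continuity of $X$ yields a unique $v'\colon U' \to X$ with $v's = v$ and $v'|_W = \tilde{v}$; the cell $[v', s]$ is the required lift. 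For the lifting against $\m_U$, where $v \in A$, it remains to show $[v', s] \in \satur{\clcom{A}}$: the equation $v = v's$ realises $v'$ as a local decomposition of $v$ along a boundary subdivision, so by the closure condition on $\clcom{A}$ (stability under boundary decompositions) $v' \in A$; hence $[v', \idd{U'}]$ is marked in $(\FMerg\UMerg X, \clcom{A})$, essential completeness of the latter promotes this to marked-invertibility, and Lemma \ref{lem:composition_of_marked_invertible} transfers it to $[v', s] = [v', \idd{U'}] \cdot s$.

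The main subtlety lies in this last step: showing that the \emph{ad hoc} lift produced by cocontinuity lands in the completion $\satur{\clcom{A}}$ requires precisely the boundary-decomposition closure axiom on $\clcom{A}$—the non-trivial condition distinguishing marked merge-complexes from marked directed complexes—together with the compatibility between subdivisions and marked-invertibility in essential merge-$n$\nbd categories, which is exactly where the interaction between the merge and inflate structures becomes essential.
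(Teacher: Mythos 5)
Your treatment of the acyclic cofibration and of the lifting against $\flatm{\bdmap_U}$ is correct and matches the paper's proof: the paper factors that lifting problem through the boundary-subdivision pushout rather than assembling the lift directly by $\Gamma_\S$\nbd continuity, but the computation is the same, and your reduction of acyclicity of the vertical morphism to the right lifting property of its $\UmMerg$\nbd image against $I$ is sound. The remainder has two genuine problems.

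First, you never verify that the factorisation exists, i.e.\ that $[u', s] \mapsto u's$ is well defined as a morphism of \emph{marked} merge-complexes $(\FMerg\UMerg X, \satur{\clcom{A}}) \to (X, A)$: a cell that is marked upstairs --- equivalently, marked-invertible in $(\FMerg\UMerg X, \clcom{A})$ --- must land in $A$. This is not automatic, since $\satur{\clcom{A}}$ is in general strictly larger than $\clcom{A}$; the paper proves it by noting that both objects have underlying essential \inftyn\nbd categories, so Proposition \ref{prop:morphisms_preserve_marked_invertibility} sends marked-invertible cells to marked-invertible cells, and completeness of $(X, A)$ upgrades the image to a marked cell.

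Second, and more seriously, your argument for the lifting against $\m_U$ does not address the actual lifting problem. A square against $\m_U$ consists of an \emph{arbitrary} cell $[v', s]$ of shape $U$ upstairs whose image $v's$ is marked downstairs, and a lift exists if and only if $[v', s]$ is marked upstairs; nothing constrains $s\colon U \sd U'$ to be a boundary subdivision, so $U'$ is a general round molecule and $v'$ a general round diagram, possibly with many top-dimensional cells. Your appeal to the boundary-decomposition closure condition to conclude ``$v' \in A$'' therefore does not apply, and the conclusion is false in general: if $v'$ is a section--retraction pair of $1$\nbd cells whose composite $v's$ is marked, neither factor need be marked --- precisely the phenomenon flagged in the remark following Lemma \ref{lem:free_merge-complex_on_inftyn_is_essential_inftyn}. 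The correct route, which the paper takes, never asserts that $v'$ is marked: from $v's \in A$ one gets that $v's$ is marked-invertible; Lemma \ref{lem:composition_of_marked_invertible} applied in $(X, A)$ gives that the round diagram $v'$ is marked-\emph{invertible}; this transfers to $[v', \idd{U'}]$ upstairs by Proposition \ref{prop:morphisms_preserve_marked_invertibility}, then to $[v', s]$ by Lemma \ref{lem:composition_of_marked_invertible} applied upstairs; and completeness of the merge-$n$\nbd category $(\FMerg\UMerg X, \satur{\clcom{A}})$ finally yields that $[v', s]$ is marked.
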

\begin{proof}
	By Theorem \ref{thm:characterisation_of_fibrants} and Theorem \ref{thm:model_structures_on_marked_merge}, $\UmMerg(X, A)$ admits a structure of inflate-complex making it an \inftyn\nbd category.
	If we endow it with this structure, by Lemma \ref{lem:free_merge-complex_on_inftyn_is_essential_inftyn}, $\FmMerg\UmMerg(X, A)$ becomes an essential merge-$n$\nbd category, and by Proposition 
	\ref{prop:naive_saturation_merge_n_category}, the marking $\FmMerg\UmMerg(X, A) \acof (\FMerg\UMerg X, \satur{\clcom{A}})$ whose underlying morphism is the identity on $\FMerg\UMerg X$ is a fibrant replacement, hence an acyclic cofibration.
	Now, a cell $u\colon U \to \FMerg\UMerg X$ is uniquely represented as a pair $[u', s]$ of a round diagram $u'\colon U' \to X$ and a subdivision $s\colon U \sd U'$, and the underlying morphism of $\varepsilon^\Merg_{(X, A)}$ sends it to the cell $u's$ of $X$.
	To show that this determines a morphism of marked merge-complexes $(\FMerg\UMerg X, \satur{\clcom{A}}) \to (X, A)$, it suffices to show that, if $[u', s]$ is marked-invertible in $(\FMerg\UMerg X, \clcom{A})$, then $u's$ is marked in $(X, A)$.
	Since both $(\FMerg\UMerg X, \clcom{A})$ and $(X, A)$ have underlying essential \inftyn\nbd categories, if $[u', s]$ is marked-invertible, then by Proposition \ref{prop:morphisms_preserve_marked_invertibility} $u's$ is marked-invertible, and by completeness of $(X, A)$ it is marked.

	This proves that the factorisation exists, so it remains to show that the vertical morphism is an acyclic fibration.
	Consider a commutative square
	\[\begin{tikzcd}
	{\flatm{\bd{}{}U}} & {(\FMerg\UMerg X, \satur{\clcom{A}})} \\
	{\flatm{U}} & {(X, A).}
	\arrow["{(u, v)}", from=1-1, to=1-2]
	\arrow["{\flatm{\bdmap_U}}", hook, from=1-1, to=2-1]
	\arrow[two heads, from=1-2, to=2-2]
	\arrow["a", from=2-1, to=2-2]
\end{tikzcd}\]
	Then $(u, v)$ classifies two parallel round diagrams $[u', s]$ and $[v', t]$ in $\FMerg\UMerg X$, where $s$ and $t$ are subdivisions matching on the boundaries, and $a\colon u's \celto v't$.
	Then, letting $\bd{}{-}V \eqdef s(\bd{}{-}U)$, $\bd{}{+}V \eqdef t(\bd{}{+}U)$, and $V \eqdef \bd{}{-}V \celto \bd{}{+}V$, by $\Gamma_\S$\nbd continuity we can factorise the square through the pushout
	\[\begin{tikzcd}
	{\flatm{\bd{}{}U}} & {\flatm{\bd{}{}V}} & {(\FMerg\UMerg X, \satur{\clcom{A}})} \\
	{\flatm{U}} & {\flatm{V}} & {(X, A)}
	\arrow[arloop->, "{(s, t)}", from=1-1, to=1-2]
	\arrow["{\flatm{\bdmap_U}}", hook, from=1-1, to=2-1]
	\arrow["{(u', v')}", from=1-2, to=1-3]
	\arrow["{\flatm{\bdmap_V}}", hook, from=1-2, to=2-2]
	\arrow[two heads, from=1-3, to=2-3]
	\arrow[arloop->, from=2-1, to=2-2]
	\arrow["\lrcorner"{anchor=center, pos=0.125, rotate=180}, draw=none, from=2-2, to=1-1]
	\arrow["{a'}", dashed, from=2-2, to=2-3]
\end{tikzcd}\]
	where now $a'\colon u' \celto v'$ can be directly lifted to $[a', \idd{V}]$.
	Next, consider a commutative square
	\[\begin{tikzcd}
	{\flatm{U}} & {(\FMerg\UMerg X, \satur{\clcom{A}})} \\
	{\mrk{U}} & {(X, A)}
	\arrow["{[u, s]}", from=1-1, to=1-2]
	\arrow["{\m_U}", from=1-1, to=2-1]
	\arrow[two heads, from=1-2, to=2-2]
	\arrow["us", from=2-1, to=2-2]
\end{tikzcd}\]
	where $s\colon U \sd U'$ is a subdivision and $u\colon U' \to X$ a round diagram in $X$.
	Since $us$ is marked in $(X, A)$, it is marked-invertible, and because $(X, A)$ satisfies the conditions of Lemma \ref{lem:composition_of_marked_invertible}, it follows that $u$ is marked-invertible.
	Then, by the same result, $[u, s] = [u, \idd{U'}]s$ is also marked-invertible in $(\FMerg\UMerg X, \satur{\clcom{A}})$.
	By completeness, $[u, s]$ is marked, which concludes the proof.
\end{proof}

\begin{thm} \label{thm:quillen_equivalence}
	The Quillen adjunction $(\FmMerg, \UmMerg)$ is a Quillen equivalence between $\Mwn$ and $\MMwn$.
\end{thm}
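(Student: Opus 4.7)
The plan is to apply Proposition \ref{prop:quillen_equivalence_criterion}, verifying both conditions directly. Since every object of $\mdCpx$ is cofibrant in $\Mwn$ by Lemma \ref{lem:I_cofibrations_are_monomorphisms}, and $\FmMerg$ sends monomorphisms to monomorphisms so every object of $\mMCpx$ is cofibrant in $\MMwn$, the cofibrant replacement in each structure can be taken to be the identity.

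For the first condition, let $(X, A)$ be any marked directed complex. I would first produce, using the factorisation axiom for $\Mwn$, a fibrant replacement $i\colon (X, A) \acof (Y, B)$; by Theorem \ref{thm:characterisation_of_fibrants}, $(Y, B)$ underlies an \inftyn\nbd category. Applying the left Quillen functor $\FmMerg$ yields an acyclic cofibration $\FmMerg i\colon \FmMerg(X, A) \acof \FmMerg(Y, B)$. By Lemma \ref{lem:free_merge-complex_on_inftyn_is_essential_inftyn}, $\FmMerg(Y, B)$ is an essential merge-$n$\nbd category, so by Proposition \ref{prop:naive_saturation_merge_n_category} the marking $\FmMerg(Y, B) \acof (\FMerg Y, \satur{\clcom{B}})$ is a fibrant replacement in $\MMwn$. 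Composing these two acyclic cofibrations provides the required fibrant replacement of $\FmMerg(X, A)$. The transpose morphism is the top-right composite of the triangle
\[\begin{tikzcd}
(X, A) \ar[r] \ar[d, "i"', "\wr"] & \UmMerg(\FMerg Y, \satur{\clcom{B}}) \\
(Y, B) \ar[ur, "\sigma_{(Y, B)}"']
\end{tikzcd}\]
obtained by naturality of the unit of $\FmMerg \dashv \UmMerg$. By Theorem \ref{thm:embedding_into_merge_n_category}, $\sigma_{(Y, B)}$ is an equivalence of \inftyn\nbd categories, hence an equivalence in $\cufs{\Mwn}$ by Theorem \ref{thm:characterisation_of_equivalences}. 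Since $i$ is an acyclic cofibration, it is also an equivalence, and 2-out-of-3 (applied in $\Ho(\Mwn)$) gives that the transpose is an equivalence.

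For the second condition, let $X$ be a fibrant object of $\MMwn$. Taking $\UmMerg X$ as the cofibrant replacement of itself, the required transpose morphism is the counit $\varepsilon^{\Merg}_X\colon \FmMerg \UmMerg X \to X$. Lemma \ref{lem:counit_acyclic_fibration} exhibits this counit as the composite of an acyclic cofibration $\FmMerg \UmMerg X \acof (\FMerg \UMerg X, \satur{\clcom{A}})$ and an acyclic fibration $(\FMerg \UMerg X, \satur{\clcom{A}}) \afib X$; both are equivalences by Lemma \ref{lem:acyclic_cofib_is_cofib_equivalence}, so $\varepsilon^{\Merg}_X$ is an equivalence in $\cufs{\MMwn}$ by 2-out-of-3.

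With both conditions verified, Proposition \ref{prop:quillen_equivalence_criterion} yields the conclusion. The only non-routine input is Lemma \ref{lem:counit_acyclic_fibration} (already established) and the equivalence status of $\sigma_{(Y, B)}$ (Theorem \ref{thm:embedding_into_merge_n_category}); the remaining work is the assembly via 2-out-of-3 and the verification that every object is cofibrant, so no further obstacle is anticipated.
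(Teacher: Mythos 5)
Your proposal is correct and follows essentially the same route as the paper: both verify the two conditions of Proposition \ref{prop:quillen_equivalence_criterion} using the fibrant replacement $\FmMerg(X,A) \acof \FmMerg(Y,B) \acof (\FMerg Y, \satur{\clcom{B}})$ together with Theorem \ref{thm:embedding_into_merge_n_category} for the unit side, and Lemma \ref{lem:counit_acyclic_fibration} for the counit side. One aside is false, though harmless: it is \emph{not} true that every object of $\mMCpx$ is cofibrant in $\MMwn$ (cofibrant objects there are the ``freely generated'' ones, per the comment after Theorem \ref{thm:model_structures_on_marked_merge}, and in any case $\FmMerg$ preserving monomorphisms would only concern its image); fortunately the only cofibrant replacement your argument needs is of $\UmMerg X$ in $\Mwn$, where all objects are indeed cofibrant.
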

\begin{proof}
	Let $(X, A)$ be any marked directed complex, which is automatically cofibrant, and let $(X, A) \acof (X', A')$ be a fibrant replacement.
	By Theorem \ref{thm:characterisation_of_fibrants}, we may endow $(X', A')$ with a structure of inflate-complex making it an \inftyn\nbd category.
	Then $\FmMerg(X, A) \acof \FmMerg(X', A')$ is an acyclic cofibration in $\MMwn$, and by Lemma \ref{lem:free_merge-complex_on_inftyn_is_essential_inftyn}, the codomain is an essential merge-$n$\nbd category.
	By Proposition \ref{prop:naive_saturation_merge_n_category}, we may post-compose it with the acyclic cofibration $\FmMerg(X', A') \acof (\FMerg X', \satur{\clcom{A'}})$ to obtain a fibrant replacement $\varphi\colon \FmMerg(X, A) \acof (\FMerg X', \satur{\clcom{A'}})$.
	Then, in the commutative diagram
	\[\begin{tikzcd}
	{(X, A)} & {(X', A')} \\
	{\UmMerg\FmMerg(X, A)} & {\UmMerg\FmMerg(X', A')} & {\UmMerg(\FMerg X', \satur{\clcom{A'}})}
	\arrow["\sim", hook, from=1-1, to=1-2]
	\arrow[hook, "{\eta^\Merg_{(X, A)}}", from=1-1, to=2-1]
	\arrow[hook, "{\eta^\Merg_{(X', A')}}", from=1-2, to=2-2]
	\arrow["{\sigma_{(X', A')}}", curve={height=-12pt}, from=1-2, to=2-3]
	\arrow[from=2-1, to=2-2]
	\arrow["{\UmMerg(\varphi)}", curve={height=30pt}, from=2-1, to=2-3]
	\arrow[from=2-2, to=2-3]
\end{tikzcd}\]
	the path going down and then right is the transpose of $\varphi$, while the path going right and then diagonally down is an equivalence by Theorem \ref{thm:embedding_into_merge_n_category}.
	This proves one of the conditions of Proposition \ref{prop:quillen_equivalence_criterion}.
	Since there is no need for cofibrant replacement in $\Mwn$, the second condition follows from Lemma \ref{lem:counit_acyclic_fibration} by 2-out-of-3 for equivalences.
	This completes the proof.
\end{proof}

\begin{cor} \label{cor:equivalences_of_merge-n-categories}
	Let $f\colon (X, A) \to (Y, B)$ be a semi-strict functor of merge-$n$\nbd categories.
	The following are equivalent:
	\begin{enumerate}[label=(\alph*)]
		\item $f$ is an equivalence in $\MMwn$;
		\item $\UMerg f$ is an equivalence of \inftyn\nbd categories.
	\end{enumerate}
\end{cor}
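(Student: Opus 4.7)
The statement is essentially an application of the Quillen equivalence established in Theorem \ref{thm:quillen_equivalence}. First, I would observe that both $(X, A)$ and $(Y, B)$ are bifibrant in $\MMwn$: fibrancy follows from Proposition \ref{prop:merge_n_categories_are_inftyn_categories}, and cofibrancy is automatic since every object in $\MMwn$ is cofibrant (Theorem \ref{thm:model_structures_on_marked_merge}). Dually, $\UmMerg(X, A)$ and $\UmMerg(Y, B)$ are bifibrant in $\Mwn$: they are \inftyn\nbd categories by Proposition \ref{prop:merge_n_categories_are_inftyn_categories} and hence fibrant by Theorem \ref{thm:characterisation_of_fibrants}, and cofibrancy is again automatic. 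Note also that $\UmMerg f$ and $\UMerg f$ have the same underlying morphism of marked directed complexes, so the two formulations of statement (b) coincide.

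For the forward direction (a) $\Rightarrow$ (b), the functor $\Ho(\UmMerg)\colon \Ho(\MMwn) \to \Ho(\Mwn)$ exists by Proposition \ref{prop:quillen_adjunction_descends_to_homotopy_category}, and sends invertible morphisms to invertible morphisms. On bifibrant objects $\Ho(\UmMerg)$ is computed directly by $\UmMerg$ modulo homotopy: indeed, $\UmMerg$ is a right adjoint and hence preserves path objects, so by Lemma \ref{lem:homotopy_relation} it preserves the homotopy relation between bifibrants. Since equivalences are by definition the morphisms invertible in the homotopy category, $\UmMerg f$ is an equivalence in $\Mwn$ whenever $f$ is an equivalence in $\MMwn$.

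For the reverse direction (b) $\Rightarrow$ (a), I would use the naturality square of the counit:
\[\begin{tikzcd}[column sep=large]
{\FmMerg\UmMerg(X, A)} & {\FmMerg\UmMerg(Y, B)} \\
{(X, A)} & {(Y, B).}
\arrow["{\FmMerg\UmMerg f}", from=1-1, to=1-2]
\arrow["{\varepsilon^\Merg_{(X,A)}}"', from=1-1, to=2-1]
\arrow["{\varepsilon^\Merg_{(Y,B)}}", from=1-2, to=2-2]
\arrow["f"', from=2-1, to=2-2]
\end{tikzcd}\]
By Lemma \ref{lem:counit_acyclic_fibration}, both vertical counits factor as an acyclic cofibration followed by an acyclic fibration, and are therefore equivalences in $\MMwn$. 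If $\UmMerg f$ is an equivalence between cofibrant objects in $\Mwn$, then, by the same reasoning applied to $\Ho(\FmMerg)$ on cofibrants, $\FmMerg\UmMerg f$ is an equivalence in $\MMwn$. Invoking 2-out-of-3 for equivalences (valid because equivalences are defined as the isomorphisms in the homotopy category, where 2-out-of-3 holds trivially), we conclude that $f$ is an equivalence.

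There is no real obstacle beyond careful bookkeeping: all the ingredients --- the Quillen equivalence, the characterisation of bifibrants, the counit factorisation --- have been assembled in the preceding results. The only mildly delicate point is verifying that $\Ho(\UmMerg)$ is induced by $\UmMerg$ on bifibrants, which follows immediately from preservation of path objects by right Quillen functors.
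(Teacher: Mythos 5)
Your argument is correct and amounts to the same thing as the paper's proof, which is a one-line citation of the general fact that the right functor of a Quillen equivalence preserves and reflects equivalences between fibrant objects (\cite[Proposition 2.4.5]{henry2020weak}); your forward direction is the ``preserves'' half and your counit-square argument is essentially the standard proof of the ``reflects'' half. The only imprecise step is the claim that $\Ho(\FmMerg)$ is computed by $\FmMerg$ on cofibrants ``by the same reasoning'': the path-object/cylinder argument you used for $\UmMerg$ does not transfer directly, because the homotopy relation is only defined for morphisms with fibrant codomain, and $\FmMerg\UmMerg(X, A)$ need not be fibrant (it is only an essential merge-$n$\nbd category before saturation). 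The correct justification is Proposition \ref{prop:characterisation_of_homotopy_category}: $\Ho(\Mwn)$ is the localisation of $\cofs{\Mwn}$ at the acyclic cofibrations, and $\FmMerg$, being left Quillen, preserves acyclic cofibrations, hence induces a functor on these localisations sending the invertible $[\UmMerg f]$ to the invertible $[\FmMerg\UmMerg f]$. Alternatively, one can bypass the counit square entirely: since $\Ho(\UmMerg)$ is an equivalence of categories by Theorem \ref{thm:quillen_equivalence}, it reflects isomorphisms, which gives (b) $\Rightarrow$ (a) at once.
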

\begin{proof}
	Right functors in a Quillen equivalence reflect equivalences between fibrants, see \cite[Proposition 2.4.5]{henry2020weak}.
\end{proof}

\noindent
We conclude our article with a statement summarising what we achieved.

\begin{thm}[Semi-strictification] \label{thm:main_thm}
	Let $f\colon (X, A) \to (Y, B)$ be a functor of \inftyn\nbd categories.
	Then there exists a square
	\[\begin{tikzcd}[column sep=large]
	{(X, A)} & {(Y, B)} \\
	{\UmMerg(\FMerg X, \satur{\clcom{A}})} & {\UmMerg(\FMerg Y, \satur{\clcom{B}}).}
	\arrow["f", from=1-1, to=1-2]
	\arrow[hook, "{\sigma_{(X, A)}}", from=1-1, to=2-1]
	\arrow[hook, "{\sigma_{(Y, B)}}", from=1-2, to=2-2]
	\arrow["{\UMerg\FMerg f}", from=2-1, to=2-2]
\end{tikzcd}\]
	where
	\begin{enumerate}
		\item $\FMerg f\colon (\FMerg X, \satur{\clcom{A}}) \to (\FMerg Y, \satur{\clcom{B}})$ is a semi-strict functor of merge-$n$\nbd categories,
		\item $\sigma_{(X, A)}$ and $\sigma_{(Y, B)}$ are equivalences of \inftyn\nbd categories, in particular acyclic cofibrations.
	\end{enumerate}
	Moreover, the adjoint pair $\FmMerg \dashv \UmMerg$ determines a Quillen equivalence between weak model categories $\Mwn$ and $\MMwn$ such that
	\begin{enumerate}
		\item the category of \inftyn\nbd categories and functors is equivalent to the category of fibrant objects in $\Mwn$,
		\item every morphism of fibrant objects in $\MMwn$ is a semi-strict functor of merge-$n$\nbd categories up to acyclic fibrations over its domain and codomain.
	\end{enumerate}
	In particular, a semi-strict functor of merge-$n$\nbd categories is an equivalence if and only if its underlying functor of \inftyn\nbd categories is an equivalence.
\end{thm}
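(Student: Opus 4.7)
The plan is to assemble the theorem essentially as a summary of the preceding results, with most of the substantive work already done. The naturality square is produced as follows: given $f\colon (X,A) \to (Y,B)$, the square is the image under $\UmMerg$ of the naturality square of the unit $\eta^\Merg$ at $f$, post-composed with the fibrant-replacement markings $\FmMerg \acof (\FMerg-, \satur{\clcom{-}})$, which are natural in $f$ by functoriality of the construction $A \mapsto \satur{\clcom{A}}$. To verify that $\FMerg f$ is a well-defined semi-strict functor of merge-$n$-categories, one uses Lemma \ref{lem:free_merge-complex_on_inftyn_is_essential_inftyn} to know that both source and target are essential merge-$n$-categories, then applies Proposition \ref{prop:morphisms_preserve_marked_invertibility} to see that $\FMerg f$ sends marked-invertible cells to marked-invertible cells, hence $\satur{\clcom{A}}$ into $\satur{\clcom{B}}$. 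That $\sigma_{(X,A)}$ and $\sigma_{(Y,B)}$ are equivalences of \inftyn\nbd categories is exactly Theorem \ref{thm:embedding_into_merge_n_category}; since they are evidently monomorphisms, hence cofibrations, Lemma \ref{lem:acyclic_cofib_is_cofib_equivalence} upgrades them to acyclic cofibrations.

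For the ``moreover'' clause, the Quillen equivalence between $\Mwn$ and $\MMwn$ is already Theorem \ref{thm:quillen_equivalence}. The first bulleted consequence follows from Theorem \ref{thm:characterisation_of_fibrants}: fibrant objects in $\Mwn$ are exactly underlying marked directed complexes of \inftyn\nbd categories, and since by definition a functor of \inftyn\nbd categories is a morphism of underlying marked directed complexes, this gives an equivalence of categories (modulo a choice of $\mInfl$-algebra structure, which does not affect the underlying morphisms). The second bulleted consequence follows from Lemma \ref{lem:counit_acyclic_fibration}: given a morphism $g\colon (X,A) \to (Y,B)$ between $\MMwn$-fibrants, its transpose through $\FmMerg \dashv \UmMerg$ is a morphism $\tilde g \colon \FmMerg \UmMerg (X,A) \to (Y,B)$ which factors through the acyclic cofibration produced by the lemma, so $g$ is equal to a semi-strict functor $(\FMerg \UMerg X, \satur{\clcom A}) \to (\FMerg \UMerg Y, \satur{\clcom B})$ precomposed with an acyclic fibration onto $(X,A)$ and post-composed with an acyclic fibration from $(Y,B)$, after taking a suitable adjoint transpose on the target side. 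The final statement, that a semi-strict functor is an equivalence if and only if its underlying functor of \inftyn\nbd categories is, is exactly Corollary \ref{cor:equivalences_of_merge-n-categories}.

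Since every component has already been proved, there is no real obstacle; the only care required is in phrasing the ``up to acyclic fibrations'' clause precisely, so as to make clear that the domain and codomain replacements are functorial enough to describe every morphism between fibrants in $\MMwn$ in this form. This is handled uniformly by combining Lemma \ref{lem:counit_acyclic_fibration} at both the source and the target with the triangle identities of the adjunction $\FmMerg \dashv \UmMerg$, so that the acyclic fibrations are literally the counit components (up to the fibrant replacement step), and the intermediate semi-strict functor is $\FMerg \UMerg g$.
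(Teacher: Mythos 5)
Your proposal is correct and matches the paper's own (largely implicit) proof: the theorem is assembled exactly as you describe, with the naturality square and the well-definedness of $\FMerg f$ coming from the discussion preceding Theorem \ref{thm:embedding_into_merge_n_category} together with Proposition \ref{prop:morphisms_preserve_marked_invertibility}, the equivalence claims from Theorem \ref{thm:embedding_into_merge_n_category} and Lemma \ref{lem:acyclic_cofib_is_cofib_equivalence}, the Quillen equivalence from Theorem \ref{thm:quillen_equivalence}, the two bulleted consequences from Theorem \ref{thm:characterisation_of_fibrants} and Lemma \ref{lem:counit_acyclic_fibration}, and the final statement from Corollary \ref{cor:equivalences_of_merge-n-categories}. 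No gaps.
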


\bibliographystyle{alpha}
\small \bibliography{main}

\end{document}